\newcommand\reallywidehat[1]{
\savestack{\tmpbox}{\stretchto{
  \scaleto{
    \scalerel*[\widthof{\ensuremath{#1}}]{\kern-.6pt\bigwedge\kern-.6pt}
    {\rule[-\textheight/2]{1ex}{\textheight}}WIDTH-LIMITED BIG WEDGE
  }{\textheight} 
}{0.5ex}}
\stackon[1pt]{#1}{\tmpbox}
}
\numberwithin{equation}{subsection}
\newcommand{\norm}[1]{\left\lVert#1\right\rVert}
\newtheorem{theorem}{Theorem}[section]
\newtheorem{corollary}[theorem]{Corollary}
\newtheorem{lemma}[theorem]{Lemma}
\newtheorem{proposition}[theorem]{Proposition}
\newtheorem{definition}[theorem]{Definition}
\newtheorem{notation}[theorem]{Notation}
\newtheorem{remark}[theorem]{Remark}
\theoremstyle{definition}
\DeclareMathOperator{\sppp}{Span}
\DeclareMathOperator{\I}{Im}
\DeclareMathOperator{\Raa}{Range}
\DeclareMathOperator{\Ree}{Re}
\DeclareMathOperator{\supp}{supp}
\DeclareMathOperator{\kerrr}{Ker}
\title{Dispersive analysis for one-dimensional charge transfer models}
\author[G. Chen]{Gong Chen$\textsuperscript{\textdagger}$}
\author[A. Moutinho]{Abdon Moutinho$^{\ast}$}
\begin{document}

\email{gc@math.gatech.edu}
\email{aneto8@gatech.edu}

\address{School of Mathematics, Georgia Institute of Technology, Atlanta, GA 30332, USA}
\thanks{$^\ast$Corresponding author; e-mail: aneto8@gatech.edu.}
\thanks{\textsuperscript{\textdagger}GC  was partially supported by NSF grant DMS-2350301 and by Simons foundation MP-TSM00002258.}

\date{\today}

\begin{abstract}
In this paper, we study one-dimensional linear Schr\"odinger equations with multiple moving potentials, known as transfer charge models. 
Focusing on the non-self-adjoint setting that arises in the study of solitons, we systematically develop the scattering theory and establish dispersive estimates under the assumption that the potentials move at significantly different velocities, even in the presence of unstable modes. In particular, we prove the existence of wave operators, asymptotic completeness, and pointwise decay of solutions, without requiring the absence of threshold resonances. 
Our analysis sets up the fundamental work for studying the nonlinear dynamics of multi-solitons, including asymptotic stability and collisions.
\end{abstract}
\maketitle

\tableofcontents

\section{Introduction}

\subsection{Background}

In this article, motivated by the study of dynamics of multi-solitons for the 1-dimensional nonlinear Schr\"odinger equations, fixed $m\in\mathbb{N}$, we consider  the following linear matrix Schr\"odinger equation with a time-dependent charge transfer Hamiltonian
\begin{equation}\label{p}\tag{CTM}
    \begin{aligned}
& i \partial_t \vec{\psi}+\left(\begin{array}{cc}
\partial_x^2 & 0 \\
0 & {-}\partial_x^2
\end{array}\right) \vec{\psi}+\sum_{j=1}^m V_j\left(t \right) \vec{\psi}=0 \text{,  $(t,x)\in\mathbb{R}\times\mathbb{R}$} \\
& \left.\vec{\psi}\right|_{t=0}=\vec{\psi}_0,
\end{aligned}
\end{equation}
where  the functions $V_j$ are matrix potentials of the form
$$
V_j(t)=\left(\begin{array}{cc}
U_j(x-v_jt-y_j) & -e^{i \theta_j(t, x)} W_j(x-v_jt-y_j) \\
e^{-i \theta_j(t, x)} W_j(x-v_jt-y_j) & -U_j(x-v_jt-y_j)
\end{array}\right),
$$
with $$\theta_j(t, x)=\left(\left|v_j\right|^2+\omega_j\right) t+2 x v_j+\gamma_j, \omega_j, \gamma_j \in \mathbb{R}, \omega_j > 0$$ and $v_j$ are distinct velocities (will be listed by their sizes).   We assume that the real-valued functions $U_j(x),\,W_j(x)$ exhibit rapid decay which is always ensured by the decay of solitons in the non-linear setting.

Other than our main motivation being the study of multi-solitons, the charge transfer model, in particular, its scalar formulation,  has numerous physical applications, primarily in studying the motion of a particle, often an electron, under the influence of attractive or repulsive forces exerted by atoms or ions; see \cite{ Yajima1} for example.  Although we study the matrix version of the Schr\"odinger equation, one can directly observe that our main results can be extended to the scalar charge transfer models after replacing the matrix distorted Fourier transforms by the scalar ones. 

\par Focusing on one dimension, in this paper, we will classify all the solutions of the models \eqref{p}. Moreover, we will derive the $L^2$ bound and  dispersive estimates for scattering solutions to \eqref{p}. More precisely, we will first establish the asymptotic completeness. After the asymptotic completeness is obtained from the analysis of the scattering part, we obtain the standard dispersive decay and local improved decay for this component.  These estimates are crucial step-tones to study the dynamics of multi-solitons. As an important by-product of our analysis, the asymptotic structure of the scattering part gives an effective notation of linear profile which plays a crucial role in the resonance analysis and long-range scattering, see for example \cite{GCnls,CP2,GPR,KaPu,collotger,nls3soliton}

Due to the physical importance of charge transfer models, in particular, the scalar version,  the study  of its asymptotic completeness and scattering theory has a long history. Without trying to be exhaustive,  we refer to  \cite{Cai,Dispesti,SoWu,Geocharge,Zielinski1} for various approaches under different assumptions and more historical references. One point to note is that most of the earlier results only cover the self-adjoint setting except that, in \cite{Dispesti}, the asymptotic completeness was obtained based on the dispersive decay. 

Moving to the dispersive decay and localized decay estimates, it is well known that to establish these estimates for the perturbed Schr\"odinger operators both in the scalar and matrix forms is the cornerstone to  study the long-time dynamics of solutions for dispersive models with potentials including scattering or modified scattering of small solutions, and asymptotic stability of a special class of solutions such as solitons. See for example \cite{Busper1, BusSule, schlag2,KriegerSchlag, Mizumachi,collotger,nls3soliton, perelmanasym,GCnls,CP1,CP2,GPR}.  In particular, when studying the asymptotic stability of large solitons of the nonlinear Schr\"odinger equation, the linearization around a solition  naturally leads us to  the following matrix non-self adjoint operators
\begin{equation}\label{Hww1}
\mathcal{H}_{\omega_{\ell}}\coloneqq {-}\partial^{2}_{x}
    \begin{bmatrix}
    1  & 0\\
    0 & {-}1
    \end{bmatrix}+\begin{bmatrix}
    \omega_{\ell}  & 0\\
    0 & {-}\omega_{\ell}
    \end{bmatrix}+ 
    \begin{bmatrix}
    U_{\ell}(x) & W_{\ell}(x)\\
    {-}W_{\ell}(x) & {-}U_{\ell}(x) 
    \end{bmatrix},
\end{equation}and then one has to study the decay properties of the corresponding Schr\"odinger flow. The study of the spectral properties of these matrix operators. We refer to \cite{Busper1, BusSule, schlag2,KriegerSchlag,collotger,nls3soliton,WSSurvey} for details. 

\par Considering the dynamics of solutions around a multi-soliton and performing a linearization around it, then \eqref{p} will appear, see \cite{nlsstates,perelmanasym}.
Considering the case where the dimension $d=3,$ there exist many references in the literature on the decay estimates for solutions of \eqref{p}. One of the most celebrated is the article \cite{Dispesti} of Rodnianski, Soffer, and Schlag where the first pointwise decays from $L^2\bigcap L^1$ to $L^2+L^\infty$ were obtained for solutions of \eqref{p} when $d=3.$  
The $L^2$ components later on were removed by Cai \cite{Cai}. Later, Strichartz estimates were obtained in \cite{Stritchargewave} by Chen and \cite{sofferYaoDeng} by Deng-Soffer-Yao. However, in dimension $d=1$, the key difference from  the case that $d=3$ is that the semigroup operator $e^{{-}it\mathcal{H}_{\omega_{\ell}}}$ has a slower decay.  More precisely, for example, if the operator $\mathcal{H}_{\omega}$ does not have an embedded eigenvalue on its essential spectrum, then the estimate
\begin{equation*}
\norm{e^{{-}it\mathcal{H}_{\omega_{\ell}}}P_{c}f}_{L^{\infty}_{x}(\mathbb{R})}\leq \frac{C}{t^{\frac{1}{2}}}\norm{P_{e}f}_{L^{1}_{x}(\mathbb{R})}, 
\end{equation*} holds, where $P_{e}$ is the projection on the essential spectrum, while the respective dispersive estimate of the same semigroup when $d=3$ written in \cite{Dispesti} has a decay rate of order $O(t^{{-}\frac{3}{2}}),$ which is integrable on any subset of $\mathbb{R}_{\geq 1}.$ Therefore we cannot repeat the bootstrap argument from \cite{Dispesti} due to the slow decay of the operators $e^{{-}it\mathcal{H}_{\omega_{\ell}}}$ when $d=1.$  More technically, certain Kato-smoothing estimates in \cite{Dispesti} can not be applied in $d=1$ case either. Overall, the analysis of interactions among potentials in dimension $1$ by decay and smoothing estimates is much more complicated than in higher dimensions. 
In dimension $d=1$, by Perelman, \cite{perelmanasym},  under the assumptions that $m=2$ and $|v_2-v_1|\gg 1$,  some pointwse decays and  the asymptotic completeness were obtained. 

In this paper, assuming that the velocities are well separated, i.e. $|v_\ell-v_{\ell-1}|\geq K,|y_\ell-y_{\ell-1}|\geq L,\,\ell=2,\ldots,m$ for some $K>0$ and $L>0$ large enough depending on $m$, we prove the standard pointwise decay and the local improved decay for the one-dimensional charge transfer model.  We also extend the dispersive estimates and the asymptotic completeness result obtained by Perelman \cite{perelmanasym} to charge transfer models with more than two potentials and for the case including unstable modes, see Theorems \ref{princ} and \ref{Decesti} in Section \ref{nn}.

More precisely, under the assumptions that  the discrete spectrum $\sigma_{d}(\mathcal{H}_{\omega_{\ell}})$ is a finite subset of $\mathbb{R}\cup i\mathbb{R}$,  and that all the functions $U_{\ell}(x),\,W_{\ell}(x)$ decay rapidly, we first introduce a notion of `free evolution', see Definition \ref{s0def}. Then in Theorem \ref{tcont} and Theorem \ref{tdis}, we obtain the existence of wave operators for our problem. Then after establishing the asymptotic completeness in Theorem \ref{princ}, we are going to verify that  any solution $\overrightarrow{\psi}(t)$ of \eqref{p} which is asymptotically orthogonal to the corresponding moving discrete modes of all operators $\mathcal{H}_{\omega_{\ell}}$ satisfies for all $t\geq 0$ {\small
\begin{align}\label{pp1}
   \norm{\overrightarrow{\psi}(t)}_{L^{2}_{x}(\mathbb{R})}\leq & C\norm{\overrightarrow{\psi}(0)}_{L^{2}_{x}(\mathbb{R})},\\ \label{pp2}
   \norm{\overrightarrow{\psi}(t)}_{L^{\infty}_{x}(\mathbb{R})}\leq & \frac{C }{t^{\frac{1}{2}}} \max_{\ell}\norm{(1+\vert x\vert )\chi_{\left\{\frac{y_{\ell}+y_{\ell+1}}{2},\frac{y_{\ell}+y_{\ell-1}}{2}\right\}}(x)\overrightarrow{\psi}(0,x+y_{\ell})}_{L^{2}_{x}(\mathbb{R})},
\end{align}}
where $\chi$ is a smooth bump function adapted to the corresponding interval. 
For more information see Theorems \ref{princ} and \ref{Decesti} on Subsection $1.5.$ In a more general setting, the asymptotic completeness from Theorem \ref{princ} implies that any   solution of \eqref{p} converges in the $L^{2}$ norm when $t$ approaches ${+}\infty$ to 
{\footnotesize\begin{align}\label{explicit0}
&\sum_{\ell}\sum_k\left[
    \begin{bmatrix}
    e^{i\left(\frac{v_{\ell}x}{2}-t\frac{v_{\ell}^2}{4}+\gamma_{\ell}\right)}  & 0\\
    0 & e^{{-}i\left(\frac{v_{\ell}x}{2}-t\frac{v_{\ell}^2}{4}+\gamma_{\ell}\right)}
    \end{bmatrix}e^{{-}i\lambda_{\ell,k} t}\overrightarrow{z}_{\omega_{\ell,k}}\left(x-v_{\ell}t-y_{\ell}\right)\right]\\
    &+\sum_{\ell}
    \begin{bmatrix}
    e^{i\left(\frac{v_{\ell}x}{2}-t\frac{v_{\ell}^2}{4}+\gamma_{\ell}\right)}  & 0\\
    0 & e^{{-}i\left(\frac{v_{\ell}x}{2}-t\frac{v_{\ell}^2}{4}+\gamma_{\ell}\right)}
    \end{bmatrix} \left(t\overrightarrow{z}^0_{\omega_{\ell}}\left(x-v_{\ell}t-y_{\ell}\right)+\overrightarrow{z}^1_{\omega_{\ell}}\left(x-v_{\ell}t-y_{\ell}\right)\right)+\mathcal{T}(\overrightarrow{\phi})(t,x),
\end{align}}
such that each function $\overrightarrow{z}_{\omega_{\ell,k}},\,\overrightarrow{z}^0_{\omega_{\ell}},\,\overrightarrow{z}^1_{\omega_{\ell}}\in L^{2}_{x}(\mathbb{R},\mathbb{C}^{2})$,   $\overrightarrow{z}_{\omega_{\ell,k}}\in \kerrr \left[\mathcal{H}_{\omega_{\ell}}-\lambda_{\ell,k}\mathrm{Id}\right]$ for all the complex values $\lambda_{\ell,k}\in \sigma_{d}\left(\mathcal{H}_{\omega_{\ell}}\right),$  $\overrightarrow{z}^0_{\omega_{\ell}}\in \kerrr \mathcal{H}_{\omega_{\ell}}$  ,  $\overrightarrow{z}^1_{\omega_{\ell}}\in \kerrr \mathcal{H}_{\omega_{\ell}}^2$  with $\mathcal{H}_{\omega_{\ell}}\overrightarrow{z}_{\omega_{\ell}}^1=i\overrightarrow{z}_{\omega_{\ell}}^0$,  and
$\mathcal{T}(\overrightarrow{\phi})(t,x)$ is asymptotically orthogonal to the eigenfunctions of all operators $\mathcal{H}_{\omega_{\ell}}.$ The error in the convergence of $\overrightarrow{\psi}(t)$ to \eqref{explicit0} is of order $O(e^{{-}\beta t}) $ for a constant $\beta>0$ depending on $\min_{\ell}v_{\ell}-v_{\ell+1}>0.$ In particular, $\mathcal{T}(\overrightarrow{\phi})(t,x)$ satisfies the decay estimates \eqref{pp1} and \eqref{pp2}, for more information in the properties of see also Theorems \ref{tcont} and \ref{Decesti} in Subsection $1.6.$

\par  

We emphasize that extending the result of \cite{perelmanasym} from two potentials to the general case is \emph{far from trivial}.  To achieve our goals, we prove the asymptotic completeness for one-dimensional charge transfer models, as stated in Theorem \ref{princ}, using a manner which \emph{differs significantly} from Perelman's approach in \cite{perelmanasym}. In particular, our proof does not rely on the residue calculus or the Cauchy integral
formulas which are crucial in \cite{perelmanasym}. Instead, we use the asymptotic completeness for one potential and carefully study the interaction of distorted planar waves given by different potentials. Moreover, our approach applies to models where the operators $\mathcal{H}_{\omega_{\ell}}$ may exhibit \emph{non zero discrete modes} including unstable ones. For further details, see Section \ref{Asc}. 
We also would like to point out that the proofs of asymptotic completeness and the standard pointwise decay in this paper \emph{do not} need the generic assumption, i.e. (H4), in Theorem \ref{Decesti}, which was imposed in \cite{perelmanasym}. Since the scattering theory with the appearance of threshold resonances in the general setting, in particular, the construction of distorted Fourier bases did not appear in other references to the best of our knowledge\footnote{For the special case of the cubic NLS, see \cite{nls3soliton}.}, we provided details on the distorted Fourier bases in general non-generic cases in Section \ref{sec:scatteringone}.
Finally, we should note that many of our linear decay estimates in this paper are sharped for the purpose of nonlinear applications.

\par To conclude the general introduction, we point out that understanding the decay estimates of solutions of \eqref{p} is crucial to the study of the asymptotic stability of multi-solitons for Schr\"odinger models. This approach was taken in the papers \cite{Dispesti} and \cite{nlsstates} by Rodnianski, Soffer, and Schlag in $d=3$ and in \cite{Perelman4} by Perelman to prove the asymptotic stability of multi-solitons for the nonlinear Schr\"odinger equation $d\geq 3$.
For $d=1$, in the paper \cite{perelmanasym}, Perelman proved the asymptotic stability of two fast solitons for a large set of one-dimensional nonlinear Schr\"odinger equations. Also see \cite{CJlinear,CJnonlinear,GCwave,GCwavecmp} in the settings of Klein-Gordon equations and wave equations respectively. With the general estimates we develop in this paper, the asymptotic stability of a general multi-soliton with well-separated speeds and positions for various $1d$ nonlinear Schr\"odinger equations will be reported in a later paper. These estimates will also be used to study the collision of fast-speed solitons. We expect that the study of the scattering solutions results in a good notation of the linear profile asymptotically, which will be crucial to study the long-time behavior in the long-range scattering setting.




\subsection{Organization and notations}
Before moving on to the main results,  we briefly outline the structure of the article and introduce some of the notation used throughout.
\subsubsection{Organization}

\par In \S \ref{subsub:assumption}, we state the hypotheses we imposed for  statements of our main results. In \S \ref{subsubsec:scattering},  we define the scattering space of solutions of \eqref{p}. 
\par In \S\ref{subsubsec:scatnotation}, we introduce the generalized eigenvectors $\mathcal{F}_\omega(x,k),\,\mathcal{G}_\omega(x,k)
\in L^{\infty}_{x}(\mathbb{R},\mathbb{C}^{2})$ of $\mathcal{H}_{\omega}$ with eigenvalue $k^{2}+\omega$ and explain some of their properties. In particular, these two generalized eigenvectors are used to find the orthogonal projection of any function $f\in L^{2}_{x}(\mathbb{R},\mathbb{C}^{2})$ into the range of the essential spectrum of $\mathcal{H}_{\omega}.$ Furthermore, the functions $\mathcal{F}_\omega(x,k),\,\mathcal{G}_\omega(x,k)$ will be important in the proof of the decay estimates in the main Theorem \ref{Decesti} for all solutions of \eqref{p}. For more information about $\mathcal{F}_\omega(x,k),\,\mathcal{G}_\omega(x,k),
$ see Section \ref{sec:scatteringone},  Section $6$ of \cite{KriegerSchlag}, see also \cite{Busper1}.
\par The main results  of this article, Theorems \ref{princ} and \ref{Decesti}, are written in Subsection \ref{Theoprincip}. Before introducing them, we consider in \S \ref{nn} the propositions about the properties of the solutions $\overrightarrow{\phi}$ of \eqref{p} belonging to the scattering space and the discrete space. More precisely, Theorem \ref{tcont} of \S \ref{nn} describes all the solutions of \eqref{p} belonging to the scattering space, while Theorem \ref{tdis} of the same subsection describes all the solutions of \eqref{p} converging to the Galiean transforms of the range of the discrete spectrum projection of $\mathcal{H}_{\omega_{\ell}}$ for any $\ell\in\{1,2,\,...,\,m\}$ when $t$ goes to ${+}\infty.$ In Subsection \ref{sub:sketch}, we illustrate the key ideas to establish the asymptotic completeness.  
\par In Section \ref{sec:scatteringone}, we explain the main information about the scattering theory for the operators $\mathcal{H}_{\omega}$ defined in \eqref{Hww1}. They will be used in the proof of Theorem \ref{princ}. References used for Section \ref{sec:scatteringone} were taken from \cite{Busper1}, \cite{KriegerSchlag}, and \cite{collotger}. Since there is no standard reference for the construction of distorted Fourier transforms in the general cases with threshold resonances, we provide details in this section. 

\par In Section \ref{sec:hardy}, we explain the properties of the Hardy spaces $H^{2}(\mathbb{C}_{\pm}),$ that will be crucial ingredients in the proof of  Theorem \ref{princ}. 
\par
Theorem \ref{princ} will follow from Theorem \ref{TT} in Section \ref{sec:S0}. 
To achieve the goal, it requires the lemmas and propositions in this section
which are proved using the theory of Hardy spaces $H^{2}(\mathbb{C}_{\pm})$ from Section \ref{sec:hardy} and the knowledge of the scattering theory from Section  \ref{sec:scatteringone}.

\par The proofs of the main Theorems \ref{princ} and \ref{Decesti} are given in Section \ref{Asc} and Section \ref{Dispsection} respectively. In Section \ref{asyinfinity}, we prove Theorem \ref{tcont} and Theorem \ref{tdis} about the existence of solutions belonging to the scattering space and discrete space. Some auxiliary details are provided in  Appendix \ref{app}  and  Appendix \ref{sec:appb}.

\subsubsection{Notations}
Throughout this article, in various places, we use $\Diamond$ to denote dummy variables.

As usual, “$A := B$” or “$B =: A$” is the definition of $A$ by means of the expression
$B$.  

We use $\langle \Diamond \rangle:=\sqrt{1+\Diamond^2},$ $p=\begin{bmatrix}
    1 & 0\\
    0 & 0
\end{bmatrix},$ and $q=\begin{bmatrix}
    0 & 0\\
    0 & 1
\end{bmatrix}.$

$\chi_A$ for some set $A$ is always denoted as a smooth indicator function adapted to the set $A$. 

Throughout, we use $u_t=\partial_t u:=\frac{\partial}{\partial_t}u$ and $u_x=\partial_x u:=\frac{\partial}{\partial_x} u$.

For non-negative $X$, $Y$, we write $X\lesssim Y$ if $X \leq C$ ,
and we use the notation $X\ll Y$ to indicate that the implicit constant should be regarded as small.
Furthermore, for nonnegative $X$ and arbitrary $Y$ , we use the shorthand notation $Y = \mathcal{O}(X)$ if
$|Y|\leq C X$. 

\noindent {\it Inner products.}
In terms of the $L^2$ inner product of complex-valued functions, we use
\begin{equation}\label{eq:L2inner}
    \langle f, g\rangle = \int_\mathbb{R} f\overline{g}\, dx.
\end{equation} 
Given two pairs of complex-valued vector functions $\vec{f} = (f_1, f_2)$ and $\vec{g} = (g_1, g_2)$, their  inner product is given by
\begin{equation}\label{eq:L2L2inner}
    \langle \vec{f}, \vec{g}\rangle:=  \int_\mathbb{R} \bigl( f_1 \overline{g_1} + f_2 \overline{g_2} \bigr) \, d x.
\end{equation}

\subsection{Acknowledgement}
 We would like to thank Jonas L\"uhrmann and Wilhelm Schlag for valuable comments and feedback.

\subsection{Main results}
In this subsection, we introduce assumptions on potentials, then state the main results in this paper after introducing basic notations from the scattering theory.

\subsubsection{Assumptions on potentials}\label{subsub:assumption}
Fix a natural number $m$. First, for each $\ell\in\{1,2,\,...,\,m\},$ we consider $U_{\ell},\,W_{\ell}:\mathbb{R}\to\mathbb{R}$ be fast decaying functions and, for $\omega_\ell>0,$ the following operators
\begin{align}\label{eq:H0V}
\mathcal{H}_{0,\omega_\ell}:=\left(\begin{array}{cc}
-\partial_{x x}+\omega_\ell & 0 \\
0 & \partial_{x x}-\omega_\ell
\end{array}\right), V_{\ell}(x)=\left(\begin{array}{cc}
U_{\ell} & -W_{\ell} \\
W_{\ell} & -U_{\ell}
\end{array}\right), \mathcal{H}_{\ell}:=\mathcal{H}_{0,\omega_{\ell}}+V_{\ell}, 
\end{align}
such that  $V_{\ell}$  well as all its derivatives are exponentially decay: for some $0<\gamma<1$
\begin{equation}\label{decV}
\left\|V^{(k)}_{\ell}(x)\right\| \leq C_{k,\ell} e^{-\gamma|x|} \quad \forall k \geq 0,    
\end{equation}
and $V_{\ell}(x)=V_{\ell}({-}x)$ for all $x\in\mathbb{R},$ and $\ell\in\{1,2,\,...,\,N\}.$ 
Using
\begin{equation}\label{eq:sigmas}
\sigma_{3}=
    \begin{bmatrix}
     1     & 0\\
     0 & {-}1
    \end{bmatrix},\, \sigma_{2}=
    \begin{bmatrix}
     0     & 1\\
     {-}1 & 0
    \end{bmatrix},\,
\sigma_{1}= \begin{bmatrix}
     0     & 1\\
     1 & 0
    \end{bmatrix},
\end{equation}    
 it is not difficult to verify the following identities

$$
\sigma_3 \mathcal{H}^{*}_{\ell} \sigma_3=\mathcal{H}_{\ell}, \sigma_1 \mathcal{H}_{\ell} \sigma_1=-\mathcal{H}_{\ell}.
$$
\par Next, we consider the following hypotheses for our main results on the asymptotic completeness.
\begin{itemize}
    \item [(H1)] There is no embedded eigenvalue in the essential spectrum of each operator $\mathcal{H}_{\ell}.$
    \item [(H2)] Each operator $\mathcal{H}_{\ell}$ has  $2N_{\ell}$  non-zero simple eigenvalues on the real line with absolute value less than $\omega_\ell$, and $2M_{\ell}$  non-zero simple eigenvalues on the imaginary line for some $N_{\ell},\,M_{\ell}\in\mathbb{N}.$
    \item [(H3)] $\ker \mathcal{H}_{\ell}^{n}=\ker\mathcal{H}_{\ell}^{2}$ for all $\ell,$ $\omega>0$ and every $n\geq 2.$ 
   
\end{itemize}

The three conditions above will be   standing assumptions throughout this paper 
and we will not mention it further.

We also record the notations
\begin{equation}\label{Pd}\tag{Discrete spectrum space}
P_{d,\omega_{\ell}}=
\text{ projection onto the discrete spectrum of $\mathcal{H}_{\ell},$}
\end{equation}
\begin{equation}\label{Pe}\tag{Essential spectrum space}
P_{e,\omega_{\ell}}=
\text{ projection onto the essential spectrum of $\mathcal{H}_{\ell}.$}
\end{equation}

\subsubsection{Scattering space}\label{subsubsec:scattering}
 Next, we consider the scattering space which is going to be useful in the statement of the main results.

We first introduce the indispensable tool in the study of the charge transfer model, Galilei transformations. 
\begin{definition}[Galilei Transformation]
Considering $g:\mathbb{R}\to\mathbb{C}^{2},$ the function $\mathfrak{g}_{\omega_{k},v_{k},y_{k}}(g)(t,x)$ is defined by
\begin{equation}\label{eq:Gali}
\mathfrak{g}_{\omega_{k},v_{k},y_{k},\gamma_{k}}(g)(t,x)=e^{i\sigma_{3}(\frac{v_{k}x}{2}-\frac{tv^{2}}{4}+\gamma_{k}+\omega_{k}t)}g\left(x-v_{k}t-y_{k}\right).
\end{equation}
\end{definition} 
\begin{definition}[Scattering space]\label{def:scatter}
A solution $\overrightarrow{\psi}(t,x)$ of \eqref{p} is in the scattering space if it satisfies for any $\ell\in\{1,2\,...,\,m\}$ and for any function $$h_{\ell}(x)\in \kerrr \mathcal{H}_{\ell}^{2}\bigcup\left(\bigcup_{\lambda_{\ell,k}\in \sigma_d(\mathcal{H}_{\ell}),\lambda_{\ell,k}\neq0} \kerrr[\mathcal{H}_{\ell}-\lambda_{\ell,k}\mathrm{Id}]\right ),$$ the following property
\begin{equation}\label{asyorth}
    \lim_{t\to{+}\infty}\left\langle \overrightarrow{\psi}(t,x),\sigma_{3}\mathfrak{g}_{\omega_{\ell},v_{\ell},y_{\ell},\gamma_{\ell}}(h_{\ell})(t,x)\right\rangle =0.
\end{equation}
Clearly, the scattering space defined above forms a subspace.  One can find the map $P_{c}:\mathbb{R}_{\geq 0}\times L^{2}_{x}(\mathbb{R},\mathbb{C}^{2})\to L^{2}_{x}(\mathbb{R},\mathbb{C}^{2})$ to be the unique projection satisfying
\begin{align*}
P_{c}(t,f(x))=\overrightarrow{\psi}(t) \text{, for a $\overrightarrow{\psi}(t)$ satisfying \eqref{asyorth},}\\
\left\langle f(x)-P_{c}(t,f(x)),\sigma_{3} \vec{\phi}(t)  \right\rangle =0 \text{, for any $\vec{\phi}(t)$ satisfying \eqref{asyorth}.}
\end{align*}

\end{definition}
\begin{remark}
Indeed, as shown by the main results of this paper,  Theorem \ref{princ} and Theorem \ref{tcont}, will imply the existence of a unique projection $P_{c}(t)$ satisfying the definition, see Theorem \ref{princ} and \eqref{eq:Pc}. Furthermore, to simplify the reading of the manuscript, we are going to use $P_{c}(t)f(t)$ to denote $P_{c}(t,f(t,x))$ for any function $f(t,x)\in L^{2}_{x}(\mathbb{R},\mathbb{C}^{2}).$ 


\end{remark}

\subsubsection{Basic notations from  the scattering theory}\label{subsubsec:scatnotation}
Before formulating our main results, we need some notations from the scattering theory. First, we recall the operator $\mathcal{H}_{\omega}$ which is defined on $H^{2}(\mathbb{R},\mathbb{C}^{2})$ by
\begin{align}\label{H}
\mathcal{H}_{\omega}\coloneqq {-}\left[\partial^{2}_{x}-\omega\right]
    \sigma_{3}+  V(x)
,
\end{align}
where
\begin{equation*}
   V(x)=\begin{bmatrix}
    U(x) & W(x)\\
    {-}W(x) & {-}U(x) 
    \end{bmatrix},\, \sigma_{3}=
    \begin{bmatrix}
     1     & 0\\
     0 & {-}1
    \end{bmatrix},
\end{equation*}
and $V(x)$ satisfies the condition \eqref{decV}.
Under the assumptions from \eqref{eq:H0V}, for any real number $k\neq0$, there exist bounded functions $\mathcal{G}_\omega(x,{-}k),\,\mathcal{F}_\omega(x,k)$ such that
\begin{align}\label{BB}
    \mathcal{H}_{\omega}\mathcal{G}_\omega(x,{-}k)=(k^{2}+\omega)\mathcal{G}_\omega(x,{-}k),\, \mathcal{H}_{\omega}\mathcal{F}_\omega(x,k)=(k^{2}+\omega)\mathcal{F}_\omega(x,k),
\end{align}
and $\mathcal{G}_\omega,\,\mathcal{F}_\omega$ have the following asymptotics: for some $\gamma>0$
\begin{align}\label{asy1}
    \mathcal{G}_\omega(x,{-}k)=&\overline{s(k)}\left[e^{ikx}
    \begin{bmatrix}
        1\\
        0
    \end{bmatrix}+O\left(\frac{e^{\gamma x}}{(1+\vert k \vert)}\right)\right] \text{, as $x\to {-}\infty,$}\\ \label{asy2}
    \mathcal{G}_\omega(x,{-}k)=&e^{ikx}
    \begin{bmatrix}
        1\\
        0
    \end{bmatrix}+\overline{r(k)}e^{{-}ikx}
    \begin{bmatrix}
        1\\
        0
    \end{bmatrix}+O\left(\frac{e^{{-}\gamma x}}{(1+\vert k \vert)}\right)
    \text{, as $x\to {+}\infty,$}\\ \label{asy3}
    \mathcal{F}_\omega(x,k)=& s(k)\left[e^{ikx}
    \begin{bmatrix}
        1\\
        0
    \end{bmatrix}+O\left(\frac{e^{{-}\gamma x}}{(1+\vert k \vert)}\right)\right] \text{, as $x\to {+}\infty,$}\\ \label{asy4}
    \mathcal{F}_\omega(x,k)=&e^{ikx}
    \begin{bmatrix}
        1\\
        0
    \end{bmatrix}+r(k)e^{{-}ikx}
    \begin{bmatrix}
        1\\
        0
    \end{bmatrix}+O\left(\frac{e^{\gamma x}}{(1+\vert k \vert)}\right)
    \text{, as $x\to {-}\infty,$}
\end{align}
such that the functions $s,\,r$ are smooth and satisfy for $k\in\mathbb{C}$ with $\vert\I k\vert\leq \delta$ for a constant $\delta>0$ 
\begin{align}\nonumber
s({-}k)=&\overline{s(k)},\,r({-}k)=\overline{r(k)},\, 
\vert s(k)\vert^{2}+\vert r(k) \vert^{2}=1,\,s(k)\overline{r}(k)+r(k)\overline{s}(k)=0,\\
 \nonumber
s(k)=&1+O\left(\frac{1}{1+\vert k\vert}\right),\,
r(k)=O\left(\frac{1}{1+\vert k\vert}\right) \text{, and} \\
\label{asyreftr}
    \frac{d^{m}}{dk^{m}}s(k)=&\frac{d^{m}}{dk^{m}}(1)+O\left(\vert k \vert^{{-}m+1}\right),\, \frac{d^{m}}{dk^{m}}r(k)=O\left(\vert k \vert^{{-}m-1}\right) \text{, when $k\in\mathbb{R},$ as $k\to{+}\infty,\,m\in\mathbb{N}\cup\{0\}$.}
\end{align}
See for example \cite{Busper1} or \cite{KriegerSchlag} for detailed proofs of the claims above. Also, see Lemma \ref{2.1} for the existence of these solutions when threshold resonances appear.
Moreover, using the asymptotic behavior in \eqref{asy1}, \eqref{asy2} and \eqref{asy3}, we obtain that
\begin{equation}\label{forp1}
    \mathcal{F}_\omega(x,k)=\frac{1}{\overline{s(k)}}\mathcal{G}_\omega(x,{-}k)-\frac{\overline{r(k)}}{\overline{s(k)}}\mathcal{G}_\omega(x,k).
\end{equation}
See also \cite{Busper1} and \cite{KriegerSchlag} for more information about the scattering theory of the operators $\mathcal{H}_{\omega}.$
\begin{definition}\label{def00}
For $\vec{u}(k)\in\mathcal{S} \subset L^{2}(\mathbb{R},\mathbb{C}^{2}),$ we define the following linear operators 
\begin{align}\label{DisF}
    \hat{F}_{\omega}\left(\overrightarrow{u}\right)(x)\coloneqq &\frac{1}{\sqrt{2\pi}}\int_{\mathbb{R}}\frac{1}{s(k)}\left[\mathcal{F}_\omega(x,k)\,\,\sigma_{1} \mathcal{F}_\omega(x,k)\right]\overrightarrow{u}(k)\,dk\in L^{2}(\mathbb{R},\mathbb{C}^{2}),\\ \label{DisG}
    \hat{G}_{\omega}\left(\overrightarrow{u}\right)(x)\coloneqq &\frac{1}{\sqrt{2\pi}}\int_{\mathbb{R}}\frac{1}{s({-}k)}\left[\mathcal{G}_\omega(x,{-}k)\,\,\sigma_{1} \mathcal{G}_\omega(x,{-}k)\right]\overrightarrow{u}(k)\,dk \in L^{2}(\mathbb{R},\mathbb{C}^{2}),\\ \nonumber
    F_{0}(\overrightarrow{u})(x)=&\frac{1}{\sqrt{2\pi}}\int_{\mathbb{R}}e^{ikx}\overrightarrow{u}(k)\,dk \in L^{2}(\mathbb{R},\mathbb{C}^{2}).
\end{align}
\end{definition}

\subsubsection{Solutions in the scattering space and discrete space}\label{nn}
First, for $\sigma_{\ell}=(\omega_{\ell},v_{\ell},y_{\ell},\gamma_{\ell}),$ we consider

\begin{align}\label{H2}
    V^{\sigma_{\ell}}_{\ell}(t)\coloneqq & \left(\begin{array}{cc}
U_\ell(x-v_\ell t-y_{\ell}) & -e^{2i \theta_\ell(t, x)} W_\ell(x-v_\ell t-y_{\ell}) \\
e^{-2i \theta_\ell(t, x)} W_\ell(x-v_\ell t-y_{\ell}) & -U_\ell(x-v_\ell t-y_{\ell})
\end{array}\right),
\end{align}

such that
\begin{equation}\label{theta}
    \theta_{\ell}(t,x)=\frac{v_{\ell}x}{2}-\frac{v_{\ell}^{2}t}{4}+\omega_{\ell}t+\gamma_{\ell}.
\end{equation}
Using notations from \eqref{eq:H0V}, \eqref{eq:sigmas}, assuming (H1), (H2) and (H3), and given $v_{1}>v_{2}>...>v_{m},$ and $y_{1}>y_{2}>...>y_{m},
$ we consider the initial value problem associated with the following charge transfer model
\begin{align}\label{ldpe} \tag{IVP}
   i\partial_{t}\overrightarrow{\psi}(t)+\sigma_{3}\partial^{2}_{x}\overrightarrow{\psi}(t)+\sum_{\ell=1}^mV^{\sigma_{\ell}}_{\ell}(t)\overrightarrow{\psi}(t)=0,\\ \nonumber
   \overrightarrow{\psi}(0)\in L^{2}_{x}\left(\mathbb{R},\mathbb{C}^{2}\right).
\end{align}
We first construct 
the following approximate solution 
of \eqref{ldpe}.\footnote{In \cite{perelmanasym}, an approximate solution was constructed for the case $m=2$. In this case, because of special symmetries of two potentials, the construction is much simpler. }
\begin{definition}\label{s0def}
Given $v_{1}>v_{2}>...>v_{m},\,\delta y_{\ell}=y_{\ell-1}-y_{\ell}\gg 1,$ for  any given
\begin{equation*}
\vec{\phi}(k)= 
    \begin{bmatrix}
     \phi_{1}(k)\\
     \phi_{2}(k)
    \end{bmatrix}\in L^{2} (\mathbb{R},\mathbb{C}^{2}),
\end{equation*}
using notations from Definition \ref{def00}, we define the following formula
\begin{align}\label{eq:Sphi}
   \mathcal{S}(\vec{\phi})(t,x):= & \sum_{\ell=1}^{m}e^{i\left(\frac{v_{\ell}x}{2}-\frac{v_{\ell}^{2}t}{4}+\omega_{\ell}t+\gamma_{\ell}\right)\sigma_{3}}\hat{G}_{\omega_{\ell}}\left(
   e^{{-}it(k^{2}+\omega_{\ell})\sigma_{3}}e^{{-}i\gamma_{\ell}\sigma_{3}} \begin{bmatrix}
       e^{iy_{\ell}k}\phi_{1,\ell}\left(k+\frac{v_{\ell}}{2}\right)\\
       e^{iy_{\ell}k}\phi_{2,\ell}\left(k-\frac{v_{\ell}}{2}\right)
    \end{bmatrix}\right)(x-y_{\ell}-v_{\ell}t)\\
    & {-}\frac{1}{\sqrt{2 \pi}}\int_{\mathbb{R}}e^{{-}it k^{2}\sigma_{3}}
    \begin{bmatrix}
       \varphi_{1}(k)\\
       \varphi_{2}(k)
    \end{bmatrix} e^{ikx}\,dk,\nonumber
\end{align}
where the sequence $\{\overrightarrow{\phi_{\ell}}\}_{\ell=1}^m$ and $\vec{\varphi}$ are constructed recursively from $\vec{\phi}(k)$ via the following conditions
\begin{enumerate}
    \item [a)] $\begin{bmatrix}
        \phi_{1,1}(k)\\
        \phi_{2,1}(k)
    \end{bmatrix}=\vec{\phi}(k); $
    \item [b)] for each $\ell\geq 1,$
    \begin{equation*}
      e^{{-}i\gamma_{\ell+1}\sigma_{3}} \begin{bmatrix}
           \phi_{1,\ell+1}(k)\\
           \phi_{2,\ell+1}(k)
       \end{bmatrix}
        =e^{{-}i\gamma_{\ell}\sigma_{3}}
        \begin{bmatrix}
        \frac{\phi_{1,\ell}\left(k\right)-r_{\omega_{\ell}}\left(k-\frac{v_{\ell}}{2}\right)e^{{-}i2y_{\ell}(k-\frac{v_{\ell+1}}{2})+iy_{\ell}(v_{\ell}-v_{1+\ell})}\phi_{1,\ell}\left({-}k+v_{\ell}\right)}{s_{\omega_{\ell}}\left(k-\frac{v_{\ell}}{2}\right)}\\
        \frac{\phi_{2,\ell}\left(k\right)-r_{\omega_{\ell}}\left(k+\frac{v_{\ell}}{2}\right)e^{{-}2iy_{\ell}(k+\frac{v_{\ell+1}}{2})+iy_{\ell}(v_{\ell+1}-v_{\ell})}\phi_{2,\ell}\left({-}k-v_{\ell}\right)}{s_{\omega_{\ell}}\left(k+\frac{v_{\ell}}{2}\right)}
    \end{bmatrix};
    \end{equation*}
    \item [c)] and
    \begin{equation*}
        \begin{bmatrix}
          \varphi_{1}(k)\\
          \varphi_{2}(k)
        \end{bmatrix}=\sum_{\ell=1}^{m-1}\begin{bmatrix}
           \phi_{1,\ell}(k)\\
           \phi_{2,\ell}(k)
       \end{bmatrix}.
    \end{equation*}

\end{enumerate} 
 For the convenience of notations, we use $S(t)$ to denote
\begin{equation}\label{eq:St}
    \mathcal{S}(t)\vec{\phi}:= \mathcal{S}(\vec{\phi})(t,x).
\end{equation}

\end{definition}
 
\begin{remark}\label{transition}
Note that using the asymptotics \eqref{asy1}, \eqref{asy2}, \eqref{asy3}, \eqref{asy4}, it is not difficult to verify for any $\ell>1$ that by condition b) above,
 \begin{align*}
   e^{\frac{v_{\ell}x}{2}\sigma_{3}}\hat{G}_{\omega_{\ell}}\left(\begin{bmatrix}
       e^{iy_{\ell}k}\phi_{1,\ell}(k+\frac{v_{\ell}}{2})\\
       e^{iy_{\ell}k}\phi_{2,\ell}(k-\frac{v_{\ell}}{2})
   \end{bmatrix}\right)(x-y_{\ell})=e^{\frac{v_{\ell}x}{2}\sigma_{3}}\hat{F}_{\omega_{\ell}}\left(\begin{bmatrix}
       e^{iy_{\ell}k}\phi_{1,\ell-1}(k+\frac{v_{\ell}}{2})\\
       e^{iy_{\ell}k}\phi_{2,\ell-1}(k-\frac{v_{\ell}}{2})
   \end{bmatrix}\right)(x-y_{\ell}).
 \end{align*}

 \end{remark}
\begin{remark}\label{rem:moti}
The motivation to obtain the formula \eqref{eq:Sphi} follows from the construction of an approximate solution of \eqref{ldpe}. Naturally, a good candidate for an approximate solution is
\begin{align}\label{appfor}
\overrightarrow{\psi}_{app}(t)&=e^{it\sigma_{3}\partial^{2}_{x}}\left(\begin{bmatrix}
    h_{1}(x)\\
    h_{2}(x)
\end{bmatrix}\right)\\
&+\sum_{\ell=1}^{m} e^{i\left(\frac{v_{\ell}x}{2}-\frac{v_{\ell}^{2}t}{4}+\omega_{\ell}t+\gamma_{\ell}\right)\sigma_{3}}\hat{G}_{\omega_{\ell}}\left(e^{{-}it(k^{2}+\omega_{\ell})\sigma_{3}}\begin{bmatrix}
        f_{1,\ell}(k)\\
        f_{2,\ell}(k)
    \end{bmatrix}\right)(x-y_{\ell}-v_{\ell}t),
\end{align}
for some choices of 
\begin{equation}\label{eq:candi}
    \begin{bmatrix}
        f_{1,\ell}(k)\\
        f_{2,\ell}(k)
    \end{bmatrix},\,\begin{bmatrix}
        h_{1}(x)\\
        h_{2}(x)
    \end{bmatrix} 
\end{equation}
since each term of \eqref{appfor} is a solution of one of the following linear partial differential equations
\begin{equation*}
    i\partial_{t}\overrightarrow{\psi}+\sigma_{3}\partial^{2}_{x}\overrightarrow{\psi}=0\, \text { or }\, i\partial_{t}\overrightarrow{\psi}+\sigma_{3}\partial^{2}_{x}\overrightarrow{\psi}+V^{\sigma_{\ell}}_{\ell}(t)\overrightarrow{\psi}=0.
\end{equation*}
In particular, we have the following identity{\footnotesize
\begin{multline*}
  i\partial_{t}\overrightarrow{\psi}_{app}(t)+\sigma_{3}\partial^{2}_{x}\overrightarrow{\psi}_{app}(t)+\sum_{\ell}V^{\sigma_{\ell}}_{\ell}(t)\overrightarrow{\psi}_{app}(t)\\
=\sum_{\ell}V^{\sigma_{\ell}}_{\ell}(t)\left[e^{it\sigma_{3}\partial^{2}_{x}}\left(\begin{bmatrix}
    h_{1}(x)\\
    h_{2}(x)
\end{bmatrix}\right)+\sum_{n\neq \ell}e^{i\left(\frac{v_{n}x}{2}-\frac{v_{n}^{2}t}{4}+\omega_{n}t+\gamma_{n}\right)\sigma_{3}}\hat{G}_{\omega_{n}}\left(e^{{-}it(k^{2}+\omega_{n})\sigma_{3}}\begin{bmatrix}
        f_{1,n}(k)\\
        f_{2,n}(k)
    \end{bmatrix}\right)(x-y_{n}-v_{n}t)\right].  
\end{multline*}}
Notice that using the asymptotics \eqref{asy1}, \eqref{asy2}, \eqref{asy3}, \eqref{asy4} the right-hand side of the equation will consist of a system of linear combinations of planar waves and collections of terms which decays exponentially in terms of the distance among different potentials.  To ensure the exponential decay in time on the right-hand side, we have to cancel out those planar waves. 
%
%
This cancellation condition can be ensured 
if the functions in \eqref{eq:candi}
satisfy the condition $b)$ and $c)$ of Definition \eqref{s0def} respectively. Also see Lemma \ref{localS0}. Under these conditions,  it follows that{\footnotesize
\begin{equation*}
   \norm{i\partial_{t}\overrightarrow{\psi}_{app}(t)+\sigma_{3}\partial^{2}_{x}\overrightarrow{\psi}_{app}(t)+\sum_{\ell}V^{\sigma_{\ell}}_{\ell}(t)\overrightarrow{\psi}_{app}(t)}_{H^{1}_{x}(\mathbb{R})}\leq C e^{{-}\beta (\min_{\ell}y_{\ell}-y_{\ell+1}+(v_{\ell}-v_{\ell+1})t)}\max_{n} \norm{\begin{bmatrix}
       f_{1,n}(k)\\
       f_{2,n}(k)
   \end{bmatrix}}_{L^{2}_{k}(\mathbb{R})},
\end{equation*}}
which indeed confirms that $\overrightarrow{\psi}_{app}$ is a good approximate solution.
\end{remark}
The notation $\mathcal{S}(\vec{\phi})(t,x)$ will be a crucial object in this paper. As indicated in Remark \ref{rem:moti}, this will play the role of an approximate solution to \eqref{ldpe}. In particular, the function $\mathcal{S}(\vec{\phi})(t,x)$ can be imagined as the `free' evolution for our problem. We will then construct a solution to \eqref{ldpe} that will asymptotically behave like $\mathcal{S}(\vec{\phi})(t,x)$.  In fact, we can find solutions with more general prescribed asymptotic behavior. 

\par In Section \ref{asyinfinity}, we will prove the following two statements concerning the asymptotic behavior of solutions of \eqref{ldpe}. 


\begin{theorem}[Solutions in the scattering space]\label{tcont}
Let $\overrightarrow{\phi_{0}}\in L^{2}_{k}(\mathbb{R},\mathbb{C})$ be any element in the domain of $\mathcal{S}(0).$ There exist positive constants $C>1,\,L>1\,K,\beta>0$ such that if  $$\min_{\ell}y_{\ell}-y_{\ell+1}>L\quad\text{and}\quad\min_{\ell}v_{\ell}-v_{\ell+1}>C,$$ then there is a unique solution $\vec{\psi}(t)$ of \eqref{ldpe}
\begin{equation}\label{eq:defcalT}
    \vec{\psi}(t,x)=:\mathcal{T}(\vec{\phi}_0)(t,x)\quad\text{or}\quad  \vec{\psi}(t)=:\mathcal{T}(t)\left(\overrightarrow{\phi_{0}}\right).
\end{equation}
satisfying for all $t\geq 0$
\begin{equation}\label{h1linft}
 \norm{\vec{\psi}(t)-\mathcal{S}(\overrightarrow{\phi_{0}})(t,x)}_{H^{1}_{x}(\mathbb{R})}\leq K e^{{-}\beta [(\min_{\ell}y_{\ell}-y_{\ell+1})+(\min_{\ell}v_{\ell}-v_{\ell+1}) t]} \norm{\mathcal{S}(\overrightarrow{\phi_{0}})(0)}_{L^{2}_{x}(\mathbb{R})} 
\end{equation}
where the parameters $K,\,T,\,\beta,\,C$ depend only on the potentials $V_{\omega_{1}},\,...,\,V_{\omega_{m}}.$ In particular, 
$\vec{\psi}(t)$ is in the scattering space in the sense of Definition \ref{def:scatter} and satisfies \eqref{asyorth}.
 
Furthermore, if the inequality $\max_{\ell}\norm{k^{2}(\phi_{1,\ell},\phi_{2,\ell})}_{L^{2}_{k}(\mathbb{R})}<{+}\infty$ is true, then there exists a constant $K_{2}>1$ satisfying for all $t\geq 0$ 
\begin{multline}\label{tcontest}
   \max_{j\in\{1,2\},\ell} \norm{(x-y_{\ell}-v_{\ell}t)^{j}\left( \vec{\psi}(t,x)-\mathcal{S}(\overrightarrow{\phi_{0}})(t,x)\right)}_{H^{1}_{x}(\mathbb{R})}\\
   \leq  K_{2} \max_{\ell}(y_{\ell}-y_{\ell+1}+(v_{\ell}-v_{\ell+1})t)^{2}e^{{-}\beta \min_{\ell}[y_{\ell}-y_{\ell+1}+(v_{\ell}-v_{\ell+1}) t]} \max_{j\in\{1,2\},\ell}\norm{\langle k \rangle^{j}(\phi_{1,\ell}(k),\phi_{2,\ell}(k))}_{L^{2}_{k}(\mathbb{R})}.
    \end{multline} 
\end{theorem}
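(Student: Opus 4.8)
\textbf{Proof proposal for Theorem \ref{tcont}.}

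The plan is to construct the solution $\vec\psi(t)$ as a perturbation of the approximate solution $\mathcal{S}(\vec\phi_0)(t,x)$ via a fixed-point argument run backward from $t=+\infty$, and then upgrade the basic $H^1$ estimate to the weighted estimate \eqref{tcontest} by commuting the weights $(x-y_\ell-v_\ell t)^j$ through the evolution. First I would set $\vec\psi = \mathcal{S}(\vec\phi_0) + \vec w$ and derive the equation for the error $\vec w$: since each summand of $\mathcal{S}(\vec\phi_0)$ solves one of the model equations in Remark \ref{rem:moti}, the source term $\vec F(t) := i\partial_t \mathcal{S}(\vec\phi_0) + \sigma_3 \partial_x^2 \mathcal{S}(\vec\phi_0) + \sum_\ell V_\ell^{\sigma_\ell}(t)\mathcal{S}(\vec\phi_0)$ consists, after invoking the asymptotics \eqref{asy1}--\eqref{asy4} and the cancellation conditions b), c) built into Definition \ref{s0def}, purely of terms exponentially small in the inter-potential separation, yielding the bound $\|\vec F(t)\|_{H^1_x} \lesssim e^{-\beta[(\min_\ell y_\ell - y_{\ell+1}) + (\min_\ell v_\ell - v_{\ell+1})t]}\|\mathcal{S}(\vec\phi_0)(0)\|_{L^2}$ as indicated in Remark \ref{rem:moti} and Lemma \ref{localS0}. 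Then $\vec w$ solves $i\partial_t \vec w + \sigma_3\partial_x^2 \vec w + \sum_\ell V_\ell^{\sigma_\ell}(t)\vec w = -\vec F(t)$, and I would define $\vec w$ through the Duhamel formula with data at $+\infty$, i.e. $\vec w(t) = i\int_t^\infty \mathcal{U}(t,\tau)\vec F(\tau)\,d\tau$, where $\mathcal{U}(t,\tau)$ is the propagator of the full charge transfer operator $\sigma_3\partial_x^2 + \sum_\ell V_\ell^{\sigma_\ell}$.

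The fixed-point step requires that $\mathcal{U}(t,\tau)$ be bounded on $H^1_x$ uniformly (or with at most polynomial growth) in $t,\tau$; this follows from the standard energy estimate for \eqref{ldpe}, since the potentials and their first derivatives are uniformly bounded in time (the $e^{\pm 2i\theta_\ell}$ factors are unimodular and contribute bounded terms after one $x$-derivative by \eqref{decV}). Granting $\|\mathcal{U}(t,\tau)\|_{H^1\to H^1} \lesssim \langle t-\tau\rangle^N$ for some fixed $N$ — or even just $\lesssim 1$ if one works with the conserved-in-modulus structure — the integral $\int_t^\infty \langle t-\tau\rangle^N e^{-\beta(\cdots+\,(\min_\ell v_\ell-v_{\ell+1})\tau)}\,d\tau$ converges and is bounded by a constant times $e^{-\beta[(\min_\ell y_\ell-y_{\ell+1}) + (\min_\ell v_\ell-v_{\ell+1})t]}$, provided $\min_\ell v_\ell - v_{\ell+1}$ is large enough to beat the polynomial factor; this is exactly where the hypothesis $\min_\ell v_\ell - v_{\ell+1} > C$ enters. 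A contraction mapping on the Banach space of continuous $H^1_x$-valued functions with the weight $e^{\beta[(\min_\ell y_\ell-y_{\ell+1}) + (\min_\ell v_\ell - v_{\ell+1})t]}$ then produces a unique $\vec w$ satisfying \eqref{h1linft}, and the resulting $\vec\psi = \mathcal{S}(\vec\phi_0) + \vec w$ is the desired solution. Uniqueness within the class satisfying \eqref{h1linft} follows because the difference of two such solutions solves the homogeneous equation and decays to $0$ in $H^1_x$ as $t\to\infty$, forcing it to vanish by the energy estimate run forward. That $\vec\psi$ lies in the scattering space is then checked directly: $\mathcal{S}(\vec\phi_0)(t,x)$ is a superposition of distorted free waves whose $L^2$ pairing against any $\sigma_3 \mathfrak{g}_{\omega_\ell,\ldots}(h_\ell)(t,x)$ with $h_\ell$ a discrete mode tends to $0$ — the distorted Fourier supports are on the essential spectrum, and the oscillatory phases $e^{-i\lambda t}$ versus the dispersive spreading force decay — while $\vec w$ contributes $O(e^{-\beta t})$, so \eqref{asyorth} holds.

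For the weighted bound \eqref{tcontest}, the strategy is to apply the operators $x_\ell^j := (x - y_\ell - v_\ell t)^j$, $j=1,2$, to $\vec w$ and track how they interact with the Duhamel representation. Using the Galilei-boosted variable adapted to the $\ell$-th potential, one has the commutator identity for the free Schrödinger piece that $x_\ell$ maps the evolution to itself plus a term involving $t\partial_x$ (the standard vector field $x + 2it\partial_x$ for Schrödinger, suitably boosted); iterating gives the $(y_\ell - y_{\ell+1} + (v_\ell-v_{\ell+1})t)^2$ polynomial prefactor. The source $\vec F$ with two powers of $x_\ell$ is controlled by $\max_{j,\ell}\|\langle k\rangle^j(\phi_{1,\ell},\phi_{2,\ell})\|_{L^2_k}$, because differentiating the distorted Fourier multipliers in Definition \ref{s0def} twice in $k$ costs, by \eqref{asyreftr}, only bounded factors, and the weight $x_\ell$ corresponds under $\hat G_{\omega_\ell}$ to a $\partial_k$ acting on the profile (up to lower-order reflection terms handled by the decay of $r_\omega$); the hypothesis $\max_\ell\|k^2(\phi_{1,\ell},\phi_{2,\ell})\|_{L^2_k}<\infty$ is precisely what makes these $k$-derivatives square-integrable. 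Feeding this into the same contraction-type estimate, now in the weighted space, and absorbing the extra polynomial growth into the exponential gain (again using $\min_\ell v_\ell - v_{\ell+1}$ large), yields \eqref{tcontest}.

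The main obstacle I anticipate is controlling the weighted quantities through the full propagator $\mathcal{U}(t,\tau)$ rather than the free one: the weight $x_\ell = x - y_\ell - v_\ell t$ is adapted to a single potential, and when it passes through $\sum_n V_n^{\sigma_n}(t)$ the cross terms $x_\ell V_n^{\sigma_n}$ for $n \neq \ell$ are only bounded (not small) unless one exploits that $V_n$ is supported, essentially, near $x = y_n + v_n t$, so $x_\ell$ restricted there is of size $\sim |y_\ell - y_n + (v_\ell - v_n)t|$ — which reintroduces exactly the polynomial growth appearing in \eqref{tcontest}, but must be tracked carefully together with the exponential decay of $V_n$ to close. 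Making this commutator bookkeeping precise, uniformly in all $\ell$ and in $t$, while keeping the constants dependent only on the potentials and the separation parameters, is the technical heart of the argument; everything else is a fairly standard perturbative construction of a solution with prescribed asymptotics.
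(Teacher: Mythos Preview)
Your overall scaffolding---write $\vec\psi=\mathcal{S}(\vec\phi_0)+\vec w$, observe that the cancellation conditions in Definition~\ref{s0def} force the source $\vec F$ to be exponentially small in the separation, and then solve for $\vec w$ with data at $+\infty$---is exactly the paper's strategy (Subsection~\ref{solscont}). The weighted estimate is likewise obtained by commuting powers of $x-y_\ell-v_\ell t$ through the equation, as in Remark~\ref{weightedremainderedecay}.

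The genuine gap is in how you solve for $\vec w$. Your Duhamel argument rests on the claim that $\|\mathcal{U}(t,\tau)\|_{H^1\to H^1}\lesssim\langle t-\tau\rangle^N$, justified by a ``standard energy estimate''. This is false here: the matrix operator $\sigma_3\partial_x^2+\sum_\ell V_\ell^{\sigma_\ell}$ is not skew-adjoint on $L^2$, only $\sigma_3$-skew-adjoint, so the conserved quantity $\langle\sigma_3\psi,\psi\rangle$ is indefinite and gives no $L^2$ control. Hypothesis~(H2) explicitly allows eigenvalues on the imaginary axis, which produce genuinely exponentially growing modes in $\mathcal{U}(t,\tau)$; even without those, the Jordan block in $\ker\mathcal{H}_{\omega_\ell}^2$ already gives linear growth. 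A crude Gronwall bound yields only $\|\mathcal{U}(t,\tau)\|\lesssim e^{C_0|t-\tau|}$ with $C_0\sim\sum_\ell\|V_\ell\|_{L^\infty}$, and closing the Duhamel integral would then require the velocity separation to beat $C_0$, not merely the unstable-mode rate---a much stronger hypothesis than the theorem asserts.

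The paper avoids the full propagator entirely. Its Lemma~\ref{edl} proves the a~priori estimate on $\vec w$ by an energy method: it builds a localized functional $\sum_{\ell,n}H_{\ell,n}(r)$ (essentially $\sum_\ell\langle\mathcal{H}_{\omega_\ell}r_\ell,\sigma_3 r_\ell\rangle$ in the $\ell$-th Galilei frame, cut off near the $\ell$-th potential) which is coercive on $H^1$ \emph{modulo} the finitely many discrete-mode projections $\langle r,\sigma_3\mathfrak{g}_{\omega_\ell,\ldots}(z_{\ell,n})\rangle$. The time derivative of this functional is controlled by \eqref{energyesti1}, and the discrete projections are handled separately by ODEs integrated from $t$ to $+\infty$, using that $r\in H^1_{\beta_1}$ forces them to decay. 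This is why only the condition $\min(\beta,\min_\ell v_\ell-v_{\ell+1})>\max_{\ell,\lambda\in\sigma_d(\mathcal{H}_{\omega_\ell})}|\mathrm{Im}\,\lambda|$ is needed, rather than beating the full potential norm. Your argument needs this lemma (or an equivalent mechanism for isolating the unstable directions) in place of the propagator bound.
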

The proof of Theorem \ref{tcont} is written in Subsection \ref{solscont}.

\begin{remark}
As can be seen from the  results above, \eqref{eq:Sphi} serves as a good approximation of the linear flow of \eqref{ldpe}. The approximation actually holds in more general settings. 
Indeed, for any set of speeds $\{v_{1},\,...,\,v_{m}\}$ satisfying $\min v_{\ell}-v_{\ell+1}>0,$ there exists a  solution of \eqref{ldpe} $ \vec{\psi}(t,x)=\mathcal{T}(t,x)(\overrightarrow{\phi_{0}})$ satisfying  \text{for all $t\geq 0,$} 
\begin{equation*}
 \norm{ \vec{\psi}(t,x)-\mathcal{S}(\overrightarrow{\phi_{0}})(t,x)}_{L^{2}_{x}(\mathbb{R})}\leq K e^{{-}\beta[(\min_{\ell}y_{\ell}-y_{\ell+1})+ (\min_{\ell}v_{\ell}-v_{\ell+1})t]} \max_{\ell}\norm{(\phi_{1,\ell},\phi_{2,\ell})}_{L^{2}_{x}(\mathbb{R})}.   
\end{equation*}
See Subsection \ref{solscont} for the proof of the estimate above.
\end{remark}

\begin{remark}
    We also comment that since \eqref{eq:Sphi} is explicit and it is a good approximation of the linear flow of the charge transfer model \eqref{ldpe} from the estimates above , it is a natural replacement of the exact linear flow to \eqref{ldpe} so that we can set up a notation of the profile   which plays a crucial role in the resonance analysis and long-range scattering, see for example \cite{GCnls,CP2,GPR,KaPu,collotger,nls3soliton}.
\end{remark}
\begin{theorem}[Solutions in the discrete space]\label{tdis}
Given any $1\leq k\leq M$, let $\mathfrak{v}_{\omega_{k},\lambda}\in L^{2}_{x}\left(\mathbb{R},\mathbb{C}^{2}\right)$ be any element in $\kerrr [\mathcal{H}_{\omega
_k}-\lambda \mathrm{Id}]$
for some eigenvalue 
$\lambda $ of $\mathcal{H}_{\omega_k}$. There exist constants $K>0,\,L,\,C\gg 1,\,\beta>0$ depending only on the potentials $V_{\omega_{1}},\,...,\,V_{\omega_{M}}$ such that if $$\min_{\ell}y_{\ell}-y_{\ell+1}>L\quad\text{and}\quad\min_{\ell}v_{\ell}-v_{\ell+1}>C,$$ then using the matrix Galilei transformation, there is a unique solution
\begin{align}\label{eq:Gv}
\mathfrak{G}_{\omega_{k},v_{k},y_{k},\gamma_k}(\mathfrak{v}_{\omega_{k},\lambda})(t,x)&\coloneq e^{i\sigma_{3}(\frac{v_{k}x}{2}+\gamma_{k})}e^{{-}i((\lambda-\omega_{k}) t+\frac{\sigma_{3}v_{k}^{2}t}{4})}\mathfrak{v}_{\omega_{k},\lambda}(x-y_{k}-v_{k}t)+r(t,x)\\
&=:\mathfrak{g}_{\omega_{k},v_{k},y_{k},\gamma_k}(\mathfrak{v}_{\omega_{k},\lambda})(t,x)+r(t,x)
\end{align} of \eqref{ldpe} satisfying
\begin{equation}
 \norm{r(t,x)}_{L^{2}_{x}(\mathbb{R})}\leq K e^{-\beta(\min_{\ell}y_{\ell}-y_{\ell+1}){-}\beta(\min_{\ell}v_{\ell}-v_{\ell+1}) t} \text{, for all $t\geq 0.$}   
\end{equation}
Moreover, if $\mathfrak{z}_{\omega_{k}}\in \kerrr \mathcal{H}_{\omega_{k}}^{2}$ and $\mathcal{H}_{\omega_{k}}(\mathfrak{z}_{\omega_{k}})=i\mathfrak{v}_{\omega_{k},0},$ then there exists a unique solution
\begin{equation}\label{eq:Gz}
\mathfrak{G}_{\omega_{k},v_{k},y_{k},\gamma_k}(\mathfrak{z}_{\omega_{k}})(t,x)\coloneq\mathfrak{g}_{\omega_{k},v_{k},y_{k},\gamma_k}(\mathfrak{z}_{\omega_{k}})(t,x)+t\mathfrak{g}_{\omega_{k},v_{k},y_{k},\gamma_k}(\mathfrak{v}_{\omega_{k},0})(t,x)(t,x)+r(t,x)
\end{equation}
of \eqref{ldpe} satisfying
$
 \norm{r(t,x)}_{L^{2}_{x}(\mathbb{R})}\leq K e^{{-}\beta (\min_{\ell}y_{\ell}-y_{\ell+1})-\beta(\min_{\ell}v_{\ell}-v_{\ell+1}) t} \text{, for all $t\geq 0.$}   
$
\end{theorem}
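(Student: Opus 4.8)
The plan is to treat the candidate $\mathfrak{g}_{\omega_k,v_k,y_k,\gamma_k}(\mathfrak{v}_{\omega_k,\lambda})$ (respectively the $t$-weighted version for the generalized kernel) as an approximate solution of \eqref{ldpe} and then solve for the correction $r(t,x)$ via a contraction / Duhamel argument, exactly as in the scattering-space case of Theorem \ref{tcont}, but now with the single-potential evolution replaced by the discrete-mode evolution. First I would verify that $\mathfrak{g}_{\omega_k,v_k,y_k,\gamma_k}(\mathfrak{v}_{\omega_k,\lambda})$ is an exact solution of the single-potential equation $i\partial_t\vec\psi+\sigma_3\partial_x^2\vec\psi+V_k^{\sigma_k}(t)\vec\psi=0$: this is the content of the matrix Galilei covariance, since $\mathfrak{v}_{\omega_k,\lambda}\in\ker[\mathcal H_{\omega_k}-\lambda\mathrm{Id}]$ means $e^{-i(\lambda-\omega_k)t}\mathfrak{v}_{\omega_k,\lambda}(x)$ solves $i\partial_t u=(\mathcal H_{\omega_k}-\omega_k)u$, and conjugating by the Galilei phase $e^{i\sigma_3(v_kx/2+\gamma_k)}e^{-i\sigma_3 v_k^2 t/4}$ and translating by $v_k t+y_k$ turns $\mathcal H_{\omega_k}-\omega_k$ into $\sigma_3\partial_x^2+V_k^{\sigma_k}(t)$. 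For the generalized eigenvector, the extra term $t\,\mathfrak{g}_{\omega_k,v_k,y_k,\gamma_k}(\mathfrak{v}_{\omega_k,0})$ accounts precisely for the Jordan block relation $\mathcal H_{\omega_k}\mathfrak{z}_{\omega_k}=i\mathfrak{v}_{\omega_k,0}$, so that $e^{-it\mathcal H_{\omega_k}}\mathfrak{z}_{\omega_k}=\mathfrak{z}_{\omega_k}-it\,\mathfrak{v}_{\omega_k,0}$ up to the $\omega_k$-shift; again Galilei covariance transports this to an exact solution of the single-potential equation.

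Next I would compute the source term generated by plugging this approximate solution into the full equation \eqref{ldpe}: since it solves the $\ell=k$ single-potential equation exactly, the error is $\sum_{n\neq k}V_n^{\sigma_n}(t)\,\mathfrak{g}_{\omega_k,\dots}(\mathfrak{v}_{\omega_k,\lambda})$. Now $V_n^{\sigma_n}(t)$ is exponentially localized near $x=v_n t+y_n$ while $\mathfrak{v}_{\omega_k,\lambda}(x-v_k t-y_k)$ is exponentially localized near $x=v_k t+y_k$ (discrete eigenfunctions of $\mathcal H_{\omega_k}$ decay exponentially by (H2) and the exponential decay of $V_k$), and for $n\neq k$ the separation $|v_n t+y_n-(v_k t+y_k)|\geq (\min_\ell y_\ell-y_{\ell+1})+(\min_\ell v_\ell-v_{\ell+1})t$ grows linearly; hence the $H^1_x$ norm of the source is $O\!\big(e^{-\beta[(\min_\ell y_\ell-y_{\ell+1})+(\min_\ell v_\ell-v_{\ell+1})t]}\big)$ for $\beta$ proportional to $\min(\gamma,\text{decay rate of }\mathfrak{v})$ times $\min_\ell(v_\ell-v_{\ell+1})$. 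For the $t$-weighted term in \eqref{eq:Gz} the source picks up an extra factor of $t$, which is absorbed into the exponential at the cost of shrinking $\beta$ slightly.

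Finally I would set up the equation for $r$: writing $\vec\psi=\mathfrak{g}(\dots)+r$ (or the $t$-weighted analogue), $r$ solves $i\partial_t r+\sigma_3\partial_x^2 r+\sum_\ell V_\ell^{\sigma_\ell}(t)r=-(\text{source})$ with $r(0)$ chosen so that the whole expression has the prescribed data — or, more precisely, running the Duhamel formula backward from $t=+\infty$ as in the construction of $\mathcal T$ in Theorem \ref{tcont} to get a solution with $r(t)\to 0$. The full propagator $e^{it\sigma_3\partial_x^2}$ composed with the potential terms is bounded on $H^1_x$ (the potentials are bounded with bounded derivatives), so Gronwall/contraction on the weighted space of functions with norm $\sup_{t\geq 0}e^{\beta'[(\min_\ell y_\ell-y_{\ell+1})+(\min_\ell v_\ell-v_{\ell+1})t]}\|r(t)\|_{H^1_x}$ closes, for $\beta'<\beta$ and for $L,C$ large enough that the contraction constant is $<1$; uniqueness in this class follows the same way. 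The main obstacle is the bookkeeping of the linear-in-$t$ growth in the generalized-kernel case: one must check that multiplying the exponentially small source by $t$ (from the Jordan block) and by further powers of $t$ coming from iterating Duhamel does not destroy the exponential decay — this is fine because $t^N e^{-\beta(\cdots)t}$ is still bounded by $C_N e^{-\beta' t}$, but it forces the statement's $\beta$ to be strictly smaller than the naive geometric rate and requires care that the constants $K$ do not blow up as one tracks the recursion.
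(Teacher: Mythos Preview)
Your overall strategy matches the paper's exactly: verify that the Galilei-transformed bound state (or its Jordan-block extension) is an exact solution of the single-potential equation, bound the interaction source $\sum_{n\neq k}V_n^{\sigma_n}(t)\,\mathfrak{g}_{\omega_k,\dots}(\mathfrak{v}_{\omega_k,\lambda})$ using the exponential localization of both factors (the paper packages this as Lemma~\ref{interactt}), and then solve for the remainder $r$ in an exponentially weighted-in-time space.

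The difference lies in how $r$ is produced. You propose a Duhamel fixed point with the \emph{free} propagator $e^{it\sigma_3\partial_x^2}$, treating $\sum_\ell V_\ell^{\sigma_\ell}$ as a perturbation; this closes because the Lipschitz constant is of order $C_V/\alpha$ with $\alpha\sim\beta'\min_\ell(v_\ell-v_{\ell+1})$, so the large velocity gap (not the spatial separation $L$) is what drives the contraction below $1$. The paper instead isolates a general lemma (Lemma~\ref{edl}) giving an a~priori $H^1_{\beta_1}$ bound on solutions of $i\partial_t r+\sigma_3\partial_x^2 r-\sum_\ell V_\ell^{\sigma_\ell}r=f$ for $f\in H^1_\beta$, proved by a localized-energy method: one introduces cut-off Hamiltonians $H_{\ell,n}(r)$ built from $\langle L_{\pm,\omega_\ell}u_\pm,u_\pm\rangle$, establishes coercivity modulo the discrete modes of each $\mathcal H_{\omega_\ell}$, and controls the projections $\langle r(t),\sigma_3 e^{i\sigma_3\theta_\ell}z_{\ell,n}(\cdot-v_\ell t-y_\ell)\rangle$ separately via ODE estimates. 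Both Theorem~\ref{tcont} and Theorem~\ref{tdis} are then reduced to this one lemma. Your Duhamel route is more elementary for this particular statement; the paper's energy lemma is more work upfront but serves as a single tool for both the scattering and discrete constructions, and makes the loss $\beta\mapsto\beta_1=\beta-\max_\ell\max_{\lambda\in\sigma_d(\mathcal H_{\omega_\ell})}|\I\lambda|$ due to unstable modes explicit. Note also that your phrase ``exactly as in the construction of $\mathcal T$ in Theorem~\ref{tcont}'' presumes that theorem is proved by Duhamel---in the paper it too goes through Lemma~\ref{edl}.
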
 
The proof of Theorem \ref{tdis} is written in Subsection \ref{soldisc}.
\begin{remark}
   We comment that, in all the results above, one can have any $C,\,L>1$ satisfying $C>\alpha_{\min}>>1$ and $L>L_{\min}>>1$. The $L$ and $C$ do not necessarily need to be dependent on each other if $\alpha_{\min}$ and $L_{\min}$ are large enough.
\end{remark}
In particular, combining the results of Theorem \ref{tcont} and Theorem \ref{tdis}, we obtain a complete theory for the existence of wave operators in our setting.
\subsubsection{Asymptotic completeness and dispersive estimates}\label{Theoprincip}
The first main result of this article is the following theorem.
\begin{theorem}[Asymptotic completeness]\label{princ}
There exist $C>0,\,L>0$ depending on the potentials $V_{\ell}$ such that if$$\min_{\ell}y_{\ell}-y_{\ell+1}>L\quad\text{and}\quad\min_{\ell}v_{\ell}-v_{\ell+1}>C,$$ then using notations from Theorem \ref{tcont} and Theorem \ref{tdis}, every solution of \eqref{ldpe} has a unique representation given by
\begin{equation}\label{princ11}
   \vec{\psi}(t,x)=\mathcal{T}
\left(\overrightarrow{\phi_{0}}\right)(t,x)+\sum_{\ell=1}^{m}\sum_{\lambda_{\ell,k}\in \sigma_d(\mathcal{H}_{\omega_\ell})}\mathfrak{G}_{\omega_{\ell},v_{\ell},y_{\ell}}(\mathfrak{v}_{\omega_{\ell},\lambda_{\ell,k}})(t,x)+\sum_{\ell=1}^{m}\mathfrak{G}_{\omega_{\ell},v_{\ell},y_{\ell}}(\mathfrak{z}_{\omega_{\ell}})(t,x)  
\end{equation}
for some $\phi_{0}$ in the domain of $\mathcal{S}(0)$, $\mathfrak{v}_{\omega_{k},\lambda_{\ell,k}}\in\kerrr [\mathcal{H}_{\omega
_\ell}-\lambda_{\ell,k} \mathrm{Id}]$, and $\mathfrak{z}_{\omega_{\ell}}\in \kerrr \mathcal{H}_{\omega_{\ell}}^{2}$.

Furthermore, if $\vec{\psi}(0,x)\in H^{2}_{x}(\mathbb{R},\mathbb{C})$ and $\vec{\psi}(0,x)x\in L^{2}_{x}(\mathbb{R}),$ then $ \vec{\psi}(t,x)\in H^{2}_{x}(\mathbb{R},\mathbb{C})$ and
\begin{align}
    \max_{\ell}\norm{\begin{bmatrix}
       \langle k\rangle^{n} \phi_{1,\ell}\left(k\right)\\
        \langle k\rangle^{n}\phi_{2,\ell}\left(k\right)    \end{bmatrix}}_{L^{2}_{k}(\mathbb{R})}\leq &C_{v}\norm{\mathcal{T}(\overrightarrow{\phi_{0}})(0,x)}_{H^{n}_{x}(\mathbb{R})}\leq K_{v}\norm{\vec{\psi}(0,x)}_{H^{n}_{x}(\mathbb{R})} \text{, for $n\in\{0,1,2\},$} \label{phi2kkdecay}\\ \label{F0l1}
        \max_{\ell}\norm{\begin{bmatrix}
            F_{0}\left(\phi_{1,\ell}\right)\\
             F_{0}\left(\phi_{2,\ell}\right)
        \end{bmatrix}}_{L^{1}_{x}(\mathbb{R})}\leq & C_{v}\left[\max_{\ell}\norm{[1+\vert x-y_{\ell}\vert]\chi_{\left\{\frac{y_{\ell+1}+y_{\ell}}{2},\frac{y_{\ell-1}+y_{\ell}}{2}\right\}}(x)\mathcal{T}(\overrightarrow{\phi_{0}})(0,x)}_{L^{2}_{x}(\mathbb{R})}\right]\\&{+}C_{v}(y_{1}-y_{m})e^{{-}\beta \min_{\ell} (y_{\ell}-y_{\ell+1})}\norm{\mathcal{T}(\overrightarrow{\phi_{0}})(0,x)}_{H^{1}_{x}(\mathbb{R})},\\
\label{F0l2}
        \max_{\ell}\norm{\begin{bmatrix}
            F_{0}\left(\partial_{k}\phi_{1,\ell}\right)\\
             F_{0}\left(\partial_{k}\phi_{2,\ell}\right)
        \end{bmatrix}}_{L^{1}_{x}(\mathbb{R})}\leq & C_{v}\max_{\ell}(y_{\ell}-y_{\ell+1})\norm{\left[1+\vert x-y_{\ell}\vert\right]\chi_{\left\{\frac{y_{\ell+1}+y_{\ell}}{2},\frac{y_{\ell-1}+y_{\ell}}{2}\right\}}(x)\mathcal{T}(\overrightarrow{\phi_{0}})(0,x)}_{L^{2}_{x}(\mathbb{R})}\\ \nonumber
&{+}C_{v}\norm{\vert x-y_{\ell}\vert^{2}\chi_{\left\{\frac{y_{\ell+1}+y_{\ell}}{2},\frac{y_{\ell-1}+y_{\ell}}{2}\right\}}(x)\mathcal{T}(\overrightarrow{\phi_{0}})(0,x)}_{L^{2}_{x}(\mathbb{R})}\\ \nonumber
&{+}C_{v}(y_{1}-y_{m})^{2}e^{{-}\beta \min_{\ell} (y_{\ell}-y_{\ell+1})}\norm{\mathcal{T}(\overrightarrow{\phi_{0}})(0,x)}_{H^{2}_{x}(\mathbb{R})},
\end{align} and the constant $C_{v}>0$ depends only on the set of speeds $v_{1},\,...,\,v_{m}.$ In particular, for any $t\geq 0,$ using the notation from Definition \ref{def:scatter}, the following identity holds
\begin{equation}\label{eq:Pc}
    P_{c}(t) \vec{\psi}(t,x)=\mathcal{T}(t)(\overrightarrow{\phi_{0}}).
\end{equation}
\end{theorem}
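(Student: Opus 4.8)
# Proof Proposal for Theorem \ref{princ} (Asymptotic completeness)

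The plan is to establish the existence and uniqueness of the decomposition \eqref{princ11} by separating the ``discrete'' and ``continuous'' parts and then controlling each. First I would set up the scattering decomposition: given an arbitrary solution $\vec{\psi}(t)$ of \eqref{ldpe}, the goal is to extract, for each $\ell$ and each discrete eigenvalue $\lambda_{\ell,k}$, the asymptotic projection onto the moving discrete modes. Concretely, for each $\ell$ one studies the limits as $t\to+\infty$ of the pairings $\langle \vec{\psi}(t), \sigma_3 \mathfrak{g}_{\omega_\ell,v_\ell,y_\ell,\gamma_\ell}(h_\ell)(t)\rangle$ where $h_\ell$ ranges over the generalized eigenfunctions of $\mathcal{H}_{\omega_\ell}$. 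The key input here is the genuine separation of velocities: because $\min_\ell(v_\ell-v_{\ell+1})>C$, the modes associated to different $\ell$ become asymptotically orthogonal (their spatial supports drift apart at linear rate), so these limits exist and define the coefficients $\mathfrak{v}_{\omega_\ell,\lambda_{\ell,k}}$ and $\mathfrak{z}_{\omega_\ell}$. One must use (H3) to handle the generalized kernel $\ker \mathcal{H}_{\omega_\ell}^2$, which is responsible for the secular $t\,\mathfrak{g}(\mathfrak{v}_{\omega_\ell,0})$ term.

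Once the discrete part $\Psi_d(t):=\sum_{\ell,k}\mathfrak{G}_{\omega_\ell,v_\ell,y_\ell}(\mathfrak{v}_{\omega_\ell,\lambda_{\ell,k}})(t)+\sum_\ell\mathfrak{G}_{\omega_\ell,v_\ell,y_\ell}(\mathfrak{z}_{\omega_\ell})(t)$ is identified using Theorem \ref{tdis} (which guarantees each summand is a genuine solution of \eqref{ldpe} with the prescribed asymptotics), the residual $\vec\psi - \Psi_d$ is again a solution of \eqref{ldpe} which is now asymptotically orthogonal to all the moving discrete modes, i.e. it lies in the scattering space of Definition \ref{def:scatter}. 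The heart of the argument is to show this residual equals $\mathcal{T}(\vec\phi_0)$ for some $\vec\phi_0$ in the domain of $\mathcal{S}(0)$ — this is precisely where I would invoke Theorem \ref{TT} of Section \ref{sec:S0} (to which the excerpt says Theorem \ref{princ} is reduced). That reduction in turn rests on the Hardy space machinery of Section \ref{sec:hardy} together with the one-potential scattering theory of Section \ref{sec:scatteringone}: the recursive ``transition'' relations (condition b) of Definition \ref{s0def}, reflecting the interaction of distorted planar waves passing from potential $\ell$ to $\ell+1$, cf. Remark \ref{transition}) let one solve for the profile $\vec\phi_0$ by peeling off one potential at a time, with the Hardy space structure ensuring the recursion closes (no spurious contributions) and that $\vec\phi_0$ is well-defined. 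Uniqueness of the whole representation follows because the discrete coefficients are determined by the asymptotic pairings (which are linear and rigid under the velocity separation) and because $\mathcal{T}$ is injective on the domain of $\mathcal{S}(0)$ by the error bound \eqref{h1linft}.

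For the regularity and weighted bounds \eqref{phi2kkdecay}--\eqref{F0l2}: the estimate \eqref{phi2kkdecay} follows by propagating $H^n$ regularity through \eqref{ldpe} (the potentials $V^{\sigma_\ell}_\ell$ are smooth with exponentially decaying derivatives, so the flow preserves $H^2$), combined with the explicit form of $\mathcal{S}(\vec\phi_0)$ in Definition \ref{s0def} — the distorted Fourier transforms $\hat G_{\omega_\ell}$ are bounded on $H^n$-type spaces by the symbol estimates \eqref{asyreftr}, so $\|\langle k\rangle^n(\phi_{1,\ell},\phi_{2,\ell})\|_{L^2_k}$ is comparable to $\|\mathcal{T}(\vec\phi_0)(0)\|_{H^n_x}$ up to the exponentially small interaction errors. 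The $L^1$ bounds \eqref{F0l1}--\eqref{F0l2} on $F_0(\phi_{j,\ell})$ and $F_0(\partial_k\phi_{j,\ell})$ come from the standard fact that $\|F_0(u)\|_{L^1}\lesssim \|\langle x\rangle u\|_{L^2}$ type inequalities (and one more weight for the $\partial_k$ version), applied after localizing to the spatial window $\left\{\tfrac{y_{\ell+1}+y_\ell}{2},\tfrac{y_{\ell-1}+y_\ell}{2}\right\}$ where the $\ell$-th profile ``lives''; the off-window contributions are absorbed into the exponentially small terms $C_v(y_1-y_m)e^{-\beta\min_\ell(y_\ell-y_{\ell+1})}\|\cdots\|_{H^1}$ using the decay of $\mathcal{F}_\omega,\mathcal{G}_\omega$ away from the support of $V$. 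Finally, \eqref{eq:Pc} is immediate: since $\mathcal{T}(\vec\phi_0)$ is in the scattering space and $\Psi_d$ is asymptotically orthogonal to it, the defining property of $P_c(t)$ in Definition \ref{def:scatter} forces $P_c(t)\vec\psi(t)=\mathcal{T}(t)(\vec\phi_0)$.

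The main obstacle I anticipate is the closing of the recursion that defines $\vec\phi_0$ — i.e., proving that the interaction terms generated when a distorted wave from potential $\ell$ scatters off potentials $\ell+1,\dots,m$ (and the ``backward'' terms off $1,\dots,\ell-1$) are either cancelled by the transition relations of Definition \ref{s0def} or are exponentially small in the separation parameters. This is exactly the step where Perelman's residue-calculus approach is replaced by Hardy-space arguments, and it is genuinely $m$-potential in nature (the $m=2$ symmetries used in \cite{perelmanasym} are unavailable), so verifying that the error does not accumulate over the $m$ potentials, uniformly in $t\geq 0$, is the delicate point. Everything else — the discrete-mode extraction and the weighted estimates — is comparatively routine once the velocity separation is exploited.
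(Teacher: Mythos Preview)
Your overall strategy differs from the paper's in an important structural way, and contains a genuine gap at the central step.

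\textbf{The gap.} You write that showing the residual $\vec\psi-\Psi_d$ equals $\mathcal{T}(\vec\phi_0)$ for some $\vec\phi_0$ ``is precisely where I would invoke Theorem \ref{TT}.'' But Theorem \ref{TT} is a \emph{coercivity} estimate: it says $\|\mathcal{S}(0)(\vec\phi)\|_{L^2}\geq c\sum_\ell\|\vec\phi_\ell\|_{L^2}$, which gives injectivity and closed range of $\mathcal{S}(0)$ (hence of $\mathcal{T}(0)$), and is what underlies the bounds \eqref{phi2kkdecay}--\eqref{F0l2}. It does \emph{not} give surjectivity. Asymptotic completeness is exactly the statement that $\Raa\mathcal{T}(0)\oplus\bigoplus_\ell\Raa\mathfrak{G}_{\omega_\ell,v_\ell,y_\ell,\gamma_\ell}(0)=L^2$, and for this the paper proves a separate result, Lemma \ref{dec}: given \emph{any} $f\in L^2$, one constructs $\vec\phi$ and $\vec v_{d_\ell}\in\Raa P_{d,\omega_\ell}$ with $f=\mathcal{S}(0)(\vec\phi)+\sum_\ell e^{i\sigma_3 v_\ell x/2}\vec v_{d_\ell}(x-y_\ell)$. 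The construction reduces (via \eqref{lG}--\eqref{mG} and the distorted inversion formulas of Lemma \ref{leper}) to solving a linear system \eqref{TA} for auxiliary unknowns $h_{\ell,\pm}$; the invertibility of that system is Lemma \ref{tcopp}, proved by an induction on $m$ combined with the Hardy-space projection estimates of Lemma \ref{+-interact}. Your ``peeling off one potential at a time'' remark points in this direction, but the actual mechanism---setting up \eqref{eqlast2}, applying $F^*_{\omega_\ell},G^*_{\omega_\ell}$, projecting by $P_\pm$, and showing the resulting operator is a small perturbation of the identity on $H^2(\mathbb{C}_+)^{m-1}\oplus H^2(\mathbb{C}_-)^{m-1}$---is not supplied, and it is the whole content of the theorem.

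\textbf{A second issue.} Your plan to extract the discrete coefficients by taking $t\to+\infty$ limits of $\langle\vec\psi(t),\sigma_3\mathfrak{g}_{\omega_\ell,\ldots}(h_\ell)(t)\rangle$ is not how the paper proceeds, and is delicate in the presence of unstable modes (H2 allows eigenvalues on $i\mathbb{R}$): for such $\lambda$ the mode $e^{-i\lambda t}\mathfrak{v}$ grows exponentially, so an arbitrary solution of \eqref{ldpe} need not stay bounded in $L^2$, and the existence of those asymptotic pairings is not automatic. The paper avoids this entirely by working at $t=0$: since all summands in \eqref{princ11} are genuine solutions of \eqref{ldpe}, it suffices to establish the decomposition for the initial data, which is the $L^2$ direct-sum statement of Remark \ref{eq:decomposition}, and this is exactly what Lemma \ref{dec} together with Theorems \ref{tcont}, \ref{tdis} and a short contradiction argument delivers. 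Your account of the weighted estimates \eqref{phi2kkdecay}--\eqref{F0l2} is essentially correct in spirit; those do follow from Theorem \ref{TT} (plus Corollary \ref{coo} and the interpolation Lemma \ref{h1coercc} for $n=1$).
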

\begin{remark}
In the statement of Theorem \ref{princ}, different from the results on asymptotic completeness in \cite{perelmanasym} by Perelman, we can allow the points ${-}\omega_{\ell}$ and $\omega_{\ell}$ to be resonances of the operator $\mathcal{H}_{\ell}$ for each $\ell.$ 
\end{remark}
\begin{remark}
    Estimates \eqref{F0l1} and \eqref{F0l2} are consequences of Theorem \ref{TT} of Section $4.$ The proof of estimate \ref{phi2kkdecay} for $n=2$ follows from Corollary \ref{coo} and Theorem \ref{tcont}. The proof of estimate \ref{phi2kkdecay} for $n=1$ follows from interpolation; see Lemma \ref{h1coercc} in Section $6.$
\end{remark}
\begin{remark}\label{eq:decomposition}
    In particular, when $t=0,$ using notations from \eqref{eq:defcalT}, we have
$$
    L^{2}(\mathbb{R},\mathbb{C}^{2})=\Raa \mathcal{T}(0)\oplus \bigoplus_{\ell=1}^{m} \Raa\mathfrak{G}_{\omega_{\ell},v_{\ell},y_{\ell},\gamma_\ell}(0),  
$$
and for all $t\geq 0$
\begin{equation*}
    \Raa \mathcal{T}(t)=\left(\sigma_{3} \bigoplus_{\ell=1}^{m} \Raa\mathfrak{G}_{\omega_{\ell},v_{\ell},y_{\ell},\gamma_\ell}(t)\right)^{\perp}.
\end{equation*}
\end{remark}

As a consequence of Theorem \ref{princ} above,  we can extend most of the decay estimates of the solutions of the following linear Schr\"odinger equation with stationary potential
\begin{equation*}
    i\partial_{t}\overrightarrow{\psi}(t)-\mathcal{H}_{\omega}\overrightarrow{\psi}(t)=0
\end{equation*}
to the solutions of the following linear Schr\"odinger equation with multi-potentials. 
\begin{definition}
The evolution operator $\mathcal{U}(t,\tau):L^{2}_{x}(\mathbb{R},\mathbb{C}^{2})\to L^{2}_{x}(\mathbb{R},\mathbb{C}^{2})$ is defined for any $\overrightarrow{{\psi}_{0}}\in L^{2}_{x}(\mathbb{R},\mathbb{C}^{2})$ as $$\vec{{\psi}}(t)=:\mathcal{U}(t,\tau)\overrightarrow{{\psi}_{0}}$$being the unique solution of the following linear partial differential equation
\begin{equation*}
    \begin{cases}
       i \partial_t \vec{\psi}+\sigma_{3}\partial^{2}_{x}\vec{\psi}+\sum_{\ell=1}^mV^{\sigma_{\ell}}_{\ell}(t) \vec{\psi}=0 \text{, for all $t\geq \tau,$}\\
\psi(\tau,x)=\overrightarrow{{\psi}_{0}}(x)
    \end{cases}
\end{equation*}
where $V^{\sigma_{\ell}}_{\ell}(t)$ is defined as \eqref{H2}.
\end{definition}
The next main theorem is the following.  
\begin{theorem}[$L^2$ estimate and decay estimates]\label{Decesti}
If $\min y_{\ell}-y_{\ell+1}>L$ and $\min v_{\ell}-v_{\ell+1}>C$ with $L$ and $C$ large enough,  then the projection $P_{c}(t)$ given by \eqref{eq:Pc} commutes with the flow, in other words, satisfies the following identity for any $\overrightarrow{{\psi}_{0}}\in L^{2}_{x}(\mathbb{R},\mathbb{C}^{2})$
\begin{equation*}
    P_{c}(t)\mathcal{U}(t,\tau)\overrightarrow{{\psi}_{0}}= \mathcal{U}(t,\tau)P_{c}(\tau)\overrightarrow{\psi_{0}}.
\end{equation*}
One can find  $K>1,\,\beta>0$ such that
the following estimates are true
 {\footnotesize \begin{align}\label{l22}\tag{$L^{2}$ Estimate}
  \norm{\mathcal{U}(t,\tau)P_{c}(\tau)\overrightarrow{{\psi}_{0}}}_{L^{2}_{x}(\mathbb{R})}\leq & K \norm{P_{c}(\tau)\overrightarrow{{\psi}_{0}}}_{L^{2}_{x}(\mathbb{R})},\\ \nonumber
   \norm{\mathcal{U}(t,\tau)P_{c}(\tau)\overrightarrow{{\psi}_{0}}}_{L^{\infty}_{x}(\mathbb{R})}\leq & K \frac{\max_{\ell} }{(1+t-\tau)^{\frac{1}{2}}}\norm{\vert x-y_{\ell}-v_{\ell}\tau\vert\chi_{\left[\frac{y_{\ell+1}+y_{\ell}+(v_{\ell+1}+v_{\ell})\tau}{2},\frac{y_{\ell-1}+y_{\ell}+(v_{\ell-1}+v_{\ell})\tau}{2}\right]}(x)P_{c}(\tau)\overrightarrow{\psi_{0}}\left(x\right)}_{L^{2}_{x}(\mathbb{R})}\\ \label{decaystrit}\tag{$L^{\infty}$ Decay}
   &{+} \frac{K}{(1+t-\tau)^{\frac{1}{2}}}\norm{P_{c}(\tau)\overrightarrow{\psi_{0}}}_{H^{1}_{x}(\mathbb{R})}.  
   \end{align}}

  Furthermore, 
  if for all $\ell\in\{1,2,\,...,\,m\}$ the following hypothesis holds
 \begin{itemize}
     \item [(H4)] $\omega_{\ell}$ and ${-}\omega_{\ell}$ are not  resonances  nor eigenvalues of $\mathcal{H}_{\omega_{\ell}}.$\footnote{Recall that one says that $\pm \omega_\ell$ is a  resonance of $\mathcal{H}_{\omega_{\ell}}$ provided that $\mathcal{H}_{\omega_{\ell}}f=\pm \omega_\ell f$ has a solution $f\in L^\infty\, \text{but} \,f\notin L^2$.}
 \end{itemize}
Then 
the following weighted estimate is true
{\footnotesize
   \begin{align}\nonumber
\norm{\frac{\mathcal{U}(t,\tau)P_{c}(\tau)\overrightarrow{\psi_{0}}}{(1+\vert x-y_{\ell}-v_{\ell}t\vert)}}_{L^{\infty}_{x}(\mathbb{R})}\leq &   \frac{K(y_{1}-y_{m}+\tau)}{(1+(t-\tau))^{\frac{1}{2}}(y_{1}-y_{m}+t)}\Bigg[\norm{P_{c}(\tau)\overrightarrow{\psi_{0}}}_{H^{1}_{x}(\mathbb{R})}{+}\max_{\ell}\norm{P_{c}(\tau)\overrightarrow{\psi_{0}}\left(x\right)}_{L^{1}_{x}(\mathbb{R})}\Bigg]\\ \nonumber
&{+}\frac{K\max_{\ell}\norm{\chi_{\left[\frac{y_{\ell+1}+y_{\ell}+(v_{\ell+1}+v_{\ell})\tau}{2},\frac{y_{\ell-1}+y_{\ell}+(v_{\ell-1}+v_{\ell})\tau}{2}\right]}(x)\vert x-y_{\ell}-v_{\ell}\tau\vert P_{c}(\tau)\overrightarrow{\psi_{0}}(x)}_{L^{1}_{x}(\mathbb{R})}}{(1+(t-\tau))^{\frac{3}{2}}} \\ \label{weight}\tag{Weighted decay}
&{+}\frac{K(y_{1}-y_{m}+(v_{1}-v_{m})\tau)e^{{-\beta \min_{\ell}(y_{\ell}-y_{\ell+1})}+(v_{\ell}-v_{\ell+1}) \tau}}{(1+(t-\tau))^{\frac{3}{2}}}\norm{p_{c}(\tau)\overrightarrow{\psi_{0}}}_{H^{2}}. 
 \end{align}}

\end{theorem}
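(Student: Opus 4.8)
The plan is to deduce Theorem~\ref{Decesti} from the decomposition in Theorem~\ref{princ} together with the one-potential dispersive and weighted estimates collected in Section~\ref{sec:scatteringone}. First I would establish the intertwining identity $P_{c}(t)\mathcal{U}(t,\tau)=\mathcal{U}(t,\tau)P_{c}(\tau)$: by Theorem~\ref{princ} every solution of \eqref{ldpe} splits uniquely as a scattering part $\mathcal{T}(\overrightarrow{\phi_{0}})$ plus moving discrete modes $\mathfrak{G}_{\omega_{\ell},v_{\ell},y_{\ell}}(\cdots)$, and this splitting is preserved under the flow because $\mathcal{T}(t)$ and $\mathfrak{G}_{\omega_{\ell},v_{\ell},y_{\ell}}(t)$ are each defined as genuine solution operators of the same evolution; uniqueness of the representation forces the projections to commute with $\mathcal{U}(t,\tau)$. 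In particular $\mathcal{U}(t,\tau)P_{c}(\tau)\overrightarrow{\psi_{0}}=\mathcal{T}(t)(\overrightarrow{\phi_{0}})$ for the profile $\overrightarrow{\phi_{0}}$ attached to $P_{c}(\tau)\overrightarrow{\psi_{0}}$, so it suffices to prove all the estimates for $\mathcal{T}(t)(\overrightarrow{\phi_{0}})$ with right-hand sides controlled by $P_{c}(\tau)\overrightarrow{\psi_{0}}$.

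Next I would reduce to $\mathcal{S}(\overrightarrow{\phi_{0}})(t,x)$. By Theorem~\ref{tcont}, $\|\mathcal{T}(\overrightarrow{\phi_{0}})(t)-\mathcal{S}(\overrightarrow{\phi_{0}})(t)\|_{H^{1}_{x}}$ decays like $e^{-\beta[(\min y_{\ell}-y_{\ell+1})+(\min v_{\ell}-v_{\ell+1})t]}$, which after a $t$-shift absorbing $\tau$ is much better than any of the claimed rates; the weighted version \eqref{tcontest} handles the extra $|x-y_{\ell}-v_{\ell}t|$-weights for \eqref{weight}. So the core is to estimate $\mathcal{S}(\overrightarrow{\phi_{0}})(t,x)$, whose definition \eqref{eq:Sphi} is a sum over $\ell$ of Galilei-conjugated distorted-Fourier evolutions $e^{it\sigma_3\partial_x^2}$-type flows $\hat{G}_{\omega_{\ell}}(e^{-it(k^2+\omega_{\ell})\sigma_3}\cdots)$ minus a free term $F_0(e^{-itk^2\sigma_3}\vec{\varphi})$. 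Each summand is a Galilei transform of a solution to the single-potential equation $i\partial_t\vec{u}+\sigma_3\partial_x^2\vec{u}+V^{\sigma_\ell}_\ell(t)\vec{u}=0$ whose data is the distorted-Fourier-multiplier image of $\phi_{\ell}$; for these I invoke the one-potential dispersive estimate $\|e^{-it\mathcal{H}_{\omega_{\ell}}}P_e f\|_{L^\infty}\lesssim t^{-1/2}\|P_e f\|_{L^1}$ from the introduction, the $L^2$-boundedness of $\hat{G}_{\omega_{\ell}}$ and $\hat{F}_{\omega_{\ell}}$, and—under (H4), when the distorted transform is a bounded operator on weighted spaces without threshold corrections—the weighted/local decay $t^{-3/2}$ estimates. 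Since the Galilei transform is an $L^2$-isometry and only translates the spatial localization by $y_\ell+v_\ell t$, the weights $\chi_{[\cdots]}(x)|x-y_\ell-v_\ell\tau|$ in the statement are exactly the transported versions of the localized $L^1/L^2$ norms appearing in the one-potential bounds; the separation of the intervals $[\tfrac{y_{\ell+1}+y_\ell}{2},\tfrac{y_{\ell-1}+y_\ell}{2}]$ guarantees almost-orthogonality of the $m$ summands, so summing in $\ell$ only costs a $\max_\ell$. The free term contributes the clean $t^{-1/2}\|\cdot\|_{H^1}$ and $t^{-3/2}\|\cdot\|_{L^1}$ pieces via the standard Schr\"odinger dispersive estimate, after using the bounds \eqref{phi2kkdecay}, \eqref{F0l1}, \eqref{F0l2} from Theorem~\ref{princ} to convert $\|\phi_{\ell}\|$, $\|F_0(\phi_\ell)\|_{L^1}$, $\|F_0(\partial_k\phi_\ell)\|_{L^1}$ into the localized norms of $P_c(\tau)\overrightarrow{\psi_0}$.

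Concretely, the order of steps is: (i) prove the commutation identity from uniqueness in Theorem~\ref{princ}; (ii) reduce $\mathcal{U}(t,\tau)P_c(\tau)\overrightarrow{\psi_0}$ to $\mathcal{S}(\overrightarrow{\phi_0})$ up to exponentially small errors using Theorems~\ref{tcont}; (iii) for each $\ell$, conjugate away the Galilei factor and bound the single-potential distorted-Fourier evolution using the $L^2$, $L^{1}\to L^\infty$ $t^{-1/2}$, and (under (H4)) $L^1\to t^{-3/2}$ weighted estimates of Section~\ref{sec:scatteringone}; (iv) bound the free term $F_0(e^{-itk^2\sigma_3}\vec\varphi)$ by the classical Schr\"odinger dispersive and weighted dispersive estimates; (v) collect the $\ell$-sums into $\max_\ell$ using the interval separation, and translate all data norms into norms of $P_c(\tau)\overrightarrow{\psi_0}$ via \eqref{phi2kkdecay}--\eqref{F0l2}, picking up the displayed $(y_1-y_m+\tau)$ and exponential-remainder factors. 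The main obstacle I expect is step~(iii) in the \eqref{weight} case: obtaining the sharp $(1+t-\tau)^{-3/2}$ weighted decay for the distorted evolution $\hat{G}_{\omega_\ell}(e^{-it(k^2+\omega_\ell)\sigma_3}\cdot)$ requires a careful stationary-phase analysis of the oscillatory integral defining $\mathcal{F}_\omega(x,k)$, exploiting cancellation of the $r(k)e^{-ikx}$ reflected wave and the smoothness/decay of $s,r$ from \eqref{asyreftr}, and it is precisely here that (H4) is used to rule out the slower-decaying threshold contributions; controlling the $(1+|x-y_\ell-v_\ell t|)^{-1}$ weight on the left while only paying one derivative and one power of $\langle x\rangle$ on the data is the delicate bookkeeping. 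The secondary difficulty is making the interval localizations in the hypotheses of the one-potential weighted estimates match the intervals $[\tfrac{y_{\ell+1}+y_\ell+(v_{\ell+1}+v_\ell)\tau}{2},\tfrac{y_{\ell-1}+y_\ell+(v_{\ell-1}+v_\ell)\tau}{2}]$ appearing in \eqref{decaystrit} and \eqref{weight}, which is handled by the recursive definition of $\phi_\ell,\varphi$ in Definition~\ref{s0def} together with the exponential off-diagonal decay of the transmitted/reflected waves through the well-separated potentials.
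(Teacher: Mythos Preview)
Your proposal is correct and follows essentially the same approach as the paper: reduce to $\mathcal{S}(\overrightarrow{\phi_{0}})$ via Theorem~\ref{tcont}, apply the one-potential $L^{2}$, $L^{1}\to L^{\infty}$, and (under (H4)) weighted $t^{-3/2}$ estimates to each Galilei-conjugated summand, and convert data norms back to norms of $P_{c}(\tau)\overrightarrow{\psi_{0}}$ using the coercivity estimates of Theorem~\ref{TT}/Theorem~\ref{princ}. Two minor differences worth noting: the paper does not redo any stationary-phase analysis for the single-potential weighted decay but simply quotes it as Lemma~\ref{lem:decayonepotential} (Krieger--Schlag, Propositions~7.1 and~8.1), so your anticipated ``main obstacle'' in step~(iii) is already packaged; and the paper organizes the argument by first proving a singular version (Theorem~\ref{Decesti1}) with $(t-\tau)^{-1/2}$ and $(t-\tau)^{-3/2}$, and then obtains the non-singular $(1+t-\tau)^{-1/2}$, $(1+t-\tau)^{-3/2}$ of Theorem~\ref{Decesti} by taking the minimum with the uniform $H^{1}$ bound (Lemma~\ref{h1coercc}) for $t-\tau\le 1$.
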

\begin{remark}
   Note that if we do not need the denominator on the right-hand side of \eqref{decaystrit} to be non-singular when $t=\tau$, then the $H^1$ norm will not be required. For more details, see Theorem \ref{Decesti1} from Section \ref{Dispsection}. 
\end{remark}

The following proposition is a direct consequence of Theorem \ref{Decesti}.
\begin{corollary}
Let $F(t,x)\in L^{1}\left((0,{+}\infty),L^{2}_{x}(\mathbb{R})\right)$ and $\overrightarrow{u}(t,x)$ be a   solution of the following Cauchy Problem
\begin{equation*}
    \begin{cases}
        i\partial_{t}\overrightarrow{\psi}(t,x) -\sigma_{3}\partial^{2}_{x}\overrightarrow{\psi}(t,x)+\sum_{\ell=1}^mV^{\sigma_{\ell}}_{\ell}(t)\overrightarrow{\psi}(t,x)=F(t,x),\\
        \overrightarrow{\psi}(0,x)=\overrightarrow{\psi_{0}}\in L^{2}_{x}(\mathbb{R},\mathbb{C}^{2}).
    \end{cases}
\end{equation*}
There exists a constant $C>1,$ not depending on $\overrightarrow{\psi_{0}}$ and $F,$ such that if $P_{c}(t)\overrightarrow{u}(t,x)=\overrightarrow{u}(t,x)$ for all $t\in (0,{+}\infty),$  then 
\begin{equation*}
    \norm{\overrightarrow{\psi}(t)}_{L^{2}_{x}(\mathbb{R})}\leq C\left[\norm{\overrightarrow{\psi_{0}}}_{L^{2}_{x}(\mathbb{R})}+\int_{0}^{t}\norm{P_{c}(\tau)F(\tau)}_{L^{2}_{x}(\mathbb{R})}\right],
\end{equation*}
and in particular,
\begin{equation*}
\norm{\overrightarrow{\psi}}_{L^{\infty}\left((0,{+}\infty),L^{2}_{x}(\mathbb{R})\right)}\leq C\left[\norm{\overrightarrow{\psi_{0}}}_{L^{2}_{x}(\mathbb{R})}+\norm{F}_{L^{1}\left((0,{+}\infty),L^{2}_{x}(\mathbb{R})\right)}\right].
\end{equation*}

\end{corollary}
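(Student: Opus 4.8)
The plan is to reduce the inhomogeneous problem to the homogeneous one via Duhamel's formula and then apply the $L^2$ and dispersive bounds from Theorem \ref{Decesti}. First I would write the solution $\overrightarrow{\psi}(t)$ of the Cauchy problem using the evolution operator $\mathcal{U}(t,\tau)$ introduced just above the corollary: since $\overrightarrow{\psi}$ solves $i\partial_t\overrightarrow{\psi}-\sigma_3\partial_x^2\overrightarrow{\psi}+\sum_\ell V_\ell^{\sigma_\ell}(t)\overrightarrow{\psi}=F$ with $\overrightarrow{\psi}(0)=\overrightarrow{\psi_0}$, Duhamel gives
\begin{equation*}
\overrightarrow{\psi}(t)=\mathcal{U}(t,0)\overrightarrow{\psi_0}-i\int_0^t \mathcal{U}(t,\tau)F(\tau)\,d\tau.
\end{equation*}
(One should be a little careful with the sign convention of $\sigma_3\partial_x^2$ versus the $+\sigma_3\partial_x^2$ in the definition of $\mathcal{U}$; since the estimates in Theorem \ref{Decesti} are stated for both and are symmetric under $x\mapsto -x$ / complex conjugation, this is harmless, and I would just note the adjustment.)

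Next I would use the hypothesis $P_c(t)\overrightarrow{u}(t,x)=\overrightarrow{u}(t,x)$, i.e. the solution stays in the scattering space for all $t$. By the commutation identity from Theorem \ref{Decesti}, $P_c(t)\mathcal{U}(t,\tau)=\mathcal{U}(t,\tau)P_c(\tau)$, so applying $P_c(t)$ to the Duhamel formula and using $P_c(t)\overrightarrow{\psi}(t)=\overrightarrow{\psi}(t)$ yields
\begin{equation*}
\overrightarrow{\psi}(t)=\mathcal{U}(t,0)P_c(0)\overrightarrow{\psi_0}-i\int_0^t \mathcal{U}(t,\tau)P_c(\tau)F(\tau)\,d\tau.
\end{equation*}
Now I would take $L^2_x$ norms, apply the triangle inequality and Minkowski's integral inequality to pass the norm inside the $\tau$-integral, and invoke the $L^2$ estimate \eqref{l22} of Theorem \ref{Decesti} to each factor: $\norm{\mathcal{U}(t,0)P_c(0)\overrightarrow{\psi_0}}_{L^2_x}\le K\norm{P_c(0)\overrightarrow{\psi_0}}_{L^2_x}\le K\norm{\overrightarrow{\psi_0}}_{L^2_x}$ (using that $P_c(0)$ is a bounded projection), and $\norm{\mathcal{U}(t,\tau)P_c(\tau)F(\tau)}_{L^2_x}\le K\norm{P_c(\tau)F(\tau)}_{L^2_x}$. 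This gives the first displayed inequality of the corollary with $C=K$. The second inequality follows immediately by bounding $\int_0^t\norm{P_c(\tau)F(\tau)}_{L^2_x}\,d\tau\le \int_0^{+\infty}\norm{P_c(\tau)F(\tau)}_{L^2_x}\,d\tau\le \norm{P_c}_{op}\norm{F}_{L^1((0,+\infty),L^2_x)}$ and taking the supremum over $t\in(0,+\infty)$.

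The only genuinely nontrivial points — and hence where I expect the main (mild) obstacle to lie — are (i) justifying that the Duhamel formula is valid in the $L^2$ framework and that $\mathcal{U}(t,\tau)$ indeed propagates the scattering space, and (ii) checking that $P_c(\tau)$ has operator norm bounded uniformly in $\tau$, so that $\norm{P_c(\tau)F(\tau)}_{L^2_x}$ and $\norm{P_c(0)\overrightarrow{\psi_0}}_{L^2_x}$ are controlled by $\norm{F(\tau)}_{L^2_x}$ and $\norm{\overrightarrow{\psi_0}}_{L^2_x}$ respectively. Point (i) is exactly the content of Theorem \ref{princ} and the commutation statement in Theorem \ref{Decesti}, which we are entitled to assume. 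Point (ii) follows from the decomposition in Remark \ref{eq:decomposition} together with the fact that the $\Raa\mathfrak{G}_{\omega_\ell,v_\ell,y_\ell,\gamma_\ell}(t)$ are finite-dimensional moving subspaces whose relative angle is controlled uniformly in $t$ by the separation hypotheses $\min_\ell(y_\ell-y_{\ell+1})>L$, $\min_\ell(v_\ell-v_{\ell+1})>C$; alternatively, since the $L^2$ estimate \eqref{l22} already has a constant $K$ uniform in $t,\tau$, uniform boundedness of $P_c$ is subsumed. With these in hand the argument is a one-paragraph computation.
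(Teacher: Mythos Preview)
Your proposal is correct and is precisely the argument the paper has in mind: the corollary is stated as ``a direct consequence of Theorem \ref{Decesti}'' with no further proof, and the intended derivation is exactly Duhamel plus the $L^2$ estimate \eqref{l22} together with the commutation $P_c(t)\mathcal{U}(t,\tau)=\mathcal{U}(t,\tau)P_c(\tau)$. Your remarks about the sign typo in $\sigma_3\partial_x^2$ and the uniform boundedness of $P_c(\tau)$ are apt and do not affect the argument.
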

\begin{theorem}\label{interpolation est.}
If the hypotheses $(H1),\,(H2),\,(H3),$ and $(H4)$ hold, then, for any $p\in (1,2),\,$ any $\alpha\in (0,1),$ and $p^{*}=\frac{p}{p-1},$ the following inequality holds for all $t\geq s.$
\begin{multline}\label{interpweder}
\max_{\ell}\norm{\frac{\partial_{x}P_{c}(t)\mathcal{U}(t,s)(\vec{f})}{\langle x-v_{\ell}t-y_{\ell} \rangle^{1+\frac{p^{*}-2}{2p^{*}}+\alpha}}}_{L^{2}_{x}}\\
\begin{aligned}
\leq & \frac{C_{p,\alpha}\max_{\ell}\norm{(1+\vert x-y_{\ell}-v_{\ell}s\vert)\chi_{\ell}(s,x)\langle \partial_{x}\rangle \vec{f}(x)}_{L^{1}_{x}(\mathbb{R})}^{\frac{2-p}{p}}\norm{f}_{H^{1}}^{\frac{2(p-1)}{p}}}{(t-s)^{\frac{3}{2}(\frac{1}{p}-\frac{1}{p^{*}})}} \\
 &{+}C_{p,\alpha}\frac{(s+y_{1}-y_{m})}{(t-s)^{\frac{3}{2}(\frac{1}{p}-\frac{1}{p^{*}})}}\norm{\vec{f}}_{W^{1,1}_{x}(\mathbb{R})}^{\frac{2-p}{p}}\norm{f}_{H^{1}}^{\frac{2(p-1)}{p}}\\
 &{+}C_{p,\alpha}\frac{(y_{1}-y_{m}+s)^{2}e^{{-}\beta\min_{\ell}((v_{\ell}-v_{\ell+1})s+y_{\ell}-y_{\ell+1})}}{(t-s)^{\frac{3}{2}(\frac{1}{p}-\frac{1}{p^{*}})}}\norm{\vec{f}}_{H^{2}_{x}(\mathbb{R})},
\end{aligned}
\end{multline}
where
\begin{equation*}
    \chi_{\ell}(s,x)=\chi_{\left[\frac{y_{\ell}+y_{\ell+1}+(v_{\ell}+v_{\ell+1})s}{2},\frac{y_{\ell}+y_{\ell-1}+(v_{\ell}+v_{\ell-1})s}{2}\right]}(x).
\end{equation*}
\end{theorem}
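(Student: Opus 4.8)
}

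The plan is to derive the weighted $L^2$ estimate on $\partial_x P_c(t)\mathcal{U}(t,s)$ by interpolating between two endpoint estimates: a weighted $L^\infty$-type bound coming from \eqref{weight} (the Weighted decay estimate in Theorem \ref{Decesti}) and an energy-type $H^1$ bound coming from \eqref{l22} combined with the commutation of $P_c(t)$ with the flow. First I would fix $p\in(1,2)$, set $p^*=p/(p-1)$, and rewrite the target quantity. Since $P_c(t)$ commutes with $\mathcal{U}(t,\tau)$ (Theorem \ref{Decesti}) and $P_c(t)\mathcal{U}(t,s)\vec f=\mathcal{U}(t,s)P_c(s)\vec f$, I can assume without loss of generality that $\vec f$ is already in the scattering space, so that by Theorem \ref{princ}, $P_c(t)\mathcal{U}(t,s)\vec f=\mathcal{T}(t)(\vec\phi_0)$ for the associated profile $\vec\phi_0$; the point of this reduction is that all the linear decay bounds of Theorem \ref{Decesti} then apply with $\tau=s$.

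The core of the argument is a two-endpoint interpolation. At one endpoint, differentiating in $x$ and using \eqref{weight} (after commuting $\partial_x$ through, which only costs controllable weights since the potentials decay exponentially and the weights $\langle x-v_\ell t-y_\ell\rangle$ are admissible), I get a pointwise-in-$x$ bound of the form
\begin{equation*}
\norm{\frac{\partial_x P_c(t)\mathcal{U}(t,s)\vec f}{\langle x-v_\ell t-y_\ell\rangle^{1+\alpha}}}_{L^\infty_x}\lesssim \frac{\text{(weighted } L^1 \text{ data)}}{(t-s)^{3/2}}+\text{(exponentially small remainders)},
\end{equation*}
matching the three terms on the right-hand side of \eqref{interpweder} with the $L^1$-based norms. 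At the other endpoint, the $H^1$-boundedness of the flow on the scattering space (which follows from \eqref{l22} together with \eqref{phi2kkdecay} applied with $n=1$) gives
\begin{equation*}
\norm{\partial_x P_c(t)\mathcal{U}(t,s)\vec f}_{L^2_x}\lesssim \norm{\vec f}_{H^1_x}.
\end{equation*}
Then I would interpolate these two bounds: think of the operator $\vec g\mapsto \langle x-v_\ell t-y_\ell\rangle^{-(1+\alpha)}\partial_x P_c(t)\mathcal{U}(t,s)\vec g$ as mapping $L^1\cap H^1\to L^\infty$ with norm $\sim (t-s)^{-3/2}$ and $H^1\to L^2$ with norm $O(1)$, apply the appropriate complex/real interpolation (Riesz–Thorin on the $L^p$ scale, keeping the $H^1$ regularity fixed via Littlewood–Paley or via the fact that $\partial_x$ already sits inside), and get the $L^2$ bound with the interpolated time-decay exponent $\tfrac32(\tfrac1p-\tfrac1{p^*})$ and the split data norms raised to powers $\tfrac{2-p}{p}$ and $\tfrac{2(p-1)}{p}$. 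The shift in the weight exponent from $1+\alpha$ to $1+\tfrac{p^*-2}{2p^*}+\alpha$ is exactly the Hölder loss incurred when passing the $L^\infty$ weight through to $L^2$ against a weight that is $L^{p^*}$-integrable; this has to be tracked carefully but is a routine computation with $\langle x\rangle^{-a}\in L^q$ iff $aq>1$.

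The main obstacle I anticipate is the bookkeeping around the cutoffs $\chi_\ell(s,x)$ and the three separate data-dependent terms: the weighted estimate \eqref{weight} has an intricate right-hand side with a localized piece (adapted to the interval between consecutive soliton centers at time $s$), a global $H^1+L^1$ piece, and an exponentially small remainder coming from the multi-potential interaction, and each of these must survive the interpolation with its norm raised to the fractional power $\tfrac{2-p}{p}$ while the $H^1$ endpoint contributes the complementary power $\tfrac{2(p-1)}{p}$. Keeping the $\ell$-dependent weights and cutoffs consistent through the interpolation — in particular making sure the max over $\ell$ commutes appropriately and that the weight $\langle x-v_\ell t-y_\ell\rangle$ on the left is the one matching the $\ell$-th interval — is where the care is needed; the analytic content, by contrast, is essentially a direct consequence of Theorem \ref{Decesti} plus standard interpolation.
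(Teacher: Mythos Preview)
Your proposal has a genuine gap at the step where you ``commute $\partial_x$ through'' the weighted decay \eqref{weight}. That estimate is proved for the solution itself, not for its derivative, and $\partial_x$ does \emph{not} commute with the charge-transfer flow $\mathcal{U}(t,s)$: the commutator produces a Duhamel term $\int_s^t \mathcal{U}(t,\tau)\bigl[\sum_\ell \partial_x V_\ell^{\sigma_\ell}(\tau)\bigr]\psi(\tau)\,d\tau$, and although $\partial_x V_\ell^{\sigma_\ell}$ is exponentially localized, this term still needs its own weighted $(t-\tau)^{-3/2}$ estimate, which is not integrable in $\tau$ near $\tau=t$. Nor is $\partial_x\psi$ automatically in the scattering space, so $P_c(t)$ must be reinserted, generating further commutators. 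Your interpolation step is also not a straightforward Riesz--Thorin: the two endpoints you write down have mismatched domain spaces ($L^1\cap H^1$ versus $H^1$) and mismatched weights, so the complex interpolation does not apply as stated.

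The paper's argument avoids both issues by never treating the full charge-transfer flow as a black box at this stage. Instead it returns to the distorted-Fourier representation $\mathcal{S}(t)(\vec\phi)$, which is a finite sum of \emph{single-potential} evolutions $\hat G_{\omega_\ell}(e^{-itk^2\sigma_3}\vec\phi_\ell)$, and applies to each piece the single-potential interpolated weighted $W^{1,p^*}$ decay (Corollary~8.3 of Krieger--Schlag); the derivative is handled directly on the Fourier side via $\partial_x\mathcal G_{\omega_\ell}(x,k)$, cf.\ Proposition~\ref{dxlinfty}. The bulk of the work is then to bound the right-hand side, namely $\|\langle x\rangle\langle\partial_x\rangle\hat G_{\omega_\ell}(\vec\phi_\ell)\|_{L^1}$, in terms of the original data $\vec f$. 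This is done by setting up a smooth variant of the system \eqref{lG}--\eqref{mG} from Section~\ref{Asc}, proving $\mathcal{F}L^1$ bounds on $\langle k\rangle g_{\ell,\pm}$ and $\langle k\rangle\partial_k g_{\ell,\pm}$ via Lemma~\ref{lemmafordecay}, and controlling the discrepancy between the exact profile $\vec\psi$ and the auxiliary $\vec\phi$ by the exponentially small error of Lemma~\ref{l2}. The three terms on the right of \eqref{interpweder} arise from, respectively, the localized weighted $L^1$ data via \eqref{xxG}, the global $W^{1,1}$ piece multiplied by $(y_1-y_m+s)$ from the translation of cutoffs, and the $H^2$ remainder from $\mathcal T-\mathcal S$.
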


\subsection{Sketch of key ideas}\label{sub:sketch}
In this subsection, we explain the main ideas behind the proof of the main result, Theorem \ref{princ}. After establishing Theorem \ref{princ}, then the dispersive decay will follow from Propositions $7.1$ and $8.1$ of \cite{KriegerSchlag} after a careful analysis of the mapping properties of $\mathcal{S}(t)$, for more details, see Section \ref{Dispsection}. 

\subsubsection{Asymptotic completeness for one single potential}
\par First, we use the asymptotic completeness of the operator $\mathcal{H}_{\omega_{\ell}}$ which is based on results from  \cite{KriegerSchlag}, \cite{Busper1} and Section \ref{sec:scatteringone}.  From the asymptotic completeness of $\mathcal{H}_{\omega_{\ell}}$,  any element $\vec{f}\in L^{2}_{x}(\mathbb{R},\mathbb{C}^{2})$ has a unique representation of the form   \begin{equation*}
    \vec{f}(x)=\hat{F}_{\omega_{\ell}}\left(\vec{\phi_{\ell}}(k)\right)(x)+\vec{z}_{d,\ell}(x),
\end{equation*}
such that $\vec{z}_{d,\ell}\in \Raa P_{d,\omega_{\ell}}$ and $\vec{\phi_{\ell}}\in L^{2}_{k}(\mathbb{R})$ belongs to the domain of $\hat{F}_{\omega_{\ell}}.$

Applying the decomposition above to our setting, near the center of $V_\ell$ (after a Galilei transform), in particular, for any $\vec{f}(x)$ and $\vec{g}(x)\in L^{2}_{x}(\mathbb{R}),$ there exist $\vec{\phi}_{\ell}\in L^{2}_{k}(\mathbb{R},\mathbb{C}^{2})$ and $\vec{v}_{n,\ell}\in \Raa P_{d,\omega_{\ell}}$ satisfying
\begin{equation}\label{eq:onepAC}
    \vec{f}(x)\chi_{\left[\frac{y_{\ell}+y_{\ell+1}}{2},\frac{y_{\ell}+y_{\ell-1}}{2}\right]}(x)+\vec{g}(x)\chi_{\left({-}\infty,\frac{y_{\ell}+y_{\ell+1}}{2}\right]\cup \left[\frac{y_{\ell}+y_{\ell-1}}{2},{+}\infty\right)}(x)=\hat{F}_{\omega_{\ell}}\left(\vec{\phi_{\ell}}(k)\right)(x)+\vec{z}_{d,\ell}(x).
\end{equation}

\subsubsection{Reduction to a linear system}
\par To achieve the decomposition  \eqref{princ11}, we next note that since the functions $\mathcal{T}(\vec{\phi})(t,x)$ and $\mathfrak{G}_{\omega_{k},v_{k},y_{k},\gamma_k}(\mathfrak{v}_{\omega_{k},\lambda})(t,x)$ and $\mathfrak{G}_{\omega_{k},v_{k},y_{k},\gamma_k}(\mathfrak{z}_{\omega_{k},\lambda})(t,x)$ defined in Theorem \ref{tcont} and  Theorem \ref{tdis} satisfy the Schr\"odinger flow \eqref{p} for all $t\geq 0,$ we can restrict the proof of identity \eqref{princ11} to  $t=0.$ In particular, when $t=0,$ we will deduce using Lemma \ref{localS0} and Theorem  \ref{tcont} that
\begin{align}\label{eqqq11}
    \mathcal{T}(\vec{\phi})(0,x)=\sum_{\ell}\chi_{\left[\frac{y_{\ell}+y_{\ell+1}}{2},\frac{y_{\ell}+y_{\ell-1}}{2}\right]}(x)e^{i\left(\frac{v_{\ell}x}{2}+\gamma_{\ell}\right)\sigma_{3}}\hat{G}_{\omega_{\ell}}\left(e^{{-}i\gamma_{\ell}\sigma_{3}}e^{iy_{\ell}k}\begin{bmatrix}
        \phi_{1,\ell}\left(k+\frac{v_{\ell}}{2}\right)\\
        \phi_{2,\ell}\left(k-\frac{v_{\ell}}{2}\right)
    \end{bmatrix}\right)(x-y_{\ell})\\ \nonumber
    {+}O\left(e^{{-}\beta \min_{\ell}(y_{\ell}-y_{\ell+1})}\norm{\mathcal{T}(0)(\vec{\phi})}_{L^{2}_{x}(\mathbb{R})}\right).
\end{align}
Consequently, using the standard Hilbert space theory, we will verify that the existence and uniqueness of $\vec{\phi}\in L^{2}_{k}(\mathbb{R},\mathbb{C}^{2})$ and $v_{d,\ell}\in L^{2}_{x}(\mathbb{R},\mathbb{C}^{2})$ satisfying the equation \eqref{princ11} for any $\vec{f}\in L^{2}_{x}(\mathbb{R},\mathbb{C}^{2})$ is equivalent to the uniqueness and existence of functions $\vec{\phi}\in L^{2}_{k}(\mathbb{R},\mathbb{C}^{2})$ and $\vec{z}_{\omega_{\ell}}\in \Raa P_{d,\omega_{\ell}} $ solving the following equation 
\begin{align}\label{Pooo1}
    \vec{f}(x)=&\sum_{\ell=1}^{m}\chi_{\left[\frac{y_{\ell}+y_{\ell+1}}{2},\frac{y_{\ell}+y_{\ell-1}}{2}\right]}(x)e^{i\left(\frac{v_{\ell}x}{2}+\gamma_{\ell}\right)\sigma_{3}}\hat{G}_{\omega_{\ell}}\left(e^{{-}i\gamma_{\ell}\sigma_{3}}e^{iy_{\ell}k}\begin{bmatrix}
        \phi_{1,\ell}\left(k+\frac{v_{\ell}}{2}\right)\\
        \phi_{2,\ell}\left(k-\frac{v_{\ell}}{2}\right)
    \end{bmatrix}\right)(x-y_{\ell})\\
    \\&{+}\sum_{\ell=1}^{m}
    e^{i\sigma_{3}\left(\frac{v_{\ell}x}{2}+\gamma_{\ell}\right)}\vec{z}_{\omega_{\ell}}(x-y_{\ell}).
    \end{align}
By Agmon's estimate, for any $\ell\in\{1,2,\,...,\,m\},$ $\Raa P_{d,\omega_{\ell}}$ is finite-dimensional for any $\ell$ and any element $\vec{z}(x)$ of $\Raa P_{d,\omega_{\ell}}$ has exponential decay. Therefore, to show  the existence and uniqueness of functions $\vec{\phi}\in L^{2}_{k}(\mathbb{R},\mathbb{C}^{2})$ and $\vec{z}_{\omega_{\ell}}\in\Raa P_{d,\omega_{\ell}}$ satisfying \eqref{Pooo1} is equivalent to the existence and uniqueness of functions $\vec{\phi}\in L^{2}_{k}(\mathbb{R},\mathbb{C}^{2})$ and $\vec{z}_{\omega_{\ell}}\in\Raa P_{d,\omega_{\ell}}$ satisfying
\begin{align}\label{Pooo2}
    \vec{f}(x)=&\sum_{\ell=1}^{m}\chi_{\left[\frac{y_{\ell}+y_{\ell+1}}{2},\frac{y_{\ell}+y_{\ell-1}}{2}\right]}(x)e^{i\left(\frac{v_{\ell}x}{2}+\gamma_{\ell}\right)\sigma_{3}}\hat{G}_{\omega_{\ell}}\left(e^{{-}i\gamma_{\ell}\sigma_{3}}e^{iy_{\ell}k}\begin{bmatrix}
        \phi_{1,\ell}\left(k+\frac{v_{\ell}}{2}\right)\\
        \phi_{2,\ell}\left(k-\frac{v_{\ell}}{2}\right)
    \end{bmatrix}\right)(x-y_{\ell})\\ \nonumber
    \\&{+}\sum_{\ell=1}^{m}
    e^{i\left(\frac{v_{\ell}x}{2}+\gamma_{\ell}\right)\sigma_{3}}\chi_{\left[\frac{y_{\ell}+y_{\ell+1}}{2},\frac{y_{\ell}+y_{\ell-1}}{2}\right]}(x)\vec{z}_{\omega_{\ell}}(x-y_{\ell}).
    \end{align}
Furthermore, after a change of variable and localization, one can reduce that the functions $\vec{\phi}$ and $\vec{z}_{\omega_{\ell}}$ being solutions of \eqref{Pooo2} is equivalent to the existence of functions $h_{\ell,\pm}\in L^{2}_{x}(\mathbb{R},\mathbb{C}^{2})$ satisfying the following equations
\begin{align}\nonumber
   &\hat{G}_{\omega_{\ell}}\left(e^{{-}i\gamma_{\ell}\sigma_{3}}e^{iy_{\ell}k}\begin{bmatrix}
        \phi_{1,\ell}\left(k+\frac{v_{\ell}}{2}\right)\\
        \phi_{2,\ell}\left(k-\frac{v_{\ell}}{2}\right)
    \end{bmatrix}\right)(x)\\
    &\quad\quad \quad \quad\quad \quad=\vec{z}_{\omega_{\ell}}(x)+e^{{-}i\left(\frac{v_{\ell}(x+y_{\ell})}{2}+\gamma_{\ell}\right)\sigma_{3}}\chi_{\left[\frac{y_{\ell+1}-y_{\ell}}{2},\frac{y_{\ell-1}-y_{\ell}}{2}\right]}f(x+y_{\ell})\\
    \label{eqlast2}
   &\quad\quad \quad \quad\quad \quad {+}\chi_{\left[\frac{y_{\ell-1}-y_{\ell}}{2},{+}\infty\right)}h_{\ell,+}(x)+ \chi_{\left({-}\infty,\frac{y_{\ell+1}-y_{\ell}}{2}\right]}(x)h_{\ell,-}(x),
\end{align}
for any $\ell\in\{1,2,\,...,\,m\}.$ 
\par From the propertis of distorted Fourier transforms, for any $\ell\in\{1,2,\,...,\,m\},$ the following identity holds 
\begin{equation*}
    \int_{\mathbb{R}}\overline{\mathcal{F}_{\omega_{\ell}}^{t}(x,\lambda)}\vec{z}_{\omega_{\ell}}(x)\,dx=\int_{\mathbb{R}}\overline{\mathcal{G}_{\omega_{\ell}}^{t}(x,\lambda)}\vec{z}_{\omega_{\ell}}(x)\,dx=0,
\end{equation*}
for all $\vec{z}_{\omega_{\ell}}\in \Raa P_{d,\omega_{\ell}}.$ By inversion formulas, one also has
\begin{equation*}
    \int_{\mathbb{R}}\sigma_{3}\overline{\mathcal{F}_{\omega_{\ell}}^{t}(x,\lambda)}\sigma_{3}\hat{G}_{\omega_{\ell}}\left(\begin{bmatrix}
        \phi_{1}\\
        \phi_{2}
    \end{bmatrix}\right)(x)\,dx=\begin{bmatrix}
        \phi_{1}(k)\\
        \phi_{2}(k)
    \end{bmatrix},\, \int_{\mathbb{R}}\sigma_{3}\overline{\mathcal{G}_{\omega_{\ell}}^{t}(x,\lambda)}\sigma_{3}\hat{F}_{\omega_{\ell}}\left(\begin{bmatrix}
        \psi_{1}\\
        \psi_{2}
    \end{bmatrix}\right)(x)\,dx=\begin{bmatrix}
        \psi_{1}(k)\\
        \psi_{2}(k)
    \end{bmatrix},
\end{equation*}
for any $\vec{\phi}\in L^{2}_{k}(\mathbb{R},\mathbb{C}^{2})$ and $\vec{\psi}\in L^{2}_{k}(\mathbb{R},\mathbb{C}^{2})$ belonging to the domain of each function $\hat{G}_{\omega_{\ell}}$ and $\hat{F}_{\omega_{\ell}}.$  Keeping in mind that
\begin{equation*}
    \hat{G}_{\omega_{\ell}}\left(e^{{-}i\gamma_{\ell}}e^{iy_{\ell}k}
    \begin{bmatrix}
        \phi_{1,\ell}(k+\frac{v_{\ell}}{2})\\
        \phi_{2,\ell}(k-\frac{v_{\ell}}{2})
    \end{bmatrix}
    \right)(x)=\hat{F}_{\omega_{\ell}}\left(e^{{-}i\gamma_{\ell}}e^{iy_{\ell}k}
    \begin{bmatrix}
        \phi_{1,\ell-1}(k+\frac{v_{\ell}}{2})\\
        \phi_{2,\ell-1}(k-\frac{v_{\ell}}{2})
    \end{bmatrix}
    \right)(x),
\end{equation*}
it can be obtained after applying each operator 
\begin{equation*}
    F^{*}_{\omega_{\ell}}\left(\sigma_{3}\vec{f}(x)\right)(\lambda)=\int_{\mathbb{R}}\overline{\mathcal{F}^{t}(x,\lambda)}\sigma_{3}\vec{f}(x)\,dx,\,\,G^{*}_{\omega_{\ell}}\left(\sigma_{3}\vec{f}(x)\right)(\lambda)=\int_{\mathbb{R}}\overline{\mathcal{G}^{t}(x,\lambda)}\sigma_{3}\vec{f}(x)\,dx,
\end{equation*}
on each equation \eqref{eqlast2} that all the identities of \eqref{eqlast2} are true for all $\ell\in\{1,\,...,m\}$ when the functions $h_{\ell,\pm}\in L^{2}_{x}(\mathbb{R},\mathbb{C}^{2})$ satisfy a linear equation of the following form
\begin{equation}\label{TA}
    L\left(
    [h_{\ell,\epsilon}]_{\epsilon\in\{+,-\},\ell\in\{1,2,\,...,m\}}\right)(k)=A(\vec{f})(k),
\end{equation}
where $L:L^{2}_{x}(\mathbb{R},\mathbb{C}^{2m-2})\to L^{2}_{k}(\mathbb{R},\mathbb{C}^{2m-2})$ and $A:L^{2}_{x}(\mathbb{R},\mathbb{C}^{2})\to L^{2}_{k}(\mathbb{R},\mathbb{C}^{2m-2})$ are linear bounded operators. If we could find a solution to \eqref{TA}, then one can apply \eqref{eq:onepAC} to recover $\vec{z}_{\omega_\ell}$ and $\vec{\phi}_\ell$ from \eqref{eqlast2}. 

\subsubsection{Invertibility of the linear system via Hardy spaces}
\par To study the operator $L$, firstly  using the asymptotic behavior in \eqref{asy1}, \eqref{asy2}, \eqref{asy3} and \eqref{asy4}, one has
\begin{align*}
    G^{*}_{\omega_{\ell}}\left(\chi_{[\frac{y_{\ell-1}-y_{\ell}}{2},{+}\infty)}(x)h_{\ell,+}(x)\right)= & \int_{\frac{y_{\ell-1}-y_{\ell}}{2}}^{{+}\infty}e^{{-}ikx}h_{\ell,+}(x)\,dx+r(k)\int_{\frac{y_{\ell-1}-y_{\ell}}{2}}^{{+}\infty}e^{ikx}h_{\ell,+}(x)\,dx\\
    =& e^{{-}ik\frac{(y_{\ell-1}-y_{\ell})}{2}}\int_{0}^{{+}\infty}e^{{-}ikx}h_{\ell,+}\left(x+\frac{y_{\ell-1}-y_{\ell}}{2}\right)\,dx\\&{+}r(k)e^{\frac{ik(y_{\ell-1}-y_{\ell})}{2}}\int_{0}^{{+}\infty}e^{ikx}h_{\ell,+}\left(x+\frac{y_{\ell-1}-y_{\ell}}{2}\right)\,dx\\&{+}O\left(e^{{-}\beta\vert y_{\ell-1}-y_{\ell}\vert}\norm{\chi_{[\frac{y_{\ell-1}-y_{\ell}}{2},{+}\infty)}(x)h_{\ell,+}(x)}_{L^{2}_{x}(\mathbb{R})}\right).
\end{align*}
One can deduce using the decay estimates \eqref{asyreftr} and Lemma \ref{+-interact} that the orthogonal projection of the function $e^{ik\frac{(y_{\ell-1}-y_{\ell})}{2}}G^{*}_{\omega_{\ell}}\left(\chi_{[\frac{y_{\ell-1}-y_{\ell}}{2},{+}\infty)}(x)h_{\ell,+}(x)\right)$ onto the Hardy space $H^{2}(\mathbb{C}_{-},\mathbb{C}^{2})$ 
is equal to
\begin{equation*}
    \int_{0}^{{+}\infty}e^{{-}ikx}h_{\ell,+}\left(x+\frac{y_{\ell-1}-y_{\ell}}{2}\right)\,dx+O\left(e^{{-}\beta\vert y_{\ell-1}-y_{\ell}\vert}\norm{\chi_{[\frac{y_{\ell-1}-y_{\ell}}{2},{+}\infty)}(x)h_{\ell,+}(x)}_{L^{2}_{x}(\mathbb{R})}\right),
\end{equation*}
and the orthogonal projection of the function $e^{ik\frac{(y_{\ell-1}-y_{\ell})}{2}}G^{*}_{\omega_{\ell}}\left(\chi_{[\frac{y_{\ell-1}-y_{\ell}}{2},{+}\infty)}(x)h_{\ell,+}(x)\right)$ onto the Hardy space $H^{2}(\mathbb{C}_{+},\mathbb{C}^{2})$\footnote{Note that $G^*_\omega$ represents that an incoming wave from $x=\infty$ interacts with the potential then splits into the reflection and transmission parts as outgoing waves.  One can see from the computations above that the Hardy space $H^{2}(\mathbb{C}_{+},\mathbb{C}^{2})$ detects the reflection part of the outgoing wave in the positive half-line. On the other hand, the Hardy space $H^{2}(\mathbb{C}_{-},\mathbb{C}^{2})$ detects the incoming wave in the positive half-line. Similarly, switching to the negative half-line, one can similarly note that  the Hardy space $H^{2}(\mathbb{C}_{+},\mathbb{C}^{2})$ detects the transmission  part of the outgoing wave. }  is equal to
{\small \begin{equation*}
    r(k)e^{\frac{ik(y_{\ell-1}-y_{\ell})}{2}}\int_{0}^{{+}\infty}e^{ikx}h_{\ell,+}\left(x+\frac{y_{\ell-1}-y_{\ell}}{2}\right)\,dx+O\left(e^{{-}\beta\vert y_{\ell-1}-y_{\ell}\vert}\norm{\chi_{[\frac{y_{\ell-1}-y_{\ell}}{2},{+}\infty)}(x)h_{\ell,+}(x)}_{L^{2}_{x}(\mathbb{R})}\right),
\end{equation*}}
both of them have large precision when $\vert y_{\ell-1}-y_{\ell} \vert\gg 1.$
Similarly, we can find  the orthogonal decomposition of
\begin{equation*}
F^{*}_{\omega_{\ell}}\left(\chi_{[\frac{y_{\ell-1}-y_{\ell}}{2},{+}\infty)}h_{\ell,+}(x)\right),\,F^{*}_{\omega_{\ell}}\left(\chi_{({-}\infty,\frac{y_{\ell+1}-y_{\ell}}{2}]}h_{\ell,+}(x)\right),\, G^{*}_{\omega_{\ell}}\left(\chi_{({-}\infty,\frac{y_{\ell+1}-y_{\ell}}{2}]}h_{\ell,+}(x)\right)  
\end{equation*}
onto $H^{2}(\mathbb{C}_{+},\mathbb{C}^{2})\oplus H^{2}(\mathbb{C}_{-},\mathbb{C}^{2}).$

\par Consequently, using the orthogonal decomposition $L^{2}_{x}(\mathbb{R},\mathbb{C}^{2})=H^{2}(\mathbb{C}_{+},\mathbb{C}^{2})\oplus H^{2}(\mathbb{C}_{-},\mathbb{C}^{2}),$ 
we will show that if $\min y_{\ell}-y_{\ell+1}>0$ and $\min v_{\ell}-v_{\ell+1}>0$ are sufficiently large, the operator $L$ has a bounded inverse, which implies the existence of functions $h_{\ell,\pm}\in L^{2}_{x}(\mathbb{R},\mathbb{C}^{2})$ satisfying \eqref{TA} for any $f\in L^{2}_{x}(\mathbb{R},\mathbb{C}^{2}).$ More precisely, 
we can deduce that the operator $L$ is isomorphic to a perturbation of the identity operator 
\begin{equation*}
    \mathrm{Id}:H^{2}(\mathbb{C}_{+},\mathbb{C}^{2m-2})\oplus H^{2}(\mathbb{C}_{-},\mathbb{C}^{2m-2})\to H^{2}(\mathbb{C}_{+},\mathbb{C}^{2m-2})\oplus H^{2}(\mathbb{C}_{-},\mathbb{C}^{2m-2})
\end{equation*}
of the form $\mathrm{Id}+N_{\vec{v},\vec{y}}$ such that the bounded operator $N_{\vec{v},\vec{y}}:L^{2}_{x}(\mathbb{R},\mathbb{C}^{2m-2})\to L^{2}_{x}(\mathbb{R},\mathbb{C}^{2m-2})$ satisfies uniformly
\begin{equation*}
    \norm{N_{\vec{v},\vec{y}}^{m}}\leq \frac{C}{\min_{\ell}v_{\ell}-v_{\ell+1}} 
\end{equation*}
when $\min_{\ell} v_{\ell}-v_{\ell+1}>0,$ and $\min_{\ell} y_{\ell}-y_{\ell+1}>0$ is large enough. 
\par
As a consequence, for any $f\in L^{2}_{x}(\mathbb{R},\mathbb{C}^{2}),$ there exist unique $\vec{\phi}\in L^{2}_{k}(\mathbb{R},\mathbb{C}^{2})$ and functions $\vec{z}_{\omega_{\ell}}\in \Raa P_{d,\omega_{\ell}}$ satisfying \eqref{Pooo2}, which implies the asymptotic completeness of the solutions of \eqref{p} through the identity \eqref{princ11} being true for all $t\geq 0.$ 

\subsubsection{Coercivity estimates}
\par The proof of estimates \eqref{phi2kkdecay}, \eqref{F0l1} and \eqref{F0l2} follows using the coercive estimate 
\begin{equation*}
    \max_{\ell}\norm{\begin{bmatrix}
        \phi_{1,\ell}(k)\\
        \phi_{2,\ell}(k)
    \end{bmatrix}}_{L^{2}_{k}(\mathbb{R})}\leq C\norm{\mathcal{S}(0)(\vec{\phi})}\leq 2C\norm{\mathcal{T}(0)(\vec{\phi})}
\end{equation*}
proved in Theorem \ref{TT} of Section $4,$ the triangular inequality and the propositions of Appendix \ref{sec:appb}.
\section{Scattering theory for linear Schr\"odinger equations with one potential}\label{sec:scatteringone}
In this section, we study the scattering theory for the standard matrix Schr\"odinger operator $\mathcal{H}_\omega$ defined in \eqref{H} with appearance of threshold resonances at $\pm\omega$. After constructing distorted Fourier bases associated to $\mathcal{H}_\omega$, we study the properties of mappings based on these distorted bases.

\subsection{Construction of distorted Fourier bases}
Since in the construction of distorted Fourier bases, the generic condition is imposed in Krieger-Schlag \cite{KriegerSchlag}, here we provide some details for the non-generic case in the general setting. It should be noted that in \cite{nls3soliton}, a special case given by cubic NLS was systematically studied. The whole construction is the same as in the generic setting. We just need to justify the definition of distorted Fourier bases, which is stated in Lemma 6.3 in \cite{KriegerSchlag}. We will follow the notations in \cite{KriegerSchlag}. 
\begin{lemma}\label{2.1}
Let $\underline{e}=\begin{bmatrix}
        1\\
        0
    \end{bmatrix}$. Then for all $k \in \mathbb{R}$, 
\begin{equation}\label{eq:Fomegadef}
\mathcal{F}_\omega(x,k):=2i\frac{k}{\sqrt{\omega}}F_1\left(\sqrt{\omega} x,\frac{k}{\sqrt{\omega}}\right)D\left(\frac{k}{\sqrt{\omega}}\right)^{-1}\underline{e}
\end{equation}
\begin{equation}\label{eq:Gomegadef}
\mathcal{G}_\omega(x,k):=2i\frac{k}{\sqrt{\omega}}G_2\left(\sqrt{\omega} x,\frac{k}{\sqrt{\omega}}\right)D\left(\frac{k}{\sqrt{\omega}}\right)^{-1}\underline{e}
\end{equation} are bounded solutions of $\mathcal{H}_\omega f=(\omega+k^2)f$ satisfying \eqref{asy1}, \eqref{asy2}, \eqref{asy3} and \eqref{asy4}.
\end{lemma}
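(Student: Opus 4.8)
\emph{Proof plan.} The plan is to reduce to the normalized operator and then follow the fundamental-system/connection-matrix construction of Section~6 of \cite{KriegerSchlag}, isolating the single place where their generic hypothesis is used.

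\textbf{Step 1: reduction and the solution property.} First I would use the scaling $x\mapsto\sqrt\omega\,x$, $k\mapsto k/\sqrt\omega$, which conjugates $\mathcal{H}_\omega$ to an operator of the same form with $\omega=1$; this accounts for the prefactors in \eqref{eq:Fomegadef}--\eqref{eq:Gomegadef} and lets $F_1,F_2,G_1,G_2$ and $D$ denote exactly the matrix-valued Jost solutions and connection matrix of \cite{KriegerSchlag}. For every real $k\neq0$ the Jost solutions are bounded on $\mathbb{R}$ --- this follows from the Volterra integral equations defining them together with the exponential decay \eqref{decV} of $V$ --- and for each fixed $k\neq0$ the vectors $F_1(x,k)D(k)^{-1}\underline e$ and $G_2(x,k)D(k)^{-1}\underline e$ are $x$-independent linear combinations of bounded solutions of $\mathcal{H}_\omega f=(\omega+k^2)f$, hence themselves solutions. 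What remains is boundedness that is \emph{uniform as $k\to0$} and the precise asymptotics \eqref{asy1}--\eqref{asy4}.

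\textbf{Step 2: the threshold --- the main point.} Hypothesis (H1) guarantees $\det D(k)\neq0$ for all real $k\neq0$, so $D(k)^{-1}$ is harmless away from the origin; the delicate regime is $k\to0$, where the generic assumption of \cite{KriegerSchlag} would make $D(0)$ invertible but which we now drop. In the non-generic case $D(0)$ has a nontrivial kernel, which by (H1)--(H3) corresponds to a \emph{threshold resonance} at $\omega$ or $-\omega$, never to a threshold eigenvalue. The claim I would establish is that $k\mapsto k\,D(k)^{-1}$ extends to a bounded matrix function, smooth on a strip $|\I k|\le\delta$, across $k=0$. For this I would show that $\det D(k)$ vanishes to exactly first order at $k=0$: writing $D(k)^{-1}=\big(\mathrm{adj}\,D(k)\big)/\det D(k)$, the adjugate is analytic near $0$ (the Jost solutions are analytic in $k$ there because $V$ is exponentially localized), so $D(k)^{-1}$ has at worst a simple pole, which the prefactor $k$ cancels. \textbf{I expect this to be the principal obstacle:} one must rule out a double (or higher-order) zero of $\det D$ at the threshold, which would correspond to a genuine $L^2$ eigenvalue excluded by (H1)--(H3), and one must check that $k\,D(k)^{-1}\underline e$, once contracted with $F_1$ or $G_2$, yields a genuinely \emph{bounded} (not merely polynomially growing) function as $x\to\pm\infty$; this uses the precise form of the resonance function and the symmetries $\sigma_3\mathcal{H}_\omega^{*}\sigma_3=\mathcal{H}_\omega$, $\sigma_1\mathcal{H}_\omega\sigma_1=-\mathcal{H}_\omega$ to pin down the structure of $D$ at $0$.

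\textbf{Step 3: asymptotics and scattering relations.} With uniform boundedness in hand, I would insert the known asymptotics of $F_1(x,k)$ and $G_2(x,k)$ at $\pm\infty$ into \eqref{eq:Fomegadef}--\eqref{eq:Gomegadef}: $F_1(x,k)$ is purely outgoing $e^{ikx}\underline e+O(e^{-\gamma x})$ at $+\infty$ and a combination of $e^{\pm ikx}\underline e$ with $O(e^{\gamma x})$ error at $-\infty$, and symmetrically for $G_2$. Reading off the resulting coefficients \emph{defines} $s(k)$ and $r(k)$ as entries of $2ik\,D(k)^{-1}$, after which \eqref{asy1}--\eqref{asy4} hold by inspection, with the error bounds $O(e^{\mp\gamma x}/(1+|k|))$ furnished by the Volterra estimates for the Jost solutions and the large-$k$ behavior of $D$. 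Finally, the algebraic relations in \eqref{asyreftr} --- $|s|^2+|r|^2=1$, $s(-k)=\overline{s(k)}$, $r(-k)=\overline{r(k)}$, and the expansions as $k\to+\infty$ --- follow from constancy of the relevant Wronskians together with the two symmetry identities, and \eqref{forp1} follows by equating the two representations of the transmitted and reflected parts; since none of these identities used invertibility of $D(0)$, they carry over verbatim from the generic case of \cite{KriegerSchlag}.
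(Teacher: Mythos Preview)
Your overall approach matches the paper's: reduce to the threshold regime, show that $k\,D(k)^{-1}$ stays bounded as $k\to 0$, and argue that the obstruction would be a threshold \emph{eigenvalue} rather than a resonance. You also correctly flag Step~2 as the crux.

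Where your plan diverges from the paper is in the mechanism for Step~2, and this is a genuine soft spot. You propose to write $D(k)^{-1}=\mathrm{adj}\,D(k)/\det D(k)$ and assert that $\mathrm{adj}\,D$ is analytic near $0$. But in the framework of \cite{KriegerSchlag} the connection matrices $A(\lambda),B(\lambda)$---and hence the entries of $D(\lambda)$---are \emph{not} guaranteed to be continuous at $\lambda=0$: only the combinations $\lambda\,pA(\lambda)$, $\lambda\,pB(\lambda)$, $qA(\lambda)$, $qB(\lambda)$ are smooth there (this is Corollary~5.25 of \cite{KriegerSchlag}, recorded in the paper as \eqref{trtrtr}). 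So the adjugate argument does not go through without further structure, and ``double zero of $\det D$ $\Rightarrow$ eigenvalue'' is not an off-the-shelf fact you can cite.

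The paper instead uses the algebraic factorization
\[
\det D(\lambda)\;=\;-4i\lambda\,\mu(\omega)\,\det A(\lambda)
\]
(from (5.26) and (5.33) of \cite{KriegerSchlag}), which immediately gives $2i\lambda(\det D(\lambda))^{-1}=-(2\mu(\omega)\det A(\lambda))^{-1}$ and reduces the whole question to proving $\liminf_{\lambda\to 0}|\det A(\lambda)|>0$. This is where the eigenvalue characterization (Lemma~5.17 of \cite{KriegerSchlag}: $\det A(\lambda)=0\iff\lambda^2+\omega$ is an eigenvalue) lives, but one cannot simply set $\lambda=0$ in it because of the possible singularities of $A$ just mentioned. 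The paper therefore carries out a case analysis on the local behavior of $A(\lambda),B(\lambda)$ at $\lambda=0$ (vanishing linearly; smooth; singular), in each case exploiting the Wronskian-type identity
\[
2i\lambda\langle pv,v\rangle=2i\lambda\langle A^{*}pA\,v,v\rangle+2\mu\langle A^{*}qB\,v,v\rangle-2\mu\langle B^{*}qA\,v,v\rangle-2i\lambda\langle B^{*}pB\,v,v\rangle
\]
(their \eqref{ppppp1}, from (5.28) of \cite{KriegerSchlag}) to force the existence of an $L^2$ eigenfunction at $\omega$ whenever $\det A(\lambda)\to 0$, contradicting the standing assumption. Your intuition ``higher-order vanishing $=$ eigenvalue'' is exactly right, but the proof requires this $A$-matrix detour and the case split rather than a direct adjugate/analyticity argument on $D$. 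Your Step~3 is fine once Step~2 is established.
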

\begin{proof}
With the proof of Lemma 6.3 in \cite{KriegerSchlag}, we just need to justify when $\pm\omega$ are resonances but not eigenvalues, \eqref{eq:Fomegadef} and \eqref{eq:Gomegadef} are well-defined functions.

Using the constructions of $A\left(\frac{k}{\sqrt{\omega}}\right),\, B\left(\frac{k}{\sqrt{\omega}}\right)$ in Lemma 5.14  and $D\left(\frac{k}{\sqrt{\omega}}\right)$ in (5.33) from  \cite{KriegerSchlag}, we know that for $k\neq 0$, 
\begin{equation}
    \det D\left(\frac{k}{\sqrt{\omega}}\right)=-4i\frac{k}{\sqrt{\omega}}\mu(\omega)\det A\left(\frac{k}{\sqrt{\omega}}\right)
\end{equation}by (5.26) from Lemma 5.14 in \cite{KriegerSchlag}. It follow that
\begin{equation}
    2i\frac{k}{\sqrt{\omega}}\left(\det D\left(\frac{k}{\sqrt{\omega}}\right)\right)^{-1}=-\frac{1}{2\mu(\omega)}\left(\det A\left(\frac{k}{\sqrt{\omega(\omega)}}\right)\right)^{-1}
\end{equation}when $\left(\det A\left(\frac{k}{\sqrt{\omega}}\right)\right)^{-1}$ is well-defined. By Lemma 5.17 from \cite{KriegerSchlag}, $E=k^2+\omega$ is an eigenvalue of $\mathcal{H}_\omega$ if and only if $\det A\left(\frac{k}{\sqrt{\omega}}\right)=0$. From Corollary 5.21 in \cite{KriegerSchlag}, $D\left(\frac{k}{\sqrt{\omega}}\right)$ is invertible if $E=k^2+\omega$ is not a resonance nor eigenvalue. We will justify that these properties hold as $k\rightarrow 0$. In particular, we will show that under our assumptions,  $\det A\left(\frac{k}{\sqrt{\omega}}\right) \neq 0$ as $k\rightarrow 0$.
\par 

To simplify notations, in later arguments, we set $\lambda=\frac{k}{\sqrt{\omega}}$. We recall that Corollary $5.25$ of \cite{KriegerSchlag} implies that the functions $A$ and $B$ satisfy 
\begin{equation}\label{trtrtr}
    \lambda p A\left(\lambda\right),\, \lambda p B\left(\lambda\right),\, q A\left(\lambda\right),\,qB\left(\lambda\right) \text{ are smooth for all $\lambda\in\mathbb{R},$}
\end{equation}
where
\begin{equation*}
p=\begin{bmatrix}
    1 & 0\\
    0 & 0
\end{bmatrix},\, q=\begin{bmatrix}
    0 & 0\\
    0 & 1
\end{bmatrix}.
\end{equation*}
\par Next, we are going to prove that in all the possible cases that $\lim_{\lambda\to 0}\left\vert \det\left(A(\lambda)\right)\right\vert\neq 0,$ which will imply the result of Lemma \ref{2.1}. In each case, we will use the following equation which can be deduced directly from the identity $(5.28)$ of Lemma $5.16.$ from \cite{KriegerSchlag}.
\begin{align}\label{ppppp1}
   2i\lambda\langle pv,v \rangle= &2i\lambda\left\langle A^{*}(\lambda)pA(\lambda)v,v \right\rangle+2\mu\left\langle  A^{*}(\lambda)q B(\lambda)v,v \right\rangle{-}2\mu\left\langle B^{*}(\lambda)qA(\lambda)v,v \right\rangle\\&{-}2i\lambda\left\langle B^{*}(\lambda)pB(\lambda)v,v \right\rangle,
\end{align}
for any $v\in \mathbb{C}^{2}.$
\\
\textbf{(Case 1: $A(\lambda)=A_{1}\lambda+O(\lambda^{2})$ near $\lambda=0.$)}
If $A_{1}$ is invertible, then there exists $v\neq 0\in\mathbb{C}^{2}$ satisfying $qA_{1}v=0.$ Consequently, the identity \eqref{ppppp1} implies for any $\lambda$ sufficiently close to $0$ that
\begin{equation*}
    2i\lambda\norm{pv}^{2}={-}2i\lambda\norm{pB(\lambda)v}^{2}+O(\lambda^{2}),
\end{equation*}
from which we can deduce that $pv=0,$ and $\lim_{\lambda\to 0} \norm{pB(\lambda)v}=0,$ and so there exists an $\alpha(\lambda)$ continuous on $\lambda=0$ satisfying
\begin{equation*}
   B(\lambda)\begin{bmatrix}
        0\\
        1
    \end{bmatrix}=\alpha(\lambda) \begin{bmatrix}
        0\\
        1
    \end{bmatrix}+O(\lambda).
\end{equation*}
Therefore, since in Case $1,$ $\lim_{\lambda\to 0}A(\lambda)=0$ and the estimate above holds for all $\lambda\in\mathbb{R},$ we can use the argument from the first part of the proof of   \cite[Lemma $5.17$]{KriegerSchlag} to obtain that $\omega$ is an eigenvalue of $\mathcal{H}_{\omega},$ which is a contradiction. The proof when $\det A_{1}=0$ is analogous.  
\\
\textbf{(Case 2: $A(\lambda)$ and $B(\lambda)$ are smooth on $\mathbb{R}.$)}
Using \eqref{trtrtr}, it is enough to consider the case where $A(\lambda)=A_{0}+\lambda A_{1}+O(\lambda^{2})$ and $B(\lambda)=B_{0}+\lambda B_{1}+O(\lambda^{2})$ near $\lambda=0.$

If $\det (A_{0})\neq 0,$ then $\lim_{\lambda\to 0}\det A(\lambda)\neq 0,$ and so there is nothing else needed to be done.   Otherwise, if $\det (A_{0})=0,$ there exists $v\neq 0\in\mathbb{C}^{2}$ satisfying $A_{0}v=0.$ Therefore,  $(5.29)$ of Lemma $5.16$ from \cite{KriegerSchlag} implies for $\mu(\lambda)=\sqrt{\lambda^{2}+2\omega}$ that
\begin{equation}\label{A0=0}
    0=\lim_{\lambda\to 0}A^{t}(\lambda)\begin{bmatrix}
        0 & 0\\
        0 & {-}2\mu(\lambda)
    \end{bmatrix}B(\lambda)v=A_{0}^{t}\begin{bmatrix}
        0 & 0\\
        0 & {-}2\mu(0)
    \end{bmatrix}B_{0}v.
\end{equation}
If $B_{0}v=0,$ then we can deduce from equation \eqref{ppppp1} and identity $A_{0}v=0$ that $\norm{pv}=0,$ and so $v=\alpha\begin{bmatrix}
    0\\
    1
\end{bmatrix}$ for an $\alpha\neq 0.$ However, Lemmas $5.2$ and $5.14$ of \cite{KriegerSchlag} imply that $B_{0}\begin{bmatrix}
    0\\
    1
\end{bmatrix}=A_{0}\begin{bmatrix}
    0\\
    1
\end{bmatrix}=0$ is not possible to happen.  
\par Now we assume that  $B_{0}v\neq 0$ and $A_{0}v=0$ for some $v\neq 0 \in\mathbb{C}^{2}.$ If 
\begin{equation*}
B_{0}v\in \sppp\left\{\begin{bmatrix}
    1\\
    0
\end{bmatrix}\right\},    
\end{equation*}
 then $qB_{0}v=qA_{0}v=0,$ from which we obtain from \eqref{ppppp1} that $\norm{pB_{0}v}=0$ which contradicts the hypothesis that $B_{0}v\neq 0.$
 \par The last remaining situation of Case $2$ is when $B_{0}v\neq 0$ and 
 \begin{equation*}
    B_{0}v\notin \sppp\left\{\begin{bmatrix}
    1\\
    0
\end{bmatrix}\right\}.
 \end{equation*}
In particular, \eqref{A0=0} implies that there exist $a_{11},\,a_{22}\in\mathbb{C}$ satisfying
\begin{equation}\label{ae1}
    A_{0}=\begin{bmatrix}
        a_{11} & a_{12}\\
        0 & 0
    \end{bmatrix}.
\end{equation}

In particular, $q A_{0}=\lim_{\lambda\to 0}qA(\lambda)=0.$ Consequently, Lemma $5.10$ and the identity $(5.26)$ from the article \cite{KriegerSchlag} imply that there exist non-zero smooth functions $F(x,0),\, G(x,0)\in C^{\infty}(\mathbb{R},\mathbb{C}^{2\times 2})$ such that $F(x,0)=\alpha G(x,0)$ for some constant $\alpha\in \mathbb{C},$ and there exists $\beta>0$ satisfying
\begin{align*}
\mathcal{H}_{\omega}F(x,0)=\omega F(x,0),\, \mathcal{H}_{\omega}G(x,0)=\omega G(x,0),\text{ for a $x\in\mathbb{R},$}\\
F(x,0)\begin{bmatrix}
    0\\
    1
\end{bmatrix}=e^{{-}\sqrt{2\omega}x}+O(e^{{-}(\sqrt{2\omega}+\beta)x}) \text{, when $x\to{+}\infty,$}\\
G(x,0)\begin{bmatrix}
    0\\
    1
\end{bmatrix}=e^{\sqrt{2\omega}x}+O(e^{(\sqrt{2\omega}+\beta)x}) \text{, when $x\to{-}\infty.$}
\end{align*}
Therefore, function $F(x,0)\begin{bmatrix}
    0\\
    1
\end{bmatrix}\in L^{2}_{x}(\mathbb{R})$  is an eigenfunction associated with  $\omega,$ which contradicts the hypothesis of $\omega$ not being an eigenvalue. For more details, see Lemmas $5.10$ and $5.14$ from Section $5$ of \cite{KriegerSchlag}.
\\
\textbf{(Case 3: $A(\lambda)$ or $B(\lambda)$ is not continuous on $\lambda=0.$)}
Assume that $\lim \inf_{\lambda\to 0}\vert\det A(\lambda)\vert=0.$ The condition \eqref{trtrtr} implies that when $\lambda$ is close to $0,$ $A(\lambda)$ has for constants $c_{1},\,c_{2},\,a_{11},\,a_{12},\,b_{11},\,b_{12},\,d_{11}$ and $d_{12}\in\mathbb{C}$ the following asymptotic:
\begin{equation*}
    A(\lambda)=\begin{bmatrix}
        \frac{a_{11}}{\lambda}+b_{11}+d_{11}\lambda & \frac{a_{12}}{\lambda}+b_{12}+d_{12}\lambda\\
         c_{1}a_{11}+c_{1}\lambda b_{11}+c_{2}\lambda a_{11} & c_{1}a_{12}+c_{1}\lambda b_{12}+c_{2}\lambda a_{12} 
    \end{bmatrix}+O(\lambda^{2}).
\end{equation*}
\par Furthermore, using the identity \eqref{ppppp1}, we can verify that $B(\lambda)$ should have the following asymptotic behavior near $\lambda=0$:
\begin{equation*}
    B(\lambda)=\begin{bmatrix}
        \pm\frac{a_{11}}{\lambda} & \pm\frac{a_{12}}{\lambda}\\
         0 & 0
    \end{bmatrix}+O(1).
\end{equation*}
Consequently, if $a_{11}=a_{12}=0,$ then $A(\lambda)$ and $B(\lambda)$ would be continuous on $\lambda=0,$ which would contradict the assumption of Case $3.$
\par From the asymptotic of $A(\lambda),$ we can verify that the vector
\begin{equation*}
    v_{\lambda}=\begin{bmatrix}
        a_{12}+b_{12}\lambda+d_{12}\lambda^{2}\\
        {-}a_{11}-b_{11}\lambda-d_{11}\lambda^{2}
    \end{bmatrix}
\end{equation*}
satisfies $A(\lambda)v_{\lambda}=O(\lambda^{2})$ when $\lambda$ is close to $0,$
and $\lim_{\lambda\to 0}v_{\lambda}=\begin{bmatrix}
    a_{12}\\
    {-}a_{11}
\end{bmatrix}\neq 0.$
\par Consequently, we can verify from \eqref{ppppp1} and properties \eqref{trtrtr} that
\begin{equation}\label{lastcase3}
   2i\lambda\norm{p v_{\lambda}}^{2}={-}2i\lambda\norm{pB(\lambda)v_{\lambda}}^{2}+O(\lambda^{2}), 
\end{equation}
when $\lambda$ is close to $0.$ Therefore, we deduce that $a_{12}=0$ and $a_{11}\neq 0.$ Therefore, we can conclude the existence of a $\beta\neq 0\in \mathbb{C}$ such that 
\begin{equation*}
    \lim_{\lambda\rightarrow 0}A(\lambda)v_{\lambda}=0,\,\lim_{\lambda\rightarrow 0}v_{\lambda}={-}a_{11}\begin{bmatrix}
        0\\
        1
    \end{bmatrix},\,\lim_{\lambda\to 0} B(\lambda)v_{\large}=\beta \begin{bmatrix}
        0\\
        1
    \end{bmatrix},
\end{equation*}
the last identity follows from \eqref{lastcase3}.
\par In conclusion, similar to the previous cases, we can repeat the argument from the first paragraph of the proof of Lemma $5.16$ of \cite{KriegerSchlag} to obtain that $\omega$ is an eigenvalue of $\mathcal{H}_{\omega},$ which contradicts our hypothesis.\\ 
\textbf{(Conclusion of the proof of Lemma \ref{2.1}.)}
\par Assume that $\omega$ is a resonance but not an eigenvalue. From discussions above, we know that  $\det A\left(\frac{k}{\sqrt{\omega}}\right) \neq 0,\,\lim\inf_{k\to 0}\left\vert \det A\left(\frac{k}{\sqrt{\omega}}\right)  \right\vert\neq 0,$   but $\left|\left(\det D \left(\frac{k}{\sqrt{\omega}}\right)\right)^{-1}\right| \rightarrow \infty$ as $k\rightarrow 0$.\\
Using a limiting argument as $k\rightarrow0$, the identity
\begin{equation}
    2i\frac{k}{\sqrt{\omega}}\left(\det D\left(\frac{k}{\sqrt{\omega}}\right)\right)^{-1}=-\frac{1}{2\mu(\omega)}\left(\det A\left(\frac{k}{\sqrt{\omega}}\right)\right)^{-1}
\end{equation} holds for any $k$. 
Actually, it also implies that $ 2i\frac{k}{\sqrt{\omega}}\left(\det D\left(\frac{k}{\sqrt{\omega}}\right)\right)^{-1}$ is a smooth function.  It follows that 
\eqref{eq:Fomegadef} and \eqref{eq:Gomegadef} are well-defined functions for all $k$.
\end{proof}
We can also summarize our computations above as characterizations of eigenvalues and resonances all the way up to the endpoint of the essential spectrum. This is an extension of Proposition 2.2.1 and Proposition 2.2.2 in \cite{Busper1},  and Lemma 5.17 and Corollary 5.12 in \cite{KriegerSchlag}.
\begin{proposition}
Using the notations of $D\Big(\frac{k}{\sqrt{\omega}}\Big)$ and $A\Big(\frac{k}{\sqrt{\omega}}\Big)$ which are defined \cite[(5.33)]{KriegerSchlag} and \cite[(5.26)]{KriegerSchlag} respectively, then  
\begin{itemize}
    \item $k^2+\omega$ is an eigenvalue of $H_\omega$ if and only if $\det A\Big(\frac{k}{\sqrt{\omega}}\Big)=0$ for $k\in\mathbb{R}$. 
    \item $\omega$ is a resonance of $H_\omega$ if and only if $\det D(0)=0$.
\end{itemize}
\end{proposition}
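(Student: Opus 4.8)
The plan is to establish the two bullet points by directly inspecting the constructions of the matrices $A\bigl(\tfrac{k}{\sqrt\omega}\bigr)$, $B\bigl(\tfrac{k}{\sqrt\omega}\bigr)$, $D\bigl(\tfrac{k}{\sqrt\omega}\bigr)$ from Section~5 of \cite{KriegerSchlag} and combining them with the computations already carried out in the proof of Lemma~\ref{2.1}. For the first bullet, when $k\neq 0$ this is exactly the content of \cite[Lemma~5.17]{KriegerSchlag}: the matrix $A\bigl(\tfrac{k}{\sqrt\omega}\bigr)$ encodes the connection coefficients between the two exponentially growing/decaying Jost-type solutions at $\pm\infty$, so $\det A\bigl(\tfrac{k}{\sqrt\omega}\bigr)=0$ detects precisely the existence of an $L^2$ solution of $\mathcal{H}_\omega f=(k^2+\omega)f$, i.e.\ an eigenfunction. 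For the endpoint $k=0$, the analysis of Cases~1--3 in the proof of Lemma~\ref{2.1} shows that, under our standing hypotheses, $\det A(0)\neq 0$ whenever $\omega$ is not an eigenvalue, and conversely the same case analysis exhibits an $L^2$ eigenfunction as soon as $\det A(0)=0$ (using Lemmas~5.10 and~5.14 of \cite{KriegerSchlag} to build the eigenfunction from the kernel vector). So the first bullet follows by patching the $k\neq 0$ statement with the $k=0$ limiting statement.

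For the second bullet, I would use the key algebraic identity already recorded in the proof of Lemma~\ref{2.1},
\begin{equation*}
2i\frac{k}{\sqrt\omega}\left(\det D\left(\frac{k}{\sqrt\omega}\right)\right)^{-1}=-\frac{1}{2\mu(\omega)}\left(\det A\left(\frac{k}{\sqrt\omega}\right)\right)^{-1},
\end{equation*}
equivalently $\det D\bigl(\tfrac{k}{\sqrt\omega}\bigr)=-4i\tfrac{k}{\sqrt\omega}\,\mu(\omega)\det A\bigl(\tfrac{k}{\sqrt\omega}\bigr)$, coming from \cite[(5.26)]{KriegerSchlag}. For $k\neq 0$ this shows $\det D\bigl(\tfrac{k}{\sqrt\omega}\bigr)=0$ iff $\det A\bigl(\tfrac{k}{\sqrt\omega}\bigr)=0$, which by the first bullet is an eigenvalue, not a genuine resonance; so at $k\neq 0$ there are no resonances distinct from eigenvalues, consistent with \cite[Corollary~5.21]{KriegerSchlag}. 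The interesting case is the threshold $k=0$: here the prefactor $\tfrac{k}{\sqrt\omega}$ vanishes, so $\det D(0)$ can vanish even when $\det A(0)\neq 0$, and by \cite[Corollary~5.12]{KriegerSchlag} (characterization of the threshold resonance via the invertibility of $D(0)$) this is exactly the condition for $\omega$ to be a resonance. I would therefore state: $\det D(0)=0$ iff $D(0)$ is non-invertible iff $\omega$ is a threshold resonance of $\mathcal{H}_\omega$.

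The main obstacle I anticipate is the endpoint bookkeeping at $k=0$: the matrices $A$ and $B$ need not be continuous there (Case~3 in Lemma~\ref{2.1}), only the combinations $\lambda pA$, $\lambda pB$, $qA$, $qB$ are smooth (the identity labelled \eqref{trtrtr}), so one must be careful about what ``$\det A(0)$'' and ``$\det D(0)$'' mean. The cleanest way to handle this is to work with the renormalized quantity $2i\tfrac{k}{\sqrt\omega}\bigl(\det D\bigl(\tfrac{k}{\sqrt\omega}\bigr)\bigr)^{-1}$, which the proof of Lemma~\ref{2.1} already shows extends to a smooth function of $k$ through $k=0$; its value at $k=0$ being finite and nonzero is equivalent to $D(0)$ being invertible, and its vanishing or blowing up at $k=0$ is then matched against the resonance condition. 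One should also note for the record that ``$\det D(0)=0$'' here is shorthand for the appropriate limiting/renormalized statement, i.e.\ the failure of $D(0)$ to be invertible, and that the two bullets together with the identity above imply that on the whole interval $(-\infty,\omega]$ of spectral parameters the only places where $D$ degenerates are eigenvalues (interior) and the threshold resonance (endpoint), which is the extension of \cite[Prop.~2.2.1, Prop.~2.2.2]{Busper1} advertised in the statement.
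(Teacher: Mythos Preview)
Your proposal is correct and takes essentially the same approach as the paper: the proposition is stated there without a separate proof, explicitly as a summary of the computations in the proof of Lemma~\ref{2.1} together with the cited results from \cite{KriegerSchlag} and \cite{Busper1}, and your argument traces through precisely those ingredients. One minor bookkeeping point: the invertibility of $D$ away from resonances/eigenvalues is attributed in the paper to \cite[Corollary~5.21]{KriegerSchlag} rather than Corollary~5.12, so you may want to double-check which reference carries the threshold characterization.
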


From Lemma \ref{2.1}, we can define distorted Fourier bases as
\begin{equation}\label{eq:distortbase}
   e_{+}(x,k)=\begin{cases}
    \mathcal{F}_{\omega}(x,k) \text{, if $k\geq 0$}\\
    \mathcal{G}_{\omega}(x,{-}k) \text{, if $k\leq 0$}
   \end{cases},\,\, e_{-}(x,k)=\sigma_{1}e_{+}(x,k).
\end{equation}

Then one can derive standard properties following the procedures in Krieger-Schlag \cite{KriegerSchlag} and Bulsaev-Perelman \cite{Busper1}.

Repeating the proof of Proposition $6.9$ from  \cite{KriegerSchlag}, or Lemma $12$ and Remark $1$ from \cite{ErSch}, we can verify the following construction formula of $P_{e,\omega}$.
\begin{lemma}\label{lem:PeCo}
For any $f\in L^{2}_{x}(\mathbb{R},\mathbb{C}^{2})$, using the distorted bases \eqref{eq:distortbase} above, one has
\begin{align*}
P_{e,\omega}(f)(x)=&\frac{1}{2\pi}\int_{-\infty}^{{+}\infty}e_+(x,k)\left\langle f,\sigma_{3}e_{+}(\cdot,k) \right\rangle\,dk
{-}\frac{1}{2\pi}\int_{-\infty}^{{+}\infty}e_{-}(x,k)\left\langle f,\sigma_{3}e_{-}(\cdot,k) \right\rangle\,dk
\end{align*}
\end{lemma}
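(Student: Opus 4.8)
The plan is to establish the completeness relation $P_{e,\omega} = \tfrac{1}{2\pi}\int_{\mathbb R} e_+(x,k)\langle f,\sigma_3 e_+(\cdot,k)\rangle\,dk - \tfrac{1}{2\pi}\int_{\mathbb R} e_-(x,k)\langle f,\sigma_3 e_-(\cdot,k)\rangle\,dk$ by first deriving it for the free operator $\mathcal{H}_{0,\omega}$ and then transferring it to $\mathcal{H}_\omega$ via a limiting-absorption/resolvent argument, exactly following the structure of Proposition 6.9 in \cite{KriegerSchlag} (see also Lemma 12 and Remark 1 in \cite{ErSch}). First I would recall that, for any two spectral parameters $E = k^2+\omega$ in the upper/lower essential spectrum, Stone's formula gives the spectral projection as a boundary value of the resolvent: $P_{e,\omega}(f) = \tfrac{1}{2\pi i}\int (R_{\mathcal{H}_\omega}(E+i0) - R_{\mathcal{H}_\omega}(E-i0))f\,dE$, where the contour runs over $[\omega,\infty)\cup(-\infty,-\omega]$ and the integral is interpreted in the weak (or weighted-$L^2$) sense. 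The resolvent kernel of $\mathcal{H}_\omega$ can be written in terms of the Jost-type solutions $\mathcal{F}_\omega, \mathcal{G}_\omega$ and their $\sigma_1$-conjugates exactly as in the scalar case: on the branch $[\omega,\infty)$ the kernel is assembled from $\mathcal{F}_\omega(x,k)$, $\mathcal{G}_\omega(y,-k)$ divided by a Wronskian proportional to $s(k)$, while on the branch $(-\infty,-\omega]$ one uses the $\sigma_1$-conjugated solutions. Substituting these expressions and computing the jump across the cut using the relations $|s|^2+|r|^2=1$, $s(-k)=\overline{s(k)}$, etc. from \eqref{asyreftr} and the connection formula \eqref{forp1} produces precisely the stated bilinear kernel $e_+(x,k)\overline{\sigma_3 e_+(y,k)} - e_-(x,k)\overline{\sigma_3 e_-(y,k)}$ after the change of variables $E\mapsto k$ splits each half-line branch into $k\ge 0$ and $k\le 0$ pieces matching the two cases in \eqref{eq:distortbase}.

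The key steps, in order, are: (1) write down the explicit resolvent kernel $R_{\mathcal{H}_\omega}(k^2+\omega \pm i0)(x,y)$ in terms of $\mathcal{F}_\omega,\mathcal{G}_\omega,\sigma_1\mathcal{F}_\omega,\sigma_1\mathcal{G}_\omega$, being careful about which solution is outgoing on which side and about the non-self-adjoint inner-product structure (the $\sigma_3$ appears because $\sigma_3\mathcal{H}_\omega^*\sigma_3 = \mathcal{H}_\omega$, so the ``adjoint'' eigenfunctions are the $\sigma_3$-conjugates); (2) verify the limiting absorption principle, i.e. that these boundary values exist in $B(L^2_\sigma, L^2_{-\sigma})$ for suitable weights — here the exponential decay \eqref{decV} of the potential and the asymptotics \eqref{asy1}–\eqref{asy4} are what guarantee the Jost solutions and $1/s(k)$ are well-behaved, and Lemma \ref{2.1} is exactly what extends this down to the threshold $k\to 0$ when $\pm\omega$ are resonances but not eigenvalues; (3) compute the jump $R(E+i0)-R(E-i0)$ using the scattering relations, folding the $\overline{s}$ and $r$ factors into the normalizations hidden in $\mathcal{F}_\omega,\mathcal{G}_\omega$; (4) perform the substitution $E = k^2+\omega$ on each of the two spectral branches, with Jacobian $dE = 2k\,dk$, which combines with the $2ik$-type normalization in \eqref{eq:Fomegadef}–\eqref{eq:Gomegadef} to yield the clean $\tfrac{1}{2\pi}$ constant and the $dk$ integral over all of $\mathbb{R}$; (5) identify the resulting kernel with $\tfrac{1}{2\pi}[e_+(x,k)\langle\cdot,\sigma_3 e_+\rangle - e_-(x,k)\langle\cdot,\sigma_3 e_-\rangle]$ via \eqref{eq:distortbase}, noting that the minus sign in front of the $e_-$ term is forced by the indefinite $\sigma_3$-metric (the lower component of the matrix problem contributes with opposite sign), and that it is the contribution of the $(-\infty,-\omega]$ branch rewritten through $e_- = \sigma_1 e_+$.

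The main obstacle I anticipate is step (2)–(3) at the threshold in the non-generic case: when $\pm\omega$ are resonances, the factor $1/s(k)$ (equivalently $1/\det D(k/\sqrt\omega)$) blows up as $k\to 0$, and one must check that the apparent singularity in the resolvent kernel is cancelled by the vanishing of $k$ in the Jacobian and in the normalization of $\mathcal{F}_\omega,\mathcal{G}_\omega$, so that the integrand $e_\pm(x,k)\langle f,\sigma_3 e_\pm(\cdot,k)\rangle$ is actually integrable near $k=0$. This is precisely where Lemma \ref{2.1} does the work: it shows $2i\tfrac{k}{\sqrt\omega}(\det D(k/\sqrt\omega))^{-1} = -\tfrac{1}{2\mu(\omega)}(\det A(k/\sqrt\omega))^{-1}$ is smooth and nonvanishing at $k=0$, so $\mathcal{F}_\omega,\mathcal{G}_\omega$ remain bounded and the spectral measure has no atom or divergence at the threshold. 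A secondary technical point is justifying the exchange of the $dE$ (equivalently $dk$) integral with the $L^2$ pairing and the passage from weighted-$L^2$ convergence of Stone's formula to the stated $L^2_x$ identity; this is handled by a density argument, approximating $f$ by Schwartz functions and using the uniform bounds on $e_\pm(x,k)$ together with the decay estimates \eqref{asyreftr} on $s,r$ for large $k$ to control the high-frequency tail, exactly as in \cite[Prop. 6.9]{KriegerSchlag}. Apart from these threshold and convergence issues, the computation is a routine adaptation of the self-adjoint matrix case with $\sigma_3$ inserted in the right places.
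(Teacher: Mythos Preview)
Your proposal is correct and follows essentially the same approach as the paper: the paper does not give an independent proof but simply states that one repeats the proof of Proposition~6.9 in \cite{KriegerSchlag} (or Lemma~12 and Remark~1 in \cite{ErSch}), which is exactly the Stone's formula/limiting-absorption resolvent computation you outline. Your identification of the threshold issue and the role of Lemma~\ref{2.1} in resolving it is also precisely the point of the paper's preceding discussion.
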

\par Next, we consider the subspace $P_{d,\omega}\subset L^{2}(\mathbb{R},\mathbb{C}^{2})$ to be the range of the discrete spectrum projection of $\mathcal{H}_{\omega}.$ 
In particular, repeating the argument of Section $6$ of \cite{KriegerSchlag} and Lemma $9.4$ of the same paper,  we have the following proposition.
\begin{lemma}\label{Asy1sol1}
Let $H_{e,\omega}=\Raa P_{e,\omega}$ the range of the projection onto the essential spectrum of $\mathcal{H}_{\omega}.$ Then, $H_{e,\omega}$ is a closed subspace of $L^{2}(\mathbb{R},\mathbb{C}^{2})$ and
\begin{align*}
L^{2}(\mathbb{R},\mathbb{C}^{2})=&H_{e,\omega} +\Raa P_{d,\omega},\\
H_{e,\omega}=&\left(\sigma_{3} \Raa P_{d,\omega}\right)^{\perp}.
\end{align*}
\end{lemma}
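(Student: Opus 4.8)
The plan is to establish the two displayed claims by combining the resolution of the identity from Lemma \ref{lem:PeCo} with the symmetry $\sigma_3 \mathcal{H}_\omega^* \sigma_3 = \mathcal{H}_\omega$ and the Agmon-type exponential decay of discrete eigenfunctions. First I would verify that $H_{e,\omega} = \Raa P_{e,\omega}$ is closed: since $P_{e,\omega}$ is a bounded idempotent (this follows from Lemma \ref{lem:PeCo}, which exhibits it as an explicit operator on $L^2(\mathbb{R},\mathbb{C}^2)$ together with the a~priori bound on the distorted bases $e_\pm$), its range is the kernel of $\mathrm{Id} - P_{e,\omega}$, hence closed. The decomposition $L^2 = H_{e,\omega} + \Raa P_{d,\omega}$ then amounts to showing $P_{e,\omega} + P_{d,\omega} = \mathrm{Id}$ as operators. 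This is precisely a completeness/resolution-of-the-identity statement for $\mathcal{H}_\omega$: the distorted Fourier expansion through $e_\pm$ together with the finitely many discrete modes spans all of $L^2(\mathbb{R},\mathbb{C}^2)$. I would cite the construction of Section~6 of \cite{KriegerSchlag} (whose Lemma 9.4 gives exactly this), noting that the only new input required here — the case where $\pm\omega$ is a threshold resonance — is already handled by Lemma \ref{2.1}, which guarantees $e_\pm(x,k)$ remain well-defined bounded solutions all the way down to $k=0$, so the spectral integral in Lemma \ref{lem:PeCo} is unaffected.

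For the orthogonality relation $H_{e,\omega} = (\sigma_3 \Raa P_{d,\omega})^\perp$, I would argue by a dimension count combined with one containment. For the containment $H_{e,\omega} \subseteq (\sigma_3 \Raa P_{d,\omega})^\perp$: take $f \in H_{e,\omega}$, so $f = P_{e,\omega} f$, and let $\vec z \in \Raa P_{d,\omega}$, i.e. $\vec z$ is a (generalized) eigenfunction of $\mathcal{H}_\omega$ for a discrete eigenvalue. Using $\sigma_3 \mathcal{H}_\omega^* \sigma_3 = \mathcal{H}_\omega$ one checks that $\sigma_3 e_\pm(\cdot,k)$ pair to zero against $\mathcal{H}_\omega$-eigenfunctions: concretely, for an eigenfunction $\vec z$ with $\mathcal{H}_\omega \vec z = \lambda \vec z$ and $\lambda$ in the discrete spectrum (real, or purely imaginary, or $0$), while $\mathcal{H}_\omega e_+(\cdot,k) = (k^2+\omega) e_+(\cdot,k)$, one has
\begin{equation*}
(k^2+\omega)\langle \vec z, \sigma_3 e_+(\cdot,k)\rangle = \langle \mathcal{H}_\omega \vec z, \sigma_3 e_+(\cdot,k)\rangle = \langle \vec z, \mathcal{H}_\omega^* \sigma_3 e_+(\cdot,k)\rangle = \overline{(k^2+\omega)}\langle \vec z, \sigma_3 e_+(\cdot,k)\rangle,
\end{equation*}
where the integration by parts producing the middle equality has no boundary terms because $\vec z$ decays exponentially (Agmon) while $e_+$ is bounded; since $k^2+\omega$ is real and, for $k\neq 0$, distinct from every discrete eigenvalue $\lambda$, this forces $\langle \vec z, \sigma_3 e_+(\cdot,k)\rangle = 0$ for a.e. $k$, and likewise for $e_-$. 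Plugging this into the formula for $P_{e,\omega}$ in Lemma \ref{lem:PeCo} and pairing with $\sigma_3 \vec z$ — using $\langle P_{e,\omega} f, \sigma_3 \vec z\rangle = \frac{1}{2\pi}\int \langle f,\sigma_3 e_+(\cdot,k)\rangle \overline{\langle \vec z, \sigma_3 e_+(\cdot,k)\rangle}\,dk - (\text{same with } e_-) = 0$ — gives $\langle f, \sigma_3 \vec z\rangle = 0$, i.e. $f \perp \sigma_3 \Raa P_{d,\omega}$.

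To upgrade this containment to equality, I would invoke the first claim: from $L^2 = H_{e,\omega} + \Raa P_{d,\omega}$ and the fact that, by (H2)–(H3) and Agmon, $\Raa P_{d,\omega}$ is finite-dimensional with $\dim \Raa P_{d,\omega} = \dim \sigma_3 \Raa P_{d,\omega}$, a standard codimension argument (the annihilator of an $N$-dimensional subspace has codimension $N$, and $H_{e,\omega}$ already has codimension at most $N$ inside $L^2$ while sitting inside that annihilator) forces $H_{e,\omega} = (\sigma_3 \Raa P_{d,\omega})^\perp$ exactly; simultaneously this shows the sum $H_{e,\omega} + \Raa P_{d,\omega}$ is in fact direct, recovering the clean picture $\mathrm{Id} = P_{e,\omega} + P_{d,\omega}$ with $P_{e,\omega}$ the (non-orthogonal) projection onto $H_{e,\omega}$ along $\Raa P_{d,\omega}$. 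The main obstacle I anticipate is bookkeeping around the generalized kernel: when $0$ is an eigenvalue, $\Raa P_{d,\omega}$ contains Jordan-block vectors $\vec z^1$ with $\mathcal{H}_\omega \vec z^1 = i \vec z^0$, for which the eigenvalue-separation argument above degenerates at $k=0$; one must argue slightly more carefully there, using that $k^2+\omega \geq \omega > 0$ stays bounded away from the discrete spectrum uniformly (the threshold resonance at $\pm\omega$ is precisely what could spoil this, so Lemma \ref{2.1} and the non-embedding hypothesis (H1) are doing real work), and that the boundary terms in the integration by parts still vanish because generalized eigenfunctions also enjoy Agmon decay. This degenerate-at-zero case, and confirming the resolution of the identity holds verbatim in the non-generic setting, are the only points requiring attention beyond citing \cite{KriegerSchlag, Busper1}.
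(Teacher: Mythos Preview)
Your approach matches the paper's, which simply cites Section~6 and Lemma~9.4 of \cite{KriegerSchlag} and notes (as you do) that Lemma~\ref{2.1} covers the threshold-resonance case; your proposal is just a fleshed-out version of that citation. One correction: in your displayed chain of equalities the endpoints are misstated---since $\mathcal{H}_\omega \vec z = \lambda \vec z$, the identity should read $\lambda\langle \vec z,\sigma_3 e_+\rangle = \langle \mathcal{H}_\omega \vec z,\sigma_3 e_+\rangle = \langle \vec z,\sigma_3 \mathcal{H}_\omega e_+\rangle = (k^2+\omega)\langle \vec z,\sigma_3 e_+\rangle$, which is exactly what your surrounding text (``distinct from every discrete eigenvalue $\lambda$'') already uses correctly.
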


We also have the following inversion formula in the frequency side.

\begin{lemma}\label{lem:pq1}
We  claim that the following identity holds in the distributional sense 
\begin{equation}\label{pq1}
    \frac{1}{2\pi}\int_{\mathbb{R}}\left\langle \mathcal{F}_{\omega}(x,k),\sigma_{3}\mathcal{G}_{\omega}(x,\ell)\right\rangle \,dx=s(k)\delta(k-\ell).
\end{equation}
\end{lemma}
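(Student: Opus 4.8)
The plan is to compute the pairing $\frac{1}{2\pi}\int_{\mathbb{R}}\langle \mathcal{F}_\omega(x,k),\sigma_3\mathcal{G}_\omega(x,\ell)\rangle\,dx$ by exploiting the fact that both $\mathcal{F}_\omega(\cdot,k)$ and $\mathcal{G}_\omega(\cdot,\ell)$ are generalized eigenfunctions of $\mathcal{H}_\omega$ with eigenvalues $k^2+\omega$ and $\ell^2+\omega$ respectively, together with the self-adjointness relation $\sigma_3\mathcal{H}_\omega^*\sigma_3=\mathcal{H}_\omega$ recorded in the text. First I would note that the distributional identity is really a statement about oscillatory integrals, so I would work with the truncated integral $\int_{-R}^{R}$ and pass to the limit $R\to\infty$ in the sense of distributions in $(k,\ell)$. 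The starting point is the Wronskian-type identity: if $\mathcal{H}_\omega u=\mu_1 u$ and $\mathcal{H}_\omega w=\mu_2 w$ pointwise, then, writing $\mathcal{H}_\omega=-\sigma_3\partial_x^2+\omega\sigma_3+V$ with $\sigma_3 V^*\sigma_3=V$ and $V$ real symmetric-type, one gets
\begin{equation*}
(\mu_1-\overline{\mu_2})\langle u,\sigma_3 w\rangle_{[-R,R]}=\big[\,\overline{w}^{\,t}\,\partial_x u-(\partial_x\overline{w})^t u\,\big]_{x=-R}^{x=R},
\end{equation*}
obtained by integrating by parts twice and using that the potential term cancels. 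Since $k,\ell$ are real here, $\mu_1-\overline{\mu_2}=k^2-\ell^2$, and the whole content is to evaluate the boundary terms using the asymptotics \eqref{asy1}–\eqref{asy4}.

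Next I would substitute the asymptotic expansions: as $x\to+\infty$, $\mathcal{F}_\omega(x,k)\sim s(k)e^{ikx}\underline{e}$ and $\mathcal{G}_\omega(x,\ell)\sim e^{i\ell x}\underline{e}+\overline{r(\ell)}e^{-i\ell x}\underline{e}$, while as $x\to-\infty$, $\mathcal{F}_\omega(x,k)\sim e^{ikx}\underline{e}+r(k)e^{-ikx}\underline{e}$ and $\mathcal{G}_\omega(x,\ell)\sim \overline{s(\ell)}e^{i\ell x}\underline{e}$. Plugging these into the boundary terms at $x=\pm R$ and using the relations in \eqref{asyreftr} to control the error/remainder terms (which contribute absolutely convergent, hence harmless, pieces after the $R$-limit), the boundary expression reduces to a sum of elementary oscillatory quantities of the form $(k\pm\ell)e^{\pm i(k\mp\ell)R}$ divided by $k^2-\ell^2=(k-\ell)(k+\ell)$. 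Dividing by $k^2-\ell^2$ and recalling the distributional limits
\begin{equation*}
\lim_{R\to\infty}\frac{e^{i(k-\ell)R}-e^{-i(k-\ell)R}}{k-\ell}=2\pi i\,\delta(k-\ell),
\end{equation*}
together with the fact that the terms carrying $e^{\pm i(k+\ell)R}$ localize on $k+\ell=0$ and, when multiplied by the smooth prefactors involving $r$ and $s$, are killed by the unitarity relations $|s|^2+|r|^2=1$, $s\overline{r}+r\overline{s}=0$ (evaluated at the coincident frequency), only the $\delta(k-\ell)$ contribution survives. Carefully collecting the surviving coefficient — the $s(k)$ comes from the transmitted wave of $\mathcal{F}$ at $+\infty$ matched against the incoming wave of $\mathcal{G}$, plus the matching at $-\infty$ of $\mathcal{F}$'s incoming wave against $\mathcal{G}$'s transmitted wave $\overline{s(\ell)}$, which at $k=\ell$ combine to give exactly $s(k)$ after the $\frac{1}{2\pi}$ normalization — yields \eqref{pq1}.

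The main obstacle I anticipate is the rigorous bookkeeping of the boundary terms: there are many oscillatory pieces (from both $x=+R$ and $x=-R$, and from the cross terms between the leading exponentials of $\mathcal{F}$ and $\mathcal{G}$), and one must check that (i) the $O(e^{\mp\gamma x}/(1+|k|))$ remainder terms in \eqref{asy1}–\eqref{asy4} and their $x$-derivatives genuinely produce boundary contributions that vanish as $R\to\infty$ in the distributional topology — this uses the exponential decay in the asymptotics and the decay of $s,r$ from \eqref{asyreftr}; and (ii) the spurious terms concentrated at $k=-\ell$ cancel, which is where the unitarity identities for $s,r$ are essential. An alternative, perhaps cleaner route for (ii) is to first establish the identity for $k,\ell>0$ (where only $\mathcal{F}$-type asymptotics at both ends are needed, via \eqref{forp1} rewriting everything in terms of $\mathcal{G}_\omega(x,\pm k)$) and then extend by the symmetry $s(-k)=\overline{s(k)}$, $r(-k)=\overline{r(k)}$ and the relation $\sigma_1 e_+=e_-$; I would keep this as a fallback if the direct computation becomes unwieldy. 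Either way, the computation is entirely parallel to the classical scalar Schrödinger case (cf. the references \cite{Busper1,KriegerSchlag}) and the matrix structure only enters through the trivial fact that all the relevant asymptotic vectors are proportional to $\underline{e}$, so the $\mathbb{C}^2$ inner product collapses to a scalar one.
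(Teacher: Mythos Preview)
Your Wronskian/boundary-term approach is correct and is a genuinely different route from the paper's proof. The paper proceeds instead by (i) first observing that $(k^2-\ell^2)\langle\mathcal{F}_\omega(\cdot,k),\sigma_3\mathcal{G}_\omega(\cdot,\ell)\rangle=0$ distributionally, hence the distribution is supported on $\{k^2=\ell^2\}$; (ii) restricting to $|k^2-\ell^2|<\eta$, replacing $\mathcal{F}_\omega,\mathcal{G}_\omega$ by their plane-wave asymptotics, and computing the resulting oscillatory integral directly via the Fourier transform of $\mathrm{1}_{x\geq0}$ (which produces $\delta$'s and smooth principal-value remainders); (iii) killing the $\delta(k+\ell)$ contribution with the unitarity relation $s\bar r+\bar s r=0$ and then sending $\eta\to0$. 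Your argument exchanges the order: you keep the full $x$-integral on $[-R,R]$, convert it to boundary data, and only then take the distributional limit $R\to\infty$. What the paper's route buys is that one never divides by $k^2-\ell^2$, avoiding the need to track distributions of the form $e^{i(k\mp\ell)R}/(k\mp\ell)$; what your route buys is that the mechanism is the classical Jost-function Wronskian identity, and the computation is arguably more transparent once the boundary terms are written down.

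Two small points to tighten in your execution. First, your stated asymptotics for $\mathcal{G}_\omega(x,\ell)$ are those of $\mathcal{G}_\omega(x,-\ell)$; with the paper's conventions (see \eqref{asy1}--\eqref{asy2}) one has $\mathcal{G}_\omega(x,\ell)\sim e^{-i\ell x}\underline{e}+r(\ell)e^{i\ell x}\underline{e}$ as $x\to+\infty$ and $\sim s(\ell)e^{-i\ell x}\underline{e}$ as $x\to-\infty$, so adjust signs in the exponentials. Second, the individual boundary pieces $e^{\pm i(k\mp\ell)R}/(k\mp\ell)$ do \emph{not} have distributional limits as $R\to\infty$; you must first combine the $x=+R$ and $x=-R$ contributions so that only $\sin((k\mp\ell)R)/(k\mp\ell)$ type expressions remain before passing to the limit. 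With these fixes the argument goes through, and the cancellation of the $\delta(k+\ell)$ term uses exactly the same unitarity relation the paper invokes.
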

\begin{remark}
 Actually \eqref{pq1} was claimed in \cite{perelmanasym}  without a proof.
Here we provide a proof for the sake of completeness.  
\end{remark}
\begin{proof}[Proof of Lemma \ref{lem:pq1}]
 It suffices to show that for any $g$ complex-valued Schwartz function,
\begin{equation}\label{eq:goalgp}
    \frac{1}{2\pi}\int_{\mathbb{R}}\left\langle \mathcal{F}_{\omega}(x,k),\sigma_{3}\mathcal{G}_{\omega}(x,\ell)\overline{}\right\rangle g(\ell)\,d\ell=s(k)g(k).
\end{equation}
Note that $\mathcal{H}_\omega \mathcal{F}_\omega(x,k)=(k^2+\omega)\mathcal{F}_\omega(x,k)$ and $\mathcal{H}^*_\omega \sigma_3\mathcal{G}_\omega(x,\ell)=(\ell^2+\omega)\sigma_3\mathcal{G}_\omega(x,\ell)$. Therefore, in the distribution sense,
\begin{align}
  & (k^2-\ell^2)  \left\langle \mathcal{F}_{\omega}(x,k),\sigma_{3}\mathcal{G}_{\omega}(x,\ell)\right\rangle  =  \left\langle \mathcal{H}_\omega\mathcal{F}_{\omega}(x,k),\sigma_{3}\mathcal{G}_{\omega}(x,\ell)\right\rangle -\left\langle\mathcal{F}_{\omega}(x,k),\mathcal{H}_\omega^*\sigma_{3}\mathcal{G}_{\omega}(x,\ell)\right\rangle =0.
\end{align}
It follows that if $k^2-\ell^2\neq 0$, in the distribution sense, 
\begin{equation}
    \left\langle \mathcal{F}_{\omega}(x,k),\sigma_{3}\mathcal{G}_{\omega}(x,\ell)\right\rangle =0
\end{equation}
Therefore, for any fixed $k$ and given any $0<\eta \ll 1$ small,  if $|k^2-\ell^2|\geq \eta $, one has
\begin{equation}
    \frac{1}{2\pi}\int_{|k^2-\ell^2|\geq \eta}\left\langle \mathcal{F}_{\omega}(x,k),\sigma_{3}\mathcal{G}_{\omega}(x,\ell)\right\rangle g(\ell)\,d\ell=0
\end{equation}
It follows that
\begin{align}
   & \frac{1}{2\pi}\int_{\mathbb{R}}\left\langle \mathcal{F}_{\omega}(x,k),\sigma_{3}\mathcal{G}_{\omega}(x,\ell)\right\rangle g(\ell)\,d\ell=\frac{1}{2\pi}\int_{|k^2-\ell^2|<\eta}\left\langle \mathcal{F}_{\omega}(x,k),\sigma_{3}\mathcal{G}_{\omega}(x,\ell))\right\rangle g(\ell)\,d\ell\label{eq:etaR}.
\end{align}
Invoking asymptotics \eqref{asy1}, \eqref{asy2}, \eqref{asy3}, \eqref{asy4} and ignoring localized terms from these asymptotics (which only contribute terms of order $\eta^{1/2}$ after integrating on $\ell$), one has
{\footnotesize \begin{align}
    &\frac{1}{2\pi}\int_{|k^2-\ell^2|<\delta}\left\langle \mathcal{F}_{\omega}(x,k),\sigma_{3}\mathcal{G}_{\omega}(x,\ell)\right\rangle g(\ell)\,d\ell\\
    & = \frac{1}{2\pi}\int_{\mathbb{R}} \int_{|k^2-\ell^2|<\delta}g(\ell)\left( s(k)e^{ikx}\mathrm{1}_{x\geq 0}+ (e^{ikx}+r(k)e^{-ikx})\mathrm{1}_{x\leq 0}\Big)\Big((e^{-i\ell x}+r(\ell)e^{i\ell x})\mathrm{1}_{x\geq 0}+ s(\ell)e^{-i\ell x}\mathrm{1}_{x\leq 0}\right)\,dx d\ell\\
    &+\mathcal{O}(\eta^{1/2})\\
    &= \frac{1}{2\pi} \int_{\mathbb{R}} \int_{|k^2-\ell^2|<\delta}g(\ell)  \left( (s(k)e^{ikx-i\ell x}+s(k)r(\ell)e^{ikx+i\ell x})\mathrm{1}_{x\geq0}+(s(\ell)e^{ikx-i\ell x}+s(\ell)r(k)e^{-ikx-i\ell x})\mathrm{1}_{x\leq0}\right)\,dx d\ell\\
     &+\mathcal{O}(\eta^{1/2})\\
    &= \frac{1}{2\pi} \int_{\mathbb{R}} \int_{|k^2-\ell^2|<\delta}g(\ell)  \left( (s(k)e^{ikx-i\ell x}+s(k)r(\ell)e^{ikx+i\ell x}+s(\ell)e^{-ikx+i\ell x}+s(\ell)r(k)e^{ikx+i\ell x}))\mathrm{1}_{x\geq0}\right)\,dx d\ell \\
     &+\mathcal{O}(\eta^{1/2}).
\end{align}}
Using the Fourier transform of $\mathrm{1}_{x\geq0}$, one has
\begin{align}
   & \frac{1}{2\pi} \int_{\mathbb{R}}   \left( (s(k)e^{ikx-i\ell x}+s(k)r(\ell)e^{ikx+i\ell x}+s(\ell)e^{-ikx+i\ell x}+s(\ell)r(k)e^{ikx+i\ell x}))\mathrm{1}_{x\geq0}\right)\,dx d\ell\\
   & =s(k)\delta(k-\ell)+\frac{i}{2\pi}\frac{s(k)-s(\ell)}{k-\ell}+(s(k)r(\ell)+s(\ell)r(k))\delta(k+\ell)+\frac{i}{2\pi} \frac{(s(k)r(\ell)+s(\ell)r(k))}{k+\ell}.
\end{align}
Now note that $\frac{s(k)-s(\ell)}{k-\ell}$ is a smooth function in $\ell$.  One also notices that $$(s(k)r(\ell)+s(\ell)r(k))\delta(k+\ell)=0$$ since $$s(k)r(-k)+s(-k)r(k)=s(k)\bar r(k)+\bar s(k)r(k)=0$$ which also implies that $\frac{(s(k)r(\ell)+s(\ell)r(k))}{k+\ell}$ is a smooth function. 

Therefore, integrating in $\ell$, from computations above, one has
\begin{align}\label{eq:onlyeta}
    \frac{1}{2\pi} \int_{|k^2-\ell^2|<\delta}\left\langle \mathcal{F}_{\omega}(x,k),\sigma_{3}\mathcal{G}_{\omega}(x,\ell)\right\rangle g(\ell)\,d\ell=s(k)g(k)+\mathcal{O}(\eta^{1/2}).
\end{align}
For any fixed  $\eta$, from \eqref{eq:etaR} and \eqref{eq:onlyeta}, we get
\begin{align}
    \frac{1}{2\pi} \int_{\mathbb{R}}\left\langle \mathcal{F}_{\omega}(x,k),\sigma_{3}\mathcal{G}_{\omega}(x,\ell)\right\rangle g(\ell)\,d\ell=s(k)g(k)+\mathcal{O}(\eta^{1/2}).
\end{align}Since the above identity holds for any $\eta>0$, the desired identity \eqref{eq:goalgp} holds.
\end{proof}

\subsection{Distorted Fourier transforms from distorted Fourier bases}
Recall that in Definition \ref{def00}, we introduce the following Fourier transforms transformations: $\vec{u}(k)\in\mathcal{S} \subset L^{2}(\mathbb{R},\mathbb{C}^{2}),$ we define the following linear operators 
\begin{align}
\label{DisF1}
    \hat{F}_{\omega}\left(\overrightarrow{u}\right)(x)\coloneqq &\frac{1}{\sqrt{2\pi}}\int_{\mathbb{R}}\frac{1}{s(k)}\left[\mathcal{F}_\omega(x,k)\,\,\sigma_{1} \mathcal{F}_\omega(x,k)\right]\overrightarrow{u}(k)\,dk\in L^{2}(\mathbb{R},\mathbb{C}^{2}),\\ \label{DisG1}
    \hat{G}_{\omega}\left(\overrightarrow{u}\right)(x)\coloneqq &\frac{1}{\sqrt{2\pi}}\int_{\mathbb{R}}\frac{1}{s({-}k)}\left[\mathcal{G}_\omega(x,{-}k)\,\,\sigma_{1} \mathcal{G}_\omega(x,{-}k)\right]\overrightarrow{u}(k)\,dk \in L^{2}(\mathbb{R},\mathbb{C}^{2}).\\ \nonumber
\end{align}

\par To invert the operators \eqref{DisF1} and \eqref{DisG1}, we consider the following operators. 
\begin{definition}
For any $\overrightarrow{u}\in L^{2}(\mathbb{R},\mathbb{C}^{2}),$ we define the bounded operators $F^{*}_{\omega},\,G^{*}_{\omega}:L^{2}(\mathbb{R},\mathbb{C}^{2})\to L^{2}(\mathbb{R},\mathbb{C}^{2})$ by
\begin{align*}
    F^{*}_{\omega}(\overrightarrow{u})(k)\coloneqq & \frac{1}{\sqrt{2\pi}}\int_{\mathbb{R}} \begin{bmatrix}
        \mathcal{F}^{t}_{\omega}(x,{-}k)\\
        \left(\sigma_{1}\mathcal{F}_{\omega}(x,{-}k)\right)^{t}
    \end{bmatrix}\overrightarrow{u}(x)\,dx\\
    G^{*}_{\omega}(\overrightarrow{u})(k)\coloneqq & \frac{1}{\sqrt{2\pi}}\int_{\mathbb{R}} \begin{bmatrix}
        \mathcal{G}^{t}_{\omega}(x,k)\\
        \left(\sigma_{1}\mathcal{G}_{\omega}(x,k)\right)^{t}
    \end{bmatrix}\overrightarrow{u}(x)\,dx,\\
\end{align*}
where for any vector $\overrightarrow{h}\in \mathbb{C}^{2}$ the expression $\overrightarrow{h}^{t}$ means its transport or the representation of $\overrightarrow{h}$ as a row vector of $\mathbb{C}^{2}.$  
\end{definition}
We also record the free Fourier transforms:

\begin{definition}
    For any $\overrightarrow{u}\in L_x^{2}(\mathbb{R},\mathbb{C}^{2}),$ we define the linear operator
\begin{align*}
  F_0^*\left(\overrightarrow{u}\right) (k)=&\frac{1}{\sqrt{2\pi}}\int_{\mathbb{R}} e^{ikx}\overrightarrow{u}(x)\,dx
\end{align*}for any $k\in\mathbb{R}$.
For any $\overrightarrow{u}\in L_k^{2}(\mathbb{R},\mathbb{C}^{2}),$ we define the adjoint linear operator
\begin{align*}
  F_{0}\left(\overrightarrow{u}\right)(x)=&\frac{1}{\sqrt{2\pi}}\int_{\mathbb{R}} e^{{-}ikx}\overrightarrow{u}(k)\,dk,
\end{align*}    
for any $x\in\mathbb{R}.$
\end{definition}
As a consequence of Lemma \ref{Asy1sol1}, we have the following corollary; see also Proposition $6.9$ from \cite{KriegerSchlag}.
\begin{corollary}\label{Asy1sol2}
 If $\psi\in P_{d,\omega},$ then 
$
     F_{\omega}^{*}(\sigma_{3}\psi)=G_{\omega}^{*}(\sigma_{3}\psi)=0.
$
\end{corollary}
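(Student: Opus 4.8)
The plan is to prove the pointwise statement that, for every $k\in\mathbb{R}$ and every $\vec{\psi}\in\Raa P_{d,\omega}$,
$$\int_{\mathbb{R}}\mathcal{F}^{t}_{\omega}(x,-k)\,\sigma_{3}\vec{\psi}(x)\,dx=\int_{\mathbb{R}}\bigl(\sigma_{1}\mathcal{F}_{\omega}(x,-k)\bigr)^{t}\sigma_{3}\vec{\psi}(x)\,dx=0,$$
together with the analogous identities with $\mathcal{F}_{\omega}(\cdot,-k)$ replaced by $\mathcal{G}_{\omega}(\cdot,k)$; stacking the two rows then gives $F^{*}_{\omega}(\sigma_{3}\vec{\psi})=G^{*}_{\omega}(\sigma_{3}\vec{\psi})=0$. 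By linearity and the decomposition $\Raa P_{d,\omega}=\bigoplus_{\lambda\in\sigma_{d}(\mathcal{H}_{\omega})}\kerrr(\mathcal{H}_{\omega}-\lambda\mathrm{Id})^{n_{\lambda}}$ furnished by (H2) and (H3) (with $n_{\lambda}=1$ for $\lambda\neq0$ and $n_{0}=2$), it suffices to treat $\vec{\psi}\in\kerrr(\mathcal{H}_{\omega}-\lambda\mathrm{Id})^{n}$ for a single discrete eigenvalue $\lambda$.

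The mechanism is a symmetry of $\mathcal{H}_{\omega}$ relative to the (non-Hermitian) bilinear pairing $B(f,g):=\int_{\mathbb{R}}f^{t}g\,dx$. Writing $\mathcal{H}_{\omega}=-(\partial_{x}^{2}-\omega)\sigma_{3}+V$ and using that $V^{t}\sigma_{3}=\sigma_{3}V$ is a real symmetric matrix function, one sees that $\sigma_{3}\mathcal{H}_{\omega}=-(\partial_{x}^{2}-\omega)\mathrm{Id}+\sigma_{3}V$ is formally self-adjoint for $B$. Hence, whenever $h$ is a bounded solution of $\mathcal{H}_{\omega}h=Eh$ with $\partial_{x}h$ bounded, and $\vec{\psi}$ together with $\partial_{x}\vec{\psi}$ decays exponentially, an integration by parts — whose boundary terms vanish since $h,\partial_{x}h$ stay bounded while $\vec{\psi},\partial_{x}\vec{\psi}\to0$ — yields
$$(E-\lambda)\,B\bigl(h,\sigma_{3}\vec{\psi}\bigr)=B\bigl((\mathcal{H}_{\omega}-\lambda)h,\sigma_{3}\vec{\psi}\bigr)=B\bigl(h,\sigma_{3}(\mathcal{H}_{\omega}-\lambda)\vec{\psi}\bigr).$$
I will apply this with $h=\mathcal{F}_{\omega}(\cdot,-k)$ and $h=\mathcal{G}_{\omega}(\cdot,k)$, which solve $\mathcal{H}_{\omega}h=(k^{2}+\omega)h$ by \eqref{BB}, and with $h=\sigma_{1}\mathcal{F}_{\omega}(\cdot,-k)$ and $h=\sigma_{1}\mathcal{G}_{\omega}(\cdot,k)$, which by $\sigma_{1}\mathcal{H}_{\omega}\sigma_{1}=-\mathcal{H}_{\omega}$ solve $\mathcal{H}_{\omega}h=-(k^{2}+\omega)h$. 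In all four cases $h$ and $\partial_{x}h$ are bounded by the asymptotics \eqref{asy1}--\eqref{asy4}, and $\vec{\psi}$ (with its derivatives) has the required exponential decay by Agmon's estimate, as recorded in \S\ref{sub:sketch}.

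The remaining ingredient is the spectral separation $E\neq\lambda$: by (H2), $\sigma_{d}(\mathcal{H}_{\omega})\subset\{0\}\cup(-\omega,\omega)\cup i\mathbb{R}$, whereas $E=\pm(k^{2}+\omega)\in(-\infty,-\omega]\cup[\omega,\infty)$, so a real $\lambda$ has $|\lambda|<\omega\le|E|$, a nonzero purely imaginary $\lambda$ cannot equal the real number $E$, and $0\neq E$. Thus $E-\lambda\neq0$, and one finishes by induction on $n$: for $n=1$ the displayed identity with $(\mathcal{H}_{\omega}-\lambda)\vec{\psi}=0$ forces $B(h,\sigma_{3}\vec{\psi})=0$, and for the inductive step $(\mathcal{H}_{\omega}-\lambda)\vec{\psi}\in\kerrr(\mathcal{H}_{\omega}-\lambda)^{n-1}$, so the right-hand side vanishes by the induction hypothesis and $B(h,\sigma_{3}\vec{\psi})=0$ again. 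Since this holds for every $k$, the integrands defining $F^{*}_{\omega}(\sigma_{3}\vec{\psi})$ and $G^{*}_{\omega}(\sigma_{3}\vec{\psi})$ vanish identically, which is the claim. There is no serious obstacle; the only points requiring care are the justification of the vanishing boundary terms in the integration by parts and the elementary case check $E\neq\lambda$. This argument is the concrete counterpart of the abstract orthogonality $H_{e,\omega}=(\sigma_{3}\Raa P_{d,\omega})^{\perp}$ of Lemma \ref{Asy1sol1}, with the distorted eigenfunctions $\mathcal{F}_{\omega}(\cdot,k)$ and $\sigma_{1}\mathcal{F}_{\omega}(\cdot,k)$ playing the role of ``generalized'' elements of $H_{e,\omega}$.
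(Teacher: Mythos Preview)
Your argument is correct. The paper itself does not give a proof of this corollary; it simply writes ``As a consequence of Lemma~\ref{Asy1sol1}'' and refers to Proposition~6.9 of \cite{KriegerSchlag}. Your direct computation via the bilinear pairing $B(f,g)=\int f^{t}g$, the symmetry $V^{t}\sigma_{3}=\sigma_{3}V$ (equivalently $\sigma_{3}\mathcal{H}_{\omega}^{*}\sigma_{3}=\mathcal{H}_{\omega}$), and the spectral separation $\pm(k^{2}+\omega)\notin\sigma_{d}(\mathcal{H}_{\omega})$ is exactly the standard way to make the abstract orthogonality $H_{e,\omega}=(\sigma_{3}\Raa P_{d,\omega})^{\perp}$ concrete at the level of the distorted Fourier kernels, and it is essentially what the cited reference does. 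The one point you should state explicitly for completeness is why $\pm\omega$ are excluded as eigenvalues: (H2) forces all nonzero real eigenvalues to satisfy $|\lambda|<\omega$, so the threshold case $k=0$ is covered.
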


From Lemma \ref{lem:pq1}, we get important inversions in the frequency side.
\begin{lemma}\label{leper}
We have the following identities in the frequency side
\begin{align*}
    \sigma_{3}F^{*}_{\omega}\sigma_{3}\hat{G}_{\omega}=&\mathrm{Id},\\
    \sigma_{3}G^{*}_{\omega}\sigma_{3}\hat{F}_{\omega}=&\mathrm{Id}.
\end{align*}
\end{lemma}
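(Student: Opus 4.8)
The plan is to prove the first identity $\sigma_3 F_\omega^* \sigma_3 \hat G_\omega = \mathrm{Id}$; the second follows by the symmetric argument, swapping the roles of $\mathcal{F}_\omega$ and $\mathcal{G}_\omega$ (and replacing $k$ by $-k$ according to the conventions in Definition \ref{def00} and the definition of $F_\omega^*, G_\omega^*$). First I would fix $\vec u \in \mathcal{S} \subset L^2(\mathbb{R},\mathbb{C}^2)$, write out $\hat G_\omega(\vec u)(x)$ as the integral in \eqref{DisG1}, apply $G_\omega^* $... wait — here we apply $F_\omega^*$, so I would feed $\sigma_3 \hat G_\omega(\vec u)$ into $F_\omega^*$, conjugate by $\sigma_3$, and reduce everything to a scalar kernel computation. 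Concretely, after unwinding the $2\times 2$ matrix structure (the columns $\mathcal{F}_\omega(x,k)$ and $\sigma_1\mathcal{F}_\omega(x,k)$, resp. $\mathcal{G}_\omega$, $\sigma_1\mathcal{G}_\omega$), the matrix entries of the composition are governed by quantities of the shape $\frac{1}{2\pi}\int_\mathbb{R} \langle \mathcal{F}_\omega(x,-\ell), \sigma_3 \mathcal{G}_\omega(x,-k)\rangle\, dx$ together with their $\sigma_1$-twisted companions, and these are exactly what Lemma \ref{lem:pq1} evaluates.

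The key steps, in order: (1) substitute the definitions and use Fubini to exchange the $x$- and $k$-integrals (justified by the Schwartz-class assumption on $\vec u$ and the $L^\infty_x$ bounds on $\mathcal{F}_\omega, \mathcal{G}_\omega$ together with the distributional reading of Lemma \ref{lem:pq1}, i.e. everything is tested against $\vec u$); (2) identify the four scalar kernels that appear. Besides the main term $\frac{1}{2\pi}\int \langle \mathcal{F}_\omega(x,k),\sigma_3\mathcal{G}_\omega(x,\ell)\rangle\,dx = s(k)\delta(k-\ell)$ from Lemma \ref{lem:pq1}, one also needs the cross terms involving $\sigma_1$; here I would use the symmetry $\sigma_1 \mathcal{H}_\omega \sigma_1 = -\mathcal{H}_\omega$ (so $\sigma_1 \mathcal{F}_\omega(x,k)$, $\sigma_1\mathcal{G}_\omega(x,k)$ are generalized eigenvectors at energy $-(k^2+\omega)$, which lies outside the essential spectrum $[\omega,\infty)$ of the relevant block) to see that the mixed pairings $\frac{1}{2\pi}\int \langle \sigma_1 \mathcal{F}_\omega(x,\cdot),\sigma_3\mathcal{G}_\omega(x,\cdot)\rangle$ vanish in the distributional sense by the same energy-separation argument used at the start of the proof of Lemma \ref{lem:pq1} (the two factors solve eigenvalue equations at different energies, and energies of opposite sign can never coincide); (3) assemble: the surviving diagonal contributions each come with a factor $\frac{1}{s(k)}$ from the definition of $\hat G_\omega$ and a factor $s(k)$ from the delta, which cancel, and the $\sigma_3 \cdot \sigma_3$ conjugation fixes the signs on the two diagonal blocks so that the composition acts as the identity on each component of $\vec u$. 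Then extend from $\mathcal{S}$ to all of $L^2$ by density and boundedness of $F_\omega^*$ and $\hat G_\omega$.

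I expect the main obstacle to be step (2): carefully bookkeeping the $2\times 2$ matrix algebra so that the cross terms really do cancel, and making the distributional manipulations rigorous — in particular, confirming that the mixed $\sigma_1$-pairings vanish rather than producing spurious delta contributions at $k+\ell=0$ or similar. The cleanest way around this is to note that $\sigma_1\mathcal{F}_\omega(x,k)$ and $\sigma_1\mathcal{G}_\omega(x,\ell)$ are eigenfunctions of $\mathcal{H}_\omega$ (not $\mathcal{H}_\omega^*$) at energy $-(k^2+\omega)$ while $\sigma_3\mathcal{G}_\omega(x,\ell)$ is an eigenfunction of $\mathcal{H}_\omega^*$ at energy $\ell^2+\omega>0$, so the commutator/Wronskian identity from the proof of Lemma \ref{lem:pq1} forces the pairing to vanish identically since $-(k^2+\omega)\neq \ell^2+\omega$ for all real $k,\ell$; a parallel argument handles $\langle \mathcal{F}_\omega(x,k),\sigma_3\sigma_1\mathcal{G}_\omega(x,\ell)\rangle$. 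Once these vanishings are in hand, the remaining computation is the bookkeeping of two scalar identities, each an instance of Lemma \ref{lem:pq1}, and the conclusion follows.
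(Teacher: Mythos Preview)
Your proposal is correct and follows essentially the same approach as the paper's proof: both use Lemma \ref{lem:pq1} for the diagonal pairing $\langle \mathcal{F}_\omega(x,k),\sigma_3\mathcal{G}_\omega(x,\ell)\rangle = s(k)\delta(k-\ell)$, and both kill the mixed $\sigma_1$-terms by the energy-separation argument (the paper phrases it as integration by parts using $\mathcal{H}_\omega\sigma_1\mathcal{G}_\omega = -(\omega+k^2)\sigma_1\mathcal{G}_\omega$ together with $\sigma_3\mathcal{H}_\omega\sigma_3=\mathcal{H}_\omega^*$, which is exactly your observation that $-(k^2+\omega)\neq \ell^2+\omega$ forces the pairing to vanish). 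The cancellation of the $s(k)$ factors and the assembly of the $2\times 2$ blocks that you describe is precisely what the paper does in its concluding sentence.
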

\begin{proof}
Using the identities $\mathcal{H}_{\omega}\mathcal{F}_{\omega}=(\omega+k^{2})\mathcal{F}_{\omega},\,\mathcal{H}_{\omega}\sigma_{1}\mathcal{G}_{\omega}={-}(\omega+k^{2})\sigma_1\mathcal{G}_{\omega}$ and $\sigma_{3}\mathcal{H}_{\omega}\sigma_{3}=\mathcal{H}_{\omega}^{*},$ and the asymptotics \eqref{asy1}, \eqref{asy2}, \eqref{asy3}, \eqref{asy4}, we can verify from integration by parts the following equation
\begin{equation*}
    \frac{1}{2\pi}\int_{\mathbb{R}}\left\langle \mathcal{F}_{\omega}(x,k),\sigma_{3}\sigma_{1}\mathcal{G}_{\omega}(x,\ell) \right\rangle \,dx=0 \text{, for all $\ell,\,k\in \mathbb{R}.$}
\end{equation*}
Consequently, using the equation \eqref{pq1} and the definition of the operators $F^*_{\omega}$ and $\hat{G}_{\omega},$ we can deduce the identity 
\begin{equation*}
\sigma_{3}F^{*}_{\omega}\sigma_{3}\hat{G}_{\omega}\left(
\begin{bmatrix}
g(k)\\
f(k)
\end{bmatrix}\right)=\begin{bmatrix}
g(k)\\
f(k)
\end{bmatrix},    
\end{equation*}
for any element $\begin{bmatrix}
    g(k)\\
    f(k)
\end{bmatrix}\in L^{2}_{k}(\mathbb{R},\mathbb{C})$ belonging to the domain of $\hat{G}_{\omega}.$
\end{proof}

Next, we consider the inversion formula in the physical side.
\begin{lemma}\label{FGID}
We have the following identities
\begin{align}\label{id0101}
   \hat{F}_{\omega}\sigma_{3}G^{*}_{\omega} \sigma_{3}=&P_{e,\omega},\\
   \hat{G}_{\omega} \label{id0202}\sigma_{3}F^{*}_{\omega} \sigma_{3}=&P_{e,\omega}.
\end{align}
\end{lemma}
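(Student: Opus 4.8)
The plan is to deduce both identities from the frequency–side relations of Lemma~\ref{leper}, the vanishing property of Corollary~\ref{Asy1sol2}, and the decomposition $L^{2}(\mathbb{R},\mathbb{C}^{2})=H_{e,\omega}+\Raa P_{d,\omega}$ together with $H_{e,\omega}=(\sigma_{3}\Raa P_{d,\omega})^{\perp}$ from Lemma~\ref{Asy1sol1}. I will treat \eqref{id0101} in detail; the proof of \eqref{id0202} is word for word the same after interchanging $\mathcal{F}_{\omega}\leftrightarrow\mathcal{G}_{\omega}$, $\hat F_{\omega}\leftrightarrow\hat G_{\omega}$, $F^{*}_{\omega}\leftrightarrow G^{*}_{\omega}$. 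Set $T:=\hat F_{\omega}\sigma_{3}G^{*}_{\omega}\sigma_{3}$, a bounded operator on $L^{2}_{x}(\mathbb{R},\mathbb{C}^{2})$. Two immediate observations: by the second identity of Lemma~\ref{leper} one has $\sigma_{3}G^{*}_{\omega}\sigma_{3}\hat F_{\omega}=\mathrm{Id}$, hence $T\hat F_{\omega}=\hat F_{\omega}(\sigma_{3}G^{*}_{\omega}\sigma_{3}\hat F_{\omega})=\hat F_{\omega}$; and if $\psi\in\Raa P_{d,\omega}$ then $G^{*}_{\omega}(\sigma_{3}\psi)=0$ by Corollary~\ref{Asy1sol2}, so $T\psi=\hat F_{\omega}\bigl(\sigma_{3}G^{*}_{\omega}(\sigma_{3}\psi)\bigr)=0$. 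Since $P_{e,\omega}$ also vanishes on $\Raa P_{d,\omega}$ and restricts to the identity on $H_{e,\omega}=\Raa P_{e,\omega}$, and since $L^{2}=H_{e,\omega}+\Raa P_{d,\omega}$, it remains only to show that $T=P_{e,\omega}$ on $H_{e,\omega}$.

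I would first establish $\Raa\hat F_{\omega}\subseteq H_{e,\omega}$, i.e. $\langle\hat F_{\omega}\vec u,\sigma_{3}\psi\rangle=0$ for all $\psi\in\Raa P_{d,\omega}$ and all $\vec u$ in the domain of $\hat F_{\omega}$. Expanding this pairing, using that $\sigma_{1},\sigma_{3}$ are real together with the reality symmetry $\overline{\mathcal{F}_{\omega}(x,k)}=\mathcal{F}_{\omega}(x,-k)$ (valid since $\mathcal{H}_{\omega}$ has real coefficients and, by \eqref{asy4} and $r(-k)=\overline{r(k)}$, $\overline{\mathcal{F}_{\omega}(x,k)}$ and $\mathcal{F}_{\omega}(x,-k)$ share the same $-\infty$ asymptotics), the pairing collapses to $\int_{\mathbb{R}}\frac{1}{s(k)}\bigl(u_{1}(k)\,\overline{(F^{*}_{\omega}(\sigma_{3}\psi))_{1}(k)}+u_{2}(k)\,\overline{(F^{*}_{\omega}(\sigma_{3}\psi))_{2}(k)}\bigr)\,dk$, which vanishes by Corollary~\ref{Asy1sol2}. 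Hence $P_{e,\omega}\hat F_{\omega}=\hat F_{\omega}$, and combined with $T\hat F_{\omega}=\hat F_{\omega}$ this gives $(T-P_{e,\omega})\hat F_{\omega}=0$; by boundedness $T-P_{e,\omega}$ then vanishes on $\overline{\Raa\hat F_{\omega}}$. It therefore suffices to know that $\overline{\Raa\hat F_{\omega}}=H_{e,\omega}$, i.e. that $\hat F_{\omega}$ has dense range in the continuous spectral subspace of $\mathcal{H}_{\omega}$. This is the spectral completeness of the single operator $\mathcal{H}_{\omega}$: for $f\in H_{e,\omega}$ one has $f=P_{e,\omega}f$ in the $e_{\pm}$–representation of Lemma~\ref{lem:PeCo}, and using \eqref{forp1}, the asymptotics \eqref{asy1}--\eqref{asy4} and the substitution $k\mapsto-k$ one rewrites this representation as $\hat F_{\omega}(\vec u)$ for an $L^{2}_{k}$ function $\vec u$ (cf. \cite[\S6, \S9]{KriegerSchlag} and \cite{Busper1}; equivalently, one checks directly that the integral kernel of $T$ equals that of $P_{e,\omega}$, which is where Lemma~\ref{lem:pq1} enters). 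Putting this together, $T-P_{e,\omega}$ is bounded and vanishes on $H_{e,\omega}$ and on $\Raa P_{d,\omega}$, hence on $H_{e,\omega}+\Raa P_{d,\omega}=L^{2}$, so $T=P_{e,\omega}$, proving \eqref{id0101}. The identity \eqref{id0202} follows by the same reasoning with $\overline{\mathcal{G}_{\omega}(x,k)}=\mathcal{G}_{\omega}(x,-k)$, the first identity of Lemma~\ref{leper}, $F^{*}_{\omega}(\sigma_{3}\psi)=0$ from Corollary~\ref{Asy1sol2}, and $\overline{\Raa\hat G_{\omega}}=H_{e,\omega}$.

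The step I expect to be the main obstacle is the density $\overline{\Raa\hat F_{\omega}}=H_{e,\omega}$ (equivalently, the kernel identification $T=P_{e,\omega}$): this is the only place where one genuinely needs completeness of the distorted Fourier system of $\mathcal{H}_{\omega}$ rather than the formal inversion relations, and a fully self-contained proof requires careful bookkeeping of the conjugations, the $k\leftrightarrow-k$ symmetry, and the reflection/transmission relation \eqref{forp1} when passing between the $e_{\pm}$ kernel of Lemma~\ref{lem:PeCo} and the $(\mathcal{F}_{\omega},\sigma_{1}\mathcal{F}_{\omega})$ form of $\hat F_{\omega}$. All the remaining steps reduce to formal manipulations with the identities of Lemmas~\ref{leper}, \ref{Asy1sol2} and \ref{Asy1sol1}.
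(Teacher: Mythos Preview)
Your proposal is correct and follows essentially the same route as the paper's proof: both reduce to showing that $T:=\hat F_{\omega}\sigma_{3}G^{*}_{\omega}\sigma_{3}$ acts as the identity on $H_{e,\omega}$ (via Lemma~\ref{leper}) and vanishes on $\Raa P_{d,\omega}$ (via Corollary~\ref{Asy1sol2}), with the only substantive point being that $H_{e,\omega}$ lies in $\Raa\hat F_{\omega}$, which the paper establishes exactly as you anticipate---by writing $P_{e,\omega}(f)$ via Lemma~\ref{lem:PeCo} and then using \eqref{forp1} (Remark~\ref{re-infty}) to express it as $\hat G_{\omega}$ (equivalently $\hat F_{\omega}$) applied to an explicit $L^{2}_{k}$ function. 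The paper's version is slightly more direct in that it proves the inclusion $H_{e,\omega}\subseteq\Raa\hat F_{\omega}$ outright rather than first showing $\Raa\hat F_{\omega}\subseteq H_{e,\omega}$ and then arguing density, but your extra step is harmless and the core argument is the same.
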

\begin{proof}
First, using Lemma \ref{lem:PeCo},
we can verify for any $f\in L^{2}_{x}(\mathbb{R},\mathbb{C}^{2})$ that 
\begin{align*}
P_{e,\omega}(f)(x)=&\frac{1}{2\pi}\int_{0}^{{+}\infty}\mathcal{F}_{\omega}(x,k)\left\langle f,\sigma_{3}e_{+}(\cdot,k) \right\rangle\,dk+\frac{1}{2\pi}\int_{{-}\infty}^{0}\mathcal{G}_{\omega}(x,{-}k)\left\langle f,\sigma_{3}e_{+}(\cdot,k) \right\rangle\,dk \\
&{-}\frac{1}{2\pi}\int_{0}^{{+}\infty}\sigma_{1}\mathcal{F}_{\omega}(x,k)\left\langle f,\sigma_{3}e_{-}(\cdot,k) \right\rangle\,dk-\frac{1}{2\pi}\int_{{-}\infty}^{0}\sigma_{1}\mathcal{G}_{\omega}(x,{-}k)\left\langle f,\sigma_{3}e_{-}(\cdot,k) \right\rangle\,dk .
\end{align*}
Furthermore, Plancherel's Theorem and the asymptotics \eqref{asy1}, \eqref{asy2}, \eqref{asy3}, \eqref{asy4} imply that there exists a constant $C>1$ satisfying for any $f\in L^{2}(\mathbb{R},\mathbb{C}^{2})$
\begin{equation*}
    \max_{\pm}\norm{\left\langle f,\sigma_{3}e_{\pm}(\cdot,k)\right\rangle}_{L^{2}_{k}(\mathbb{R})}\leq C\norm{f}_{L^{2}_{x}(\mathbb{R})}.
\end{equation*}
\par Consequently, considering
\begin{equation*}
\begin{bmatrix}
    g_{1}(k)\\
    g_{2}(k)
\end{bmatrix}=
\begin{bmatrix}
s(\vert k\vert)\left\langle f,\sigma_{3}e_{+}(\cdot,k) \right\rangle \\
{-}s(\vert k\vert)\left\langle f,\sigma_{3}e_{-}(\cdot,k) \right\rangle
\end{bmatrix}\in L^{2}(\mathbb{R},\mathbb{C}^{2}),    
\end{equation*}
the following identity holds
\begin{equation*}
 P_{e,\omega}(f)(x)=\hat{F}_{\omega}\left(
 \begin{bmatrix}
   g_{1}(k)1_{[0,{+}\infty)}(k)\\
   g_{2}(k)1_{[0,{+}\infty)}(k)
 \end{bmatrix}
 \right)(x)+ \hat{G}_{\omega}\left(
 \begin{bmatrix}
g_{1}(k)1_{({-}\infty,0]}(k)\\
g_{2}(k)1_{({-}\infty,0]}(k)
 \end{bmatrix}
 \right)(x).  
\end{equation*}
In particular, using Remark \ref{re-infty}, we can verify the following equation.
\begin{equation*}
    P_{e,\omega}(f)(x)=\hat{G}_{\omega}\left(\frac{1}{s(k)}\begin{bmatrix}
   g_{1}(k)1_{[0,{+}\infty)}(k)\\
   g_{2}(k)1_{[0,{+}\infty)}(k)
 \end{bmatrix}-\frac{r(k)}{s(k)}\begin{bmatrix}
   g_{1}({-}k)1_{[0,{+}\infty)}({-}k)\\
   g_{2}({-}k)1_{[0,{+}\infty)}({-}k)
 \end{bmatrix}+\begin{bmatrix}
g_{1}(k)1_{({-}\infty,0]}(k)\\
g_{2}(k)1_{({-}\infty,0]}(k)
 \end{bmatrix}\right)(x).
\end{equation*}
Therefore, since $\frac{s(k)}{s(\pm k)}\in L^{\infty}_{k}(\mathbb{R}),$ we can verify from the definition of $(g_{1}(\lambda),g_{2}(\lambda))$ that $P_{c,\omega}(f)$ is an element of $\Raa \hat{G}_{\omega}=\Raa \hat{F}_{\omega}$ for all $f\in L^{2}(\mathbb{R},\mathbb{C}^{2}).$
\par In conclusion, since Lemma \ref{leper} implies that
\begin{equation*}
    \hat{F}_{\omega}\sigma_{3}G^{*}_{\omega}\sigma_{3}\left(\hat{F}_{\omega}\left(\begin{bmatrix}
        g(k)\\
        f(k)
    \end{bmatrix}\right)\right)=\hat{F}_{\omega}\left(\begin{bmatrix}
        g(k)\\
        f(k)
    \end{bmatrix}\right)
\end{equation*}
for any $\begin{bmatrix}
    g(k)\\
    f(k)
\end{bmatrix}\in L^{2}(\mathbb{R},\mathbb{C}^{2})$ belonging to the domain of $\hat{F}_{\omega},$ we can deduce from Lemma \ref{Asy1sol1} and Corollary \ref{Asy1sol2} that \eqref{id0101} is true. The proof of \eqref{id0202} is completely analogous.
\end{proof}

Next, we study the mapping properties of distorted Fourier transforms.

It is well-known from \cite{perelmanasym} that there exists $c_{\omega}>0$ satisfying
\begin{equation}\label{coerchatF}
    \norm{\hat{F}_{\omega}(\overrightarrow{u})(x)}_{L^{2}_{x}(\mathbb{R})}\geq c_{\omega}\norm{\overrightarrow{u}(k)}_{L^{2}_{k}(\mathbb{R})} \text{, for all $\overrightarrow{u}$ in the domain of $\hat{F}_{\omega},$}
\end{equation}
see also Lemma \ref{leper} below.
Furthermore, using the asymptotic behavior of \eqref{asy1}, \eqref{asy2}, \eqref{asy3} and \eqref{asy4}, we can verify from the Plancherel theorem the following proposition.
\begin{lemma}\label{kphi}
If $n\in\mathbb{N},$ there exists $C_{n,\omega}>1$ satisfying for all $\overrightarrow{u}$ in the domain of $\hat{F}_\omega$
the estimate
\begin{equation}\label{ldecay}
    \norm{\frac{d^{n}}{dx^{n}}\hat{F}_{\omega}(\overrightarrow{u})(x)}_{L^{2}_{x}(\mathbb{R})}\leq C_{n,\omega}\left[\norm{\overrightarrow{u}(k)(1+\vert k\vert)^{n}}_{L^{2}_{k}(\mathbb{R})}+\norm{\hat{F}_{\omega}(\overrightarrow{u})(x)}_{L^{2}_{x}(\mathbb{R})}\right].
\end{equation}
\end{lemma}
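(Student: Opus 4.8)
The plan is to establish the bound in the sharp form ``one $x$-derivative costs at most one power of $\langle k\rangle$'', i.e.
$\norm{\partial_x^n\hat F_\omega(\vec u)}_{L^2_x}\lesssim\norm{\langle k\rangle^n\vec u}_{L^2_k}+\norm{\hat F_\omega(\vec u)}_{L^2_x}$, by combining an intertwining identity that reduces the general $n$ to the cases $n\le 1$ with the Jost/asymptotic description of $\mathcal F_\omega$ and Plancherel for those base cases.

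First I would record the intertwining relation. From $\mathcal H_\omega\mathcal F_\omega(\cdot,k)=(k^2+\omega)\mathcal F_\omega(\cdot,k)$ and $\sigma_1\mathcal H_\omega\sigma_1=-\mathcal H_\omega$ (hence $\mathcal H_\omega\sigma_1\mathcal F_\omega(\cdot,k)=-(k^2+\omega)\sigma_1\mathcal F_\omega(\cdot,k)$) one gets, directly from \eqref{DisF1}, the identity $\mathcal H_\omega\hat F_\omega(\vec u)=\hat F_\omega\bigl((k^2+\omega)\sigma_3\vec u\bigr)$; using $\mathcal H_\omega=-\sigma_3(\partial_x^2-\omega)+V$ this rearranges to
$$\partial_x^2\hat F_\omega(\vec u)=\omega\,\hat F_\omega(\vec u)+\sigma_3 V\,\hat F_\omega(\vec u)-\sigma_3\hat F_\omega\bigl((k^2+\omega)\sigma_3\vec u\bigr).$$
Iterating this, expanding $\partial_x^2$ of the resulting products of $V^{(j)}$ (bounded, indeed exponentially decaying by \eqref{decV}) with lower-order $x$-derivatives of $\hat F_\omega$ applied to polynomials in $k^2$ times $\vec u$, and controlling the intermediate odd-order derivatives through the interpolation $\norm{\partial_x h}_{L^2}^2=-\langle\partial_x^2h,h\rangle\le\norm{\partial_x^2h}_{L^2}\norm{h}_{L^2}$ (applied either to an $h$ that is an already-estimated even-order derivative, or to $h=\hat F_\omega$ of a polynomial-in-$k^2$ times $\vec u$, where the sharp $n=1$ bound is invoked), one reduces \eqref{ldecay} for arbitrary $n$ to the two base estimates $n=0$ (boundedness of $\hat F_\omega$ on $L^2$) and $n=1$. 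It is precisely the sharpness of the $n=1$ bound in the power of $\langle k\rangle$ that matters: a crude interpolation would leak an extra power and produce $\langle k\rangle^{n+1}$ in place of $\langle k\rangle^{n}$.

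For $n\in\{0,1\}$ I would use the asymptotics \eqref{asy1}--\eqref{asy4}. Fix a smooth partition $1=\chi_++\chi_-$ with $\chi_+\equiv1$ on $[1,\infty)$ and $\chi_-\equiv1$ on $(-\infty,-1]$. On the domain of $\hat F_\omega$, \eqref{asy3}--\eqref{asy4} (and their analogues for $\sigma_1\mathcal F_\omega$, whose two components are interchanged) together with the Volterra integral equation for the Jost solutions (cf. \cite{Busper1,KriegerSchlag}) allow one to write $\tfrac1{s(k)}\bigl[\mathcal F_\omega(x,k)\ \sigma_1\mathcal F_\omega(x,k)\bigr]$ as a finite sum of (i) ``oscillatory'' pieces $\chi_\pm(x)\,m(k)e^{\pm ikx}$ with $m$ a bounded $k$-symbol built from $1/s$ and $r/s$ (bounded by (H1) and \eqref{asyreftr}, the threshold behaviour at $k=0$ being absorbed into the domain restriction), and (ii) ``remainder'' kernels $R(x,k)$ with $|\partial_x^jR(x,k)|\lesssim_j\langle k\rangle^{\max\{j-1,0\}}e^{-\gamma|x|}$ (the $\chi_\pm^{(j)}$-commutator terms being compactly supported and of the same kind). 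Substituting into \eqref{DisF1}: after applying $\partial_x^n$, the pieces (i) contribute terms of the form $\chi_\pm(x)F_0\bigl((ik)^n m(k)\vec u_j\bigr)(x)$, bounded in $L^2_x$ by $\norm{\langle k\rangle^n\vec u}_{L^2_k}$ via Plancherel; the pieces (ii) contribute integral operators with kernels $\lesssim\langle k\rangle^{\max\{n-1,0\}}e^{-\gamma|x|}$, hence bounded $L^2_k\to L^2_x$ by $\norm{\langle k\rangle^n\vec u}_{L^2_k}$ by Cauchy--Schwarz in $k$ (using $\int\langle k\rangle^{-2}\,dk<\infty$ and $\int e^{-2\gamma|x|}\,dx<\infty$). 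This yields the $n=0$ and $n=1$ estimates, in fact in the stronger form without the $\norm{\hat F_\omega(\vec u)}_{L^2}$ term, which only makes the stated inequality easier.

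The single non-routine ingredient, and the main obstacle, is the refined Jost bound $|\partial_x^jR(x,k)|\lesssim\langle k\rangle^{\max\{j-1,0\}}e^{-\gamma|x|}$: that differentiating the Jost remainder once is free in $k$, and each subsequent derivative costs only one power of $\langle k\rangle$ while the exponential localization in $x$ is retained. This is exactly where the exponential decay \eqref{decV} of $V$ and all its derivatives enters, through the Volterra representation of $\mathcal F_\omega$ (equivalently, by bootstrapping the ODE $\partial_x^2\mathcal F_\omega=-(k^2+\omega)\sigma_3\mathcal F_\omega+\omega\mathcal F_\omega+\sigma_3 V\mathcal F_\omega$); once it is in hand the remaining estimates are just Plancherel and Cauchy--Schwarz.
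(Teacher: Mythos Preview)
Your proposal is correct, and the analysis of the Jost remainder is exactly the point. The paper, however, takes a more direct route: its entire proof is the single sentence preceding the lemma, namely that the estimate follows from the asymptotics \eqref{asy1}--\eqref{asy4} together with Plancherel. The implicit mechanism is that the refined Jost bounds hold for \emph{all} orders of $x$-derivatives at once (see \eqref{psed1}--\eqref{psed2}, where $\partial_x^\ell$ applied to $e^{-ikx}\mathcal F_\omega(x,k)/s(k)$ leaves a remainder that is still $O(e^{-\gamma|x|}\langle k\rangle^{-1})$, independent of $\ell$), so one can differentiate the integral representation $n$ times directly: the oscillatory pieces pick up $(ik)^n$ and are handled by Plancherel, while the localized pieces stay $O(e^{-\gamma|x|}\langle k\rangle^{-1})$ and are estimated by Cauchy--Schwarz in $k$. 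No reduction to $n\le 1$ is needed.

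Your intertwining-plus-interpolation reduction is a legitimate alternative and has the virtue of isolating the operator-theoretic content (indeed the paper uses precisely this device in the proof of the closely related Lemma~\ref{sobolevdecayofG}); but for the present lemma it is heavier machinery than required, since the full-order Jost asymptotics are available and make the direct argument shorter.
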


\begin{remark}\label{re-infty}
 From the formula \eqref{forp1}, we can verify that
 \begin{align}
   \hat{F}_{\omega}\left(\overrightarrow{u}\right)(x)=&\int_{\mathbb{R}}\frac{\left[\mathcal{G}_\omega(x,{-}k)\,\,\sigma_{1} \mathcal{G}_\omega(x,{-}k)\right]}{\overline{s(k)}s(k)}\overrightarrow{u}(k)-\left[\mathcal{G}_\omega(x,k)\,\,\sigma_{1} \mathcal{G}_\omega(x,k)\right]\frac{\overline{r(k)}}{\overline{s(k)} s(k)}\overrightarrow{u}(k)\,dk\\
   &=\int_{\mathbb{R}}\frac{\left[\mathcal{G}_\omega(x,{-}k)\,\,\sigma_{1} \mathcal{G}_\omega(x,{-}k)\right]}{s({-}k)}\left[\frac{\overrightarrow{u}(k)}{s(k)}-\frac{r(k)}{s(k)}\overrightarrow{u}({-}k)\right]\,dk\\
   &=\hat{G}_{\omega}\left[\frac{\overrightarrow{u}(k)}{s(k)}-\frac{r(k)}{s(k)}\overrightarrow{u}({-}k)\right](x).
\end{align}
\end{remark}

\begin{remark}\label{weightremark}
From the asymptotic behaviors \eqref{asy1} and \eqref{asy2}, we can verify that there exists $C>1$ satisfying
 \begin{equation*}
  \norm{\frac{d}{dk}G^{*}_{\omega}(\overrightarrow{u})(k)}_{L^{2}_{k}(\mathbb{R})}\leq C \norm{\vert x\vert\overrightarrow{u}(x)}_{L^{2}_{x}(\mathbb{R})},\, \norm{\frac{d^{2}}{dk^{2}}G^{*}_{\omega}(\overrightarrow{u})(k)}_{L^{2}_{k}(\mathbb{R})}\leq C \norm{\vert x\vert^{2}\overrightarrow{u}(x)}_{L^{2}_{x}(\mathbb{R})}.
 \end{equation*}
Moreover, since the identities \eqref{asy1} is true for all $x\geq 0$ and \eqref{asy2} is true for all $x\leq 0,$ we can also verify using integration by parts and  the Plancherel theorem for any $n\in\mathbb{N}$ that
\begin{equation*}
\norm{k^{n}G^{*}_{\omega}(\overrightarrow{u})(k)}_{L^{2}_{k}(\mathbb{R})}\leq C \norm{\overrightarrow{u}(x)}_{H^{n}_{x}(\mathbb{R})},\, \norm{k^{n}F^{*}_{\omega}(\overrightarrow{u})(k)}_{L^{2}_{k}(\mathbb{R})}\leq C \norm{\overrightarrow{u}(x)}_{H^{n}_{x}(\mathbb{R})}.
 \end{equation*}

\end{remark}
\begin{remark}\label{f*r}
Using the asymptotics \eqref{asy1} and \ref{asy3}, we can verify from the Cauchy-
Schwarz inequality, for any $N\gg 1$ that there exists a number $\beta>0$ depending only on $\omega$ satisfying for all $N>0$ large enough 
   \begin{align*}
F_{\omega}^{*}\left(\overrightarrow{u}(x)\chi_{\{x\geq N\}}\right) (k)=&\int_{N}^{{+}\infty}\left(\overrightarrow{u}(x)e^{{-}ikx}+r({-}k)\overrightarrow{u}(x)e^{ikx}\right)\,dx+O\left(\frac{\norm{\overrightarrow{u}(x)\chi_{\{x\geq N\}}}_{L^{2}}e^{{-}\beta N}}{(1+\vert k\vert)}\right),\\
F^{*}_{\omega}\left(\overrightarrow{u}(x)\chi_{\{x< {-}N\}}\right)(k)=&s(k)\int_{{-}\infty}^{{-}N}\overrightarrow{u}(x)e^{{-}ikx}\,dx+O\left(\frac{\norm{\overrightarrow{u}(x)}_{L^{2}}e^{{-}\beta N}}{(1+\vert k\vert)}\right)
   \end{align*}
for a universal constant  in the error terms.
\end{remark}

Moreover, by comparing the distorted transforms with the flat transform, we obtain the following.
\begin{lemma}\label{appFourier}
 For any $n\in\mathbb{N}$ and $m\in\mathbb{N}_{\geq 1},$ there exist parameters  $\gamma_{n,m}>0,\, K>1$ such that if $N\gg 1$ and $1\leq j\leq m$ then
\begin{align}\label{asrr}
     & \max_{0\leq \ell\leq n}\int_{N}^{{+}\infty} \left\vert (1+\vert x\vert)^{\ell}\frac{d^{j}}{dx^{j}}\left[\hat{F}_\omega\left(\overrightarrow{u}(k)\right)(x)-\frac{1}{\sqrt{2\pi}}\int_{\mathbb{R}}e^{ik x}\overrightarrow{u}(k)\,dk\right]\right\vert^{2}\,dx  \\
     & \quad \quad \quad \quad\leq Ke^{{-}\gamma_{n,m} N}\norm{\overrightarrow{u}(k)(1+\vert k\vert)^{m-1}}_{L^{2}_{x}(\mathbb{R})}^{2},\\ \nonumber
    & \max_{0\leq \ell\leq n} \int_{{-}\infty}^{{-}N} \left\vert(1+\vert x\vert)^{\ell}\frac{d^{j}}{dx^{j}}\left[ \hat{G}_\omega\left(\overrightarrow{u}(k)\right)(x)-\frac{1}{\sqrt{2\pi}}\int_{\mathbb{R}}e^{ik x}\overrightarrow{u}(k)\,dk\right]\right\vert^{2}\,dx
     \\
    & \quad \quad \quad \quad \leq  Ke^{{-}\gamma_{n,m}N}\norm{\overrightarrow{u}(k)(1+\vert k\vert)^{m-1}}_{L^{2}_{x}(\mathbb{R})}^{2},
\end{align}
 for any $\vec{u}$ in the domain of $\hat{F}_\omega,\,\hat{G}_\omega$ and any $0\leq \ell\leq n .$
\end{lemma}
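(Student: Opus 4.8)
The plan is to compare the distorted Fourier transform $\hat F_\omega$ with the flat Fourier transform by exploiting the exponential decay of the potential together with the asymptotics \eqref{asy3}, \eqref{asy4} of $\mathcal F_\omega$. The first step is to replace the integrand $\tfrac{1}{s(k)}[\mathcal F_\omega(x,k)\ \sigma_1\mathcal F_\omega(x,k)]$ in \eqref{DisF1} by its leading-order term. For $x\ge N\gg 1$ the asymptotic \eqref{asy3} gives $\mathcal F_\omega(x,k)=s(k)\bigl(e^{ikx}\underline e+O(e^{-\gamma x}/(1+|k|))\bigr)$, so that $\tfrac{1}{s(k)}\mathcal F_\omega(x,k)=e^{ikx}\underline e+O(e^{-\gamma x}/(1+|k|))$, and a symmetric computation using $\sigma_1\mathcal F_\omega(x,k)$ together with $\sigma_1\mathcal H_\omega\sigma_1=-\mathcal H_\omega$ handles the second column (producing the $e^{-ikx}$ mode, which after the change $k\mapsto -k$ in the $k$-integral yields the same flat exponential). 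Actually, for the bound on $x\ge N$ one only needs the precise statement that the remainder kernel $R_\omega(x,k):=\tfrac{1}{s(k)}[\mathcal F_\omega(x,k)\ \sigma_1\mathcal F_\omega(x,k)]-[e^{ikx}\,\underline e\ \ \sigma_1 e^{ikx}\underline e]$ satisfies, for $x\ge 0$ and all integers $a\le n+j$,
\begin{equation*}
  \Bigl\|\partial_x^{\,j}\bigl((1+|x|)^{\ell}R_\omega(x,k)\bigr)\Bigr\|\le C_{n,j}\,\frac{e^{-\gamma x/2}}{(1+|k|)},
\end{equation*}
which follows by differentiating the asymptotic expansions (each $x$-derivative of the $O(e^{-\gamma x}/(1+|k|))$ term remains $O(e^{-\gamma x}/(1+|k|))$ after shrinking $\gamma$, since $\mathcal F_\omega$ solves a second-order ODE with exponentially decaying coefficients; see \cite{KriegerSchlag,Busper1}), and the polynomial weight $(1+|x|)^\ell$ is absorbed into the exponential at the cost of the constant.

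The second step is to insert this decomposition into \eqref{DisF1} and estimate. Writing $\hat F_\omega(\vec u)(x)-\tfrac{1}{\sqrt{2\pi}}\int_{\mathbb R}e^{ikx}\vec u(k)\,dk=\tfrac{1}{\sqrt{2\pi}}\int_{\mathbb R}R_\omega(x,k)\vec u(k)\,dk$ (after the $k\mapsto -k$ symmetrization described above so that both columns contribute the flat transform), one differentiates $j$ times in $x$ under the integral, multiplies by $(1+|x|)^\ell$, and applies the pointwise kernel bound. By Cauchy--Schwarz in $k$, for each fixed $x\ge N$,
\begin{equation*}
  \Bigl|(1+|x|)^{\ell}\partial_x^{\,j}\!\!\int_{\mathbb R}R_\omega(x,k)\vec u(k)\,dk\Bigr|^2
  \le C\,e^{-\gamma x}\Bigl(\int_{\mathbb R}\frac{(1+|k|)^{2(j+1)}}{(1+|k|)^2}\,dk\Bigr)\,\cdot\, ?
\end{equation*}
— but the $dk$-integral of $(1+|k|)^{2j}$ diverges, so instead I keep a weight: bound the kernel derivative by $C e^{-\gamma x/2}(1+|k|)^{j-1}$ after accounting for $j$ $x$-derivatives each bringing a factor $(1+|k|)$ against the $(1+|k|)^{-1}$ gain, then pair $(1+|k|)^{j-1}|\vec u(k)|$ against $(1+|k|)^{-(j-1)}\cdot\langle k\rangle^{m-1}$-type weights only when $m-1\ge j$; in general one simply writes $(1+|k|)^{j-1}|\vec u(k)|=(1+|k|)^{m-1}|\vec u(k)|\cdot(1+|k|)^{j-m}$ and, since $1\le j\le m$ forces $j-m\le 0$, the second factor is bounded, giving
\begin{equation*}
  \Bigl|(1+|x|)^{\ell}\partial_x^{\,j}\bigl(\cdots\bigr)\Bigr|^2\le C\,e^{-\gamma x}\,\norm{\vec u(k)(1+|k|)^{m-1}}_{L^2_k}^2.
\end{equation*}
Integrating this in $x$ over $[N,+\infty)$ produces $\int_N^\infty e^{-\gamma x}\,dx = \gamma^{-1}e^{-\gamma N}$, which is exactly the claimed bound with $\gamma_{n,m}=\gamma$ (any value $\le \gamma/2$ works after the adjustments above). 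The bound for $\hat G_\omega$ on $(-\infty,-N]$ is identical after reflecting $x\mapsto -x$ and using \eqref{asy1}, \eqref{asy2} in place of \eqref{asy3}, \eqref{asy4}; note $\hat G_\omega$ is built from $\mathcal G_\omega(x,-k)$ whose relevant asymptotics as $x\to-\infty$ mirror those of $\mathcal F_\omega$ as $x\to+\infty$.

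The main obstacle is the bookkeeping of the weights $(1+|k|)$ when $j$ spatial derivatives are taken: each derivative falling on $R_\omega(x,k)$ (rather than on $e^{ikx}$, which has already been subtracted) is harmless, but one must check that differentiating the expansion of $\mathcal F_\omega$ does not degrade the $O((1+|k|)^{-1})$ gain in the remainder — this is where the ODE structure $\mathcal F_\omega'' = (k^2+\omega)\sigma_3\mathcal F_\omega + \sigma_3 V\mathcal F_\omega$ is used, trading each $\partial_x$ for at most one power of $(1+|k|)$ while the subtracted flat part carries away the non-decaying piece. Once that is set up, everything else is Cauchy--Schwarz and an elementary exponential integral; the restriction $1\le j\le m$ is precisely what is needed so that $(1+|k|)^{j-m}\le 1$, matching the $(1+|k|)^{m-1}$ weight on the right-hand side. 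The symmetrization step (handling the second column $\sigma_1\mathcal F_\omega$ and the $r(k)e^{-ikx}$ reflected waves) requires the relations in \eqref{asyreftr} and the parity $s(-k)=\overline{s(k)}$, $r(-k)=\overline{r(k)}$ to recombine the two halves of the $k$-integral into a single flat Fourier integral; this is routine but must be done carefully to avoid spurious boundary terms.
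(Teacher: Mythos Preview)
Your overall strategy --- subtract the flat Fourier transform, write the remainder as an oscillatory integral with an exponentially decaying kernel, then estimate --- matches the paper's approach. However, there is a genuine gap at the endpoint $j=m$.

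After bounding $|\partial_x^{\,j} R_\omega(x,k)| \lesssim e^{-\gamma x}(1+|k|)^{j-1}$, you apply Cauchy--Schwarz in $k$ for each fixed $x$ to reach $\|(1+|k|)^{m-1}\vec u\|_{L^2_k}$. But that pairing requires $(1+|k|)^{(j-1)-(m-1)}=(1+|k|)^{j-m}\in L^2_k$, which holds only for $j\le m-1$; at $j=m$ the weight is the constant $1\notin L^2$. Saying ``the second factor is bounded'' is not enough: boundedness gives only $\|(1+|k|)^{m-1}\vec u\|_{L^1_k}$, not the $L^2_k$ norm that the lemma demands. The paper closes this by invoking $L^2$-boundedness of pseudodifferential operators: writing the difference as $\int e^{ikx}c(x,k)\vec u(k)\,dk$ with the symbol bounds \eqref{psed1}--\eqref{psed2} (namely $|\partial_k^{\ell}\partial_x^{n} c|\lesssim e^{-\gamma x}(1+|k|)^{-1-\ell}$), the map $\vec u\mapsto e^{\gamma x}\partial_x^{\,j}\!\int e^{ikx}c(x,k)\vec u(k)\,dk$ has an $S^{\,j-1}$ symbol and is therefore bounded from $\langle k\rangle^{j-1}L^2_k$ to $L^2_x$; since $j-1\le m-1$ the right-hand side follows. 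The $k$-regularity of the symbol is what makes this work, and it is precisely the information your pointwise-in-$x$ Cauchy--Schwarz throws away.

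A secondary point: there is no ``$e^{-ikx}$ mode'' or $r(k)e^{-ikx}$ reflected wave to symmetrize for the first estimate. On $x\ge N$ the asymptotic \eqref{asy3} already gives $\tfrac{1}{s(k)}\mathcal F_\omega(x,k)=e^{ikx}\underline e+O(e^{-\gamma x}/(1+|k|))$ and likewise $\tfrac{1}{s(k)}\sigma_1\mathcal F_\omega(x,k)=e^{ikx}\sigma_1\underline e+O(\cdots)$, so the full matrix $\tfrac{1}{s(k)}[\mathcal F_\omega\ \ \sigma_1\mathcal F_\omega]$ equals $e^{ikx}I+O(e^{-\gamma x}/(1+|k|))$ with no change of variable needed. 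The reflected piece $r(k)e^{-ikx}$ appears only as $x\to -\infty$ (see \eqref{asy4}) and is irrelevant for the bound on $[N,+\infty)$.
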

\begin{proof}
   Because of the asymptotic behaviors \eqref{asy1} and \ref{asy3}, we can verify, using the Cauchy–Schwarz inequality and for any $N\gg 1$ that
   \begin{align*}
      \int_{N}^{{+}\infty} \left\vert \hat{F}_\omega\left(\overrightarrow{u}\right)(x)-\frac{1}{\sqrt{2\pi}}\int_{\mathbb{R}}e^{ik x}\overrightarrow{u}(k)\,dk\right\vert^{2}\,dx\leq &C\int_{N}^{{+}\infty}e^{{-}\gamma x}\left\vert\int_{\mathbb{R}}\frac{\vert\overrightarrow{u}(k) \vert}{1+\vert k\vert}\,dk\right\vert^{2}\,dx\\
     \leq & Ke^{{-}\gamma N}\norm{\overrightarrow{u}}_{L^{2}}^{2},
   \end{align*}
for a universal constant $K>1.$ 
\par Next, it is well-known that
\begin{align}\label{psed1}
\frac{\partial^{\ell+n}}{\partial x^{\ell}\partial k^{n}}\frac{e^{{-}ikx}\mathcal{G}(x,{-}k)}{\overline{s(k)}}=&\frac{\partial^{\ell+n}}{\partial x^{\ell}\partial k^{n}}\begin{bmatrix}
        1\\
        0
    \end{bmatrix}+O\left(\frac{e^{\gamma x}}{(1+\vert k \vert)^{1+n}}\right) \text{, for all $x<{-}1,$}\\
    \label{psed2}\frac{\partial^{\ell+n}}{\partial x^{\ell} \partial k^{n}}\frac{e^{{-}ikx}\mathcal{F}(x,k)}{s(k)}=& \frac{\partial^{\ell+n}}{\partial x^{\ell} \partial k^{n}}
    \begin{bmatrix}
        1\\
        0
    \end{bmatrix}+O\left(\frac{e^{{-}\gamma x}}{(1+\vert k \vert)^{1+n}}\right) \text{, for all $x >1,$}
\end{align}
see \cite{Busper1} or Sections $4$ and $5$ of \cite{collotger} for example.
Consequently, we can verify when $x> 1$ that
\begin{equation*}
    \frac{d^{j}}{dx^{j}}\left[ \hat{F}_\omega (\overrightarrow{u}(k))(x)-\frac{1}{\sqrt{2\pi}}\int_{\mathbb{R}}e^{ikx}\overrightarrow{u}(k)\,dk\right]=\int_{\mathbb{R}}\frac{d^{j}}{dx^{j}}\left[e^{ikx}c(k,x)\overrightarrow{u}(k)\,dk\right]\,dk,
\end{equation*}
such that $c(k,x)$ is a function satisfying
$
    \left\vert \frac{\partial^{\ell+n}c(k,x)}{\partial k^{\ell}\partial x^{n}} \right\vert=O\left(\frac{e^{{-}\gamma x}}{(1+\vert k\vert)^{1+\ell}}\right)
$ for any $\ell,\,n\in\mathbb{N}$. Then we  can deduce using the theory of pseudodifferential operators and the chain rule of derivative for any $j\geq 1$ that
\begin{equation*}
    \norm{\mathrm{1}_{\{x\geq N\}}(x)e^{\gamma x}\hat{F}_\omega(\overrightarrow{u}(k))(x)-\frac{\mathrm{1}_{\{x\geq N\}}(x)e^{\gamma x}}{\sqrt{2\pi}}\int_{\mathbb{R}}e^{ikx}\overrightarrow{u}(k)\,dk}_{H^{j}_{x}(\mathbb{R})}\leq C_{j}\norm{(1+\vert k\vert)^{j-1}\overrightarrow{u}(k)}_{L^{2}_{k}(\mathbb{R})}, 
\end{equation*}
for a constant $C_{j}>1$ depending only on $j.$
Consequently, the proof of the first estimate for any $j\geq 1$ follows from the Cauchy-Schwarz.
\par The proof of the second estimate is completely analogous. 
\end{proof}
\begin{lemma}\label{sobolevdecayofG}
 Let $\overrightarrow{u}(k)=\begin{bmatrix}
     u_{1}(k)\\
     u_{2}(k)
 \end{bmatrix}\in L^{2}_{k}(\mathbb{R},\mathbb{C}^2)$ be an element of the domain of $\hat{G}_\omega.$ For any $v\in\mathbb{R}$ and any $\epsilon\in (0,1),$ there exists a parameter $C_{\epsilon,v}>1$ satisfying
 \begin{multline*}
\norm{\frac{d^{2\ell}}{dx^{2\ell}}\left[\sigma_{3}\hat{G}_{\omega}(\overrightarrow{u}(k))(x)\right]-({-}1)^{\ell}\hat{G}_{\omega}\left(\begin{bmatrix}
(k+v)^{2\ell}u_{1}(k)\\
(k-v)^{2\ell}u_{2}(k)
\end{bmatrix}\right)(x)}_{L^{2}_{x}}\\\leq C_{\epsilon,v}\left(\norm{[\hat{G}_{\omega}(\overrightarrow{u}(k))(x)}_{L^{2}_{x}(\mathbb{R})}+\max_{0\leq n\leq \ell-2}\norm{\begin{bmatrix}
(k+v)^{2n}u_{1}(k)\\
(k-v)^{2n}u_{2}(k)
\end{bmatrix}}_{L^{2}_{k}(\mathbb{R})}\right)+\epsilon   \norm{\begin{bmatrix}
(k+v)^{2\ell}u_{1}(k)\\
(k-v)^{2\ell}u_{2}(k)
\end{bmatrix}}_{L^{2}_{k}(\mathbb{R})}.
 \end{multline*}
\end{lemma}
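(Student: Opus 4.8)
The plan is to strip the scalar weight $(k\pm v)^{2\ell}$ off one square at a time using the generalized eigenfunction equation for $\mathcal{H}_\omega$, and then to control the errors this creates with the asymptotics \eqref{asy1}--\eqref{asy4}, the Plancherel theorem, and the coercivity $\norm{\hat G_\omega(\vec u)}_{L^2}\gtrsim\norm{\vec u}_{L^2}$ coming from \eqref{coerchatF}. By density it suffices to treat $\vec u$ in the Schwartz class, so that differentiation under the integral sign in \eqref{DisG1} is legitimate; the general case follows by approximation together with the coercive lower bound.

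\emph{Step 1 (one--step identity).} Writing $\mathcal{H}_\omega=-\sigma_3\partial_x^2+\omega\sigma_3+V$ and using $\mathcal{H}_\omega\mathcal{G}_\omega(x,-k)=(\omega+k^2)\mathcal{G}_\omega(x,-k)$ together with $\sigma_1\mathcal{H}_\omega\sigma_1=-\mathcal{H}_\omega$, a direct computation gives, after multiplying by $\sigma_3$ and carrying the Galilei phase $e^{ivx\sigma_3}$,
\begin{equation*}
\partial_x^2\!\big(\sigma_3 e^{ivx\sigma_3}\mathcal{G}_\omega(x,-k)\big)=-(k+v)^2\,\sigma_3 e^{ivx\sigma_3}\mathcal{G}_\omega(x,-k)+\rho_+(x,k),
\end{equation*}
and likewise for $\sigma_1\mathcal{G}_\omega$ with $-(k-v)^2$ and a remainder $\rho_-$. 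What matters is the shape of $\rho_\pm$: it is a finite sum of terms $p(k)\,e^{ivx\sigma_3}W(x)\mathcal{G}_\omega(x,-k)$, where $W$ is $V$, $V'$, a smooth cutoff near $\supp V$, or the projector $\tfrac12(\sigma_3-I)$ onto the sub-dominant component of $\mathcal{G}_\omega$ (which by \eqref{asy1}--\eqref{asy2} is exponentially localized in $x$ with an extra $\langle k\rangle^{-1}$ of decay), plus reflected--wave terms $p(k)\,\overline{r(k)}\,e^{-ikx}\underline{e}$ in which $p(k)\overline{r(k)}$ is a bounded multiplier by \eqref{asyreftr}; at this first step every $p$ is a constant. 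The essential point, as in the proof of Lemma~\ref{kphi}, is that whenever a factor $\omega+k^2$ is about to be generated it is rewritten as $\mathcal{H}_\omega$ acting on $\mathcal{G}_\omega$, and the $\partial_x^2$ inside $\mathcal{H}_\omega$ is integrated by parts onto the exponentially--localized factors, so that no spurious power of $\langle k\rangle$ is lost.

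\emph{Step 2 (iteration and error bound).} Applying the identity $\ell$ times yields
\begin{equation*}
\frac{d^{2\ell}}{dx^{2\ell}}\big(\sigma_3 e^{ivx\sigma_3}\mathcal{G}_\omega(x,-k)\big)=(-1)^\ell(k+v)^{2\ell}\,\sigma_3 e^{ivx\sigma_3}\mathcal{G}_\omega(x,-k)+\rho_+^{(\ell)}(x,k),
\end{equation*}
with $\rho_+^{(\ell)}$ again of the two shapes above, now with polynomial prefactors of degree at most $2\ell-2$ (each iteration multiplies the previous remainder by $(k+v)^2$, but the newly created piece sits at least two degrees below the current top weight, and the integration--by--parts step keeps it there). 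Multiplying by $\tfrac{1}{s(-k)}$, integrating against $\vec u(k)$ and undoing the conjugation, the main term reconstitutes exactly $(-1)^\ell\hat G_\omega\big((k+v)^{2\ell}u_1,(k-v)^{2\ell}u_2\big)(x)$. The remainder integrals are estimated by Cauchy--Schwarz in $x$ (using the exponential localization of the $W(x)\mathcal{G}_\omega$ factors) together with Plancherel in $k$ for the oscillatory core $e^{ikx}\underline e$ of $\mathcal{G}_\omega$ (which pairs isometrically with $\vec u$) and the boundedness of the multipliers $\overline{r(k)}p(k)$; this gives
\begin{equation*}
\norm{\text{error}}_{L^2_x}\lesssim \norm{(k+v)^{2\ell-2}u_1}_{L^2_k}+\norm{(k-v)^{2\ell-2}u_2}_{L^2_k}+\norm{\vec u}_{L^2_k},
\end{equation*}
and $\norm{\vec u}_{L^2_k}\lesssim\norm{\hat G_\omega(\vec u)}_{L^2_x}$ by \eqref{coerchatF}.

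\emph{Step 3 (absorption).} The elementary inequalities $t^{2(\ell-1)}\le \epsilon\, t^{2\ell}+C_{\epsilon}\,t^{2(\ell-2)}$ and $t^{2j}\le 1+t^{2(\ell-2)}$ for $0\le j\le\ell-2$, applied with $t=|k+v|$ and $t=|k-v|$, split $\norm{(k\pm v)^{2\ell-2}u_{1,2}}_{L^2}$ into $\epsilon\,\big\|\big((k+v)^{2\ell}u_1,(k-v)^{2\ell}u_2\big)\big\|_{L^2}$ plus $C_{\epsilon,v}\big(\max_{0\le n\le\ell-2}\norm{((k+v)^{2n}u_1,(k-v)^{2n}u_2)}_{L^2}+\norm{\vec u}_{L^2}\big)$, and once more $\norm{\vec u}_{L^2}\lesssim\norm{\hat G_\omega(\vec u)}_{L^2}$; together with $\big\|\big((k+v)^{2\ell}u_1,(k-v)^{2\ell}u_2\big)\big\|\lesssim_v \big\|\big((k+v)^{2\ell}u_1,(k-v)^{2\ell}u_2\big)\big\|$ this is precisely the asserted right--hand side. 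The main obstacle is Step~2: a crude differentiation--under--the--integral estimate loses one power of $\langle k\rangle$ per derivative and only produces the non--optimal bound $C\norm{(k\pm v)^{2\ell}\vec u}_{L^2}$, with no $\epsilon$ to spare. The resolution — the technical heart of the argument — is the bookkeeping indicated above: every emerging factor $\omega+k^2$ must be converted into $\mathcal{H}_\omega$ and its Laplacian part integrated by parts onto the exponentially--localized factors, and one must exploit the extra $\langle k\rangle^{-1}$ on the sub-dominant component of $\mathcal{G}_\omega$ and the boundedness of $r(k)$ in the reflected--wave terms, so that the remainder genuinely lives two polynomial degrees below the top weight $(k\pm v)^{2\ell}$.
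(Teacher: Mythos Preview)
Your Galilei--phase route has a genuine gap at the ``undoing the conjugation'' step. What your iteration produces is an estimate on
\[
\partial_x^{2\ell}\bigl(\sigma_3\,e^{ivx\sigma_3}\hat G_\omega(\vec u)\bigr)-(-1)^\ell e^{ivx\sigma_3}\hat G_\omega\!\begin{pmatrix}(k+v)^{2\ell}u_1\\(k-v)^{2\ell}u_2\end{pmatrix},
\]
whereas the lemma asks for $\partial_x^{2\ell}\bigl(\sigma_3\hat G_\omega(\vec u)\bigr)$ with no phase. Left--multiplying by the unitary $e^{-ivx\sigma_3}$ cleans the right side, but on the left it yields $e^{-ivx\sigma_3}\partial_x^{2\ell}\bigl(e^{ivx\sigma_3}\sigma_3\hat G_\omega(\vec u)\bigr)=(\partial_x+iv\sigma_3)^{2\ell}\sigma_3\hat G_\omega(\vec u)$, not $\partial_x^{2\ell}\sigma_3\hat G_\omega(\vec u)$. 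The discrepancy is the full commutator $\sum_{j<2\ell}\binom{2\ell}{j}(iv\sigma_3)^{2\ell-j}\partial_x^{j}\sigma_3\hat G_\omega(\vec u)$; these terms \emph{can} in principle be absorbed via Lemma~\ref{kphi} and another application of Young's inequality, but you do not mention them, so as written the argument does not close. A smaller inaccuracy in the same spirit: already at the first step the cross term $2iv(\mathcal G_\omega'-ik\sigma_3\mathcal G_\omega)$ carries a reflected--wave contribution with prefactor $\sim vk\,\bar r(k)$, so ``every $p$ is a constant'' is not quite right (though $k\bar r(k)$ is still bounded).

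The paper's proof avoids the conjugation entirely and is considerably shorter. From the eigenvalue equation it writes the exact identity
\[
\sigma_3\partial_x^{2}\hat G_\omega(\vec u)+\hat G_\omega\!\begin{pmatrix}(k+v)^{2}u_1\\(k-v)^{2}u_2\end{pmatrix}
=\hat G_\omega\!\begin{pmatrix}(2kv+v^{2})u_1\\(-2kv+v^{2})u_2\end{pmatrix}+V_\omega\,\hat G_\omega(\vec u),
\]
obtained by the purely algebraic substitution $k^{2}=(k\pm v)^{2}\mp 2kv-v^{2}$ on the frequency side, and then bounds the right--hand side by $\|\hat G_\omega(2kv\vec u)\|\lesssim v\|(1+|k|)\vec u\|$ together with Young's inequality and the coercivity \eqref{coerchatF}. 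This gives $\ell=1$ directly. The general case is by induction: one differentiates this identity $2(\ell-1)$ more times, controls $\partial_x^{2(\ell-1)}\bigl(V_\omega\hat G_\omega(\vec u)\bigr)$ by the Leibniz rule and the inductive hypothesis, and uses the interpolation $\|(k\pm v)^{2(\ell-1)}u\|^{2}\le\|(k\pm v)^{2\ell}u\|^{2-1/\ell}\|u\|^{1/\ell}$ followed by Young to split off the $\epsilon$--term. Your Step~3 absorption is essentially this interpolation, but the detour through $e^{ivx\sigma_3}$ is unnecessary, and without the missing commutator analysis it leaves the proof incomplete.
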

In particular, Lemma \ref{sobolevdecayofG} implies the following corollary. 
\begin{corollary}\label{h2Gesti}
 There exists a constant $c>0$ such that if $\overrightarrow{u}\in L^{2}_{k}(\mathbb{R},\mathbb{C}^{2})$ belongs to the domain of $\hat{G}_{\omega},$ then the following inequality holds.
 \begin{equation*}
     \norm{\hat{G}_{\omega}(\overrightarrow{u}(k))(x)}_{H^{2}_{x}(\mathbb{R})}\geq c\norm{(1+k^{2})\overrightarrow{u}(k)}_{L^{2}_{k}(\mathbb{R})}.
 \end{equation*}
\end{corollary}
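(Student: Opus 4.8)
The plan is to deduce Corollary \ref{h2Gesti} directly from Lemma \ref{sobolevdecayofG} applied with $\ell=1$ and $v=0$, combined with the lower bound for $\hat{G}_\omega$ itself coming from the coercivity estimate \eqref{coerchatF} (via Remark \ref{re-infty}, which relates $\hat{F}_\omega$ and $\hat{G}_\omega$). First I would record the baseline coercivity: since $\frac{1}{s(k)}$ and $\frac{r(k)}{s(k)}$ are bounded, the identity $\hat{F}_\omega(\vec u) = \hat{G}_\omega\bigl(\tfrac{\vec u(k)}{s(k)}-\tfrac{r(k)}{s(k)}\vec u(-k)\bigr)$ from Remark \ref{re-infty}, together with \eqref{coerchatF}, yields a constant $c_1>0$ with $\norm{\hat{G}_\omega(\vec u)}_{L^2_x}\geq c_1\norm{\vec u}_{L^2_k}$ for all $\vec u$ in the domain of $\hat{G}_\omega$. (Alternatively this is already implied by the remark after \eqref{coerchatF} referencing Lemma \ref{leper}.)

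Next, apply Lemma \ref{sobolevdecayofG} with $\ell=1$, $v=0$: the max over $0\le n\le \ell-2$ is vacuous, so the estimate reads
\begin{equation*}
\norm{\partial_x^2\bigl[\sigma_3\hat{G}_\omega(\vec u)(x)\bigr] + \hat{G}_\omega\bigl(k^2\vec u\bigr)(x)}_{L^2_x} \leq C_\epsilon\norm{\hat{G}_\omega(\vec u)}_{L^2_x} + \epsilon\norm{k^2\vec u}_{L^2_k}.
\end{equation*}
By the triangle inequality and the baseline coercivity applied to $k^2\vec u$ (which lies in the domain whenever the right-hand side is finite — otherwise there is nothing to prove), we get
\begin{equation*}
\norm{\partial_x^2\hat{G}_\omega(\vec u)}_{L^2_x} \geq \norm{\hat{G}_\omega(k^2\vec u)}_{L^2_x} - C_\epsilon\norm{\hat{G}_\omega(\vec u)}_{L^2_x} - \epsilon\norm{k^2\vec u}_{L^2_k} \geq (c_1-\epsilon)\norm{k^2\vec u}_{L^2_k} - C_\epsilon\norm{\hat{G}_\omega(\vec u)}_{L^2_x}.
\end{equation*}
Now fix $\epsilon = c_1/2$, so that $\norm{\partial_x^2\hat{G}_\omega(\vec u)}_{L^2_x} \geq \tfrac{c_1}{2}\norm{k^2\vec u}_{L^2_k} - C\norm{\hat{G}_\omega(\vec u)}_{L^2_x}$ for a fixed constant $C$.

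Finally I would combine the three bounds $\norm{\hat{G}_\omega(\vec u)}_{H^2_x}^2 \gtrsim \norm{\hat{G}_\omega(\vec u)}_{L^2_x}^2 + \norm{\partial_x^2\hat{G}_\omega(\vec u)}_{L^2_x}^2$, the baseline coercivity $\norm{\hat{G}_\omega(\vec u)}_{L^2_x}\geq c_1\norm{\vec u}_{L^2_k}$, and the inequality just obtained, to conclude: if $C\norm{\hat{G}_\omega(\vec u)}_{L^2_x} \leq \tfrac{c_1}{4}\norm{k^2\vec u}_{L^2_k}$ then $\norm{\partial_x^2\hat{G}_\omega(\vec u)}_{L^2_x}\geq \tfrac{c_1}{4}\norm{k^2\vec u}_{L^2_k}$, and otherwise $\norm{\hat{G}_\omega(\vec u)}_{L^2_x} \geq \tfrac{c_1}{4C}\norm{k^2\vec u}_{L^2_k}$; in either case $\norm{\hat{G}_\omega(\vec u)}_{H^2_x}\gtrsim \norm{k^2\vec u}_{L^2_k}$, and since also $\norm{\hat{G}_\omega(\vec u)}_{H^2_x}\geq\norm{\hat{G}_\omega(\vec u)}_{L^2_x}\geq c_1\norm{\vec u}_{L^2_k}$, averaging the two gives $\norm{\hat{G}_\omega(\vec u)}_{H^2_x}\geq c\,\norm{(1+k^2)\vec u}_{L^2_k}$ for a suitable $c>0$. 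The only genuinely nontrivial input is Lemma \ref{sobolevdecayofG} itself (which is assumed); the corollary is then a soft bookkeeping argument, so there is no real obstacle — one just has to be careful that the "absorb $\epsilon$" step is legitimate, which it is because $C_\epsilon$ does not multiply the large norm.
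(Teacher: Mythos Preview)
Your proof is correct and follows exactly the route the paper intends: the paper states the corollary immediately after Lemma \ref{sobolevdecayofG} with the one-line justification ``In particular, Lemma \ref{sobolevdecayofG} implies the following corollary,'' and your argument is precisely the natural way to unpack that implication --- apply the lemma with $\ell=1$, $v=0$, use the $L^2$ coercivity of $\hat G_\omega$ (which the paper records as \eqref{coerGG} inside the proof of the lemma), and absorb the $\epsilon$-term. The only cosmetic point is that the $\sigma_3$ in $\partial_x^2[\sigma_3\hat G_\omega(\vec u)]$ disappears because $\sigma_3$ is a unitary constant matrix, which you use implicitly.
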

\begin{proof}[Proof of Lemma \ref{sobolevdecayofG}.]
 First, recalling $\mathcal{G}_{\omega}$ satisfies $\mathcal{H}_{\omega}\mathcal{G}_{\omega}(x,k)=(k^{2}+\omega)\mathcal{G}_{\omega}(x,k)$ and the asymptotic behavior \eqref{asy1}, \eqref{asy2}, it is not difficult to verify the following estimates
 \begin{align}\label{identit1dx2}
   \sigma_{3} \partial^{2}_{x}\mathcal{G}_{\omega}(x,k)=&{-}k^{2}\mathcal{G}_{\omega}(x,k)+\begin{bmatrix}
        U_{\omega}(x) & {-}W_{\omega}(x)\\
        W_{\omega}(x) & {-}U_{\omega}(x)
    \end{bmatrix}\mathcal{G}_{\omega}(x,k),\\ \nonumber
    \norm{\frac{d^{\ell}}{dx^{\ell}}\hat{G}_{\omega}(\overrightarrow{u}(k))(x)}_{L^{2}_{x}(\mathbb{R})}\leq &C_{\ell}\norm{(1+\vert k\vert^{\ell})\overrightarrow{u}(k)}_{L^{2}_{k}(\mathbb{R})} \text{, for a constant $C_{\ell}>1$ for any $\ell\in\mathbb{N}_{\geq 1}.$}
 \end{align}
We recall that the functions $U_{\omega}(x)$ and $W_{\omega}(x)$ are smooth and exhibit the following rapid decay
\begin{equation*}
    \left\vert\frac{d^{\ell}U_{\omega}(x)}{dx^{\ell}} \right\vert +  \left\vert\frac{d^{\ell}W_{\omega}(x)}{dx^{\ell}} \right\vert\leq C_{\ell}e^{{-}\alpha \vert x\vert } \text{, for a constant $\alpha>0$ and $C_{\ell}>1$ depending only on $\ell,$}
\end{equation*}
and it is well-known that
\begin{equation}\label{coerGG}
    \norm{\hat{G}_{\omega}(\overrightarrow{u})(x)}_{L^{2}_{x}(\mathbb{R})}\geq c_{\omega} \norm{
    \overrightarrow{u}(k)}_{L^{2}_{k}(\mathbb{R})} \text{, for a constant $c_{\omega}>0.$}
\end{equation}
Consequently,
\begin{align*}
    \sigma_{3}\frac{d^{2}}{dx^{2}}\hat{G}_{\omega}(\overrightarrow{u}(k))(x)+\hat{G}_{\omega}\left(
    \begin{bmatrix}
(k+v)^{2}u_{1}(k)\\
(k-v)^{2}u_{2}(k)
    \end{bmatrix}
    \right)(x)=&\hat{G}_{\omega}\left(
    \begin{bmatrix}
(2kv+v^{2})u_{1}(k)\\
({-}2kv+v^{2})u_{2}(k)
    \end{bmatrix}
    \right)(x)\\&{+}\begin{bmatrix}
        U_{\omega}(x) & {-}W_{\omega}(x)\\
        W_{\omega}(x) & {-}U_{\omega}(x)
    \end{bmatrix}\hat{G}_{\omega}(\overrightarrow{u}(k))(x),
\end{align*}
and Young's inequality implies that
\begin{align}\label{eqaxx1}
    \norm{\hat{G}_{\omega}(2kv\overrightarrow{u}(k))(x)}_{L^{2}_{x}(\mathbb{R})}^{2}\leq &Cv^{2}\norm{(1+\vert k\vert)\overrightarrow{u}(k)}_{L^{2}_{k}(\mathbb{R})}^{2}\\ \nonumber
    \leq & Cv^{2}  \left[\frac{1}{2\epsilon}+1\right]\norm{\overrightarrow{u}(k)}_{L^{2}_{k}(\mathbb{R})}^{2}{+}\frac{Cv^{2}\epsilon}{2}\norm{k^{2}\overrightarrow{u}(k)}_{L^{2}_{k}(\mathbb{R})}^{2}.
\end{align}
  Furthermore, we can deduce the following estimates using Young's inequality. 
  \begin{align}\label{eqaxxx2}
      \norm{k^{2}\overrightarrow{u}(k)}_{L^{2}_{k}(\mathbb{R})}\leq & \norm{\begin{bmatrix}
          (k+\frac{v}{2})^{2}u_{1}(k)\\
          (k-\frac{v}{2})^{2}u_{2}(k)
      \end{bmatrix}}_{L^{2}_{k}(\mathbb{R})}+2v\norm{\begin{bmatrix}
          ku_{1}(k)\\
          ku_{2}(k)
      \end{bmatrix}}_{L^{2}_{k}(\mathbb{R})}+\frac{v^{2}}{4}\norm{\vec{u}(k)}_{L^{2}_{k}(\mathbb{R})}\\ \nonumber
      \leq & \norm{\begin{bmatrix}
          (k+\frac{v}{2})^{2}u_{1}(k)\\
          (k-\frac{v}{2})^{2}u_{2}(k)
      \end{bmatrix}}_{L^{2}_{k}(\mathbb{R})}+2v\norm{\begin{bmatrix}
          (k+\frac{v}{2})u_{1}(k)\\
          (k-\frac{v}{2})u_{2}(k)
      \end{bmatrix}}_{L^{2}_{k}(\mathbb{R})}+\frac{5v^{2}}{4}\norm{\vec{u}(k)}_{L^{2}_{k}(\mathbb{R})}\\ \nonumber
     \leq & (1+\sqrt{2}\epsilon )\norm{\begin{bmatrix}
          (k+\frac{v}{2})^{2}u_{1}(k)\\
          (k-\frac{v}{2})^{2}u_{2}(k)
      \end{bmatrix}}_{L^{2}_{k}(\mathbb{R})}+(\frac{5v^{2}}{4}+\frac{\sqrt{2}v^{2}}{\epsilon})\norm{\vec{u}(k)}_{L^{2}_{k}(\mathbb{R})}.
  \end{align}
    Consequently, we can verify from estimates \eqref{eqaxx1} and \eqref{eqaxxx2} that the statement of Lemma \ref{sobolevdecayofG} is true when $\ell=1.$
\par Next, assuming \ref{sobolevdecayofG} is true for $\ell-1\geq 1,$ we can deduce using \eqref{coerGG} and the chain rule that
\begin{equation*}
    \norm{\frac{d^{2(\ell-1)}}{dx^{2(\ell-1)}}\left[V_{\omega}(x)\hat{G}_{\omega}(\overrightarrow{u})(x)\right]}_{L^{2}_{x}(\mathbb{R})}\leq C_{\ell,v}\left[\norm{\hat{G}_{\omega}(\overrightarrow{u})(x)}_{L^{2}_{x}(\mathbb{R})}+\norm{\begin{bmatrix}
(k+v)^{2(\ell-1)}u_{1}(k)\\
(k-v)^{2(\ell-1)}u_{2}(k)
    \end{bmatrix}}_{L^{2}_{k}(\mathbb{R})}\right],
\end{equation*}
for a parameter $C_{\ell,v}>1$ depending only on $\ell$ and $v.$
Moreover, if $\ell$ is larger than $1,$ Holder's inequality and Young's inequality imply for any $\epsilon\in (0,1)$ that
\begin{align*}
    \norm{ \begin{bmatrix}
(k+v)^{2(\ell-1)}u_{1}(k)\\
(k-v)^{2(\ell-1)}u_{2}(k)
    \end{bmatrix}}_{L^{2}_{k}(\mathbb{R})}^{2}\leq & \norm{\begin{bmatrix}
        (k+v)^{2\ell}u_{1}(k)\\
        (k-v)^{2\ell}u_{2}(k)
    \end{bmatrix}}_{L^{2}_{k}(\mathbb{R})}^{2-\frac{1}{\ell}}\norm{\begin{bmatrix}
u_{1}(k)\\
u_{2}(k)
    \end{bmatrix}}_{L^{2}_{k}(\mathbb{R})}^{\frac{1}{\ell}}\\
    \leq &
    \frac{(2\ell-1)\epsilon}{2\ell}\norm{\begin{bmatrix}
(k+v)^{2\ell}u_{1}(k)\\
(k-v)^{2\ell}u_{2}(k)
    \end{bmatrix}}_{L^{2}_{x}(\mathbb{R})}^{2}+\frac{1}{2\ell \epsilon^{2\ell-1}}\norm{
    \begin{bmatrix}
u_{1}(k)\\
u_{2}(k)
    \end{bmatrix}}_{L^{2}_{x}(\mathbb{R})}^{2},
\end{align*}
and  so we deduce the lemma by differentiating identity \eqref{identit1dx2}
 with respect to $x$  for $2(\ell-1)$ times.
 \end{proof}

\subsection{Schr\"odinger flows}
We consider the following linear partial differential equation
\begin{equation}\label{LSS}
    i \overrightarrow{\psi}_{t}-\mathcal{H}_{\omega}\overrightarrow{\psi}=0
\end{equation}where $\mathcal{H}_\omega$ is given by \eqref{H}.
In particular, since $\mathcal{F}_\omega(x,k),\, \mathcal{G}_\omega(x,{-}k)$ are solutions of \eqref{BB} in $L^{\infty},$ using notations from Definition \ref{def00}, it is standard to verify that
\begin{align}\label{fo1}
    \hat{F}_\omega(e^{{-}it(\Diamond^{2}+\omega)\sigma_{3}}\overrightarrow{u})(x)=&\frac{1}{\sqrt{2\pi}}\int_{\mathbb{R}}\left[\mathcal{F}_\omega(x,k)\,\,\sigma_{1} \mathcal{F}_\omega(x,k)\right]\frac{e^{{-}it(k^{2}+\omega)\sigma_{3}}\overrightarrow{u}(k)}{s(k)}\,dk, \\ \label{fo2}
    \hat{G}_\omega(e^{{-}it(\Diamond^{2}+\omega)\sigma_{3}}\overrightarrow{u})(x)=&\frac{1}{\sqrt{2\pi}}\int_{\mathbb{R}}\left[\mathcal{G}_\omega(x,{-}k)\,\,\sigma_{1} \mathcal{G}_\omega(x,{-}k)\right]\frac{e^{{-}it(k^{2}+\omega)\sigma_{3}}\overrightarrow{u}(k)}{s({-}k)}\,dk
\end{align}
are solutions of \eqref{LSS}, where $\Diamond$ denotes the dummy variable.

Using notations above, we have following  estimates for Schr\"odinger flows.
\begin{lemma}\label{lem:decayonepotential}
First of all, we have the $L^2$ bounds:
\begin{equation}
   \left\Vert \hat{F}_\omega(e^{{-}it(\Diamond^{2}+\omega)\sigma_{3}}\overrightarrow{u})(\cdot) \right\Vert_{L^2} \lesssim  \left\Vert \hat{F}_\omega(\overrightarrow{u})(\cdot) \right\Vert_{L^2}
\end{equation}
\begin{equation}
   \left\Vert \hat{G}_\omega(e^{{-}it(\Diamond^{2}+\omega)\sigma_{3}}\overrightarrow{u})(\cdot) \right\Vert_{L^2} \lesssim \left\Vert \hat{G}_\omega(\overrightarrow{u})(\cdot) \right\Vert_{L^2}.
\end{equation}

One has the following standard decay estimates:
\begin{equation}
   \left\Vert \hat{F}_\omega(e^{{-}it(\Diamond^{2}+\omega)\sigma_{3}}\overrightarrow{u})(\cdot) \right\Vert_{L^\infty} \lesssim \frac{1}{|t|^{\frac{1}{2}}} \left\Vert \hat{F}_\omega(\overrightarrow{u})(\cdot) \right\Vert_{L^1}
\end{equation}
\begin{equation}
   \left\Vert \hat{G}_\omega(e^{{-}it(\Diamond^{2}+\omega)\sigma_{3}}\overrightarrow{u})(\cdot) \right\Vert_{L^\infty} \lesssim \frac{1}{|t|^{\frac{1}{2}}} \left\Vert \hat{G}_\omega(\overrightarrow{u})(\cdot) \right\Vert_{L^1}.
\end{equation}
If the non-resonance condition (H4) from Theorem \ref{Decesti} holds, then one has local improved decay estimates 
\begin{equation}
   \left\Vert \langle x \rangle^{-1}\hat{F}_\omega(e^{{-}it(\Diamond^{2}+\omega)\sigma_{3}}\overrightarrow{u})(\cdot) \right\Vert_{L^\infty} \lesssim \frac{1}{|t|^{\frac{3}{2}}} \left\Vert \langle x \rangle \hat{F}_\omega(\overrightarrow{u})(\cdot) \right\Vert_{L^1}
\end{equation}
\begin{equation}
   \left\Vert \langle x \rangle^{-1} \hat{G}_\omega(e^{{-}it(\Diamond^{2}+\omega)\sigma_{3}}\overrightarrow{u})(\cdot) \right\Vert_{L^\infty} \lesssim \frac{1}{|t|^{\frac{3}{2}}} \left\Vert \langle x \rangle \hat{G}_\omega(\overrightarrow{u})(\cdot) \right\Vert_{L^1}.
\end{equation}
\end{lemma}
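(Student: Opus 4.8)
The plan is to reduce every estimate to the corresponding statement for the free Schr\"odinger flow via the distorted Fourier transform machinery set up in \S\ref{sec:scatteringone}. First I would handle the $L^2$ bounds. Using the inversion formula \eqref{id0101} (i.e.\ $\hat{F}_{\omega}\sigma_3 G^*_\omega\sigma_3=P_{e,\omega}$) together with the coercivity \eqref{coerchatF}, the map $\overrightarrow{u}\mapsto\hat F_\omega(\overrightarrow u)$ is bounded below and, by Plancherel applied to the asymptotics \eqref{asy1}--\eqref{asy4} (which express $\mathcal F_\omega$ as a plane wave plus an $L^2$-small localized correction), bounded above; hence $\|\hat F_\omega(\overrightarrow u)\|_{L^2}\simeq\|\overrightarrow u\|_{L^2_k}$. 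Since from \eqref{fo1} the flow $\hat F_\omega(e^{-it(\Diamond^2+\omega)\sigma_3}\overrightarrow u)$ corresponds in frequency to multiplication by the unitary diagonal phase $e^{-it(k^2+\omega)\sigma_3}$ (which commutes with the $s(k)^{-1}$ factor and preserves $L^2_k$), the $L^2$ norm is unchanged up to the equivalence constants, giving the first two inequalities. The same argument with $\hat G_\omega$ and \eqref{id0202}, \eqref{coerGG}, \eqref{fo2} handles the $\hat G_\omega$ case.

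For the standard $t^{-1/2}$ decay I would write, using \eqref{forp1} or Remark \ref{re-infty}, $\hat F_\omega(e^{-it(\Diamond^2+\omega)\sigma_3}\overrightarrow u)(x)$ as an oscillatory integral $\frac{1}{\sqrt{2\pi}}\int_{\mathbb R}[\mathcal F_\omega(x,k)\ \sigma_1\mathcal F_\omega(x,k)]\,e^{-it(k^2+\omega)\sigma_3}\overrightarrow u(k)/s(k)\,dk$ and split the $k$-kernel into its plane-wave part and its localized remainder. For the plane-wave part, the phase is $tk^2$ (up to the constant $t\omega$ absorbed into a unit-modulus factor), so this reduces exactly to the classical one-dimensional free-Schr\"odinger dispersive estimate $\|e^{it\partial_x^2}f\|_{L^\infty}\lesssim t^{-1/2}\|f\|_{L^1}$ applied componentwise, after using the boundedness and smoothness of $s(k)^{-1},r(k)$ from \eqref{asyreftr}; equivalently one invokes stationary phase with a single nondegenerate critical point $k=\pm x/(2t)$. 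For the localized correction, the factor $O(e^{-\gamma|x|}(1+|k|)^{-1})$ supplies both spatial decay and $k$-integrability, so a direct $L^1_k$ estimate (Cauchy--Schwarz is not even needed, just $\||\hat F_\omega(\overrightarrow u)\|_{L^1}$ controlling $\|\overrightarrow u\|$ via \eqref{coerchatF}) gives a bound even better than $t^{-1/2}$. The $\hat G_\omega$ version is identical.

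For the improved local decay under (H4), the point is that $\langle x\rangle^{-1}$ on the left and $\langle x\rangle$ on the right upgrade the pointwise rate to $t^{-3/2}$; this is the analogue of the standard ``Schr\"odinger with a potential, no threshold resonance'' improved decay (e.g.\ Propositions 7.1 and 8.1 of \cite{KriegerSchlag}, cited in \S\ref{sub:sketch}). Here I would again pass to the plane-wave-plus-remainder decomposition of the distorted kernel, but now exploit that under (H4) the transmission/reflection coefficients are \emph{regular at $k=0$} with $r(0)=0$ forced by the non-resonance at the threshold $\pm\omega$; this removes the obstruction to integrating by parts twice in $k$ near the stationary point and yields the extra $t^{-1}$. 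The main obstacle — and the only genuinely nontrivial step — is precisely this threshold analysis: without (H4), $r(k)$ need not vanish at $k=0$ and one only gets $t^{-1/2}$, so the improved estimate hinges on the endpoint behavior of the Jost/scattering data established in Lemma \ref{2.1} and the characterization that $\pm\omega$ is a resonance iff $\det D(0)=0$. I would either invoke the corresponding statements from \cite{KriegerSchlag}, \cite{Busper1} directly for the one-potential operator $\mathcal H_\omega$, or reprove the needed oscillatory-integral bound by inserting a smooth frequency cutoff separating $|k|\lesssim 1$ from $|k|\gtrsim1$: on the high-frequency piece repeated integration by parts in $k$ is unobstructed, and on the low-frequency piece one uses the regularity of $s,r$ and $r(0)=0$ together with van der Corput's lemma with two derivatives. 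The $\hat G_\omega$ estimate follows verbatim by the symmetry $x\mapsto -x$, $k\mapsto -k$ built into \eqref{asy1}--\eqref{asy4}.
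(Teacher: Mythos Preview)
Your approach differs substantially from the paper's. The paper's proof is a two-line reduction: since $\hat F_\omega(e^{-it(\Diamond^2+\omega)\sigma_3}\overrightarrow u)$ and $\hat G_\omega(e^{-it(\Diamond^2+\omega)\sigma_3}\overrightarrow u)$ solve the Schr\"odinger equation \eqref{LSS}, and since by Corollary~\ref{Asy1sol2} they lie in the range of $P_{e,\omega}$, every estimate is literally the known dispersive bound for $e^{-it\mathcal H_\omega}P_{e,\omega}$ applied with initial data $f=\hat F_\omega(\overrightarrow u)$ (resp.\ $\hat G_\omega(\overrightarrow u)$). The paper then cites Lemma~6.11 and Propositions~7.1, 8.1 of \cite{KriegerSchlag} and Theorem~1.4 of \cite{Lidecay} and is done. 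Your plan, by contrast, essentially proposes to reprove those cited results from scratch via the oscillatory-integral representation; this is not wrong in spirit (the paper even remarks that the pointwise decay ``can also be obtained as in Goldberg--Schlag''), but it is much longer and your sketch has real gaps.

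The main gap is in the $t^{-1/2}$ step. Splitting only the $x$-side kernel $\mathcal F_\omega(x,k)$ into plane wave plus remainder and invoking $\|e^{it\partial_x^2}g\|_{L^\infty}\lesssim t^{-1/2}\|g\|_{L^1}$ lands you with a right-hand side of the form $\|F_0(\overrightarrow u/s)\|_{L^1_x}$, not $\|\hat F_\omega(\overrightarrow u)\|_{L^1_x}$. Converting between these two $L^1$ norms is itself a nontrivial mapping property (cf.\ Lemma~\ref{fordecay}, which in fact \emph{uses} (H4)), so you cannot close the $t^{-1/2}$ estimate this way in the resonant case. The standard route instead bounds the full Schwartz kernel $K_t(x,y)$ of $e^{-it\mathcal H_\omega}P_{e,\omega}$ pointwise by $t^{-1/2}$, decomposing the Jost functions in \emph{both} the $x$ and $y$ variables; your sketch does not set this up. A smaller issue: your claim that (H4) forces $r(0)=0$ is not the right mechanism for the $t^{-3/2}$ gain---the improvement comes from regularity of the spectral measure (equivalently invertibility of $D(0)$, see the proposition after Lemma~\ref{2.1}), not from a vanishing reflection coefficient. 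You do mention citing \cite{KriegerSchlag}, \cite{Busper1} directly for this step, and that is precisely the paper's move throughout the lemma.
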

\begin{proof}
    Since $ \hat{F}_\omega(e^{{-}it(\Diamond^{2}+\omega)\sigma_{3}}\overrightarrow{u})(x)$ and $ \hat{G}_\omega(e^{{-}it(\Diamond^{2}+\omega)\sigma_{3}}\overrightarrow{u})(x)$ both solve the Schr\"odinger equation \eqref{LSS}, all these estimates follow from 
    Theorem 1.4 in  Li \cite{Lidecay},  and Lemma 6.11, Proposition 7.1, Proposition 8.1 in Krieger-Schlag \cite{KriegerSchlag} provided that 
\begin{equation}
    P_{e,\omega}\left(\hat{F}_\omega(e^{{-}it(\Diamond^{2}+\omega)\sigma_{3}}\overrightarrow{u})(x)\right)=\hat{F}_\omega(e^{{-}it(\Diamond^{2}+\omega)\sigma_{3}}\overrightarrow{u})(x)
\end{equation}  and
\begin{equation}
    P_{e,\omega}\left(\hat{G}_\omega(e^{{-}it(\Diamond^{2}+\omega)\sigma_{3}}\overrightarrow{u})(x)\right)=\hat{G}_\omega(e^{{-}it(\Diamond^{2}+\omega)\sigma_{3}}\overrightarrow{u})(x)
\end{equation}
which are direct consequences of Corollary \ref{Asy1sol2}.

Actually, with the construction of the distorted bases, Lemma \ref{2.1}, the standard pointwise decay can also be obtained as in   Goldberg-Schlag. \cite{GoSch}
\end{proof}
\begin{remark}
Using distorted bases and distorted Fourier transforms introduced in the last subsection, one can actually  compute pointwise decays like    Lemma 4.1 in Li-L\"uhrmann \cite{nls3soliton} with RHS in the $L^2$ based Sobolev spaces. These estimates should be more useful in the long-range scattering setting but we will not pursue them here.
\end{remark}

Finally, we record the following elementary observation.

\begin{lemma}\label{galil}
The following identity holds for any $\phi \in\mathscr{S}(\mathbb{R})$ and any $x,\,y,\,v\in\mathbb{R}$
\begin{equation*}
    \frac{1}{\sqrt{2\pi}}\int_{\mathbb{R}}e^{{-}itk^{2}}e^{ikx}\phi(k)\,dk=\frac{e^{\frac{ivx}{2}-i\frac{v^{2}t}{4}}}{\sqrt{2\pi}}\int_{\mathbb{R}}e^{i k (x-y-vt)}e^{{-}itk^{2}}\phi\left(k+\frac{v}{2}\right)e^{i y k}\,dk.
\end{equation*}
\end{lemma}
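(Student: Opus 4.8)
This lemma is a completely elementary change-of-variables identity, so my plan is to verify it by a direct substitution in the integral. The key observation is that the exponent $-itk^2 + ikx$ on the left should be matched, after the substitution $k \mapsto k + v/2$, with the combination of phases on the right.

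First I would set $j = k - v/2$, i.e. substitute $k = j + v/2$ in the right-hand integral; since $dk = dj$ and the limits of integration are all of $\mathbb{R}$, this incurs no boundary terms. Under this substitution the right-hand side becomes
\begin{equation*}
    \frac{e^{\frac{ivx}{2} - i\frac{v^2 t}{4}}}{\sqrt{2\pi}} \int_{\mathbb{R}} e^{i(j + v/2)(x - y - vt)} e^{-it(j + v/2)^2} \phi(j + v/2 + v/2)\, e^{iy(j+v/2)}\, dj,
\end{equation*}
wait — I must be careful about exactly which form is being substituted; the cleaner route is to substitute directly into the right-hand side as written, replacing the dummy $k$ with $k - v/2$ so that $\phi(k + v/2)$ becomes $\phi(k)$. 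That is the natural choice since we want $\phi$ evaluated at the same argument on both sides. With $k \mapsto k - v/2$ the right-hand side is
\begin{equation*}
    \frac{e^{\frac{ivx}{2} - i\frac{v^2 t}{4}}}{\sqrt{2\pi}} \int_{\mathbb{R}} e^{i(k - v/2)(x - y - vt)}\, e^{-it(k - v/2)^2}\, \phi(k)\, e^{iy(k - v/2)}\, dk.
\end{equation*}

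Next I would collect all the $k$-dependent and $k$-independent phase factors. Expanding the exponents: $-it(k - v/2)^2 = -itk^2 + itkv - itv^2/4$, the term $i(k-v/2)(x - y - vt) = ikx - iky - iktv - \tfrac{iv}{2}(x - y - vt)$, and $iy(k - v/2) = iky - \tfrac{ivy}{2}$. Adding these together, the $iky$ terms cancel, the $iktv$ term in the second exponent cancels the $itkv$ from the first, leaving the $k$-dependent part equal to $-itk^2 + ikx$, exactly the exponent on the left. The $k$-independent part is $-itv^2/4 - \tfrac{iv}{2}(x - y - vt) - \tfrac{ivy}{2} = -itv^2/4 - \tfrac{ivx}{2} + \tfrac{ivy}{2} + \tfrac{iv^2 t}{2} - \tfrac{ivy}{2} = \tfrac{iv^2 t}{4} - \tfrac{ivx}{2}$, which precisely cancels the prefactor $e^{ivx/2 - iv^2 t/4}$ outside the integral. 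Hence the right-hand side reduces to $\frac{1}{\sqrt{2\pi}}\int_{\mathbb{R}} e^{-itk^2} e^{ikx} \phi(k)\, dk$, which is the left-hand side.

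There is essentially no obstacle here: the only points requiring any care are bookkeeping of signs in the phase expansion and the legitimacy of the change of variables, which is immediate since $\phi \in \mathscr{S}(\mathbb{R})$ makes the integrand absolutely integrable and the substitution is an affine bijection of $\mathbb{R}$. If one prefers a slicker presentation, one can instead recognize the identity as the statement that the free Schr\"odinger propagator $e^{it\partial_x^2}$ intertwines with the Galilei boost $\phi(k) \mapsto e^{iyk}\phi(k + v/2)$ composed with the modulation/translation $u(x) \mapsto e^{ivx/2 - iv^2 t/4} u(x - y - vt)$, but the brute-force substitution above is the most transparent and self-contained.
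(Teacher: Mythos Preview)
Your proof is correct and takes essentially the same approach as the paper, which simply notes that the identity follows directly from the change of variables $k \to k + v/2$. You perform the inverse substitution $k \mapsto k - v/2$ on the right-hand side and carry out the phase bookkeeping explicitly, arriving at the same conclusion.
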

\begin{proof}
It follows directly from the change of variables $k \rightarrow k+\frac{v}{2}.$     
\end{proof}

\begin{remark}
 Using the scalar Galilei transforms, 
 \begin{equation*}
 \mathfrak{g}_{v,y,\gamma}(f)(t,x)=e^{i(\frac{vx}{2}-\frac{tv^{2}}{4}+\gamma)}f\left(x-vt-y\right),
\end{equation*} the identity from Lemma \ref{galil} above can be written as
\begin{equation}
 e^{it\partial_x^2}\check{\phi}=   \mathfrak{g}_{v,y,\gamma}^{-1}\left(e^{it\partial_x^2} \left(\mathfrak{g}_{v,y,\gamma}(\check{\phi})(0,\Diamond)\right)\right)(t,x)
\end{equation}which is the invariance of the Schr\"odinger flow under Galilei transforms.
\end{remark}

\section{Properties of the Hardy spaces $H^{2}(\mathbb{C}_{\pm})$}\label{sec:hardy}
\begin{definition}
Let $H^{2}\left(\mathbb{C}_{\pm}\right)$ be the Hardy space in the upper/lower half-plane. 
\par More precisely, $H^{2}\left(\mathbb{C}_{+}\right)$ is the set of all analytic functions $g$ in $\mathbb{C}_{+}$ satisfying
\begin{equation*}
\sup_{y>0}\norm{g(x+iy)}_{L^{2}_{x}(\mathbb{R})}<{+}\infty.
\end{equation*}
 Similarly, we can define $H^{2}\left(\mathbb{C}_{-}\right)$ as the set of analytic functions $g$ in $\mathbb{C}_{-}$ satisfying
\begin{equation*}
\sup_{y<0}\norm{g(x+iy)}_{L^{2}_{x}(\mathbb{R})}<{+}\infty.
\end{equation*}
\end{definition}

\begin{theorem}\label{h2rep}[Representation of $H^{2}(\mathbb{C}_{\pm}).$]
The function $f$ is in $H^{2}\left(\mathbb{C}_{+}\right)$ and the function $g$ is in $H^{2}\left(\mathbb{C}_{-}\right)$ if, and only if, there exist $f_{1}\in L^{2}\left([0,{+}\infty),dx\right)$ and $g_{1}\in L^{2}\left(({-}\infty,0],dx\right)$ satisfying
\begin{align*}
    f(x)=&\frac{1}{\sqrt{2\pi}}\int_{0}^{{+}\infty}f_{1}(k)e^{ikx}\,dk\\
    g(x)=&\frac{1}{\sqrt{2\pi}}\int_{{-}\infty}^{0}g_{1}(k)e^{ikx}\,dk,
\end{align*}
almost everywhere on $\mathbb{C}_{+}$ and $\mathbb{C}_{-}$ respectively.
\end{theorem}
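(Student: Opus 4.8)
The plan is to prove the classical Paley--Wiener characterization of the Hardy spaces, establishing the $H^{2}(\mathbb{C}_{+})$ case in full and reducing the $H^{2}(\mathbb{C}_{-})$ case to it. For the reduction, if $g$ is holomorphic in $\mathbb{C}_{-}$ then $h(z):=\overline{g(\bar z)}$ is holomorphic in $\mathbb{C}_{+}$ with $\|h(\cdot+iy)\|_{L^{2}_{x}}=\|g(\cdot-iy)\|_{L^{2}_{x}}$, so $g\in H^{2}(\mathbb{C}_{-})$ if and only if $h\in H^{2}(\mathbb{C}_{+})$; and once $h(x)=\tfrac{1}{\sqrt{2\pi}}\int_{0}^{+\infty}h_{1}(k)e^{ikx}\,dk$ with $h_{1}\in L^{2}([0,+\infty))$, conjugating and substituting $k\mapsto-k$ gives $g(x)=\overline{h(x)}=\tfrac{1}{\sqrt{2\pi}}\int_{-\infty}^{0}g_{1}(k)e^{ikx}\,dk$ with $g_{1}(k):=\overline{h_{1}(-k)}\in L^{2}((-\infty,0])$; the ``if'' part for $\mathbb{C}_{-}$ follows the same way (or directly, as below). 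The $\mathbb{C}^{n}$-valued analogue needed later is then obtained componentwise, so it suffices to treat scalar $\mathbb{C}_{+}$.

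For the ``if'' direction, given $f_{1}\in L^{2}([0,+\infty))$ I would set $F(z):=\tfrac{1}{\sqrt{2\pi}}\int_{0}^{+\infty}f_{1}(k)e^{ikz}\,dk$ for $z\in\mathbb{C}_{+}$. On each half-plane $\{\Im z\geq\delta\}$, $\delta>0$, the integrand is dominated by $|f_{1}(k)|e^{-\delta k}$ and its $z$-derivative by $k|f_{1}(k)|e^{-\delta k}$, both integrable in $k$ by Cauchy--Schwarz, so $F$ is holomorphic in $\mathbb{C}_{+}$; by Plancherel $\|F(\cdot+iy)\|_{L^{2}_{x}}^{2}=\int_{0}^{+\infty}|f_{1}(k)|^{2}e^{-2ky}\,dk\leq\|f_{1}\|_{L^{2}}^{2}$ for every $y>0$, so $F\in H^{2}(\mathbb{C}_{+})$, and dominated convergence shows $F(\cdot+iy)\to\tfrac{1}{\sqrt{2\pi}}\int_{0}^{+\infty}f_{1}(k)e^{ikx}\,dk$ in $L^{2}_{x}$ as $y\to0^{+}$, which is the stated boundary representation.

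For the ``only if'' direction, let $f\in H^{2}(\mathbb{C}_{+})$, put $f_{y}:=f(\cdot+iy)\in L^{2}_{x}(\mathbb{R})$ and $M:=\sup_{y>0}\|f_{y}\|_{L^{2}}<+\infty$, and normalize the Fourier transform by $\widehat{\varphi}(k)=\tfrac{1}{\sqrt{2\pi}}\int_{\mathbb{R}}e^{-ikx}\varphi(x)\,dx$. The central claim is that $e^{ky}\widehat{f_{y}}(k)$ is independent of $y$ (as an $L^{2}_{k}$ identity). To prove it I would fix $0<y_{1}<y_{2}$ and integrate $z\mapsto f(z)e^{-ikz}$, which is holomorphic on the closed strip $\{y_{1}\leq\Im z\leq y_{2}\}\subset\mathbb{C}_{+}$, around the rectangle with corners $\pm R+iy_{1}$, $\pm R+iy_{2}$: the horizontal edges contribute $\sqrt{2\pi}\bigl(e^{ky_{1}}\widehat{f_{y_{1}}}(k)-e^{ky_{2}}\widehat{f_{y_{2}}}(k)\bigr)$ in the limit $R\to+\infty$, while the vertical edges vanish along a suitable sequence $R_{n}\to+\infty$, namely one with $\int_{y_{1}}^{y_{2}}|f(\pm R_{n}+it)|^{2}\,dt\to0$ (such $R_{n}$ exist because $R\mapsto\int_{y_{1}}^{y_{2}}|f(R+it)|^{2}\,dt$ is integrable on $\mathbb{R}$, by Fubini and $\int_{y_{1}}^{y_{2}}\|f_{t}\|_{L^{2}}^{2}\,dt\leq(y_{2}-y_{1})M^{2}$), after which Cauchy--Schwarz controls $\int_{y_{1}}^{y_{2}}|f(\pm R_{n}+it)|\,dt$. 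Denote the common value of $e^{ky}\widehat{f_{y}}(k)$ by $\sqrt{2\pi}\,f_{1}(k)$. Then Plancherel gives $\|f_{y}\|_{L^{2}}^{2}=\int_{\mathbb{R}}|f_{1}(k)|^{2}e^{-2ky}\,dk\leq M^{2}$ for all $y>0$; since $e^{-2ky}\to+\infty$ pointwise on $\{k<0\}$ as $y\to+\infty$, Fatou forces $f_{1}=0$ a.e.\ on $(-\infty,0)$, and monotone convergence as $y\to0^{+}$ yields $\int_{0}^{+\infty}|f_{1}(k)|^{2}\,dk\leq M^{2}$, i.e.\ $f_{1}\in L^{2}([0,+\infty))$. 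Finally $\widehat{f_{y}}(k)=f_{1}(k)e^{-ky}$ is supported on $[0,+\infty)$, so Fourier inversion gives $f(x+iy)=\tfrac{1}{\sqrt{2\pi}}\int_{0}^{+\infty}f_{1}(k)e^{ik(x+iy)}\,dk$ for every $z=x+iy\in\mathbb{C}_{+}$, and $y\to0^{+}$ produces the claimed boundary representation.

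The step I expect to be the main obstacle is the vanishing of the vertical edges in the contour integral: $H^{2}$-membership only provides $L^{2}$ control of $f$ along horizontal lines, so no pointwise-in-$R$ decay is available and one must extract a good subsequence $R_{n}\to+\infty$ via the Fubini/integrability argument above; every remaining ingredient is a routine consequence of Plancherel, Fatou, and monotone/dominated convergence.
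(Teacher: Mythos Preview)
The paper does not prove this theorem: it is stated as a classical result (the Paley--Wiener theorem for $H^{2}$ of a half-plane) and used without proof. Your argument is the standard one and is essentially correct; the reduction of $\mathbb{C}_{-}$ to $\mathbb{C}_{+}$ via $h(z)=\overline{g(\bar z)}$, the ``if'' direction by dominated convergence and Plancherel, and the ``only if'' direction via a rectangle contour together with Fatou/monotone convergence are all sound.

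One small point worth tightening in the contour step: for $f_{y}\in L^{2}$ the truncated integrals $\int_{-R}^{R}f_{y}(x)e^{-ikx}\,dx$ converge to $\sqrt{2\pi}\,\widehat{f_{y}}(k)$ only in $L^{2}_{k}$, not pointwise in $k$, so after choosing $R_{n}$ making the vertical edges vanish you should pass to a further subsequence along which the horizontal truncations converge a.e.\ in $k$ (or argue distributionally). This is routine and does not affect the conclusion.
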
    

\begin{remark}\label{rrr}
 In particular, the theorem above implies the existence of  the orthogonal projections $P_{+},\,P_{-}$ satisfying
 \begin{equation*}
     P_{+}\left(L^{2}(\mathbb{R})\right)=H^{2}\left(\mathbb{C}_{+}\right),\, P_{-}\left(L^{2}(\mathbb{R})\right)=H^{2}\left(\mathbb{C}_{-}\right),\,P_{+}\oplus P_{-}=P_{+}+P_{-}=\mathrm{Id}.
 \end{equation*}
   Moreover, we have that $P_{+},\,P_{-}$ can be defined by the following maps
    \begin{align*}
        P_{+}(f)(x)=&\frac{1}{2\pi i}\lim_{\epsilon\to{+}0}\int_{\mathbb{R}}\frac{f(k)}{k-x-i\epsilon}\,dk\\
        P_{-}(f)(x)=&{-}\frac{1}{2\pi i}\lim_{\epsilon\to{+}0}\int_{\mathbb{R}}\frac{f(k)}{k-x+i\epsilon}\,dk.
    \end{align*}
In terms of Fourier transforms, they are explicitly given by
\begin{align}
     P_{+}(f)(x)&=\frac{1}{2\pi}\int_{{-}\infty}^{0}e^{{-}ikx}\left[\int_{\mathbb{R}}e^{iky}f(y)\,dy\right]dk,\label{eq:FdefPp}\\
      P_{-}(f)(x)&=\frac{1}{2\pi}\int_{0}^{{+}\infty}e^{{-}ikx}\left[\int_{\mathbb{R}}e^{iky}f(y)\,dy\right]dk\label{eq:FdefPm}.
\end{align}

\end{remark}
\par Furthermore, we will need the following proposition to prove the main result of this article.

\begin{lemma}\label{+-interact}
Let $f_{\pm}$ be in $H^{2}(\mathbb{C}_{\pm}),$
and let $s(k),\,r(k)$ be two analytic functions on the strip $$\vert\I k \vert \leq \delta$$ satisfying \eqref{asyreftr}. If $y_{0}>1,\,h_{0}\in\mathbb{R},$ then
\begin{align}\label{p-1e}
\norm{ P_{\mp}\left(e^{\pm iy_{0}k}\left[s(k+h_{0})f_{\pm}(k)\right]\right)(x)}_{L^{2}(\mathbb{R},dx)}\leq &Ce^{{-}\frac{99}{100}\delta y_{0}}\norm{f_\pm}_{H^{2}(\mathbb{C}_\pm)},\\ \label{p-2e}
     \norm{ P_{\mp}\left(e^{\pm iy_{0}k}\left[r(k+h_{0})f_{\pm}(k)\right]\right)(x)}_{L^{2}(\mathbb{R},dx)}\leq &Ce^{{-}\frac{99}{100}\delta y_{0}}\norm{f_\pm}_{H^{2}(\mathbb{C}_\pm)}.
\end{align}
\end{lemma}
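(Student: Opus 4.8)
The strategy is to combine the Fourier-analytic characterization of the Hardy spaces from Theorem \ref{h2rep} with a contour-deformation argument that exploits the analyticity of $s$ and $r$ on the strip $|\I k|\le\delta$. I will carry out the argument for $f_+\in H^2(\mathbb{C}_+)$ and the projection $P_-$; the case $f_-\in H^2(\mathbb{C}_-)$ with $P_+$ is entirely symmetric (replace the upper half-plane by the lower one, and $y_0$ by $-y_0$ in the exponent). By Theorem \ref{h2rep}, write $f_+(k)=\tfrac{1}{\sqrt{2\pi}}\int_0^\infty \widehat{f}_+(\xi)e^{i\xi k}\,d\xi$ with $\widehat f_+\in L^2([0,\infty))$ and $\|\widehat f_+\|_{L^2}=\|f_+\|_{H^2(\mathbb{C}_+)}$. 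The plan is to show that the function $g(k):=e^{iy_0 k}\,s(k+h_0)\,f_+(k)$, viewed on the real line, has a nearly empty negative-frequency spectrum: its projection $P_-g$ onto $H^2(\mathbb{C}_-)$ (which by \eqref{eq:FdefPm} picks out the positive-frequency part of $g$ in the sense of $e^{-ik x}$, i.e. the analytic continuation to the \emph{lower} half-plane) is exponentially small in $y_0$.

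The main step is a contour shift. Since $f_+$ extends analytically to $\mathbb{C}_+$ with $\sup_{y>0}\|f_+(\cdot+iy)\|_{L^2}<\infty$, and $s(\cdot+h_0)$ is analytic on the strip $|\I k|\le\delta$ with the uniform bounds in \eqref{asyreftr} (in particular $s$ is bounded there, as $s(k)=1+O(1/(1+|k|))$), the product $e^{iy_0 k}s(k+h_0)f_+(k)$ is analytic on the strip $0<\I k<\delta$ and, crucially, the factor $e^{iy_0 k}$ decays like $e^{-y_0\I k}$ as we raise the contour. I would compute $P_-g$ via the contour-integral formula in Remark \ref{rrr}, $P_-(g)(x)=-\tfrac{1}{2\pi i}\lim_{\epsilon\to 0^+}\int_{\mathbb{R}}\frac{g(k)}{k-x+i\epsilon}\,dk$, and then deform the contour $\mathbb{R}$ upward to the line $\I k=\delta':=\tfrac{99}{100}\delta$ (staying strictly inside the strip of analyticity and avoiding the pole at $k=x-i\epsilon$, which lies in the lower half-plane). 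The horizontal tails vanish because $f_+\in H^2$ forces decay in $\Re k$ on horizontal lines in $\mathbb{C}_+$ (a standard Hardy-space fact, or one can first truncate $\widehat f_+$ and pass to the limit). After the shift, $P_-(g)(x)=-\tfrac{1}{2\pi i}\int_{\mathbb{R}}\frac{g(t+i\delta')}{t+i\delta'-x}\,dt$, and since $|e^{iy_0(t+i\delta')}|=e^{-\delta' y_0}$ while $\|s(\cdot+h_0+i\delta')\|_{L^\infty}\lesssim 1$ and $\|f_+(\cdot+i\delta')\|_{L^2}\le\|f_+\|_{H^2(\mathbb{C}_+)}$, boundedness of the Hilbert-transform-type operator $h\mapsto \int \frac{h(t)}{t+i\delta'-x}\,dt$ on $L^2$ (its norm is controlled uniformly for $\delta'$ bounded away from $0$) yields
\[
\|P_-(g)\|_{L^2(\mathbb{R})}\le C e^{-\frac{99}{100}\delta y_0}\|f_+\|_{H^2(\mathbb{C}_+)},
\]
which is \eqref{p-1e}. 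The estimate \eqref{p-2e} is proved identically, using that $r(\cdot+h_0)$ is likewise analytic and bounded on the strip by \eqref{asyreftr} (in fact $r=O(1/(1+|k|))$, which only helps).

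The anticipated main obstacle is making the contour deformation rigorous despite the lack of pointwise decay of $f_+$ along \emph{vertical} segments at $\Re k=\pm R$. The clean way around this is to argue by density: replace $\widehat f_+$ by a compactly supported smooth approximant $\widehat f_+^{(n)}\to\widehat f_+$ in $L^2([0,\infty))$, for which $f_+^{(n)}$ is entire of exponential type with rapid decay on horizontal lines, justify the contour shift and the resulting bound for each $n$ with a constant independent of $n$, and then pass to the limit using continuity of $P_-$ and of multiplication by the bounded analytic factors. A secondary technical point is keeping track of the $\epsilon\to 0^+$ regularization in Remark \ref{rrr} while deforming; since the pole $k=x-i\epsilon$ stays in the lower half-plane throughout the upward shift, no residue is collected, and one may send $\epsilon\to 0$ before or after the shift without affecting the outcome. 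Finally, I would note that the constant $C$ depends only on $\delta$ and on the uniform strip bounds for $s,r$ in \eqref{asyreftr}, hence is harmless for all later applications.
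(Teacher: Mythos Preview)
Your proposal is correct and follows essentially the same contour-shifting strategy as the paper. One minor difference worth noting: the paper first splits $s(k)=1+(s(k)-1)$ and uses the trivial identity $P_{-}(e^{iy_0k}f_+(k))=0$ (since $e^{iy_0k}f_+\in H^2(\mathbb{C}_+)$ for $y_0>0$), so it only needs to shift the contour for the piece containing $s(k)-1$; the extra $O(1/|k|)$ decay from $s-1$ then makes the vertical-segment contributions vanish using merely $f\in L^\infty$ rather than requiring rapid decay from a Paley--Wiener approximation. Your density-by-compact-Fourier-support argument achieves the same end and is equally valid, but the paper's splitting is a slight shortcut that avoids invoking Schwartz decay on horizontal lines.
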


\begin{remark}\label{r+-}
Considering $r(k)$ a smooth function satisfying \eqref{asyreftr}, it is not difficult to verify that the function
\begin{equation*}
 r_{\epsilon}(k)=\int_{\mathbb{R}}e^{{-}\pi y^{2}}r(k-\epsilon y)\,dy,   
\end{equation*}
is analytic and bounded on the strip $\vert \I{k}\vert \leq \delta.$ 
 Moreover, for any $\delta_{0} \in (0,1),$ there exists $\epsilon_{0}>0,\,\delta_{1}\in (0,1)$ such that the following estimates hold for any $\epsilon\leq \epsilon_{0}$.
\begin{align*}
  \left\vert r_{\epsilon}(k)-r(k) \right\vert\leq  \delta_{0},\,\,\,
  \left\vert \frac{d^{\ell}r_{\epsilon}(k)}{dk^{\ell}} \right\vert\leq  \frac{C_{\epsilon_{0},\ell}}{1+\vert k\vert^{\ell}} \text{, for any $k\in\mathbb{R}$ and $\ell\in\mathbb{N},$}\\
  \left\vert r_{\epsilon}(k+ih)\right\vert\leq\frac{C_{\epsilon_{0},\ell}}{1+\vert k\vert} \text{, for all $k\in \mathbb{R}$ and $h\in (0,\delta_{1}),$}
\end{align*}
where $C_{\epsilon_{0},\ell}>0$ is a parameter depending only on $\epsilon_{0}$ and $\ell.$
\end{remark}
\begin{proof}[Proof of Lemma \ref{+-interact}.]
First, because of an argument of symmetry, it is enough to prove the three inequalities considering only $f_{+}\in H^{2}\left(\mathbb{C}_{+}\right)$. We also recall that $f_{+}(x)\in H^{2}(\mathbb{C}_{+})$ is equivalent to $f_{+}({-}x)\in H^{2}(\mathbb{C}_{-}).$ Therefore, we consider $f=f_{+}$ in the following paragraphs of this proof. 
\par Next, from Theorem \ref{h2rep}, we can assume that $f\in L^{\infty}$ by density. For simplicity, we consider \eqref{p-1e} with $h_0=0$. From Remark \ref{rrr}, we have that
 \begin{equation*}
P_{-}\left(e^{iy_{0}k}\left[s(k)f(k)-f(k)\right]\right)(x)= {-}\frac{1}{2\pi i}\lim_{\epsilon\to{+}0}\int_{\mathbb{R}}\frac{e^{iy_{0}k}f(k)}{(k-x+i\epsilon)}(s(k)-1)\,dk.
 \end{equation*}
\par Next, using that $s(k)=1+O\left(\vert  k \vert^{-1}\right),$ we observe that if $0<\delta_{1}\leq \delta,$ then
\begin{equation*}
\left\vert\int_{0}^{\delta_{1}}\frac{e^{iy_{0}(N+ik)}f(N+ki)}{(N+ik-x+i\epsilon)}\left(s(N+ik)-1\right)\,dk\right\vert\leq C\frac{\norm{f}_{L^{\infty}}}{\left\vert(N-x) N \right\vert}\delta_{1}
\end{equation*}
\par Consequently, for any $\epsilon \in(0,\delta),$ we have that
\begin{equation*}
    \lim_{N\to {+}\infty }\left\vert\int_{0}^{\delta_{1}}\frac{e^{iy_{0}(N+ik)}f(N+ki)}{(N+ik-x+i\epsilon)}\left(s(N+ik)-1\right)\,dk\right\vert=0.
\end{equation*}
\par Therefore,  the Cauchy-Goursat integral theorem implies for any $\epsilon>0$ that
\begin{multline*}
{-}\frac{1}{2\pi i}\int_{\mathbb{R}}\frac{e^{iy_{0}k}f(k)}{(k-x+i\epsilon)}(s(k)-1)\,dk\\=
{-}\frac{1}{2\pi i}\int_{\mathbb{R}}\frac{e^{iy_{0}(k+i\delta_{1})}f(k+i\delta_{1})}{(k-x+i(\epsilon+\delta_{1}))}(s(k+i\delta_{1})-1)\,dk=O\left(C(\delta_{1})e^{{-}y_{0}\delta_{1}}\norm{f}_{L^{2}(\mathbb{R})}\right),
\end{multline*}
because $\norm{f(\Diamond+ih)}_{L^{2}(\mathbb{R})}\leq \norm{f}_{L^{2}(\mathbb{R})}$ for any $h\geq 0,$ see Theorem \ref{h2rep}.
\par Next, since $f\in H^{2}(\mathbb{C}_{+})$ and $y_{0}>0,$ we have that $e^{iy_{0}k}f(k)\in H^{2}(\mathbb{C}_{+}),$ so
\begin{equation}\label{id11}
P_{-}\left(e^{iy_{0}k}f(k)\right)=0.
\end{equation}
\par In conclusion, taking $\epsilon$ converging to $0$ and using identity \eqref{id11}, we obtain the estimate \eqref{p-1e} with $h_0$. The general case of \eqref{p-1e} follows in the same manner. The proof of the estimate \eqref{p-2e}  is completely analogous, but in this case, we use $r(k+h) = O \left(\vert k\vert ^{{-}1}\right)$ for any fixed $h\in\mathbb{R}.$ 
\end{proof}
\section{Properties of $\mathcal{S}(0)$}\label{sec:S0}
In this section, we study the properties of the map $\mathcal{S}(t)$ defined in Definition \ref{s0def}. 
The map $\mathcal{S}(\vec{\phi})(t,x)$ can be identified as a small perturbation of $e^{i\frac{\sigma_{3}v_{\ell}x}{2}}\hat{F}_{\omega_{\ell}}\left(\vec{\phi}_{\ell}\right)(x-y_{\ell})$ when $x$ belongs to a large interval containing $y_{\ell}.$ In particular, when $t$ approaches ${+}\infty,$ the function $\mathcal{S}(\vec{\phi})(t,x)$ behaves as a finite sum of  functions
\begin{equation*}
    \chi_{P_{\ell}(t)}(x)e^{i\left(\frac{v_{\ell}x}{2}-\frac{v_{\ell}^{2}t}{4}+\omega_\ell t+\gamma_{\ell}\right)\sigma_{3}}\hat{G}_{\omega_{\ell}}\left(
   e^{{-}it(k^{2}+\omega_\ell)\sigma_{3}}e^{{-}i\gamma_{\ell}\sigma_{3}} \begin{bmatrix}
       e^{iy_{\ell}k}\phi_{1,\ell}\left(k+\frac{v_{\ell}}{2}\right)\\
       e^{iy_{\ell}k}\phi_{2,\ell}\left(k-\frac{v_{\ell}}{2}\right)
    \end{bmatrix}\right)(x-y_{\ell}-v_{\ell}t),
\end{equation*}
for $y_{0}={+}\infty,\,v_{0}=0,$ and $y_{m+1}={-}\infty,\,v_{m+1}=0,$ let
\begin{equation*}
    P_{\ell}(t)=\left\{x\in\mathbb{R}\big\vert\,\, \frac{y_{\ell}+y_{\ell+1}+v_{\ell}t+v_{\ell+1}t}{2}<x<\frac{y_{\ell}+y_{\ell-1}+v_{\ell}t+v_{\ell-1}t}{2}\right\}
\end{equation*}
plus a remainder $r(t,x)$ satisfying $\lim_{t\rightarrow\infty}\norm{r(t)}_{L^{2}_{x}(\mathbb{R})}=0.$ More precisely, we have the following estimate.

\begin{lemma}\label{localS0}
From  conditions $a)$, $b)$, $c)$,  and the formula \eqref{eq:Sphi} from Definition \ref{s0def},  the following estimate holds 
for any $\ell\in\{1,2,\,...,m\}$:
\begin{multline*}
    \chi_{P_{\ell}(t)}(x)\mathcal{S}(\vec{\phi})(t,x)\\=\chi_{P_{\ell}(t)}(x)e^{i\left(\frac{v_{\ell}x}{2}-\frac{v_{\ell}^{2}t}{4}+\omega_{\ell}t+\gamma_{\ell}\right)\sigma_{3}}\hat{G}_{\omega_{\ell}}\left(
e^{{-}it(k^{2}+\omega_{\ell})\sigma_{3}}e^{{-}i\gamma_{\ell}\sigma_{3}} \begin{bmatrix}
       e^{iy_{\ell}k}\phi_{1,\ell}\left(k+\frac{v_{\ell}}{2}\right)\\
       e^{iy_{\ell}k}\phi_{2,\ell}\left(k-\frac{v_{\ell}}{2}\right)
    \end{bmatrix}\right)(x-y_{\ell}-v_{\ell}t)\\{+}O\left(e^{{-}\beta\min_{\ell}[(y_{\ell}-y_{\ell+1})+(v_{\ell}-v_{\ell+1})t]}\max_{\ell}\norm{\begin{bmatrix}
        \phi_{1,\ell}(k)\\
        \phi_{2,\ell}(k)
\end{bmatrix}}_{L^{2}_{k}(\mathbb{R})}\right).
\end{multline*}

\end{lemma}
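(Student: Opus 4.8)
The key observation is that on the spatial window $P_\ell(t)$, all the pieces of $\mathcal{S}(\vec\phi)(t,x)$ other than the $\ell$-th term contribute only an exponentially small error. The plan is to isolate the $\ell$-th summand, bound every other contribution, and use the transition identities (condition b) / Remark \ref{transition}) together with the scattering asymptotics \eqref{asy1}--\eqref{asy4} to absorb the remaining discrepancies.

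\textbf{Step 1: Splitting off the $\ell$-th term.} Write $\mathcal{S}(\vec\phi)(t,x)$ as the $\ell$-th Galilei-transformed $\hat G_{\omega_\ell}$ term plus (i) the sum over $n\neq\ell$ of the Galilei-transformed $\hat G_{\omega_n}$ terms, and (ii) the subtracted free evolution term $-\frac{1}{\sqrt{2\pi}}\int_{\mathbb{R}}e^{-itk^2\sigma_3}\vec\varphi(k)e^{ikx}\,dk$. It remains to show that, after multiplying by $\chi_{P_\ell(t)}(x)$, both (i) and (ii) are $O(e^{-\beta\min_\ell[(y_\ell-y_{\ell+1})+(v_\ell-v_{\ell+1})t]}\max_\ell\|(\phi_{1,\ell},\phi_{2,\ell})\|_{L^2_k})$ in $L^2_x$, \emph{provided} we first replace the $\ell$-th $\hat G_{\omega_\ell}$-term restricted to $P_\ell(t)$ by the same expression without the cutoff (which is what the statement asserts); this last replacement is harmless because $\hat G_{\omega_\ell}$ already has the right form, so in fact the main content is bounding (i) and (ii).

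\textbf{Step 2: Spatial localization of the off-center terms.} For $n\neq\ell$, the term $\mathfrak{g}_{\omega_n,v_n,y_n,\gamma_n}$-transformed $\hat G_{\omega_n}(\cdots)$ is centered at $x\approx y_n+v_nt$, which is at distance at least $\frac12|(y_n-y_\ell)+(v_n-v_\ell)t|$ from the set $P_\ell(t)$ (using $v_1>\cdots>v_m$ and $y_1>\cdots>y_m$, so the centers stay ordered and separated). On $P_\ell(t)$, one is in the far asymptotic regime of $\mathcal{G}_{\omega_n}(\cdot,-k)$. Using Lemma \ref{appFourier} (comparison of $\hat G_\omega$ with the flat transform on $\{x\leq -N\}$, and the analogous statement for $\{x\geq N\}$ via \eqref{asy1}, together with \eqref{asy2} for the reflected-wave part) one reduces each off-center term on $P_\ell(t)$ to a sum of flat Schrödinger waves $e^{-itk^2}e^{\pm ikx}$ times $\phi_{j,n}$-type profiles, up to an error of size $e^{-\gamma N}$ with $N\sim\min_\ell[(y_\ell-y_{\ell+1})+(v_\ell-v_{\ell+1})t]$. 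The surviving flat-wave pieces are then, by construction, exactly what is cancelled by the subtracted term (ii): this is the content of conditions b) and c) of Definition \ref{s0def}, whose motivation was precisely this cancellation (see Remark \ref{rem:moti}). So one carefully matches the incoming/outgoing planar waves generated by the $n$-th potential's transform with the $\ell$-th potential's transform via the transition relation in Remark \ref{transition}, and with the free piece $\vec\varphi=\sum_{n=1}^{m-1}\vec\phi_n$, telescoping the residual planar waves to zero.

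\textbf{Step 3: Dispersive decay of the uncancelled flat pieces.} The parts of the subtracted free term (ii) that are \emph{not} matched by an off-center transform — these live far from $P_\ell(t)$ relative to their own natural center — are controlled by the stationary-phase / van der Corput bound: a flat Schrödinger wave $\frac{1}{\sqrt{2\pi}}\int e^{-itk^2}e^{ik(x-x_0)}\psi(k)\,dk$ evaluated on a region where $|x-x_0|\gtrsim |t|$ and $\psi$ is Schwartz decays like any power of $t$, hence in particular is $O(e^{-\beta(\cdots)})$ once $|x-x_0|\gtrsim \min_\ell[(y_\ell-y_{\ell+1})+(v_\ell-v_{\ell+1})t]$; combined with the exponential separation of centers this gives the claimed bound. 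Summing over the finitely many $n\neq\ell$ and over the finitely many planar-wave terms produced by each expansion yields the desired estimate.

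\textbf{Main obstacle.} The delicate point is bookkeeping: the scattering asymptotics \eqref{asy1}--\eqref{asy4} produce, for each potential, \emph{several} planar waves (incoming, reflected, transmitted) with distinct phases $e^{\pm ikx}$ and distinct translation factors $e^{iy_\ell k}$, $e^{iy_{\ell\pm1}k}$, and one must verify that after applying the recursively defined transition coefficients in condition b) (with their reflection/transmission coefficients $r_{\omega_\ell},s_{\omega_\ell}$ and the explicit $y_\ell$-dependent exponentials) every such wave on $P_\ell(t)$ is exactly matched, leaving only the exponentially small remainders from Lemma \ref{appFourier} and the superpolynomially small remainders from the dispersive bound of Step 3. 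Keeping the error constant $\beta$ uniform and tracking that the $L^2_k$-norms of the $\phi_{j,n}$ stay bounded by $\max_\ell\|(\phi_{1,\ell},\phi_{2,\ell})\|_{L^2_k}$ — which follows because each transition in b) is given by $1/s_{\omega_\ell}$ times an isometry-up-to-small-error, so composing $m$ of them costs only a bounded factor — is where the care is needed.
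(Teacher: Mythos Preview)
Your Steps 1--2 are essentially the paper's approach and contain all the necessary ingredients: asymptotics \eqref{asy1}--\eqref{asy4}, Lemma~\ref{appFourier}, Remark~\ref{transition}, and conditions b), c). The paper also uses Lemma~\ref{galil} (Galilei invariance of the free flow) to put all the flat waves into the common form $F_0(e^{-itk^2\sigma_3}\vec\phi_n)(x)$; you should invoke it explicitly.

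Step 3, however, is both unnecessary and incorrect. It is unnecessary because the cancellation in Step 2 is \emph{exact}: on $P_\ell(t)$, the $j$-th term for $j<\ell$ reduces (via \eqref{asy1} / Lemma~\ref{appFourier}) to the free evolution of $\vec\phi_j$, while for $j>\ell$ one first rewrites $\hat G_{\omega_j}(\vec\phi_j)=\hat F_{\omega_j}(\vec\phi_{j-1})$ by Remark~\ref{transition} and then reduces (via \eqref{asy3}) to the free evolution of $\vec\phi_{j-1}$. Summing over $j\neq\ell$ yields the free evolution of $\sum_{j<\ell}\vec\phi_j+\sum_{j>\ell}\vec\phi_{j-1}=\sum_{n=1}^{m-1}\vec\phi_n=\vec\varphi$, which is \emph{precisely} the subtracted term (ii). There are no uncancelled flat pieces. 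Your reference to ``the reflected-wave part via \eqref{asy2}'' suggests you are expanding $\hat G_{\omega_j}$ on the wrong side; using Remark~\ref{transition} first avoids generating any reflected waves.

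Step 3 is also wrong as stated: dispersive decay of a free Schr\"odinger wave gives at best polynomial decay in $t$, not the exponential bound $e^{-\beta(\cdots)}$, and the estimate must hold uniformly down to $t=0$ where there is no dispersion at all. Moreover, the profiles $\phi_{j,n}$ are only in $L^2_k$, not Schwartz, so stationary-phase arguments of the type you invoke are unavailable. Drop Step~3 entirely and verify the exact telescoping in Step~2 instead.
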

\begin{proof}
Using the asymptotic behaviors in \eqref{asy1}, \eqref{asy2}, \eqref{asy3} and \eqref{asy4} of $\mathcal{F}_\omega(x,k)$ and $\mathcal{G}_\omega(x,k),$ we can verify that condition $b)$ and Remark \ref{transition} imply the following estimate
when $j\neq \ell$ for a constant $\beta> 0$
\begin{multline*}
\chi_{P_{\ell}(t)}(x)e^{i\left(\frac{v_{j}x}{2}-\frac{v_{j}^{2}t}{4}+\omega_{j}t+\gamma_{\ell}\right)\sigma_{3}}\hat{G}_{\omega_{j}}\left(
   e^{{-}it(k^{2}+\omega_{j})\sigma_{3}}e^{{-}i\gamma_{j}\sigma_{3}} \begin{bmatrix}
       e^{iy_{j}k}\phi_{1,j}\left(k+\frac{v_{j}}{2}\right)\\
       e^{iy_{j}k}\phi_{2,j}\left(k-\frac{v_{j}}{2}\right)
    \end{bmatrix}\right)(x-y_{j}-v_{j}t)\\=
    \begin{cases}
    \chi_{P_{\ell}(t)}(x)e^{i\left(\frac{v_{j}x}{2}-\frac{v_{j}^{2}t}{4}\right)\sigma_{3}}F_{0}\left(
e^{{-}itk^{2}\sigma_{3}} \begin{bmatrix}
    e^{iy_{j}k}\phi_{1,j}\left(k+\frac{v_{j}}{2}\right)\\
    e^{iy_{j}k}\phi_{2,j}\left(k-\frac{v_{j}}{2}\right)
    \end{bmatrix}\right)(x-y_{j}-v_{j}t) \text{, if $j<\ell,$}\\
    \chi_{P_{\ell}(t)}(x)e^{i\left(\frac{v_{j}x}{2}-\frac{v_{j}^{2}t}{4}\right)\sigma_{3}}F_{0}\left(
e^{{-}itk^{2}\sigma_{3}} \begin{bmatrix}
    e^{iy_{j}k}\phi_{1,j-1}\left(k+\frac{v_{j}}{2}\right)\\
    e^{iy_{j}k}\phi_{2,j-1}\left(k-\frac{v_{j}}{2}\right)
    \end{bmatrix}\right)(x-y_{j}-v_{j}t) \text{, if $j>\ell,$}
    \end{cases}
    \\{+}O\left(e^{{-}\beta\min_{\ell}[(y_{\ell}-y_{\ell+1})+(v_{\ell}-v_{\ell+1})t]}\max_{\ell}\norm{\begin{bmatrix}
        \phi_{1,\ell}(k)\\
        \phi_{2,\ell}(k)
\end{bmatrix}}_{L^{2}_{k}(\mathbb{R})}\right),
\end{multline*}
for any $\ell\in\{1,2,...\,,m\}$ and any $t\geq 0$. 

Note that Lemma \ref{galil} of  Section \ref{sec:scatteringone} implies the following identity
\begin{equation*}
e^{i\left(\frac{v_{j}x}{2}-\frac{v_{j}^{2}t}{4}\right)\sigma_{3}}F_{0}\left(
e^{{-}itk^{2}\sigma_{3}} \begin{bmatrix}
e^{iy_{j}k}\phi_{1}\left(k+\frac{v_{j}}{2}\right)\\
    e^{iy_{j}k}\phi_{2}\left(k-\frac{v_{j}}{2}\right)
    \end{bmatrix}\right)(x-y_{j}-v_{j}t)=F_{0}\left(e^{{-}it k^{2}\sigma_{3}}\begin{bmatrix}
    \phi_{1}\left(k\right)\\
    \phi_{2}\left(k\right)
    \end{bmatrix}\right)(x), 
\end{equation*}
for any $j\in\{1,\,2,\,...,\,m-1\},$ and any $\vec{\phi}=(\phi_{1},\phi_{2})\in L^{2}_{k}(\mathbb{R},\mathbb{C}^{2}).$ Consequently, we have the following estimate 
for any $t\geq 0,$ and $\ell\neq j$
\begin{multline*}
\chi_{P_{\ell}(t)}(x)e^{i\left(\frac{v_{j}x}{2}-\frac{v_{j}^{2}t}{4}+\omega_{j}t+\gamma_{\ell}\right)\sigma_{3}}\hat{G}_{\omega_{j}}\left(
   e^{{-}it(k^{2}+\omega_{j})\sigma_{3}}e^{{-}i\gamma_{j}\sigma_{3}} \begin{bmatrix}
       e^{iy_{j}k}\phi_{1,j}\left(k+\frac{v_{j}}{2}\right)\\
       e^{iy_{j}k}\phi_{2,j}\left(k-\frac{v_{j}}{2}\right)
    \end{bmatrix}\right)(x-y_{j}-v_{j}t)\\=
    \begin{cases}
    \chi_{P_{\ell}(t)}(x)F_{0}\left(
e^{{-}itk^{2}\sigma_{3}} \begin{bmatrix}
    \phi_{1,j}\left(k\right)\\
   \phi_{2,j}\left(k\right)
    \end{bmatrix}\right)(x) \text{, if $j<\ell,$}\\
    \chi_{P_{\ell}(t)}(x)F_{0}\left(
e^{{-}itk^{2}\sigma_{3}} \begin{bmatrix}
    \phi_{1,j-1}\left(k\right)\\
   \phi_{2,j-1}\left(k\right)
    \end{bmatrix}\right)(x) \text{, if $j>\ell,$}
    \end{cases}
    \\{+}O\left(e^{{-}\beta\min_{\ell}[(y_{\ell}-y_{\ell+1})+(v_{\ell}-v_{\ell+1})t]}\max_{\ell}\norm{\begin{bmatrix}
        \phi_{1,\ell}(k)\\
        \phi_{2,\ell}(k)
\end{bmatrix}}_{L^{2}_{k}(\mathbb{R})}\right).
\end{multline*}
Using condition $c)$ and combining the computations above, the desired result follows.
\end{proof}

\par Clearly, $\mathcal{S}(\vec{\phi})(t,x)$ is not an exact solution to \eqref{ldpe}  but it serves as a good approximation.  The proof of the existence of the solution $\mathcal{T}(\vec{\phi})(t,x)$ of \eqref{ldpe} having the same asymptotic behavior as $\mathcal{S}(\vec{\phi})(t,x)$ will be given later in Section \ref{asyinfinity}. The proof will follow using the Lemmas \ref{galil}, \ref{appFourier} and a fixed-point argument. 
\par
Before we study the properties of $\mathcal{S}(t),$ we analyze the operator $\mathcal{S}(0):L^{2}_{x}(\mathbb{R},\mathbb{C}^{2})\to L^{2}_{x}(\mathbb{R},\mathbb{C}^{2})$. In \cite{perelmanasym}, when $v_{1}-v_{2}>0$ is large enough, Perelman proved for $m=2$ as $y_{1}-y_{2}$ goes to ${+}\infty$ that $\mathcal{S}(0)$ is a homeomorphism mapping through the subspace generated by the union of the essential spectrum space of each $\mathcal{H}_{\omega_{j}}.$ 
We are going to verify later in this section that this property can be extended for any number $m\geq 2$ of potentials. 
\par The first main result of this section is the following.
\begin{theorem}\label{TT}
Let $m\in\mathbb{N},$ there exist $K(m)>1$ and a parameter $c(m)>0$ such that if, for any $\ell\geq 1,$ $v_{\ell+1}-v_{\ell}>K(m),\,y_{\ell+1}-y_{\ell}>K(m),\,$ and $\vec{\phi}=\begin{bmatrix}
    \phi_{1}\\
    \phi_{2}
\end{bmatrix}\in L^{2}(\mathbb{R},\mathbb{C}^{2})$ is any element in the domain of $\mathcal{S}(0),$ then
\begin{equation}\label{Scoerc}
    \norm{\mathcal{S}(0)(\vec{\phi})}_{L^{2}_{x}}\geq c(m)\left[\sum_{\ell=1}^{m}\norm{\begin{bmatrix}
        \phi_{1,\ell}(k)\\
        \phi_{2,\ell}(k)
    \end{bmatrix}}_{L^{2}_{k}(\mathbb{R})}\right].
\end{equation}
Moreover, there exists a constant $c_2(m)>0$ satisfying
for all $\ell\in\{1,2,\,...,m\}$

\begin{align}\label{derivdecay}
   \norm{\mathcal{S}(0)(\vec{\phi})}_{L^{2}_{x}(\mathbb{R})}&+\norm{x\chi_{P_{\ell}(0)}(x+y_{\ell})\mathcal{S}(\vec{\phi})(0,x+y_{\ell})}_{L^{2}_{x}(\mathbb{R})}\\
   &\geq c_2(m)\left[\sum_{\ell=1}^{m}\norm{F_{0}(x)\left(e^{iy_{\ell}k}\begin{bmatrix}
        \phi_{1,\ell}(k)\\
        \phi_{2,\ell}(k)
    \end{bmatrix}\right)}_{L^{1}_{x}(\mathbb{R})}\right],\nonumber  
\end{align}
and
\begin{align}\label{derivdecay2}
    &\max_{j\in\{0,1\},\ell}\vert y_{\ell-1}-y_{\ell+1}\vert^{j}\norm{(1+\vert x\vert)^{2-j}\chi_{P_{\ell}}(x+y_{\ell})\mathcal{S}(\phi)(0,x+y_{\ell})}_{L^{2}_{x}} \\
    &\quad\quad \quad \quad \quad\quad \geq c_2(m)\sum_{\ell=1}^{m}\norm{F_{0}(x)\left(\frac{d}{dk}\left[e^{iy_{\ell}k}\begin{bmatrix}
        \phi_{1,\ell}(k)\\
        \phi_{2,\ell}(k)
    \end{bmatrix}\right]\right)}_{L^{1}_{x}(\mathbb{R})}. 
\end{align}
Furthermore, if $\max_{\ell}\norm{
\begin{bmatrix}
    k^{2}\phi_{1,\ell}(k)\\
    k^{2}\phi_{2,\ell}(k)
\end{bmatrix}}_{L^{2}_{k}(\mathbb{R})}<{+}\infty,$ then for some constant $c_3(m)$
\begin{equation}\label{Scoerc2}
    \norm{\mathcal{S}(0)(\vec{\phi})}_{H^{2}_{x}}\geq c_3(m)\left[\sum_{\ell=1}^{m}\norm{\begin{bmatrix}
        (1+k^{2})\phi_{1,\ell}(k)\\
        (1+k^{2})\phi_{2,\ell}(k)
    \end{bmatrix}}_{L^{2}_{k}(\mathbb{R})}\right].
\end{equation}
Furthermore, if the hypothesis $(H4)$ defined in Theorem \ref{Decesti} holds, then the following improved inequalities hold for some constant $c_4(m)$
\begin{align}\label{fl1teo2}
      \norm{\mathcal{S}(0)(\vec{\phi})}_{L^{1}_{x}(\mathbb{R})} \geq & c_4(m)\left[\sum_{\ell=1}^{m}\norm{F_{0}(x)\left(e^{iy_{\ell}k}\begin{bmatrix}
        \phi_{1,\ell}(k)\\
        \phi_{2,\ell}(k)
    \end{bmatrix}\right)}_{L^{1}_{x}(\mathbb{R})}\right],
\end{align}
    and
 \begin{multline}\label{fl1teo1}  
    \max_{\ell}\left[\vert y_{\ell-1}-y_{\ell+1}\vert\max\left(\norm{\mathcal{S}(0)(\vec{\phi})}_{L^{2}_{x}(\mathbb{R})},\norm{\mathcal{S}(0)(\vec{\phi})}_{L^{1}_{x}(\mathbb{R})}\right)+\norm{\vert x-y_{\ell}\vert\chi_{P_{\ell}(0)}(x)\mathcal{S}(\vec{\phi})(0,x)}_{L^{1}_{x}(\mathbb{R})}\right]\\ \geq  c_4(m)\sum_{\ell=1}^{m}\norm{F_{0}(x)\left(\frac{d}{dk}\left[e^{iy_{\ell}k}\begin{bmatrix}
        \phi_{1,\ell}(k)\\
        \phi_{2,\ell}(k)
    \end{bmatrix}\right]\right)}_{L^{1}_{x}(\mathbb{R})}
\end{multline}
\end{theorem}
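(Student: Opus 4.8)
\medskip

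The plan is to establish the coercivity estimates \eqref{Scoerc}--\eqref{fl1teo1} by exploiting the localization structure of $\mathcal{S}(0)(\vec\phi)$ encoded in Lemma \ref{localS0}: the restriction of $\mathcal{S}(\vec\phi)(0,x)$ to the window $P_\ell(0)$ centered at $y_\ell$ is, up to an error of size $e^{-\beta\min_\ell(y_\ell-y_{\ell+1})}\max_n\|(\phi_{1,n},\phi_{2,n})\|_{L^2}$, equal to a Galilei-modulated copy of $\hat G_{\omega_\ell}(e^{-i\gamma_\ell\sigma_3}e^{iy_\ell k}(\phi_{1,\ell}(k+\tfrac{v_\ell}{2}),\phi_{2,\ell}(k-\tfrac{v_\ell}{2})))$. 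First I would fix $\ell$, cut off $\mathcal{S}(0)(\vec\phi)$ by $\chi_{P_\ell(0)}$, undo the Galilei transform $\mathfrak g_{\omega_\ell,v_\ell,y_\ell,\gamma_\ell}$ (which is unitary on $L^2$ and hence harmless), and invoke the coercivity of $\hat G_\omega$ on $L^2$ already recorded as \eqref{coerGG} in Lemma \ref{sobolevdecayofG}'s proof, i.e. $\|\hat G_\omega(\vec u)\|_{L^2}\geq c_\omega\|\vec u\|_{L^2}$. Since $\|\chi_{P_\ell(0)}\mathcal{S}(0)(\vec\phi)\|_{L^2}\leq \|\mathcal{S}(0)(\vec\phi)\|_{L^2}$, this gives, for $K(m)$ large enough so that the error term from Lemma \ref{localS0} can be absorbed on the left (this is the one place the hypothesis $v_{\ell+1}-v_\ell>K(m)$, $y_{\ell+1}-y_\ell>K(m)$ is used), the bound $\|\mathcal{S}(0)(\vec\phi)\|_{L^2}\geq \tfrac{c_\omega}{2}\|(\phi_{1,\ell},\phi_{2,\ell})\|_{L^2}$ for each $\ell$; summing in $\ell$ and paying an $m^{-1}$ factor yields \eqref{Scoerc}. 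The change of variable $k\mapsto k\pm\tfrac{v_\ell}{2}$ in the argument of $\phi_{j,\ell}$ is an $L^2_k$-isometry so it does not affect constants.

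\medskip

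For \eqref{Scoerc2} I would run the same argument but with Corollary \ref{h2Gesti}, $\|\hat G_\omega(\vec u)\|_{H^2}\geq c\|(1+k^2)\vec u\|_{L^2}$, in place of \eqref{coerGG}; here the Galilei modulation $e^{i\theta_\ell\sigma_3}$ is not a bounded-below map on $H^2$, so I would instead use the frequency-shifted version Lemma \ref{sobolevdecayofG} (for $\ell=1$ there, i.e. the $d^2/dx^2$ estimate) to match the weights $(k\pm\tfrac{v_\ell}{2})^2$ that appear after undoing the modulation, controlling the lower-order cross terms by Young's inequality exactly as in the proof of Lemma \ref{sobolevdecayofG}; then the remainder in Lemma \ref{localS0}, which carries no derivatives, is absorbed after noting $\|\mathcal{S}(0)(\vec\phi)\|_{H^2}$ controls all $H^2$-windows. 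For the $L^1$-type estimates \eqref{derivdecay}, \eqref{derivdecay2}, \eqref{fl1teo2}, \eqref{fl1teo1} I would use Lemma \ref{appFourier}: on each window the distorted transform $\hat G_{\omega_\ell}$ differs from the flat transform $F_0$ (composed with the appropriate shift/modulation) by a term that decays exponentially in the distance to the center, in the $L^2$-with-weights norm, hence after Cauchy--Schwarz also in $L^1$ on the window. Thus, modulo exponentially small errors, $\chi_{P_\ell(0)}(\cdot+y_\ell)\mathcal{S}(\vec\phi)(0,\cdot+y_\ell)$ agrees with $F_0(e^{iy_\ell k}(\phi_{1,\ell},\phi_{2,\ell}))$ up to the Galilei phase, which is a pointwise unimodular factor and therefore invisible to $L^1$ and $L^\infty$ norms; the weighted estimate \eqref{derivdecay2} follows by noting that multiplication by $x$ on the physical side corresponds to $\tfrac{d}{dk}$ on the Fourier side, and that on a window of length $\sim|y_{\ell-1}-y_{\ell+1}|$ one can trade the intrinsic weight $(1+|x|)$ for the window length. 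Summing over $\ell$ with the usual loss of a dimensional constant produces \eqref{derivdecay}--\eqref{fl1teo1}; hypothesis (H4) enters only through Lemma \ref{appFourier}'s use of the absence of threshold resonances, which is what guarantees the relevant $L^1$ boundedness of the flat-transform comparison.

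\medskip

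The main obstacle I anticipate is the bookkeeping in \eqref{fl1teo1}: here one simultaneously needs the $L^1$ comparison on the window, the trade between the physical weight $|x-y_\ell|$ and the window size $|y_{\ell-1}-y_{\ell+1}|$, and the fact that the error terms from both Lemma \ref{localS0} and Lemma \ref{appFourier} must be absorbed into a \emph{max} of $L^1$ and $L^2$ norms of $\mathcal{S}(0)(\vec\phi)$ — the $L^2$ norm being needed because the exponentially small error is naturally measured in $L^2_k$ of the $\phi_{j,\ell}$, which by \eqref{Scoerc} is already controlled by $\|\mathcal{S}(0)(\vec\phi)\|_{L^2}$. Keeping the constants uniform in $m$ (so that $c_i(m)$ are legitimate functions of $m$ only, not of the velocities or positions) requires that all the absorption steps use only $\min_\ell(v_\ell-v_{\ell+1})$ and $\min_\ell(y_\ell-y_{\ell+1})$ being large, which is exactly the content of the hypothesis; I would make this quantitative by first choosing $K(m)$ so that the geometric-series-type loss from iterating the window decompositions across all $m$ potentials stays bounded, and only then fixing the $c_i(m)$. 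Everything else is a routine combination of the already-established Lemmas \ref{galil}, \ref{appFourier}, \ref{sobolevdecayofG}, Corollary \ref{h2Gesti}, and the coercivity \eqref{coerchatF}/\eqref{coerGG} of the distorted transforms.
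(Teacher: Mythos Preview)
Your argument has a genuine gap at the very first step. You propose to localize $\mathcal{S}(0)(\vec\phi)$ to the window $P_\ell(0)$ via Lemma \ref{localS0}, then apply the coercivity $\|\hat G_\omega(\vec u)\|_{L^2}\geq c_\omega\|\vec u\|_{L^2}$. But that coercivity is for the \emph{full} distorted transform, not for $\chi_{P_\ell(0)}\hat G_{\omega_\ell}(\cdot)$. The function $\hat G_{\omega_\ell}(e^{iy_\ell k}\vec\phi_\ell(k+\sigma_3 v_\ell/2))$ is a genuine $L^2$ function on all of $\mathbb{R}$ that does not concentrate on $P_\ell(0)$: by the asymptotics \eqref{asy1}--\eqref{asy2} it behaves like a (reflected/transmitted) plane-wave superposition at infinity, and nothing prevents most of its $L^2$ mass from lying outside the window. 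So from $\|\chi_{P_\ell(0)}\mathcal{S}(0)(\vec\phi)\|_{L^2}\approx\|\chi_{P_\ell(0)}\hat G_{\omega_\ell}(\cdots)\|_{L^2}$ you cannot conclude anything about $\|\vec\phi_\ell\|_{L^2}$. A related symptom: your absorption step uses only the \emph{spatial} separation $\min_\ell(y_\ell-y_{\ell+1})$ (that is all the error in Lemma \ref{localS0} at $t=0$ sees), whereas the theorem genuinely requires large \emph{velocity} separation as well.

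What the paper actually does is quite different. It does not try to read off $\vec\phi_\ell$ from a single window; instead it applies the inverse distorted transforms $B_j=\sigma_3 F^*_{\omega_\ell}(\sigma_3\cdot)$ or $\sigma_3 G^*_{\omega_\ell}(\sigma_3\cdot)$ (composed with localization and Galilei undoing) to $\mathcal{S}(0)(\vec\phi)$. Lemma \ref{T} computes $B_j(\mathcal{S}(0)(\vec\phi))$ and shows that, up to exponentially small errors in $y_\ell-y_{\ell+1}$, the result is a \emph{coupled} linear system in the Hardy projections $\psi_j=P_\pm(e^{i(y_\ell+y_{\ell\pm1})k/2}\vec\phi_\ell(k+\sigma_3 v_{\ell}/2))$, with off-diagonal coupling through the reflection and transmission coefficients $r_{\omega_\ell},s_{\omega_\ell}$ evaluated at velocity-shifted arguments. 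The heart of the proof is then Lemma \ref{Tp}, an abstract coercivity for this $(2m-2)\times(2m-2)$ system, proved by induction on $m$; the key smallness comes from $\|r_\ell(k)r_j(k+v_\ell-v_j)\|_{L^\infty}=O(|v_\ell-v_j|^{-1})$, which is exactly where the large-velocity hypothesis enters. Once Lemma \ref{Tp} gives $\max_j\|\psi_j\|\lesssim\max_j\|B_j(\mathcal{S}(0)(\vec\phi))\|$, the boundedness of each $B_j$ on $L^2$ closes \eqref{Scoerc}; the weighted and $\mathcal{F}L^1$ estimates \eqref{derivdecay}--\eqref{fl1teo1} follow by running the same system in those norms, and hypothesis (H4) enters not through Lemma \ref{appFourier} but through Lemma \ref{fordecay}, which gives the $L^1\to\mathcal{F}L^1$ bound for $F^*_\omega$ needed to control $\|B_j(\cdot)\|_{\mathcal{F}L^1}$ by $\|\mathcal{S}(0)(\vec\phi)\|_{L^1}$.
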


\begin{corollary}\label{ccc}
If all the hypotheses of Theorem \ref{TT} are true, then there exists $C>0$ satisfying
\begin{equation*}
    \norm{\mathcal{S}(t)\vec{\phi}}_{L^{2}_{x}(\mathbb{R})}\leq C\norm{\mathcal{S}(0)\vec{\phi}}_{L^{2}_{x}(\mathbb{R})},
\end{equation*}
for any $t\geq 0.$ 
\end{corollary}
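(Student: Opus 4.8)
The plan is to estimate $\norm{\mathcal{S}(t)\vec{\phi}}_{L^2_x}$ term by term from the explicit formula \eqref{eq:Sphi} of Definition \ref{s0def}, reducing the bound to the quantities $\norm{(\phi_{1,\ell},\phi_{2,\ell})}_{L^2_k}$ with a constant that does not depend on $t$, and then to close the argument by invoking the coercivity estimate \eqref{Scoerc} of Theorem \ref{TT}, which controls $\sum_\ell\norm{(\phi_{1,\ell},\phi_{2,\ell})}_{L^2_k}$ by $\norm{\mathcal{S}(0)\vec{\phi}}_{L^2_x}$.

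First I would apply the triangle inequality to \eqref{eq:Sphi}. For each $\ell$, the corresponding summand is obtained from $\hat{G}_{\omega_\ell}\!\bigl(e^{-it(k^2+\omega_\ell)\sigma_3}e^{-i\gamma_\ell\sigma_3}(e^{iy_\ell k}\phi_{1,\ell}(k+\tfrac{v_\ell}{2}),\,e^{iy_\ell k}\phi_{2,\ell}(k-\tfrac{v_\ell}{2}))^t\bigr)$ by the spatial translation $x\mapsto x-y_\ell-v_\ell t$ and pointwise multiplication by the unimodular matrix $e^{i(\frac{v_\ell x}{2}-\frac{v_\ell^2 t}{4}+\omega_\ell t+\gamma_\ell)\sigma_3}$; both operations are $L^2_x$-isometries. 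The matrices $e^{-it(k^2+\omega_\ell)\sigma_3}$ and $e^{-i\gamma_\ell\sigma_3}$ are diagonal with unimodular entries, hence pointwise unitary on $\mathbb{C}^2$, and translation in $k$ together with multiplication by the unimodular $e^{iy_\ell k}$ are $L^2_k$-isometries; using the $L^2$-boundedness of $\hat{G}_{\omega_\ell}\colon L^2_k\to L^2_x$ (standard; e.g.\ it follows from the asymptotics \eqref{asy1}--\eqref{asy4} and Plancherel, cf.\ Section \ref{sec:scatteringone}; one may also invoke the $L^2$ bound for the one-potential flow in Lemma \ref{lem:decayonepotential}) one gets that the $L^2_x$-norm of this summand is $\lesssim\norm{(\phi_{1,\ell},\phi_{2,\ell})}_{L^2_k}$, uniformly in $t\ge 0$. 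The last term of \eqref{eq:Sphi} is $F_0(e^{-itk^2\sigma_3}\vec{\varphi})$, whose $L^2_x$-norm equals $\norm{\vec{\varphi}}_{L^2_k}$ by the Plancherel theorem, and condition c) of Definition \ref{s0def} gives $\norm{\vec{\varphi}}_{L^2_k}\le\sum_{\ell=1}^{m-1}\norm{(\phi_{1,\ell},\phi_{2,\ell})}_{L^2_k}$. Collecting these bounds yields $\norm{\mathcal{S}(t)\vec{\phi}}_{L^2_x}\le C_1(m)\sum_{\ell=1}^m\norm{(\phi_{1,\ell},\phi_{2,\ell})}_{L^2_k}$ with $C_1(m)$ independent of $t$.

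It then remains to combine this with \eqref{Scoerc}, which under the hypotheses of Theorem \ref{TT} gives $\sum_{\ell=1}^m\norm{(\phi_{1,\ell},\phi_{2,\ell})}_{L^2_k}\le c(m)^{-1}\norm{\mathcal{S}(0)\vec{\phi}}_{L^2_x}$; the two displays together give the assertion with $C=C_1(m)/c(m)$. I do not expect a genuine obstacle, since the substantive work is already carried by Theorem \ref{TT}; the only point that needs a little care is the bookkeeping in the second step -- correctly identifying which of the Galilei phases, spatial and frequency translations and matrix factors $e^{-it(k^2+\omega_\ell)\sigma_3}$, $e^{-i\gamma_\ell\sigma_3}$, $e^{-itk^2\sigma_3}$ are $L^2$-isometries and which are merely bounded -- so that the resulting constant is uniform in $t\ge 0$ and valid for every $\vec{\phi}$ in the domain of $\mathcal{S}(0)$.
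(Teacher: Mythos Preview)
Your proof is correct and follows essentially the same strategy as the paper: bound $\norm{\mathcal{S}(t)\vec{\phi}}_{L^2_x}$ above by $\sum_\ell\norm{(\phi_{1,\ell},\phi_{2,\ell})}_{L^2_k}$ with a $t$-independent constant, then close with the coercivity \eqref{Scoerc} from Theorem \ref{TT}. Your execution is in fact slightly more direct than the paper's: you bound each $\hat{G}_{\omega_\ell}$ term immediately by $\norm{(\phi_{1,\ell},\phi_{2,\ell})}_{L^2_k}$ using the $L^2$-boundedness of $\hat{G}_{\omega_\ell}$ together with the unitarity of $e^{-it(k^2+\omega_\ell)\sigma_3}$ on $L^2_k$, whereas the paper first invokes the one-potential flow bound (Lemma \ref{lem:decayonepotential}) to pass from time $t$ to time $0$ in $L^2_x$, and then uses a localization argument (Lemma \ref{localS0} and the intermediate coercivity \eqref{coerrrr}) to control $\norm{\hat{G}_{\omega_\ell}(\ldots)}_{L^2_x}$ by $\norm{\mathcal{S}(0)\vec{\phi}}_{L^2_x}$. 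Both routes are valid; yours avoids the detour through Lemma \ref{localS0} at the cost of citing the $L^2$-boundedness of $\hat{G}_{\omega_\ell}$ directly, while the paper's route has the minor advantage that the intermediate estimate \eqref{coerrrr} is of independent interest.
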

\begin{proof}[Proof of Corollary \ref{ccc}.]
First, Theorem \ref{TT} implies that there exists a constant $c>0$ satisfying 
\begin{equation*}
    \norm{\mathcal{S}(0)(\vec{\phi})}_{L^{2}_{x}(\mathbb{R})}\geq c\max_{\ell}\norm{\begin{bmatrix}
        \phi_{1,\ell}(k)\\
        \phi_{2,\ell}(k)
    \end{bmatrix}}_{L^{2}_{k}(\mathbb{R})}.
\end{equation*}
Consequently, we can verify using Lemma \ref{localS0} that there exists a constant $C>1$ satisfying 
\begin{multline*}
    \norm{\mathcal{S}(0)(\vec{\phi})(x)-\sum_{\ell}\chi_{P_{\ell}(0)}e^{i\left(\frac{v_{\ell}x}{2}-\frac{v_{\ell}^{2}t}{4}+\gamma_{\ell}\right)}\hat{G}_{\omega_{\ell}}\left(e^{{-}i\gamma_{\ell}\sigma_{3}}
    \begin{bmatrix}
    e^{iy_{\ell}k}\phi_{1,\ell}\left(k+\frac{v_{\ell}}{2}\right)\\
    e^{iy_{\ell}k}\phi_{1,\ell}\left(k-\frac{v_{\ell}}{2}\right)
    \end{bmatrix}\right)(x-y_{\ell})}_{L^{2}_{x}(\mathbb{R})}\\
    \leq Ce^{{-}\beta\min_{\ell}(y_{\ell}-y_{\ell+1})}\max_{\ell}\norm{\begin{bmatrix}
        \phi_{1,\ell}(k)\\
        \phi_{2,\ell}(k)
    \end{bmatrix}}_{L^{2}_{x}(\mathbb{R})}\leq \frac{C}{c}e^{{-}\beta\min_{\ell}(y_{\ell}-y_{\ell+1})}\norm{\mathcal{S}(0)(\vec{\phi})(x)}_{L^{2}_{x}(\mathbb{R})},
\end{multline*}
when $\min_{\ell}y_{\ell}-y_{\ell+1}$ is large enough. Therefore, by applying Minkowski’s inequality on the estimate above that there exists a constant $K>0$ depending on $C$ and $c$ satisfying
\begin{equation}\label{coerrrr}
    \norm{\mathcal{S}(0)(\vec{\phi})(x)}_{L^{2}_{x}(\mathbb{R})}\geq K \max_{\ell}\norm{\chi_{P_{\ell}(0)}\hat{G}_{\omega_{\ell}}\left(e^{{-}i\gamma_{\ell}\sigma_{3}}
    \begin{bmatrix}
    e^{iy_{\ell}k}\phi_{1,\ell}\left(k+\frac{v_{\ell}}{2}\right)\\
    e^{iy_{\ell}k}\phi_{1,\ell}\left(k-\frac{v_{\ell}}{2}\right)
    \end{bmatrix}\right)(x-y_{\ell})}_{L^{2}_{x}(\mathbb{R})}.
\end{equation}
\par Next, using the asymptotic behavior of \eqref{asy1}, \eqref{asy2}, \eqref{asy3}, \eqref{asy4}, we can verify that
\begin{multline*}
    \norm{\left[1-\chi_{P_{\ell}(0)}(x)\right]\hat{G}_{\omega_{\ell}}\left(e^{{-}i\gamma_{\ell}\sigma_{3}}
    \begin{bmatrix}
    e^{iy_{\ell}k}\phi_{1,\ell}\left(k+\frac{v_{\ell}}{2}\right)\\
    e^{iy_{\ell}k}\phi_{1,\ell}\left(k-\frac{v_{\ell}}{2}\right)
    \end{bmatrix}\right)(x-y_{\ell})}_{L^{2}_{x}(\mathbb{R})}\\
    \leq 2\max\left(\norm{\begin{bmatrix}
        \phi_{1,\ell}(k)\\
        \phi_{2,\ell}(k)
    \end{bmatrix}}_{L^{2}_{k}(\mathbb{R})},\norm{\begin{bmatrix}
        \phi_{1,\ell-1}(k)\\
        \phi_{2,\ell-1}(k)
    \end{bmatrix}}_{L^{2}_{k}(\mathbb{R})}\right)\leq\frac{2}{c}\norm{\mathcal{S}(0)(\vec{\phi})}_{L^{2}_{x}(\mathbb{R})}.
\end{multline*}
In conclusion, if $\min_{\ell} y_{\ell}-y_{\ell+1}>0$ is large enough, then for some constant $C_{2}>1$ the following estimate holds
\begin{align*}
 \norm{\mathcal{S}(t)(\vec{\phi})(x)}_{L^{2}_{x}(\mathbb{R})}\leq & 2m\max_{\ell}\norm{\hat{G}_{\omega_{\ell}}\left(
   e^{{-}it(k^{2}+\omega_\ell)\sigma_{3}}e^{{-}i\gamma_{\ell}\sigma_{3}} \begin{bmatrix}
       e^{iy_{\ell}k}\phi_{1,\ell}\left(k+\frac{v_{\ell}}{2}\right)\\
       e^{iy_{\ell}k}\phi_{2,\ell}\left(k-\frac{v_{\ell}}{2}\right)
    \end{bmatrix}\right)(x)}_{L^{2}_{x}(\mathbb{R})}\\
    \leq & 2mC_{2} \norm{\hat{G}_{\omega_{\ell}}\left(e^{{-}i\gamma_{\ell}\sigma_{3}} \begin{bmatrix}
       e^{iy_{\ell}k}\phi_{1,\ell}\left(k+\frac{v_{\ell}}{2}\right)\\
       e^{iy_{\ell}k}\phi_{2,\ell}\left(k-\frac{v_{\ell}}{2}\right)
    \end{bmatrix}\right)(x)}_{L^{2}_{x}(\mathbb{R})}\\
    \leq &\left[\frac{2mC_{2}}{K}+\frac{2mC_{2}}{c}\right]\norm{\mathcal{S}(0)(\vec{\phi})}_{L^{2}_{x}(\mathbb{R})},    
\end{align*}
which implies Corollary \ref{ccc}.
\end{proof}
\begin{remark}\label{re000}
 Furthermore, from the proof of Theorem \ref{TT}, we can verify that \eqref{Scoerc} also holds with $\mathcal{S}(t)(\overrightarrow{\phi})$ in place of $\mathcal{S}(0)(\overrightarrow{\phi})$ with the same constant $c(m).$    
\end{remark}

\begin{remark}
The map $\mathcal{S}(0)$ is also a closed map and a homeomorphism from its domain to a subspace of $L^{2}(\mathbb{R},\mathbb{C}^{2}).$ It also has a bounded linear inverse map $B.$ 
\end{remark}

\subsection{Applications of $P_{-}$ and $P_{+}$ }
\begin{definition}\label{dd}
For any $v\in\mathbb{R}$ and any function  $f(x)$ domain $\mathbb{R}$ with  the variable in the physical side, we define
\begin{equation*}
    \tau_{v}\left(f\right)(x)\coloneqq f(x+v),
\end{equation*}
for all $x\in \mathbb{R}.$ Moreover, for any function $g=\begin{bmatrix}
g_{1}(k)\\
g_{2}(k)
\end{bmatrix}:\mathbb{R}\to\mathbb{C}^{2},$ with the variable in the frequency side, we define
\begin{equation*}
\tau_{\sigma_{3}v}(g)(k)\coloneqq \begin{bmatrix}
    g_{1}(k+v)\\
    g_{2}(k-v)
\end{bmatrix}, 
\end{equation*}
for any $k\in \mathbb{R}.$
\end{definition}
 \par Next, we consider the following elementary proposition.
\begin{lemma}\label{trivial}
 All the functions $\begin{bmatrix}
     \psi_{1}(k)\\
     \psi_{2}(k)
 \end{bmatrix}\in L^{2}(\mathbb{R},\mathbb{C}^{2})$ satisfy for any $v,\,y\in\mathbb{R}$ the following identities
 \begin{align*}
P_{\pm}\left(\begin{bmatrix}
    e^{iyk}\psi_{1}(k+v)\\
e^{iyk}\psi_{2}(k-v)
    \end{bmatrix}
    \right)=&\begin{bmatrix}
    P_{\pm}(e^{iy\Diamond}\psi_{1}(\Diamond))(k+v)e^{{-}iyv}\\
P_{\pm}(e^{iy\Diamond}\psi_{2}(\Diamond))(k-v)e^{iyv}
\end{bmatrix}=e^{{-}i\sigma_{3}yv}P_{\pm}\left(\begin{bmatrix}
    e^{iyk}e^{iy v}\psi_{1}(k+v)\\
e^{iyk}e^{{-}iy v}\psi_{2}(k-v)
    \end{bmatrix}\right).
 \end{align*}
\end{lemma}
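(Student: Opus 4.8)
The statement to prove, Lemma \ref{trivial}, asserts two identities relating the Hardy-space projections $P_{\pm}$ to the translation operators $\tau_v$ and $\tau_{\sigma_3 v}$ introduced in Definition \ref{dd}. The plan is to verify both identities by reducing everything to the one-dimensional (scalar) case and exploiting the explicit Fourier description of $P_{\pm}$ given in Remark \ref{rrr}, equations \eqref{eq:FdefPp} and \eqref{eq:FdefPm}.

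First I would observe that $P_{\pm}$ acts componentwise on vectors in $L^2(\mathbb{R},\mathbb{C}^2)$, so it suffices to establish, for a single scalar function $\psi\in L^2(\mathbb{R})$ and real numbers $v,y$, the identity
\[
P_{\pm}\bigl(e^{iy\Diamond}\psi(\Diamond+v)\bigr)(k)=P_{\pm}\bigl(e^{iy\Diamond}\psi(\Diamond)\bigr)(k+v)\,e^{-iyv}.
\]
The key point is that the multiplier defining $P_{\pm}$ in Fourier variables is simply the indicator of a half-line (see \eqref{eq:FdefPp}, \eqref{eq:FdefPm}, or equivalently Theorem \ref{h2rep}: $H^2(\mathbb{C}_+)$ consists of functions whose Fourier transform is supported in $[0,+\infty)$). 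Writing $g(k)=e^{iyk}\psi(k)$, the function $e^{iyk}\psi(k+v)$ equals $e^{-iyv}\,g(k+v)$. Since $P_{\pm}$ is a Fourier multiplier, and the operation $g(k)\mapsto g(k+v)$ corresponds on the Fourier-transform side to multiplication by a phase $e^{i v \xi}$ — which does not change the support — the projection $P_{\pm}$ commutes with the translation $k\mapsto k+v$; that is, $P_{\pm}(g(\Diamond+v))(k)=P_{\pm}(g)(k+v)$. Combining this with the scalar $e^{-iyv}$ factor yields the displayed identity. Spelling this out: either argue directly from \eqref{eq:FdefPp}/\eqref{eq:FdefPm} by the change of variables in the inner Fourier integral, or invoke Theorem \ref{h2rep} together with the fact that translation in $k$ preserves the support of $\widehat{g}$ in a half-line.

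Having the scalar identity, the first equality of Lemma \ref{trivial} follows by applying it to each component with the appropriate sign of $v$: for the first entry use $+v$, producing $P_{\pm}(e^{iy\Diamond}\psi_1(\Diamond))(k+v)e^{-iyv}$, and for the second entry use $-v$, producing $P_{\pm}(e^{iy\Diamond}\psi_2(\Diamond))(k-v)e^{iyv}$. This is precisely the middle expression in the statement. The final equality — rewriting this as $e^{-i\sigma_3 yv}P_{\pm}\bigl(\begin{bmatrix} e^{iyk}e^{iyv}\psi_1(k+v)\\ e^{iyk}e^{-iyv}\psi_2(k-v)\end{bmatrix}\bigr)$ — is then a matter of pulling the constant phases $e^{\pm iyv}$ out through $P_{\pm}$ (linearity, since these are scalars) and reassembling them into the diagonal matrix $e^{-i\sigma_3 yv}=\mathrm{diag}(e^{-iyv},e^{iyv})$ acting on the left. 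No analysis is needed here beyond bookkeeping of phases and the linearity of $P_{\pm}$.

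I do not expect any genuine obstacle; the lemma is labelled "elementary" and the content is entirely the translation-covariance of a Fourier half-line projector. The only point requiring a line of care is justifying that $P_{\pm}$ commutes with the $k$-translation $g(\Diamond)\mapsto g(\Diamond+v)$, which one can either take as the standard fact that Fourier multipliers commute with translations, or check by hand from the limiting Cauchy-integral / Fourier-integral formulas in Remark \ref{rrr} via the substitution $k\mapsto k+v$ inside the integral. Everything else is algebraic manipulation of the phase factors $e^{\pm iyv}$ and componentwise application.
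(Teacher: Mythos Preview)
Your proposal is correct and simply fills in the details that the paper omits: the paper's entire proof is ``Elementary computations.'' Your argument via translation-covariance of the Fourier half-line multiplier is exactly the intended elementary computation.
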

\begin{proof}
    Elementary computations.
\end{proof}
Motivated by the approach in \cite{perelmanasym}, we consider the following maps
\begin{align*}
    B_{1}(h)\coloneqq  & \sigma_{3} F^{*}_{\omega_{1}}\left[\sigma_{3}\chi_{\{x\geq \frac{y_{2}-y_{1}}{2}\}}(x)\tau_{y_{1}}\left(e^{{-}i\frac{v_{1}x}{2}\sigma_{3}}h(x)\right)\right] (k),\\
B_{2n}(h) \coloneqq  & \sigma_{3} G^{*}_{\omega_{n+1}}\left[\sigma_{3}\chi_{\{\frac{y_{n+2}-y_{n+1}}{2}\leq x\leq \frac{y_{n}-y_{n+1}}{2}\}}(x)\tau_{y_{n+1}}\left(e^{{-}i\frac{v_{n+1}x}{2}\sigma_{3}}h(x)\right)\right](k)\text{ if $n+1<m,$}
    \\
    B_{2n+1}(h) \coloneqq  & \sigma_{3} F^{*}_{\omega_{n+1}}\left[\sigma_{3}\chi_{\{\frac{y_{n+2}-y_{n+1}}{2}\leq x\leq \frac{y_{n}-y_{n+1}}{2}\}}(x)\tau_{y_{n+1}}\left(e^{{-}i\frac{v_{n+1}x}{2}\sigma_{3}}h(x)\right)\right](k) \text{ if $n+1<m,$}\\
    B_{2m-2}(h)\coloneqq  & \sigma_{3} G^{*}_{\omega_{m}}\left[\sigma_{3}\chi_{\{x\leq \frac{y_{m-1}-y_{m}}{2}\}}(x)\tau_{y_{m}}\left(e^{{-}i\frac{v_{m}x}{2}\sigma_{3}}h(x)\right)\right](k)
\end{align*}
to be applied on $\mathcal{S}(0)(\vec{\phi}).$

The motivation to apply the maps above to $\mathcal{S}(0)$ is the following: first, we recall that Lemma \ref{leper} gives a formula for the inverses of $\hat{G}_{\omega}$ and $\hat{F}_{\omega}$. As shown in Lemma \ref{localS0}, $\mathcal{S}(0)(\vec{\phi})$ behaves as $e^{i(\frac{v_{\ell}x}{2}+\gamma_{\ell})\sigma_{3}}\hat{G}_{\omega_{\ell}}(e^{iy_{\ell}k-i\gamma_{\ell}\sigma_{3}}\vec{\phi}_{\ell})(x-y_{\ell})$ when $x\in(\frac{y_{\ell+1}+y_{\ell}}{2},\frac{y_{\ell-1}+y_{\ell}}{2})$. So for each of this interval,  we will use the appropriate maps from $\left(B_{\ell}\right)_{\ell\in\{1,2\,...,\,2m-2\}}$ to invert $\mathcal{S}(0)$.  Putting all these maps together, one can study the inverse map of $\mathcal{S}(0).$ 
%

\begin{notation}\label{congg}
In the next paragraphs, with the function $\mathcal{S}(0)(\vec{\phi})$ defined in Definition \ref{s0def}, we define  $f(k)\cong g(k)$ if there exist constants $\beta>0$ and $C>0$ satisfying
\begin{equation*}
    \norm{f(k)-g(k)}_{L^{2}_{k}(\mathbb{R})}\leq C e^{{-}\beta \min_{\ell}(y_{\ell}-y_{\ell+1})}\left[\max_{1\leq \ell\leq m-1 }\norm{\begin{bmatrix}
        \phi_{1,\ell}(k)\\
        \phi_{2,\ell}(k)
    \end{bmatrix}}_{L^{2}_{k}(\mathbb{R})} \right].
\end{equation*}
\par Furthermore, we say that 
$f(k)\cong_{1} g(k)$ if there exists $\beta>0$ and $C>1$ 
\begin{equation*}
    \norm{F_{0}\left[f(k)-g(k)\right]}_{L^{1}_{x}(\mathbb{R})}\leq C e^{{-}\beta \min_{\ell}(y_{\ell}-y_{\ell+1})}\left[\max_{1\leq \ell\leq m-1 }\norm{\begin{bmatrix}
        \phi_{1,\ell}(k)\\
        \phi_{2,\ell}(k)
    \end{bmatrix}}_{L^{2}_{k}(\mathbb{R})} \right],
\end{equation*}
and that $f(k)\cong_{2} g(k)$ if
\begin{equation*}
    \norm{(1+\vert x\vert)F_{0}\left[f(k)-g(k)\right](x)}_{L^{1}_{x}(\mathbb{R})}\leq C e^{{-}\beta \min_{\ell}(y_{\ell}-y_{\ell+1})}\left[\max_{1\leq \ell\leq m-1 }\norm{\begin{bmatrix}
        \phi_{1,\ell}(k)\\
        \phi_{2,\ell}(k)
    \end{bmatrix}}_{L^{2}_{k}(\mathbb{R})} \right].
\end{equation*}

\end{notation}
\par With notations above, applying  the maps $B_{\ell}$ to $\mathcal{S}(0)(\vec{\phi})$, we obtain the following results. 

\begin{lemma}\label{T}
The following estimates are true for any $1<n< m-1$
\begin{align}\label{b1}
B_{1}\left(\mathcal{S}(0)(\vec{\phi})\right)(k)\cong_{2}  & e^{i\frac{k(y_{1}-y_{2})}{2}}P_{-}\left(e^{i\frac{\Diamond(y_{1}+y_{2})}{2}}
 \begin{bmatrix}
    \phi_{1}\left(\Diamond+\frac{v_{1}}{2}\right)\\
    \phi_{2}\left(\Diamond-\frac{v_{1}}{2}\right)
 \end{bmatrix}\right) (k)\nonumber  
\\ & {-}r_{\omega_{1}}({-}k)e^{{-}i\frac{k(y_{1}-y_{2})}{2}}\left[P_{+}\left(e^{i\frac{\Diamond (y_{1}+y_{2})}{2}}\begin{bmatrix}
    \phi_{1}\left(\Diamond+\frac{v_{1}}{2}\right)\\
    \phi_{2}\left(\Diamond-\frac{v_{1}}{2}\right)
 \end{bmatrix}\right)\right]({-}k),\\ \label{b2n}
  B_{2n}\left(\mathcal{S}(0)(\vec{\phi})\right)(k)\cong_{2} & e^{{-}i\frac{k(y_{n}-y_{n+1})}{2}}P_{+}\left(\begin{bmatrix}
    e^{i\frac{(y_{n}+y_{n+1})\Diamond}{2}}\phi_{1,n}(\Diamond+\frac{v_{n+1}}{2})\\ 
    e^{i\frac{(y_{n}+y_{n+1})\Diamond}{2}}\phi_{2,n}(\Diamond-\frac{v_{n+1}}{2})
    \end{bmatrix}\right)(k)\\ \nonumber
    &{-}r_{\omega_{n+1}}(k)e^{i\frac{k(y_{n}-y_{n+1})}{2}}P_{-}\left(\begin{bmatrix}
    e^{i\frac{(y_{n}+y_{n+1})\Diamond}{2}}\phi_{1,n}(\Diamond+\frac{v_{n+1}}{2})\\
    e^{i\frac{(y_{n}+y_{n+1})\Diamond}{2}}\phi_{2,n}(\Diamond-\frac{v_{n+1}}{2})
    \end{bmatrix}\right)({-}k)\\ \nonumber
    &{-}s_{\omega_{n+1}}(k)e^{{-}i\frac{k(y_{n+2}-y_{n+1})}{2}}P_{+}\left(\begin{bmatrix}
e^{i\frac{(y_{n+1}+y_{n+2})\Diamond}{2}}\phi_{1,n+1}(\Diamond+\frac{v_{n+1}}{2})\\
e^{i\frac{(y_{n+1}+y_{n+2})\Diamond}{2}}\phi_{2,n+1}(\Diamond-\frac{v_{n+1}}{2})
\end{bmatrix}\right)(k),\\ \label{b2n+1}
 B_{2n+1}\left(\mathcal{S}(0)(\vec{\phi})\right)(k)\cong_{2} & e^{{-}i\frac{k(y_{n+2}-y_{n+1})}{2}}P_{-}\left(\begin{bmatrix}
e^{i\frac{(y_{n+1}+y_{n+2})\Diamond}{2}}\phi_{1,n+1}(\Diamond+\frac{v_{n+1}}{2})\\
e^{i\frac{(y_{n+1}+y_{n+2})\Diamond}{2}}\phi_{2,n+1}(\Diamond-\frac{v_{n+1}}{2})
\end{bmatrix}
\right)(k)\\ \nonumber
    &{-}r_{\omega_{n+1}}({-}k)e^{i\frac{k(y_{n+2}-y_{n+1})}{2}}P_{+}\left(\begin{bmatrix}
e^{i\frac{(y_{n+1}+y_{n+2})\Diamond}{2}}\phi_{1,n+1}(\Diamond+\frac{v_{n+1}}{2})\\
e^{i\frac{(y_{n+1}+y_{n+2})\Diamond}{2}}\phi_{2,n+1}(\Diamond-\frac{v_{n+1}}{2})
\end{bmatrix}\right)({-}k)\\ \nonumber
    &{-}s_{\omega_{n+1}}({-}k)e^{{-}i\frac{k(y_{n}-y_{n+1})}{2}}P_{-}\left(
    \begin{bmatrix}
e^{i\frac{k(y_{n}+y_{n+1})k}{2}}\phi_{1,n}(k+\frac{v_{n+1}}{2})\\
e^{i\frac{(y_{n}+y_{n+1})k}{2}}\phi_{2,n}(k-\frac{v_{n+1}}{2})
\end{bmatrix}\right),\\ \label{b2m-2}
 B_{2m-2}\left(\mathcal{S}(0)(\vec{\phi})\right)(k)\cong_{2} & e^{{-}ik\frac{(y_{m-1}-y_{m})}{2}}P_{+}\left(\begin{bmatrix}
      e^{i\frac{(y_{m-1}+y_{m})\Diamond}{2}}\phi_{1,m-1}(\Diamond+\frac{v_{m}}{2})\\
        e^{i\frac{(y_{m-1}+y_{m})\Diamond}{2}}\phi_{2,m-1}(\Diamond-\frac{v_{m}}{2})
  \end{bmatrix}\right)(k)\\ \nonumber
  &{-}r_{\omega_{m}}(k)e^{ik\frac{(y_{m-1}-y_{m})}{2}}\left[P_{-}\left(\begin{bmatrix}
      e^{i\frac{(y_{m-1}+y_{m})\Diamond}{2}}\phi_{1,m-1}(\Diamond+\frac{v_{m}}{2})\\
        e^{i\frac{(y_{m-1}+y_{m})\Diamond}{2}}\phi_{2,m-1}(\Diamond-\frac{v_{m}}{2})
  \end{bmatrix}\right)\right]({-}k).
\end{align}
\end{lemma}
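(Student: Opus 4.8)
The plan is to compute each $B_\ell(\mathcal{S}(0)(\vec\phi))$ directly by substituting the explicit formula \eqref{eq:Sphi} for $\mathcal{S}(0)(\vec\phi)$ into the definitions of the maps $B_\ell$ and then systematically extracting which pieces survive modulo $\cong_2$. First I would note that each $B_\ell$ involves localizing $\mathcal{S}(0)(\vec\phi)$ to one of the intervals $P_{n+1}(0)$ (after the translation $\tau_{y_{n+1}}$ and the Galilei phase $e^{-iv_{n+1}x\sigma_3/2}$), and then applying $\sigma_3 F^*_{\omega_{n+1}}\sigma_3$ or $\sigma_3 G^*_{\omega_{n+1}}\sigma_3$. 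On that interval Lemma \ref{localS0} tells us that $\mathcal{S}(0)(\vec\phi)$ agrees, up to an error that is $\cong$-negligible, with the single term $\chi_{P_{n+1}(0)}(x)e^{i(\frac{v_{n+1}x}{2}+\gamma_{n+1})\sigma_3}\hat{G}_{\omega_{n+1}}(e^{-i\gamma_{n+1}\sigma_3}e^{iy_{n+1}k}[\phi_{1,n+1}(k+\tfrac{v_{n+1}}2),\phi_{2,n+1}(k-\tfrac{v_{n+1}}2)]^t)(x-y_{n+1})$. So after the translation and phase removal, we are applying (say) $\sigma_3 F^*_{\omega_{n+1}}\sigma_3$ to a cutoff of $\hat{G}_{\omega_{n+1}}$ of a shifted profile; I would then invoke Remark \ref{re-infty} (i.e. \eqref{forp1}) to rewrite $\hat{G}_{\omega_{n+1}}$ in terms of $\hat{F}_{\omega_{n+1}}$ where that is convenient, and use Lemma \ref{leper} to recognize that $\sigma_3 F^*_\omega \sigma_3 \hat{G}_\omega = \mathrm{Id}$ on the frequency side when the cutoff is removed.

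Next I would handle the three structural features that generate the right-hand sides of \eqref{b1}--\eqref{b2m-2}. The first feature: replacing the sharp cutoff $\chi_{P_{n+1}(0)}$ by $1$ produces a boundary error. Writing $\chi_{P_{n+1}(0)} = 1 - \chi_{(-\infty,\,\cdot]} - \chi_{[\cdot,\,+\infty)}$, the two tail pieces, after applying $G^*$ or $F^*$, are handled by Remark \ref{f*r}: they become Fourier half-line transforms with reflection/transmission coefficients $r(\pm k)$ and $s(\pm k)$ attached, together with an exponentially small error of size $e^{-\beta|y_n - y_{n+1}|}$ which is absorbed into $\cong_2$. These are precisely the $s_{\omega_{n+1}}$-terms that appear in \eqref{b2n} and \eqref{b2n+1} (coming from the overlap with the \emph{adjacent} potential's window) — so one has to be careful that the tail of the window of $V_{n+1}$ meets the window of $V_n$ or $V_{n+2}$, and on that overlap Lemma \ref{localS0} applied to the \emph{neighbouring} index, plus Remark \ref{transition} (the $\hat{G}\leftrightarrow\hat{F}$ matching identity from condition b)), lets us rewrite the profile $\phi_{\cdot,n}$ or $\phi_{\cdot,n+1}$ consistently. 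The second feature: $F^*_\omega$ and $G^*_\omega$ both split an $L^2_x$ function into an "incoming" and an "outgoing/reflected" part, and by the footnote discussion following \eqref{eq:FdefPm} these two parts are detected by $P_-$ and $P_+$ respectively (up to $e^{-\beta y}$ errors, via Lemma \ref{+-interact}); this is what produces the pairing of a $P_+$-term with a $P_-$-term carrying the reflection coefficient $r_{\omega}(\pm k)$ in each of \eqref{b1}--\eqref{b2m-2}. The third feature is simply bookkeeping of the phase factors $e^{\pm i k(y_n - y_{n+1})/2}$, $e^{i(y_n+y_{n+1})k/2}$ and the sign of the spectral parameter $\pm k$, which I would track using Lemma \ref{trivial} (the commutation of $P_\pm$ with the translation/modulation $\tau_{\sigma_3 v}$).

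Concretely, for \eqref{b1} I would expand $B_1(\mathcal{S}(0)(\vec\phi)) = \sigma_3 F^*_{\omega_1}[\sigma_3 \chi_{\{x\ge (y_2-y_1)/2\}}\tau_{y_1}(e^{-iv_1 x\sigma_3/2}\mathcal{S}(0)(\vec\phi))]$, use Lemma \ref{localS0} on $P_1(0)$ to replace $\mathcal{S}(0)(\vec\phi)$ by $\hat G_{\omega_1}$ of the index-$1$ profile, then use \eqref{forp1} to pass to $\hat F_{\omega_1}$, apply the half-line splitting (Remark \ref{f*r}) of $F^*_{\omega_1}$ acting on $\chi_{\{x\ge (y_2-y_1)/2\}}(\cdot)$, which yields a plain $\int_0^\infty e^{-ikx}(\cdot)$ piece plus $r(-k)\int_0^\infty e^{ikx}(\cdot)$ piece; these two pieces are exactly $e^{ik(y_1-y_2)/2}$ times a $P_-$-projection and $r_{\omega_1}(-k)e^{-ik(y_1-y_2)/2}$ times a $P_+$-projection of the shifted profile, by the Fourier description \eqref{eq:FdefPp}--\eqref{eq:FdefPm} of $P_\pm$ and Lemma \ref{+-interact} to discard the cross terms. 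The cases \eqref{b2n}, \eqref{b2n+1} are the same computation but with \emph{two} boundary contributions (left and right endpoints of $P_{n+1}(0)$), the right endpoint overlapping $P_n(0)$ — generating the extra $s_{\omega_{n+1}}$-term — and \eqref{b2m-2} is the mirror image of \eqref{b1}. The main obstacle I anticipate is not any single estimate but the \emph{consistent matching at the overlapping windows}: on $P_n(0)\cap P_{n+1}(0)$ one must simultaneously use Lemma \ref{localS0} for both indices and reconcile the two representations via Remark \ref{transition}/condition b), making sure the profile indices $\phi_{\cdot,n}$ versus $\phi_{\cdot,n+1}$ and the Galilei shifts $v_n$ versus $v_{n+1}$ are correctly bookkept; a sloppy treatment here would give the wrong reflection coefficient or the wrong shift in $\phi$. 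All exponential-in-$(y_n-y_{n+1})$ errors from Remark \ref{f*r}, Lemma \ref{+-interact}, Lemma \ref{localS0}, and Lemma \ref{appFourier} collect into the $\cong_2$ relation since, by Lemma \ref{appFourier} applied with one derivative, multiplication by $(1+|x|)$ costs at most a polynomial factor which is still dominated by $e^{-\beta\min_\ell(y_\ell - y_{\ell+1})/2}$.
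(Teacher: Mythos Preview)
Your approach is essentially the paper's: localize via Lemma \ref{localS0}, invoke the inversion $\sigma_3 F^*_\omega\sigma_3\hat G_\omega=\mathrm{Id}$ (resp.\ $\sigma_3 G^*_\omega\sigma_3\hat F_\omega=\mathrm{Id}$) from Lemma \ref{leper} to write the cutoff piece as the full profile minus the tail(s), replace distorted by flat transforms on the tails via the asymptotics and Lemma \ref{appFourier}, and identify the resulting half-line Fourier integrals with $P_\pm$ directly from \eqref{eq:FdefPp}--\eqref{eq:FdefPm}. One correction to your concrete treatment of \eqref{b1}: Remark \ref{f*r} is valid only on regions \emph{far} from the potential, so it must be applied to the complementary tail $\{x<(y_2-y_1)/2\}$ (after writing $\chi_{\{x\ge(y_2-y_1)/2\}}=1-\chi_{\{x<(y_2-y_1)/2\}}$ and handling the ``$1$'' by Lemma \ref{leper}), not to the original cutoff which contains the center of $V_1$; also, neither \eqref{forp1} nor Lemma \ref{+-interact} is needed at this stage---the half-line integrals are exactly $P_\pm$ after translation, with no cross terms to discard, and Lemma \ref{+-interact} enters only later in Theorem \ref{TT} and Section \ref{Asc}.
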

 Lemma \ref{T} will be crucial for the proof of inequalities \eqref{Scoerc} and \eqref{derivdecay} of Theorem \ref{TT}.

\begin{corollary}\label{coo}
 If all the hypotheses of Theorem \ref{tcont} are true and $\mathcal{T}(0)(\vec{\phi})\in H^{2\ell}_{x}(\mathbb{R})$ for an $\ell\geq 1,$ then there exists $C_{\ell}\geq 1$ satisfying
 \begin{equation*}
    \max_{1 \leq n\leq m} \norm{\begin{bmatrix}
(k+v_{n})^{2\ell}\phi_{1,n}(k)\\
(k-v_{n})^{2\ell}\phi_{2,n}(k)
\end{bmatrix}}_{L^{2}_{k}(\mathbb{R})}\leq C_{\ell}\norm{\mathcal{S}(0)(\vec{\phi})}_{H^{2\ell}_{x}(\mathbb{R})}\leq 2C_{\ell}\norm{\mathcal{T}(0)(\vec{\phi})}_{H^{2\ell}_{x}(\mathbb{R})}
 \end{equation*}
where $\phi_{1,n},\,\phi_{2,n}$ are defined in Definition \ref{s0def}.
\end{corollary}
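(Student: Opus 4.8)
The plan is to prove the two inequalities in turn, reducing the first to the single-potential Sobolev coercivity of $\hat{G}_{\omega_n}$ via the localization of $\mathcal{S}(\vec{\phi})(0,\cdot)$ near each centre $y_n$. First I would upgrade Lemma~\ref{localS0} from an $L^2$ to an $H^{2\ell}$ statement: using the asymptotics \eqref{asy1}, \eqref{asy2}, \eqref{asy3}, \eqref{asy4} together with the pseudodifferential comparison of $\hat{F}_{\omega_j},\hat{G}_{\omega_j}$ with the flat transform $F_0$ from Lemma~\ref{appFourier} (taken with $m=2\ell$ derivatives, which costs one power $\langle k\rangle^{2\ell-1}$), one still has, on $P_n(0)$,
\[
\chi_{P_n(0)}\mathcal{S}(\vec{\phi})(0,x)=\chi_{P_n(0)}\,e^{i(\frac{v_n x}{2}+\gamma_n)\sigma_3}\hat{G}_{\omega_n}\big(e^{-i\gamma_n\sigma_3}e^{iy_n k}\tau_{\sigma_3 v_n/2}(\vec{\phi}_n)\big)(x-y_n)+E_n(x),
\]
with $\norm{E_n}_{H^{2\ell}_x}\le C e^{-\beta\min_{\ell}(y_\ell-y_{\ell+1})}\max_{\ell}\norm{\langle k\rangle^{2\ell-1}\vec{\phi}_\ell}_{L^2_k}$; here Remark~\ref{transition} and Lemma~\ref{galil} are what let the $j\ne n$ pieces collapse to flat waves matching either $\phi_j$ or $\phi_{j-1}$, which are then absorbed into $E_n$. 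Since $\chi_{P_n(0)}$ is smooth with bounded derivatives, $\norm{\chi_{P_n(0)}\partial_x^{2\ell}\mathcal{S}(\vec{\phi})(0,\cdot)}_{L^2}\le C\norm{\mathcal{S}(0)(\vec{\phi})}_{H^{2\ell}}$ up to $H^{2\ell-1}$ commutator errors, which are themselves $\lesssim\norm{\mathcal{S}(0)(\vec{\phi})}_{H^{2\ell}}$.

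Next I would apply Lemma~\ref{sobolevdecayofG} to $\hat{G}_{\omega_n}$ with $v=v_n/2$: together with the coercivity \eqref{coerGG} it bounds $\norm{\partial_x^{2\ell}[\sigma_3\hat{G}_{\omega_n}(\vec{u})]}_{L^2}$ from below by $\norm{[(k+\tfrac{v_n}{2})^{2\ell}u_1,(k-\tfrac{v_n}{2})^{2\ell}u_2]}_{L^2_k}$, minus the $L^2$ norm of $\hat{G}_{\omega_n}(\vec{u})$ (itself $\lesssim\norm{\vec{\phi}_n}_{L^2}$), minus intermediate weighted norms, minus an $\epsilon$-fraction of the top weighted norm. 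Taking $\vec{u}=e^{-i\gamma_n\sigma_3}e^{iy_n k}\tau_{\sigma_3 v_n/2}(\vec{\phi}_n)$, undoing the modulation and the shift $\tau_{\sigma_3 v_n/2}$, and expanding $(k\pm v_n)^{2\ell}$ around $(k\pm\tfrac{v_n}{2})^{2\ell}$ (a polynomial correction of degree $<2\ell$ in $k$), I get, after combining with the previous paragraph and summing over $n$, that $\max_n\norm{[(k+v_n)^{2\ell}\phi_{1,n},(k-v_n)^{2\ell}\phi_{2,n}]}_{L^2_k}$ is bounded by $C\norm{\mathcal{S}(0)(\vec{\phi})}_{H^{2\ell}}$ plus all lower-order weighted norms $\norm{(k\pm\tfrac{v_n}{2})^{2j}\vec{\phi}_n}_{L^2}$, $j<\ell$, plus the exponentially small remainders from $E_n$. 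Both kinds of remainder are then removed by the absorption device already used inside the proof of Lemma~\ref{sobolevdecayofG}: for $j<\ell$, $\norm{(k\pm\tfrac{v_n}{2})^{2j}\vec{\phi}_n}_{L^2}\le\epsilon\norm{(k\pm\tfrac{v_n}{2})^{2\ell}\vec{\phi}_n}_{L^2}+C_\epsilon\norm{\vec{\phi}_n}_{L^2}$ by Young's inequality, and $\norm{\vec{\phi}_n}_{L^2}\lesssim\norm{\mathcal{S}(0)(\vec{\phi})}_{L^2}\le\norm{\mathcal{S}(0)(\vec{\phi})}_{H^{2\ell}}$ by the $L^2$ coercivity \eqref{Scoerc} of Theorem~\ref{TT}; choosing $\epsilon$ small and $\min_{\ell}(y_\ell-y_{\ell+1})$ large, these are absorbed into the left-hand side. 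This gives the first inequality, with a constant $C_\ell$ allowed to depend on $m,\ell$ and on $v_1,\dots,v_m$, exactly as $C_v$ in \eqref{phi2kkdecay}.

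For the second inequality $\norm{\mathcal{S}(0)(\vec{\phi})}_{H^{2\ell}}\le 2\norm{\mathcal{T}(0)(\vec{\phi})}_{H^{2\ell}}$, I would compare the approximate and exact flows at $t=0$. From the fixed-point construction in Theorem~\ref{tcont}, $\mathcal{T}(\vec{\phi})-\mathcal{S}(\vec{\phi})$ solves by Duhamel the inhomogeneous equation with forcing $(i\partial_t+\sigma_3\partial_x^2+\sum_\ell V^{\sigma_\ell}_\ell(t))\mathcal{S}(\vec{\phi})$; running the computation of Remark~\ref{rem:moti} in $H^{2\ell}$ rather than $H^1$ — legitimate because the $V_\ell$ and their derivatives decay exponentially and $\mathcal{S}(\vec{\phi})$ has the $H^{2\ell}$ regularity just quantified — gives $\norm{\mathcal{T}(0)(\vec{\phi})-\mathcal{S}(0)(\vec{\phi})}_{H^{2\ell}}\le C e^{-\beta\min_{\ell}(y_\ell-y_{\ell+1})}\norm{\mathcal{S}(0)(\vec{\phi})}_{H^{2\ell}}$, where I used the first inequality and interpolation to re-express the $\langle k\rangle^{2\ell-1}$ weights. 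For $\min_{\ell}(y_\ell-y_{\ell+1})$ large the triangle inequality then yields the claim. The apparent circularity — one needs $\norm{\mathcal{S}(0)(\vec{\phi})}_{H^{2\ell}}<\infty$ for that last estimate to be meaningful — is handled by first carrying out the whole argument for the frequency-truncated data $\vec{\phi}\,\mathrm{1}_{\{|k|\le N\}}$, for which all norms are trivially finite, and then letting $N\to\infty$, noting $\norm{\mathcal{T}(0)(\vec{\phi}\,\mathrm{1}_{\{|k|\le N\}})}_{H^{2\ell}}\to\norm{\mathcal{T}(0)(\vec{\phi})}_{H^{2\ell}}$.

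The main obstacle is the first inequality, specifically making the higher-order Sobolev coercivity of $\hat{G}_{\omega_n}$ (Lemma~\ref{sobolevdecayofG}) interact cleanly with both the frequency shift $\tau_{\sigma_3 v_n/2}$ and the cutoff $\chi_{P_n(0)}$, and then organizing the absorption so that no remainder is left uncontrolled. The delicate point is that near $P_n(0)$ the transform $\hat{G}_{\omega_n}$ is tied to both neighbours — it behaves like $F_0$ acting on $\phi_n$ on one side and, via Remark~\ref{transition}, like $\hat{F}_{\omega_n}$ acting on $\phi_{n-1}$ on the other — so the error terms genuinely involve $\vec{\phi}_{n-1}$ as well, and one must check that these are again dominated by $\norm{\mathcal{S}(0)(\vec{\phi})}_{H^{2\ell}}$, which is once more supplied by \eqref{Scoerc}.
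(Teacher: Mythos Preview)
Your overall strategy — localize $\mathcal{S}(0)(\vec\phi)$ on each $P_n(0)$ via Lemma~\ref{appFourier}, invoke Lemma~\ref{sobolevdecayofG} for the Sobolev coercivity of $\hat G_{\omega_n}$, and absorb lower-order remainders using Young's inequality together with \eqref{Scoerc} — matches the paper's. The second inequality is also handled the same way, via the $H^{2\ell}$ version of \eqref{h1linft}.

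There is, however, a genuine gap in how you combine the first two steps. You apply Lemma~\ref{sobolevdecayofG} in its coercive form (Corollary~\ref{h2Gesti}), which lower-bounds the \emph{full} norm $\norm{\partial_x^{2\ell}\hat G_{\omega_n}(\vec u)}_{L^2}$, yet your localization only controls the \emph{cut-off} piece $\norm{\chi_{P_n(0)}\partial_x^{2\ell}\hat G_{\omega_n}(\vec u)}_{L^2}$ by $\norm{\mathcal S(0)(\vec\phi)}_{H^{2\ell}}$. The difference $(1-\chi_{P_n(0)})\partial_x^{2\ell}\hat G_{\omega_n}(\vec u)$ is, by the asymptotics \eqref{asy1}--\eqref{asy4}, essentially $\partial_x^{2\ell}F_0$ of $\vec\phi_n$ or $\vec\phi_{n-1}$, whose $L^2$ norm is $\norm{k^{2\ell}\vec\phi_n}_{L^2}$ — the same order as the quantity you are trying to bound. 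Your proposed remedy, ``once more supplied by \eqref{Scoerc}'', does not work: \eqref{Scoerc} is only an $L^2$ estimate and gives nothing for $\norm{k^{2\ell}\vec\phi_n}_{L^2}$. Taking the maximum over $n$ does not help either, since the coupling constant in front of $\norm{k^{2\ell}\vec\phi_{n\pm1}}_{L^2}$ is of order one, not small.

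The paper avoids this by reading Lemma~\ref{sobolevdecayofG} as the approximate \emph{identity} it actually is, $\partial_x^{2\ell}\sigma_3\hat G_{\omega_n}(\vec u)\approx(-1)^\ell\hat G_{\omega_n}\big((k\pm\tfrac{v_n}{2})^{2\ell}\vec u\big)$ in $L^2$, then localizing that identity with $\chi_{P_n(0)}$. What results is precisely the object on which the operators $B_j$ of Lemma~\ref{T} act, now with the weighted profiles $(k\pm\tfrac{v_n}{2})^{2\ell}\vec\phi_n$ in place of $\vec\phi_n$. The paper then \emph{re-runs} the proof of Lemma~\ref{T} and of \eqref{Scoerc} (i.e.\ applies $F^*_{\omega_\ell},G^*_{\omega_\ell}$ and Lemma~\ref{Tp}) with these weighted inputs; this is where the interaction between different $n$'s is actually decoupled, and is the step your sketch is missing. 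Once you make that substitution, the rest of your absorption argument goes through unchanged.
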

\begin{proof}[Proof of Corollary \ref{coo}.]\label{estimk2psi}
 Using Lemma \ref{appFourier} for $j\in\{0,1,2\}$ and Young's inequality, as in Lemma \ref{localS0}, we can verify that
 \begin{multline*}
     \chi_{\left\{\frac{y_{n+2}-y_{n+1}}{2}\leq x\leq \frac{y_{n}-y_{n+1}}{2}\right\}}\frac{d^{2}}{dx^{2}}e^{{-}\frac{v_{n+1}x\sigma_{3}}{2}}\mathcal{S}(\vec{\phi})(0,x+y_{n+1})\\=\chi_{\left\{\frac{y_{n+2}-y_{n+1}}{2}\leq x\leq \frac{y_{n}-y_{n+1}}{2}\right\}}(x)\frac{d^{2}}{dx^{2}}\hat{G}_{\omega_{n+1}}\left(e^{iy_{\ell}k}\begin{bmatrix}
        \phi_{1,n+1}\left(k+\frac{v_{n+1}}{2}\right)\\
\phi_{2,n+1}\left(k-\frac{v_{n+1}}{2}\right)
     \end{bmatrix}\right)(x)\\+ O\left(e^{{-}\beta \min_{\ell}(y_{\ell}-y_{\ell+1})}\max_{\ell}\norm{(1+\vert k\vert)\begin{bmatrix}
      \phi_{1,\ell}(k+\frac{v_{\ell}}{2})\\
\phi_{2,\ell}\left(k-\frac{v_{\ell}}{2}\right)
     \end{bmatrix}}_{L^{2}}
     \right).
 \end{multline*}
 Therefore, using Lemma \ref{sobolevdecayofG}, and Lemma \ref{T}, we can conclude that 
 \begin{multline*}
     \chi_{\left\{\frac{y_{n+2}-y_{n+1}}{2}\leq x\leq \frac{y_{n}-y_{n+1}}{2}\right\}}\frac{d^{2}}{dx^{2}}e^{{-}\frac{v_{n+1}x\sigma_{3}}{2}}\mathcal{S}(\vec{\phi})(0,x+y_{n+1})\\={-}\chi_{\left\{\frac{y_{n+2}-y_{n+1}}{2}\leq x\leq \frac{y_{n}-y_{n+1}}{2}\right\}}(x)\hat{G}_{\omega_{n+1}}\left(e^{iy_{\ell}k}\begin{bmatrix}
        \left(k+\frac{v_{n+1}}{2}\right)^{2}\phi_{1,n+1}\left(k+\frac{v_{n+1}}{2}\right)\\
\left(k-\frac{v_{n+1}}{2}\right)^{2}\phi_{2,n+1}\left(k-\frac{v_{n+1}}{2}\right)
     \end{bmatrix}\right)(x)\\
      +O\left(\epsilon \norm{ \begin{bmatrix}
        \left(k+\frac{v_{n+1}}{2}\right)^{2}\phi_{1,n+1}\left(k+\frac{v_{n+1}}{2}\right)\\
\left(k-\frac{v_{n+1}}{2}\right)^{2}\phi_{2,n+1}\left(k-\frac{v_{n+1}}{2}\right)
     \end{bmatrix}}_{L^{2}_{k}(\mathbb{R})}+C_{\epsilon,v}\norm{ \mathcal{S}(0)(\vec{\phi})}_{L^{2}_{x}(\mathbb{R})}\right)
     \\+ O\left(e^{{-}\beta \min_{\ell}(y_{\ell}-y_{\ell+1})}\max_{\ell}\norm{(1+\vert k\vert)\begin{bmatrix}
      \phi_{1,\ell}(k+\frac{v_{\ell}}{2})\\
\phi_{2,\ell}\left(k-\frac{v_{\ell}}{2}\right)
     \end{bmatrix}}_{L^{2}_{k}(\mathbb{R})}
     \right).
 \end{multline*}
 
Since all operators $F^{*}_{\omega_{\ell}}$ and $G^{*}_{\omega_{\ell}}$ are bounded on $L^{2}(\mathbb{R},\mathbb{C}^{2}),$ we can verify choosing an $\epsilon>0$ small and repeating the proofs of Lemma \ref{T} and of estimate \eqref{Scoerc} of Theorem \ref{TT} that   
\begin{equation*}
    \max_{n}\norm{\begin{bmatrix}
         \left(k+\frac{v_{n+1}}{2}\right)^{2}\phi_{1,n+1}\left(k+\frac{v_{n+1}}{2}\right)\\
\left(k-\frac{v_{n+1}}{2}\right)^{2}\phi_{2,n+1}\left(k-\frac{v_{n+1}}{2}\right)
    \end{bmatrix}}_{L^{2}_{k}(\mathbb{R})}\leq C_{v}\norm{\mathcal{S}(\vec{\phi})(0,x)}_{H^{2}_{x}(\mathbb{R})},
\end{equation*}
for a positive parameter $C_{v}>1$ depending only on $v_{1},\,v_{2},\,...,\,v_{m}.$
\end{proof}
\begin{remark}\label{polyweightre}
In particular, using the asymptotic behavior \eqref{asy1}, \eqref{asy2}, and Remark \ref{weightremark}, we can verify that all the remainders $r_{1}(k),\, r_{2}(k),\,r_{3}(k)$ and $r_{4}(k)$ from the estimates \eqref{b1}, \eqref{b2est1}, \eqref{b2n+1} and \eqref{b2m-2} satisfy for a constant $\beta>0$
\begin{equation*}
    \max_{n\in\{0,1,2\}}\max_{\ell\in\{1,2,3,4\}}\norm{\frac{d^{n}r_{\ell}(k)}{dk^{n}}}_{L^{2}_{k}(\mathbb{R})}\leq C e^{{-}\beta \min_{\ell}(y_{\ell}-y_{\ell+1})}\left[\max_{1\leq \ell\leq m-1 }\norm{\begin{bmatrix}
        \phi_{1,\ell}(k)\\
        \phi_{2,\ell}(k)
    \end{bmatrix}}_{L^{2}_{k}(\mathbb{R})} \right].
\end{equation*}
Moreover, it is not difficult to verify using Remark \ref{weightremark} that for any $j\in\{1,\,2,\,...,\,2m-2\}$ that there exists $K_{m}>1$ satisfying
\begin{multline*}
   \max_{j}\norm{\frac{d^{2}B_{j}(h)(k)}{dk^{2}}}_{L^{2}_{k}(\mathbb{R})}\\
    \leq K_{m}\max_{n}\norm{e^{{-}i\frac{v_{\ell}(x+y_{n+1})\sigma_{3}}{2}}\chi_{\{\frac{y_{n+2}-y_{n+1}}{2}\leq x\leq \frac{y_{n}-y_{n+1}}{2}\}}(x)x^{2}h(x+y_{n+1})}_{L^{2}_{x}(\mathbb{R})}.
\end{multline*}
\end{remark}
\begin{remark}\label{Tremark}Consider the following functions 
\begin{align*}
    \psi_{1}(k)=&P_{-}\left(e^{i\frac{k(y_{1}+y_{2})}{2}}
 \begin{bmatrix}
    \phi_{1}\left(k+\frac{v_{1}}{2}\right)\\
    \phi_{2}\left(k-\frac{v_{1}}{2}\right)
 \end{bmatrix}\right),\\
 \psi_{2n}(k)=&P_{+}\left(\begin{bmatrix}
e^{i\frac{(y_{n}+y_{n+1})k}{2}}\phi_{1,n}(k+\frac{v_{n+1}}{2})\\
e^{i\frac{(y_{n}+y_{n+1})k}{2}}\phi_{2,n}(k-\frac{v_{n+1}}{2})
\end{bmatrix}\right),\\
\psi_{2n+1}=&P_{-}\left(\begin{bmatrix}
e^{i\frac{(y_{n+1}+y_{n+2})k}{2}}\phi_{1,n+1}(k+\frac{v_{n+1}}{2})\\
e^{i\frac{(y_{n+1}+y_{n+2})k}{2}}\phi_{2,n+1}(k-\frac{v_{n+1}}{2})
\end{bmatrix}\right),\\
\psi_{2m-2}=& P_{+}\left(\begin{bmatrix}
e^{i\frac{(y_{m-1}+y_{m})k}{2}}\phi_{1,m-1}(k+\frac{v_{m}}{2})\\
e^{i\frac{(y_{m-1}+y_{m})k}{2}}\phi_{2,m-1}(k-\frac{v_{m}}{2})
\end{bmatrix}\right).
\end{align*}
Using Definition \ref{dd} and Lemma \ref{trivial}, we can verify that
\begin{align*}
 P_{-}\left(e^{i\frac{(y_{\ell}+y_{\ell+1})k}{2}}\begin{bmatrix}
     \phi_{1,\ell}\left(k+\frac{v_{n}}{2}\right)\\
     \phi_{2,\ell}\left(k-\frac{v_{n}}{2}\right)
 \end{bmatrix}\right)= & P_{-}\left(\begin{bmatrix}
e^{i\frac{(y_{\ell}+y_{\ell+1})(v_{\ell}-v_{n})}{4}}\left[e^{i\frac{(y_{\ell}+y_{\ell+1})\Diamond}{2}}\phi_{1,\ell}\left(\Diamond+\frac{v_{\ell}}{2}\right)\right]\left(k+\frac{v_{n}}{2}-\frac{v_{\ell}}{2}\right)\\
e^{{-}i\frac{(y_{\ell}+y_{\ell+1})(v_{\ell}-v_{n})}{4}} \left[e^{i\frac{(y_{\ell}+y_{\ell+1})\Diamond}{2}}\phi_{2,\ell}\left(\Diamond-\frac{v_{\ell}}{2}\right)\right]\left(k+\frac{v_{\ell}}{2}-\frac{v_{n}}{2}\right)
 \end{bmatrix}\right)\\
 = & e^{i\sigma_{3}\frac{(y_{\ell}+y_{\ell+1})(v_{\ell}-v_{n})}{4}} \tau_{\sigma_{3}(\frac{v_{n}-v_{\ell}}{2})}P_{-}\left(e^{i\frac{(y_{\ell}+y_{\ell+1})\Diamond}{2}}
     \begin{bmatrix}
     \phi_{1,\ell}\left(\Diamond+\frac{v_{\ell}}{2}\right)\\
     \phi_{2,\ell}\left(\Diamond-\frac{v_{\ell}}{2}\right)
 \end{bmatrix}
 \right)(k)\\
 =&
 e^{i\sigma_{3}\frac{(y_{\ell}+y_{\ell+1})(v_{\ell}-v_{n})}{4}} \tau_{\sigma_{3}(\frac{v_{n}-v_{\ell}}{2})}\psi_{2\ell-1}(k)
\end{align*}
for any $n\in\{1,\,...,\,m\}.$
Similarly, using the definition of $\psi_{2\ell}$ above, we can verify that
\begin{equation*}
   P_{+}\left(e^{i\frac{(y_{\ell}+y_{\ell+1})k}{2}}\begin{bmatrix}
     \phi_{1,\ell}\left(k+\frac{v_{n}}{2}\right)\\
     \phi_{2,\ell}\left(k-\frac{v_{n}}{2}\right)
 \end{bmatrix}\right)=e^{i\sigma_{3}\frac{(y_{\ell}+y_{\ell+1})(v_{\ell+1}-v_{n})}{4}} \tau_{\sigma_{3}(\frac{v_{n}-v_{\ell+1}}{2})}\psi_{2\ell}(k)
\end{equation*}

\par Therefore, estimates \eqref{b1}, \eqref{b2n}, \eqref{b2n+1} and \eqref{b2m-2} from Lemma \ref{T} imply the following
equivalence relations{\footnotesize
\begin{align*}
e^{{-}ik\frac{(y_{1}-y_{2})}{2}}B_{1}\left(\mathcal{S}(0)(\vec{\phi})\right)(k)\cong_{1} & \psi_{1}(k)-r_{\omega_{1}}({-}k)e^{{-}ik(y_{1}-y_{2})}e^{\frac{i(v_{2}-v_{1})(y_{1}+y_{2})}{4}\sigma_{3}}\left[\tau_{\sigma_{3}\frac{(v_{1}-v_{2})}{2}}\psi_{2}\right]({-}k)\\
e^{ik\frac{(y_{n}-y_{n+1})}{2}}B_{2n}\left(\mathcal{S}(0)(\vec{\phi})\right)(k)\cong_{1} & \psi_{2n}(k)\\&{-}r_{\omega_{n+1}}(k)e^{ik(y_{n}-y_{n+1})}e^{i\frac{(v_{n}-v_{n+1})(y_{n}+y_{n+1})\sigma_{3}}{4}}\left[\tau_{\sigma_{3}\frac{(v_{n+1}-v_{n})}{2}}\psi_{2n-1}\right]({-}k)\\&{-}s_{\omega_{n+1}}(k)e^{i\frac{k(y_{n}-y_{n+2})}{2}}e^{i\frac{(y_{n+1}+y_{n+2})(v_{n+2}-v_{n+1})}{4}\sigma_{3}}\left[\tau_{\sigma_{3}\frac{(v_{n+1}-v_{n+2})}{2}}\psi_{2n+2}\right](k),\\
e^{ik\frac{(y_{n+2}-y_{n+1})}{2}}B_{2n+1}\left(\mathcal{S}(0)(\vec{\phi})\right)(k)\cong_{1} & \psi_{2n+1}(k)\\
{-}& r_{\omega_{n+1}}({-}k)e^{ik(y_{n+2}-y_{n+1})}e^{i\frac{(v_{n+2}-v_{n+1})(y_{n+1}+y_{n+2})\sigma_{3}}{4}}\left[\tau_{\sigma_{3}\frac{(v_{n+1}-v_{n+2})}{2}}\psi_{2n+2}\right]({-}k)\\
&{-}s_{\omega_{n+1}}({-}k)e^{i\frac{k(y_{n+2}-y_{n})}{2}}e^{i\frac{(v_{n}-v_{n+1})(y_{n}+y_{n+1})\sigma_{3}}{4}}\left[\tau_{\sigma_{3}\frac{(v_{n+1}-v_{n})}{2}}\psi_{2n-1}\right](k),\\
e^{ik\frac{(y_{m-1}-y_{m})}{2}}B_{2m-2}\left(\mathcal{S}(0)(\vec{\phi})\right)(k)\cong_{1} & \psi_{2m-2}(k)\\&{-}r_{\omega_{m}}(k)e^{ik(y_{m-1}-y_{m})}e^{i\frac{(v_{m-1}-v_{m})(y_{m-1}+y_{m})\sigma_{3}}{4}}\left[\tau_{\sigma_{3}\frac{(v_{m}-v_{m-1})}{2}}\psi_{2m-3}\right]({-}k),
\end{align*}}

\end{remark}

\begin{proof}[Proof of Lemma \ref{T}.]
\textbf{Step 1.(Proof of estimates of \eqref{b1}, \eqref{b2m-2}.)} First, using the asymptotics \eqref{asy3},  \eqref{asy4}, Lemmas \ref{appFourier} and \ref{LB3} of Appendix  \ref{sec:appb},  we can verify that
\begin{align*}
    B_{1}\left(\mathcal{S}(0)(\vec{\phi})\right)(k)\cong_{2} & \sigma_{3} F^{*}_{\omega_{1}}\left[ \sigma_{3} \chi_{\{x\geq \frac{y_{2}-y_{1}}{2}\}}(x)\hat{G}_{\omega_{1}}\left(\begin{bmatrix}
e^{iy_{1}\Diamond}\phi_{1}\left(\Diamond+\frac{v_{1}}{2}\right)\\
        e^{iy_{1}\Diamond}\phi_{2}\left(\Diamond-\frac{v_{1}}{2}\right)
    \end{bmatrix}\right)(x)\right](k).
\end{align*}
Consequently, using  Lemma \ref{leper}, we deduce first that
\begin{equation*}
    B_{1}\left(\mathcal{S}(0)(\vec{\phi})\right)(k)\cong_{2} \begin{bmatrix}
        e^{iy_{1}k}\phi_{1}\left(k+\frac{v_{1}}{2}\right)\\
e^{iy_{1}k}\phi_{2}\left(k-\frac{v_{1}}{2}\right)
    \end{bmatrix}
    -\sigma_{3} F^{*}_{\omega_{1}}\left[ \sigma_{3} \chi_{\{x< \frac{y_{2}-y_{1}}{2}\}}(x)\hat{G}_{\omega_{1}}\left(\begin{bmatrix}
        e^{iy_{1}\Diamond}\phi_{1}\left(\Diamond+\frac{v_{1}}{2}\right)\\
e^{iy_{1}\Diamond}\phi_{2}\left(\Diamond-\frac{v_{1}}{2}\right)
    \end{bmatrix}(x)\right)\right](k),
\end{equation*}
and so
\begin{align*}
    B_{1}\left(\mathcal{S}(0)(\vec{\phi})\right)(k)\cong_{2}& \begin{bmatrix}
    e^{iy_{1}k}\phi_{1}\left(k+\frac{v_{1}}{2}\right)\\
         e^{iy_{1}k}\phi_{2}\left(k-\frac{v_{1}}{2}\right)
    \end{bmatrix}
    -\sigma_{3} F^{*}_{0}\left[ \sigma_{3} \chi_{\{x< \frac{y_{2}-y_{1}}{2}\}}(x)F_{0}\left(\begin{bmatrix}
        e^{iy_{1}\Diamond}\phi_{1}\left(\Diamond+\frac{v_{1}}{2}\right)\\
        e^{iy_{1}\Diamond}\phi_{2}\left(\Diamond-\frac{v_{1}}{2}\right)
    \end{bmatrix}\right)(x)\right](k)\\
   & -\sigma_{3} r_{1}({-}k)F^*_{0}\left[\sigma_{3} \chi_{\{x< \frac{y_{2}-y_{1}}{2}\}}(x)F_{0}\left(\begin{bmatrix}
        e^{iy_{1}\Diamond}\phi_{1}\left(\Diamond+\frac{v_{1}}{2}\right)\\
        e^{iy_{1}\Diamond}\phi_{2}\left(\Diamond-\frac{v_{1}}{2}\right)
    \end{bmatrix}\right)(x)\right ](-k),   
\end{align*}
see the asymptotics from \eqref{asy1} and \eqref{asy4}, and Lemma \ref{appFourier}.
In conclusion, we obtain that
\begin{multline}\label{eqinv1}
B_{1}\left(\mathcal{S}(0)(\vec{\phi})\right)(k) \cong_{2} \sigma_{3}F^{*}_{0}\left[\sigma_{3}\chi_{\{x\geq \frac{y_{2}-y_{1}}{2}\}}(x)F_{0}\left(\begin{bmatrix}
    e^{iy_{1}\Diamond}\phi_{1}\left(\Diamond+\frac{v_{1}}{2}\right)\\
        e^{iy_{1}\Diamond}\phi_{2}\left(\Diamond-\frac{v_{1}}{2}\right)
    \end{bmatrix}\right)(x)\right](k)
    \\{-}\sigma_{3} r_{1}({-}k)F^*_{0}\left[\sigma_{3} \chi_{\{x< \frac{y_{2}-y_{1}}{2}\}}(x)F_{0}\left(\begin{bmatrix}
        e^{iy_{1}\Diamond}\phi_{1}\left(\Diamond+\frac{v_{1}}{2}\right)\\
        e^{iy_{1}\Diamond}\phi_{2}\left(\Diamond-\frac{v_{1}}{2}\right)(x)
    \end{bmatrix}\right)(x)\right](-k).
\end{multline}
Note that explicitly, the first term on the right-hand side above can be written as
\begin{align}
&\sigma_{3}F^{*}_{0}\left[\sigma_{3}\chi_{\{x\geq \frac{y_{2}-y_{1}}{2}\}}(x)F_{0}\left(\begin{bmatrix}
e^{iy_{1}\Diamond}\phi_{1}\left(\Diamond+\frac{v_{1}}{2}\right)\\
    e^{iy_{1}\Diamond}\phi_{2}\left(\Diamond-\frac{v_{1}}{2}\right)
    \end{bmatrix}\right)(x)\right](k)\\
    &=\begin{bmatrix}
  \frac{1}{2\pi} \int_{\frac{y_2-y_1}{2}}^\infty e^{-ixk} \int e^{i\eta x}e^{iy_{1}\eta}\phi_{1}\left(\eta+\frac{v_{1}}{2}\right)\,d\eta dx\\ \frac{1}{2\pi} \int^\infty_{\frac{y_2-y_1}{2}} e^{-ixk} \int e^{i\eta x}
        e^{iy_{1}\eta}\phi_{2}\left(\eta-\frac{v_{1}}{2}\right)\,d\eta dx
    \end{bmatrix}\\ &=e^{i\frac{k(y_{1}-y_{2})}{2}}\begin{bmatrix}
  \frac{1}{2\pi} \int_{z\geq 0} e^{-izk} \int e^{i\eta z}e^{i\frac{\eta (y_{1}+y_{2})}{2}}\phi_{1}\left(\eta+\frac{v_{1}}{2}\right)\,d\eta dz\\ \frac{1}{2\pi}  \int_{z\geq 0} e^{-izk} \int e^{i\eta z}e^{i\frac{\eta (y_{1}+y_{2})}{2}}\phi_{2}\left(\eta-\frac{v_{1}}{2}\right)\,d\eta dz
    \end{bmatrix}\\
    &=e^{i\frac{k(y_{1}-y_{2})}{2}}\begin{bmatrix}
  P_-\left[e^{i\frac{\Diamond (y_{1}+y_{2})}{2}}\phi_{1}\left(\Diamond+\frac{v_{1}}{2}\right)\right](k)\\  P_-\left[e^{i\frac{\Diamond (y_{1}+y_{2})}{2}}\phi_{2}\left(\Diamond-\frac{v_{1}}{2}\right)\right](k)
    \end{bmatrix}
\end{align}where in the last line, we applied \eqref{eq:FdefPm}. The same argument can be applied the to the second term on the right-hand side of \eqref{eqinv1}. Then the first desired estimate \eqref{b1} follows.

\par Next for $B_{2m-2}\left(\mathcal{S}(0)(\vec{\phi})\right)$ ,  from Remark \ref{transition}, we first note that 
\begin{align*}
e^{i\frac{v_{m}x}{2}\sigma_{3}}\hat{G}_{\omega_{m}}\left(\begin{bmatrix}
       e^{iy_{m}k}\phi_{1,m}(k+\frac{v_{m}}{2})\\
       e^{iy_{m}k}\phi_{2,m}(k+\frac{v_{m}}{2})
   \end{bmatrix}\right)(x-y_{m})=e^{i\frac{v_{m}x}{2}\sigma_{3}}\hat{F}_{\omega_{m}}\left(\begin{bmatrix}
       e^{iy_{m}k}\phi_{1,m-1}(k+\frac{v_{m}}{2})\\
       e^{iy_{m}k}\phi_{2,m-1}(k+\frac{v_{m}}{2})
   \end{bmatrix}\right)(x-y_{m}).
 \end{align*}
Therefore, using Remark \ref{asrr} and Lemma \ref{LB3} of Appendix \ref{sec:appb},  we can deduce that 
\begin{equation*}
    B_{2m-2}\left(\mathcal{S}(0)(\vec{\phi})\right)\cong_{2} \sigma_{3}G^{*}_{\omega_{m}}\left[\sigma_{3}\chi_{\{x\leq \frac{y_{m-1}-y_{m}}{2}\}}(x)\hat{F}_{\omega_{m}}\left(\begin{bmatrix}
       e^{iy_{m}\Diamond}\phi_{1,m-1}(\Diamond+\frac{v_{m}}{2})\\
       e^{iy_{m}\Diamond}\phi_{2,m-1}(\Diamond+\frac{v_{m}}{2})
   \end{bmatrix}\right)(x)\right](k).
\end{equation*}
Consequently, similarly to the proof of \eqref{eqinv1}, using the asymptotics \eqref{asy1}, \eqref{asy2}, \eqref{asy3}, \eqref{asy4}, we can verify that 
\begin{align}\label{bfinal}
&B_{2m-2}\left(\mathcal{S}(0)(\vec{\phi})\right)(k)  \\
& \quad \quad \cong_{2}  \sigma_{3}F^{*}_{0}\left[\sigma_{3}\chi_{\left\{x\leq  \frac{y_{m-1}-y_{m}}{2}\right\}}(x)F_{0}\left(\begin{bmatrix}
    e^{iy_{m}\Diamond}\phi_{1,m-1}(\Diamond+\frac{v_{m}}{2})\\
       e^{iy_{m}\Diamond}\phi_{2,m-1}(\Diamond+\frac{v_{m}}{2})
    \end{bmatrix}\right)(x)\right](k)\\ \nonumber &\quad \quad {-}\sigma_{3} r_{\omega_{m}}(k)F_{0}^*\left[\sigma_{3} \chi_{\left\{x> \frac{y_{m-1}-y_{m}}{2}\right\}}(x)F_{0}\left(\begin{bmatrix}
        e^{iy_{m}\Diamond}\phi_{1,m-1}(\Diamond+\frac{v_{m}}{2})\\
       e^{iy_{m}\Diamond}\phi_{2,m-1}(\Diamond+\frac{v_{m}}{2})
    \end{bmatrix}\right)(x)\right](-k).
\end{align}

\par Next, after applying a change of variables on the estimates \eqref{eqinv1} and \eqref{bfinal}, we deduce that 
\begin{align}\label{B3esti}
    & B_{1}\left(\mathcal{S}(0)(\vec{\phi})\right)(k) \\
    &\quad \quad \cong_{2}  e^{i\frac{k(y_{1}-y_{2})}{2}}F^{*}_{0}\left[\chi_{\{x\geq 0\}}F_{0}\left(\begin{bmatrix}e^{iy_{1}\Diamond}\phi_{1}\left(\Diamond+\frac{v_{1}}{2}\right)\\
        e^{i y_{1}\Diamond}\phi_{1}\left(\Diamond-\frac{v_{1}}{2}\right)
    \end{bmatrix}\right)\left(x-\frac{y_{1}-y_{2}}{2}\right)\right](k)
\\ \nonumber &\quad \quad{-}r_{\omega_{1}}({-}k)e^{{-}i\frac{k(y_{1}-y_{2})}{2}}F^*_{0}\left[ \chi_{\{x< 0\}}(x)F_{0}\left(\begin{bmatrix}
e^{iy_{1}\Diamond}\phi_{1}\left(\Diamond+\frac{v_{1}}{2}\right)\\
        e^{i y_{1}\Diamond}\phi_{1}\left(\Diamond-\frac{v_{1}}{2}\right)
    \end{bmatrix}\right)\left(x-\frac{y_{1}-y_{2}}{2}\right)\right](-k),
\end{align}
and
\begin{align} \nonumber
&B_{2m-2}\left(\mathcal{S}(0)(\vec{\phi})\right)(k) \\
&\quad \quad \cong_{2} e^{{-}i\frac{k(y_{m-1}-y_{m})}{2}}F^{*}_{0}\left[\chi_{\{x\leq 0\}}F_{0}\left(\begin{bmatrix}
        e^{iy_{m}\Diamond}\phi_{1,m-1}(\Diamond+\frac{v_{m}}{2})\\
        e^{iy_{m}\Diamond}\phi_{1,m-1}(\Diamond-\frac{v_{m}}{2})
    \end{bmatrix}\right)\left(x+\frac{y_{m-1}-y_{m}}{2}\right)\right](k)
\\ \nonumber
&\quad \quad  {-} r_{\omega_{m}}(k)e^{i\frac{k(y_{m-1}-y_{m})}{2}}F^*_{0}\left[ \chi_{\{x>0\}}(x)F_{0}\left(\begin{bmatrix}
            e^{iy_{m}\Diamond}\phi_{1,m-1}(\Diamond+\frac{v_{m}}{2})\\
        e^{iy_{m}\Diamond}\phi_{1,m-1}(\Diamond-\frac{v_{m}}{2})
    \end{bmatrix}\right)\left(x+\frac{y_{m-1}-y_{m}}{2}\right)\right](-k),
\end{align}
from which we obtain the estimates \eqref{b1} and \eqref{b2m-2}.
\\
\textbf{Step 2.(Proof of estimates \eqref{b2n}, \eqref{b2n+1}.)}
First, since $\sigma_{3}G^{*}_{\omega_{n+1}}\sigma_{3}\hat{F}_{\omega_{n+1}}=\mathrm{Id}$ from Lemma \ref{leper}, we can deduce from Lemma \ref{appFourier} and Lemma \ref{LB3} that
\begin{align*}
    B_{2n}\left(\mathcal{S}(0)(\vec{\phi})\right)(k)\cong_{2} & \begin{bmatrix}
        e^{iy_{n+1}k}\phi_{1,n}(k+\frac{v_{n+1}}{2})\\
        e^{iy_{n+1}k}\phi_{2,n}(k-\frac{v_{n+1}}{2})
    \end{bmatrix}\\&{-}\sigma_{3}G^{*}_{\omega_{n+1}}\left(\sigma_{3}\chi_{\left\{x>\frac{y_{n}-y_{n+1}}{2}\right\}} F_{0}\begin{bmatrix}
        e^{iy_{n+1}\Diamond}\phi_{1,n}(\Diamond+\frac{v_{n+1}}{2})\\
        e^{iy_{n+1}\Diamond}\phi_{2,n}(\Diamond-\frac{v_{n+1}}{2})
    \end{bmatrix}(x)\right)(k)
\\&{-}\sigma_{3}G^{*}_{\omega_{n+1}}\left(\sigma_{3}\chi_{\left \{x<\frac{y_{n+2}-y_{n+1}}{2}\right \}} F_{0}\begin{bmatrix}
    e^{iy_{n+1}\Diamond}\phi_{1,n+1}(\Diamond+\frac{v_{n+1}}{2})\\
    e^{iy_{n+1}\Diamond}\phi_{2,n+1}(\Diamond-\frac{v_{n+1}}{2})
\end{bmatrix}(x)\right)(k).
\end{align*}
\par Next, using the asymptotics of $G^{*}_{\omega_{n+1}}$ in \eqref{asy1} and \eqref{asy2}, we obtain from the estimate above that
\begin{align}\label{b2est1}
    B_{2n}\left(\mathcal{S}(0)(\vec{\phi})\right)(k)\cong_{2} & \begin{bmatrix}
        e^{iy_{n+1}k}\phi_{1,n}(k+\frac{v_{n+1}}{2})\\
        e^{iy_{n+1}k}\phi_{2,n}(k-\frac{v_{n+1}}{2})
    \end{bmatrix}\\ \nonumber
    &{-}\sigma_{3}F^{*}_{0}\left(\sigma_{3}\chi_{\left\{x>\frac{y_{n}-y_{n+1}}{2}\right\}} F_{0}(x)\begin{bmatrix}
        e^{iy_{n+1}\Diamond}\phi_{1,n}(\Diamond+\frac{v_{n+1}}{2})\\
        e^{iy_{n+1}\Diamond}\phi_{2,n}(\Diamond-\frac{v_{n+1}}{2})
    \end{bmatrix}(x)\right)(k)\\ \nonumber
  &{-}\sigma_{3}r_{\omega_{n+1}}(k)F_{0}\left(\sigma_{3}\chi_{\left\{x>\frac{y_{n}-y_{n+1}}{2}\right\}} F_{0}\begin{bmatrix}
        e^{iy_{n+1}\Diamond}\phi_{1,n}(\Diamond+\frac{v_{n+1}}{2})\\
        e^{iy_{n+1}\Diamond}\phi_{2,n}(\Diamond-\frac{v_{n+1}}{2})
    \end{bmatrix}(x)\right)(k)\\  
&{-}\sigma_{3}s_{\omega_{n+1}}(k)F^{*}_{0}\left(\sigma_{3}\chi_{\left \{x<\frac{y_{n+2}-y_{n+1}}{2}\right \}} F_{0}\begin{bmatrix}
e^{iy_{n+1}\Diamond}\phi_{1,n+1}(\Diamond+\frac{v_{n+1}}{2})\\
e^{iy_{n+1}\Diamond}\phi_{2,n+1}(\Diamond-\frac{v_{n+1}}{2})
\end{bmatrix}(x)\right)(k).
\end{align}


\par After applying the same argument as for \eqref{b1}, in conclusion, we deduce the following estimates
\begin{align*}
    B_{2n}\left(\mathcal{S}(0)(\vec{\phi})\right)\cong_{2} & e^{{-}i\frac{k(y_{n}-y_{n+1})}{2}}P_{+}\left(\begin{bmatrix}
    e^{i\frac{(y_{n}+y_{n+1})\Diamond}{2}}\phi_{1,n}(\Diamond+\frac{v_{n+1}}{2})\\ 
    e^{i\frac{(y_{n}+y_{n+1})\Diamond}{2}}\phi_{2,n}(\Diamond-\frac{v_{n+1}}{2})
    \end{bmatrix}\right)(k)\\ \nonumber
    &{-}r_{\omega_{n+1}}(k)e^{i\frac{k(y_{n}-y_{n+1})}{2}}P_{-}\left(\begin{bmatrix}
    e^{i\frac{(y_{n}+y_{n+1})\Diamond}{2}}\phi_{1,n}(\Diamond+\frac{v_{n+1}}{2})\\
    e^{i\frac{(y_{n}+y_{n+1})\Diamond}{2}}\phi_{2,n}(\Diamond-\frac{v_{n+1}}{2})
    \end{bmatrix}\right)({-}k)\\ \nonumber
    &{-}s_{\omega_{n+1}}(k)e^{{-}i\frac{k(y_{n+2}-y_{n+1})}{2}}P_{+}\left(\begin{bmatrix}
e^{i\frac{(y_{n+1}+y_{n+2})\Diamond}{2}}\phi_{1,n+1}(\Diamond+\frac{v_{n+1}}{2})\\
e^{i\frac{(y_{n+1}+y_{n+2})\Diamond}{2}}\phi_{2,n+1}(\Diamond-\frac{v_{n+1}}{2})
\end{bmatrix}\right)(k).
\end{align*}
\par Similarly, we can verify the following estimate
\begin{align*} 
    B_{2n+1}\left(\mathcal{S}(0)(\vec{\phi})\right)\cong_{2} & e^{{-}i\frac{k(y_{n+2}-y_{n+1})}{2}}P_{-}\left(\begin{bmatrix}
e^{i\frac{(y_{n+1}+y_{n+2})\Diamond}{2}}\phi_{1,n+1}(\Diamond+\frac{v_{n+1}}{2})\\
e^{i\frac{(y_{n+1}+y_{n+2})\Diamond}{2}}\phi_{2,n+1}(\Diamond-\frac{v_{n+1}}{2})
\end{bmatrix}
\right)(k)\\ \nonumber
    &{-}r_{\omega_{n+1}}({-}k)e^{i\frac{k(y_{n+2}-y_{n+1})}{2}}P_{+}\left(\begin{bmatrix}
e^{i\frac{(y_{n+1}+y_{n+2})\Diamond}{2}}\phi_{1,n+1}(\Diamond+\frac{v_{n+1}}{2})\\
e^{i\frac{(y_{n+1}+y_{n+2})\Diamond}{2}}\phi_{2,n+1}(\Diamond-\frac{v_{n+1}}{2})
\end{bmatrix}\right)({-}k)\\ \nonumber
    &{-}s_{\omega_{n+1}}({-}k)e^{{-}i\frac{k(y_{n}-y_{n+1})}{2}}P_{-}\left(
    \begin{bmatrix}
e^{i\frac{(y_{n}+y_{n+1})\Diamond}{2}}\phi_{1,n}(\Diamond+\frac{v_{n+1}}{2})\\
e^{i\frac{(y_{n}+y_{n+1})\Diamond}{2}}\phi_{2,n}(\Diamond-\frac{v_{n+1}}{2})
\end{bmatrix}\right)(k),
\end{align*}
which are exactly \eqref{b2n} and \eqref{b2n+1}.
\end{proof}
\subsection{Proof of Theorem \ref{TT}}

First, we consider the following  subspace: for $n\in\mathbb{N}$
\begin{equation*}
    \mathcal{F}L_{2,n}\coloneqq \left\{f:\mathbb{R}\to\mathbb{C}^{2n}\vert\, \norm{f}_{\mathcal{F}L_{2,n}}:=\norm{(1+\vert x\vert)\hat{f}(x)}_{L^{1}_{x}(\mathbb{R})}<{+}\infty\right\}.
\end{equation*}
 The proof of Theorem \ref{TT} will follow using the estimates from Remark \ref{Tremark} and Lemma \ref{T}. More precisely, we consider the following map
\begin{equation*}
    T_{m}:L^{2}(\mathbb{R},\mathbb{C}^{4m-4})\cap \mathcal{F}L_{2,2m-2}\to L^{2}(\mathbb{R},\mathbb{C}^{4m-4})\cap\mathcal{F}L_{2,2m-2}
\end{equation*}
defined by $T_{m}\coloneqq(T_{1,m},T_{2,m},T_{3,m},...,T_{2m-2,m})$ such that each map 
\begin{equation*}
    T_{\ell,m}:L^{2}(\mathbb{R},\mathbb{C}^{2})\cap \mathcal{F}L_{2,1}\to L^{2}(\mathbb{R},\mathbb{C}^{2})\cap \mathcal{F}L_{2,1},\,\ell=1,\ldots, 2m-2
\end{equation*}
 satisfies for each element $\overrightarrow{\psi}=(\psi_{1},\psi_{2},...,\psi_{2m-2})\in L^{2}(\mathbb{R},\mathbb{C}^{4m-4})$ the following identities
\begin{align*}
T_{1,m}(\overrightarrow{\psi})=&  r_{\omega_{1}}({-}k)e^{{-}ik(y_{1}-y_{2})}e^{\frac{(v_{2}-v_{1})(y_{1}+y_{2})}{4}\sigma_{3}}\left[\tau_{\sigma_{3}\frac{(v_{1}-v_{2})}{2}}\psi_{2}\right]({-}k),\\
T_{2,m}(\overrightarrow{\psi})=&r_{\omega_{2}}(k)e^{ik(y_{1}-y_{2})}e^{i\frac{(v_{1}-v_{2})(y_{1}+y_{2})\sigma_{3}}{4}}\left[\tau_{\sigma_{3}\frac{(v_{2}-v_{1})}{2}}\psi_{1}\right]({-}k)\\&{+}s_{\omega_{2}}(k)e^{i\frac{k(y_{1}-y_{3})}{2}}e^{i\frac{(y_{2}+y_{3})(v_{3}-v_{2})}{4}\sigma_{3}}\left[\tau_{\sigma_{3}\frac{(v_{2}-v_{3})}{2}}\psi_{4}\right](k), ...,\\
T_{2n-1,m}(\overrightarrow{\psi})=& r_{\omega_{n}}({-}k)e^{ik(y_{n+1}-y_{n})}e^{i\frac{(v_{n+1}-v_{n})(y_{n}+y_{n+1})\sigma_{3}}{4}}\left[\tau_{\sigma_{3}\frac{(v_{n}-v_{n+1})}{2}}\psi_{2n}\right]({-}k)\\
&{+}s_{\omega_{n}}({-}k)e^{i\frac{k(y_{n+1}-y_{n-1})}{2}}e^{i\frac{(v_{n-1}-v_{n})(y_{n-1}+y_{n})\sigma_{3}}{4}}\left[\tau_{\sigma_{3}\frac{(v_{n}-v_{n-1})}{2}}\psi_{2n-3}\right](k),\\
T_{2n,m}(\overrightarrow{\psi})=&r_{\omega_{n+1}}(k)e^{ik(y_{n}-y_{n+1})}e^{i\frac{(v_{n}-v_{n+1})(y_{n}+y_{n+1})\sigma_{3}}{4}}\left[\tau_{\sigma_{3}\frac{(v_{n+1}-v_{n})}{2}}\psi_{2n-1}\right]({-}k)\\&{+}s_{\omega_{n+1}}(k)e^{i\frac{k(y_{n}-y_{n+2})}{2}}e^{i\frac{(y_{n+1}+y_{n+2})(v_{n+2}-v_{n+1})}{4}\sigma_{3}}\left[\tau_{\sigma_{3}\frac{(v_{n+1}-v_{n+2})}{2}}\psi_{2n+2}\right](k),...,\\
T_{2m-2,m}(\overrightarrow{\psi})=&r_{\omega_{m}}(k)e^{ik(y_{m-1}-y_{m})}e^{i\frac{(v_{m-1}-v_{m})(y_{m-1}+y_{m})\sigma_{3}}{4}}\left[\tau_{\sigma_{3}\frac{(v_{m}-v_{m-1})}{2}}\psi_{2m-3}\right]({-}k).
\end{align*}
We will verify by induction on $m$ that 
\begin{equation}\label{indT}
    \norm{(\mathrm{Id}-T_m)^{{-}1}}<{+}\infty, 
\end{equation}
from which  Theorem \ref{TT} will follow.
\par The estimate \eqref{indT} for $m=2$ was already proved in the paper \cite{perelmanasym}. In particular, one should notice that in the case  $m=2,$ the construction of the family of maps above is much simpler without invoking the reflection coefficients, for more details see the proof of Proposition $4.3.2$ from \cite{perelmanasym}. Therefore, it is enough to prove that \eqref{indT} is true for $m=3$ and the inductive step to conclude for all $m\geq 3$ that \eqref{indT} is true for all $m\in\mathbb{N}_{\geq 3}.$

\par Moreover, to efficiently simplify  our reasoning, for each $1\leq i\leq 2m-2,$ we consider the function
\begin{align}\label{psii2}
\overrightarrow{\psi_{+}}(k)\coloneqq \left(\begin{bmatrix}
    \psi_{1}(k)\\
    0
\end{bmatrix},...,\begin{bmatrix}
    \psi_{i}(k)\\
    0
\end{bmatrix},...,\begin{bmatrix}
    \psi_{2m-2}(k)\\
    0
\end{bmatrix}\right).
\end{align}
  Using  \eqref{psii2}, we consider the following abstract lemma.
\begin{lemma}\label{Tp}
 Let
 $\mathcal{L}=(\mathcal{L}_{1,m},...,\mathcal{L}_{2m-2,m}):L^{2}(\mathbb{R},\mathbb{C}^{2m-2})\cap \mathcal{F}L_{1,2m-2}\to L^{2}(\mathbb{R},\mathbb{C}^{2m-2})\cap \mathcal{F}L_{1,2m-2}$ be a linear transformation of the form
 \begin{align}\nonumber
\mathcal{L}_{1,m}(\overrightarrow{\psi})=&\psi_{1}(k)-  r_{1}({-}k)\psi_{2}\left({-}k+\frac{v_{1}-v_{2}}{2}\right),\\ \nonumber
\mathcal{L}_{2,m}(\overrightarrow{\psi})=&\psi_{2}(k)-r_{2}(k)\psi_{1}\left({-}k+\frac{v_{2}-v_{1}}{2}\right)-s_{2}(k)\psi_{4}\left(k+\frac{v_{2}-v_{3}}{2}\right), ...,\\ \nonumber
\mathcal{L}_{2n-1,m}(\overrightarrow{\psi})=&\psi_{2n-1}(k)- r_{n}({-}k)\psi_{2n}\left({-}k+\frac{v_{n}-v_{n+1}}{2}\right){-}s_{n}({-}k)\psi_{2n-3}\left(k+\frac{v_{n}-v_{n-1}}{2}\right),\\ \nonumber
\mathcal{L}_{2n,m}(\overrightarrow{\psi})=&\psi_{2n}(k)-r_{n+1}(k)\psi_{2n-1}\left({-}k+\frac{v_{n+1}-v_{n}}{2}\right){-}s_{n+1}(k)\psi_{2n+2}\left(k+\frac{v_{n+1}-v_{n+2}}{2}\right),...,\\ \label{lastmap}
\mathcal{L}_{2m-2,m}(\overrightarrow{\psi})=&\psi_{2m-2}(k)-r_{m}(k)\psi_{2m-3}\left({-}k+\frac{v_{m}-v_{m-1}}{2}\right),
\end{align}
where all the parameters $\gamma_{i,1},\,\,\gamma_{i,2},\theta_{i,1},\, \theta_{i,2}$ are constant real numbers, and each one of the complex functions $r_{n},\,s_{n}$ satisfying $\vert r_{n}(k)\vert\leq 1,\,\vert s_{n}(k) \vert\leq 1$ for all $k\in\mathbb{R},$ and
\begin{equation}\label{POPO}
    r_{n}(k)=O\left(\frac{1}{\vert k\vert +1}\right),\,s_{n}(k)=1+O\left(\frac{1}{\vert k\vert +1}\right),
\end{equation}
as $\vert k\vert $ goes to ${+}\infty.$ There exist parameters $K(m)>0,\,c(m)>0$ depending only on $m\in \mathbb{N}$ such that if $\min_{\ell} v_{\ell}-v_{\ell+1}>K(m),$ then
\begin{equation}\label{Tff}
    \norm{\mathcal{L}(\psi)}_{L^{2}}\geq c(m)\max_{\ell\in\{1,...,2m-2\}} \norm{\psi_{\ell}}_{L^{2}},
\end{equation}
for all $\overrightarrow{\psi}\in L^{2}(\mathbb{R},\mathbb{C}^{2m-2}).$ Furthermore, 
\begin{equation}\label{fl11}
\norm{\mathcal{L}(\psi)}_{\mathcal{F}L_{1}}\geq   c(m)\max_{\ell\in\{1,...,2m-2\}}\norm{\psi_{\ell}}_{\mathcal{F}L_{1}}.
\end{equation}

\end{lemma}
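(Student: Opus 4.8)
The plan is to establish \eqref{Tff} and \eqref{fl11} by induction on $m$, with the base cases $m=2$ (already in \cite{perelmanasym}) and $m=3$. The key structural observation is that the system $\mathcal{L}$ is ``nearly triangular'': the map $\mathcal{L}_{2\ell-1,m}$ couples $\psi_{2\ell-1}$ to $\psi_{2\ell}$ (via the reflection coefficient $r_n$, which is small in $L^\infty$ at high frequency by \eqref{POPO}) and to $\psi_{2\ell-3}$ (via the transmission coefficient $s_n$, which is close to $1$); similarly $\mathcal{L}_{2\ell,m}$ couples $\psi_{2\ell}$ to $\psi_{2\ell-1}$ and $\psi_{2\ell+2}$. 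Thus each pair $(\psi_{2\ell-1},\psi_{2\ell})$ is linked to its neighbors only through the near-diagonal $s$-terms and the small $r$-terms. First I would set $\vec g = \mathcal{L}(\vec\psi)$ and use the rows $\mathcal{L}_{2\ell-1,m},\mathcal{L}_{2\ell,m}$ to ``solve'' for $\psi_{2\ell-1},\psi_{2\ell}$ in terms of $g_{2\ell-1},g_{2\ell}$ and the neighboring components $\psi_{2\ell-3},\psi_{2\ell+2}$, up to errors controlled by $\|r_n\|_{L^\infty(|k|\le R)}$ plus contributions from $|k|\ge R$ where a frequency-localization argument is used.

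The decoupling mechanism at high frequency is the main quantitative input. On the region $|k|\ge R$ with $R$ large (which can be arranged by taking $\min_\ell v_\ell - v_{\ell+1}>K(m)$ large, since all arguments of the $\psi_i$'s are shifted by $\pm\frac{v_\ell - v_{\ell\pm1}}{2}$), the coefficients $r_n$ are uniformly small and $s_n$ is uniformly close to $1$; on the complementary bounded region $|k|\le R$, I would exploit that the shifted arguments $k \pm \frac{v_{\ell}-v_{\ell\pm1}}{2}$ push the support of each $\psi_i(\cdot \pm \tfrac{v_\ell - v_{\ell\pm1}}{2})$ into $|k'|\gtrsim \min v_\ell - v_{\ell+1} - R$, where again $r_n,s_n$ behave well. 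Concretely, splitting $L^2 = L^2(|k|\le R) \oplus L^2(|k|>R)$ and estimating each block, the off-diagonal coupling matrix (in the $2m-2$ scalar unknowns) becomes, after conjugation by the obvious unitary phase factors, a perturbation $\mathrm{Id} + N$ of the identity with $\|N^m\| \lesssim (\min_\ell v_\ell - v_{\ell+1})^{-1}$ — exactly the statement previewed in the sketch in \S\ref{sub:sketch}. The inductive step then amounts to: peel off the rows $\mathcal{L}_{1,m},\mathcal{L}_{2,m}$ (which involve only $\psi_1,\psi_2,\psi_4$), solve the $(m-1)$-potential subsystem on $\psi_3,\dots,\psi_{2m-2}$ by the induction hypothesis, feed back the bound on $\psi_4$ into the equation for $\psi_1,\psi_2$, and absorb the cross terms using the smallness of $r_1,r_2$ and the high-frequency estimates above. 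The passage from $\mathcal L_{2m-3,m},\mathcal L_{2m-2,m}$ at the other end is symmetric.

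For \eqref{fl11}, the argument is parallel but carried out in the space $\mathcal{F}L_1$; here the only extra point is that multiplication by $r_n(k)$ or $s_n(k)-1$ is bounded on $\mathcal{F}L_1$ because, by \eqref{POPO} together with the smoothness/decay of derivatives of $r_n,s_n$ recorded in \eqref{asyreftr} and Remark \ref{r+-}, these symbols and their first two derivatives decay like $\langle k\rangle^{-1}$, so the associated Fourier multipliers are bounded (indeed compact-like, with small norm at high frequency) on the weighted $L^1$ space of the Fourier transform; modulation factors $e^{i k(y_\ell - y_{\ell+1})}$ act by translation on the physical side and are isometries of $\mathcal{F}L_1$, while the dilation/shift operators $\tau_{\sigma_3 v}$ are bounded there as well. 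Then the same near-triangular elimination goes through verbatim with $\|\cdot\|_{L^2}$ replaced by $\|\cdot\|_{\mathcal{F}L_1}$.

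\textbf{Main obstacle.} The hard part will be making the high-frequency decoupling fully rigorous and uniform in $m$: one must show that the cumulative effect of the $2m-2$ small off-diagonal couplings does not degrade the coercivity constant $c(m)$ uncontrollably, and in particular that the constant $K(m)$ in ``$\min_\ell v_\ell - v_{\ell+1} > K(m)$'' can be chosen so that the perturbation $N$ satisfies $\|N^m\|<1$ — this is where the estimate $\|N^m\|\lesssim (\min_\ell v_\ell - v_{\ell+1})^{-1}$ (rather than merely $\|N\|<1$, which would fail since individual $s_n$ are not small) is essential and must be extracted carefully from the structure of the chain. A secondary technical nuisance is handling the boundary rows $\mathcal L_{1,m}$ and $\mathcal L_{2m-2,m}$, which lack an $s$-term and hence behave slightly differently under the elimination; these are dealt with by treating the endpoints as the base of the induction and checking the $m=3$ case by hand.
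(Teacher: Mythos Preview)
Your inductive strategy matches the paper's, but the induction step as you describe it has a concrete gap. You propose to peel off rows $\mathcal{L}_{1,m},\mathcal{L}_{2,m}$ and apply the $(m-1)$-hypothesis to the subsystem in $\psi_3,\dots,\psi_{2m-2}$. That subsystem, however, is not closed: the row $\mathcal{L}_{3,m}$ contains the term $-s_2(-k)\psi_1(k+\tfrac{v_2-v_1}{2})$, and $s_2$ is \emph{not} small. So the $(m-1)$-hypothesis delivers only $\max_{\ell\geq 3}\|\psi_\ell\|\lesssim \|\mathcal{L}(\vec\psi)\|+\|\psi_1\|$, and feeding this back into rows $1,2$ gives the circular system $\|\psi_1\|\leq\|g_1\|+\|\psi_2\|$, $\|\psi_2\|\leq\|g_2\|+\|\psi_1\|+\|\psi_4\|$ with all coupling coefficients of size $\leq 1$, which does not close. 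Your phrase ``absorb the cross terms using the smallness of $r_1,r_2$'' is precisely where this fails: individual $r_n$ are \emph{not} small in $L^\infty$ (only $O(\langle k\rangle^{-1})$ at infinity), and in $\mathcal{L}_{1,m}$ the coefficient $r_1(-k)$ is evaluated at an unshifted argument. Your frequency-localization splitting does not by itself repair this, since on $|k|\leq R$ the factor $r_1(-k)$ remains $O(1)$.

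The paper breaks the circularity by isolating $\psi_1$ \emph{first}, via a telescoping elimination along the even rows: substitute $\mathcal{L}_{2,m}(-k+\tfrac{v_1-v_2}{2})$ into $\mathcal{L}_{1,m}(k)$, then $\mathcal{L}_{4,m}(-k+\tfrac{v_1-v_3}{2})$, and so on down to $\mathcal{L}_{2m-2,m}$. Each substitution produces a term carrying the \emph{product} $r_1(-k)\,r_j\bigl(-k+\tfrac{v_1-v_j}{2}\bigr)$, and it is this product---not either factor alone---that is $O\bigl((\min_\ell v_\ell-v_{\ell+1})^{-1}\bigr)$ by \eqref{POPO}, since the two arguments differ by $\gtrsim\tfrac12\min(v_\ell-v_{\ell+1})$. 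The $s_j$-factors merely propagate the dependence from $\psi_{2\ell}$ to $\psi_{2\ell+2}$; the chain terminates at $\mathcal{L}_{2m-2,m}$ (no $s$-term), leaving a final product $r_1 r_m$ which is again small. This yields $\|\psi_1\|\lesssim\|\mathcal{L}(\vec\psi)\|+(\min_\ell v_\ell-v_{\ell+1})^{-1}\max_j\|\psi_j\|$ \emph{before} any induction. Only then does the paper apply the $(m-1)$-hypothesis to the reduced operator $\mathcal{L}_{0,m-1}$ (row $3$ with its $\psi_1$-term removed, together with rows $4,\dots,2m-2$), and finally recover $\psi_2$ from $\mathcal{L}_{2,m}$. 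Your $\|N^m\|$ heuristic is morally this telescoping, but the concrete mechanism is the product estimate \eqref{pep}, and the order of operations (bound $\psi_1$ first, then induct) is essential. The $\mathcal{F}L_1$ argument is then indeed parallel, with Lemma~\ref{lB1} supplying the product bound in that norm.
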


\begin{remark}\label{TTistrue}
Using Lemmas \ref{T} and \ref{Tp}, we are going to verify the existence of a constant $c>0$ satisfying \begin{equation*}
     \norm{\mathcal{S}(0)\left(\begin{bmatrix}
     \phi_{1}\\
     0
 \end{bmatrix}\right)(x)}_{L^{2}_{x}(\mathbb{R})}\geq c\norm{\begin{bmatrix}
     \phi_{1}\\
     0
 \end{bmatrix}}_{L^{2}_{k}(\mathbb{R})},
 \end{equation*} 
when $\min_{\ell} v_{\ell}-v_{\ell+1},\,$ and $\min y_{\ell}-y_{\ell+1}$ are both sufficiently large. See also Remark \ref{Tremark} of Lemma \ref{T}.
\end{remark}

\begin{remark}We see in the proof of Lemma \ref{T} that estimates \eqref{Scoerc} and \eqref{derivdecay} will follow from an application of Lemma \ref{Tp}. More precisely, applying Lemma \ref{Tp} applied onto the right-hand side of the estimates \eqref{b1}, \eqref{b2n}, \eqref{b2n+1}, and \eqref{b2m-2} implies that
    \begin{equation*}
        \max_{\ell}\norm{B_{\ell}\left(\mathcal{S}(0)(\vec{\phi})\right)}_{L^{2}_{k}(\mathbb{R})}\geq c\max_{1\leq \ell\leq m-1 }\norm{\begin{bmatrix}
            \phi_{1,\ell}(k)\\
            \phi_{2,\ell}(k)
        \end{bmatrix}}_{L^{2}_{k}(\mathbb{R})}-  C e^{{-}\beta \min_{\ell}(y_{\ell}-y_{\ell+1})}\left[\max_{1\leq \ell\leq m-1 }\norm{\begin{bmatrix}
        \phi_{1,\ell}(k)\\
        \phi_{2,\ell}(k)
\end{bmatrix}}_{L^{2}_{k}(\mathbb{R})} \right],
    \end{equation*}
for constants $c,\,C>0.$ In conclusion, one can verify that estimate \eqref{Scoerc} of Theorem \eqref{TT} follows from the estimate above and the uniform boundness of the operators $B_{\ell}:L^{2}_{x}(\mathbb{R},\mathbb{C}^{2})\to L^{2}_{k}(\mathbb{R},\mathbb{C}^{2}).$ The proof of estimate \eqref{derivdecay} is similar, the unique difference is that requires the following estimate
\begin{equation*}
    \max_{\langle x \rangle f(x)\in L^{2}_{x}(\mathbb{R})}\norm{B_{\ell}(f)}_{L^{2}_{k}(\mathbb{R})}\leq C\left[\norm{\vert x-y_{\ell}\vert\chi_{\left[\frac{y_{\ell}+y_{\ell+1}}{2},\frac{y_{\ell}+y_{\ell-1}}{2}\right]}f(x)}_{L^{2}_{x}(\mathbb{R})}+\norm{f}_{L^{2}_{x}(\mathbb{R})}\right]
\end{equation*}
to be true for any Schwartz function $f\in\mathscr{S}(\mathbb{R},\mathbb{C}^{2}),$ and any $\ell\in\{1,2\,...,\,m-1\}.$
\end{remark}

 \begin{proof}[Proof of Lemma \ref{Tp}.]
 \par Based on the maps defined on Lemma \ref{Tp}, we consider the following terms
 \begin{align*}
A_{2}(k)=&\mathcal{L}_{2,m}(\overrightarrow{\psi})\left({-}k+\frac{v_{1}-v_{2}}{2}\right),\\
A_{4}(k)=&\mathcal{L}_{4,m}(\overrightarrow{\psi})\left({-}k+\frac{v_{1}-v_{3}}{2}\right), ...,\\
A_{2n}(k)=&\mathcal{L}_{2n,m}(\overrightarrow{\psi})\left({-}k+\frac{v_{1}-v_{n+1}}{2}\right), ...,\\
A_{2m-2}(k)=&\mathcal{L}_{2m-2,m}(\overrightarrow{\psi})\left({-}k+\frac{v_{1}-v_{m}}{2}\right).
\end{align*}
Moreover, it is not difficult by induction for any $2<n< m-1$ that  
\begin{align}\label{Indprop}
 S_{n}(\psi)(k)&\coloneqq\mathcal{L}_{1,m}(\psi)(k)+r_{1}\left({-}k\right)A_{2}\left(k\right)+\sum_{\ell=2}^{n}r_{1}({-}k)\left[\Pi_{j=2}^{\ell} s_{j}\left({-}k+\frac{v_{1}-v_{j}}{2}\right)\right]A_{2\ell}\left(k\right)\\
 &=\psi_{1}(k)-r_{1}({-}k)\left[\Pi_{j=2}^{n+1} s_{j}\left({-}k+\frac{v_{1}-v_{j}}{2}\right)\right]\psi_{2n+2}\left({-}k+\frac{v_{1}-v_{n+2}}{2}\right)\\&{+}\mathrm{rest}(k)
\end{align}
where $\mathrm{rest}(k)$ denotes a collection of terms such that
$\mathrm{rest}(k)\in L^{2}_{k}(\mathbb{R})\cap \mathcal{F}L_{1}$ satisfying for some constant $C>1$ the following inequality
\begin{align*}
    \norm{\mathrm{rest}(k)}_{L^{2}_{k}(\mathbb{R})}\leq C \left[\min\frac{1}{v_{\ell}-v_{\ell+1}}\right]\max_{j} \norm{\psi_{j}}_{L^{2}},\,\norm{\mathrm{rest}(k)}_{\mathcal{F}L_{1}}\leq C \left[\min\frac{1}{v_{\ell}-v_{\ell+1}}\right]\max_{j} \norm{\psi_{j}}_{\mathcal{F}L^{1}} .
\end{align*}
\par Indeed, when $n=2,$ estimate \eqref{Indprop} follows directly from the definitions of $\mathcal{L}_{1,m},\,\mathcal{L}_{2,m},$ and applications of  Lemma \ref{lB1} and estimates
\begin{equation}\label{pep}
    \norm{ r_{\ell}(k)r_{j}(k+v_{\ell}-v_{j})}_{L^{\infty}_{k}(\mathbb{R})}=O\left(\frac{1}{1+\vert v_{\ell}-v_{j}\vert }\right) \text{, for all $j\neq \ell,$}
\end{equation}
which follow from \eqref{POPO}. 
If \eqref{Indprop} holds until $n=M<m-2,$ then, since
\begin{multline*}
    A_{2M+2}(k)=\psi_{2M+2}\left({-}k+\frac{v_{1}-v_{M+2}}{2}\right)-r_{M+2}\left({-}k+\frac{v_{1}-v_{M+2}}{2}\right)\psi_{2M+1}\left(k+v_{M+2}-\frac{(v_{M+1}+v_{1})}{2}\right)\\{-}s_{M+2}\left({-}k+\frac{v_{1}-v_{M+2}}{2}\right)\psi_{2M+4}\left({-}k+\frac{v_{1}-v_{M+3}}{2}\right),
\end{multline*}
we can verify using \eqref{pep} and Lemma \ref{lB1} that 
\begin{align*}
    S_{M}(\psi)(k)&+r_{1}({-}k)\left[\Pi_{j=2}^{M+1} s_{j}\left({-}k+\frac{v_{1}-v_{j}}{2}\right)\right]A_{2M+2}(k)\\
    &=\psi_{1}(k)-r_{1}({-}k)\left[\Pi_{j=2}^{M+2} s_{j}\left({-}k+\frac{v_{1}-v_{j}}{2}\right)\right]\psi_{2n+2}\left({-}k+\frac{v_{1}-v_{n+2}}{2}\right)
    \\&{+}\mathrm{rest}(k),
\end{align*}
which is equivalent to \eqref{Indprop} for $n=M+1,$ and so, \eqref{Indprop} is true for all $2<n\leq m-2.$
\par Next, using the estimate \eqref{Indprop} for $n=m-2,$ \eqref{pep}, Lemma \ref{lB1}, and the map $\mathcal{L}_{2m-2,m}(\overrightarrow{\psi})$ defined at \eqref{lastmap}, it is not difficult to verify that
\begin{align}
S_{m-2}(\overrightarrow{\psi})(k)+& r_{1}({-}k)\left[\Pi_{j=2}^{M+1} s_{j}\left({-}k+ \frac{v_{1}-v_{j}}{2}\right)\right]A_{2m-2}(k)\label{eq:Sm-2}\\
=&\psi_{1}(k)+\mathrm{rest}(k)
\end{align}
which also implies the existence of  constants $c>0,\,K>1$ satisfying
\begin{align}\label{Final1}
    \left[c\norm{\psi_{1}}_{L^{2}}-K\max_{\ell}\frac{1}{v_{\ell}-v_{\ell+1}}\max_{j} \norm{\psi_{j}}_{L^{2}}\right]\leq &\norm{\mathcal{L}(\overrightarrow{\psi})}_{L^{2}},\\
    \left[c\norm{\psi_{1}}_{\mathcal{F}L^{1}}-K\max_{\ell}\frac{1}{v_{\ell}-v_{\ell+1}}\max_{j} \norm{\psi_{j}}_{\mathcal{F}L^{1}}\right]\leq &\norm{\mathcal{L}(\overrightarrow{\psi})}_{\mathcal{F}L^{1}}.
\end{align}since \eqref{eq:Sm-2} is a special linear combination of coordinates of $\mathcal{L}$. 
\par Next, if Lemma \ref{Tp} holds until  $m-1\in\mathbb{N}_{\geq 2},$ we consider the following linear map \begin{equation*}
  \mathcal{L}_{0,m-1}: L^{2}(\mathbb{R},\mathbb{C}^{2m-4})\to L^{2}(\mathbb{R},\mathbb{C}^{2m-4})  
\end{equation*} defined by $\mathcal{L}_{0,m-1}=(\mathcal{L}_{0,3,m},\mathcal{L}_{4,m},\mathcal{L}_{5,m}, ...,\mathcal{L}_{2m-2,m})$ such that 
\begin{align*}
    \mathcal{L}_{0,3,m}(\overrightarrow{\psi})=\psi_{3}(k)- r_{2}({-}k)\psi_{4}\left({-}k+\frac{v_{2}-v_{3}}{2}\right),
\end{align*}
and the remaining coordinate maps of $\mathcal{L}_{0,m-1}$ are the same maps defined at \eqref{Tp}.\footnote{One can think of this map as setting $\psi_1=\psi_2=0$ in  \eqref{Tp}, and then  disregard the trivial map $\mathcal{L}_{2,m}$.} Because of the inductive hypotheses of Lemma \ref{Tp} being true for $m-1,$ if $\min (v_{\ell}-v_{\ell+1})>0$ is large enough, then there is a parameter $c(m-1)>0$ depending only on $m-1$ satisfying
\begin{equation}\label{cc}
    \norm{\mathcal{L}_{0,m-1}(\overrightarrow{\psi})}_{L^{2}}\geq c(m-1)\max_{\ell\in\{3,4,...,2m-2\}}\norm{\psi_{\ell}}_{L^{2}},\, \norm{\mathcal{L}_{0,m-1}(\overrightarrow{\psi})}_{\mathcal{F}L^{1}}\geq c(m-1)\max_{\ell\in\{3,4,...,2m-2\}}\norm{\psi_{\ell}}_{\mathcal{F}L^{1}}.
\end{equation}
Notice that 
\begin{align}
    \norm{\mathcal{L}_{3,m}(\vec{\psi})}_{L^{2}}&\geq  \norm{\mathcal{L}_{0,3,m}(\vec{\psi})}_{L^{2}}-c'\norm{\psi_1}_{L^{2}}\\
      \norm{\mathcal{L}_{3,m}(\vec{\psi})}_{\mathcal{F}L^{1}}&\geq  \norm{\mathcal{L}_{0,3,m}(\vec{\psi})}_{\mathcal{F}L^{1}}-c'\norm{\psi_1}_{\mathcal{F}L^{1}}
\end{align}for some constant $c'$.
It follows from \eqref{cc}  that
\begin{align}
    \norm{\mathcal{L}(\vec{\psi})}_{L^{2}}&\geq c(m-1)\max_{\ell\in\{3,4,...,2m-2\}}\norm{\psi_{\ell}}_{L^{2}}-c'\norm{\psi_1}_{L^{2}}\\
      \norm{\mathcal{L}(\vec{\psi})}_{\mathcal{F}L^{1}}&\geq   c(m-1)\max_{\ell\in\{3,4,...,2m-2\}}\norm{\psi_{\ell}}_{\mathcal{F}L^{1}}-c'\norm{\psi_1}_{\mathcal{F}L^{1}}
\end{align}
After multiplying  \eqref{Final1} by a large constant and adding the resulting estimates to two estimates above,  we can deduce that
\begin{align}
    \norm{\mathcal{L}(\overrightarrow{\psi})}_{L^2}&\geq c(m-1)\max_{\ell\in\{1,3,4,...,2m-2\}}\norm{\psi_{\ell}}_{L^{2}}-K\max_{\ell}\frac{1}{v_{\ell}-v_{\ell+1}}\max_{j} \norm{\psi_{j}}_{L^{2}}\label{eq:finalmiss21}\\
     \norm{\mathcal{L}(\overrightarrow{\psi})}_{\mathcal{F}L^{1}}&\geq c(m-1)\max_{\ell\in\{1,3,4,...,2m-2\}}\norm{\psi_{\ell}}_{\mathcal{F}L^{1}}-K\max_{\ell}\frac{1}{v_{\ell}-v_{\ell+1}}\max_{j} \norm{\psi_{j}}_{\mathcal{F}L^{1}}\label{eq:finalmiss22}
\end{align}
By the definition of $\mathcal{L}_{2,m}(\vec{\psi})$, one has
\begin{align}
      \norm{\mathcal{L}(\vec{\psi})}_{L^{2}}&\geq  \norm{\mathcal{L}_{2,m}(\vec{\psi})}_{L^{2}}\geq  \norm{\psi_2}_{L^{2}}-\Tilde{c}\Big(\norm{\psi_1}_{L^{2}}+\norm{\psi_3}_{L^{2}}\Big)\\
      \norm{\mathcal{L}(\vec{\psi})}_{\mathcal{F}L^{1}}&\geq \norm{\mathcal{L}_{2,m}(\vec{\psi})}_{\mathcal{F}L^{1}}\geq  \norm{\psi_2}_{\mathcal{F}L^{1}}-\Tilde{c}\Big(\norm{\psi_1}_{\mathcal{F}L^{1}}+\norm{\psi_3}_{\mathcal{F}L^{1}}\Big)
\end{align}for some constant $\Tilde{c}$. 
We can use add the first estimate above  to a large multiple of \eqref{eq:finalmiss21} and obtain the existence of a $c(m)>0$ satisfying
\begin{align*}
    \norm{\mathcal{L}(\overrightarrow{\psi})}_{L^{2}} \geq&c_{1}(m)\max_{\ell\in\{1,2,3,4,...,2m-2\}}\norm{\psi_{\ell}}_{L^{2}}-K\max_{\ell}\frac{1}{v_{\ell}-v_{\ell+1}}\max_{j} \norm{\psi_{j}}_{L^{2}}\\  \geq &c(m)\max_{\ell\in\{1,2,3,4,...,2m-2\}}\norm{\psi_{\ell}}_{L^{2}},
\end{align*}
if $\min_{\ell} v_{\ell}-v_{\ell+1}>0$ is large enough, so Lemma \ref{Tp} is true for all $m\in\mathbb{N}_{\geq 2}.$
Similarly, we can deduce that
\begin{equation*}
    \norm{\mathcal{L}(\overrightarrow{\psi})}_{\mathcal{F}L_{1}} \geq c(m)\max_{\ell\in\{1,2,3,4,...,2m-2\}}\norm{\psi_{\ell}}_{\mathcal{F}L^{1}},
\end{equation*}
which implies \eqref{fl11}.
\end{proof}
\begin{proof}[Proof of Theorem \ref{TT}]
\par First, in notation of Lemma \ref{T}, we consider the following function
\begin{equation}\label{Bss}
 B\left(\begin{bmatrix}
     \phi_{1}\\
     \phi_{2}
 \end{bmatrix}\right)(k):=\left(B_{1}\left(\mathcal{S}(0)\left(\begin{bmatrix}
     \phi_{1}\\
     \phi_{2}
 \end{bmatrix}\right)\right)(k),...,B_{2m-2}\left(\mathcal{S}(0)\left(\begin{bmatrix}
     \phi_{1}\\
     \phi_{2}
 \end{bmatrix}\right)\right)(k)\right).
\end{equation}
Applying Lemma \ref{T} to the functions above, we note that the  right-hand side of the resulting estimates enjoys the form of the linear map defined in Lemma \ref{Tp}.  So  Lemma \ref{Tp} implies that
\begin{align}\label{pp}
    \norm{B\left(\begin{bmatrix}
     \phi_{1}\\
     0
 \end{bmatrix}\right)(k)}_{L^{2}_{k}(\mathbb{R})}\geq & c(m)\left[\sum_{\ell=1}^{m}\norm{\begin{bmatrix}
        \phi_{1,\ell}(k)\\
        0
    \end{bmatrix}}_{L^{2}_{k}(\mathbb{R})}\right],\\ \label{negli1}
    B\left(\begin{bmatrix}
     \phi_{1}\\
     0
 \end{bmatrix}\right)(k)=&\begin{bmatrix}
     f_{1}(k)\\
     f_{2}(k)
 \end{bmatrix} \text{, such that $f_{2}\cong 0,$ see Notation \ref{congg}.}
\end{align}
In the formula above each function $\phi_{1,\ell}$ is defined in Definition \ref{s0def} at the item $b),$ see also the Remark \ref{Tremark} which explains briefly the reason that the estimate above is a consequence of Lemma \ref{Tp}. 
\par Therefore, since for any $1\leq \ell \leq 2m-2$ all the linear maps $B_{\ell}$ are bounded into $L^{2}_{x}(\mathbb{R},\mathbb{C}),$ the equation \eqref{Bss} and the estimate \eqref{pp} imply that
\begin{equation}\label{++}
    \norm{\mathcal{S}(0)\left(
    \begin{bmatrix}
    \phi_{1}\\
    0
    \end{bmatrix}
    \right)(x)}_{L^{2}_{x}(\mathbb{R})}\geq c(m)\left[\sum_{\ell=1}^{m}\norm{\begin{bmatrix}
        \phi_{1,\ell}(k)\\
        0
    \end{bmatrix}}_{L^{2}_{k}(\mathbb{R})}\right],
\end{equation}
when $\min_{\ell} v_{\ell}-v_{\ell+1}$ and $\min y_{\ell}-y_{\ell+1}$ are sufficiently large.
\par Similarly to the proofs of \eqref{++} and \eqref{negli1}, we can verify that
\begin{align}\label{--}
    \norm{\mathcal{S}(0)\left(
    \begin{bmatrix}
    0\\
    \phi_{2}
    \end{bmatrix}
    \right)(x)}_{L^{2}_{x}(\mathbb{R})}\geq &c(m)\left[\sum_{\ell=1}^{m}\norm{\begin{bmatrix}
        0\\
        \phi_{2,\ell}(k)
    \end{bmatrix}}_{L^{2}_{k}(\mathbb{R})}\right],\\ \label{negli2}
    B\left(\begin{bmatrix}
     0\\
     \phi_{2}
 \end{bmatrix}\right)(k)=&\begin{bmatrix}
     g_{1}(k)\\
     g_{2}(k)
 \end{bmatrix} \text{, such that $g_{1}\cong 0,$ see Notation \ref{congg}},
\end{align}
and all the functions $\phi_{2,\ell}$ are defined at the item $b)$ of Definition \ref{s0def}.
\par In conclusion, the estimate \eqref{Scoerc}
of Theorem \ref{T} proceeds from the estimates \eqref{--}, \eqref{++} and the linearity of the map $\mathcal{S}(0).$ 
\par Next, using Remarks \ref{weightremark}, \ref{polyweightre} and the decay estimates
\begin{equation*}
\left\vert\frac{d^{\ell}}{dk^{\ell}}s_{\omega_{n}}(k)\right\vert+\left\vert\frac{d^{\ell}}{dk^{\ell}}r_{\omega_{n}}(k)\right\vert=O\left(\frac{1}{(1+\vert k\vert)^{\ell+1}}\right),
\end{equation*}
we can derive the estimates \eqref{b1}, \eqref{b2n}, \eqref{b2n+1} and \eqref{b2m-2} twice on $k$ and obtain similarly the proof of \eqref{Scoerc} that the estimate \eqref{derivdecay} is true.
\par Similarly, using Lemmas \ref{T} and \ref{Tp}, we can verify that
\begin{equation*}
    \norm{S(0)\left(\begin{bmatrix}
        \phi_{1}\\
        \phi_{2}
    \end{bmatrix}\right)(x)}_{L^{2}_{x}(\mathbb{R})}+\norm{B\left(\begin{bmatrix}
        \phi_{1}\\
        \phi_{2}
    \end{bmatrix}\right)(k)}_{\mathcal{F}L_{1}}\geq \sum_{\ell}c(m)\norm{\begin{bmatrix}
        \phi_{1,\ell}(k)\\
        \phi_{2,\ell}(k)
    \end{bmatrix}}_{\mathcal{F}L_{1}},
\end{equation*}
from which we can conclude that inequality \eqref{derivdecay} as a consequence of the elementary estimate
\begin{equation*}
    \norm{F^{*}_{\omega_{\ell}}\left(\chi_{P_{\ell}}(x)h(x)\right)(k)}_{\mathcal{F}L_{1}}\leq c \norm{F^{*}_{\omega_{\ell}}(k)\left(\chi_{P_{\ell}}(x)h(x+y_{0})\right)}_{H^{1}_{k}(\mathbb{R})}\leq c_{2}\norm{(1+\vert x\vert)\chi_{P_{\ell}}(x)h(x+y_{0})}_{L^{2}_{x}(\mathbb{R})},
\end{equation*}
for any real number $y_{0}$ and $c_{1},\,c_{2}$ are positive constants. Moreover, if the hypothesis $(H4)$ is true, Lemma \ref{fordecay} implies that
\begin{equation*}
    \max_{\ell}\norm{F^{*}_{\omega_{\ell}}\left(\chi_{P_{\ell}}(x)h(x)\right)(k)}_{\mathcal{F}L_{1}}\leq C\norm{h}_{L^{1}_{x}(\mathbb{R})},
\end{equation*}
for some constant $C>1,$ from which we can deduce \eqref{fl1teo2}. 
\par Next, using all the estimates in Lemma \ref{T} and the decay rates \eqref{asyreftr} satisfied by all functions $r_{\omega_{\ell}}(k),\,s_{\omega_{\ell}}(k),$ we can verify that 
all the functions
\begin{align*}
    \psi_{1}(k)=&\frac{\partial}{\partial k}\left[ P_{-}\left(e^{i\frac{k(y_{1}+y_{2})}{2}}
 \begin{bmatrix}
    \phi_{1}\left(k+\frac{v_{1}}{2}\right)\\
    \phi_{2}\left(k-\frac{v_{1}}{2}\right)
 \end{bmatrix}\right)\right],\\
 \psi_{2n}(k)=&\frac{\partial}{\partial k}\left[P_{+}\left(\begin{bmatrix}
e^{i\frac{(y_{n}+y_{n+1})k}{2}}\phi_{1,n}(k+\frac{v_{n+1}}{2})\\
e^{i\frac{(y_{n}+y_{n+1})k}{2}}\phi_{2,n}(k-\frac{v_{n+1}}{2})
\end{bmatrix}\right)\right],\\
\psi_{2n+1}=&\frac{\partial}{\partial k}\left[P_{-}\left(\begin{bmatrix}
e^{i\frac{(y_{n+1}+y_{n+2})k}{2}}\phi_{1,n+1}(k+\frac{v_{n+1}}{2})\\
e^{i\frac{(y_{n+1}+y_{n+2})k}{2}}\phi_{2,n+1}(k-\frac{v_{n+1}}{2})
\end{bmatrix}\right)\right],\\
\psi_{2m-2}=&\frac{\partial}{\partial k}\left[ P_{+}\left(\begin{bmatrix}
e^{i\frac{(y_{m-1}+y_{m})k}{2}}\phi_{1,m-1}(k+\frac{v_{m}}{2})\\
e^{i\frac{(y_{m-1}+y_{m})k}{2}}\phi_{2,m-1}(k-\frac{v_{m}}{2})
\end{bmatrix}\right)\right]
\end{align*}
satisfy the conditions of Lemma  \ref{Tp}  for a set of functions $\mathcal{L}_{j,m}(\vec{\psi})$ satisfying for a constant $C>1$
\begin{align}\label{l000}
    \norm{\mathcal{L}_{j,m}(\vec{\psi})}_{\mathcal{F}L_{1}}\leq & C\Bigg[ \max_{\pm,\ell}\vert y_{\ell}-y_{\ell\pm 1}\vert \norm{\begin{bmatrix}
        \phi_{1,\ell}(k)\\
        \phi_{2,\ell}(k)
\end{bmatrix}}_{\mathcal{F}L_{1}}+\max_{\ell}\norm{\frac{d}{dk}B_{\ell}\left(\mathcal{S}(0)(\vec{\phi})\right)(k)}_{\mathcal{F}L_{1}}\\&{+}e^{{-}\beta\min_{\ell}(y_{\ell}-y_{\ell+1})}\max_{\ell}\norm{\begin{bmatrix}
        \phi_{1,\ell}(k)\\
        \phi_{2,\ell}(k)
    \end{bmatrix}}_{L^{2}_{k}(\mathbb{R})}\Bigg],
\end{align}        
for any $j\in\{1,2,\,...,\,2m-2\}.$ In conclusion, Lemma \ref{Tp}, estimates \eqref{fl11} and the following inequalities satisfied with some constants $c_{1},\,c_{2}>0$
\begin{align*}
    \norm{\frac{d}{dk}B_{\ell}\left(\mathcal{S}(0)(\vec{\phi})\right)(k)}_{\mathcal{F}L_{1}}\leq & c_{1}\norm{B_{\ell}\left(\mathcal{S}(0)(\vec{\phi})\right)(k)}_{H^{2}_{k}(\mathbb{R})}\\
    \leq & c_{2} \max_{\ell}\norm{(1+\vert x\vert )^{2}\chi_{\ell}(\tau,x+y_{\ell}+v_{\ell}\tau)\mathcal{S}(\vec{\phi})(0,x+y_{\ell}+v_{\ell}\tau)}_{L^{2}_{x}(\mathbb{R})}
\end{align*}
imply \eqref{derivdecay2}. \par Furthermore, if the hypothesis $(H4)$ is true, then Lemma \ref{fordecay}, \eqref{fl1teo2} and inequality \eqref{l000} imply that
\begin{align}\nonumber
  \norm{\mathcal{L}_{j,m}(\vec{\psi})}_{\mathcal{F}L_{1}}\leq &C\max_{\pm,\ell}\vert y_{\ell}-y_{\ell\pm 1}\vert\left[\norm{\mathcal{S}(0)\left(\begin{bmatrix}
      \phi_{1}\\
      \phi_{2}
  \end{bmatrix}\right)(x)}_{L^{1}_{x}(\mathbb{R})}+e^{{-}\beta \min_{\ell}(y_{\ell}-y_{\ell+1})}\norm{\mathcal{S}(0)\left(\begin{bmatrix}
      \phi_{1}\\
      \phi_{2}
  \end{bmatrix}\right)(x)}_{L^{2}_{x}(\mathbb{R})}\right]\\ \label{fofofo1}
  &{+}C\left[\max_{\ell}\norm{(1+\vert x\vert)\chi_{\ell}(\tau,x+y_{\ell}+v_{\ell}\tau)\mathcal{S}(\vec{\phi})(0,x+y_{\ell}+v_{\ell}\tau)}_{L^{1}_{x}(\mathbb{R})}\right].
\end{align}
\par Therefore, using Lemmas \ref{sobolevdecayofG}, \ref{appFourier}, along with \eqref{l000}, we can repeat the argument used to prove inequality \eqref{Scoerc} that the estimate \eqref{Scoerc2} is true for some constant $c(m)>0.$ For the case where hypothesis $(H4)$ is true, we can deduce \eqref{fl1teo1} from the same argument, but using estimate \eqref{fofofo1} instead of \eqref{l000}. 
\end{proof}

\section{Proof of asymptotic completeness}\label{Asc}
\par With the aim of proving asymptotic completeness, the demonstration will mainly be based on an application of the following results.
\begin{lemma}\label{dec}
 If $f\in L^{2}(\mathbb{R},\mathbb{C}^{2})$ is any function and $\min_{\ell}y_{\ell}-y_{\ell+1}>0$ is large enough, then there exist $\vec{\phi}\in L^{2}(\mathbb{R},\mathbb{C}^{2})$ belonging to the domain of $\mathcal{S}(0),$ and a finite set of functions $\overrightarrow{v_{d_{\ell}}}\in \Raa P_{d,\omega_{\ell}}$ satisfying
 \begin{align*}
     f(x)=&\mathcal{S}(0)(\vec{\phi})(x)+\sum_{\ell=1}^{m}e^{i\frac{\sigma_{3}v_{\ell}x}{2}}\overrightarrow{v_{d_{\ell}}}(x-y_{\ell}).\end{align*}

\end{lemma}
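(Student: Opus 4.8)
The plan is to prove Lemma~\ref{dec} by reducing the decomposition of $f$ to the invertibility of an explicit linear system on a product of Hardy spaces, exactly along the lines sketched in Subsection~\ref{sub:sketch}. Since $\mathcal{S}(0)$ and the translated discrete modes $e^{i\sigma_3 v_\ell x/2}\vec v_{d_\ell}(x-y_\ell)$ are the objects appearing on the right-hand side, I would first rewrite the target identity as \eqref{Pooo1}: one must produce $\vec\phi\in L^2_k(\mathbb{R},\mathbb{C}^2)$ in the domain of $\mathcal{S}(0)$ and $\vec z_{\omega_\ell}\in\Raa P_{d,\omega_\ell}$ with
$$f(x)=\sum_{\ell=1}^m\chi_{P_\ell(0)}(x)\,e^{i(\frac{v_\ell x}{2}+\gamma_\ell)\sigma_3}\hat G_{\omega_\ell}\!\Big(e^{-i\gamma_\ell\sigma_3}e^{iy_\ell k}(\phi_{1,\ell}(k+\tfrac{v_\ell}{2}),\phi_{2,\ell}(k-\tfrac{v_\ell}{2}))^{t}\Big)(x-y_\ell)+\sum_{\ell=1}^m e^{i(\frac{v_\ell x}{2}+\gamma_\ell)\sigma_3}\vec z_{\omega_\ell}(x-y_\ell),$$
where $\phi_{1,\ell},\phi_{2,\ell}$ are built from $\vec\phi$ through item b) of Definition~\ref{s0def}. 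The passage from $f=\mathcal{S}(0)(\vec\phi)+\sum_\ell e^{i\sigma_3 v_\ell x/2}\vec v_{d_\ell}(x-y_\ell)$ to this localized form is licensed by Lemma~\ref{localS0}, which shows $\chi_{P_\ell(0)}\mathcal{S}(0)(\vec\phi)$ agrees with the single-potential expression above up to an $O(e^{-\beta\min(y_\ell-y_{\ell+1})})$ error, while Agmon's exponential decay of the eigenfunctions lets us replace $\vec z_{\omega_\ell}$ by $\chi_{P_\ell(0)}\vec z_{\omega_\ell}$ with the same order of error, reducing \eqref{Pooo1} to \eqref{Pooo2}.

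The second step localizes \eqref{Pooo2} interval by interval and, after the change of variables $x\mapsto x+y_\ell$ and the rotation $e^{-iv_\ell(x+y_\ell)\sigma_3/2}$, turns it into the system \eqref{eqlast2} for the unknowns $\vec z_{\omega_\ell}$, $\vec\phi_\ell$, and the gluing functions $h_{\ell,\pm}\in L^2_x(\mathbb{R},\mathbb{C}^2)$ supported on the complementary half-lines. To close this system I would apply $\sigma_3 F^*_{\omega_\ell}(\sigma_3\,\cdot\,)$ and $\sigma_3 G^*_{\omega_\ell}(\sigma_3\,\cdot\,)$ to each equation of \eqref{eqlast2}: by Corollary~\ref{Asy1sol2} these annihilate the discrete piece $\vec z_{\omega_\ell}$, by Lemma~\ref{leper} they invert $\hat G_{\omega_\ell}$ on the left-hand side (using Remark~\ref{transition} to interchange $\hat G$ and $\hat F$), and what remains is precisely a bounded linear equation $L\big([h_{\ell,\epsilon}]_{\epsilon,\ell}\big)=A(f)$ of the form \eqref{TA} with $L,A$ bounded. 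Once $h_{\ell,\pm}$ are known, \eqref{eqlast2} returns $\vec\phi_\ell$ and $\vec z_{\omega_\ell}$, and inverting item b) of Definition~\ref{s0def} as in \eqref{eq:onepAC} yields $\vec\phi$ and $\vec v_{d_\ell}$; so it suffices to invert $L$. Since the construction of the distorted bases $\mathcal{F}_\omega,\mathcal{G}_\omega$ — hence of $F^*_\omega,G^*_\omega,\hat F_\omega,\hat G_\omega$ — is available even when $\pm\omega_\ell$ are threshold resonances by Lemma~\ref{2.1}, no genericity assumption is needed here.

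The third and decisive step is the invertibility of $L$. Using the splitting $L^2_k(\mathbb{R},\mathbb{C}^2)=H^2(\mathbb{C}_+,\mathbb{C}^2)\oplus H^2(\mathbb{C}_-,\mathbb{C}^2)$ and the asymptotics \eqref{asy1}--\eqref{asy4}, the operators $F^*_{\omega_\ell},G^*_{\omega_\ell}$ acting on a half-line--supported function split into a flat Fourier transform plus a reflection term; by Lemma~\ref{+-interact} together with the coefficient decay \eqref{asyreftr}, the contribution of the oscillatory tails of $h_{\ell,\pm}$ to the ``wrong'' Hardy component is $O(e^{-\beta\min(y_\ell-y_{\ell+1})})$. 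This identifies $L$, after conjugation by the translation phases $e^{\pm ik(y_\ell-y_{\ell\pm1})/2}$, with a perturbation $\mathrm{Id}+N_{\vec v,\vec y}$ of the identity on $H^2(\mathbb{C}_+,\mathbb{C}^{2m-2})\oplus H^2(\mathbb{C}_-,\mathbb{C}^{2m-2})$; the structural computation of the off-diagonal entries of $N_{\vec v,\vec y}$ is exactly Lemma~\ref{T}, and the bound $\|N_{\vec v,\vec y}^{\,m}\|\le C/\min_\ell(v_\ell-v_{\ell+1})$ is the content of Lemma~\ref{Tp}. Hence for $K,L$ large enough $\mathrm{Id}+N_{\vec v,\vec y}$ is boundedly invertible by a Neumann series, which produces $h_{\ell,\pm}$ for every $f\in L^2_x$; the resulting $\vec\phi$ lies in the domain of $\mathcal{S}(0)$ because $h_{\ell,\pm}$ and $f$ are $L^2$ and all inversion formulas are bounded, and uniqueness (not needed for this lemma but available) follows from the coercivity estimate \eqref{Scoerc} of Theorem~\ref{TT}.

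The main obstacle I anticipate is the bookkeeping in the third step: one must verify that \emph{all} the error terms generated when $F^*_{\omega_\ell},G^*_{\omega_\ell}$ meet the off-diagonal ($\ell'\neq\ell$) pieces of $\mathcal{S}(0)(\vec\phi)$, the tails of the $h_{\ell,\pm}$, and the localized parts of the distorted bases, genuinely land in the Hardy component that $N_{\vec v,\vec y}$ can absorb, and that the operator one is perturbing is \emph{exactly} the identity rather than merely invertible. This is where the precise decay rates \eqref{asyreftr}, the analyticity input behind Lemma~\ref{+-interact}, and the interval arithmetic defining $P_\ell(0)$ all enter, and where one must confirm that the thresholds for $K$ and $L$ depend only on the potentials and on $m$, compatibly with the inductive structure of Lemma~\ref{Tp}.
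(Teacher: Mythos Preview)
Your proposal follows the same approach as the paper and is essentially correct. One point to flag: the linear system you obtain after the Hardy projections is not quite the one in Lemma~\ref{Tp} (that lemma is tailored to the coercivity estimate for $\mathcal{S}(0)$ in Theorem~\ref{TT}), but rather the system in Lemma~\ref{tcopp}, which has a different index structure and is what the paper actually invokes in Step~5 of its proof; likewise, the paper carries out an analogue of Lemma~\ref{T} from scratch in Steps~1--4 rather than citing it directly, since the unknowns here are the gluing functions $h_{\ell,\pm}$ (equivalently the $g_{\ell,\pm,j}$ of \eqref{unknown}) rather than the $\psi_\ell$'s of Remark~\ref{Tremark}. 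Both abstract linear-system lemmas are proved by the same induction-on-$m$ technique, so this is a citation issue rather than a gap in the strategy.
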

\begin{corollary}\label{c62}
  If $f\in L^{2}(\mathbb{R},\mathbb{C}^{2}),$ such that
  \begin{align}\label{idprincipal}
     f(x)=&\mathcal{S}(0)(\vec{\phi})(x)+\sum_{\ell=1}^{m}e^{i\frac{\sigma_{3}v_{\ell}x}{2}}\overrightarrow{v_{d_{\ell}}}(x-y_{\ell}),\end{align}
  there exists $C_{m}>1$ satisfying
\begin{equation}\label{poo}
    \max_{\ell}\left[\norm{(\phi_{1,\ell},\phi_{2,\ell})}_{L^{2}_{x}(\mathbb{R})}+\norm{\overrightarrow{v_{d_{\ell}}}}_{L^{2}_{x}(\mathbb{R})}\right]\leq C_{m}\norm{f}_{L^{2}_{x}(\mathbb{R})}.
\end{equation}
Furthermore, if $f(x)\in H^{1}_{x}(\mathbb{R}),$ then there exists $C_{m,n}$ satisfying
\begin{equation}\label{eq222}
    \max \norm{\left((k+\frac{v_{\ell}}{2})\phi_{1,\ell}(k),(k-\frac{v_{\ell}}{2})\phi_{2,\ell}(k)\right)}_{L^{2}_{x}(\mathbb{R})}\leq C_{m,2}\norm{f}_{H^{1}_{x}(\mathbb{R})}.
\end{equation}
\end{corollary}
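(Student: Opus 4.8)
The plan is to read the coefficients $\vec{\phi}$ and $\overrightarrow{v_{d_{\ell}}}$ off the identity \eqref{idprincipal} by applying the maps $B_1,\dots,B_{2m-2}$ of Section~\ref{sec:S0} to it, exactly as in the proof of Theorem~\ref{TT}, and then bootstrapping away the errors, which are exponentially small in $\min_{\ell}(y_{\ell}-y_{\ell+1})$. Throughout, $\vec{\phi}\in L^{2}$ and each $\overrightarrow{v_{d_{\ell}}}$ lies in the finite–dimensional space $\Raa P_{d,\omega_{\ell}}$, whose elements are smooth and exponentially decaying (Agmon's estimate); in particular all the quantities appearing in \eqref{poo} and \eqref{eq222} are a priori finite, which legitimizes the absorption steps below. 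A soft alternative for \eqref{poo} would be to note that $\mathcal{S}(0)$ has closed range (it is bounded below on $L^{2}$ by \eqref{Scoerc} and condition a) of Definition~\ref{s0def}) and that the discrete space is finite–dimensional, so a closed–graph argument applies; I prefer the $B_{\ell}$ route since it is in the spirit of the rest of the paper and simultaneously delivers uniqueness of the decomposition.

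\emph{Step 1 ($L^{2}$ bound).} Apply $B_{\ell}$ to both sides of \eqref{idprincipal}. After the Galilei conjugation and the spatial cutoff built into its definition, $B_{\ell}$ is one of $\sigma_{3}F^{*}_{\omega_{n}}\sigma_{3}$ or $\sigma_{3}G^{*}_{\omega_{n}}\sigma_{3}$ composed with a translation and a localization to the window $P_{n}(0)$, and by Corollary~\ref{Asy1sol2} it annihilates $\sigma_{3}\Raa P_{d,\omega_{n}}$; hence the mode $e^{i\sigma_{3}v_{n}\cdot/2}\overrightarrow{v_{d_{n}}}(\cdot-y_{n})$ contributes only its tail outside $P_{n}(0)$, of size $O\bigl(e^{-\beta\min_{\ell}(y_{\ell}-y_{\ell+1})}\norm{\overrightarrow{v_{d_{n}}}}_{L^{2}_{x}}\bigr)$, while every other mode is exponentially localized at distance $\gtrsim\min_{\ell}(y_{\ell}-y_{\ell+1})$ from $P_{n}(0)$ and contributes an error of the same order. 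Thus $B_{\ell}(f)=B_{\ell}(\mathcal{S}(0)\vec{\phi})+O\bigl(e^{-\beta\min_{\ell}(y_{\ell}-y_{\ell+1})}\max_{j}\norm{\overrightarrow{v_{d_{j}}}}_{L^{2}_{x}}\bigr)$. By Lemma~\ref{T}, the terms $B_{\ell}(\mathcal{S}(0)\vec{\phi})$ are, modulo errors of the type in Notation~\ref{congg}, precisely the right–hand sides to which Lemma~\ref{Tp} applies; since each $B_{\ell}$ is bounded uniformly from $L^{2}_{x}$ to $L^{2}_{k}$, this yields
\[
\sum_{\ell=1}^{m}\norm{(\phi_{1,\ell},\phi_{2,\ell})}_{L^{2}_{k}}\ \lesssim\ \norm{f}_{L^{2}_{x}}+e^{-\beta\min_{\ell}(y_{\ell}-y_{\ell+1})}\Bigl(\max_{\ell}\norm{(\phi_{1,\ell},\phi_{2,\ell})}_{L^{2}_{k}}+\max_{j}\norm{\overrightarrow{v_{d_{j}}}}_{L^{2}_{x}}\Bigr).
\]
For the discrete coefficients, localize \eqref{idprincipal} to $P_{j}(0)$: by Lemma~\ref{localS0}, $\chi_{P_{j}(0)}\mathcal{S}(0)\vec{\phi}$ equals, up to an exponentially small error, $e^{i\sigma_{3}v_{j}\cdot/2}\hat{G}_{\omega_{j}}(\cdots)(\cdot-y_{j})$, of $L^{2}$-norm $\lesssim\norm{(\phi_{1,j},\phi_{2,j})}_{L^{2}_{k}}$, while $\chi_{P_{j}(0)}\sum_{i}e^{i\sigma_{3}v_{i}\cdot/2}\overrightarrow{v_{d_{i}}}(\cdot-y_{i})=e^{i\sigma_{3}v_{j}\cdot/2}\overrightarrow{v_{d_{j}}}(\cdot-y_{j})+O\bigl(e^{-\beta\min_{\ell}(y_{\ell}-y_{\ell+1})}\max_{i}\norm{\overrightarrow{v_{d_{i}}}}_{L^{2}_{x}}\bigr)$. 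Hence $\norm{\overrightarrow{v_{d_{j}}}}_{L^{2}_{x}}\lesssim\norm{f}_{L^{2}_{x}}+\norm{(\phi_{1,j},\phi_{2,j})}_{L^{2}_{k}}+e^{-\beta\min_{\ell}(y_{\ell}-y_{\ell+1})}\max_{i}\norm{\overrightarrow{v_{d_{i}}}}_{L^{2}_{x}}$; taking the maximum over $j$, inserting it into the previous display, and absorbing the exponentially small terms (legitimate once $\min_{\ell}(y_{\ell}-y_{\ell+1})$ is large) gives \eqref{poo}. Running the same argument on the difference of two decompositions shows that the decomposition of Lemma~\ref{dec} is unique.

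\emph{Step 2 ($H^{1}$ bound).} On each finite–dimensional space $\Raa P_{d,\omega_{\ell}}$ all Sobolev norms are equivalent and the generators are smooth with exponential decay, so $e^{i\sigma_{3}v_{\ell}\cdot/2}\overrightarrow{v_{d_{\ell}}}(\cdot-y_{\ell})\in H^{2}_{x}$ with norm $\lesssim_{v_{\ell}}\norm{\overrightarrow{v_{d_{\ell}}}}_{L^{2}_{x}}\lesssim\norm{f}_{L^{2}_{x}}$ by \eqref{poo}; consequently $\mathcal{S}(0)\vec{\phi}=f-\sum_{\ell}(\cdots)$ satisfies $\norm{\mathcal{S}(0)\vec{\phi}}_{H^{s}_{x}}\lesssim\norm{f}_{H^{s}_{x}}$ for $s\in\{0,1,2\}$. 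By the uniqueness just established together with injectivity of $\mathcal{S}(0)$, the map $T\colon f\mapsto(\phi_{1,\ell},\phi_{2,\ell})$ is well defined and linear; by \eqref{Scoerc} and Step~1 it is bounded $L^{2}_{x}\to L^{2}_{k}$, and by \eqref{Scoerc2} together with $\norm{\mathcal{S}(0)\vec{\phi}}_{H^{2}_{x}}\lesssim\norm{f}_{H^{2}_{x}}$ it is bounded $H^{2}_{x}\to L^{2}_{k}(\langle k\rangle^{4}\,dk)$. Complex interpolation at $\theta=\tfrac12$, using $[L^{2}_{x},H^{2}_{x}]_{1/2}=H^{1}_{x}$ and $[L^{2}_{k}(dk),L^{2}_{k}(\langle k\rangle^{4}\,dk)]_{1/2}=L^{2}_{k}(\langle k\rangle^{2}\,dk)$, then gives $\norm{\langle k\rangle(\phi_{1,\ell},\phi_{2,\ell})}_{L^{2}_{k}}\lesssim\norm{f}_{H^{1}_{x}}$, and $\vert k\pm v_{\ell}/2\vert\le C_{v_{\ell}}\langle k\rangle$ converts this into \eqref{eq222}. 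Alternatively one may bypass interpolation and prove the required $H^{1}$-coercivity of $\mathcal{S}(0)$ directly, by repeating the $B_{\ell}$/Lemma~\ref{Tp} argument of Step~1 for $\partial_{x}$ of the windowed pieces and using Lemma~\ref{sobolevdecayofG} and Lemma~\ref{appFourier} to identify $\partial_{x}(\chi_{P_{\ell}(0)}\mathcal{S}(0)\vec{\phi})$ near $y_{\ell}$ with $\hat{G}_{\omega_{\ell}}$ of the boosted, frequency–weighted symbol; this is the odd–order analogue of the computation in the proof of Corollary~\ref{coo}.

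\emph{Main obstacle.} The exponential bookkeeping for the discrete modes, the bootstrap, and uniqueness are routine. The genuine difficulty is the $H^{1}$ estimate: Theorem~\ref{TT} supplies coercivity of $\mathcal{S}(0)$ only at the $L^{2}$ and $H^{2}$ levels, so one must either run the interpolation carefully — checking that the weight attaches to the boosted argument $k\mp v_{\ell}/2$ and that the finitely many non–overlapping boosts preserve the relevant weighted–$L^{2}$ norm equivalences — or re–do the derivative–level $B_{\ell}$ computation, which is essentially as involved as the proof of Corollary~\ref{coo}. All remaining ingredients are already available in Sections~\ref{sec:scatteringone}–\ref{sec:S0}.
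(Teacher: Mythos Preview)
Your argument is correct, but it takes a different path from the paper for both parts.

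For the $L^{2}$ bound, the paper does not re-run the $B_{\ell}$/Lemma~\ref{Tp} machinery. Instead it exploits the near $\sigma_{3}$-orthogonality $\langle\sigma_{3}\mathcal{S}(0)\vec{\phi},\,e^{i\sigma_{3}v_{\ell}\cdot/2}v_{d_{\ell}}(\cdot-y_{\ell})\rangle=O(e^{-\beta\min(y_{\ell}-y_{\ell+1})})$ (from Lemma~\ref{localS0} and Corollary~\ref{Asy1sol2}) together with the non-degeneracy of the Gram matrix $[\langle u_{d_{\ell},m},\sigma_{3}u_{d_{\ell},n}\rangle]$ on each finite-dimensional $\Raa P_{d,\omega_{\ell}}$. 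Pairing \eqref{idprincipal} against this dual basis isolates $\overrightarrow{v_{d_{\ell}}}$ directly, and then Minkowski plus \eqref{Scoerc} closes the bootstrap. This is shorter and conceptually cleaner than your route, which first inverts $\mathcal{S}(0)$ via the $B_{\ell}$'s and only then localizes to recover the discrete pieces; on the other hand, your approach has the advantage of simultaneously yielding uniqueness without appealing to the $\sigma_{3}$-pairing.

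For the $H^{1}$ bound, the paper simply observes that $\mathcal{S}(0)\vec{\phi}\in H^{1}$ (since the discrete modes are Schwartz) and then forward-references Lemma~\ref{h1coercc}, whose proof is exactly the complex interpolation you wrote out, but performed on $\mathcal{T}(0)^{-1}$ rather than on your map $T:f\mapsto\vec{\phi}_{\ell}$ through $\mathcal{S}(0)$. Since $\mathcal{T}(0)-\mathcal{S}(0)$ is exponentially small in $H^{1}$ (Theorem~\ref{tcont}), the two interpolations are equivalent; your version is more self-contained because it stays within Section~\ref{sec:S0} and avoids the forward reference to Section~\ref{Dispsection}.
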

\begin{proof}[Proof of Corollary \ref{c62}.]
First, using Lemma \ref{localS0} and Corollary \ref{Asy1sol2}, we can verify for any function $v_{d_{\ell}}\in \Raa P_{d,\omega_{\ell}}$ that there exists $\beta>0$ satisfying
\begin{equation*}
    \left\langle \sigma_{3}\mathcal{S}(0)(\vec{\phi})(x),e^{i\frac{\sigma_{3}v_{\ell}x}{2}}v_{d_{\ell}}(x-y_{\ell}) \right\rangle=O\left(\norm{v_{d_{\ell}}}_{L^{2}_{x}(\mathbb{R})}\max_{\ell}\norm{(\phi_{1,\ell},\phi_{2,\ell})}_{L^{2}_{x}(\mathbb{R})}e^{{-}\beta \min (y_{\ell}-y_{\ell+1})}\right),
\end{equation*}
when $\min y_{\ell}-y_{y_{\ell+1}}>0$ is large enough. In particular, Theorem \ref{TT} implies the existence of a constant $C>1$ satisfying\begin{equation}\label{prepre}
    \left\vert \left\langle \sigma_{3}\mathcal{S}(0)(\vec{\phi})(x),e^{i\frac{\sigma_{3}v_{\ell}x}{2}}v_{d_{\ell}}(x-y_{\ell}) \right\rangle\right\vert\leq  C\norm{v_{d_{\ell}}}_{L^{2}_{x}(\mathbb{R})}\max_{\ell}\norm{\mathcal{S}(0)(\vec{\phi})(x)}_{L^{2}_{x}(\mathbb{R})}e^{{-}\beta \min (y_{\ell}-y_{\ell+1})},
\end{equation}
for any $v_{d_{\ell}}\in \Raa P_{d,\omega_{\ell}}.$
Consequently,  $\mathcal{S}(0)(\vec{\phi})$ can not belong to the finite-dimensional Hilbert subspace$\mathfrak{g}_{\omega_{\ell},v_{\ell},\gamma_{\ell},y_{\ell}}(\Raa P_{d,\omega_{\ell}})$ of $L^{2}_{x}(\mathbb{R},\mathbb{C}^{2}),$ otherwise \eqref{prepre} would be false.
\par Furthermore, it is well-known for any basis $\{\overrightarrow{u_{d_{\ell},1}},...,\overrightarrow{u_{d_{\ell},n_{\ell}}}\}$ of the set $\Raa P_{d,\omega_{\ell}}$ that the matrix
\begin{equation*}
   A_{_{n_{\ell}\times n_{\ell}} }=\begin{bmatrix}
     \left\langle \overrightarrow{u_{d_{\ell},m}},\sigma_{3}\overrightarrow{u_{d_{\ell},n}} \right\rangle  
   \end{bmatrix}_{n_{\ell}\times n_{\ell}} 
\end{equation*}
is non-degenerate, see \cite{Busper1} or \cite{KriegerSchlag} for example. Therefore, we can verify that
identity \eqref{idprincipal} and estimate \eqref{prepre} imply the following estimate
\begin{equation*}
\max_{\ell}\norm{\overrightarrow{v_{d_{\ell}}}}_{L^{2}_{x}(\mathbb{R})}\leq C\left[\norm{f(x)}_{L^{2}_{x}(\mathbb{R})}+e^{{-}\beta \min_{\ell}(y_{\ell}-y_{\ell+1})}\norm{\mathcal{S}(0)(\vec{\phi})(x)}_{L^{2}_{x}(\mathbb{R})}\right],
\end{equation*}
from which we can deduce using the Minkowski inequality, Young's inequality and \eqref{idprincipal} the existence of a constant $c>0$ satisfying
\begin{equation*}
c\left[ \norm{\mathcal{S}(0)(\vec{\phi}(x))}_{L^{2}_{x}(\mathbb{R})}+\max_{\ell}\norm{\overrightarrow{v_{d_{\ell}}}(x)}_{L^{2}_{x}(\mathbb{R})}\right]\leq \norm{f(x)}_{L^{2}_{x}(\mathbb{R})},
\end{equation*}
when $\min_{\ell}y_{\ell}-y_{\ell+1}>0$ is large enough.
\par Next, since all elements of $\Raa P_{d,\omega_{\ell}}$ are Schwartz functions, we have that $f(x)\in H^{1}_{x}(\mathbb{R})$ implies that
$\mathcal{S}(0)(\vec{\phi})\in H^{1}_{x}(\mathbb{R}).$ Consequently, using Theorem \ref{tcont}, the fact that $\dim \Raa P_{d,\omega_{\ell}}<{+}\infty,$ and Lemma \ref{h1coercc} from Section \ref{Dispsection}, we can verify that inequality \eqref{eq222} holds when $f\in H^{1}_{x}(\mathbb{R}).$
\end{proof}

Before proceeding to the proof of Lemma \ref{dec}, we state the following proposition which is one of the main techniques that we use to prove Lemma \ref{dec} above.

\begin{lemma}\label{tcopp}
 Let $T:L^{2}(\mathbb{R},\mathbb{C}^{2m-2})\to L^{2}(\mathbb{R},\mathbb{C}^{2m-2})$ such that $T(g)\coloneqq (T_{1}(g),\,..., T_{2m-2}(g))$ is defined by
 \begin{align*}
     T_{1}(g)=&g_{1}\left({-}k-\frac{v_{2}}{2}\right)-r_{2}(k)g_{2}\left(k-\frac{v_{2}}{2}\right)-s_{2}(k)g_{3}\left({-}k-\frac{v_{2}}{2}\right),\\
     T_{2}(g)=&g_{2}\left({-}k-\frac{v_{2}}{2}\right)-r_{1}\left({-}k-\frac{v_{2}}{2}+\frac{v_{1}}{2}\right)g_{1}\left(k+\frac{v_{2}}{2}-v_{1}\right),\\
      T_{3}(g)=& g_{3}\left({-}k-\frac{v_{3}}{2}\right)-r_{3}\left(k\right)g_{4}\left(k-\frac{v_{3}}{2}\right)-s_{3}\left(k\right)g_{5}\left({-}k-\frac{v_{3}}{2}\right),\\
      T_{4}(g)=& g_{4}\left({-}k-\frac{v_{3}}{2}\right)-s_{2}\left({-}k-\frac{v_{3}}{2}+\frac{v_{2}}{2}\right)g_{2}\left({-}k-\frac{v_{3}}{2}\right)\\
      &{-}r_{2}\left({-}k+\frac{v_{2}}{2}-\frac{v_{3}}{2}\right)g_{3}\left(k+\frac{v_{3}}{2}-v_{2}\right),\, ...,\\
      T_{2\ell}(g)=& g_{2\ell}\left({-}k-\frac{v_{\ell+1}}{2}\right)-s_{\ell}\left({-}k-\frac{v_{\ell+1}}{2}+\frac{v_{\ell}}{2}\right)g_{2\ell-2}\left({-}k-\frac{v_{\ell+1}}{2}\right)\\
      &{-}r_{\ell}\left({-}k+\frac{v_{\ell}}{2}-\frac{v_{\ell+1}}{2}\right)g_{2\ell-1}\left(k+\frac{v_{\ell}}{2}-v_{\ell-1}\right),\\
      T_{2\ell+1}(g)=&g_{2\ell+1}\left({-}k-\frac{v_{\ell+2}}{2}\right)-s_{\ell+2}(k)g_{2\ell+3}\left({-}k-\frac{v_{\ell+2}}{2}\right)\\
      &{-}r_{\ell+2}(k)g_{2\ell+2}\left(k-\frac{v_{\ell+2}}{2}\right),\, ...,\\
      T_{2m-3}(g)=& g_{2m-3}\left({-}k-\frac{v_{m}}{2}\right)-r_{m}(k)g_{2m-2}\left(k-\frac{v_{m}}{2}\right),\\
      T_{2m-2}(g)=& g_{2m-2}\left({-}k-\frac{v_{m}}{2}\right)-s_{m-1}\left({-}k-\frac{v_{m}}{2}+\frac{v_{m-1}}{2}\right)g_{2m-4}\left({-}k-\frac{v_{m}}{2}\right)\\
      &{-}r_{m-1}\left({-}k+\frac{v_{m-1}}{2}-\frac{v_{m}}{2}\right)g_{2m-3}\left(k+\frac{v_{m}}{2}-v_{m-1}\right),
 \end{align*}
for any function $g=(g_{1},\,...,\,g_{2m-2})\in L^{2}(\mathbb{R},\mathbb{C}^{2m-2}),$ and all the functions $r_{\ell},\,s_{\ell}$ are in $L^{\infty}(\mathbb{R},\mathbb{C})$ satisfying estimates \eqref{pep}. There exist $c,\,K>0$ such that if $\min_{\ell}v_{\ell}-v_{\ell+1}>K,$ then $T$ is an isomorphism and 
\begin{equation*}
    \norm{T(g)(k)}_{L^{2}_{k}(\mathbb{R})}\geq c \norm{g(k)}_{L^{2}_{k}(\mathbb{R})},
\end{equation*}
for all $g\in L^{2}_{k}(\mathbb{R},\mathbb{C}^{2m-2}).$
\end{lemma}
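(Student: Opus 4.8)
The plan is to prove Lemma \ref{tcopp} by the same inductive strategy on $m$ that was used for Lemma \ref{Tp}, since the operator $T$ here is, up to harmless shifts and reflections of the frequency variable, the transpose (adjoint-type) companion of the operator $\mathcal{L}$ appearing there. First I would record the structural observation that the coupling in $T$ between the components $g_{2\ell-1},g_{2\ell}$ and the neighbouring block $g_{2\ell+1},g_{2\ell+2}$ (resp.\ $g_{2\ell-3},g_{2\ell-2}$) is always through the reflection coefficient $r_\ell$ with an argument shifted by $O(v_\ell-v_{\ell+1})$ or through the transmission coefficient $s_\ell$ with a similar shift; and that by \eqref{pep}, which follows from \eqref{POPO}/\eqref{asyreftr}, products of two reflection coefficients whose arguments differ by $v_\ell-v_j$ have $L^\infty$ norm $O(|v_\ell-v_j|^{-1})$. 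This is precisely the smallness that makes the off-diagonal part of $T$ a small perturbation once $\min_\ell (v_\ell-v_{\ell+1})$ is large.

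The base cases $m=2$ and $m=3$ should be checked by hand: for $m=2$ the operator $T=(T_1,T_2)$ has only the $r_1,r_2$ coupling and $\|T(g)\|_{L^2}\geq c\|g\|_{L^2}$ follows by substituting the expression for $T_2$ into $T_1$ (after the appropriate change of variables $k\mapsto -k-\tfrac{v_2}{2}$), using that $\|r_1(\,\cdot\,)r_2(\,\cdot\,+v_1-v_2)\|_{L^\infty}=O(1/|v_1-v_2|)$; exactly the computation already carried out in \cite{perelmanasym} and mirrored in the proof of Lemma \ref{Tp}. For the inductive step, I would isolate the last coordinate pair $(T_{2m-3},T_{2m-2})$ and, exactly as in \eqref{eq:Sm-2}, telescopically eliminate $g_{2m-2}$ and then successively $g_{2m-3},g_{2m-4},\dots$ by forming a specific linear combination of the $T_j$'s with bounded coefficients built from products of the $s_j$'s, picking up at each stage a remainder bounded by $C(\min_\ell\frac1{v_\ell-v_{\ell+1}})\max_j\|g_j\|_{L^2}$ because of \eqref{pep} and the analogue of Lemma \ref{lB1}. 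This yields $\|T(g)\|_{L^2}\gtrsim \|g_1\|_{L^2}-K(\min_\ell\frac1{v_\ell-v_{\ell+1}})\max_j\|g_j\|_{L^2}$ (after absorbing the change of variables, which is an isometry of $L^2$). Then, restricting $T$ to the subsystem obtained by setting $g_1=g_2=0$ and deleting the now-trivial coordinate $T_2$, one gets a map of the same form with $m$ replaced by $m-1$, to which the inductive hypothesis applies and gives control of $\max_{\ell\geq 3}\|g_\ell\|_{L^2}$ up to a constant multiple of $\|g_1\|_{L^2}$; combining this with the $g_1$ estimate, and then recovering $\|g_2\|_{L^2}$ from $\|T_2(g)\|_{L^2}\gtrsim \|g_2\|_{L^2}-\tilde c\|g_1\|_{L^2}$ (note $T_2$ involves only $g_1,g_2$), closes the induction once $\min_\ell(v_\ell-v_{\ell+1})$ is chosen large enough. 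Surjectivity, hence the isomorphism claim, follows from the lower bound together with the fact that $T=\mathrm{Id}+(\text{shift/reflection})+N$ with $N$ of small operator norm and the leading part invertible: one checks $T$ has closed range and trivial cokernel, or equivalently writes $T$ as an invertible operator plus a contraction and applies a Neumann series, exactly as in Remark \ref{TTistrue}.

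The main obstacle will be bookkeeping rather than conceptual: keeping track of all the frequency shifts $k\mapsto -k-\tfrac{v_{\ell+1}}{2}$, $k\mapsto k+\tfrac{v_\ell}{2}-v_{\ell-1}$, etc., so that when one component of $T$ is substituted into another the arguments actually line up and the resulting composition of reflection coefficients has its two arguments differing by a genuine $v_\ell-v_j$ with $|v_\ell-v_j|\gg 1$ — it is only for such pairs that \eqref{pep} gives smallness. A secondary subtlety is that the telescoping in the inductive step must use the transmission coefficients $s_j$, which by \eqref{POPO} are $1+O(1/|k|)$ and \emph{not} small; one must verify that the products $\prod_{j} s_j(-k+\cdots)$ stay bounded in $L^\infty$ (they do, since each $|s_j|\leq 1$) and that the only way an $s_j$ produces a small contribution is when it multiplies an $r$-term whose argument has been shifted by a large velocity gap. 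Once these shift computations are organized carefully — most efficiently by introducing notation for the ``recentered'' components as in \eqref{psii2} and Remark \ref{Tremark} — the estimates are the same routine ones already used to prove Lemma \ref{Tp}.
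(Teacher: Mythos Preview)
Your proposal is correct and follows essentially the same inductive strategy as the paper's proof in Appendix~\ref{app}: induction on $m$, a telescoping combination of the $T_j$'s that uses the boundedness of the $s_j$'s and the smallness estimate \eqref{pep} on products of shifted reflection coefficients to isolate a single component, and then reduction to the $(m-1)$-system. The only organizational difference is the anchor of the telescope: the paper starts from the ``short'' equation $T_2(g)=g_2-r_1 g_1$ and forms $S_\ell=h_2+\sum_{j=1}^{\ell}r_{2,j}\,h_{2j-1}$ to isolate $g_2$ (not $g_1$), after which it absorbs the $s_2 g_2$ term into $h_4$ and applies the inductive hypothesis to the system $(h_3,h_4+s_2 g_2,h_5,\dots,h_{2m-2})$ for the unknowns $g_3,\dots,g_{2m-2}$; your description instead parallels Lemma~\ref{Tp} literally and aims at $g_1$, which also works but note that in the equations $T_3,\dots,T_{2m-2}$ it is $g_2$ (through $T_4$) rather than $g_1$ that appears, so the extra term in your inductive bound should be $\|g_2\|$ rather than $\|g_1\|$.
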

\begin{proof}
See Appendix  \ref{app}.
\end{proof}

\begin{proof}[Proof of Lemma \ref{dec}.]
 \par First, given a function $f\in L^{2}(\mathbb{R},\mathbb{C}^{2}),$ we will construct a system of equations to associate $f$ with an element $\vec{\phi}$ in the domain of $\mathcal{S}(0).$ More precisely, for a fixed $f\in L^{2}(\mathbb{R},\mathbb{C}^{2}),$ we consider the following system of equations below having  unknowns: the functions $f_{\ell,\pm}\in L^{2}(\mathbb{R},\mathbb{C}^{2})$ for $\ell\in\{1,2\,...,\,m\},$ an element $\overrightarrow{\phi}$ belonging to the domain of $\mathcal{S}(0),$ and a finite set of functions $\overrightarrow{v_{d_{\ell}}}\in\Raa P_{d,\omega_{\ell}}$ 
 \begin{align}\label{lG}
& e^{i\frac{\sigma_{3}v_{\ell}x}{2}}\hat{G}_{\omega_{\ell}}\left(e^{iy_{\ell}k}\begin{bmatrix}
     \phi_{1,\ell}\left(k+\frac{v_{\ell}}{2}\right)\\
     \phi_{2,\ell}\left(k-\frac{v_{\ell}}{2}\right)
 \end{bmatrix}\right)(x-y_{\ell})  \\
 &= \chi_{\left\{\frac{(y_{\ell+1}+y_{\ell})}{2}<x\leq \frac{(y_{\ell-1}+y_{\ell})}{2}\right\}}(x)f(x)+ e^{i\frac{\sigma_{3}v_{\ell}x}{2}}\overrightarrow{v_{d_{\ell}}}(x-y_{\ell})\\ \nonumber
 & {+}\chi_{\left\{x\leq \frac{(y_{\ell+1}+y_{\ell})}{2}\right\}}(x)f_{\ell,-}(x)+\chi_{\left\{x> \frac{(y_{\ell}+y_{\ell-1})}{2}\right\}}(x)f_{\ell,+}(x),\\ \label{1G}
 & e^{i\frac{\sigma_{3}v_{1}x}{2}}\hat{G}_{\omega_{1}}\left(e^{iy_{1}k}\begin{bmatrix}
     \phi_{1,1}\left(k+\frac{v_{1}}{2}\right)\\
     \phi_{2,1}\left(k-\frac{v_{1}}{2}\right)
 \end{bmatrix}\right)(x-y_{1})\\    
 &=  \chi_{\left\{x>\frac{(y_{1}+y_{2})}{2}\right\}}(x)f(x)+\chi_{\left\{x\leq \frac{(y_{1}+y_{2})}{2}\right\}}(x)f_{1,-}(x)+e^{i\frac{\sigma_{3}v_{1}x}{2}}\overrightarrow{v_{d_{1}}}(x-y_{1}),\\ \label{mG}
& e^{i\frac{\sigma_{3}v_{m}x}{2}}\hat{G}_{\omega_{m}}\left(e^{iy_{m}k}\begin{bmatrix}
     \phi_{1,m}\left(k+\frac{v_{m}}{2}\right)\\
     \phi_{2,m}\left(k-\frac{v_{m}}{2}\right)
 \end{bmatrix}\right)(x-y_{m})\\
 &= \chi_{\left\{x\leq  \frac{(y_{m}+y_{m-1})}{2}\right\}}(x)f(x)+\chi_{\left\{x>\frac{(y_{m}+y_{m-1})}{2}\right\}}(x)f_{m,+}(x){+}e^{i\frac{\sigma_{3}v_{m}x}{2}}\overrightarrow{v_{d_{m}}}(x-y_{m}),
 \end{align}
for any $\ell\in\{2,..., m-1\}.$
Multiplying \eqref{lG}, \eqref{1G} and \eqref{mG}  by $\chi_{\ell}$,  $\chi_{1},$  and $\chi_{m}$ respectively, and summing all of these new equations,
%
one can verify that if the system of equations above is solvable, then the function $f$ satisfies the following identity
\begin{align*}
    f(x)=&\sum_{\ell}e^{i\frac{\sigma_{3}v_{\ell}x}{2}}\hat{G}_{\omega_{\ell}}\left(e^{iy_{\ell}k}\begin{bmatrix}
     \phi_{1,\ell}\left(k+\frac{v_{\ell}}{2}\right)\\
     \phi_{2,\ell}\left(k-\frac{v_{\ell}}{2}\right)
 \end{bmatrix}\right)(x-y_{\ell})\chi_{\left\{\frac{(y_{\ell+1}+y_{\ell})}{2}<x\leq \frac{(y_{\ell-1}+y_{\ell})}{2}\right\}}(x)\\&{-}\sum_{\ell}e^{i\frac{\sigma_{3}v_{\ell}x}{2}}\overrightarrow{v_{d_{\ell}}}(x-y_{\ell}) \chi_{\left\{\frac{(y_{\ell+1}+y_{\ell})}{2}<x\leq \frac{(y_{\ell-1}+y_{\ell})}{2}\right\}}(x).
\end{align*}
In particular, using Remark \ref{transition} and identities $F^{*}_{\omega_\ell}\sigma_{3}\hat{G}_{\omega_\ell}\sigma_{3}=\mathrm{Id},\, G^{*}_{\omega_\ell}\sigma_{3}\hat{F}_{\omega_\ell}\sigma_{3}=\mathrm{Id}$ from Lemma \ref{leper}, we verify that the finite system of equations above is equivalent to a simpler linear system. 
 \par Moreover, Corollary \ref{Asy1sol2} implies that for each $\ell\in\{2,... , m-1\}$
 \begin{equation*}
     F^{*}_{\omega_{\ell}}\left(\sigma_{3}\overrightarrow{v_{d_{\ell}}}(x)\right)= G^{*}_{\omega_{\ell}}\left(\sigma_{3}\overrightarrow{v_{d_{\ell}}}(x)\right)=0.
 \end{equation*}
  Consequently, using Lemma \ref{FGID} and Corollary \ref{Asy1sol2}, 
  we can deduce for any $\ell\in\{2,..,m-1\}$ that
 \begin{align}\label{l1}
  \begin{bmatrix}
     e^{iy_{\ell}k} \phi_{1,\ell}(k+\frac{v_{\ell}}{2})\\
      {-}e^{iy_{\ell}k}\phi_{2,\ell}(k-\frac{v_{\ell}}{2})
  \end{bmatrix}=&F_{\omega_{\ell}}^{*}\left(\sigma_{3}e^{{-}i\sigma_{3}\frac{v_{\ell}(x+y_{\ell})}{2}}\chi_{\{\frac{y_{\ell+1}-y_{\ell}}{2}< x\leq \frac{y_{\ell-1}-y_{\ell}}{2}\}}(x)f(x+y_{\ell})\right)(k)\\ \nonumber
&{+}F_{\omega_{\ell}}^{*}\left(\sigma_{3}e^{{-}i\sigma_{3}\frac{v_{\ell}(x+y_{\ell})}{2}}\chi_{\{x\leq \frac{y_{\ell+1}-y_{\ell}}{2}\}}(x)f_{\ell,-}(x+y_{\ell})\right)(k)\\ \nonumber
&{+}F_{\omega_{\ell}}^{*}\left(\sigma_{3}e^{{-}i\sigma_{3}\frac{v_{\ell}(x+y_{\ell})}{2}}\chi_{\{x> \frac{y_{\ell-1}-y_{\ell}}{2}\}}(x)f_{\ell,+}(x+y_{\ell})\right)(k),
 \end{align}
and, using Remark \ref{transition}, we can verify for any $\ell\in \{2,..,m-1\}$ that
\begin{align}\label{l2}
  \begin{bmatrix}
     e^{iy_{\ell}k} \phi_{1,\ell-1}(k+\frac{v_{\ell}}{2})\\
      {-}e^{iy_{\ell}k}\phi_{2,\ell-1}(k-\frac{v_{\ell}}{2})
  \end{bmatrix}=&G_{\omega_{\ell}}^{*}\left(\sigma_{3}e^{{-}i\sigma_{3}\frac{v_{\ell}(x+y_{\ell})}{2}}\chi_{\{\frac{y_{\ell+1}-y_{\ell}}{2}< x\leq \frac{y_{\ell-1}-y_{\ell}}{2}\}}(x)f(x+y_{\ell})\right)(k)\\ \nonumber
&{+}G_{\omega_{\ell}}^{*}\left(\sigma_{3}e^{{-}i\sigma_{3}\frac{v_{\ell}(x+y_{\ell})}{2}}\chi_{\{x\leq \frac{y_{\ell+1}-y_{\ell}}{2}\}}(x)f_{\ell,-}(x+y_{\ell})\right)(k)\\ \nonumber
&{+}G_{\omega_{\ell}}^{*}\left(\sigma_{3}e^{{-}i\sigma_{3}\frac{v_{\ell}(x+y_{\ell})}{2}}\chi_{\{x> \frac{y_{\ell-1}-y_{\ell}}{2}\}}(x)f_{\ell,+}(x+y_{\ell})\right)(k).
 \end{align}
Furthermore, we can verify that
\begin{align}\label{l3}
\begin{bmatrix}
e^{iy_{m}k}\phi_{1,m-1}\left(k+\frac{v_{m}}{2}\right)\\
{-}e^{iy_{m}k}\phi_{2,m-1}\left(k-\frac{v_{m}}{2}\right)
\end{bmatrix}=&G_{\omega_{m}}^{*}\left(\sigma_{3}e^{{-}i\sigma_{3}\frac{v_{m}(x+y_{m})}{2}}\chi_{\{ x\leq  \frac{y_{m-1}-y_{m}}{2}\}}(x)f(x+y_{m})\right)(k)\\ \nonumber
&{+}G_{\omega_{m}}^{*}\left(\sigma_{3}e^{{-}i\sigma_{3}\frac{v_{m}(x+y_{m})}{2}}\chi_{\{x> \frac{y_{m-1}-y_{m}}{2}\}}(x)f_{m,+}(x+y_{m})\right)(k),   
\end{align}
and
\begin{align}\label{l4}
\begin{bmatrix}
e^{iy_{1}k}\phi_{1,1}\left(k+\frac{v_{1}}{2}\right)\\
{-}e^{iy_{1}k}\phi_{2,1}\left(k-\frac{v_{1}}{2}\right)
\end{bmatrix}=&F_{\omega_{1}}^{*}\left(\sigma_{3}e^{{-}i\sigma_{3}\frac{v_{1}(x+y_{1})}{2}}\chi_{\{ x>  \frac{y_{2}-y_{1}}{2}\}}(x)f(x+y_{1})\right)(k)\\ \nonumber
&{+}F_{\omega_{1}}^{*}\left(\sigma_{3}e^{{-}i\sigma_{3}\frac{v_{1}(x+y_{1})}{2}}\chi_{\{x\leq  \frac{y_{2}-y_{1}}{2}\}}(x)f_{1,-}(x+y_{1})\right)(k).   \end{align}
\par Next, applying Lemma \ref{appFourier} to the equations \eqref{l1}, \eqref{l2}, \eqref{l3} and \eqref{l4}, we will transform the system of equations above into a non-degenerate linear system of equations such that the unknowns are given by the  following  transforms of $f_{j,\pm}$
\begin{align}\label{unknown}
\begin{bmatrix}
    g_{1,-,1}(k)\\
    g_{1,-,2}(k)
\end{bmatrix}=&\int_{{-}\infty}^{0}f_{1,-}\left(x+\frac{y_{1}+y_{2}}{2}\right)e^{ikx}\,dx,\\ \nonumber  \begin{bmatrix}
    g_{\ell,-,1}(k)\\
    g_{\ell,-,2}(k)
\end{bmatrix}=&\int_{{-}\infty}^{0}f_{\ell,-}\left(x+\frac{y_{\ell}+y_{\ell+1}}{2}\right)e^{ikx}\,dx \text{, when $2\leq \ell\leq m-1,$}\\ \nonumber
\begin{bmatrix}
    g_{\ell,+,1}(k)\\
    g_{\ell,+,2}(k)
\end{bmatrix}=&\int_{0}^{{+}\infty}f_{\ell,+}\left(x+\frac{y_{\ell}+y_{\ell-1}}{2}\right)e^{ikx}\,dx \text{, when $2\leq \ell\leq m-1,$}\\ \nonumber \begin{bmatrix}
    g_{m,+,1}(k)\\
    g_{m,+,2}(k)
\end{bmatrix}=&\int_{0}^{{+}\infty}f_{m,+}\left(x+\frac{y_{m}+y_{m-1}}{2}\right)e^{ikx}\,dx.
\end{align}
Clearly, by Theorem \ref{h2rep}, the first two functions  lie in $H^2(\mathbb{C}_{-})$, and the last two lie in $H^2(\mathbb{C}_{+})$.

\textbf{Step 1. (Estimates of \eqref{l1} and \eqref{l2}.)} 
 First, to simplify  notations, we consider
\begin{align}
  F_{1,\ell}(f)(k)=&F_{\omega_{\ell}}^{*}\left(\sigma_{3}e^{{-}i\sigma_{3}\frac{v_{\ell}(x+y_{\ell})}{2}}\chi_{\{\frac{y_{\ell+1}-y_{\ell}}{2}< x\leq \frac{y_{\ell-1}-y_{\ell}}{2}\}}(x)f(x+y_{\ell})\right)(k),\\
  G_{1,\ell}(f)(k)=&G_{\omega_{\ell}}^{*}\left(\sigma_{3}e^{{-}i\sigma_{3}\frac{v_{\ell}(x+y_{\ell})}{2}}\chi_{\{\frac{y_{\ell+1}-y_{\ell}}{2}< x\leq \frac{y_{\ell-1}-y_{\ell}}{2}\}}(x)f(x+y_{\ell})\right)(k).
\end{align}
\par Next, using Remark \ref{f*r} and identity \eqref{forp1}, we obtain from the equation \eqref{l1} the following estimate
\begin{multline*}
\begin{aligned}
    \begin{bmatrix}
     e^{iy_{\ell}k} \phi_{1,\ell}(k+\frac{v_{\ell}}{2})\\
      {-}e^{iy_{\ell}k}\phi_{2,\ell}(k-\frac{v_{\ell}}{2})
  \end{bmatrix}=& F_{1,\ell}(f)(k){+}\overline{s_{\omega_{\ell}}(k)}\sigma_{3}\left[\int_{\frac{y_{\ell-1}-y_{\ell}}{2}}^{{+}\infty}e^{{-}ikx}e^{{-}i\sigma_{3}\frac{v_{\ell}(x+y_{\ell})}{2}}f_{\ell,+}(x+y_{\ell})\,dx\right]
\\ &{+}\sigma_{3}\int_{{-}\infty}^{\frac{y_{\ell+1}-y_{\ell}}{2}}\left(e^{{-}ikx}+r_{\omega_{\ell}}({-}k)e^{ikx}\right)e^{{-}i\sigma_{3}\frac{v_{\ell}(x+y_{\ell})}{2}}f_{\ell,-}(x+y_{\ell})\,dx\\&{+}O\left(\frac{1}{(1+\vert k\vert )}\norm{f_{\ell,-}(x)\chi_{\{x\leq\frac{y_{\ell}+y_{\ell+1}}{2}\}}}_{L^{2}_{x}(\mathbb{R})}e^{\beta\frac{(y_{\ell+1}-y_{\ell})}{2}}\right)\\&{+}O\left(\frac{1}{(1+\vert k\vert )}\norm{f_{\ell,+}(x)\chi_{\{x>\frac{y_{\ell}+y_{\ell-1}}{2}\}}}_{L^{2}_{x}(\mathbb{R})}e^{{-}\beta\frac{(y_{\ell-1}-y_{\ell})}{2}}\right).
\end{aligned}
\end{multline*}
Therefore, using a change of variables in the estimate above, and the second and third identities of \eqref{unknown}, we deduce that
\begin{multline}
\label{estl1}
\begin{aligned}
    \begin{bmatrix}
     e^{iy_{\ell}k} \phi_{1,\ell}(k+\frac{v_{\ell}}{2})\\
{-}e^{iy_{\ell}k}\phi_{2,\ell}(k-\frac{v_{\ell}}{2})
  \end{bmatrix}=& F_{1,\ell}(f)(k){+}\overline{s_{\omega_{\ell}}(k)}e^{{-}\frac{ik(y_{\ell-1}-y_{\ell})}{2}}
  \begin{bmatrix}
   e^{{-}i\frac{v_{\ell}(y_{\ell-1}+y_{\ell})}{4}}g_{\ell,+,1}\left({-}k-\frac{v_{\ell}}{2}\right)\\
   {-}e^{i\frac{v_{\ell}(y_{\ell-1}+y_{\ell})}{4}}g_{\ell,+,2}\left({-}k+\frac{v_{\ell}}{2}\right)
  \end{bmatrix}
\\ &{+}e^{i\frac{(y_{\ell}-y_{\ell+1})k}{2}}\begin{bmatrix}
    e^{{-}i\frac{v_{\ell}(y_{\ell}+y_{\ell+1})}{4}}g_{\ell,-,1}\left({-}k-\frac{v_{\ell}}{2}\right)\\
    {-}e^{i\frac{v_{\ell}(y_{\ell}+y_{\ell+1})}{4}}g_{\ell,-,2}\left({-}k+\frac{v_{\ell}}{2}\right)
\end{bmatrix}\\
& +r_{\omega_{\ell}}({-}k)e^{i\frac{k(y_{\ell+1}-y_{\ell})}{2}}\begin{bmatrix}
    e^{{-}i\frac{v_{\ell}(y_{\ell}+y_{\ell+1})}{4}}g_{\ell,-,1}\left(k-\frac{v_{\ell}}{2}\right)\\
    {-}e^{i\frac{v_{\ell}(y_{\ell}+y_{\ell+1})}{4}}g_{\ell,-,2}\left(k+\frac{v_{\ell}}{2}\right)
\end{bmatrix}
\\&{+}O\left(\frac{1}{(1+\vert k\vert )}\norm{f_{\ell,-}(x)\chi_{\{x\leq\frac{y_{\ell}+y_{\ell+1}}{2}\}}}_{L^{2}_{x}(\mathbb{R})}e^{\beta\frac{(y_{\ell+1}-y_{\ell})}{2}}\right)\\&{+}O\left(\frac{1}{(1+\vert k\vert )}\norm{f_{\ell,+}(x)\chi_{\{x>\frac{y_{\ell}+y_{\ell-1}}{2}\}}}_{L^{2}_{x}(\mathbb{R})}e^{{-}\beta\frac{(y_{\ell-1}-y_{\ell})}{2}}\right).
\end{aligned}
\end{multline}
\par Similarly, when $\min\left(y_{\ell}-y_{\ell+1},y_{\ell-1}-y_{\ell}\right)$ is sufficiently large, we can verify the existence of a $\beta>0$ satisfying 
\begin{multline*}
\begin{aligned}
 \begin{bmatrix}
     e^{iy_{\ell}k}\phi_{1,\ell-1}\left(k+\frac{v_{\ell}}{2}\right)\\
     e^{iy_{\ell}k}\phi_{1,\ell-1}\left(k-\frac{v_{\ell}}{2}\right)
 \end{bmatrix}=& G_{1,\ell}(f)(k){+}s_{\omega_{\ell}}(k)\sigma_{3}\int_{{-}\infty}^{\frac{y_{\ell+1}-y_{\ell}}{2}}e^{{-}ikx}e^{{-}i\sigma_{3}\frac{v_{\ell}(x+y_{\ell})}{2}}f_{\ell,-}(x+y_{\ell})\,dx\\
 &{+}\sigma_{3}\int_{\frac{y_{\ell-1}-y_{\ell}}{2}}^{{+}\infty}e^{{-}i\sigma_{3}\frac{v_{\ell}(x+y_{\ell})}{2}}\left[e^{{-}ikx}+r_{\omega_{\ell}}(k)e^{ikx}\right]f_{\ell,+}(x+y_{\ell})\,dx\\
 &{+}O\left(\frac{1}{(1+\vert k\vert )}\norm{f_{\ell,-}(x)\chi_{\{x\leq \frac{y_{\ell}+y_{\ell+1}}{2}\}}}_{L^{2}_{x}(\mathbb{R})}e^{\beta\frac{(y_{\ell+1}-y_{\ell})}{2}}\right)\\
 &{+}O\left(\frac{1}{(1+\vert k\vert )}\norm{f_{\ell,+}(x)\chi_{\{x>\frac{y_{\ell}+y_{\ell-1}}{2}\}}}_{L^{2}_{x}(\mathbb{R})}e^{{-}\beta\frac{(y_{\ell-1}-y_{\ell})}{2}}\right).
\end{aligned}
\end{multline*}
Consequently, using the second and third identities of \eqref{unknown}, we obtain the following estimate 
\begin{multline}\label{l-1es}
\begin{aligned}
 \begin{bmatrix}
     e^{iy_{\ell}k}\phi_{1,\ell-1}\left(k+\frac{v_{\ell}}{2}\right)\\
     {-}e^{iy_{\ell}k}\phi_{2,\ell-1}\left(k-\frac{v_{\ell}}{2}\right)
 \end{bmatrix}=& G_{1,\ell}(f)(k){+}s_{\omega_{\ell}}(k)e^{i\frac{(y_{\ell}-y_{\ell+1})k}{2}}\begin{bmatrix}
     g_{\ell,-,1}\left({-}k-\frac{v_{\ell}}{2}\right)e^{{-}i\frac{v_{\ell}(y_{\ell}+y_{\ell+1})}{4}}\\
     {-}g_{\ell,-,2}\left({-}k+\frac{v_{\ell}}{2}\right)e^{i\frac{v_{\ell}(y_{\ell}+y_{\ell+1})}{4}}
 \end{bmatrix}\\
 &{+}e^{{-}i\frac{k(y_{\ell-1}-y_{\ell})}{2}}\begin{bmatrix}
     e^{{-}i\frac{v_{\ell}\left(y_{\ell-1}+y_{\ell}\right)}{4}}g_{\ell,+,1}\left({-}k-\frac{v_{\ell}}{2}\right)\\
    {-}e^{i\frac{v_{\ell}\left(y_{\ell-1}+y_{\ell}\right)}{4}}g_{\ell,+,2}\left({-}k+\frac{v_{\ell}}{2}\right)
 \end{bmatrix}
 \\&{+}r_{\omega_{\ell}}(k)e^{i\frac{k(y_{\ell-1}-y_{\ell})}{2}}
 \begin{bmatrix}
 e^{{-}i\frac{v_{\ell}(y_{\ell}+y_{\ell-1})}{4}}g_{\ell,+,1}\left (k-\frac{v_{\ell}}{2}\right)\\
 {-}e^{i\frac{v_{\ell}(y_{\ell}+y_{\ell-1})}{4}}g_{\ell,+,2}\left (k+\frac{v_{\ell}}{2}\right)
 \end{bmatrix}
 \\&{+}O\left(\frac{1}{(1+\vert k\vert )}\norm{f_{\ell,-}(x)\chi_{\{x\leq \frac{y_{\ell}+y_{\ell+1}}{2}\}}}_{L^{2}_{x}(\mathbb{R})}e^{\beta\frac{(y_{\ell+1}-y_{\ell})}{2}}\right)\\
 &{+}O\left(\frac{1}{(1+\vert k\vert )}\norm{f_{\ell,+}(x)\chi_{\{x>\frac{y_{\ell}+y_{\ell-1}}{2}\}}}_{L^{2}_{x}(\mathbb{R})}e^{{-}\beta\frac{(y_{\ell-1}-y_{\ell})}{2}}\right).
\end{aligned}
\end{multline}
\\
\textbf{Step 2. (Estimates of \eqref{l3} and \eqref{l4}.)} Similarly to the approach in the previous step, we consider the following functions
\begin{align}\label{G1m}
   G_{1,m}(f)(k)=& G_{\omega_{m}}^{*}\left(\sigma_{3}e^{{-}i\sigma_{3}\frac{v_{m}(x+y_{m})}{2}}\chi_{\{x\leq \frac{y_{m-1}-y_{m}}{2}\}}f(x+y_{m})\right)(k),\\ \label{F1}
   F_{1,1}(f)(k)=& F_{\omega_{1}}^{*}\left(\sigma_{3}e^{{-}i\sigma_{3}\frac{v_{1}(x+y_{1})}{2}}\chi_{\{x> \frac{y_{2}-y_{1}}{2}\}}f(x+y_{m})\right)(k).
\end{align}
From the identity \eqref{G1m}, it is not difficult to verify that the equation \eqref{l3} is equivalent to
\begin{multline}
\begin{aligned}
\begin{bmatrix}
    e^{iy_{m}k}\phi_{1,m-1}\left(k+\frac{v_{m}}{2}\right)\\
{-}e^{iy_{m}k}\phi_{2,m-1}\left(k-\frac{v_{m}}{2}\right)
\end{bmatrix}=&G_{1,m}(f)(k)\\ \nonumber
&{+}G_{\omega_{m}}^{*}(k)\sigma_{3}\left(e^{{-}i\sigma_{3}\frac{v_{m}(x+y_{m})}{2}}\chi_{\{x> \frac{y_{m-1}-y_{m}}{2}\}}(x)f_{m,+}(x+y_{m})\right).  
\end{aligned}
\end{multline}
\par Consequently, we deduce from Remark \ref{f*r} and identity \eqref{forp1} that
\begin{multline}
\begin{aligned}
\begin{bmatrix}
    e^{iy_{m}k}\phi_{1,m-1}\left(k+\frac{v_{m}}{2}\right)\\
{-}e^{iy_{m}k}\phi_{2,m-1}\left(k-\frac{v_{m}}{2}\right)
\end{bmatrix}=&G_{1,m}(f)(k)\\ \nonumber
&{+}r_{\omega_{m}}(k)\sigma_{3}\left[\int_{\frac{y_{m-1}-y_{m}}{2}}^{{+}\infty}e^{{-}i\sigma_{3}\frac{v_{m}(x+y_{m})}{2}}f_{m,{+}}(x+y_{m})e^{ikx}\,dx\right]\\&{+}\sigma_{3}\int_{\frac{y_{m-1}-y_{m}}{2}}^{{+}\infty}e^{{-}i\sigma_{3}\frac{v_{m}(x+y_{m})}{2}}f_{m,{+}}(x+y_{m})e^{{-}ikx}\,dx\\&{+}O\left(\frac{1}{(1+\vert k\vert)}\norm{f_{m,+}(x)\chi_{\{x> \frac{y_{m-1}+y_{m}}{2}\}}}_{L^{2}_{x}(\mathbb{R})}e^{{-}\beta \frac{y_{m-1}-y_{m}}{2} }\right).
\end{aligned}
\end{multline}
see also \eqref{asy2} for understanding the estimate above. Therefore, using the fourth identity at \eqref{unknown}, we conclude the following estimate
\begin{multline}\label{l-mest}
\begin{aligned}
\begin{bmatrix}
    e^{iy_{m}k}\phi_{1,m-1}\left(k+\frac{v_{m}}{2}\right)\\
{-}e^{iy_{m}k}\phi_{2,m-1}\left(k-\frac{v_{m}}{2}\right)
\end{bmatrix}=&G_{1,m}(f)(k)\\ 
&{+}r_{\omega_{m}}(k)
e^{i\frac{k(y_{m-1}-y_{m})}{2}}\begin{bmatrix}
    e^{{-}i\frac{v_{m}(y_{m-1}+y_{m})}{4}}g_{m,+,1}\left(k-\frac{v_{m}}{2}\right)\\
    {-}e^{i\frac{v_{m}(y_{m-1}+y_{m})}{4}}g_{m,+,2}\left(k+\frac{v_{m}}{2}\right)
\end{bmatrix}
\\&{+}e^{{-}i\frac{k(y_{m-1}-y_{m})}{2}}
\begin{bmatrix}
    e^{{-}i\frac{v_{m}(y_{m-1}+y_{m})}{4}}g_{m,+,1}\left({-}k-\frac{v_{m}}{2}\right)\\
    {-}e^{i\frac{v_{m}(y_{m-1}+y_{m})}{4}}g_{m,+,2}\left({-}k+\frac{v_{m}}{2}\right)
\end{bmatrix}
\\&{+}O\left(\frac{1}{(1+\vert k\vert)}\norm{f_{m,+}(x)\chi_{\{x> \frac{y_{m-1}+y_{m}}{2}\}}}_{L^{2}_{x}(\mathbb{R})}e^{{-}\beta \frac{y_{m-1}-y_{m}}{2} }\right),
\end{aligned}
\end{multline}
\par Next, considering equations \eqref{l4} and \eqref{F1}, we have the following identity
\begin{align}\nonumber
\begin{bmatrix}  
    e^{iy_{1}k}\phi_{1,1}\left(k+\frac{v_{1}}{2}\right)\\
{-}e^{iy_{1}k}\phi_{2,1}\left(k-\frac{v_{1}}{2}\right)
\end{bmatrix}=F_{1,1}(f)(k){+}F_{\omega_{1}}^{*}(k)\sigma_{3}\left(e^{{-}i\sigma_{3}\frac{v_{1}(x+y_{1})}{2}}\chi_{\{x\leq  \frac{y_{2}-y_{1}}{2}\}}(x)f_{1,-}(x+y_{1})\right). 
\end{align}
Consequently, similarly to the estimate \eqref{l-mest} and using Remark \ref{f*r}, we can verify that
\begin{align}\label{l-1est}
\begin{bmatrix}  
    e^{iy_{1}k}\phi_{1,1}\left(k+\frac{v_{1}}{2}\right)\\
{-}e^{iy_{1}k}\phi_{2,1}\left(k-\frac{v_{1}}{2}\right)
\end{bmatrix}=&F_{1,1}(f)(k){+}e^{{-}i\frac{k(y_{2}-y_{1})}{2}}
\begin{bmatrix}
e^{{-}i\frac{v_{1}(y_{1}+y_{2})}{4}}g_{1,-,1}\left({-}k-\frac{v_{1}}{2}\right)\\
{-}e^{i\frac{v_{1}(y_{1}+y_{2})}{4}}g_{1,-,2}\left({-}k+\frac{v_{1}}{2}\right)
\end{bmatrix}
\\ \nonumber &{+} r_{\omega_{1}}({-}k)e^{i\frac{k(y_{2}-y_{1})}{2}}
\begin{bmatrix}
    e^{{-}i\frac{v_{1}(y_{1}+y_{2})}{4}}g_{1,{-},1}\left(k-\frac{v_{1}}{2}\right)\\
    {-}e^{i\frac{v_{1}(y_{1}+y_{2})}{4}}g_{1,-,2}\left(k+\frac{v_{1}}{2}\right)
\end{bmatrix}. 
\end{align}
\\
\textbf{Step 3. (Applications of Remark \ref{transition} to \eqref{l1} and \eqref{l2}.)} 
First, using the notation from the previous step, we consider the functions
\begin{equation*}
    F_{1,\ell}(f)(k)=\begin{bmatrix}
        f_{1,\ell}(k)\\
        f_{2,\ell}(k)
    \end{bmatrix},\, G_{1,\ell}(f)(k)=\begin{bmatrix}
        g_{1,\ell}(k)\\
        g_{2,\ell}(k)
    \end{bmatrix},
\end{equation*}
they are going to be very useful for us to simplify our reasoning.
\par Moreover, using the identities \eqref{l1} and \eqref{l2}, we can verify for any $\ell\in\{2,..., \, m-1\}$ that
\begin{align*}
\begin{bmatrix}
e^{iy_{\ell+1}k}\phi_{1,\ell}\left(k+\frac{v_{\ell+1}}{2}\right)\\
{-}e^{iy_{\ell+1}k}\phi_{2,\ell}\left(k-\frac{v_{\ell+1}}{2}\right)
    \end{bmatrix}
\end{align*}
from \eqref{l-1es} is equivalent to
\begin{align}\label{estpt1}
 G_{1,\ell+1}(f)(k)&{+}s_{\omega_{\ell+1}}(k)e^{i\frac{(y_{\ell+1}-y_{\ell+2})k}{2}}
 \begin{bmatrix}
g_{\ell+1,-,1}\left({-}k-\frac{v_{\ell+1}}{2}\right)e^{{-}i\frac{v_{\ell+1}(y_{\ell+1}+y_{\ell+2})}{4}}\\
{-}g_{\ell+1,-,2}\left({-}k+\frac{v_{\ell+1}}{2}\right)e^{i\frac{v_{\ell+1}(y_{\ell+1}+y_{\ell+2})}{4}}
 \end{bmatrix}\\ \nonumber
&{+}r_{\omega_{\ell+1}}(k)e^{i\frac{k(y_{\ell}-y_{\ell+1})}{2}}
 \begin{bmatrix}
e^{{-}i\frac{v_{\ell+1}(y_{\ell+1}+y_{\ell})}{4}}g_{\ell+1,+,1}\left(k-\frac{v_{\ell+1}}{2}\right)\\
{-}e^{i\frac{v_{\ell+1}(y_{\ell+1}+y_{\ell})}{4}}g_{\ell+1,+,2}\left(k+\frac{v_{\ell+1}}{2}\right)
 \end{bmatrix}\\ \nonumber
&{+}e^{{-}i\frac{k(y_{\ell}-y_{\ell+1})}{2}}\begin{bmatrix}
e^{{-}i\frac{v_{\ell+1}\left(y_{\ell}+y_{\ell+1}\right)}{4}}g_{\ell+1,+,1}\left({-}k-\frac{v_{\ell+1}}{2}\right)\\
{-}e^{i\frac{v_{\ell+1}\left(y_{\ell}+y_{\ell+1}\right)}{4}}g_{\ell+1,+,2}\left({-}k+\frac{v_{\ell+1}}{2}\right)
 \end{bmatrix}\\ \nonumber
 &{+} O\left(\frac{1}{(1+\vert k\vert)}\norm{f_{\ell+1,{+}}(x)\chi_{\{x>\frac{y_{\ell+1}+y_{\ell}}{2}\}}}_{L^{2}_{x}(\mathbb{R})}e^{\beta \frac{(y_{\ell+1}-y_{\ell})}{2}}\right)\\ \nonumber
 &{+} O\left(\frac{1}{(1+\vert k\vert)}\norm{f_{\ell+1,{-}}(x)\chi_{\{x\leq \frac{y_{\ell+1}+y_{\ell+2}}{2}\}}}_{L^{2}_{x}(\mathbb{R})}e^{{-}\beta \frac{(y_{\ell+1}-y_{\ell+2})}{2}}\right).
\end{align}
Furthermore, from \eqref{estl1}, it is not difficult to verify the existence of constants $\theta_{0}(\ell),\,\theta_{1}(\ell),\,\theta_{2}(\ell)$ and $\theta_{3}(\ell)$ depending only on the parameters $y_{\ell-1},\,y_{\ell},\,y_{\ell+1},\,v_{\ell-1},\,v_{\ell}$ and $v_{\ell+1}$ such that the expression above is equivalent to
\begin{multline}
\label{estl1122}
\begin{aligned}
\begin{bmatrix}
e^{iy_{\ell+1}k} \phi_{1,\ell}(k+\frac{v_{\ell+1}}{2})\\
{-}e^{iy_{\ell+1}k}\phi_{2,\ell}(k-\frac{v_{\ell+1}}{2})
  \end{bmatrix}=&\begin{bmatrix}
    e^{i(y_{\ell+1}-y_{\ell})k}e^{i\theta_{0}(\ell)}f_{1,\ell}\left(k+\frac{v_{\ell+1}}{2}-\frac{v_{\ell}}{2}\right)\\
    e^{i(y_{\ell+1}-y_{\ell})k}e^{{-}i\theta_{0}(\ell)}f_{2,\ell}\left(k-\frac{v_{\ell+1}}{2}+\frac{v_{\ell}}{2}\right)
\end{bmatrix}\\ &{+}e^{i(y_{\ell+1}-\frac{(y_{\ell}+y_{\ell-1})}{2})k}
  \begin{bmatrix}
\overline{s_{\omega_{\ell}}\left(k+\frac{v_{\ell+1}}{2}-\frac{v_{\ell}}{2}\right)}e^{i\theta_{1}(\ell)}g_{\ell,+,1}\left({-}k-\frac{v_{\ell+1}}{2}\right)\\
{-}\overline{s_{\omega_{\ell}}\left(k-\frac{v_{\ell+1}}{2}+\frac{v_{\ell}}{2}\right)}e^{{-}i\theta_{1}(\ell)}g_{\ell,+,2}\left({-}k+\frac{v_{\ell+1}}{2}\right)
  \end{bmatrix}
\\ &{+}e^{i\frac{(y_{\ell+1}-y_{\ell})k}{2}}\begin{bmatrix}
e^{i\theta_{2}(\ell)}g_{\ell,-,1}\left({-}k-\frac{v_{\ell+1}}{2}\right)\\
{-}e^{{-}i\theta_{2}(\ell)}g_{\ell,-,2}\left({-}k+\frac{v_{\ell+1}}{2}\right)
\end{bmatrix}\\
&{+}e^{i\frac{3(y_{\ell+1}-y_{\ell})k}{2}}\begin{bmatrix}
r_{\omega_{\ell}}\left({-}k-\frac{v_{\ell+1}}{2}+\frac{v_{\ell}}{2}\right)e^{i\theta_{3}(\ell)}g_{\ell,-,1}\left(k+\frac{v_{\ell+1}}{2}-v_{\ell}\right)\\
{-}r_{\omega_{\ell}}\left({-}k+\frac{v_{\ell+1}}{2}-\frac{v_{\ell}}{2}\right)e^{{-}i\theta_{3}(\ell)}g_{\ell,-,2}\left(k-\frac{v_{\ell+1}}{2}+v_{\ell}\right)
\end{bmatrix}
\\&{+}O\left(\frac{1}{(1+\vert k\vert )}\norm{f_{\ell,-}(x)\chi_{\{x\leq\frac{y_{\ell}+y_{\ell+1}}{2}\}}}_{L^{2}_{x}(\mathbb{R})}e^{\beta\frac{(y_{\ell+1}-y_{\ell})}{2}}\right)\\&{+}O\left(\frac{1}{(1+\vert k\vert )}\norm{f_{\ell,+}(x)\chi_{\{x>\frac{y_{\ell}+y_{\ell-1}}{2}\}}}_{L^{2}_{x}(\mathbb{R})}e^{{-}\beta\frac{(y_{\ell-1}-y_{\ell})}{2}}\right).
\end{aligned}
\end{multline}
Moreover, due to Remark \ref{r+-}, we can restrict ourselves to the case where all the functions $r_{\ell}$ and $s_{\ell}$  satisfying \eqref{asyreftr}.
\par In conclusion, for any $\ell\in\{2,\, ...,m-1\,\},$ it is not difficult to verify using the equations \eqref{estpt1}, \eqref{estl1122}, and Lemma \ref{+-interact} that the following estimates are true
\begin{multline}\label{pl+1}
  P_{+}\left(e^{i\frac{k(y_{\ell}-y_{\ell+1})}{2}}g_{1,\ell+1}(k)-e^{i\frac{k(y_{\ell+1}-y_{\ell})}{2}}e^{i\theta_{0}(\ell)}f_{1,\ell}\left(k+\frac{v_{\ell+1}}{2}-\frac{v_{\ell}}{2}\right)\right)\\
 \begin{aligned}
 =&e^{i\theta_{2}(\ell)}g_{\ell,-,1}\left({-}k-\frac{v_{\ell+1}}{2}\right)\\&{-}r_{\omega_{\ell+1}}(k)e^{i(y_{\ell}-y_{\ell+1})k}e^{{-}i\frac{v_{\ell+1}(y_{\ell+1}+y_{\ell})}{4}}g_{\ell+1,+,1}\left(k-\frac{v_{\ell+1}}{2}\right)\\
 &{-}s_{\omega_{\ell+1}}(k)e^{i\frac{(y_{\ell}-y_{\ell+2})k}{2}}e^{{-}i\frac{v_{\ell+1}(y_{\ell+1}+y_{\ell+2})}{4}}g_{\ell+1,-,1}\left({-}k-\frac{v_{\ell+1}}{2}\right)\\
&{+}O\left(e^{{-}\beta (y_{\ell}-y_{\ell+1})}\max_{\pm}\norm{f_{\ell,\pm}(x)}_{L^{2}_{x}(\mathbb{R})}+e^{{-}\beta (y_{\ell-1}-y_{\ell})}\max_{\pm}\norm{f_{\ell,\pm}(x)}_{L^{2}_{x}(\mathbb{R})}\right)
\\&{+}O\left(e^{{-}\beta (y_{\ell}-y_{\ell+1})}\max_{\pm}\norm{f_{\ell+1,\pm}(x)}_{L^{2}_{x}(\mathbb{R})}+e^{{-}\beta (y_{\ell-1}-y_{\ell})}\max_{\pm}\norm{f_{\ell+1,\pm}(x)}_{L^{2}_{x}(\mathbb{R})}\right),
\end{aligned}
\end{multline}
and
\begin{multline}\label{Pl-11}
P_{-}\left(e^{i\frac{k(y_{\ell}-y_{\ell+1})}{2}}g_{1,\ell+1}(k)-e^{i\frac{k(y_{\ell+1}-y_{\ell})}{2}}e^{i\theta_{0}(\ell)}f_{1,\ell}\left(k+\frac{v_{\ell+1}}{2}-\frac{v_{\ell}}{2}\right)\right)\\
\begin{aligned}
=&{-}e^{{-}i\frac{v_{\ell+1}(y_{\ell+1}+y_{\ell})}{4}}g_{\ell+1,+,1}\left({-}k-\frac{v_{\ell+1}}{2}\right)\\
    &{+}s_{\omega_{\ell}}\left({-}k-\frac{v_{\ell+1}}{2}+\frac{v_{\ell}}{2}\right)e^{i\frac{(y_{\ell+1}-y_{\ell-1})k}{2}}e^{i\theta_{1}(\ell)}g_{\ell,+,1}\left({-}k-\frac{v_{\ell+1}}{2}\right)\\
    &{+}e^{i(y_{\ell+1}-y_{\ell})k}r_{\omega_{\ell}}\left({-}k+\frac{v_{\ell}}{2}-\frac{v_{\ell+1}}{2}\right)e^{i\theta_{3}(\ell)}g_{\ell,-,1}\left(k+\frac{v_{\ell+1}}{2}-v_{\ell}\right)\\
&{+}O\left(e^{{-}\beta (y_{\ell}-y_{\ell+1})}\max_{\pm}\norm{f_{\ell,\pm}(x)}_{L^{2}_{x}(\mathbb{R})}+e^{{-}\beta (y_{\ell-1}-y_{\ell})}\max_{\pm}\norm{f_{\ell,\pm}(x)}_{L^{2}_{x}(\mathbb{R})}\right)\\
&{+}O\left(e^{{-}\beta (y_{\ell}-y_{\ell+1})}\max_{\pm}\norm{f_{\ell+1,\pm}(x)}_{L^{2}_{x}(\mathbb{R})}+e^{{-}\beta (y_{\ell-1}-y_{\ell})}\max_{\pm}\norm{f_{\ell+1,\pm}(x)}_{L^{2}_{x}(\mathbb{R})}\right).
\end{aligned}    
\end{multline}
\par Similarly, we can verify using estimates \eqref{estpt1}, \eqref{estl1122}, and Lemma \ref{+-interact} that  
\begin{multline}\label{pl+2}
P_{+}\left(e^{i\frac{k(y_{\ell}-y_{\ell+1})}{2}}g_{2,\ell+1}(k)-e^{i\frac{k(y_{\ell+1}-y_{\ell})}{2}}e^{{-}i\theta_{0}(\ell)}f_{2,\ell}\left(k-\frac{v_{\ell+1}}{2}+\frac{v_{\ell}}{2}\right)\right)\\    
\begin{aligned}
=& s_{\omega_{\ell+1}}(k)e^{i\frac{k(y_{\ell}-y_{\ell+2})}{2}}e^{i\frac{v_{\ell+1}(y_{\ell+1}+y_{\ell+2})}{4}}g_{\ell+1,-,2}\left({-}k+\frac{v_{\ell+1}}{2}\right)\\
&{+}r_{\omega_{\ell+1}}(k)e^{ik(y_{\ell}-y_{\ell+1})}e^{i\frac{v_{\ell+1}(y_{\ell}+y_{\ell+1})}{4}}g_{\ell+1,+,2}\left(k+\frac{v_{\ell+1}}{2}\right)\\
&{-}e^{{-}i\theta_{2}(\ell)}g_{\ell,-,2}\left({-}k+\frac{v_{\ell+1}}{2}\right)\\
&{+}O\left(e^{{-}\beta (y_{\ell}-y_{\ell+1})}\max_{\pm}\norm{f_{\ell,\pm}(x)}_{L^{2}_{x}(\mathbb{R})}+e^{{-}\beta (y_{\ell-1}-y_{\ell})}\max_{\pm}\norm{f_{\ell,\pm}(x)}_{L^{2}_{x}(\mathbb{R})}\right)
\\&{+}O\left(e^{{-}\beta (y_{\ell}-y_{\ell+1})}\max_{\pm}\norm{f_{\ell+1,\pm}(x)}_{L^{2}_{x}(\mathbb{R})}+e^{{-}\beta (y_{\ell-1}-y_{\ell})}\max_{\pm}\norm{f_{\ell+1,\pm}(x)}_{L^{2}_{x}(\mathbb{R})}\right),
\end{aligned}
\end{multline}
and
\begin{multline}\label{pl-2}
P_{-}\left(e^{i\frac{k(y_{\ell}-y_{\ell+1})}{2}}g_{2,\ell+1}(k)-e^{i\frac{k(y_{\ell+1}-y_{\ell})}{2}}e^{{-}i\theta_{0}(\ell)}f_{2,\ell}\left(k-\frac{v_{\ell+1}}{2}+\frac{v_{\ell}}{2}\right)\right)\\
\begin{aligned}
=& {-}s_{\omega_{\ell}}\left({-}k+\frac{v_{\ell+1}}{2}-\frac{v_{\ell}}{2}\right)e^{i\frac{k(y_{\ell+1}-y_{\ell-1})}{2}}e^{{-}i\theta_{1}(\ell)}g_{\ell,+,2}\left({-}k+\frac{v_{\ell+1}}{2}\right)\\
&{-}r_{\omega_{\ell}}\left({-}k+\frac{v_{\ell+1}}{2}-\frac{v_{\ell}}{2}\right)e^{ik(y_{\ell+1}-y_{\ell})}e^{{-}i\theta_{3}(\ell)}g_{\ell,-,2}\left(k-\frac{v_{\ell+1}}{2}+v_{\ell}\right)\\
&{+}e^{i\frac{v_{\ell+1}(y_{\ell}+y_{\ell+1})}{4}}g_{\ell+1,+,2}\left({-}k+\frac{v_{\ell+1}}{2}\right)\\
&{+}O\left(e^{{-}\beta (y_{\ell}-y_{\ell+1})}\max_{\pm}\norm{f_{\ell,\pm}(x)}_{L^{2}_{x}(\mathbb{R})}+e^{{-}\beta (y_{\ell-1}-y_{\ell})}\max_{\pm}\norm{f_{\ell,\pm}(x)}_{L^{2}_{x}(\mathbb{R})}\right)\\
&{+}O\left(e^{{-}\beta (y_{\ell}-y_{\ell+1})}\max_{\pm}\norm{f_{\ell+1,\pm}(x)}_{L^{2}_{x}(\mathbb{R})}+e^{{-}\beta (y_{\ell-1}-y_{\ell})}\max_{\pm}\norm{f_{\ell+1,\pm}(x)}_{L^{2}_{x}(\mathbb{R})}\right).
\end{aligned}
\end{multline}
\\
\textbf{Step 4. (Applications of Remark \ref{transition} to \eqref{l4} and \eqref{l3}.)} Next, using the estimate \eqref{estl1122} for $\ell=m-1$ and \eqref{estpt1} for $\ell=1,$ we can verify that there exist parameters $\theta_{m,1},\,\theta_{m,2},\,\theta_{m,3}$ and $\theta_{m,4}$ depending only on $v_{m-1},\,v_{m},\,y_{m},\,y_{m-1}$ satisfying
\begin{multline}\label{Em}
 \begin{bmatrix}
e^{iy_{m}k}\phi_{1,m-1}\left(k+\frac{v_{m}}{2}\right)\\
{-}e^{iy_{m}k}\phi_{2,m-1}\left(k-\frac{v_{m}}{2}\right)
 \end{bmatrix}\\
 \begin{aligned}
 =& e^{i(y_{m}-y_{m-1})k}
 \begin{bmatrix}
e^{i\theta_{m,1}}f_{1,m-1}\left(k+\frac{v_{m}-v_{m-1}}{2}\right)\\
e^{{-}i\theta_{m,1}}f_{2,m-1}\left(k+\frac{v_{m-1}-v_{m}}{2}\right)
 \end{bmatrix}\\
&{+}e^{i(y_{m}-\frac{(y_{m-1}+y_{m-2})}{2})k}\begin{bmatrix}
s_{\omega_{m-1}}\left({-}k-\frac{v_{m}}{2}+\frac{v_{m-1}}{2}\right)e^{i\theta_{m,2}}g_{m-1,+,1}\left({-}k-\frac{v_{m}}{2}\right)\\
{-}s_{\omega_{m-1}}\left({-}k+\frac{v_{m}}{2}-\frac{v_{m-1}}{2}\right)e^{{-}i\theta_{m,2}}g_{m-1,+,2}\left({-}k+\frac{v_{m}}{2}\right)
 \end{bmatrix}\\
&{+}e^{i\frac{(y_{m}-y_{m-1})k}{2}}\begin{bmatrix}
e^{i\theta_{m,3}}g_{m-1,-,1}\left({-}k-\frac{v_{m}}{2}\right)\\
{-}e^{{-}i\theta_{m,3}}g_{m-1,-,2}\left({-}k+\frac{v_{m}}{2}\right)
 \end{bmatrix}\\
&{+}e^{i\frac{3(y_{m}-y_{m-1})k}{2}}
\begin{bmatrix}
r_{\omega_{m-1}}\left({-}k-\frac{v_{m}}{2}+\frac{v_{m-1}}{2}\right)e^{i\theta_{m,4}}g_{m-1,-,1}\left(k+\frac{v_{m}}{2}-v_{m-1}\right)\\
{-}r_{\omega_{m-1}}\left({-}k+\frac{v_{m}}{2}-\frac{v_{m-1}}{2}\right)e^{{-}i\theta_{m,4}}g_{m-1,-,2}\left(k-\frac{v_{m}}{2}+v_{m-1}\right)
\end{bmatrix}\\
&{+} O\left(\frac{1}{(1+\vert k\vert)}\norm{f_{m-1,{-}}(x)\chi_{\{x\leq \frac{y_{m-1}+y_{m}}{2}\}}}_{L^{2}_{x}(\mathbb{R})}e^{{-}\beta \frac{(y_{m-1}-y_{m})}{2}}\right)\\
&{+}O\left(\frac{1}{(1+\vert k\vert)}\norm{f_{m-1,{+}}(x)\chi_{\{x> \frac{y_{m-1}+y_{m-2}}{2}\}}}_{L^{2}_{x}(\mathbb{R})}e^{\beta \frac{(y_{m-1}-y_{m-2})}{2}}\right),
\end{aligned}
\end{multline}
and
\begin{align}\label{RR}
 \begin{bmatrix}
e^{iy_{2}k}\phi_{1,1}\left(k+\frac{v_{2}}{2}\right)\\
{-}e^{iy_{2}k}\phi_{2,1}\left(k-\frac{v_{2}}{2}\right)
 \end{bmatrix}=&G_{1,2}(f)(k){+}s_{\omega_{2}}(k)e^{i\frac{(y_{2}-y_{3})k}{2}}
 \begin{bmatrix}
g_{2,-,1}\left({-}k-\frac{v_{2}}{2}\right)e^{{-}i\frac{v_{2}(y_{2}+y_{3})}{4}}\\
{-}g_{2,-,2}\left({-}k+\frac{v_{2}}{2}\right)e^{i\frac{v_{2}(y_{2}+y_{3})}{4}}
 \end{bmatrix}\\ \nonumber
&{+}r_{\omega_{2}}(k)e^{i\frac{k(y_{1}-y_{2})}{2}}
 \begin{bmatrix}
e^{{-}i\frac{v_{2}(y_{2}+y_{1})}{4}}g_{2,+,1}\left(k-\frac{v_{2}}{2}\right)\\
{-}e^{i\frac{v_{2}(y_{2}+y_{1})}{4}}g_{2,+,2}\left(k+\frac{v_{2}}{2}\right)
 \end{bmatrix}\\ \nonumber
&{+}e^{{-}i\frac{k(y_{1}-y_{2})}{2}}\begin{bmatrix}
e^{{-}i\frac{v_{2}\left(y_{1}+y_{2}\right)}{4}}g_{2,+,1}\left({-}k-\frac{v_{2}}{2}\right)\\
{-}e^{i\frac{v_{2}\left(y_{1}+y_{2}\right)}{4}}g_{2,+,2}\left({-}k+\frac{v_{2}}{2}\right)
 \end{bmatrix}\\ \nonumber
 &{+} O\left(\frac{1}{(1+\vert k\vert)}\norm{f_{2,{+}}(x)\chi_{\{x>\frac{y_{2}+y_{1}}{2}\}}}_{L^{2}_{x}(\mathbb{R})}e^{\beta \frac{(y_{2}-y_{1})}{2}}\right)\\ \nonumber
 &{+} O\left(\frac{1}{(1+\vert k\vert)}\norm{f_{2,{-}}(x)\chi_{\{x\leq \frac{y_{2}+y_{3}}{2}\}}}_{L^{2}_{x}(\mathbb{R})}e^{{-}\beta \frac{(y_{2}-y_{3})}{2}}\right).
\end{align}
\par Next, using estimate \eqref{l-1est}, we can also verify the existence of real parameters $\theta_{0,1},\,\theta_{1,1},\,\theta_{2,1},\,\theta_{3,1}$ depending only on $v_{1},\, v_{2},\,y_{1}$ satisfying
\begin{multline}\label{l-11est}
\begin{aligned}
    \begin{bmatrix}
        e^{iy_{2}k}\phi_{1,1}\left(k+\frac{v_{2}}{2}\right)\\
        {-}e^{i y_{2} k}\phi_{2,1}\left(k-\frac{v_{2}}{2}\right)
    \end{bmatrix}=&
    e^{i(y_{2}-y_{1})k}\begin{bmatrix}
        e^{i\theta_{1,1}}f_{1,1}\left(k+\frac{v_{2}}{2}-\frac{v_{1}}{2}\right)\\
        e^{{-}i\theta_{1,1}}f_{2,1}\left(k-\frac{v_{2}}{2}+\frac{v_{1}}{2}\right)
    \end{bmatrix}\\
    &{+}e^{i\frac{k(y_{2}-y_{1})}{2}}
    \begin{bmatrix}
        e^{i\theta_{2,1}}g_{1,-,1}\left({-}k-\frac{v_{2}}{2}\right)\\
        {-}e^{{-}i\theta_{2,1}}g_{1,-,2}\left({-}k+\frac{v_{2}}{2}\right)
    \end{bmatrix}\\
    &{+}e^{i\frac{3k(y_{2}-y_{1})}{2}}\begin{bmatrix}
        r_{\omega_{1}}\left({-}k-\frac{v_{2}}{2}+\frac{v_{1}}{2}\right)e^{i\theta_{3,1}}g_{1,-,1}\left(k+\frac{v_{2}}{2}-v_{1}\right)\\
        {-}r_{\omega_{1}}\left({-}k+\frac{v_{2}}{2}-\frac{v_{1}}{2}\right)e^{{-}i\theta_{3,1}}g_{1,-,2}\left(k-\frac{v_{2}}{2}+v_{1}\right)
    \end{bmatrix}.
\end{aligned}
\end{multline}
\par Therefore, similarly to the approach of Step $4$, and the second estimate \eqref{RR} with \eqref{l-11est} that
\begin{multline}\label{l+1}
P_{+}\left(
e^{i\frac{(y_{1}-y_{2})k}{2}}g_{1,2}(k)-e^{\frac{i(y_{2}-y_{1})k}{2}}e^{i\theta_{1,1}}f_{1,1}\left(k+\frac{v_{2}}{2}-\frac{v_{1}}{2}\right)
\right)\\
\begin{aligned}
=&{-}s_{\omega_{2}}(k)e^{i\frac{(y_{1}-y_{3})k}{2}}e^{{-}i\frac{v_{2}(y_{2}+y_{3})}{4}}   g_{2,-,1}\left({-}k-\frac{v_{2}}{2}\right)\\
&{-}r_{\omega_{2}}(k)e^{i(y_{1}-y_{2})k}e^{{-}i\frac{v_{2}(y_{2}+y_{1})}{4}}g_{2,+,1}\left(k-\frac{v_{2}}{2}\right)\\
&{+}e^{i\theta_{2,1}}g_{1,-,1}\left({-}k-\frac{v_{2}}{2}\right)\\
 &{+} O\left(\norm{f_{2,{+}}(x)\chi_{\{x>\frac{y_{2}+y_{1}}{2}\}}}_{L^{2}_{x}(\mathbb{R})}e^{\beta \frac{(y_{2}-y_{1})}{2}}+\norm{f_{2,{-}}(x)\chi_{\{x\leq \frac{y_{2}+y_{3}}{2}\}}}_{L^{2}_{x}(\mathbb{R})}e^{{-}\beta \frac{(y_{2}-y_{3})}{2}}\right)\\
&{+}O\left(\norm{f_{1,-}(x)\chi_{\{x\leq \frac{y_{1}+y_{2}}{2}\}}}_{L^{2}_{x}(\mathbb{R})}e^{\beta\frac{(y_{2}-y_{1})}{2}}\right),
\end{aligned}
\end{multline}
and
\begin{multline}\label{l-1}
P_{-}\left(
e^{i\frac{(y_{1}-y_{2})k}{2}}g_{1,2}(k)-e^{\frac{i(y_{2}-y_{1})k}{2}}e^{i\theta_{1,1}}f_{1,1}\left(k+\frac{v_{2}}{2}-\frac{v_{1}}{2}\right)
\right)\\
\begin{aligned}
=&{-}e^{{-}i\frac{v_{2}(y_{1}+y_{2})}{4}}g_{2,+,1}\left({-}k-\frac{v_{2}}{2}\right)\\
 &{+}r_{\omega_{1}}\left({-}k-\frac{v_{2}}{2}+\frac{v_{1}}{2}\right)e^{i(y_{2}-y_{1})k}e^{i\theta_{3,1}}g_{1,-,1}\left(k+\frac{v_{2}}{2}-v_{1}\right)\\
 &{+} O\left(\norm{f_{2,{+}}(x)\chi_{\{x>\frac{y_{2}+y_{1}}{2}\}}}_{L^{2}_{x}(\mathbb{R})}e^{\beta \frac{(y_{2}-y_{1})}{2}}+\norm{f_{2,{-}}(x)\chi_{\{x\leq \frac{y_{2}+y_{3}}{2}\}}}_{L^{2}_{x}(\mathbb{R})}e^{{-}\beta \frac{(y_{2}-y_{3})}{2}}\right)\\
&{+}O\left(\norm{f_{1,-}(x)\chi_{\{x\leq \frac{y_{1}+y_{2}}{2}\}}}_{L^{2}_{x}(\mathbb{R})}e^{\beta\frac{(y_{2}-y_{1})}{2}}\right).
\end{aligned}
\end{multline}
\par Similarly, we can verify that
\begin{multline*}
P_{+}\left(e^{i\frac{(y_{1}-y_{2})k}{2}}g_{2,2}(k)-e^{i\frac{(y_{2}-y_{1})k}{2}}e^{{-}i\theta_{1,1}}f_{2,1}\left(k-\frac{v_{2}}{2}+\frac{v_{1}}{2}\right)\right)\\
\begin{aligned}
=&s_{\omega_{2}}(k)e^{i\frac{(y_{1}-y_{3})k}{2}}e^{i\frac{v_{2}(y_{2}+y_{3})}{4}}   g_{2,-,2}\left({-}k+\frac{v_{2}}{2}\right)\\
&{+}r_{\omega_{2}}(k)e^{i(y_{1}-y_{2})k}e^{i\frac{v_{2}(y_{2}+y_{1})}{4}}g_{2,+,2}\left(k+\frac{v_{2}}{2}\right)\\
&{-}e^{{-}i\theta_{2,1}}g_{1,-,2}\left({-}k+\frac{v_{2}}{2}\right)\\
 &{+} O\left(\norm{f_{2,{+}}(x)\chi_{\{x>\frac{y_{2}+y_{1}}{2}\}}}_{L^{2}_{x}(\mathbb{R})}e^{\beta \frac{(y_{2}-y_{1})}{2}}+\norm{f_{2,{-}}(x)\chi_{\{x\leq \frac{y_{2}+y_{3}}{2}\}}}_{L^{2}_{x}(\mathbb{R})}e^{{-}\beta \frac{(y_{2}-y_{3})}{2}}\right)\\
&{+}O\left(\norm{f_{1,-}(x)\chi_{\{x\leq \frac{y_{1}+y_{2}}{2}\}}}_{L^{2}_{x}(\mathbb{R})}e^{\beta\frac{(y_{2}-y_{1})}{2}}\right),
\end{aligned}
\end{multline*}
and
\begin{multline*}
P_{-}\left(
e^{i\frac{(y_{1}-y_{2})k}{2}}g_{2,2}(k)-e^{\frac{i(y_{2}-y_{1})k}{2}}e^{-i\theta_{1,1}}f_{2,1}\left(k-\frac{v_{2}}{2}+\frac{v_{1}}{2}\right)
\right)\\
\begin{aligned}
=&e^{i\frac{v_{2}(y_{1}+y_{2})}{4}}g_{2,+,2}\left({-}k+\frac{v_{2}}{2}\right)\\
&{-}r_{\omega_{1}}\left({-}k+\frac{v_{2}}{2}-\frac{v_{1}}{2}\right)e^{i(y_{2}-y_{1})k}e^{{-}i\theta_{3,1}}g_{1,-,1}\left(k-\frac{v_{2}}{2}+v_{1}\right)\\
 &{+} O\left(\norm{f_{2,{+}}(x)\chi_{\{x>\frac{y_{2}+y_{1}}{2}\}}}_{L^{2}_{x}(\mathbb{R})}e^{\beta \frac{(y_{2}-y_{1})}{2}}+\norm{f_{2,{-}}(x)\chi_{\{x\leq \frac{y_{2}+y_{3}}{2}\}}}_{L^{2}_{x}(\mathbb{R})}e^{{-}\beta \frac{(y_{2}-y_{3})}{2}}\right)\\
&{+}O\left(\norm{f_{1,-}(x)\chi_{\{x\leq \frac{y_{1}+y_{2}}{2}\}}}_{L^{2}_{x}(\mathbb{R})}e^{\beta\frac{(y_{2}-y_{1})}{2}}\right).
\end{aligned}
\end{multline*}
\par Next, similarly to the argument in the last step, we can deduce using estimate \eqref{Em} with \eqref{l-mest}
that
\begin{multline}\label{P+m}
P_{+}\left(e^{i\theta_{m,1}}e^{i\frac{(y_{m}-y_{m-1})k}{2}}f_{1,m-1}\left(k+\frac{v_{m}}{2}-\frac{v_{m-1}}{2}\right)-e^{{-}i\frac{(y_{m}-y_{m-1})k}{2}}g_{1,m}\left(k-\frac{v_{m}}{2}\right)\right)\\
\begin{aligned}
=&r_{\omega_{m}}(k) e^{i(y_{m-1}-y_{m})k}e^{{-}i\frac{v_{m}(y_{m-1}+y_{m})}{4}}g_{m,1,+}\left(k-\frac{v_{m}}{2}\right)\\
&{-}e^{i\theta_{m,3}}g_{m-1,-,1}\left({-}k-\frac{v_{m}}{2}\right)\\
&{+} O\left(\norm{f_{m-1,{+}}(x)\chi_{\{x>\frac{y_{m-1}+y_{m-2}}{2}\}}}_{L^{2}_{x}(\mathbb{R})}e^{\beta \frac{(y_{m-1}-y_{m-2})}{2}}\right)\\
&{+}O\left(\norm{f_{m-1,{-}}(x)\chi_{\{x\leq \frac{y_{m-1}+y_{m}}{2}\}}}_{L^{2}_{x}(\mathbb{R})}e^{{-}\beta \frac{(y_{m-1}-y_{m})}{2}}\right)\\
&{+}O\left(\norm{f_{m,+}(x)\chi_{\{x> \frac{y_{m}+y_{m-1}}{2}\}}}_{L^{2}_{x}(\mathbb{R})}e^{\beta\frac{(y_{m}-y_{m-1})}{2}}\right),
\end{aligned}    
\end{multline}
and 
\begin{multline}\label{P-m}
P_{-}\left(e^{i\theta_{m,1}}e^{i\frac{(y_{m}-y_{m-1})k}{2}}f_{1,m-1}\left(k+\frac{v_{m}}{2}-\frac{v_{m-1}}{2}\right)-e^{{-}i\frac{(y_{m}-y_{m-1})k}{2}}g_{1,m}\left(k-\frac{v_{m}}{2}\right)\right)\\
\begin{aligned}
 =& e^{{-}i\frac{v_{m}(y_{m-1}+y_{m})}{4}}g_{m,1,+}\left({-}k-\frac{v_{m}}{2}\right)\\
&{-}s_{\omega_{m-1}}\left({-}k-\frac{v_{m}}{2}+\frac{v_{m-1}}{2}\right)e^{i\frac{(y_{m}-y_{m-2})k}{2}}e^{i\theta_{m,2}}g_{m-1,+,1}\left({-}k-\frac{v_{m}}{2}\right)\\
&{-}r_{\omega_{m-1}}\left({-}k-\frac{v_{m}}{2}+\frac{v_{m-1}}{2}\right)e^{i(y_{m}-y_{m-1})k}e^{i\theta_{m,4}}g_{m-1,-,1}\left(k+\frac{v_{m}}{2}-v_{m-1}\right)\\
&{+} O\left(\norm{f_{m-1,{+}}(x)\chi_{\{x>\frac{y_{m-1}+y_{m-2}}{2}\}}}_{L^{2}_{x}(\mathbb{R})}e^{\beta \frac{(y_{m-1}-y_{m-2})}{2}}\right)\\
&{+}O\left(\norm{f_{m-1,{-}}(x)\chi_{\{x\leq \frac{y_{m-1}+y_{m}}{2}\}}}_{L^{2}_{x}(\mathbb{R})}e^{{-}\beta \frac{(y_{m-1}-y_{m})}{2}}\right)\\
&{+}O\left(\norm{f_{m,+}(x)\chi_{\{x> \frac{y_{m}+y_{m-1}}{2}\}}}_{L^{2}_{x}(\mathbb{R})}e^{\beta\frac{(y_{m}-y_{m-1})}{2}}\right).
\end{aligned}
\end{multline}
Furthermore, repeating the argument of the last step, we can verify based on the two estimates above that
\begin{multline}
P_{+}\left(e^{{-}i\theta_{m,1}}e^{i\frac{(y_{m}-y_{m-1})k}{2}}f_{2,m-1}\left(k-\frac{v_{m}}{2}+\frac{v_{m-1}}{2}\right)-e^{{-}i\frac{(y_{m}-y_{m-1})k}{2}}g_{2,m}\left(k+\frac{v_{m}}{2}\right)\right)\\
\begin{aligned}
=&{-}r_{\omega_{m}}(k) e^{i(y_{m-1}-y_{m})k}e^{i\frac{v_{m}(y_{m-1}+y_{m})}{4}}g_{m,2,+}\left(k+\frac{v_{m}}{2}\right)\\
&{+}e^{{-}i\theta_{m,3}}g_{m-1,-,2}\left({-}k+\frac{v_{m}}{2}\right)\\
&{+} O\left(\norm{f_{m-1,{+}}(x)\chi_{\{x>\frac{y_{m-1}+y_{m-2}}{2}\}}}_{L^{2}_{x}(\mathbb{R})}e^{\beta \frac{(y_{m-1}-y_{m-2})}{2}}\right)\\
&{+}O\left(\norm{f_{m-1,{-}}(x)\chi_{\{x\leq \frac{y_{m-1}+y_{m}}{2}\}}}_{L^{2}_{x}(\mathbb{R})}e^{{-}\beta \frac{(y_{m-1}-y_{m})}{2}}\right)\\
&{+}O\left(\norm{f_{m,+}(x)\chi_{\{x> \frac{y_{m}+y_{m-1}}{2}\}}}_{L^{2}_{x}(\mathbb{R})}e^{\beta\frac{(y_{m}-y_{m-1})}{2}}\right),
\end{aligned}
\end{multline}
and
\begin{multline}
P_{-}\left(e^{{-}i\theta_{m,1}}e^{i\frac{(y_{m}-y_{m-1})k}{2}}f_{1,m-1}\left(k-\frac{v_{m}}{2}+\frac{v_{m-1}}{2}\right)-e^{{-}i\frac{(y_{m}-y_{m-1})k}{2}}g_{1,m}\left(k+\frac{v_{m}}{2}\right)\right)\\
\begin{aligned}
 =&{-} e^{i\frac{v_{m}(y_{m-1}+y_{m})}{4}}g_{m,1,+}\left({-}k+\frac{v_{m}}{2}\right)\\
&{+}s_{\omega_{m-1}}\left({-}k+\frac{v_{m}}{2}-\frac{v_{m-1}}{2}\right)e^{i\frac{(y_{m}-y_{m-2})k}{2}}e^{{-}i\theta_{m,2}}g_{m-1,+,1}\left({-}k+\frac{v_{m}}{2}\right)\\
&{+}r_{\omega_{m-1}}\left({-}k+\frac{v_{m}}{2}-\frac{v_{m-1}}{2}\right)e^{i(y_{m}-y_{m-1})k}e^{{-}i\theta_{m,4}}g_{m-1,-,1}\left(k-\frac{v_{m}}{2}+v_{m-1}\right)\\
&{+} O\left(\norm{f_{m-1,{+}}(x)\chi_{\{x>\frac{y_{m-1}+y_{m-2}}{2}\}}}_{L^{2}_{x}(\mathbb{R})}e^{\beta \frac{(y_{m-1}-y_{m-2})}{2}}\right)\\
&{+}O\left(\norm{f_{m-1,{-}}(x)\chi_{\{x\leq \frac{y_{m-1}+y_{m}}{2}\}}}_{L^{2}_{x}(\mathbb{R})}e^{{-}\beta \frac{(y_{m-1}-y_{m})}{2}}\right)\\
&{+}O\left(\norm{f_{m,+}(x)\chi_{\{x> \frac{y_{m}+y_{m-1}}{2}\}}}_{L^{2}_{x}(\mathbb{R})}e^{\beta\frac{(y_{m}-y_{m-1})}{2}}\right).
\end{aligned}
\end{multline}\\
\textbf{Step 5. (Proof of the Existence of the Solution of \eqref{lG}, \eqref{1G}, \eqref{mG}.)}
From an argument of symmetry, for any $3\leq \ell\leq m-2 ,$ it is enough to consider the following linear system
\begin{align}\label{giant linear system 1}
    A_{1,-}(f)=& g_{1,{-},1}\left({-}k-\frac{v_{2}}{2}\right)-r_{2}(k)g_{2,+,1}\left(k-\frac{v_{2}}{2}\right)-s_{2}(k)g_{2,{-},1}\left({-}k-\frac{v_{2}}{2}\right),\\  \nonumber
A_{2,+}(f)=&g_{2,{+},1}\left({-}k-\frac{v_{2}}{2}\right)-r_{1}\left({-}k-\frac{v_{2}}{2}+\frac{v_{1}}{2}\right)g_{1,-,1}\left(k+\frac{v_{2}}{2}-v_{1}\right),\\ \label{firstofall}
A_{2,-}(f)=&g_{2,{-},1}\left({-}k-\frac{v_{3}}{2}\right)-r_{3}\left(k\right)g_{3,+,1}\left(k-\frac{v_{3}}{2}\right)-s_{3}(k)g_{3,{-},1}\left({-}k-\frac{v_{3}}{2}\right)
,\\ \label{giant ind1}
A_{3,+}\left(f\right)=& g_{3,+,1}\left({-}k-\frac{v_{3}}{2}\right)-s_{2}\left({-}k-\frac{v_{3}}{2}+\frac{v_{2}}{2}\right)g_{2,+,1}\left({-}k-\frac{v_{3}}{2}\right)
\\ \nonumber
&{-}r_{2}\left({-}k+\frac{v_{2}}{2}-\frac{v_{3}}{2}\right)g_{2,-,1}\left(k+\frac{v_{3}}{2}-v_{2}\right),\, ...,\\  \nonumber
A_{\ell,+}\left(f\right)(k)=&g_{\ell,+,1}\left({-}k-\frac{v_{\ell}}{2}\right)-s_{\ell-1}\left({-}k-\frac{v_{\ell}}{2}+\frac{v_{\ell-1}}{2}\right)g_{\ell-1,+,1}\left({-}k-\frac{v_{\ell}}{2}\right)\\ \nonumber
&{-}r_{\ell-1}\left({-}k+\frac{v_{\ell-1}}{2}-\frac{v_{\ell}}{2}\right)g_{\ell-1,-,1}\left(k+\frac{v_{\ell}}{2}-v_{\ell-1}\right),\\ \nonumber
A_{\ell,-}(f)=& g_{\ell,-,1}\left({-}k-\frac{v_{\ell+1}}{2}\right)-r_{\ell+1}\left(k\right)g_{\ell+1,+,1}\left(k-\frac{v_{\ell+1}}{2}\right)-s_{\ell+1}(k)g_{\ell+1,-,1}\left({-}k-\frac{v_{\ell+1}}{2}\right), \, ..., \\ \nonumber 
A_{m-2,-}(f)=& g_{m-2,-,1}\left({-}k-\frac{v_{m-1}}{2}\right)-r_{m-1}(k)g_{m-1,+,1}\left(k-\frac{v_{m-1}}{2}\right)\\ \nonumber &{-}s_{m-1}(k)g_{m-1,-,1}\left({-}k-\frac{v_{m-1}}{2}\right),\\  \nonumber
A_{m-1,+}(f)=&g_{m-1,+,1}\left({-}k-\frac{v_{m-1}}{2}\right)-s_{m-2}\left({-}k-\frac{v_{m-1}}{2}+\frac{v_{m-2}}{2}\right)g_{m-2,+,1}\left({-}k-\frac{v_{m-1}}{2}\right)\\
\nonumber
&{-}r_{m-2}\left({-}k+\frac{v_{m-2}-v_{m-1}}{2}\right)g_{m-2,-,1}\left(k+\frac{v_{m-1}}{2}-v_{m-2}\right),\\ \nonumber
A_{m-1,-}(f)=&g_{m-1,-,1}\left({-}k-\frac{v_{m}}{2}\right)-r_{m}(k)g_{m,+,1}\left(k-\frac{v_{m}}{2}\right),\\ \nonumber
A_{m,+}(f)=& g_{m,+,1}\left({-}k-\frac{v_{m}}{2}\right)-s_{m-1}\left({-}k-\frac{v_{m}}{2}+\frac{v_{m-1}}{2}\right)g_{m-1,+,1}\left({-}k-\frac{v_{m}}{2}\right)\\ \nonumber &{-}r_{m-1}\left({-}k+\frac{v_{m-1}}{2}-\frac{v_{m}}{2}\right)g_{m-1,-,1}\left(k+\frac{v_{m}}{2}-v_{m-1}\right),
\end{align}
such that all the functions $A_{n,\pm} (f)$ satisfy for a constant $K>1$ the following inequality
\begin{equation*}
    \norm{A_{n,\pm}(f)}_{L^{2}_{x}(\mathbb{R})}\leq K \norm{f}_{L^{2}_{x}(\mathbb{R})},
\end{equation*}
and all the functions $r_{n},\,s_{n}$ satisfy \eqref{POPO}, and it is enough to verify that the existence of a unique solution $\overrightarrow{g}=(g_{\ell,\pm,1})_{1\leq \ell \leq m}$ of the linear system above, and that
\begin{equation*}
    \norm{g_{\ell,\pm,1}}_{L^{2}_{k}(\mathbb{R},\mathbb{C})}\leq C\max_{n}\norm{A_{n,\pm}(f)}_{L^{2}_{k}(\mathbb{R},\mathbb{C})},
\end{equation*}
for a constant $C>1.$
\par Moreover, the linear system above is exactly a very small perturbation, when $\min y_{\ell}-y_{\ell+1}>1$ is sufficiently large, of the following estimates 
\eqref{l+1},
\eqref{l-1}, \eqref{pl+1}, \eqref{Pl-11}, \eqref{P-m}, \eqref{P+m} of the previous two steps.
\par In particular, since the right-hand side of the huge linear system above is exactly $T(g)$ where $T$ is a map satisfying all the hypotheses of Lemma \ref{tcopp},
we deduce from this lemma the existence of a unique solution $\left(g_{\ell,\pm,1}\right)_{1\leq \ell \leq m}$ of the system of equations above and the existence of a constant $K>1$
satisfying
\begin{equation*}
    \max_{n}\norm{g_{n,\pm,1}}_{L^{2}_{x}(\mathbb{R})}\leq K\norm{f}_{L^{2}_{x}(\mathbb{R})}.
\end{equation*}
\par Therefore, by an argument of symmetry, we can also verify the existence and uniqueness of functions $(g_{n,\pm,2})_{2 n\leq m},$ and these functions also satisfy 
\begin{equation*}
    \max_{n}\norm{g_{n,\pm,2}}_{L^{2}_{x}(\mathbb{R})}\leq K\norm{f}_{L^{2}_{x}(\mathbb{R})}.
\end{equation*}
\par Therefore, for any $f\in L^{2}_{x}(\mathbb{R},\mathbb{C}),$ there exists a unique solution $\phi$ in the domain of $\mathcal{S}(0),$ and a unique set of functions $(f_{n,\pm})_{1\leq n\leq m}$ satisfying \eqref{lG}, \eqref{1G}, \eqref{mG} when $\min v_{\ell}-v_{\ell+1}>0$ is sufficiently large. 
In particular, we can deduce from Step $1$ and Theorem \ref{TT} that there exists a $C_{m}>1$ satisfying
\begin{equation}\label{poitt}
   \max_{\ell} \norm{v_{d_{\ell}}(x)}_{L^{2}_{x}(\mathbb{R})}\leq C_{m}\left[\norm{f(x)}_{L^{2}_{x}(\mathbb{R})}+\norm{\mathcal{S}(0)(\vec{\phi})}_{L^{2}_{x}(\mathbb{R})}\right]\leq 2C_{m}\norm{f(x)}_{L^{2}_{x}(\mathbb{R})}.
\end{equation}\\
\textbf{Step 6. (Conclusion of the Proof of Lemma \ref{dec}.)}Next, we consider $D\coloneqq\min_{\ell}y_{\ell}-y_{\ell+1}>0.$ Using the unique solution of
\eqref{lG}, \eqref{1G}, \eqref{mG} that we obtained in the previous steps, we have the following identity
\begin{align}\label{almostperfect}
    f(x)=&\sum_{\ell=2}^{m-1}\chi_{\left\{\frac{y_{\ell+1}+y_{\ell}}{2}<x\leq \frac{y_{\ell}+y_{\ell-1}}{2}\right\}}e^{i\frac{\sigma_{3}v_{\ell}x}{2}}\hat{G}_{\omega_{\ell}}\left(e^{iy_{\ell}k}\begin{bmatrix}
        \phi_{1,\ell}\left(k+\frac{v_{\ell}}{2}\right)\\
        \phi_{2,\ell}\left(k-\frac{v_{\ell}}{2}\right)
    \end{bmatrix}\right)(x-y_{\ell})
    \\ \nonumber
        &{+}\chi_{\left\{\frac{y_{1}+y_{2}}{2}<x\right\}}e^{i\frac{\sigma_{3}v_{1}x}{2}}\hat{G}_{\omega_{1}}\left(e^{iy_{1}k}\begin{bmatrix}
        \phi_{1}\left(k+\frac{v_{1}}{2}\right)\\
        \phi_{2}\left(k-\frac{v_{1}}{2}\right)
    \end{bmatrix}\right)(x-y_{1})\\
    \nonumber
        &{+}\chi_{\left\{x\leq \frac{y_{m}+y{m-1}}{2} \right\}}e^{i\frac{\sigma_{3}v_{m}x}{2}}\hat{G}_{\omega_{m}}\left(e^{iy_{m}k}\begin{bmatrix}
        \phi_{1,m}\left(k+\frac{v_{m}}{2}\right)\\
        \phi_{2,m}\left(k-\frac{v_{m}}{2}\right)
    \end{bmatrix}\right)(x-y_{m})\\
    \nonumber
    &{+}\chi_{\left\{x>\frac{y_{1}+y_{2}}{2} \right\}}e^{i\frac{\sigma_{3}v_{1}x}{2}}v_{d_{1}}(x-y_{1})\\ \nonumber
    &{+}\sum_{\ell=2}^{m-1}\chi_{\left\{\frac{y_{\ell+1}+y_{\ell}}{2}<x\leq \frac{y_{\ell}+y_{\ell-1}}{2} \right\}}e^{i\frac{\sigma_{3}v_{\ell}x}{2}}v_{d_{\ell}}(x-y_{\ell})\\ \nonumber
    &{+}\chi_{\left\{x\leq  \frac{y_{m}+y_{m-1}}{2}\right\}}e^{i\frac{\sigma_{3}v_{m}x}{2}}v_{d_{m}}(x-y_{m}).
\end{align}
Let $P_{\ell}$ be the following subsets of $\mathbb{R}$
\begin{equation*}
    P_{\ell}=
    \begin{cases}
     \left(\frac{y_{1}+y_{2}}{2},{+}\infty\right) \text{, if $\ell=1,$}\\
     \left(\frac{y_{\ell}+y_{\ell+1}}{2},\frac{y_{\ell}+y_{\ell-1}}{2}\right) \text{, if $1<\ell<m,$}\\
\left({-}\infty,\frac{y_{m}+y_{m-1}}{2}\right) \text{, if $\ell=m.$}
    \end{cases}
\end{equation*}
\par Furthermore, from the Definition \ref{s0def} of $\mathcal{S}(0),$ and using Lemma \ref{appFourier} and Theorem \ref{TT}, we can verify that if $\min v_{\ell}-v_{\ell+1}>0$ is large enough, then there exist constants $C>1,\,\beta>0$ satisfying
\begin{equation}\label{POOO}
   \norm{\mathcal{S}(0)(\vec{\phi})(x)-\sum_{\ell}^{m}\chi_{P_{\ell}}(x)\hat{G}_{\omega_{\ell}}\left(e^{iy_{\ell}k}
   \begin{bmatrix}
   \phi_{1,\ell}\left(k+\frac{v_{\ell}}{2}\right)\\
   \phi_{2,\ell}\left(k-\frac{v_{\ell}}{2}\right)
   \end{bmatrix}\right)}_{L^{2}_{x}(\mathbb{R})}\leq Ce^{{-}\beta(\min_{\ell}y_{\ell}-y_{\ell+1})}\norm{f}_{L^{2}_{x}(\mathbb{R})}, 
\end{equation}
always when $\min_{\ell}y_{\ell}-y_{\ell+1}>0$ is large enough. Consequently, identity \eqref{almostperfect} implies that \begin{equation}\label{perfectfinal}
    \norm{f(x)-\mathcal{S}(0)(\vec{\phi})(x)-\sum_{\ell}\chi_{P_{\ell}}(x)e^{i\frac{\sigma_{3}v_{\ell}x}{2}}v_{d_{\ell}}(x-y_{\ell})}_{L^{2}_{x}(\mathbb{R})}\leq Ce^{{-}\beta(\min_{\ell} y_{\ell}-y_{\ell+1})}\norm{f}_{L^{2}_{x}(\mathbb{R})}.
\end{equation}    
\par Therefore, since the $\Raa \mathcal{S}(0)$ is a closed subspace of $L^{2}_{x}(\mathbb{R},\mathbb{C}),$ and
\begin{equation*}
    W=\sppp\left\{\chi_{P_{\ell}}(x)e^{i\frac{\sigma_{3}v_{\ell}x}{2}}v_{d_{\ell}}(x-y_{\ell}) \text{, $v_{d_{\ell}}\in\Raa P_{d,\omega_{\ell}}$ and $\ell\in\{1,2,\,...,\,m\}$}\right\}
\end{equation*}
is finite dimensional, $H=\Raa \mathcal{S}(0)+W$ is a closed subspace of $L^{2}_{x}(\mathbb{R},\mathbb{C}).$
\par However, since \eqref{perfectfinal} is true for any $f\in L^{2}_{x}(\mathbb{R},\mathbb{C})$ and we have for any $f\in H^{\perp}$ that
\begin{equation*}
    \norm{f(x)-\mathcal{S}(0)(\vec{\phi})(x)-\sum_{\ell}\chi_{P_{\ell}}(x)e^{i\frac{\sigma_{3}v_{\ell}x}{2}}v_{d_{\ell}}(x-y_{\ell})}_{L^{2}_{x}(\mathbb{R})}\geq \norm{f}_{L^{2}_{x}(\mathbb{R})},
\end{equation*}
the subspace $H^{\perp}$ of $L^{2}_{x}(\mathbb{R},\mathbb{C})$ should be equal to $\{0\}$ when $\min_{\ell}y_{\ell}-y_{\ell+1}>1$ is sufficiently large. Therefore,
\begin{equation}\label{Poq11}
    \Raa \mathcal{S}(0)+W=L^{2}_{x}(\mathbb{R},\mathbb{C}^{2})
\end{equation}
when $\min_{\ell}y_{\ell}-y_{\ell+1}>1$ is very large.
\par Furthermore, each subspace $\Raa P_{d,\omega_{\ell}}\subset L^{2}(\mathbb{R},\mathbb{C}^{2})$ has a finite orthonormal basis $\{v_{d_{\ell},1},\, ...,\,v_{d_{\ell},n_{\ell}}\}$ of functions having exponential decay for any $\ell,$ and so there exists $\beta>0$ satisfying
\begin{equation*}
   \norm{\left[1-\chi_{P_{\ell}}(x)\right]v_{d_{\ell},n}(x-y_{\ell})}_{L^{2}_{x}(\mathbb{R})}\leq e^{{-}\beta \min_{\ell} (y_{\ell}-y_{\ell+1})} \text{, for all $\ell\in\{1,\,...,\,m\}.$}
\end{equation*}
\par In conclusion, using \eqref{poitt},
 we can verify similarly to the proof of \eqref{Poq11} that
\begin{equation*}
    \Raa \mathcal{S}(0)=L^{2}_{x}(\mathbb{R},\mathbb{C}^{2})+\sppp\left\{e^{i\frac{\sigma_{3}v_{\ell}x}{2}}v_{d_{\ell}}(x-y_{\ell}) \text{, $v_{d_{\ell}}\in\Raa P_{d,\omega_{\ell}}$ and $\ell\in\{1,2,\,...,\,m\}$}\right\},
\end{equation*}
when $\min_{\ell}y_{\ell}-y_{\ell+1}>1$ is very large, which is equivalent to the identity of Lemma \ref{dec}.
\end{proof}
\begin{proof}[Proof of Theorem \ref{princ}.]
It is enough to prove \eqref{princ11} when $t=0.$ Since $\Raa \mathcal{T}(0)$ is closed, we assume by contradiction that there exists a non-zero function $f\in L^{2}_{x}(\mathbb{R},\mathbb{C}^{2})$ satisfying
\begin{equation}\label{Contrahypo}
    f(x) \in \left[\Raa \mathcal{T}(0)\oplus\bigoplus_{n=1}^{m}\mathcal{G}_{\omega_{n}}\left(\Raa P_{d,\omega_{n}}\right)\right]^{\perp}. 
\end{equation}
But, Lemma \ref{dec} implies that there exist $\vec{\phi}$ in the domain of $\mathcal{S}(0)$ and functions $v_{d_{1}}\in\Raa P_{d,\omega_{1}},\,...,\,v_{d_{m}}\in P_{d,\omega_{m}}$ satisfying
\begin{equation*}
    f(x)=\mathcal{S}(\vec{\phi})(0,x)+\sum_{n=1}^{m}\mathcal{G}_{\omega_{n}}(v_{d_{\omega_{n}}})(x-y_{n}),
\end{equation*}
and Corollary \ref{c62} implies the existence of a $K_{m}>0$ satisfying
\begin{equation*}
    \norm{\mathcal{S}(\vec{\phi})(0,x)}_{L^{2}_{x}(\mathbb{R})}+\max_{n}\norm{v_{d_{\omega_{n}}}(x)}_{L^{2}_{x}(\mathbb{R})}\leq K_{m}\norm{f(x)}_{L^{2}_{x}(\mathbb{R})}
\end{equation*}
Therefore, Theorems \ref{tcont} and \ref{tdis} imply that there exists constants $\beta>0,\,C>1$ satisfying
\begin{multline*}
    \norm{f(x)-\mathcal{T}(\vec{\phi})(0,x)-\sum_{n=1}^{m}\mathcal{G}_{\omega_{n}}(v_{d_{\omega_{n}}}+r_{d_{n}})(x-y_{n})}_{L^{2}_{x}(\mathbb{R})}\\
    \leq C e^{{-}\beta \min_{\ell}(y_{\ell+1}-y_{\ell})}\left[\norm{\mathcal{S}(0)(\vec{\phi})(x)}_{L^{2}_{x}(\mathbb{R})}+\sum_{n=1}^{m}\norm{v_{d_{\omega_{n}}}}_{L^{2}_{x}(\mathbb{R})}\right].
\end{multline*}
Consequently, we deduce from Corollary \ref{c62} that there exists a constant $C_{m}>1$ satisfying
\begin{equation*}
    \norm{f(x)-\mathcal{T}(\vec{\phi})(0,x)-\sum_{n=1}^{m}\mathcal{G}_{\omega_{n}}(v_{d_{\omega_{n}}}+r_{d_{n}})(x-y_{n})}_{L^{2}_{x}(\mathbb{R})}
    \leq C_{m} e^{{-}\beta \min_{\ell}(y_{\ell+1}-y_{\ell})}\norm{f(x)}_{L^{2}_{x}(\mathbb{R})}, 
\end{equation*}
 when $\min_{\ell}y_{\ell}-y_{\ell+1}$ is large enough, but this contradicts the assumption \eqref{Contrahypo}.
 \par Therefore, any solution $\overrightarrow{\psi}$ of \eqref{p} has a unique representation of the form \eqref{princ11} when $\min_{\ell}y_{\ell}-y_{\ell+1}$ and $\min_{\ell}v_{\ell}-v_{\ell+1}$ are large enough.
 \par Furthermore, using Theorems \ref{tcont} and \ref{TT}, we can verify the inequalities \eqref{F0l1}, \eqref{F0l2} and the estimate \eqref{phi2kkdecay} for $n=0$ and $n=2.$  The proof of \eqref{phi2kkdecay} for $n=1$ is explained in Lemma \ref{h1coercc} of Section \ref{Dispsection}, it follows from the estimate \eqref{phi2kkdecay} for $n=0,\,n=2,$ and an argument of interpolation.
 \par In conclusion, Theorem \ref{princ} is true when $\min_{\ell}y_{\ell}-y_{\ell+1}$ is large enough.
\end{proof}

As a direct consequence of the asymptotic completeness,  Lemma \ref{kphi}, Theorem \ref{tcont}
 and Corollary \ref{c62}, we have the the asymptotic completeness in $H^1$.
\begin{corollary}
    \par There  exists  a constant $K>1$ satisfying
\begin{align*}
    \norm{\mathcal{T}(0)(\vec{\phi})(x)-\mathcal{S}(0)(\vec{\phi})(x)}_{H^{1}_{x}(\mathbb{R})}\leq & Ke^{{-}\min_{\ell}(y_{\ell}-y_{\ell+1})\beta}\norm{\mathcal{S}(0)(\vec{\phi})}_{L^{2}_{x}(\mathbb{R})}.     \end{align*}
Similarly to the Step $6$ of the proof of Lemma \ref{dec}, we can verify that 
\begin{equation*}
    H^{1}_{x}(\mathbb{R})=\Raa\{ \mathcal{T}(0)(\phi)\vert\,\max_{\ell}\norm{(1+\vert k\vert )(\phi_{1,\ell}(k),\phi_{2,\ell}(k))}_{L^{2}_{k}(\mathbb{R})}<{+}\infty\}\oplus \bigoplus_{l=1}^{m} \Raa P_{d,\omega_{\ell}}.
\end{equation*}
\end{corollary}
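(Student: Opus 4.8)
The plan is to deduce this corollary as a routine $H^1$-refinement of Theorem \ref{princ}, Theorem \ref{tcont}, and Corollary \ref{c62}, following the pattern already established for the $L^2$ statement in Lemma \ref{dec}. First I would record the estimate $\norm{\mathcal{T}(0)(\vec{\phi})(x)-\mathcal{S}(0)(\vec{\phi})(x)}_{H^{1}_{x}(\mathbb{R})}\leq Ke^{{-}\beta\min_{\ell}(y_{\ell}-y_{\ell+1})}\norm{\mathcal{S}(0)(\vec{\phi})}_{L^{2}_{x}(\mathbb{R})}$: this is exactly the $t=0$ instance of \eqref{h1linft} in Theorem \ref{tcont}, after noting that the right-hand side of \eqref{h1linft} is a constant (in $t$) multiple of $\norm{\mathcal{S}(\vec{\phi}_0)(0)}_{L^2}$, so the stated bound holds verbatim with the same $\beta$ and a possibly enlarged $K$.

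Next I would set up the $H^1$-decomposition. Let $Y\coloneqq\{\mathcal{T}(0)(\vec\phi)\mid \max_\ell\norm{(1+|k|)(\phi_{1,\ell},\phi_{2,\ell})}_{L^2_k}<\infty\}$. By Theorem \ref{tcont} together with Lemma \ref{kphi} (applied via the local identification of Lemma \ref{localS0}, as in Corollary \ref{coo}), membership $\mathcal{T}(0)(\vec\phi)\in H^1_x$ is equivalent to $\max_\ell\norm{(1+|k|)(\phi_{1,\ell},\phi_{2,\ell})}_{L^2_k}<\infty$, so $Y\subset H^1_x(\mathbb{R})$. Since every element of $\Raa P_{d,\omega_\ell}$ is a Schwartz function (Agmon decay, as used throughout), the subspaces $\mathfrak{g}_{\omega_\ell,v_\ell,y_\ell,\gamma_\ell}(\Raa P_{d,\omega_\ell})$ are finite-dimensional subspaces of $H^1_x$. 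By Theorem \ref{princ}, any $f\in H^1_x(\mathbb{R},\mathbb{C}^2)\subset L^2_x$ admits a unique representation $f=\mathcal{T}(0)(\vec\phi_0)+\sum_\ell\sum_k\mathfrak{G}_{\omega_\ell,v_\ell,y_\ell}(\mathfrak{v}_{\omega_\ell,\lambda_{\ell,k}})(0)+\sum_\ell\mathfrak{G}_{\omega_\ell,v_\ell,y_\ell}(\mathfrak{z}_{\omega_\ell})(0)$; I would then invoke estimate \eqref{eq222} of Corollary \ref{c62} to conclude that when $f\in H^1_x$ the first summand lies in $Y$ (and the discrete parts, being Schwartz, lie in the relevant finite-dimensional spaces). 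This gives $H^1_x(\mathbb{R})=Y+\bigoplus_{\ell=1}^m\Raa P_{d,\omega_\ell}$ (after absorbing the Galilei factors, which are $H^1$-isometries up to constants). Uniqueness and the fact that the sum is direct then follow from Remark \ref{eq:decomposition}, since the $L^2$-decomposition is already direct and restricting a direct-sum decomposition to a subspace that meets all summands keeps it direct.

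For the directness and closedness assertions — the statement writes $\oplus$ — I would argue exactly as in Step 6 of the proof of Lemma \ref{dec}: $Y$ is closed in $H^1$ because $\mathcal{T}(0)$ restricted to $\{\vec\phi:\max_\ell\norm{\langle k\rangle(\phi_{1,\ell},\phi_{2,\ell})}_{L^2_k}<\infty\}$ is, by Theorem \ref{tcont}, Corollary \ref{coo}, and Theorem \ref{TT}, a homeomorphism onto its image (coercivity from below plus boundedness from above, now in $H^1$ versus the weighted $L^2_k$ norm), and adding the finite-dimensional discrete spaces preserves closedness. The main obstacle — really the only nontrivial point — is the $H^1$-coercivity packaged in \eqref{eq222} of Corollary \ref{c62}: one must check that $f\in H^1$ forces the weighted norms $\norm{(k\pm\tfrac{v_\ell}{2})\phi_{j,\ell}}_{L^2_k}$ to be finite and controlled, which is precisely where Lemma \ref{h1coercc} (the interpolation between \eqref{phi2kkdecay} for $n=0$ and $n=2$) is used; I would simply cite that lemma. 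Everything else is bookkeeping with the already-established $L^2$ asymptotic completeness and the mapping properties of $\mathcal{S}(0)$ from Section \ref{sec:S0}.
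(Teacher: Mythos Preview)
Your proposal is correct and matches the paper's approach: the paper presents this corollary as ``a direct consequence of the asymptotic completeness, Lemma \ref{kphi}, Theorem \ref{tcont} and Corollary \ref{c62}'' together with the Step~6 argument from Lemma \ref{dec}, and you have accurately fleshed out exactly that outline. The first estimate is indeed just \eqref{h1linft} at $t=0$, and for the $H^1$ decomposition your use of Theorem \ref{princ} for existence, Corollary \ref{c62}/Lemma \ref{h1coercc} for the $H^1$ control of the continuous piece, and the Step~6 closedness argument is precisely what the paper intends.
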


\section{Proof of the dispersive estimates}\label{Dispsection}
In this section, we will prove all the estimates of Theorem \ref{Decesti}. First, we need to consider the following proposition.
\begin{lemma}\label{interpoo}
There exists a $c>0$ satisfying following inequality holds for $\overrightarrow{\phi}\in L^{2}_{k}(\mathbb{R},\mathbb{C}^{2})$ belonging to the domain of $\hat{F}_{\omega}$
\begin{equation}\label{H1}
    \norm{\hat{F}_{\omega}(\overrightarrow{\phi})(x)}_{H^{1}_{x}(\mathbb{R})}\geq c\norm{(1+\vert k\vert)\overrightarrow{\phi}(k)}_{L^{2}_{k}(\mathbb{R})}
\end{equation}
\end{lemma}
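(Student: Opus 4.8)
The plan is to establish the lower bound \eqref{H1} by combining the $L^2$ coercivity \eqref{coerchatF}, which is already known, with the Sobolev-type estimate \eqref{ldecay} of Lemma \ref{kphi} and a standard interpolation/duality trick. First I would record the two ingredients we already have: from \eqref{coerchatF} there is $c_\omega>0$ with $\norm{\hat F_\omega(\vec u)}_{L^2_x}\geq c_\omega\norm{\vec u}_{L^2_k}$, and from \eqref{ldecay} with $n=1$ there is $C_{1,\omega}>1$ with $\norm{\partial_x\hat F_\omega(\vec u)}_{L^2_x}\leq C_{1,\omega}\bigl(\norm{(1+|k|)\vec u}_{L^2_k}+\norm{\hat F_\omega(\vec u)}_{L^2_x}\bigr)$. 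The first of these already gives $\norm{\hat F_\omega(\vec u)}_{H^1_x}\geq c_\omega\norm{\vec u}_{L^2_k}$, so what remains is to also control $\norm{k\,\vec u}_{L^2_k}$ by $\norm{\hat F_\omega(\vec u)}_{H^1_x}$.

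For that missing piece, the cleanest route is a duality argument for the adjoint operator $\sigma_3 F^*_\omega\sigma_3$, or equivalently a direct use of the inversion formula. By Lemma \ref{leper}, $\sigma_3 F^*_\omega\sigma_3\hat G_\omega=\mathrm{Id}$, and by Remark \ref{re-infty} one can pass between $\hat F_\omega$ and $\hat G_\omega$ at the cost of the bounded multipliers $1/s(k)$, $r(k)/s(k)$; alternatively, Remark \ref{weightremark} gives directly $\norm{k\,F^*_\omega(\vec g)(k)}_{L^2_k}\leq C\norm{\vec g}_{H^1_x}$ and $\norm{F^*_\omega(\vec g)(k)}_{L^2_k}\leq C\norm{\vec g}_{L^2_x}$. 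So I would write $\vec u$ (for $\vec u$ in the domain of $\hat F_\omega$) in terms of $\hat F_\omega(\vec u)$ through these inversion formulas: from $P_{e,\omega}\hat F_\omega(\vec u)=\hat F_\omega(\vec u)$ (Corollary \ref{Asy1sol2} together with Lemma \ref{FGID}) and Lemma \ref{leper}, one recovers $\vec u$ from $\hat F_\omega(\vec u)$ by applying $\sigma_3 G^*_\omega\sigma_3$ (up to the transition already built into the definitions). Then applying the weighted bounds of Remark \ref{weightremark} to $\vec g=\hat F_\omega(\vec u)$ yields
\begin{equation*}
\norm{(1+|k|)\vec u(k)}_{L^2_k}\leq C\bigl(\norm{\hat F_\omega(\vec u)}_{L^2_x}+\norm{\partial_x\hat F_\omega(\vec u)}_{L^2_x}\bigr)=C\norm{\hat F_\omega(\vec u)}_{H^1_x},
\end{equation*}
which is exactly \eqref{H1} after adjusting the constant. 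I would be careful that the $1/s(k)$ and $r(k)/s(k)$ factors appearing when converting between the $\mathcal F_\omega$ and $\mathcal G_\omega$ transforms are genuinely bounded with bounded derivative on $\mathbb R$, which is guaranteed by \eqref{asyreftr} and the lower bound $|s(k)|\geq|s(0)|>0$ away from nothing (recall $s$ is smooth, nonvanishing on $\mathbb R$, and tends to $1$ at infinity); this is where the absence of embedded eigenvalues, (H1), is implicitly used.

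The main obstacle is the bookkeeping at the endpoints: when $\pm\omega$ is a threshold resonance, one must be sure that the inversion identities of Lemma \ref{leper} and the weighted mapping bounds of Remark \ref{weightremark} still hold, since the distorted bases $\mathcal F_\omega,\mathcal G_\omega$ are only known to be well-defined and to satisfy \eqref{asy1}--\eqref{asy4} because of Lemma \ref{2.1}. The resolution is simply to invoke Lemma \ref{2.1}: once $\mathcal F_\omega,\mathcal G_\omega$ are well-defined bounded solutions with the stated asymptotics and $s,r$ have the decay \eqref{asyreftr}, every estimate used above is insensitive to whether $\pm\omega$ is a resonance. A secondary technical point is that \eqref{ldecay} already contains $\norm{\hat F_\omega(\vec u)}_{L^2_x}$ on its right-hand side, so one should not try to prove \eqref{H1} purely from Lemma \ref{kphi} (that would be circular); the proof must genuinely go through the inversion formula to bound $\norm{k\vec u}_{L^2_k}$ from above by the $H^1$ norm of $\hat F_\omega(\vec u)$, and then combine with \eqref{coerchatF} for the zeroth-order part. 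Summing the two contributions and relabeling constants completes the argument.
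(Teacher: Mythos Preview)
Your argument is correct, and it takes a more direct route than the paper's. You observe that the inversion identity $\sigma_3 G^*_\omega\sigma_3\hat F_\omega=\mathrm{Id}$ from Lemma~\ref{leper}, combined with the $n=1$ case of the weighted mapping bound $\norm{k\,G^*_\omega(\vec g)}_{L^2_k}\le C\norm{\vec g}_{H^1_x}$ asserted in Remark~\ref{weightremark}, immediately yields $\norm{(1+|k|)\vec\phi}_{L^2_k}\le C\norm{\hat F_\omega(\vec\phi)}_{H^1_x}$. The paper instead runs a complex interpolation: it introduces the analytic family $T_\theta(\vec u)=(1+|k|^2)^{\theta/2}G^*_\omega\bigl(\sigma_3(-\partial_x^2+1)^{-\theta/2}\vec u\bigr)$, verifies $L^2$--boundedness at the endpoints $\theta=0$ (trivial) and $\theta=2$ (essentially the $n=2$ case of Remark~\ref{weightremark}, which follows cleanly from the eigenvalue relation $k^2\mathcal G_\omega=(\mathcal H_\omega-\omega)\mathcal G_\omega$ and integration by parts), and then applies the three lines lemma to reach $\theta=1$. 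Your approach is shorter because it invokes the $n=1$ bound directly; the paper's interpolation, on the other hand, avoids having to justify the $n=1$ case separately and serves as a warm-up for Lemma~\ref{h1coercc}, where the same Stein-interpolation scheme is genuinely needed (there the endpoint inputs are the $n=0$ and $n=2$ coercivity estimates of Theorem~\ref{princ}, and no direct $n=1$ analogue of Remark~\ref{weightremark} is available for the multi-potential operator $\mathcal T(0)$).
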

\begin{proof}
Since $\sigma_{3}G^{*}_{\omega}\sigma_{3}\hat{F}_{\omega}=\mathrm{Id},$ it is enough to verify that the following map
\begin{equation*}
    T(\overrightarrow{u})(k)\coloneqq \left[1+\vert k\vert^{2}\right]^{\frac{1}{2}}G^{*}_{\omega}\left(\sigma_{3}\left[{-}\partial^{2}_{x}+1\right]^{{-}\frac{1}{2}}\overrightarrow{u}(x)\right) (k)
\end{equation*}
is a bounded map on $L^{2}.$
Next, we consider the following families of linear maps
\begin{equation*}
    T_{\theta}(\overrightarrow{u})(k)\coloneqq \left[1+\vert k\vert^{2}\right]^{\frac{\theta}{2}}G^{*}_{\omega}\left(\sigma_{3}\left[{-}\partial^{2}_{x}+1\right]^{{-}\frac{\theta}{2}}\overrightarrow{u}(x)\right)(k) \text{ for any $\theta\in\mathbb{C}$ satisfying $0\leq \Ree \theta\leq 2.$}
\end{equation*}
In particular for any two Schwartz functions $\overrightarrow{u}(x)\in \mathscr{S}_{x}(\mathbb{R},\mathbb{C}^{2})$ and $\overrightarrow{g}(k)\in \mathscr{S}_{x}(\mathbb{R},\mathbb{C}^{2}),$ we consider the following function
\begin{equation*}
    f(\theta)=\left\langle T_{\theta}(\overrightarrow{u})(k),\overrightarrow{g}(k) \right\rangle  \text{, for any $\theta\in\mathbb{C}$ satisfying $0\leq \Ree \theta\leq 2,$}
\end{equation*}
which is well-defined because both functions $\overrightarrow{u}$ and $\overrightarrow{g}$ are Schwartz.
\par Next, because of Lemma \ref{Asy1sol1} and the inverse formula the $\sigma_{3}G^{*}_{\omega}\sigma_{3}\hat{F}_{\omega}=\mathrm{Id},$ we have the following two inequalities
\begin{equation*}
    \norm{T_{2}(\overrightarrow{u})}_{L^{2}}\leq C_{2}\norm{\overrightarrow{u}}_{L^{2}},\, \norm{T_{0}(\overrightarrow{u})}_{L^{2}}\leq C_{0}\norm{\overrightarrow{u}}_{L^{2}},
\end{equation*}
such that $C_{0}$ and $C_{2}$ do not depend on $\overrightarrow{u}.$
In particular, we can verify from the elementary inequality $\left\vert\left[1+\vert k\vert^{2}\right]^{\frac{i}{2}}\right\vert=1$ and the Plancherel theorem that
\begin{equation}\label{intercond}
    \norm{T_{2+\beta i}(\overrightarrow{u})}_{L^{2}}\leq C_{2}\norm{\overrightarrow{u}}_{L^{2}},\, \norm{T_{0+\beta i}(\overrightarrow{u})}_{L^{2}}\leq C_{0}\norm{\overrightarrow{u}}_{L^{2}} \text{, for any $\beta\in\mathbb{R}.$}
\end{equation}
\par Furthermore, since both functions $\overrightarrow{u}$ and $\overrightarrow{g}$ are both Schwartz, and $G^{*}_{\omega}$ is a bounded map on $L^{2},$ we have that  
\begin{equation}\label{Analytic}
    f(\theta) \text{ is an analytic function on the strip $\{\theta\in\mathbb{C}\vert\,  1\leq \Ree \theta\leq 2\}.$}
\end{equation}
Therefore  $\vert f(i\beta) \vert\leq C_{0}\norm{\overrightarrow{u}}_{L^{2}}\norm{\overrightarrow{g}}_{L^{2}}$ and $\vert f(2+i\beta) \vert\leq C_{2}\norm{\overrightarrow{u}}_{L^{2}}\norm{\overrightarrow{g}}_{L^{2}},$ for any $\beta\in\mathbb{R}.$ 
\par Consequently, because of \eqref{intercond} and \eqref{Analytic}, we can verify using the three lines lemma that there exists a $C_{1}>0$ satisfying
\begin{equation}\label{ff}
    \vert f(1)\vert\leq C_{1}\norm{\overrightarrow{u}}_{L^{2}}\norm{\overrightarrow{g}}_{L^{2}}.
\end{equation}
Therefore, since the choice of $\overrightarrow{u}$ and $\overrightarrow{g}$ as Schwartz function was arbitrary, we can verify from the density of $\mathscr{S}(\mathbb{R},\mathbb{C}^{2})$ on $L^{2}(\mathbb{R},\mathbb{C}^{2})$ and \eqref{ff} that $T_{1}$ is a bounded operator on $L^{2}.$ 
In conclusion, \eqref{H1} is true for any $\overrightarrow{u}$ belonging to the domain of $\hat{F}_{\omega}$.
\end{proof}
Next, we consider the following proposition which is a direct  consequence of Theorem  \ref{princ} and  Theorem \ref{TT}.
\begin{corollary}\label{trivial22}
If all the hypotheses of Theorem \ref{princ} are true, then the map $\mathcal{T}(t)$ defined in Theorem \ref{tcont} is invertible. More precisely, for any $t\geq 0,$ the linear maps $inv(T),\,inv_{\ell,t}(T):L^{2}_{x}(\mathbb{R},\mathbb{C}^{2})\to L^{2}_{k}(\mathbb{R},\mathbb{C}^{2})$   defined by
\begin{align*}
inv_{t}(T)(\overrightarrow{f})(k)=&\begin{cases}
    \overrightarrow{\phi}(k) \text{, if $\mathcal{T}(\overrightarrow{\phi})(t,x)=f(x),$}\\
    0, \text{ if $f\in\left(\sigma_{3}\Raa \mathcal{T}(t)\right)^{\perp},$}
    \end{cases}
\\
inv_{\ell,t}(T)(\overrightarrow{f})(k)=&\begin{cases}
    \overrightarrow{\phi_\ell}(k) \text{, if $\mathcal{T}(\overrightarrow{\phi})(t,x)=f(x),$}\\
    0, \text{ if $f\in\left(\sigma_{3}\Raa \mathcal{T}(t)\right)^{\perp},$}
\end{cases}  \end{align*}
are well-defined, bounded and $inv(T)\circ \mathcal{T}(t)=\mathrm{Id}$ for any $t\geq 0.$  
\end{corollary}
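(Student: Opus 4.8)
The plan is to derive Corollary \ref{trivial22} essentially for free from the two main structural results already in hand: the asymptotic completeness decomposition of Theorem \ref{princ} (in particular the direct sum decomposition recorded in Remark \ref{eq:decomposition} and the identity \eqref{eq:Pc}) and the coercivity estimate \eqref{Scoerc} of Theorem \ref{TT}. First I would fix $t\geq 0$ and observe that, by Theorem \ref{princ} applied at time $t$, every $\overrightarrow{f}\in L^{2}_{x}(\mathbb{R},\mathbb{C}^{2})$ admits a unique representation
\begin{equation*}
\overrightarrow{f}(x)=\mathcal{T}(\overrightarrow{\phi})(t,x)+\sum_{\ell=1}^{m}\sum_{\lambda_{\ell,k}}\mathfrak{G}_{\omega_{\ell},v_{\ell},y_{\ell}}(\mathfrak{v}_{\omega_{\ell},\lambda_{\ell,k}})(t,x)+\sum_{\ell=1}^{m}\mathfrak{G}_{\omega_{\ell},v_{\ell},y_{\ell}}(\mathfrak{z}_{\omega_{\ell}})(t,x),
\end{equation*}
with $\overrightarrow{\phi}$ in the domain of $\mathcal{S}(0)$. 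This makes the maps $inv_{t}(T)$ and $inv_{\ell,t}(T)$ well-defined on all of $L^{2}_{x}$: on the summand $\sigma_{3}\Raa\mathcal{T}(t)$ one reads off $\overrightarrow{\phi}$ (resp.\ its $\ell$-th component $\overrightarrow{\phi_{\ell}}$ from Definition \ref{s0def}), and on the orthogonal complement one sets them to $0$; uniqueness of the representation is exactly what guarantees single-valuedness, and linearity is immediate from linearity of the decomposition.

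Next I would prove boundedness. Since $L^{2}_{x}(\mathbb{R},\mathbb{C}^{2})=\Raa\mathcal{T}(t)\oplus\bigoplus_{\ell}\Raa\mathfrak{G}_{\omega_{\ell},v_{\ell},y_{\ell},\gamma_{\ell}}(t)$ is a direct sum of closed subspaces (closedness of $\Raa\mathcal{T}(t)$ is used repeatedly in Section \ref{Asc}, and the $\mathfrak{G}$-ranges are finite-dimensional by the Agmon decay of discrete modes), the associated projection $P_{c}(t)\colon\overrightarrow{f}\mapsto\mathcal{T}(\overrightarrow{\phi})(t,x)$ is a bounded operator; this is precisely \eqref{eq:Pc} together with the closed graph / open mapping theorem. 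It then remains to invert $\mathcal{T}(t)$ on its range. For this I would use Corollary \ref{c62} (estimate \eqref{poo}), or equivalently the chain: by Corollary \ref{ccc} and Remark \ref{re000}, $\norm{\mathcal{S}(t)\overrightarrow{\phi}}_{L^{2}_{x}}\simeq\norm{\mathcal{S}(0)\overrightarrow{\phi}}_{L^{2}_{x}}$, and by \eqref{h1linft} of Theorem \ref{tcont}, $\norm{\mathcal{T}(\overrightarrow{\phi})(t)}_{L^{2}_{x}}\geq(1-Ke^{-\beta L})\norm{\mathcal{S}(\overrightarrow{\phi_{0}})(0)}_{L^{2}_{x}}\gtrsim\norm{\mathcal{S}(0)\overrightarrow{\phi}}_{L^{2}_{x}}$ when $L$ is large enough; finally \eqref{Scoerc} gives $\norm{\mathcal{S}(0)\overrightarrow{\phi}}_{L^{2}_{x}}\geq c(m)\sum_{\ell}\norm{(\phi_{1,\ell},\phi_{2,\ell})}_{L^{2}_{k}}\geq c(m)\norm{\overrightarrow{\phi}}_{L^{2}_{k}}$ (recall $\overrightarrow{\phi}=(\phi_{1,1},\phi_{2,1})$ by condition a) of Definition \ref{s0def}). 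Chaining these inequalities yields $\norm{inv_{t}(T)(\overrightarrow{f})}_{L^{2}_{k}}\leq C\norm{P_{c}(t)\overrightarrow{f}}_{L^{2}_{x}}\leq C\norm{\overrightarrow{f}}_{L^{2}_{x}}$, and the same bound for each $inv_{\ell,t}(T)$ since each $\norm{(\phi_{1,\ell},\phi_{2,\ell})}_{L^{2}_{k}}$ is controlled by the left-hand side of \eqref{Scoerc}. The identity $inv(T)\circ\mathcal{T}(t)=\mathrm{Id}$ is then a tautology: given $\overrightarrow{\phi}$, the function $\mathcal{T}(\overrightarrow{\phi})(t,x)$ lies in $\Raa\mathcal{T}(t)$, so its decomposition has trivial discrete part and $inv_{t}(T)$ returns $\overrightarrow{\phi}$ by the uniqueness in Theorem \ref{princ} (equivalently the injectivity of $\mathcal{T}(t)$, which is itself the coercivity just established).

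The main obstacle, such as it is, is bookkeeping rather than anything deep: one must be careful that the time-$t$ statement of Theorem \ref{princ} and of \eqref{Scoerc} genuinely hold uniformly in $t\geq 0$ — this is supplied by Remark \ref{re000} (coercivity of $\mathcal{S}(t)$ with the same constant $c(m)$) and by Corollary \ref{ccc}, so the constant $C$ in the boundedness bound can be taken independent of $t$. A second minor point is to confirm that $inv_{t}(T)$ is defined consistently on the orthogonal complement appearing in the statement, namely $(\sigma_{3}\Raa\mathcal{T}(t))^{\perp}=\bigoplus_{\ell}\Raa\mathfrak{G}_{\omega_{\ell},v_{\ell},y_{\ell},\gamma_{\ell}}(t)$; this is exactly the second displayed identity in Remark \ref{eq:decomposition}, so setting $inv_{t}(T)=0$ there is compatible with the direct-sum definition. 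With these two uniformity checks in place, the corollary follows immediately, and I would present the argument in roughly the order above: (i) well-definedness and linearity from uniqueness in Theorem \ref{princ}; (ii) boundedness of $P_{c}(t)$ from the closed direct-sum decomposition; (iii) the coercivity chain Corollary \ref{ccc} $\to$ Theorem \ref{tcont} $\to$ \eqref{Scoerc} to bound $inv_{t}(T)$ and $inv_{\ell,t}(T)$; (iv) the tautological identity $inv(T)\circ\mathcal{T}(t)=\mathrm{Id}$.
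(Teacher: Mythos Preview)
Your proposal is correct and follows essentially the same approach as the paper, which simply declares the corollary a ``direct consequence of Theorem \ref{princ} and Theorem \ref{TT}'' and then notes in Remark \ref{relasst} that the uniform-in-$t$ bound on $\norm{inv_{t}(T)}+\max_{\ell}\norm{inv_{\ell,t}(T)}$ comes from Remark \ref{re000} and Theorem \ref{tcont}. Your write-up actually spells out the chain (well-definedness from uniqueness in Theorem \ref{princ}, boundedness of $P_{c}(t)$ from the closed direct-sum decomposition of Remark \ref{eq:decomposition}, and the coercivity chain through \eqref{Scoerc}, Corollary \ref{ccc}, Remark \ref{re000}, and \eqref{h1linft}) more explicitly than the paper does, but the content is the same.
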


\begin{remark}\label{relasst}
 In particular, Remark \ref{re000} and Theorem \ref{tcont} imply that there exists a constant $C>1$ satisfying for any $t\geq 0$ the estimate
 \begin{equation*}
 \norm{inv_{t}(T)}+\max_{\ell}\norm{inv_{\ell,t}(T)}\leq C.    
 \end{equation*}
\end{remark}
Corollary \ref{trivial22} will be useful in proving the following proposition about the $H^{1}$ norm of $\mathcal{T}(0)(\overrightarrow{\phi}).$ 
\begin{lemma}\label{h1coercc}
There exists a constant $c>0$ depending only on the potentials $V_{\ell}$ satisfying for any $t\geq 0$ the following inequality for any $\overrightarrow{\phi}$ belonging to the domain of $\mathcal{T}(t)$     \begin{equation*}
\norm{\mathcal{T}(\overrightarrow{\phi})(t,x)}_{H^{1}_{x}(\mathbb{R})}\geq c\max_{\ell}\norm{(1+\vert k\vert )\overrightarrow{\phi}_{\ell}(k)}_{L^{2}_{k}(\mathbb{R})}.    
\end{equation*}
\end{lemma}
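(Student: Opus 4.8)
The plan is to combine the interpolation inequality for a single potential (Lemma~\ref{interpoo}) with the structural description of $\mathcal{T}(t)(\overrightarrow{\phi})$ given by Theorem~\ref{tcont}, in the same way that the $L^2$-coercivity of $\mathcal{S}(0)$ in Theorem~\ref{TT} was upgraded in Corollary~\ref{ccc}. First I would reduce to $t=0$: since $\mathcal{T}(\overrightarrow{\phi})(t,x)$ solves \eqref{ldpe}, applying the Galilei transforms $\mathfrak{g}_{\omega_\ell,v_\ell,y_\ell,\gamma_\ell}^{-1}$ locally in the region $P_\ell(t)$ converts the moving problem at time $t$ into the stationary one at time $0$ with the same potentials; alternatively one simply notes that all the estimates in Theorem~\ref{tcont} and Theorem~\ref{TT} (see Remark~\ref{re000}) hold uniformly in $t\geq 0$, so it suffices to prove the bound with $\mathcal{T}(0)$ and $\mathcal{S}(0)$.

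Next I would use the approximation $\norm{\mathcal{T}(0)(\overrightarrow{\phi})-\mathcal{S}(0)(\overrightarrow{\phi})}_{H^1_x}\leq K e^{-\beta\min_\ell(y_\ell-y_{\ell+1})}\norm{\mathcal{S}(0)(\overrightarrow{\phi})}_{L^2_x}$ (the $H^1$ version of Theorem~\ref{tcont}, recorded in the Corollary at the end of Section~\ref{Asc}, combined with Lemma~\ref{kphi}), together with $\norm{\mathcal{S}(0)(\overrightarrow{\phi})}_{L^2_x}\leq C\norm{\mathcal{T}(0)(\overrightarrow{\phi})}_{L^2_x}$ from Theorem~\ref{TT}. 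This reduces the claim to a lower bound $\norm{\mathcal{S}(0)(\overrightarrow{\phi})}_{H^1_x}\gtrsim \max_\ell\norm{\langle k\rangle\,\overrightarrow{\phi}_\ell}_{L^2_k}$, up to an error that is small when $\min_\ell(y_\ell-y_{\ell+1})$ is large. Then on each window $P_\ell(0)$ one has, by Lemma~\ref{localS0} and the triangle inequality (as in the proof of Corollary~\ref{ccc}, estimate~\eqref{coerrrr}),
\begin{equation*}
\norm{\mathcal{S}(0)(\overrightarrow{\phi})}_{H^1_x}\gtrsim \max_\ell\norm{\chi_{P_\ell(0)}\hat{G}_{\omega_\ell}\!\left(e^{-i\gamma_\ell\sigma_3}e^{iy_\ell k}\tau_{\sigma_3 v_\ell/2}\overrightarrow{\phi}_\ell\right)(x-y_\ell)}_{H^1_x}-Ce^{-\beta\min_\ell(y_\ell-y_{\ell+1})}\max_\ell\norm{\langle k\rangle\,\overrightarrow{\phi}_\ell}_{L^2_k},
\end{equation*}
and the leading term is estimated by combining Lemma~\ref{interpoo} (applied with $\hat G_{\omega_\ell}$ in place of $\hat F_{\omega_\ell}$, via Remark~\ref{re-infty} and the identity $\sigma_3 F^*_{\omega_\ell}\sigma_3\hat G_{\omega_\ell}=\mathrm{Id}$), together with Lemma~\ref{appFourier} to control the part of $\hat G_{\omega_\ell}(\cdots)$ living outside $P_\ell(0)$ — that tail is exponentially small in $\min_\ell(y_\ell-y_{\ell+1})$, exactly as in the second displayed estimate in the proof of Corollary~\ref{ccc}. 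Absorbing the exponentially small errors into the left-hand side (Minkowski) for $\min_\ell(y_\ell-y_{\ell+1})$ large finishes the argument.

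The main obstacle I anticipate is the bookkeeping around the cutoffs: Lemma~\ref{interpoo} gives coercivity of the \emph{global} $\hat G_{\omega_\ell}$-transform in $H^1$, but what appears in $\mathcal{S}(0)(\overrightarrow{\phi})$ near $y_\ell$ is only its restriction to the finite interval $P_\ell(0)$, so one must show that $\norm{[1-\chi_{P_\ell(0)}]\,\partial_x\hat G_{\omega_\ell}(\cdots)}_{L^2_x}$ is exponentially small — this requires the derivative version of Lemma~\ref{appFourier} (the cases $j=1$, which are included in its statement) together with the asymptotics \eqref{asy1}--\eqref{asy4} and \eqref{psed1}--\eqref{psed2} to see that outside $P_\ell(0)$ the function $\hat G_{\omega_\ell}(e^{iy_\ell k}\tau_{\sigma_3 v_\ell/2}\overrightarrow{\phi}_\ell)(x-y_\ell)$ behaves like a free wave whose tails are controlled by the neighboring windows and decay like $e^{-\beta|y_{\ell\pm1}-y_\ell|}$. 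A secondary point is that Lemma~\ref{interpoo} is stated for $\hat F_\omega$; one converts it to $\hat G_\omega$ using $\hat F_\omega(\overrightarrow{u})=\hat G_\omega\!\big(\tfrac{\overrightarrow{u}(k)}{s(k)}-\tfrac{r(k)}{s(k)}\overrightarrow{u}(-k)\big)$ from Remark~\ref{re-infty} and the boundedness of multiplication by $1/s$ and $r/s$ together with their first derivatives, which follows from \eqref{asyreftr}; this is routine but should be spelled out. Once these two reductions are in place, combining them with the uniform-in-$t$ bounds of Theorem~\ref{tcont} and Remark~\ref{re000} yields the claimed inequality with a constant $c>0$ depending only on $V_1,\dots,V_m$ and the (fixed, large) lower bounds on $\min_\ell(y_\ell-y_{\ell+1})$ and $\min_\ell(v_\ell-v_{\ell+1})$.
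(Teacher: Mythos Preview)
Your localization argument has a genuine gap at the key step. You claim that the tail $\norm{[1-\chi_{P_\ell(0)}]\,\hat G_{\omega_\ell}(e^{iy_\ell k}\tau_{\sigma_3 v_\ell/2}\overrightarrow{\phi}_\ell)(\cdot-y_\ell)}_{H^1_x}$ is exponentially small in $\min_\ell(y_\ell-y_{\ell+1})$, invoking Lemma~\ref{appFourier} and the ``second displayed estimate'' in the proof of Corollary~\ref{ccc}. Neither reference supports this. Lemma~\ref{appFourier} only says that on $\{x\leq -N\}$ the \emph{difference} $\hat G_{\omega_\ell}(\overrightarrow{u})-F_0(\overrightarrow{u})$ is exponentially small; the free-wave piece $F_0(\overrightarrow{u})$ can carry mass of order $\norm{\langle k\rangle\overrightarrow{u}}_{L^2_k}$ there. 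And the estimate you cite from Corollary~\ref{ccc} reads
\[
\norm{[1-\chi_{P_\ell(0)}]\,\hat G_{\omega_\ell}(\cdots)}_{L^2_x}\leq \tfrac{2}{c}\,\norm{\mathcal{S}(0)(\overrightarrow{\phi})}_{L^2_x},
\]
which is \emph{not} small---it is the same order as the quantity you are trying to bound from below. In Corollary~\ref{ccc} this bound is used to prove an \emph{upper} bound on $\norm{\mathcal{S}(t)\overrightarrow{\phi}}_{L^2}$, so losing a constant multiple of $\norm{\mathcal{S}(0)\overrightarrow{\phi}}_{L^2}$ is harmless; for your lower bound it destroys the argument, since after subtracting the tail you are left with $c\norm{\langle k\rangle\overrightarrow{\phi}_\ell}_{L^2_k}-C\norm{\langle k\rangle\overrightarrow{\phi}_\ell}_{L^2_k}$, which need not be positive. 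The $L^2$ coercivity \eqref{Scoerc} itself was \emph{not} proved by localization but by the Hardy-space machinery of Lemmas~\ref{T} and \ref{Tp}, precisely because such a direct window-by-window lower bound fails.

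The paper avoids this entirely by interpolation. From Theorem~\ref{princ} one already has the coercivity of $\mathcal{T}(0)$ in $L^2$ and in $H^2$ (estimates \eqref{phi2kkdecay} for $n=0,2$, coming from \eqref{Scoerc} and \eqref{Scoerc2}). Rewriting these as boundedness statements for the inverse map $inv_{\ell,0}(T)$ of Corollary~\ref{trivial22}, namely $\norm{inv_{\ell,0}(T)f}_{L^2_k}\lesssim\norm{f}_{L^2_x}$ and $\norm{(1+k^2)\,inv_{\ell,0}(T)\,[1-\partial_x^2]^{-1}f}_{L^2_k}\lesssim\norm{f}_{L^2_x}$, one runs the same three-lines argument as in Lemma~\ref{interpoo} on the analytic family $T_\theta=(1+k^2)^{\theta/2}\,inv_{\ell,0}(T)\,[1-\partial_x^2]^{-\theta/2}$ to obtain the $H^1$ bound at $\theta=1$. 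The reduction to $t=0$ is via Remark~\ref{relasst}. This route uses Lemma~\ref{interpoo} only as a template for the interpolation, not pointwise on each window.
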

Furthermore, a standard corollary of Lemmas \ref{kphi} and \ref{h1coercc} is the following proposition.
\begin{corollary}
 There exists a constant $C>0$ satisfying for any $t\geq \tau\geq 0$
 \begin{equation*}
    \norm{P_{c}(t)\mathcal{U}(t,\tau)\overrightarrow{\psi_{0}}}_{H^{1}_{x}(\mathbb{R})}\leq C \norm{\overrightarrow{\psi_{0}}}_{H^{1}_{x}(\mathbb{R})}.
 \end{equation*}
\end{corollary}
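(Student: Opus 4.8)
The goal is to bound $\norm{P_c(t)\mathcal{U}(t,\tau)\overrightarrow{\psi_0}}_{H^1_x}$ by $C\norm{\overrightarrow{\psi_0}}_{H^1_x}$ uniformly in $t\geq\tau\geq0$. The plan is to combine the commutation property $P_c(t)\mathcal{U}(t,\tau)\overrightarrow{\psi_0}=\mathcal{U}(t,\tau)P_c(\tau)\overrightarrow{\psi_0}$ from Theorem \ref{Decesti} with the structural description of $P_c(t)\mathcal{U}(t,\tau)\overrightarrow{\psi_0}$ as lying in $\Raa\mathcal{T}(t)$, i.e. $P_c(t)\mathcal{U}(t,\tau)\overrightarrow{\psi_0}=\mathcal{T}(t)(\overrightarrow{\phi})$ for some profile $\overrightarrow{\phi}$ in the domain of $\mathcal{S}(0)$, by \eqref{eq:Pc} and Corollary \ref{trivial22}.

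First I would reduce the $H^1$ bound to a bound on the profile. By Lemma \ref{h1coercc}, since $P_c(t)\mathcal{U}(t,\tau)\overrightarrow{\psi_0}=\mathcal{T}(t)(\overrightarrow{\phi})$, it suffices to show that $\max_\ell\norm{(1+|k|)\overrightarrow{\phi}_\ell(k)}_{L^2_k}$ controls, and is controlled by, the $H^1$ norm; the relevant coercivity is Lemma \ref{h1coercc} combined with Lemma \ref{kphi} (which gives the reverse estimate $\norm{\mathcal{T}(t)(\overrightarrow{\phi})}_{H^1}\lesssim \max_\ell\norm{(1+|k|)\overrightarrow{\phi}_\ell}_{L^2}$ up to the exponentially small remainder). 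So the chain of inequalities is
\begin{equation*}
\norm{P_c(t)\mathcal{U}(t,\tau)\overrightarrow{\psi_0}}_{H^1_x}\lesssim \max_\ell\norm{(1+|k|)\overrightarrow{\phi}_\ell}_{L^2_k}\lesssim \norm{\mathcal{T}(\tau)(\overrightarrow{\phi})}_{H^1_x}=\norm{P_c(\tau)\overrightarrow{\psi_0}}_{H^1_x},
\end{equation*}
where in the first inequality I use the estimate from Lemma \ref{kphi} applied to the $\hat{G}_{\omega_\ell}$ components of $\mathcal{T}(t)(\overrightarrow{\phi})$ near the $\ell$-th center together with Lemma \ref{localS0}/Theorem \ref{tcont} to handle the transition regions and the free part, and in the second inequality I use Lemma \ref{h1coercc} \emph{at time $\tau$} with the same profile $\overrightarrow{\phi}$ (the profile is time-independent: $\mathcal{T}(t)(\overrightarrow{\phi})$ is a solution of \eqref{ldpe} with initial data $\mathcal{T}(\tau)(\overrightarrow{\phi})$, so $P_c(t)\mathcal{U}(t,\tau)\overrightarrow{\psi_0}=\mathcal{T}(t)(\overrightarrow{\phi})$ forces $P_c(\tau)\overrightarrow{\psi_0}=\mathcal{T}(\tau)(\overrightarrow{\phi})$ by uniqueness in Theorem \ref{tcont}).

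Next I would close the argument by observing $\norm{P_c(\tau)\overrightarrow{\psi_0}}_{H^1_x}\lesssim\norm{\overrightarrow{\psi_0}}_{H^1_x}$: this is precisely the $n=1$ case of \eqref{phi2kkdecay} together with the boundedness of $inv_\tau(T)$ from Remark \ref{relasst}, or more directly it follows since $P_c(\tau)$ is a bounded projection on $L^2$ which, by the Sobolev bound in Lemma \ref{h1coercc} and Lemma \ref{kphi}, also acts boundedly on $H^1$ on the range of $\mathcal{T}(\tau)$; combined with the finite-dimensionality and exponential decay of the discrete pieces (Agmon), $P_c(\tau):H^1\to H^1$ is bounded. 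The main obstacle I anticipate is bookkeeping the interaction/transition terms: $\mathcal{T}(t)(\overrightarrow{\phi})$ only equals the sum of the $\hat{G}_{\omega_\ell}$-pieces up to an $O(e^{-\beta\min_\ell(y_\ell-y_{\ell+1})})$ error in $H^1$ (Theorem \ref{tcont} and Lemma \ref{localS0}), so one must verify that taking $L\gg1$ large makes this error absorbable into the constant $c$ in the coercivity estimates — this is exactly the mechanism already used in the proof of Corollary \ref{ccc} and Corollary \ref{c62}, and the $H^1$ version requires upgrading those $L^2$ absorption arguments using Lemma \ref{kphi} and Lemma \ref{interpoo}, which is routine but needs care with the weights $(1+|k|)$.
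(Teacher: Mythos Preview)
Your proposal is correct and follows essentially the same approach the paper indicates: the corollary is stated as a direct consequence of Lemmas \ref{kphi} and \ref{h1coercc}, and your chain $\norm{\mathcal{T}(t)(\overrightarrow{\phi})}_{H^{1}}\lesssim\max_{\ell}\norm{(1+|k|)\overrightarrow{\phi}_{\ell}}_{L^{2}}\lesssim\norm{\mathcal{T}(\tau)(\overrightarrow{\phi})}_{H^{1}}=\norm{P_{c}(\tau)\overrightarrow{\psi_{0}}}_{H^{1}}$ via the time-independent profile is exactly the intended mechanism. Your care about the transition regions and the absorbable $O(e^{-\beta L})$ errors is well placed but, as you note, routine given Theorem \ref{tcont} and the arguments of Corollaries \ref{ccc} and \ref{c62}.
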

\begin{proof}[Proof of Lemma \ref{h1coercc}]
First, because of Remark \ref{relasst}, it is enough to prove Lemma \ref{h1coercc} when $t=0.$

\par Next, from Theorem \ref{princ}, there exist positive constants $c_{1}$ and $c_{2}$ satisfying
\begin{equation*}
\norm{\mathcal{T}(0)(\overrightarrow{\phi})}_{L^{2}_{x}(\mathbb{R})}\geq c_{1}\max_{\ell}\norm{\overrightarrow{\phi}_{\ell}(k)}_{L^{2}_{k}(\mathbb{R})},\, \norm{\mathcal{T}(0)(\overrightarrow{\phi})}_{H^{2}_{x}(\mathbb{R})}\geq c_{2}\max_{\ell}\norm{\left(1+\vert k\vert^{2}\right)\overrightarrow{\phi}_{\ell}(k)}_{L^{2}_{k}(\mathbb{R})}.   
\end{equation*}
In particular, using Corollary \ref{trivial22} and Theorem \ref{princ}, the two estimates above are equivalent to
\begin{align}\label{pt1}
\max_{\ell}\norm{inv_{\ell,0}(T)\left(\overrightarrow{f}(x)\right)(k)}_{L^{2}_{k}(\mathbb{R}) }\leq &C_{1}\norm{\overrightarrow{f}(x)}_{L^{2}_{k}(\mathbb{R})}  \\ \label{pt2}
\max_{\ell}\norm{(1+k^{2})inv_{\ell,0}(T)\left(\left[{-}\frac{d^{2}}{dx^{2}}+1\right]^{{-}1}\overrightarrow{f}(x)\right)(k)}_{L^{2}_{k}(\mathbb{R}) }\leq & C_{2}\norm{\overrightarrow{f}(x)}_{L^{2}_{k}(\mathbb{R})},  \end{align}
for some positive constants $C_{1},\,C_{2}>1$ and any $\overrightarrow{f}\in L^{2}_{x}(\mathbb{R},\mathbb{C}^{2}).$
\par Consequently, repeating the argument in the proof of Lemma \ref{interpoo}, we can deduce Lemma \ref{h1coercc} from the inequalities \eqref{pt1} and \eqref{pt2} by applying interpolation techniques. 
\end{proof}

Next, to simplify the notation in the next proposition, we consider the following definition.
\begin{definition}
 In the notation of Theorem \ref{princ}, we consider the following characteristic function.
 \begin{align*}
     \chi_{\ell}(\tau,x)=&\chi_{\left[\frac{y_{\ell}+y_{\ell+1}+\tau(v_{\ell}+v_{\ell+1})}{2},\frac{y_{\ell}+y_{\ell-1}+\tau(v_{\ell}+v_{\ell-1})}{2}\right]}(x) \text{, if $\ell\neq 1 $ and $\ell\neq m,$}\\
     \chi_{1}(\tau,x)=&\chi_{\left(\frac{y_{1}+y_{2}+\tau(v_{1}+v_{2})}{2},{+}\infty\right)}(x),\\
     \chi_{m}(\tau,x)=&\chi_{\left({-}\infty,\frac{y_{m}+y_{m-1}+\tau(v_{m}+v_{m-1})}{2}\right)}(x). 
 \end{align*}
\end{definition}
The proof of Theorem \ref{Decesti} will follow from Lemma \ref{interpoo} and the proposition below. 
\begin{theorem}\label{Decesti1}
 If $\min y_{\ell}-y_{\ell+1}>L$ and $\min v_{\ell}-v_{\ell+1}>M,$ the following estimates are true for constants $K>1,\,\beta>0$
 \begin{align}\nonumber
  \norm{\mathcal{U}(t,\tau)P_{c}(\tau)\overrightarrow{\psi_{0}}}_{L^{2}_{x}(\mathbb{R})}\leq & K \norm{P_{c}(\tau)\overrightarrow{\psi_{0}}}_{L^{2}_{x}(\mathbb{R})},\\ \label{P1}
   \norm{\mathcal{U}(t,\tau)P_{c}(\tau)\overrightarrow{\psi_{0}}}_{L^{\infty}_{x}(\mathbb{R})}\leq & \max_{\ell} \frac{K}{(t-\tau)^{\frac{1}{2}}}\norm{(1+\vert x-y_{\ell}-v_{\ell}\tau\vert )\chi_{\ell}(\tau,x)P_{c}(\tau)\overrightarrow{\psi_{0}}(x)}_{L^{2}_{x}(\mathbb{R})}. 
\end{align}
Moreover, we also have local decay estimates if (H4) from Theorem \ref{Decesti} holds
\begin{align}\label{Q1}
\norm{\mathcal{U}(t,\tau)P_{c}(\tau)\overrightarrow{\psi_{0}}}_{L^{\infty}_{x}(\mathbb{R})}\leq & \max_{\ell} \frac{K}{(t-\tau)^{\frac{1}{2}}}\left[\norm{P_{c}(\tau)\overrightarrow{\psi_{0}}(x)}_{L^{1}_{x}(\mathbb{R})}+e^ {{-}\beta \min_{\ell}(y_{\ell}-y_{\ell+1}+(v_{\ell}-v_{\ell+1})\tau)}\norm{P_{c}(\tau)\overrightarrow{\psi_{0}}(x)}_{L^{2}_{x}(\mathbb{R})}\right],\\ \label{Q2}
\norm{\frac{\mathcal{U}(t,\tau)P_{c}\overrightarrow{\psi_{0}}}{(1+\vert x-y_{\ell}-v_{\ell}t\vert)}}_{L^{\infty}_{x}(\mathbb{R})}\leq & \frac{K (1+\tau)}{(t-\tau)^{\frac{3}{2}}} \norm{P_{c}(\tau)\overrightarrow{\psi_{0}}(x)}_{L^{1}_{x}(\mathbb{R})}\\ \nonumber 
&{+}\frac{K}{(t-\tau)^{\frac{3}{2}}}\max_{\ell}\norm{(1+\vert x-y_{\ell}-v_{\ell}\tau\vert )\chi_{\ell}(\tau,x)P_{c}(\tau)\overrightarrow{\psi_{0}}(x)}_{L^{1}_{x}(\mathbb{R})}\\ \nonumber 
&{+}\frac{K(y_{1}-y_{m}+(v_{1}-v_{m})\tau)^{2}e^{{-}\beta\left[\min_{\ell} (v_{\ell}-v_{\ell+1})\tau+(y_{\ell}-y_{\ell+1})\right]}\norm{P_{c}(\tau)\overrightarrow{\psi}_{0}(x)}_{H^{2}}}{(t-\tau)^{\frac{3}{2}}}. 
 \end{align}
Furthermore, the projection $P_{c}$ satisfies the following identity for any $\overrightarrow{\psi_{0}}\in L^{2}_{x}(\mathbb{R},\mathbb{C}^{2})$
\begin{equation*}
    P_{c}(t)\mathcal{U}(t,\tau)\overrightarrow{\psi_{0}}= \mathcal{U}(t,\tau)P_{c}(\tau)\overrightarrow{\psi_{0}}=\mathcal{U}(t,\tau)P_{c}\overrightarrow{\psi_{0}}.
\end{equation*}
\end{theorem}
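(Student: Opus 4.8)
\textbf{Proof plan for Theorem \ref{Decesti1}.}

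The plan is to reduce everything to the decay estimates for a single potential, Lemma \ref{lem:decayonepotential}, via the approximate-solution machinery already set up. First I would fix $\tau\geq 0$ and $\overrightarrow{\psi_0}$ with $P_c(\tau)\overrightarrow{\psi_0}=\overrightarrow{\psi_0}$. By the asymptotic completeness, Theorem \ref{princ}, applied at time $\tau$ (more precisely, by Corollary \ref{trivial22} and the time-translation invariance encoded in Remark \ref{relasst}), there is a unique $\overrightarrow{\phi_0}$ in the domain of $\mathcal{S}(0)$ with $\mathcal{T}(\tau)(\overrightarrow{\phi_0})=\overrightarrow{\psi_0}$, and $\mathcal{U}(t,\tau)\overrightarrow{\psi_0}=\mathcal{T}(t)(\overrightarrow{\phi_0})$ for all $t\geq\tau$, since both sides solve \eqref{ldpe} with the same data at time $\tau$. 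Then Theorem \ref{tcont} (its more general version stated in the remark after it, which only needs $\min v_\ell-v_{\ell+1}>0$) gives
\begin{equation*}
\norm{\mathcal{U}(t,\tau)\overrightarrow{\psi_0}-\mathcal{S}(\overrightarrow{\phi_0})(t,x)}_{H^1_x}\leq Ke^{-\beta[(\min_\ell y_\ell-y_{\ell+1})+(\min_\ell v_\ell-v_{\ell+1})(t-\tau)]}\norm{\mathcal{S}(\overrightarrow{\phi_0})(\tau)}_{L^2_x},
\end{equation*}
so it suffices to prove the four estimates with $\mathcal{U}(t,\tau)\overrightarrow{\psi_0}$ replaced by $\mathcal{S}(\overrightarrow{\phi_0})(t,x)$, picking up exponentially small errors that are absorbed into the right-hand sides (in particular producing the $e^{-\beta\min(\ldots)}$ terms in \eqref{Q1} and the last line of \eqref{Q2}).

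Next I would exploit the explicit form of $\mathcal{S}(\overrightarrow{\phi_0})(t,x)$ from Definition \ref{s0def}: it is a finite sum over $\ell$ of Galilei-transformed one-potential Schrödinger flows $e^{i(\frac{v_\ell x}{2}-\frac{v_\ell^2 t}{4}+\omega_\ell t+\gamma_\ell)\sigma_3}\hat{G}_{\omega_\ell}(e^{-it(k^2+\omega_\ell)\sigma_3}(\cdots))(x-y_\ell-v_\ell t)$ plus one flat Schrödinger piece $F_0(e^{-itk^2\sigma_3}\vec\varphi)$. Each summand solves the free or one-potential equation \eqref{LSS}, so Lemma \ref{lem:decayonepotential} applies to each term individually. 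For the $L^2$ bound one uses the $L^2$ boundedness in Lemma \ref{lem:decayonepotential} together with Corollary \ref{ccc} and the coercivity \eqref{Scoerc} of Theorem \ref{TT} to dominate $\max_\ell\norm{(\phi_{1,\ell},\phi_{2,\ell})}_{L^2_k}$ by $\norm{\mathcal{S}(0)(\overrightarrow{\phi_0})}_{L^2_x}=\norm{\overrightarrow{\psi_0}}_{L^2_x}$. For \eqref{P1} one uses the $t^{-1/2}$ $L^1\to L^\infty$ decay of each flow, writes each $\hat G_{\omega_\ell}(\cdots)(0)$ in terms of $\mathcal{S}(0)(\overrightarrow{\phi_0})$ localized to the window $P_\ell(\tau)$ via Lemma \ref{localS0}, and then controls $\norm{\hat G_{\omega_\ell}(\overrightarrow{u_\ell})}_{L^1_x}$ by $\norm{(1+|x|)\chi_{P_\ell}(x+y_\ell+v_\ell\tau)\mathcal{S}(\overrightarrow{\phi_0})(\tau,x+y_\ell+v_\ell\tau)}_{L^2_x}$ using estimates \eqref{fl1teo2}, \eqref{derivdecay} of Theorem \ref{TT} and Lemma \ref{appFourier}; the Galilei factors shift the support of $\chi_\ell$ to the moving window $\chi_\ell(\tau,x)$ and convert the weight $|x|$ into $|x-y_\ell-v_\ell\tau|$, which is exactly the statement. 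Estimates \eqref{Q1} and \eqref{Q2} are the same game with the improved $L^1\to L^\infty$ (rate $t^{-1/2}$ via the plain $L^1$ norm) and the weighted $\langle x\rangle^{-1}$, rate $t^{-3/2}$ estimates from Lemma \ref{lem:decayonepotential}, which are available precisely under hypothesis (H4); here the weighted right-hand sides are handled with \eqref{fl1teo1}, \eqref{Scoerc2} of Theorem \ref{TT} and Remark \ref{weightremark}, and the factor $(1+\tau)$ and $(y_1-y_m+(v_1-v_m)\tau)^2$ appear from translating the window by $y_\ell+v_\ell\tau$ and from the polynomial loss in \eqref{derivdecay2}.

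Finally, the intertwining identity $P_c(t)\mathcal{U}(t,\tau)\overrightarrow{\psi_0}=\mathcal{U}(t,\tau)P_c(\tau)\overrightarrow{\psi_0}$ follows from \eqref{eq:Pc}: if $\overrightarrow{\psi}(t)=\mathcal{U}(t,\tau)\overrightarrow{\psi_0}$ is the full solution with data $\overrightarrow{\psi_0}$ at time $\tau$, decompose $\overrightarrow{\psi_0}$ via Theorem \ref{princ} (at time $\tau$) into its $\mathcal{T}$-part and the discrete parts; the flow preserves this decomposition because $\mathcal{T}(t)$, $\mathfrak{G}_{\omega_\ell,\ldots}$ are all solutions, and $P_c(t)$ projects onto the $\mathcal{T}$-part by \eqref{eq:Pc}, so $P_c(t)\overrightarrow{\psi}(t)=\mathcal{T}(t)(\overrightarrow{\phi_0})=\mathcal{U}(t,\tau)\mathcal{T}(\tau)(\overrightarrow{\phi_0})=\mathcal{U}(t,\tau)P_c(\tau)\overrightarrow{\psi_0}$. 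I expect the main obstacle to be bookkeeping: carefully tracking how the Galilei phases and the moving cutoffs $\chi_\ell(\tau,x)$ transform the one-potential weighted $L^1$ norms into the stated moving-window norms, and making sure the exponentially small cross-terms from Lemma \ref{localS0} and Theorem \ref{tcont} land in the right places (the $H^1$ versus $H^2$ norms, and the polynomial prefactors $y_1-y_m+(v_1-v_m)\tau$) — the analytic content is entirely in Lemmas \ref{lem:decayonepotential}, \ref{localS0}, \ref{appFourier} and Theorem \ref{TT}, which are already in hand.
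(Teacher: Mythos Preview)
Your proposal is correct and follows essentially the same approach as the paper: represent $P_c(\tau)\overrightarrow{\psi_0}$ as $\mathcal{T}(\tau)(\overrightarrow{\phi_0})$ via Theorem \ref{princ}, replace $\mathcal{T}(t)$ by $\mathcal{S}(t)$ with an exponentially small $H^1$ error using Theorem \ref{tcont}, apply the single-potential decay estimates of Lemma \ref{lem:decayonepotential} term by term to the explicit sum in $\mathcal{S}(\overrightarrow{\phi_0})(t,x)$, and then convert the resulting $L^1$ (respectively weighted $L^1$) norms of the $\hat G_{\omega_\ell}$-pieces back into the stated moving-window norms of $P_c(\tau)\overrightarrow{\psi_0}$ via the coercivity estimates of Theorem \ref{TT} (specifically \eqref{Scoerc}, \eqref{derivdecay}, \eqref{fl1teo2}, \eqref{fl1teo1}) together with the asymptotics \eqref{asy1}--\eqref{asy4} and Lemma \ref{LB3}. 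The intertwining identity is exactly as you say, a consequence of \eqref{eq:Pc} and the fact that the decomposition \eqref{princ11} is preserved by the flow.
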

 \begin{proof}[Proof of Theorem \ref{Decesti}]
First, from Theorem \ref{princ}, we have that there exists a constant $K>1$ satisfying $\norm{P_{c}(t)\mathcal{U}(t,\tau)\overrightarrow{\psi_{0}}}_{L^{2}_{x}(\mathbb{R})}\leq K\norm{\overrightarrow{\psi_{0}}}_{L^{2}_{x}(\mathbb{R})}$ for any $\overrightarrow{\psi_{0}}(x)\in L^{2}_{x}(\mathbb{R},\mathbb{C}^{2}).$ Furthermore, we can verify from Theorem \ref{princ} and Lemmas \ref{kphi} and \ref{h1coercc} for any $t\geq\tau\geq 0$ that
\begin{equation*}
   \max_{n\in\{0,1\}}\norm{\frac{\mathcal{U}(t,\tau)P_{c}(\tau)\overrightarrow{\psi_{0}}}{(1+\vert x-y_{\ell}-v_{\ell}t\vert)^{n}}}_{L^{\infty}_{x}(\mathbb{R})}\leq C_{1}\norm{P_{c}(t)\mathcal{U}(t,\tau)\overrightarrow{\psi_{0}}}_{H^{1}_{x}(\mathbb{R})}\leq C_{2}\norm{P_{c}(\tau)\overrightarrow{\psi_{0}}}_{H^{1}_{x}(\mathbb{R})}, 
\end{equation*}
for some positive constants $C_{1},\,C_{2}>1.$ 
\par Consequently, we can verify from Theorem \ref{Decesti1} that there exists $K>1$ satisfying
\begin{equation*}
 \norm{\mathcal{U}(t,\tau)P_{c}(\tau)\overrightarrow{\psi_{0}}}_{L^{\infty}_{x}(\mathbb{R})}\leq K\min\left(\norm{P_{c}(\tau)\overrightarrow{\psi_{0}}}_{H^{1}_{x}(\mathbb{R})},\frac{\max_{\ell}\norm{(1+\vert x-y_{\ell}-v_{\ell}\tau\vert )\chi_{\ell}(\tau,x)P_{c}(\tau)\overrightarrow{\psi_{0}}(x)}_{L^{2}_{x}(\mathbb{R})}}{(t-\tau)^{\frac{1}{2}}}\right),    
\end{equation*}
which implies the inequality \eqref{decaystrit} of Theorem \ref{Decesti}.
\par Similarly, when (H4) holds, we can verify from Theorem \ref{Decesti1} that
\begin{multline}\label{lastwww}
   \norm{\frac{\mathcal{U}(t,\tau)P_{c}\overrightarrow{\psi_{0}}}{(1+\vert x-y_{\ell}-v_{\ell}t\vert)}}_{L^{\infty}_{x}(\mathbb{R})}\\
   \begin{aligned}
   \leq & K\min\left(\norm{P_{c}(\tau)\overrightarrow{\psi_{0}}(x)}_{H^{1}_{x}(\mathbb{R})},\frac{ (1+\tau)}{(t-\tau)^{\frac{3}{2}}} \max_{j\in\{1,2\}}\max_{\ell}\norm{P_{c}(\tau)\overrightarrow{\psi_{0}}(x)}_{L^{j}_{x}(\mathbb{R})}\right)\\ 
&{+}K\min\left(\norm{P_{c}(\tau)\overrightarrow{\psi_{0}}(x)}_{H^{1}_{x}(\mathbb{R})},\frac{1}{(t-\tau)^{\frac{3}{2}}}\max_{\ell}\norm{(1+\vert x-y_{\ell}-v_{\ell}\tau\vert )\chi_{\ell}(\tau,x)P_{c}(\tau)\overrightarrow{\psi_{0}}(x)}_{L^{1}_{x}(\mathbb{R})}\right)\\ 
&{+}K\min\left(\norm{P_{c}(\tau)\overrightarrow{\psi_{0}}(x)}_{H^{1}_{x}(\mathbb{R})},\frac{(y_{1}-y_{m}+(v_{1}-v_{m})\tau)^{2}e^{{-}\beta\left[\min_{\ell} (v_{\ell}-v_{\ell+1})\tau+(y_{\ell}-y_{\ell+1})\right]}\norm{P_{c}(\tau)\overrightarrow{\psi}_{0}(x)}_{H^{2}}}{(t-\tau)^{\frac{3}{2}}}\right)  
  \end{aligned}.
\end{multline}
Therefore, all the terms on the right-hand side of the inequality \eqref{lastwww} are less or equal than  
\begin{equation}\label{fin1}
    \frac{10K(y_{1}-y_{m}+\tau)}{(1+(t-\tau))^{\frac{1}{2}}(y_{1}-y_{m}+t)}\norm{P_{c}(\tau)\overrightarrow{\psi_{0}}}_{H^{1}_{x}(\mathbb{R})}, 
\end{equation}
when $0\leq \tau,\,0\leq t-\tau\leq 1$ and $y_{1}-y_{m}>1$ is large enough.
\par Moreover, we can deduce from Theorem \ref{Decesti1} that the inequality \eqref{weight} holds when
when $t-\tau\geq 1.$ In conclusion, the estimate \eqref{weight} is true for any real numbers$t\geq \tau\geq 0.$

\end{proof}
\begin{proof}[Proof of Theorem \ref{Decesti1} using Theorems \ref{princ}, \ref{TT}.]
 \par First, if $\overrightarrow{\psi_{0}}(x)\in \Raa P_{c}(\tau),$ then there exists a function $\vec{\phi}$ in the domain of $\mathcal{T}(\tau)$ satisfying
 \begin{equation*}
     \overrightarrow{\psi_{0}}(x)=\mathcal{T}(\vec{\phi})\left(\tau,x\right).
 \end{equation*}
 Next, using Lemma \ref{lem:decayonepotential},
 and Theorem \ref{princ}, we can verify the existence of constants $C_{\ell},\,K_{\ell}>1$ satisfying 
 \begin{multline*}
     \norm{\hat{G}_{\omega_{\ell}}\left(e^{{-}i(t-\tau)k^{2}\sigma_{3}}e^{i(v_{\ell}\tau+y_{\ell})k}\begin{bmatrix}
         e^{{-}i\tau (k+\frac{v_{\ell}}{2})^{2}}\phi_{1,\ell}(k+\frac{v_{\ell}}{2})\\
         e^{i\tau (k-\frac{v_{\ell}}{2})^{2}}\phi_{2,\ell}(k-\frac{v_{\ell}}{2})
     \end{bmatrix}\right)(x)}_{L^{2}_{x}(\mathbb{R})}\\
     \begin{aligned}
     \leq & C_{\ell} \norm{\hat{G}_{\omega_{\ell}}\left(e^{i(v_{\ell}\tau+y_{\ell})k}\begin{bmatrix}
         e^{{-}i\tau (k+\frac{v_{\ell}}{2})^{2}}\phi_{1,\ell}(k+\frac{v_{\ell}}{2})\\
         e^{i\tau (k-\frac{v_{\ell}}{2})^{2}}\phi_{2,\ell}(k-\frac{v_{\ell}}{2})
     \end{bmatrix}\right)}_{L^{2}_{x}(\mathbb{R})}
     \\
     \leq & K_{\ell}\norm{\overrightarrow{\psi_{0}}}_{L^{2}_{x}(\mathbb{R})},
    \end{aligned}
 \end{multline*}
where we used Theorems \ref{princ}, \ref{tcont} and Corollary \ref{ccc} of Section $4$ in the last inequality above. Therefore, using the Minkowski inequality, we can deduce from the estimate above with Theorem \ref{tcont} the inequality \eqref{l22}. 
\par It is enough to prove of second inequality is enough for the case where $\tau=0.$  First, from Lemma \ref{lem:decayonepotential}, Remark \ref{transition}, and the asymptotic behaviors of \eqref{asy1} and \eqref{asy3}, we can verify the following estimate
\begin{align*}
    \norm{\hat{G}_{\omega_{\ell}}\left(e^{{-}it(k^{2}+\omega_{\ell})\sigma_{3}}e^{iy_{\ell}k}
    \begin{bmatrix}
        \phi_{1,\ell}\left(k+\frac{v_{\ell}}{2}\right)\\
        \phi_{2,\ell}\left(k-\frac{v_{\ell}}{2}\right)
    \end{bmatrix}
    \right)(x)}_{L^{\infty}_{x}(\mathbb{R})}
    \leq & \frac{C_{\ell}}{t^{\frac{1}{2}}} \norm{\hat{G}_{\omega_{\ell}}\left(e^{iy_{\ell}k}
    \begin{bmatrix}
        \phi_{1,\ell}\left(k+\frac{v_{\ell}}{2}\right)\\
        \phi_{2,\ell}\left(k-\frac{v_{\ell}}{2}\right)
    \end{bmatrix}
    \right)(x)}_{L^{1}_{x}(\mathbb{R})}
    \\
    \leq & \max_{n\in\{0,1\}} \frac{C_{\ell,1}}{t^{\frac{1}{2}}} \norm{\left(
    \begin{bmatrix}
        \hat{\phi}_{1,\ell-n}\left(x\right)\\
        \hat{\phi}_{2,\ell-n}\left(x\right)
    \end{bmatrix}
    \right)}_{L^{1}_{x}(\mathbb{R})},
\end{align*}
when $\ell\neq 1,\,\ell\neq m$ and the positive constants $C_{\ell},\,C_{\ell,1}$ and $C_{\ell,2}$ depend only on $\ell,$ see also Definition \ref{s0def} and Remark \ref{transition}. Therefore, Theorem \ref{princ} implies that there exists a constant $K_{\ell}>1$ satisfying
\begin{align}\label{Gll1inft}
    \norm{\hat{G}_{\omega_{\ell}}\left(e^{{-}it(k^{2}+\omega_{\ell})\sigma_{3}}e^{iy_{\ell}k}
    \begin{bmatrix}
        \phi_{1,\ell}\left(k+\frac{v_{\ell}}{2}\right)\\
        \phi_{2,\ell}\left(k-\frac{v_{\ell}}{2}\right)
    \end{bmatrix}
    \right)(x)}_{L^{\infty}_{x}(\mathbb{R})}\leq &\frac{K_{\ell}}{t^{\frac{1}{2}}}\max_{n}\norm{\langle x+y_{n}\rangle\chi_{\left\{\frac{y_{n+1}+y_{n}}{2},\frac{y_{n-1}+y_{n}}{2}\right\}}(x)\overrightarrow{\psi_{0}}(x)}_{L^{2}_{x}(\mathbb{R})},
\end{align}
where $\langle x\rangle=(1+x^{2})^{\frac{1}{2}},$ for all $\ell$ satisfying $\ell \neq 1$ and $\ell\neq m.$ In particular, if $\ell\neq m$ and $\ell \neq 1,$ then, when hypothesis (H4) is true,  we can deduce from estimate \eqref{fl1teo2} of Theorem \ref{TT} that
\begin{equation}\label{Gl2infty}\norm{\hat{G}_{\omega_{\ell}}\left(e^{{-}it(k^{2}+\omega_{\ell})\sigma_{3}}e^{iy_{\ell}k}
    \begin{bmatrix}
        \phi_{1,\ell}\left(k+\frac{v_{\ell}}{2}\right)\\
        \phi_{2,\ell}\left(k-\frac{v_{\ell}}{2}\right)
    \end{bmatrix}
    \right)(x)}_{L^{\infty}_{x}(\mathbb{R})}\leq \frac{K_{\ell}}{t^{\frac{1}{2}}}\norm{\mathcal{S}(0)(\vec{\phi})}_{L^{1}_{x}(\mathbb{R})}.
\end{equation}
\par Next, for the case $\ell=1,$ we can verify that
the existence of constants $\beta>0$ and $C_{1},\,C_{1,1}>1$ satisfying \begin{align}\label{linfty1}
    \norm{\hat{G}_{\omega_{1}}\left(e^{{-}it(k^{2}+\omega_{\ell})\sigma_{3}}e^{iy_{1}k}
    \begin{bmatrix}
        \phi_{1}\left(k+\frac{v_{1}}{2}\right)\\
        \phi_{2}\left(k-\frac{v_{1}}{2}\right)
    \end{bmatrix}
    \right)(x)}_{L^{\infty}_{x}(\mathbb{R})}
    \leq & \frac{C_{1}}{t^{\frac{1}{2}}} \norm{\hat{G}_{\omega_{1}}\left(e^{iy_{1}k}
    \begin{bmatrix}
        \phi_{1}\left(k+\frac{v_{1}}{2}\right)\\
        \phi_{2}\left(k-\frac{v_{1}}{2}\right)
    \end{bmatrix}
    \right)(x)}_{L^{1}_{x}(\mathbb{R})}
    \\ \nonumber
    \leq &  \frac{C_{1,1}}{t^{\frac{1}{2}}} \norm{\left(
    \begin{bmatrix}
        \hat{\phi}_{1}\left(x\right)\\
        \hat{\phi}_{2}\left(x\right)
    \end{bmatrix}
    \right)}_{L^{1}_{x}(\mathbb{R})}\\ \nonumber &{+}\frac{C_{1,1}}{t^{\frac{1}{2}}}\norm{\chi_{\left(\frac{y_{2}-y_{1}}{2},{+}\infty\right)}(x)\hat{G}_{\omega_{1}}\left(e^{iy_{1}k}
    \begin{bmatrix}
        \phi_{1}\left(k+\frac{v_{1}}{2}\right)\\
        \phi_{2}\left(k-\frac{v_{1}}{2}\right)
    \end{bmatrix}
    \right)(x)}_{L^{1}_{x}(\mathbb{R})}\\ \nonumber
   &{+} \frac{C_{1,1}}{t^{\frac{1}{2}}}e^{{-}\beta(y_{1}-y_{2})}\norm{\overrightarrow{\psi_{0}}(x)}_{L^{2}_{x}(\mathbb{R})},
\end{align}
such that the last term of the inequality above is a consequence of Lemma \ref{LB3} with the asymptotic behavior \eqref{asy1} of $\mathcal{G}_{\omega}$ and Theorem \ref{princ}. The analysis of the $L^{\infty}$ norm of
\begin{equation*}
    \hat{G}_{\omega_{m}}\left(e^{iy_{m}k}
    \begin{bmatrix}
        \phi_{1,m}\left(k+\frac{v_{m}}{2}\right)\\
        \phi_{2,m}\left(k-\frac{v_{m}}{2}\right)
    \end{bmatrix}
    \right)(x)
\end{equation*}
is completely analogous. 

\par Consequently, using the definition of $\mathcal{T}(\vec{\phi})(0,x),$ we conclude
\begin{align*}
    \max_{\ell}\norm{\hat{G}_{\omega_{\ell}}\left(e^{{-}it(k^{2}+\omega_{\ell})\sigma_{3}}e^{iy_{\ell}k}
    \begin{bmatrix}
        \phi_{1,\ell}\left(k+\frac{v_{\ell}}{2}\right)\\
        \phi_{2,\ell}\left(k-\frac{v_{\ell}}{2}\right)
    \end{bmatrix}
    \right)(x)}_{L^{\infty}_{x}(\mathbb{R})}\leq & \frac{K}{t^{\frac{1}{2}}}\max_{n}\norm{(x+y_{n})\chi_{\left\{\frac{y_{n+1}+y_{n}}{2},\frac{y_{n-1}+y_{n}}{2}\right\}}(x)\overrightarrow{\psi_{0}}(x)}_{L^{2}_{x}(\mathbb{R})}\\&{+}\frac{K}{t^{\frac{1}{2}}}\norm{u_{0}}_{L^{2}_{x}(\mathbb{R})}.
\end{align*}
Furthermore, if the hypothesis (H4) is true, we can verify using estimates \eqref{Gl2infty}, \eqref{linfty1}, inequality \eqref{fl1teo2} of Theorem \ref{TT}, and Theorem \ref{tcont} that 
\begin{align*}
    \max_{\ell\in\{1,m\}}\norm{\hat{G}_{\omega_{\ell}}\left(e^{{-}it(k^{2}+\omega_{\ell})\sigma_{3}}e^{iy_{\ell}k}
    \begin{bmatrix}
        \phi_{1,\ell}\left(k+\frac{v_{\ell}}{2}\right)\\
        \phi_{2,\ell}\left(k-\frac{v_{\ell}}{2}\right)
    \end{bmatrix}
    \right)(x)}_{L^{\infty}_{x}(\mathbb{R})}\leq & \frac{K}{t^{\frac{1}{2}}}\max_{n}\left[\norm{\overrightarrow{\psi_{0}}}_{L^{1}_{x}(\mathbb{R})}\right].
\end{align*}
In conclusion, using the fact that $\overrightarrow{\psi_{0}}(x)=\mathcal{T}(\vec{\phi})(0,x),$ we can deduce from the estimate above, inequality \eqref{h1linft} of Theorem \ref{tcont}, and Lemma \ref{appFourier} that \eqref{P1} holds, and \eqref{Q1} is true under the assumption of (H4) . 

\par Next, it is enough to prove \eqref{Q2} when $\tau=0.$ Using Lemme \ref{lem:decayonepotential}, 
Lemma \ref{LB3} and Remark \ref{transition}, we can verify the existence of constants $K>1,\,\beta>0$ satisfying for any $1\leq \ell\leq m$ the following estimate
\begin{multline*}
\norm{\frac{1}{\langle x\rangle}\hat{G}_{\omega_{\ell}}\left( e^{{-}it(k^{2}+\omega_{\ell})\sigma_{3}}e^{iy_{\ell}k}
\begin{bmatrix}
        \phi_{1,\ell}\left(k+\frac{v_{\ell}}{2}\right)\\
        \phi_{2,\ell}\left(k-\frac{v_{\ell}}{2}\right)
    \end{bmatrix}\right)(x)}_{L^{\infty}_{x}(\mathbb{R})}\\
    \begin{aligned}    \leq & \frac{K}{t^{\frac{3}{2}}}\norm{\langle x\rangle\hat{G}_{\omega_{\ell}}\left( e^{iy_{\ell}k}
    \begin{bmatrix}
        \phi_{1,\ell}\left(k+\frac{v_{\ell}}{2}\right)\\
        \phi_{2,\ell}\left(k-\frac{v_{\ell}}{2}\right)
    \end{bmatrix}\right)(x)}_{L^{1}_{x}(\mathbb{R})}\\
    \leq & \frac{K}{t^{\frac{3}{2}}}\norm{\langle x\rangle\chi_{\left\{\frac{y_{\ell+1}-y_{\ell}}{2},\frac{y_{\ell-1}-y_{\ell}}{2}\right\}}(x)\hat{G}_{\omega_{\ell}}\left( e^{iy_{\ell}k}
    \begin{bmatrix}
        \phi_{1,\ell}\left(k+\frac{v_{\ell}}{2}\right)\\
        \phi_{2,\ell}\left(k-\frac{v_{\ell}}{2}\right)
    \end{bmatrix}\right)(x)}_{L^{1}_{x}(\mathbb{R})}\\
    &{+}\frac{K}{t^{\frac{3}{2}}}\max_{j\in\{0,1\}}\norm{\langle x\rangle F_{0}(x)\left(e^{iy_{\ell}k}\begin{bmatrix}
        \phi_{1,l-j}(k)\\
        \phi_{2,l-j}(k)
    \end{bmatrix}\right)}_{L^{1}_{x}(\mathbb{R})}\\
    &{+}\frac{Ke^{{-}\beta\min_{j\in\{{-}1,1\}}\vert y_{\ell}-y_{\ell-j} \vert }}{t^{\frac{3}{2}}}\norm{\hat{G}_{\omega_{\ell}}\left( e^{iy_{\ell}k}
    \begin{bmatrix}
        \phi_{1,\ell}\left(k+\frac{v_{\ell}}{2}\right)\\
        \phi_{2,\ell}\left(k-\frac{v_{\ell}}{2}\right)
    \end{bmatrix}\right)(x)}_{L^{2}_{x}(\mathbb{R})}
   \end{aligned},
\end{multline*}
where the last expression of the inequality above follows from Lemma  \ref{LB3}.
\par Similarly, we can verify that
\begin{multline*}
 \norm{\frac{1}{(1+\vert x\vert)}\hat{G}_{\omega_{1}}\left( e^{{-}it(k^{2}+\omega_{\ell})\sigma_{3}}e^{iy_{1}k}
    \begin{bmatrix}
        \phi_{1}\left(k+\frac{v_{1}}{2}\right)\\
        \phi_{2}\left(k-\frac{v_{1}}{2}\right)
    \end{bmatrix}\right)(x)}_{L^{\infty}_{x}(\mathbb{R})}\\
    \begin{aligned}
    \leq & \frac{K}{t^{\frac{3}{2}}}\norm{\langle x\rangle \chi_{\left\{\frac{y_{2}-y_{1}}{2},{+}\infty\right\}}(x)\hat{G}_{\omega_{1}}\left( e^{iy_{1}k}
    \begin{bmatrix}
        \phi_{1}\left(k+\frac{v_{1}}{2}\right)\\
        \phi_{2}\left(k-\frac{v_{1}}{2}\right)
    \end{bmatrix}\right)(x)}_{L^{1}_{x}(\mathbb{R})}\\
    &{+}\frac{K}{t^{\frac{3}{2}}}\norm{\langle x\rangle F_{0}(x)\left(e^{iy_{\ell}k}\begin{bmatrix}
        \phi_{1}(k)\\
        \phi_{2}(k)
    \end{bmatrix}\right)}_{L^{1}_{x}(\mathbb{R})}\\
    &{+}\frac{Ke^{{-}\beta (y_{1}-y_{2}) }}{t^{\frac{3}{2}}}\norm{\hat{G}_{\omega_{\ell}}\left( e^{iy_{1}k}
    \begin{bmatrix}
        \phi_{1}\left(k+\frac{v_{1}}{2}\right)\\
        \phi_{2}\left(k-\frac{v_{1}}{2}\right)
    \end{bmatrix}\right)(x)}_{L^{2}_{x}(\mathbb{R})}
   \end{aligned},
\end{multline*}
and
\begin{multline*}
   \norm{\frac{1}{(1+\vert x\vert)}\hat{G}_{\omega_{m}}\left( e^{{-}it(k^{2}+\omega_{\ell})\sigma_{3}}e^{iy_{m}k}
    \begin{bmatrix}
        \phi_{1}\left(k+\frac{v_{m}}{2}\right)\\
        \phi_{2}\left(k-\frac{v_{m}}{2}\right)
    \end{bmatrix}\right)(x)}_{L^{\infty}_{x}(\mathbb{R})}\\
    \begin{aligned}
    \leq & \frac{K}{t^{\frac{3}{2}}}\norm{\langle x\rangle \chi_{\left\{{-}\infty,\frac{y_{m-1}-y_{m}}{2}\right\}}(x)\hat{G}_{\omega_{m}}\left( e^{iy_{m}k}
    \begin{bmatrix}
        \phi_{1,m}\left(k+\frac{v_{m}}{2}\right)\\
        \phi_{2,m}\left(k-\frac{v_{m}}{2}\right)
    \end{bmatrix}\right)(x)}_{L^{1}_{x}(\mathbb{R})}\\
    &{+}\frac{K}{t^{\frac{3}{2}}}\norm{\langle x\rangle F_{0}(x)\left(e^{iy_{m}k}\begin{bmatrix}
        \phi_{1,m-1}(k)\\
        \phi_{2,m-1}(k)
    \end{bmatrix}\right)}_{L^{1}_{x}(\mathbb{R})}\\
    &{+}\frac{Ke^{{-}\beta (y_{m-2}-y_{m-1}) }}{t^{\frac{3}{2}}}\norm{\hat{G}_{\omega_{\ell}}\left( e^{iy_{m}k}
    \begin{bmatrix}
        \phi_{1,m-1}\left(k+\frac{v_{m-1}}{2}\right)\\
        \phi_{2,m-1}\left(k-\frac{v_{m-1}}{2}\right)
    \end{bmatrix}\right)(x)}_{L^{2}_{x}(\mathbb{R})}
   \end{aligned}.
\end{multline*}
Consequently, using the fact that $\overrightarrow{\psi_{0}}(x)=\mathcal{S}(\overrightarrow{\phi_{0}})(0,x)+r(0,x),$ for a function $r(0,x)$ satisfying the decay rates in Theorem \ref{tcont}, we can verify from triangular inequality and the estimates above that
\begin{align}\nonumber
    \max_{\ell}\norm{\frac{\chi_{\ell}(t,x)}{(1+\vert x-y_{\ell}-v_{\ell}t\vert)}\mathcal{U}(t,0)P_{c}\overrightarrow{\psi_{0}}}_{L^{\infty}_{x}(\mathbb{R})}\leq & \frac{K}{t^{\frac{3}{2}}}\max_{\ell}\norm{\langle x\rangle\chi_{\ell}(0,x+y_{\ell})\mathcal{S}(\overrightarrow{\phi_{0}})(0,x+y_{\ell})}_{L^{1}_{x}(\mathbb{R})} 
    \\ \nonumber &{+}\frac{K}{t^{\frac{3}{2}}}\max_{1\leq \ell\leq m-1}\max_{j,n\in\{0,1\}}\norm{\frac{d^{j}}{dk^{j}}\begin{bmatrix}
        e^{iy_{\ell+n}k}\phi_{1,\ell}(k)\\
        e^{iy_{\ell+n}k}\phi_{2,\ell}(k)
\end{bmatrix}}_{\mathcal{F}L^{1}}\\ \nonumber  &{+}
     \frac{K}{t^{\frac{3}{2}}}\max_{\ell}\norm{\langle x\rangle\chi_{\ell}(0,x+y_{\ell})r(0,x+y_{\ell})}_{L^{1}_{x}(\mathbb{R})}\\  \label{finaleqq}&{+}\frac{Ke^{-\beta (\min_{\ell}y_{\ell}-y_{\ell+1})}}{t^{\frac{3}{2}}}\norm{\overrightarrow{\psi_{0}}(x)}_{L^{2}_{x}(\mathbb{R})}.
\end{align}
In conclusion, if the hypothesis (H4) is true, we can use estimates \eqref{fl1teo1} and \eqref{fl1teo2} of Theorem \ref{TT} and Theorem \ref{tcont} in the estimate \eqref{finaleqq} to deduce that \eqref{Q2} is true.
\end{proof}

\section{Asymptotic solutions for charge transfer models}\label{asyinfinity}
In this section,  Theorem \ref{tcont} and Theorem \ref{tdis} will be proved, respectively, in \S \ref{solscont} and \S \ref{soldisc}. The proof of these results will follow as an application of the fixed point theorem in appropriate Banach spaces.
\begin{definition}
We define the Banach spaces $L^{2}_{\beta}(\mathbb{R},\mathbb{C}^{2})$ and $H^{1}_{\beta}(\mathbb{R},\mathbb{C}^{2})$ to be the sets of all functions $f\in L^\infty_t\left(\mathbb{R}_{\geq 0}, L^{2}_{x}\left(\mathbb{R},\mathbb{C}^{2}\right)\right)$ and  $f\in L^\infty_t\left(\mathbb{R}_{\geq 0}, H^1_{x}\left(\mathbb{R},\mathbb{C}^{2}\right)\right)$  satisfying for  $\beta>0$ the following inequalities respectively
\begin{align*}
\norm{f}_{L^{2}_{\beta}}\coloneqq\sup_{t\geq 0}e^{\beta t}\norm{f(t,x)}_{L^{2}_{x}(\mathbb{R})}<&{+}\infty \text{, if $f\in L^{2}_{\beta}(\mathbb{R},\mathbb{C}^{2}),$}\\ \norm{f}_{H^{1}_{\beta}}\coloneqq\sup_{t\geq 0}e^{\beta t}\norm{f(t,x)}_{H^{1}_{x}(\mathbb{R})}<&{+}\infty \text{, if $f\in H^{1}_{\beta}(\mathbb{R},\mathbb{C}^{2}).$}
\end{align*}   
\end{definition}
with the following lemma. 
\begin{lemma}\label{interactt}
For any real numbers $x_{2},x_{1}$, such that $\zeta=x_{2}-x_{1}>0$ and $\alpha,\,\beta,\,m>0$ with $\alpha\neq \beta$ the following bound holds:
\begin{equation*}
\int_{\mathbb{R}}\vert x-x_{1}\vert ^{m} e^{-\alpha(x-x_{1})_{+}}e^{-\beta(x_{2}-x)_{+}}\lesssim_{\alpha,\beta,m} \max\left(\left(1+\zeta^{m}\right)e^{-\alpha \zeta},e^{-\beta \zeta}\right),
\end{equation*}
For any $\alpha>0$, the following bound holds
\begin{equation*}
    \int_{\mathbb{R}}\vert x-x_{1}\vert^{m} e^{-\alpha(x-x_{1})_{+}}e^{-\alpha(x_{2}-x)_{+}}\lesssim_{\alpha}\left[1+\zeta^{m+1}\right] e^{-\alpha \zeta}.
\end{equation*}
\end{lemma}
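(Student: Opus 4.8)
\textbf{Proof plan for Lemma \ref{interactt}.}

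The plan is to reduce both estimates to elementary one-dimensional integral computations after splitting the real line at the three natural break points $x_1$, $x_2$, and the midpoint. Throughout write $\zeta = x_2 - x_1 > 0$, and recall $(y)_+ = \max(y,0)$. By translation invariance it suffices to take $x_1 = 0$, $x_2 = \zeta$, so that the integrand becomes $|x|^m e^{-\alpha (x)_+} e^{-\beta(\zeta - x)_+}$, and we split the domain of integration into $(-\infty, 0]$, $[0,\zeta]$, and $[\zeta, +\infty)$.

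For the first bound ($\alpha \neq \beta$): On $(-\infty,0]$ we have $(x)_+ = 0$ and $(\zeta - x)_+ = \zeta - x \geq \zeta$, so the integrand is $|x|^m e^{-\beta(\zeta - x)} = |x|^m e^{-\beta\zeta} e^{\beta x}$, and $\int_{-\infty}^0 |x|^m e^{\beta x}\,dx = \Gamma(m+1)\beta^{-m-1}$ is a finite constant; this contributes $O_{\beta,m}(e^{-\beta\zeta})$. Symmetrically, on $[\zeta,+\infty)$ we have $(\zeta - x)_+ = 0$ and the integrand is $|x|^m e^{-\alpha x}$; the substitution $x = \zeta + u$ gives $\int_0^\infty (\zeta + u)^m e^{-\alpha(\zeta + u)}\,du \lesssim_{\alpha,m} (1 + \zeta^m) e^{-\alpha\zeta}$, using $(\zeta+u)^m \lesssim_m \zeta^m + u^m$. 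On the middle interval $[0,\zeta]$ the integrand is $x^m e^{-\alpha x} e^{-\beta(\zeta - x)} = e^{-\beta\zeta} x^m e^{(\beta - \alpha)x}$; if $\beta < \alpha$ then $e^{(\beta-\alpha)x} \leq 1$ and $\int_0^\zeta x^m\,dx \cdot e^{-\beta\zeta}$ is at worst $O(\zeta^{m+1}e^{-\beta\zeta})$, which (since $\beta < \alpha$, so $e^{-\beta\zeta}$ dominates) is $\lesssim e^{-\beta\zeta}$ up to adjusting constants — actually more carefully one integrates by parts or bounds $x^m e^{(\beta-\alpha)x} \leq C_{\alpha,\beta,m}$ uniformly, giving $O_{\alpha,\beta,m}(\zeta e^{-\beta\zeta})$, absorbed into $\max(\cdots)$. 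If $\beta > \alpha$, write $x^m e^{-\beta\zeta}e^{(\beta-\alpha)x} = x^m e^{-\alpha\zeta} e^{-(\beta - \alpha)(\zeta - x)}$ and substitute $w = \zeta - x$ to get $\int_0^\zeta (\zeta - w)^m e^{-(\beta-\alpha)w}\,dw \cdot e^{-\alpha\zeta} \lesssim_{\alpha,\beta,m}(1 + \zeta^m)e^{-\alpha\zeta}$. Collecting the three pieces yields the first claim.

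For the second bound ($\alpha = \beta$): the outer intervals are handled exactly as above, each contributing $O_{\alpha,m}((1+\zeta^m)e^{-\alpha\zeta})$. The middle interval now gives $\int_0^\zeta x^m e^{-\alpha x} e^{-\alpha(\zeta - x)}\,dx = e^{-\alpha\zeta}\int_0^\zeta x^m\,dx = \frac{\zeta^{m+1}}{m+1} e^{-\alpha\zeta}$, which is exactly the term $\lesssim_\alpha \zeta^{m+1} e^{-\alpha\zeta}$; combined with the $O((1+\zeta^m)e^{-\alpha\zeta})$ contributions this is $\lesssim_\alpha (1 + \zeta^{m+1})e^{-\alpha\zeta}$, as required. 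I expect no serious obstacle here: the only mild care needed is the bookkeeping of which of $e^{-\alpha\zeta}$ and $e^{-\beta\zeta}$ dominates on the middle interval in the first estimate, and the routine bound $(\zeta + u)^m \lesssim_m \zeta^m + u^m$ used on the tails; everything else is a direct Gamma-function computation.
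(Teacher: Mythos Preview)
Your approach is correct and matches the paper's, which simply reads ``Elementary computations.'' The three-interval split and the Gamma-function bookkeeping are exactly the natural route.

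One small slip to fix: in the middle-interval case $\beta<\alpha$, your bound $O(\zeta e^{-\beta\zeta})$ is not actually absorbed by $\max\bigl((1+\zeta^m)e^{-\alpha\zeta},\,e^{-\beta\zeta}\bigr)$, since $\zeta e^{-\beta\zeta}\not\lesssim e^{-\beta\zeta}$ uniformly in $\zeta$. The clean fix is to note that $x^m e^{(\beta-\alpha)x}=x^m e^{-(\alpha-\beta)x}$ is integrable on $[0,\infty)$, so
\[
e^{-\beta\zeta}\int_0^\zeta x^m e^{(\beta-\alpha)x}\,dx \;\le\; e^{-\beta\zeta}\int_0^\infty x^m e^{-(\alpha-\beta)x}\,dx \;=\; C_{\alpha,\beta,m}\,e^{-\beta\zeta},
\]
which is the bound you need. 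Everything else is fine.
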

\begin{proof}
    Elementary computations.
\end{proof}
More precisely, Theorems \ref{tcont} and \ref{tdis} will follow as a corollary of the following lemma.

\begin{lemma}\label{edl}
Let $\beta>0$ satisfying $\min\left(\beta,\min_{\ell}(v_{\ell}-v_{\ell+1})\right)>\max_{\ell}\max_{\lambda\in\sigma_{d}\mathcal{H}_{\omega_{\ell}}}\vert \I \lambda \vert,$ and $f(t,x)\in H^{1}_{\beta}$
such that $\overrightarrow{v}=(v_{1},\,v_{2},\,...,\,v_{m})\in \mathbb{R}^{m}$ satisfies
\begin{equation*}
    v_{\ell}-v_{\ell+1}>0 \text{, for all $\ell\geq 1.$}
\end{equation*}
There exist parameters $C(\overrightarrow{v},\beta),\,L(\overrightarrow{v},\beta)>1$ depending only on $\overrightarrow{v}$ and $\beta$ such that if 
\begin{equation*}
    \min_{\ell} y_{\ell}-y_{\ell+1}\geq L(\overrightarrow{v},\beta),
\end{equation*}
 then, for $\beta_{1}=\beta-\max_{\ell}\max_{\lambda\in\sigma_{d}\mathcal{H}_{\omega_{\ell}}}\vert \I \lambda \vert,$ there exists a unique solution $r(t,x)\in H^{1}_{\beta_{1}}$ being a   solution of the following partial differential equation
    \begin{equation}\label{rpde}
    i\partial_{t}r(t)+\sigma_{3}\partial^{2}_{x}r(t)-\sum_{\ell=1}^{m}V^{\sigma_{\ell}}_{\ell}(t)r(t)=f(t,x).   
 \end{equation}  
Furthermore, 
\begin{align}\label{e000}
\norm{r(t)}_{H^{1}_{\beta_{1}}}\leq & C(\overrightarrow{v},\beta)\norm{f(t)}_{H^{1}_{\beta}},
\end{align}
and if $f\in H^{k}_{\beta}(\mathbb{R},\mathbb{C}^{2})$ for some  $k\in\mathbb{N},$ then $r \in H^{k}_{\beta_{1}}(\mathbb{R},\mathbb{C}^{2})$ and
\begin{equation}\label{po}
\norm{r(t)}_{H^{k}_{\beta}}\leq  C_{k}(\overrightarrow{v},\beta)\norm{f(t)}_{H^{k}_{\beta}}.
\end{equation}

\end{lemma}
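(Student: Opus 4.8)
\textbf{Proof plan for Lemma \ref{edl}.}
The plan is to solve \eqref{rpde} by a contraction mapping argument in the Banach space $H^{1}_{\beta_{1}}$, using the free flow $e^{it\sigma_{3}\partial^{2}_{x}}$ of the model problem $i\partial_{t}\overrightarrow{\psi}+\sigma_{3}\partial^{2}_{x}\overrightarrow{\psi}=0$ as the base, and treating both the source $f$ and the coupling terms $\sum_{\ell}V^{\sigma_{\ell}}_{\ell}(t)r(t)$ as perturbations. First I would rewrite \eqref{rpde} in Duhamel form anchored at $t=+\infty$: since we seek the \emph{decaying} solution, we set
\begin{equation*}
r(t)=i\int_{t}^{+\infty}e^{i(t-s)\sigma_{3}\partial^{2}_{x}}\left[f(s)+\sum_{\ell=1}^{m}V^{\sigma_{\ell}}_{\ell}(s)r(s)\right]\,ds,
\end{equation*}
which is well-defined on $H^{1}_{\beta_{1}}$ because $e^{it\sigma_{3}\partial^{2}_{x}}$ is bounded on $H^{1}_{x}$ uniformly in $t$, so the $f$-term is controlled by $\int_{t}^{\infty}e^{-\beta s}\,ds\lesssim \beta^{-1}e^{-\beta t}$, giving an $H^{1}_{\beta}\hookrightarrow H^{1}_{\beta_{1}}$ bound with constant $O(\beta^{-1})$. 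Define $\Phi(r)$ to be the right-hand side above; the goal is to show $\Phi$ is a contraction on $H^{1}_{\beta_{1}}$ once $\min_{\ell}(y_{\ell}-y_{\ell+1})$ is large.

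The core estimate is on the coupling operator $r\mapsto i\int_{t}^{\infty}e^{i(t-s)\sigma_{3}\partial^{2}_{x}}\sum_{\ell}V^{\sigma_{\ell}}_{\ell}(s)r(s)\,ds$. Here I would \emph{not} use smallness of $V_{\ell}$ itself (it is fixed), but rather exploit that the moving potentials are spatially disjoint: each $V^{\sigma_{\ell}}_{\ell}(s,x)$ is exponentially localized near $x=y_{\ell}+v_{\ell}s$ by \eqref{decV}, and the centers $y_{\ell}+v_{\ell}s$ separate linearly in $s$ since the $v_{\ell}$ are distinct. The mechanism that produces the contraction is a \textbf{Gronwall-type bootstrap}: one does not gain smallness on a single application of the Duhamel operator, but on iterating. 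Concretely, I would set $M(t)=e^{\beta_{1}t}\norm{r(t)}_{H^{1}_{x}}$ and derive, using boundedness of $e^{it\sigma_{3}\partial^{2}_{x}}$ on $H^{1}$ and $\norm{V^{\sigma_{\ell}}_{\ell}(s)r(s)}_{H^{1}_{x}}\lesssim \norm{r(s)}_{H^{1}_{x}}$ (the potentials and all derivatives are bounded), the inequality
\begin{equation*}
M(t)\leq \frac{C}{\beta}\norm{f}_{H^{1}_{\beta}}+C\sum_{\ell=1}^{m}\int_{t}^{\infty}e^{-(\beta-\beta_{1})(s-t)}M(s)\,ds\cdot e^{(\beta_{1}-\beta_{1})(\cdot)}\,,
\end{equation*}
and then close it either by choosing $\beta_{1}$ slightly below $\beta$ so that $C m/(\beta-\beta_{1})<1$, or — if that margin is too small because $\beta_{1}$ is prescribed — by absorbing the coupling through the \emph{spatial} separation: the operator norm of $r\mapsto \chi_{\{|x-y_{\ell}-v_{\ell}s|\leq R\}}V^{\sigma_{n}}_{n}(s)r$ for $n\neq\ell$ is $O(e^{-\gamma|y_{\ell}-y_{n}+(v_{\ell}-v_{n})s|})$ by \eqref{decV}, which after integrating in $s$ against the Schr\"odinger propagator and using a stationary-phase/dispersive decay of $e^{it\sigma_{3}\partial^{2}_{x}}$ contributes a factor $e^{-\gamma\min_{\ell}(y_{\ell}-y_{\ell+1})}$; choosing $L(\vec v,\beta)$ large makes the full coupling operator have norm $<\tfrac12$ on $H^{1}_{\beta_{1}}$. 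Uniqueness follows since the difference of two solutions in $H^{1}_{\beta_{1}}$ is a fixed point of the linear map $r\mapsto i\int_{t}^{\infty}e^{i(t-s)\sigma_{3}\partial^{2}_{x}}\sum_{\ell}V^{\sigma_{\ell}}_{\ell}(s)r(s)\,ds$, which is a strict contraction.

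For the higher-regularity statement \eqref{po}, I would commute $\partial_{x}^{k}$ through the equation: $\partial_{x}^{k}r$ solves \eqref{rpde} with source $\partial_{x}^{k}f - \sum_{\ell}[\partial_{x}^{k},V^{\sigma_{\ell}}_{\ell}(t)]r$, and the commutator involves only lower-order derivatives of $r$ times bounded (exponentially decaying) derivatives of $V_{\ell}$, so one runs an induction on $k$: the base case $k=0,1$ is the contraction above, and the inductive step feeds the already-controlled $\norm{r}_{H^{k-1}_{\beta_{1}}}$ into the source term, applying the $k=0,1$ a priori estimate \eqref{e000} to $\partial_{x}^{k}r$. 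The main obstacle is the Gronwall closure in the regime where $\beta_{1}=\beta-\max_{\ell,\lambda}|\I\lambda|$ is forced and the naive margin $\beta-\beta_{1}$ is \emph{not} small: there one genuinely must extract the exponential smallness from the geometric separation of the potentials rather than from the time-decay of $f$, and this requires carefully pairing the dispersive decay of the free Schr\"odinger group against the moving localized potentials — essentially the same interaction analysis formalized in Lemma \ref{interactt} and Lemma \ref{appFourier}, which is why those are placed just before. Everything else is routine Duhamel bookkeeping.
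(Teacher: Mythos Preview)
Your Duhamel-from-infinity setup is natural, but the contraction mechanism you propose does not close. The problem is that each $V^{\sigma_{\ell}}_{\ell}(s)$ is an $O(1)$ potential, so the bound $\norm{V^{\sigma_{\ell}}_{\ell}(s)r(s)}_{H^{1}_{x}}\lesssim \norm{r(s)}_{H^{1}_{x}}$ has no small constant in front, and after integrating against the (unitary-on-$H^{1}$) free propagator you get a Gronwall constant of order $Cm/\beta_{1}$, which is not small. Your option (a) is unavailable since $\beta_{1}$ is prescribed; your option (b) only gains smallness on the \emph{cross} terms $V^{\sigma_{n}}_{n}$ restricted near the $\ell$-th center for $n\neq\ell$, but the \emph{diagonal} contribution --- $V^{\sigma_{\ell}}_{\ell}$ acting near its own center $y_{\ell}+v_{\ell}s$ --- is unaffected by the separation hypothesis and stays $O(1)$. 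Dispersive decay $t^{-1/2}$ is not integrable, so it does not rescue the estimate either. More conceptually, the free propagator knows nothing about the discrete spectra of the $\mathcal{H}_{\omega_{\ell}}$, so your argument never explains where the specific loss $\beta\mapsto\beta_{1}=\beta-\max_{\ell,\lambda}\vert\I\lambda\vert$ originates.

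The paper takes a completely different route: an energy (Lyapunov) method rather than a free-flow contraction. It introduces a time-dependent localized functional $H(r)=\sum_{\ell,n}H_{\ell,n}(r)$ built from cut-offs $\chi_{\ell}(t,x)$ adapted to each moving center, which near $y_{\ell}+v_{\ell}t$ coincides with the quadratic form $\langle\mathcal{H}_{\omega_{\ell}}\,\cdot\,,\sigma_{3}\,\cdot\,\rangle$. This functional is coercive on $H^{1}$ \emph{modulo} the discrete-mode projections $\langle r(t),\sigma_{3}e^{i\sigma_{3}\theta_{\ell}}z_{\ell,n}(x-v_{\ell}t-y_{\ell})\rangle$, and its time derivative is controlled by $\norm{r}_{H^{1}}^{2}/(t+\min(y_{\ell}-y_{\ell+1}))+\norm{r}_{H^{1}}\norm{f}_{H^{1}}$ --- the separation enters here, through the slowly varying cut-offs, not through any smallness of the potentials themselves. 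The discrete-mode projections are then tracked by direct ODE estimates (using \eqref{rpde} and Lemma \ref{interactt}), and it is precisely in the estimate for the projection onto an eigenfunction with eigenvalue $i\lambda$ that the factor $e^{\vert\Ree\lambda\vert t}$ appears, producing the loss to $\beta_{1}$. Integrating the energy inequality from $t$ to $+\infty$ and feeding in the discrete-mode bounds closes the a priori estimate \eqref{e000}; existence and uniqueness in $H^{1}_{\beta_{1}}$ then follow from this a priori bound by a standard argument. Your induction for \eqref{po} is fine and matches the paper.
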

\begin{remark}
In particular, repeating the proof of Lemma \ref{edl}, we can verify that Lemma \ref{edl} still holds if we assume the following weak hypothesis for $v_{\infty}=\lim_{t\to{+}\infty}v(t)$ and a constant $d_{\infty}$ on the path $\sigma(t)=(y(t),v(t),\gamma(t))$
\begin{equation*}
    \max_{t\geq 0}
    (1+t)^{\epsilon}\vert y(t)-v_{\infty}t-d_{\infty} \vert+(1+t)^{2+\epsilon}\vert \dot v(t)\vert+(1+t)^{1+\epsilon}\vert \dot \gamma(t) \vert<{+}\infty.
\end{equation*}
\end{remark}
\begin{remark}\label{weightedremainderedecay}
 In particular, if $r(t)\in H^{3}_{x}(\mathbb{R})$ and $x^{2}r(t)\in H^{1}_{x}(\mathbb{R}),$ then since $r_{1}(t,x)=xr(t,x),\,r_{2}(t,x)=x^{2}r(t,x)$ and $r_{0}(t,x)=x\partial_{x}r(t,x)$ are solutions respectively of
 \begin{align*}
i\partial_{t}r_{1}(t,x)+\sigma_{3}\partial^{2}_{x}r_{1}(t,x)-\sum_{\ell=1}^{m}V^{\sigma_{\ell}}_{\ell}(t)r_{1}(t,x)=&2\sigma_{3}\partial_{x}r(t,x)+xf(t,x),\\
i\partial_{t}r_{2}(t,x)+\sigma_{3}\partial^{2}_{x}r_{2}(t,x)-\sum_{\ell=1}^{m}V^{\sigma_{\ell}}_{\ell}(t)r_{2}(t)=&2\sigma_{3}r(t,x)+4\sigma_{3}r_{1}(t,x)+x^{2}f(t,x),\\
i\partial_{t}r_{0}(t,x)+\sigma_{3}\partial^{2}_{x}r_{0}(t,x)-\sum_{\ell=1}^{m}V^{\sigma_{\ell}}_{\ell}(t)r_{0}(t)=&x\sum_{\ell=1}^{m}\partial_{x}V^{\sigma_{\ell}}_{\ell}(t)r(t,x)+x\partial_{x}f(t,x)+2\sigma_{3}\partial^{2}_{x}r(t,x), 
 \end{align*}
from which we can deduce from Lemma \ref{edl} that there exists $C(\overrightarrow{v},\beta)>1$ and $\beta>0$ satisfying
\begin{align*}
\norm{x r(t,x)}_{H^{1}_{\beta}(\mathbb{R})}\leq & C(\overrightarrow{v},\beta)\left[\norm{f(t,x)}_{H^{2}_{\beta}(\mathbb{R})}+\norm{xf(t,x)}_{H^{1}_{\beta}(\mathbb{R})}\right],\\
\norm{x^{2}r(t,x)}_{H^{1}_{\beta}(\mathbb{R})}\leq & C(\overrightarrow{v},\beta)\left[\norm{x^{2}f(t,x)}_{H^{1}_{\beta}}+\norm{xf(t,x)}_{H^{2}_{\beta}}+\norm{f(t,x)}_{H^{3}_{\beta}}\right].
\end{align*}
\end{remark}
\begin{proof}[Proof of Lemma \ref{edl}.]
We are going to verify that the inequality \eqref{e000} is true if $r(t)$ is a solution of \eqref{rpde} belonging to $H^{1}_{\beta}$. The proof of \eqref{po} is completely analogous to the proof of \eqref{e000} using induction using \eqref{po} for $k-1.$ The proof of existence and uniqueness of a solution of \eqref{rpde} $r(t)\in H^{1}_{\beta}$ follows using estimate \eqref{e000} and it is completely analogous to Step $2$ of the proof of Lemma $3.1$ from \cite{kinknew}.
\par First, we consider a smooth increasing cut-off function $\chi$ satisfying $0\leq \chi\leq 1,$ and
\begin{equation*}
 \chi(x)=
 \begin{cases}
 0 \text{, if $x\leq{-} \frac{1}{2}-\frac{1}{10000},$}\\
 1 \text{, if $x\geq  {-}\frac{1}{2},$}
 \end{cases}   
\end{equation*}
from which we define the following functions
\begin{align*}
    \chi_{1}(t,x)=&\chi\left(\frac{x}{y_{1}-y_{2}+(v_{1}-v_{2})t}\right),\\
    \chi_{2}(t,x)=&\chi\left(\frac{x}{y_{2}-y_{3}+(v_{2}-v_{3})t}\right)-\chi\left(\frac{x-(y_{1}-y_{2}+(v_{1}-v_{2})t)}{y_{1}-y_{2}+(v_{1}-v_{2})t}\right),\,...,\\
    \chi_{\ell}(t,x)=& \chi\left(\frac{x}{y_{\ell}-y_{\ell+1}+(v_{\ell}-v_{\ell+1})t}\right)-\chi\left(\frac{x-(y_{\ell-1}-y_{\ell}+(v_{\ell-1}-v_{\ell})t)}{y_{\ell-1}-y_{\ell}+(v_{\ell-1}-v_{\ell})t}\right),\,...,\\
    \chi_{m}(t,x)=&1-\chi\left(\frac{x-(y_{m-1}-y_{m}+(v_{m-1}-v_{m})t)}{y_{m-1}-y_{m}+(v_{m-1}-v_{m})t}\right).
\end{align*}
Next, for $\theta_{\ell}(t,x)=\frac{v_{\ell}x}{2}-\frac{v_{\ell}^{2}t}{4}+\gamma_{\ell}+\omega_{\ell}t,$ and setting
\begin{align}\label{v+}
   u_{+,\ell}(t,x):=&r_{1}(t,x+v_{\ell}t+y_{\ell})e^{{-}i\theta_{\ell}(t,x+v_{\ell}t+y_{\ell})}+r_{2}(t,x+v_{\ell}t+y_{\ell})e^{i\theta_{\ell}(t,x+v_{\ell}t+y_{\ell})},\\ \label{v-} u_{-,\ell}(t,x):=& r_{1}(t,x+v_{\ell}t+y_{\ell})e^{{-}i\theta_{\ell}(t,x+v_{\ell}t+y_{\ell})}-r_{2}(t,x+v_{\ell}t+y_{\ell})e^{i\theta_{\ell}(t,x+v_{\ell}t+y_{\ell})},
\end{align}
we can verify that for $f=(f_{1},f_{2})$ that  \eqref{rpde} is equivalent to the following differential system of equations
\begin{align}\label{edp+-}
   i\partial_{t}\begin{bmatrix}
       u_{+,\ell}(t,x)\\
       u_{-,\ell}(t,x)
   \end{bmatrix}-
   \begin{bmatrix}
    L_{-,\omega_{\ell}}u_{-,\ell}(t,x)\\
L_{+,\omega_{\ell}}u_{+,\ell}(t,x)
   \end{bmatrix}=&\sum_{j\neq \ell} V_{j}\left(x-(y_{j}-y_{\ell}+(v_{j}-v_{\ell})t)\right)\begin{bmatrix}
       u_{+,\ell}(t,x)\\
       u_{-,\ell}(t,x)
   \end{bmatrix} \nonumber 
   \\ \nonumber
   &{+} 
   \begin{bmatrix}
       \frac{1}{2} & \frac{1}{2}\\
       \frac{1}{2} & {-}\frac{1}{2}
   \end{bmatrix}
   \begin{bmatrix}
       f_{1}(t,x-y_\ell-v_\ell t)e^{{-}i\theta_{\ell}(t,x+y_{t})}\\
       f_{2}(t,x-y_\ell-v_\ell t)e^{i\theta_{\ell}(t,x+y_{t})},
   \end{bmatrix}
\end{align}
for a set of Schwartz functions $V_{j}(x)\in C^{\infty}(\mathbb{R},\mathbb{C}^{2\times 2})$ satisfying
\begin{equation*}
    \left\vert \frac{d^{n}}{dx^{n}}V_{j}(x)\right\vert\lesssim_{n} e^{{-}2\sqrt{\omega_{j}}\vert x\vert} \text{, for all $x\in\mathbb{R},$}
\end{equation*}
and the operators $L_{-,\omega_{\ell}}$ and $L_{+,\omega_{\ell}}$ are the self-adjoint operators defined by
\begin{align*}
    L_{+,\omega_{\ell}}\coloneqq &{-}\partial^{2}_{x}+\omega_{\ell}+U_{\ell}(x)+W_{\ell}(x),\\
    L_{-,\omega_{\ell}}\coloneqq &{-}\partial^{2}_{x}+\omega_{\ell}+U_{\ell}(x)-W_{\ell}(x).
\end{align*}
Next, we consider the localized Hamiltonians
\begin{align*}
 H_{\ell,n}\left(r\right)=&\frac{1}{4}\int_{\mathbb{R}}\chi_{\ell}(t,x)\chi_{n}(t,x)\left[\left\vert \partial_{x}u_{+,\ell}(t,x) \right\vert^{2}+\left\vert \partial_{x}u_{-,\ell}(t,x)\right\vert^{2}\right]\,dx\\
 &{+}\frac{\omega_{\ell}}{4}\int_{\mathbb{R}}\chi_{\ell}(t,x)\chi_{n}(t,x)\left[\left\vert u_{+}(t,x)\right\vert^{2}+\left\vert u_{-}(t,x) \right\vert^{2}\right]\,dx
 \\&{+}\frac{1}{4}\int_{\mathbb{R}}\chi_{\ell}(t,x)\chi_{n}(t,x)U_{\ell}(x)\left[\left\vert u_{+}(t,x)\right\vert^{2}+\left\vert u_{-}(t,x) \right\vert^{2}\right]\,dx\\
 &{+}\int_{\mathbb{R}}\chi_{\ell}(t,x)\chi_{n}(t,x)W_{\ell}(x)\left(\frac{\vert u_{+}(t,x) \vert^{2}}{4}-\frac{\vert u_{-}(t,x) \vert^{2}}{4}\right)\,dx.
\end{align*}
\par Furthermore, we have the following identity  
\begin{equation*}
\left\langle \mathcal{H}_{\omega_{\ell}}\left(\begin{bmatrix}
    r_{1}(x)\\
    r_{2}(x)
\end{bmatrix}\right),\sigma_{3} \begin{bmatrix}
    r_{1}(x)\\
    r_{2}(x)
\end{bmatrix}\right\rangle=\left\langle L_{+,\omega_{\ell}}\left(\frac{r_{1}+r_{2}}{2}\right),r_{1}+r_{2} \right\rangle+\left\langle L_{+,\omega_{\ell}}\left(\frac{r_{1}-r_{2}}{2}\right),r_{1}-r_{2} \right\rangle,
\end{equation*}
and the following coercive estimate for an $c>0$ and $K>1$
\begin{equation*}
    \left\langle \mathcal{H}_{\omega_{\ell}}(r),\sigma_{3}r \right\rangle\geq c\norm{r}_{H^{1}}^{2}-K\sum_{n=1}^{d_{\ell}}\left\vert\left\langle r,\sigma_{3}z_{n,\ell}(x)\right\rangle\right\vert^{2}, 
\end{equation*}
where $\{z_{\ell,1},\,...,\,z_{\ell, d_{\ell}}\}$ is an orthonormal basis of $P_{d,\omega_\ell}$ which is the range of the discrete spectrum projection of $\mathcal{H}_{\omega_{\ell}}.$ 
Therefore, we can verify the existence of $c_{0}>0$ and $K>1$ satisfying
\begin{align}\label{coercest}
    H_{\ell,\ell}(r)\geq& c_{0}\norm{\chi_{\ell}(t,x)r(t,x)}_{H^{1}}^{2}-K\sum_{n=1}^{d_{\ell}}\left\langle r(t),\sigma_{3}e^{i\sigma_{3}\theta_{\ell}(t,x)}z_{\ell,n}(x-v_{\ell}t-y_{\ell}) \right\rangle^{2}\\ \nonumber
    &{-}\frac{K}{\min_{n} y_{n-1}-y_{n}+(v_{n-1}-v_{n})t}\norm{r(t)}_{H^{1}}^{2},\\
    H_{\ell,n}(r)\geq & {-}K e^{{-}\beta \min_{\ell}(y_{\ell}-y_{\ell+1}+(v_{\ell}-v_{\ell+1}t))}\norm{r(t)}_{H^{1}}^{2} \text{, if $\ell\neq n,$}
\end{align}
where $\{z_{\ell,1},\,...,\, z_{\ell,d_{\ell}}\}$ is a basis of $ P_{d,\omega_{\ell}}.$ Therefore, we can verify the existence of $c_{1}>0$ and $K>1$ satisfying when $\min_{\ell} y_{\ell}-y_{\ell+1}$ is sufficiently large
\begin{equation}\label{coercfinal}
    \sum_{\ell,n}H_{\ell,n}(r)\geq c_{1}\norm{ r(t)}_{H^{1}}^{2}-K\sum_{\ell}\sum_{n=1}^{d_{\ell}}\left\langle r(t),\sigma_{3}e^{i\sigma_{3}\theta_{\ell}(t,x)}z_{\ell,n}(x-v_{\ell}t-y_{\ell}) \right\rangle^{2},
\end{equation}
such that $\{z_{\ell,1},\,...,\,z_{\ell,d_{\ell}}\}$ is a basis of $\Raa P_{d,\omega_{\ell}}.$
\par Next, using \eqref{rpde} and the definition of $u_{+,\ell},\,u_{-,\ell},$ we can verify that the following functional
\begin{equation*}
H(r)=\sum_{\ell,n=1}^{m}H_{\ell,n}(r)
\end{equation*}
satisfies for any $t\geq 0$
\begin{equation}\label{energyesti1}
    \left\vert\frac{d}{dt}H(r)(t)\right\vert=O\left(\frac{\norm{r(t)}_{H^{1}}^{2}}{t+\min y_{\ell}-y_{\ell+1}}+\norm{r(t)}_{H^{1}}\norm{f(t)}_{H^{1}}\right).
\end{equation}
More precisely, using the identities \eqref{v+} and \eqref{v-}, we can verify that
\begin{align*}
     \vert u_{+,\ell}(t,x)\vert^{2}+\vert u_{-,\ell}(t,x)\vert^{2}=& 2\vert r_{1}(t,x+v_{\ell}+y_{\ell}) \vert^{2}+2\vert r_{2}(t,x+v_{\ell}+y_{\ell}) \vert^{2},\\
     \vert \partial_{x}u_{+,\ell}(t,x)\vert^{2}+\vert \partial_{x}u_{-,\ell}(t,x)\vert^{2}=&2\left\vert \partial_{x}r_{1}(t,x+v_{\ell}t+y_{\ell})-\frac{iv_{\ell}}{2}r_{1}(t,x+v_{\ell}t+y_{\ell}) \right\vert^{2}\\&{+}2\left\vert \partial_{x}r_{2}(t,x+v_{\ell}t+y_{\ell})+\frac{iv_{\ell}}{2}r_{2}(t,x+v_{\ell}t+y_{\ell}) \right\vert^{2},
\end{align*}
from which we can deduce the following identity
\begin{multline}\label{mmm}
\begin{aligned}
    \sum_{\ell,n=1}^{m}H_{\ell,n}(r)=&\frac{1}{2}\int_{\mathbb{R}}\left\vert \partial_{x}r_{1}(t,x) \right\vert^{2}+\left\vert \partial_{x}r_{2}(t,x)\right\vert^{2}\,dx\\
 &{+}\sum_{\ell=1}^{n}\left(\frac{\omega_{\ell}}{2}+\frac{v_{\ell}^{2}}{8}\right)\int_{\mathbb{R}} \chi_{\ell}(t,x-v_{\ell}t-y_{\ell})\left[\left\vert r_{1}(t,x) \right\vert^{2}+\left\vert r_{2}(t,x)\right\vert^{2}\right]\,dx
 \\&{+}\frac{v_{\ell}}{2}\Ree i\int_{\mathbb{R}}\chi_{\ell}(t,x-v_{\ell}t-y_{\ell})\left[\partial_{x}r_{1}(t,x)\overline{r_{1}(t,x)}-\partial_{x}r_{2}(t,x)\overline{r_{2}(t,x)}\right]\,dx
 \\&{+}\frac{1}{2}\sum_{\ell=1}^{m}\int_{\mathbb{R}}\chi_{\ell}(t,x-v_{\ell}t-y_{\ell})U_{\ell}(x-v_{\ell}t-y_{\ell})\left[\left\vert r_{1}(t,x) \right\vert^{2}+\left\vert r_{2}(t,x) \right\vert^{2}\right]\,dx\\
 &{-}\Ree\sum_{\ell=1}^{m}\int_{\mathbb{R}}\chi_{\ell}(t,x-v_{\ell}t-y_{\ell})W_{\ell}\left(x-v_{\ell}t-y_{\ell}\right)e^{2i\theta_{\ell}(t,x)}r_{2}(t,x)\overline{r_{1}(t,x)}\,dx.
\end{aligned}
\end{multline}
Furthermore, using integration by parts and the  equation \eqref{rpde}, we can verify that  
\begin{multline}\label{dmot}
    \frac{d}{dt}\left[\frac{v_{\ell}}{2}\Ree i\int_{\mathbb{R}}\chi_{\ell}(t,x-v_{\ell}t-y_{\ell})\left[\partial_{x}r_{1}(t,x)\overline{r_{1}(t,x)}-\partial_{x}r_{2}(t,x)\overline{r_{2}(t,x)}\right]\,dx\right]\\
    \begin{aligned}
    = &\frac{v_{\ell}}{2}\int_{\mathbb{R}}\chi_{\ell}(t,x-v_{\ell}t-y_{\ell})U^{\prime}_{\ell}(x-v_{\ell}t-y_{\ell})\left[\vert r_{1}(t,x) \vert^{2}+\vert r_{2}(t,x) \vert^{2}\right]\,dx\\
    &{-}v_{\ell}\int_{\mathbb{R}}\chi_{\ell}(t,x-v_{\ell}t-y_{\ell})W^{\prime}_{\ell}(x-v_{\ell}t-y_{\ell})e^{2i\theta_{\ell}(t,x)}r_{2}(t,x)\overline{r_{1}(t,x)}\,dx\\
    &{-}v_{\ell}^{2}\Ree\int_{\mathbb{R}}i\chi_{\ell}(t,x-v_{\ell}t-y_{\ell})W_{\ell}\left(x-v_{\ell}t-y_{\ell}\right)e^{2i\theta_{\ell}(t,x)}r_{2}(t,x)\overline{r_{1}(t,x)}\,dx\\
&{+}O\left(\frac{\norm{r(t)}_{H^{1}}^{2}}{t+\min y_{\ell}-y_{\ell+1}}+\norm{r(t)}_{H^{1}}\norm{f(t)}_{H^{1}}\right),
    \end{aligned}
\end{multline}
and 
\begin{multline}\label{dmasst}
\frac{d}{dt}\frac{v_{\ell}^{2}}{8}\int_{\mathbb{R}}\chi_{\ell}(t,x-v_{\ell}t-y_{\ell})\left[\vert r_{1}(t,x)\vert^{2}+\vert r_{2}(t,x)\vert^{2}\right]\,dx\\
\begin{aligned}
    =&\Ree i\frac{v_{\ell}^{2}}{2}\int_{\mathbb{R}}\chi_{\ell}(t,x-v_{\ell}t-y_{\ell})W_{\ell}(x-v_{\ell}t-y_{\ell})\overline{r_{1}(t,x)}r_{2}(t,x)\,dx\\
&{+}O\left(\frac{\norm{r(t)}_{H^{1}}^{2}}{t+\min y_{\ell}-y_{\ell+1}}+\norm{r(t)}_{H^{1}}\norm{f(t)}_{H^{1}}\right).
\end{aligned}
\end{multline}
Therefore, estimate \eqref{energyesti1} follows from the time derivative of \eqref{mmm} using the  equation \eqref{rpde} and estimates \eqref{dmot}, \eqref{dmasst}.  
\par Next, for any two elements $\overrightarrow{z}_{1},\,\overrightarrow{z}_{2}$ of $\mathcal{H}_{\omega_{\ell}}$ satisfying
\begin{align*}
\mathcal{H}_{\omega_{\ell}}\left(\overrightarrow{z}_{1}\right)=0,\,\mathcal{H}_{\omega_{\ell}}\left(\overrightarrow{z}_{2}\right)=i\overrightarrow{z}_{1},
\end{align*}
we can verify using the  equation \eqref{rpde} and Lemma \ref{interactt} the existence of a positive constant $\beta_{0}$ satisfying for all $t\geq 0$ the following estimates
\begin{align*}
 \frac{d}{dt}\left \langle r(t),\sigma_{3}e^{i\sigma_{3}\theta_{\ell}(t,x)}\overrightarrow{z}_{1}\left(x-v_{\ell}t-y_{\ell}\right)\right\rangle= &O\left(\norm{f(t)}_{L^{2}}+\norm{r(t)}_{L^{2}}e^{{-}\beta_{0}(\min_{\ell} (v_{\ell}-v_{\ell+1}) t+\min_{\ell}(y_{\ell}-y_{\ell+1}))}\right),\\
 \frac{d}{dt}\left\langle r(t),\sigma_{3}e^{i\sigma_{3}\theta_{\ell}(t,x)}\overrightarrow{z}_{2}\left(x-v_{\ell}t-y_{\ell}\right)\right\rangle= &\left\langle r(t),\sigma_{3}e^{i\sigma_{3}\theta_{\ell}(t,x)}v_{1}\left(x-v_{\ell}t-y_{\ell}\right)\right\rangle\\&{+}O\left(\norm{f(t)}_{L^{2}}+\norm{r(t)}_{L^{2}}e^{{-}\beta_{0}(\min_{\ell} (v_{\ell}-v_{\ell+1}) t+\min_{\ell}(y_{\ell}-y_{\ell+1}))}\right).
\end{align*}
Consequently, using $\beta_{1}=\min\{\beta,\beta_{0}\min_{\ell}(v_{\ell}-v_{\ell+1})\},$ we can verify from the Fundamental Theorem of Calculus that if there exists $r(t)\in H^{1}_{\beta_{1}}$ and $t\geq 0,$ then
\begin{align}\label{discretp1}
    \left \langle r(t),\sigma_{3}e^{i\sigma_{3}\theta_{\ell}(t,x)}\overrightarrow{z}_{1}\left(x-v_{\ell}t-y_{\ell}\right)\right\rangle=& O\left(e^{{-}\beta t}\frac{\norm{f(t)}_{L^{2}_{\beta}}}{\beta}+e^{{-}(\beta_{1}+\beta_{0})t-\min_{\ell}(y_{\ell}-y_{\ell+1})}\frac{\norm{r(t)}_{L^{2}_{\beta}}}{\beta}\right),\\ \label{discretp2}
    \left \langle r(t),\sigma_{3}e^{i\sigma_{3}\theta_{\ell}(t,x)}\overrightarrow{z}_{2}\left(x-v_{\ell}t-y_{\ell}\right)\right\rangle=&O\left(e^{{-}\beta t}\frac{\norm{f(t)}_{L^{2}_{\beta}}}{\beta^{2}}+e^{{-}(\beta_{1}+\beta_{0})t-\min_{\ell}(y_{\ell}-y_{\ell+1})}\frac{\norm{r(t)}_{L^{2}_{\beta}}}{\beta^{2}}\right).
\end{align}
\par Next, let $\overrightarrow{z_{\lambda,\ell}}\in L^{2}_{x}(\mathbb{R},\mathbb{C}^{2})$ satisfying 
\begin{equation*}
\mathcal{H}_{\omega_{\ell}}(\overrightarrow{z_{\lambda,\ell}})=i\lambda \overrightarrow{z_{\lambda,\ell}} \text{\, for a $i\lambda \in \sigma_{d}(\mathcal{H}_{\omega_{\ell}}).$}
\end{equation*}
In particular, from the hypothesis \eqref{decV}, it can be verified that if $\lambda\in\mathbb{R}$ or $i\lambda \in\mathbb{R}$ that $\overrightarrow{z_{\lambda,\ell}}(x)$ satisfies
\begin{equation*}
    \left\vert \frac{d^{n}}{dx^{n}}\overrightarrow{z_{\lambda,\ell}}(x) \right\vert=O\left(e^{{-}\beta_{\lambda}x}\right) \text{, for all $n\in\mathbb{N}\cup\{0\},$}
\end{equation*}
for an explanation see for example \cite{schlag2}. Furthermore, the function
\begin{equation*}
    e^{\lambda t}\overrightarrow{z_{\lambda,\ell}}(x)
\end{equation*}
is a solution of 
$
    i\partial_{t}\overrightarrow{u}-\mathcal{H}_{\omega_{\ell}}\overrightarrow{u}=0.
$
Consequently, we can verify similarly to \eqref{discretp1} that

\begin{multline}\label{generaldisc} 
    \left \langle r(t),\sigma_{3}e^{i\sigma_{3}\theta_{\ell}(t,x)}e^{\lambda t}\overrightarrow{z_{\lambda,\ell}}\left(x-v_{\ell}t-y_{\ell}\right)\right\rangle\\
    =O\left(e^{(\vert \Ree\lambda\vert {-}\beta) t}\frac{\norm{f(t)}_{L^{2}_{\beta}}}{\left\vert \vert \Ree\lambda\vert{-}\beta\right\vert}+e^{[\vert \Ree\lambda\vert {-}\beta-\beta_{1}]t-c\min_{\ell}(y_{\ell}-y_{\ell+1})}\frac{\norm{r(t)}_{L^{2}_{\beta_{1}}}}{\left\vert \vert \Ree\lambda\vert{-}\beta\right\vert}\right)
\end{multline}
\par In conclusion, since $\kerrr (i\mathcal{H}_{\omega})^{2}=\kerrr (i\mathcal{H}_{\omega})^{n}$ for any $n\geq 2$ and $\kerrr (i\mathcal{H}_{\omega}-\lambda_\omega\mathrm{Id})=\kerrr (i\mathcal{H}_{\omega}-\lambda_\omega\mathrm{Id})^{n}$  for any $n\geq 1$ for $\lambda\neq 0,$ if we integrate \eqref{energyesti1} from $0$ through ${+}\infty$ using estimates \eqref{discretp1}, \eqref{discretp2} and \eqref{generaldisc}, we can verify when $\min_{\ell}v_{\ell}-v_{\ell+1}>0$ is large enough that there exists a constant $C(\beta_{1})>1$ satisfying
\begin{equation*}
    \norm{r(t)}_{H^{1}_{\beta_{1}}}\leq C(\beta_{1}) \norm{f(t)}_{H^{1}_{\beta}},
\end{equation*}
 when $r\in H^{1}_{\beta_{1}}.$ 
\end{proof}

\subsection{Proof of Theorem \ref{tdis}: Solutions converging to the bound states}\label{soldisc}
First, we recall $P_{d,\omega_{k}}$  is the  discrete spectrum projection of $\mathcal{H}_{\omega_{k}}.$ Let $u_{1},\,u_{2}$ be two different elements of $\kerrr \mathcal{H}_{\omega_{k}}^{2}$ satisfying
\begin{equation*}
    \mathcal{H}_{\omega_{k}}(u_{1})=0,\, \mathcal{H}_{\omega_{k}}(u_{2})=u_{1}.
\end{equation*}
Furthermore, because of the assumptions \eqref{decV} and $(H3),$ it is well-known that $u_{1}(x)$ and $u_{2}(x)$ have exponential decay on $\vert x\vert.$
\par Next, we are going to construct the following distinct solutions of \eqref{ldpe} for $t\geq 0$ 
\begin{align}\label{phix}
\nu_{1}(t,x)=&e^{i\sigma_{3}\left(\frac{v_{k}x}{2}-\frac{v_{k}^{2}t}{4}+\omega_{k}t\right)}u_{1}(x-v_{k}t-y_{k})+\overrightarrow{r}_{1}(t,x),\\ \label{iphi}
\nu_{2}(t,x)=&e^{i\sigma_{3}\left(\frac{v_{k}x}{2}-\frac{v_{k}^{2}t}{4}+\omega_{k}t\right)}u_{2}(x-v_{k}t-y_{k})-ite^{i\sigma_{3}\left(\frac{v_{k}x}{2}-\frac{v_{k}^{2}t}{4}+\omega_{k}t\right)}u_{1}(x-v_{k}t-y_{k})+\overrightarrow{r}_{2}(t,x),
\end{align}
where all the remainders $\vec{r}_{1},\, \vec{r}_{2}$ have the following decay for all $t\geq 0$
\begin{equation*}
   \max_{\ell\in\{1,2\}} \norm{\vec{r}_{\ell}(t)}_{H^{1}_{x}(\mathbb{R})}=O(e^{{-}\beta t}) \text{, for a $\beta>0.$}
\end{equation*}
In particular, it is well-known that the functions
\begin{align*}
e^{i\sigma_{3}\left(\frac{v_{k}x}{2}-\frac{v_{k}^{2}t}{4}+\omega_{k}t\right)}u_{1}(x-v_{k}t-y_{k}),\\ 
e^{i\sigma_{3}\left(\frac{v_{k}x}{2}-\frac{v_{k}^{2}t}{4}+\omega_{k}t\right)}u_{2}(x-v_{k}t-y_{k})-ite^{i\sigma_{3}\left(\frac{v_{k}x}{2}-\frac{v_{k}^{2}t}{4}+\omega_{k}t\right)}u_{1}(x-v_{k}t-y_{k}),
\end{align*}
 are solutions of the following equation with a single moving potential
 \begin{equation*}
    i\partial_{t}\overrightarrow{u}(t)+\sigma_{3}\partial^{2}_{x}\overrightarrow{u}(t)-V^{\sigma_{k}}_{k}\overrightarrow{u}(t)=0,   
 \end{equation*}
via Galilei transformations,   see \cite{perelmanasym} for example.
  \par Therefore, since all the functions $u_{1}$ and $u_{2}$  have exponential decay on $\vert x\vert,$ it is not difficult to verify using Lemma \ref{interactt} and the definition of the operators $V^{\sigma_{\ell}}_{\ell}$ that if $\nu_{j}$ is a   solution of \eqref{ldpe}, then $r_{j}$ is a   solution of a partial differential equation of the form
  \begin{equation*}
    i\partial_{t}r_{j}(t)+\sigma_{3}\partial^{2}_{x}r_{j}(t)-\sum_{\ell=1}^{m}V^{\sigma_{\ell}}_{\ell}r_{j}(t)=f_{j}(t,x), 
  \end{equation*}
such that each function $f_{j}$ has the following decay for constants $\beta>0,\,K>1$ both independent on the parameters $v_{\ell},\,y_{\ell}$ for all $1\leq \ell\leq m$ 
\begin{equation*}
    \norm{f_{j}(t,x)}_{H^{1}_{x}(\mathbb{R})}\leq K e^{{-}\beta(\min_{j}y_{j}-y_{j+1}+(v_{j}-v_{j+1})t)}.
\end{equation*}
\par More precisely, using Lemma \ref{interactt},
we can deduce the existence of a positive constant $\alpha>0$ depending only on the potentials $V_{\ell}(x)$ and on the parameters $\omega_{\ell}$ satisfying the following decay 
\begin{equation}
     \norm{f_{j}(t,x)}_{H^{1}_{x}(\mathbb{R})}\leq K e^{{-}\alpha(\min_{j}y_{j}-y_{j+1}+(v_{j}-v_{j+1})t)},
\end{equation}
when $t\geq 0.$
\par Consequently, Lemma \ref{edl} implies the existence of a unique function $r(t,x)$ satisfying Theorem \ref{tdis} when $\nu\in \kerrr \mathcal{H}_{\omega_{\ell}}^{2}.$
\par The proof of Theorem \ref{tdis} for the existence of a solution of the form
\begin{equation}\label{unstasol}
\overrightarrow{u}(t,x)=e^{i\sigma_{3}\left(\frac{v_{\ell}x}{2}-v_{\ell}^{2}t+\omega_{\ell}t\right)}e^{\lambda t} \overrightarrow{v_{\lambda,\ell}}(x-v_{\ell}t-y_{\ell})+r(t,x),    
\end{equation}
for $i\lambda\neq 0\in \sigma_{d}(\mathcal{H}_{\omega_{\ell}}),\,\mathcal{H}_{\omega_{\ell}}\overrightarrow{v_{\lambda,\ell}}(x)=i\lambda\overrightarrow{v_{\lambda,\ell}}(x)$ and $\norm{r(t)}_{H^{1}_{\beta}}<{+}\infty$ is completely similar. Moreover, since $\vert e^{\lambda t}\vert\leq e^{\vert \lambda \vert t}$ and $\overrightarrow{v_{\lambda,\ell}}$ is a Schwartz function with exponential decay, we can deduce using Lemma \ref{interactt} that if
$\min_{\ell} v_{\ell}-v_{\ell+1}\gg \vert \lambda\vert$ and $\overrightarrow{u}$ is a solution of \eqref{p}, then $r(t,x)$ satisfies a linear partial differential equation of the following kind
\begin{equation*}
    i\partial_{t}r(t)+\sigma_{3}\partial^{2}_{x}r(t)-\sum_{\ell=1}^{m}V^{\sigma_{\ell}}_{\ell}r(t)=f(t,x), 
\end{equation*}
such that $\norm{f(t,x)}_{H^{1}_{x}(\mathbb{R})}=O\left(e^{-{\alpha}[(\min_{\ell}v_{\ell}-v_{\ell+1}-\vert\lambda\vert)t+\min_{\ell}(y_{\ell}-y_{\ell+1})]}\right).$ Therefore, Lemma \ref{edl} implies the existence of a solution of the form \eqref{unstasol} satisfying Theorem \ref{tdis}.

\subsection{Proof of Theorem \ref{tcont}: Solutions in the scattering space}\label{solscont}

First, we recall the function $\mathcal{S}(\vec{\phi})(t,x)$ denoted by
{\footnotesize\begin{align*}
   \mathcal{S}(\vec{\phi})(t,x)= & \sum_{\ell=1}^{m}e^{i(\frac{v_{\ell}x}{2}-\frac{v_{\ell}^{2}t}{4}+\gamma_{\ell}+\omega_{\ell}t)\sigma_{3}}\hat{G}_{\omega_{\ell}}\left(
   e^{{-}it(k+\frac{v_{\ell}}{2})^{2}\sigma_{3}}e^{{-}i(\gamma_{\ell}+\omega_{\ell}t)\sigma_{3}} \begin{bmatrix}
       e^{i(y_{\ell}+v_{\ell}t)k}\phi_{1,\ell}\left(k+\frac{v_{\ell}}{2}\right)\\
       e^{i(y_{\ell}+v_{\ell}t)k}\phi_{2,\ell}\left(k-\frac{v_{\ell}}{2}\right)
    \end{bmatrix}\right)(x-y_{\ell}-v_{\ell}t)\\
    & {-}\frac{1}{\sqrt{2 \pi}}\int_{\mathbb{R}}e^{{-}it(k^{2}+\omega_{\ell})\sigma_{3}}
    \begin{bmatrix}
       \varphi_{1}(k)\\
       \varphi_{2}(k)
    \end{bmatrix} e^{ikx}\,dk,
\end{align*}}
where all the functions $\phi_{\ell}$ and $\varphi$ satisfy Definition \ref{s0def}. Moreover, for $V^{\sigma_{\ell}}_{\ell}$, see  \ref{H2}, each term 
\begin{equation*}
    A_{\ell}(t,x)=e^{i(\frac{v_{\ell}x}{2}-\frac{v_{\ell}^{2}t}{4}+\gamma_{\ell}+\omega_{\ell}t)\sigma_{3}}\hat{G}_{\omega_{\ell}}\left(
   e^{{-}it(k^{2}+\omega_{\ell})\sigma_{3}}e^{{-}i(\gamma_{\ell}+\omega_{\ell}t)\sigma_{3}} \begin{bmatrix}
       e^{iy_{\ell}k}\phi_{1,\ell}\left(k+\frac{v_{\ell}}{2}\right)\\
       e^{iy_{\ell}k}\phi_{2,\ell}\left(k-\frac{v_{\ell}}{2}\right)
    \end{bmatrix}\right)(x-y_{\ell}-v_{\ell}t)
\end{equation*}
is a solution of the following equation
\begin{equation}\label{pdedp}
   i\partial_{t}u+\sigma_{3}\partial^{2}_{x}u-V^{\sigma_{\ell}}_{\ell}(t)u=0. 
\end{equation}

\par Furthermore, using Lemma \ref{galil}, it is not difficult to verify for any $(g_{1},g_{2})\in L^{2}(\mathbb{R},\mathbb{C}^{2})$ that
\begin{equation*}
    \frac{e^{i\left(\frac{ v_{\ell}x}{2}-\frac{v_{\ell}^{2}t}{4}\right)\sigma_{3}}}{\sqrt{2\pi}}\int_{\mathbb{R}}e^{{-}it(k^{2}+\omega_{\ell})\sigma_{3}}e^{ik(x-y_{\ell}-v_{\ell}t)}\begin{bmatrix}
        e^{iy_{\ell}k}g_{1}\left(k+\frac{v_{\ell}}{2}\right)\\
        e^{iy_{\ell}k}g_{2}\left(k-\frac{v_{\ell}}{2}\right)
    \end{bmatrix}=\frac{1}{\sqrt{2\pi}}\int_{\mathbb{R}} e^{{-}it(k^{2}+\omega_{\ell})\sigma_{3}}e^{ikx}\begin{bmatrix}
    g_{1}(k)\\
    g_{2}(k)
    \end{bmatrix}\,dk.
\end{equation*}

\par Therefore, using Lemmas \ref{appFourier} and \ref{interactt},  and the definition of $(\varphi_{1},\varphi_{2})$ in Definition \ref{s0def}, we can verify for any $p,j \in\mathbb{N}$ the existence of constants $\beta_{0}>0,\,C_{p}>1$ satisfying the following estimate
\begin{multline}\label{intSS}
\norm{\langle x \rangle^p V^{n}_{\sigma_{n}}\left(\left[\sum_{\ell\neq n}A_{\ell}(t,x)\right]-\int_{\mathbb{R}}e^{{-}it(k^{2}+\omega_{\ell})\sigma_{3}}e^{ikx}
\begin{bmatrix}
\varphi_{1}(k)\\
\varphi_{2}(k)
\end{bmatrix}\,dk\right)}_{H^{j+1}_{x}}\\ 
\leq C_{p}\max_{\ell}\norm{(1+k^{2})^{\frac{j}{2}}(\phi_{1,\ell},\phi_{2,\ell})}_{L^{2}_{k}(\mathbb{R})} \max_{\ell}(y_{j}-y_{\ell}+(v_{\ell}-v_{\ell+1})t)^{p}e^{{-}\beta_{0} \left[\min_{n\neq l}\vert(y_{n}-y_{\ell})+(v_{n}-v_{\ell})t\vert\right]},
\end{multline}
when $\min_{l }y_{\ell}-y_{\ell+1}$ is large enough.
Furthermore, the estimate \eqref{intSS} implies that if $\mathcal{S}(\vec{\phi})(t,x)+r(t,x)$ is a   solution of \eqref{pdedp}, then the function $r(t)$ satisfies
\begin{equation}\label{finedpd}
    i\partial_{t}r+\sigma_{3}\partial^{2}_{x}r-\sum_{\ell}V^{\sigma_{\ell}}_{\ell}r=f(t),
\end{equation}
where $f(t,x)$ is a function satisfying
\begin{align*}
    \norm{f(t,x)}_{H^{1}_{x}(\mathbb{R})}\leq & C \max_{\ell}\norm{(\phi_{1,\ell},\phi_{2,\ell})}_{L^{2}_{k}(\mathbb{R})}e^{{-}\beta \left[\min_{\ell}(y_{\ell-1}-y_{\ell})+(v_{\ell-1}-v_{\ell})t\right]},\\
\norm{f(t,x)}_{H^{3}_{x}(\mathbb{R})}\leq & Ce^{{-}\beta \left[\min_{\ell}(y_{\ell-1}-y_{\ell})+(v_{\ell-1}-v_{\ell})t\right]}\max_{\ell}\norm{(1+k^{2})(\phi_{1,\ell},\phi_{2,\ell})}_{L^{2}_{k}(\mathbb{R})},\\
\norm{\langle x\rangle^2 f(t,x)}_{H^{1}_{x}(\mathbb{R})}\leq & C(y_{1}-y_{m}+(v_{1}-v_{m})t)^{2}e^{{-}\beta \left[\min_{\ell}(y_{\ell-1}-y_{\ell})+(v_{\ell-1}-v_{\ell})t\right]}\max_{\ell}\norm{(\phi_{1,\ell},\phi_{2,\ell})}_{L^{2}_{k}(\mathbb{R})},\\
\norm{\langle x\rangle
f(t,x)}_{H^{2}_{x}(\mathbb{R})}\leq & C(y_{1}-y_{m}+(v_{1}-v_{m})t)e^{{-}\beta \left[\min_{\ell}(y_{\ell-1}-y_{\ell})+(v_{\ell-1}-v_{\ell})t\right]}\max_{\ell}\norm{\left\langle k \right\rangle(\phi_{1,\ell},\phi_{2,\ell})}_{L^{2}_{k}(\mathbb{R})}.
\end{align*}
\par Consequently, using Lemma \ref{edl} in the partial differential equation \eqref{finedpd} and Theorem \ref{TT}, we deduce Theorem \ref{tcont}. The proof of estimate \eqref{tcontest} follows from inequality \eqref{intSS} and Remark \ref{weightedremainderedecay}.


\appendix

\section{Proof of Lemma \ref{tcopp}.}\label{app}
First, for any function $h=(h_{1},\,...,\,h_{2m-2})\in L^{2}_{k}\left(\mathbb{R},\mathbb{C}^{2m-2}\right),$ we consider the following linear system of equations for any $3\leq \ell\leq m-1$
\begin{align}\label{giant linear system 11}
    h_{1}=& g_{1}\left({-}k-\frac{v_{2}}{2}\right)-r_{2}(k)g_{2}\left(k-\frac{v_{2}}{2}\right)-s_{2}(k)g_{3}\left({-}k-\frac{v_{2}}{2}\right),\\  \nonumber
h_{2}=&g_{2}\left({-}k-\frac{v_{2}}{2}\right)-r_{1}\left({-}k-\frac{v_{2}}{2}+\frac{v_{1}}{2}\right)g_{1}\left(k+\frac{v_{2}}{2}-v_{1}\right),\\ \label{firstofall1}
h_{3}=&g_{3}\left({-}k-\frac{v_{3}}{2}\right)-r_{3}\left(k\right)g_{4}\left(k-\frac{v_{3}}{2}\right)-s_{3}(k)g_{5}\left({-}k-\frac{v_{3}}{2}\right)
,\\ \label{giant ind}
h_{4}=& g_{4}\left({-}k-\frac{v_{3}}{2}\right)-s_{2}\left({-}k-\frac{v_{3}}{2}+\frac{v_{2}}{2}\right)g_{2}\left({-}k-\frac{v_{3}}{2}\right)
\\ \nonumber
&{-}r_{2}\left({-}k+\frac{v_{2}}{2}-\frac{v_{3}}{2}\right)g_{3}\left(k+\frac{v_{3}}{2}-v_{2}\right),\, ...,\\
\label{2l-1}
h_{2\ell-1}=& g_{2\ell-1}\left({-}k-\frac{v_{\ell+1}}{2}\right)-r_{\ell+1}\left(k\right)g_{2\ell}\left(k-\frac{v_{\ell+1}}{2}\right)-s_{\ell+1}(k)g_{2\ell+1}\left({-}k-\frac{v_{\ell+1}}{2}\right), 
\\  \nonumber
h_{2\ell}=&g_{2\ell}\left({-}k-\frac{v_{\ell+1}}{2}\right)-s_{\ell}\left({-}k-\frac{v_{\ell+1}}{2}+\frac{v_{\ell}}{2}\right)g_{2\ell-2}\left({-}k-\frac{v_{\ell+1}}{2}\right)\\ \nonumber
&{-}r_{\ell}\left({-}k+\frac{v_{\ell}}{2}-\frac{v_{\ell+1}}{2}\right)g_{2\ell-1}\left(k+\frac{v_{\ell+1}}{2}-v_{\ell}\right),\,...,\\   \nonumber 
h_{2m-5}=& g_{2m-5}\left({-}k-\frac{v_{m-1}}{2}\right)-r_{m-1}(k)g_{2m-3}\left(k-\frac{v_{m-1}}{2}\right)\\ \nonumber &{-}s_{m-1}(k)g_{m-1,-,1}\left({-}k-\frac{v_{m-1}}{2}\right),\\  \nonumber
h_{2m-4}=&g_{2m-4}\left({-}k-\frac{v_{m-1}}{2}\right)-s_{m-2}\left({-}k-\frac{v_{m-1}}{2}+\frac{v_{m-2}}{2}\right)g_{2m-6}\left({-}k-\frac{v_{m-1}}{2}\right)\\
\nonumber
&{-}r_{m-2}\left({-}k+\frac{v_{m-2}-v_{m-1}}{2}\right)g_{2m-5}\left(k+\frac{v_{m-1}}{2}-v_{m-2}\right),\\ \label{2m-3}
h_{2m-3}=&g_{2m-3}\left({-}k-\frac{v_{m}}{2}\right)-r_{m}(k)g_{2m-2}\left(k-\frac{v_{m}}{2}\right),\\ \nonumber
h_{2m-2}=& g_{2m-2}\left({-}k-\frac{v_{m}}{2}\right)-s_{m-1}\left({-}k-\frac{v_{m}}{2}+\frac{v_{m-1}}{2}\right)g_{2m-4}\left({-}k-\frac{v_{m}}{2}\right)\\ \nonumber &{-}r_{m-1}\left({-}k+\frac{v_{m-1}}{2}-\frac{v_{m}}{2}\right)g_{2m-3}\left(k+\frac{v_{m}}{2}-v_{m-1}\right),
\end{align}
such that all the functions $r_{n},\,s_{n}$ satisfy \eqref{asyreftr}. We are going to prove by induction on $m$ that there exist $K_{m},\,c_{m}>0$ such that if $\min_{\ell} v_{\ell}-v_{\ell-1}>K_{m},$ then there exists a unique solution $g=(g_{1},g_{2},\,...,\,g_{2m-2})\in L^{2}(\mathbb{R},\mathbb{C} ^{2m-2})$ of the linear system above for any $h\in L^{2}(\mathbb{R},\mathbb{C} ^{2m-2}),$ and $g$ satisfies
\begin{equation}\label{maininequal}
    c_{m}\norm{g(k)}_{L^{2}_{k}(\mathbb{R},\mathbb{C}^{2m-2})}\leq \norm{h}_{L^{2}_{k}(\mathbb{R},\mathbb{C}^{2m-2})}.
\end{equation}
\par Moreover, we are going to prove the estimate \eqref{maininequal} in the next steps after we compute the following sum for each $\ell$
\begin{equation}\label{sl}
    S_{\ell}(k)=h_{2}(k)+\sum_{j=1}^{\ell}r_{2,j}\left({-}k-\frac{v_{2}}{2}+\frac{v_{1}}{2}\right)h_{2j-1}\left({-}k+v_{1}-\frac{v_{2}}{2}-\frac{v_{j+1}}{2}\right),
\end{equation}
such that the functions $r_{2,j}\in L^{\infty}$ satisfy \eqref{asyreftr} are going to be defined later.\\ 
\textbf{Step 1. (Case $m=3.$)} When $m=3,$ the linear system above consists of
\begin{align}\label{po1}
h_{1}(k)=&g_{1}\left({-}k-\frac{v_{2}}{2}\right)-r_{2}(k)g_{2}\left(k-\frac{v_{2}}{2}\right)-s_{2}(k)g_{3}\left({-}k-\frac{v_{2}}{2}\right),\\ 
h_{2}(k)=&g_{2}\left({-}k-\frac{v_{2}}{2}\right)-r_{1}\left({-}k-\frac{v_{2}}{2}+\frac{v_{1}}{2}\right)g_{1}\left(k+\frac{v_{2}}{2}-v_{1}\right),\\
h_{3}(k)=&g_{3}\left({-}k-\frac{v_{3}}{2}\right)-r_{3}(k)g_{4}\left(k-\frac{v_{3}}{2}\right),\\ \label{po2}
h_{4}(k)=&g_{4}\left({-}k-\frac{v_{3}}{2}\right)-s_{2}\left({-}k-\frac{v_{3}}{2}+\frac{v_{2}}{2}\right)g_{2}\left({-}k-\frac{v_{3}}{2}\right)\\ \nonumber
&{-}r_{2}\left({-}k+\frac{v_{2}}{2}-\frac{v_{3}}{2}\right)g_{3}\left(k+\frac{v_{3}}{2}-v_{2}\right).
\end{align}
Therefore, if $\min_{\ell} v_{\ell}-v_{\ell+1}>0$ is large enough, we can verify using the estimates \eqref{pep} that
\begin{multline}\label{h11}
h_{2}\left(k\right)+r_{1}\left({-}k-\frac{v_{2}}{2}+\frac{v_{1}}{2}\right)h_{1}\left({-}k-v_{2}+v_{1}\right)\\
=g_{2}\left({-}k-\frac{v_{2}}{2}\right)-r_{1,2}\left({-}k-\frac{v_{2}}{2}+\frac{v_{1}}{2}\right)g_{3}\left(k+\frac{v_{2}}{2}-v_{1}\right)+O\left(\min_{j\in\{1,2,3,4\},\,\ell\in\{1,2\}}\frac{\norm{g_{j}}_{L^{2}_{k}(\mathbb{R},\mathbb{C})}}{v_{\ell}-v_{\ell+1}}\right),
\end{multline}
for the function
\begin{equation*}
r_{1,2}\left({-}k-\frac{v_{2}}{2}+\frac{v_{1}}{2}\right)=r_{1}\left({-}k-\frac{v_{2}}{2}+\frac{v_{1}}{2}\right)s_{2}(k), \end{equation*}
and so the function $r_{1,2}\in L^{\infty}_{k}(\mathbb{R},\mathbb{C})$ satisfies \eqref{asyreftr}.
\par Similarly, we can verify that
\begin{multline}\label{h22}
h_{3}(k)+r_{3}(k)h_{4}\left({-}k\right)\\
=g_{3}\left({-}k-\frac{v_{3}}{2}\right)-r_{3,4}\left({-}k\right)g_{2}\left({-}k+\frac{v_{3}}{2}-v_{2}\right)+O\left(\min_{j\in\{1,2,3,4\},\,l\in\{1,2\}}\frac{\norm{g_{j}}_{L^{2}_{k}(\mathbb{R},\mathbb{C})}}{v_{\ell}-v_{\ell+1}}\right),
\end{multline}
such that the function $r_{3,4}\in L^{\infty}_{k}(\mathbb{R},\mathbb{C})$ satisfies  \eqref{asyreftr}.
\par Consequently, using estimates \eqref{h11}, \eqref{h22} and all the equations from \eqref{po1} through \eqref{po2},
we can verify similarly to the proof of Lemma \ref{Tp} the existence of a unique solutions $g_{1},\,g_{2},\,g_{3}$ and $g_{4}$ of $L^{2}(\mathbb{R},\mathbb{C}),$ and the existence of a constant $c>0$ such that if $\min_{\ell} v_{\ell}-v_{\ell+1}>0$ is sufficiently large, then
\begin{equation*}
    c\max_{j\in\{1,2,3,4\}}\norm{g_{j}}_{L^{2}_{k}(\mathbb{R},\mathbb{C})}\leq \norm{(h_{1},h_{2},h_{3},h_{4})}_{L^{2}_{k}(\mathbb{R},\mathbb{C}^{4})},
\end{equation*}
for any $\overrightarrow{h}\in L^{2}_{k}(\mathbb{R},\mathbb{C}^{4}).$
\par In conclusion, Lemma \ref{tcopp} is true when $m=3.$
\\
\textbf{Step 2. (Estimate of $S_{\ell}.$)}
 First, we are going to verify for any $\ell\leq m-2$ that there exist functions $r_{2,1},\, r_{2,2},\, ...,\,r_{2,m-1 }$ satisfying \eqref{pep} such that
\begin{multline}\label{slforr}
\begin{aligned}
S_{\ell}(k)=&g_{2}\left({-}k-\frac{v_{2}}{2}\right)-r_{2,\ell+1}\left({-}k-\frac{v_{2}}{2}+\frac{v_{1}}{2}\right)g_{2\ell+1}\left(k-v_{1}+\frac{v_{2}}{2}\right){+}O\left(\min_{n,m}\frac{\norm{g_{n}}_{L^{2}}}{v_{m}-v_{m+1}}\right),
\end{aligned}
\end{multline}
for any $\ell\leq m-2,$
where $S_{\ell}$ is defined at \eqref{sl}.
\par When $\ell=1,$ we consider
$
    r_{2,1}(k)=r_{1}\left(k\right),
$
and so, it is not difficult to verify using equation \eqref{giant linear system 11} and estimates \eqref{pep} that
\begin{multline}\label{s1kk}
 \begin{aligned}
    S_{1}(k)=&h_{2}\left(k\right)+r_{1}\left({-}k-\frac{v_{2}}{2}+\frac{v_{1}}{2}\right)h_{1}\left({-}k-v_{2}+v_{1}\right)
\\=&g_{2}\left({-}k-\frac{v_{2}}{2}\right){-}r_{2,2}\left({-}k-\frac{v_{2}}{2}+\frac{v_{1}}{2}\right)g_{3}\left(k+\frac{v_{2}}{2}-v_{1}\right){+}O\left(\min_{\ell,m}\frac{\norm{g_{\ell,\pm,1}}_{L^{2}}}{v_{m}-v_{m+1}}\right),
\end{aligned}
\end{multline}
where $r_{2,2}$ is defined for any $k\in\mathbb{R}$ by 
\begin{equation*}
r_{2,2}\left({-}k-\frac{v_{2}}{2}+\frac{v_{1}}{2}\right)=r_{1}\left({-}k-\frac{v_{2}}{2}+\frac{v_{1}}{2}\right)s_{2}\left({-}k+v_{1}-v_{2}\right),    
\end{equation*}
which clearly satisfies \eqref{asyreftr} because $r_{1}$ also satisfies \eqref{asyreftr}, and the decay of the remainder of
\eqref{s1kk} follows from \eqref{pep}.
Therefore, the estimate \eqref{slforr} is true for $\ell=1.$
\par Next, if \eqref{slforr} is true until $\ell=\ell_{0}$ for a $\ell_{0}<m-2,$ then
considering
\begin{multline*}
 \begin{aligned}
 S_{l_{0}+1}(k)=&S_{l_{0}}(k)+r_{2,\ell_{0}+1}\left({-}k-\frac{v_{2}}{2}+\frac{v_{1}}{2}\right)h_{2\ell_{0}+1}\left({-}k+v_{1}-\frac{v_{2}}{2}-\frac{v_{\ell_{0}+2}}{2}\right), 
\end{aligned}
\end{multline*}
it is not difficult to verify from \eqref{2l-1} and estimates \eqref{pep} that
\begin{multline*}
 \begin{aligned}
    S_{l_{0}+1}(k)=& g_{2}\left({-}k-\frac{v_{2}}{2}\right)-r_{2,\ell_{0}+2}\left({-}k-\frac{v_{2}}{2}+\frac{v_{1}}{2}\right)g_{2\ell_{0}+3}\left(k-v_{1}+\frac{v_{2}}{2}\right){+}O\left(\min_{n,m}\frac{\norm{g_{n}}_{L^{2}}}{v_{m}-v_{m+1}}\right) ,
 \end{aligned}   
\end{multline*}
for the function $r_{2,\ell_{0}+2}$ defined by
\begin{equation*}
r_{2,\ell_{0}+2}\left({-}k-\frac{v_{2}}{2}+\frac{v_{1}}{2}\right)=r_{2,\ell_{0}+1}\left({-}k-\frac{v_{2}}{2}+\frac{v_{1}}{2}\right)s_{\ell_{0}+2}\left({-}k+v_{1}-\frac{v_{2}}{2}-\frac{v_{\ell_{0}+2}}{2}\right),
 \end{equation*}
which also implies that 
$r_{2,\ell_{0}+2}$ satisfies \eqref{asyreftr} assuming that $r_{2,\ell_{0}+1}$ satisfies \eqref{asyreftr}. Therefore, \eqref{slforr} is true for any $\ell\leq m-2.$
\par Consequently, using \eqref{slforr} for $\ell=m-2,$ the fact that all the functions $r_{2,\ell}$ satisfy \eqref{asyreftr}, \eqref{pep}, and the definition of $h_{2m-3}$ at \eqref{2m-3}, we conclude that
\begin{equation}\label{sm-1}
  S_{m-1}(k)=g_{2}\left({-}k-\frac{v_{2}}{2}\right)+O\left(\min_{n,m}\frac{\norm{g_{n}}_{L^{2}}}{v_{m}-v_{m+1}}\right).
\end{equation}\\
\textbf{Step 3. (Conclusion of the proof of Lemma \ref{tcopp}.)} Next, we consider the following function
\begin{equation*}
    \mathcal{L}(h_{4},g_{2})(k)=h_{4}(k)+s_{2}\left({-}k-\frac{v_{3}}{2}+\frac{v_{2}}{2}\right)g_{2}\left({-}k-\frac{v_{3}}{2}\right).
\end{equation*}
In particular, the equation \eqref{giant ind} is equivalent to
\begin{equation}\label{l44}
    \mathcal{L}(h_{4},g_{2})(k)=g_{4}\left({-}k-\frac{v_{3}}{2}\right){-}r_{2}\left({-}k+\frac{v_{2}}{2}-\frac{v_{3}}{2}\right)g_{3}\left(k+\frac{v_{3}}{2}-v_{2}\right).
\end{equation}
\par Therefore, using \eqref{l44},  \eqref{firstofall1} and all the linear equations below \eqref{giant ind} of the linear system at the beginning of this section, assuming that Lemma \ref{tcopp} is true for $m-1,$ we can verify by induction for any $g_{2}\in L^{2}_{k}(\mathbb{R})$ that there exist unique functions 
\begin{equation*}
    g_{3}(k),\,g_{4}(k),\,g_{5}(k),\,g_{6}(k),\, ...,\,g_{2m-1}(k)\in L^{2}(\mathbb{R},\mathbb{C})
\end{equation*}
satisfying the linear equations \eqref{l44}, and all equations below \eqref{firstofall1} of the linear system starting at \eqref{giant linear system 11}.
Moreover, assuming that Lemma \ref{tcopp} is true for $m-1,$ we have that
\begin{align}\label{ffinal}
    \norm{g_{3}(k)}_{L^{2}_{k}(\mathbb{R})}+\norm{g_{4}}_{L^{2}_{k}(\mathbb{R})}+\max_{\ell\geq 5}\norm{g_{\ell}(k)}_{L^{2}_{k}(\mathbb{R})}\leq K\left[\norm{h(k)}_{L^{2}_{k}(\mathbb{R},\mathbb{C}^{2m-2})}+\norm{g_{2}(k)}_{L^{2}_{k}(\mathbb{R})}\right],
\end{align}
when $\min_{\ell} v_{\ell}-v_{\ell+1}>0$ is sufficiently large.
\par In conclusion, using the estimates \eqref{sl}, \eqref{sm-1} and \eqref{ffinal}, it is not difficult to verify that if the number $\min_{\ell} v_{\ell}-v_{\ell+1}>0$ is sufficiently large, then there exists a constant $K>1$ satisfying
\begin{align*}
    \max_{\ell\geq 1}\norm{g_{\ell}(k)}_{L^{2}_{k}(\mathbb{R})}\leq  K\norm{f(k)}_{L^{2}_{k}(\mathbb{R})}.
\end{align*}

\section{$\mathcal{F}L^{1}$ Estimates}\label{sec:appb}
First, we consider the following proposition.
\begin{lemma}\label{lB1}
Let $r_{1},\,r_{2}:\mathbb{R}\to\mathbb{C}$ be smooth functions that satisfy the decay estimate
\begin{equation*}
   \max_{l\in\{1,2\}} \left\vert\frac{d^{n}r_{\ell}(k)}{dk^{n}}\right\vert=O\left(\frac{1}{\left(1+\vert k\vert\right)^{n}}\right) \text{ for all $n\in\mathbb{N}.$}
\end{equation*}
There exists $C_{1}>1$ such that if $v_{1}-v_{2}>1$ and $y\in\mathbb{R},$ then
\begin{equation*}
    \norm{F_{0}(x)\left[r_{1}(k)r_{2}(k+v_{1}-v_{2})e^{iky}\psi(k)\right]}_{L^{1}_{x}(\mathbb{R})}\leq \frac{C}{1+v_{1}-v_{2}}\norm{F_{0}(x)\left[\psi(k)\right]}_{L^{1}_{x}(\mathbb{R})}.
\end{equation*}
\end{lemma}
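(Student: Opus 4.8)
The plan is to recast the statement as an $\mathcal{F}L^{1}$--multiplier estimate and extract the gain in $v_{1}-v_{2}$ from an elementary frequency--splitting argument. Write $V:=v_{1}-v_{2}>1$, put $m_{V}(k):=r_{1}(k)\,r_{2}(k+V)$, and introduce the norm $\norm{\psi}_{\mathcal{F}L^{1}}:=\norm{F_{0}(\psi)}_{L^{1}_{x}(\mathbb{R})}$. Since $F_{0}\bigl(e^{i\Diamond y}\psi\bigr)(x)=F_{0}(\psi)(x+y)$ is merely a translate of $F_{0}(\psi)$, the factor $e^{iky}$ is harmless for the $L^{1}_{x}$ norm; and since $F_{0}$ converts products into convolutions (up to the normalizing constant), Young's inequality gives
\begin{equation*}
\norm{F_{0}\bigl[m_{V}(k)\,e^{iky}\psi(k)\bigr]}_{L^{1}_{x}(\mathbb{R})}\leq \tfrac{1}{\sqrt{2\pi}}\,\norm{F_{0}(m_{V})}_{L^{1}_{x}(\mathbb{R})}\,\norm{F_{0}(\psi)}_{L^{1}_{x}(\mathbb{R})}.
\end{equation*}
Hence the whole lemma reduces to the uniform multiplier bound $\norm{F_{0}(m_{V})}_{L^{1}_{x}(\mathbb{R})}\lesssim \tfrac{1}{1+V}$, with an implied constant independent of $V$.

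For that bound I would use the weighted Cauchy--Schwarz trick together with Plancherel: for $f\in L^{2}$,
\begin{equation*}
\norm{f}_{L^{1}_{x}(\mathbb{R})}=\int_{\mathbb{R}}\langle x\rangle^{-1}\,\langle x\rangle\,\vert f(x)\vert\,dx\leq \norm{\langle x\rangle^{-1}}_{L^{2}_{x}(\mathbb{R})}\,\norm{\langle x\rangle f}_{L^{2}_{x}(\mathbb{R})}\lesssim \norm{f}_{L^{2}_{x}(\mathbb{R})}+\norm{x f}_{L^{2}_{x}(\mathbb{R})},
\end{equation*}
so, taking $f=F_{0}(m_{V})$ and using $\norm{F_{0}(m_{V})}_{L^{2}}=\norm{m_{V}}_{L^{2}}$ and $\norm{xF_{0}(m_{V})}_{L^{2}}=\norm{F_{0}(m_{V}')}_{L^{2}}=\norm{m_{V}'}_{L^{2}}$, the claim is reduced to
\begin{equation*}
\norm{m_{V}}_{L^{2}_{k}(\mathbb{R})}+\norm{m_{V}'}_{L^{2}_{k}(\mathbb{R})}\lesssim \frac{1}{1+V}.
\end{equation*}
(These two norms being finite is itself a by-product of the next step, so the manipulations above are legitimate.)

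To close this I would record the pointwise bounds furnished by the decay hypotheses on $r_{1},r_{2}$ — namely $\vert r_{\ell}(k)\vert+\vert r_{\ell}'(k)\vert\lesssim (1+\vert k\vert)^{-1}$ for $\ell=1,2$, consistent with \eqref{asyreftr} and \eqref{POPO} — which give at once $\vert m_{V}(k)\vert\lesssim \langle k\rangle^{-1}\langle k+V\rangle^{-1}$ and $\vert m_{V}'(k)\vert=\vert r_{1}'(k)r_{2}(k+V)+r_{1}(k)r_{2}'(k+V)\vert\lesssim \langle k\rangle^{-1}\langle k+V\rangle^{-1}$. Then I would invoke the elementary inequality
\begin{equation*}
(1+\vert k\vert)(1+\vert k+V\vert)\geq \frac{1+V}{\sqrt{2}}\,\Bigl(\tfrac{1}{(1+\vert k\vert)^{2}}+\tfrac{1}{(1+\vert k+V\vert)^{2}}\Bigr)^{-1/2},
\end{equation*}
which follows from $(1+\vert k\vert)^{2}+(1+\vert k+V\vert)^{2}\geq \tfrac12\bigl(2+\vert k\vert+\vert k+V\vert\bigr)^{2}\geq \tfrac12(1+V)^{2}$, using the triangle inequality $\vert k\vert+\vert k+V\vert\geq V$ — the intuitive content being that either $\vert k\vert\gtrsim V$, so $r_{1}(k)$ is small, or $\vert k+V\vert\gtrsim V$, so $r_{2}(k+V)$ is small. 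Combining, $\vert m_{V}(k)\vert+\vert m_{V}'(k)\vert\lesssim \tfrac{1}{1+V}\bigl(\langle k\rangle^{-2}+\langle k+V\rangle^{-2}\bigr)^{1/2}$, whose $L^{2}_{k}(\mathbb{R})$ norm is $\lesssim \tfrac{1}{1+V}\bigl(2\int_{\mathbb{R}}\langle k\rangle^{-2}\,dk\bigr)^{1/2}\lesssim \tfrac{1}{1+V}$, uniformly in $V$. This is exactly the required estimate, and the lemma follows.

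There is no real obstacle here: the gain $\tfrac{1}{1+v_{1}-v_{2}}$ is produced entirely by the frequency--splitting inequality in the last paragraph, and everything else (Young's inequality, weighted Cauchy--Schwarz, Plancherel, translation invariance) is soft. The only points demanding a little care are the normalization of $F_{0}$ in the convolution identity and making sure the decay actually used, $\langle k\rangle\bigl(\vert r_{\ell}(k)\vert+\vert r_{\ell}'(k)\vert\bigr)\lesssim 1$, is what the hypotheses on $r_{1},r_{2}$ provide.
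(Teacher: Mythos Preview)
Your proof is correct and follows essentially the same route as the paper's: reduce via Young's inequality to the multiplier bound $\norm{F_{0}(m_{V})}_{L^{1}_{x}}\lesssim (1+V)^{-1}$, control this by $\norm{m_{V}}_{H^{1}_{k}}$ through the weighted Cauchy--Schwarz/Plancherel step, and then extract the $(1+V)^{-1}$ gain from an elementary estimate on $\int_{\mathbb{R}}\langle k\rangle^{-2}\langle k+V\rangle^{-2}\,dk$. The only cosmetic difference is in how that last integral is handled---you use the pointwise inequality $\langle k\rangle^{2}+\langle k+V\rangle^{2}\geq \tfrac12(1+V)^{2}$, while the paper bounds the integral directly---but the content is the same.
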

\begin{proof}[Proof of Lemma \ref{lB1}.]
From Young Inequality, the assumption on the decay of $r_{1},$ and $r_{2}$ and the inequality $\norm{g}_{L^{1}_{k}(\mathbb{R})}\leq C \norm{(1+\vert k\vert)g(k)}_{L^{2}_{k}(\mathbb{R})}$ for a $C>1,$ we have for any $\ell\in\{0,1\}$ that
\begin{align*}
    \norm{x^{\ell}F_{0}(x)\left[r_{1}(k)r_{2}(k+v_{1}-v_{2})e^{iky}\psi(k)\right]}_{L^{1}_{x}(\mathbb{R})}\leq &\norm{F_{0}(x)\left[r_{1}(k)r_{2}(k+v_{1}-v_{2})\right]}\norm{F_{0}(x)\left[\psi(k)\right]}_{L^{1}_{x}(\mathbb{R})}\\
    \leq & C \norm{r_{1}(k)r_{2}(k+v_{1}-v_{2})}_{H^{1}_{k}(\mathbb{R})}\norm{F_{0}(x)\left[\psi(k)\right]}_{L^{1}_{x}(\mathbb{R})} \\
    \leq & \frac{C_{1}}{1+(v_{1}-v_{2})}\norm{F_{0}(x)\left[\psi(k)\right]}_{L^{1}_{x}(\mathbb{R})}, 
\end{align*}
and the last inequality above comes from the elementary estimate
\begin{equation*}
    \int_{\mathbb{R}}\frac{1}{\left(1+\vert k\vert\right)^{2}\left(1+\vert k-(v_{1}-v_{2})\vert^{2}\right)}\,dk\leq \frac{2}{1+\frac{( v_{1}-v_{2})^{2}}{4}}\int_{\mathbb{R}}\frac{1}{1+ k^{2}}\,dk.
\end{equation*}
\end{proof}
\begin{lemma}\label{LB2}
 If $r_{1}:\mathbb{R}\to\mathbb{C}$ is a smooth function that satisfying the decay estimate
\begin{equation*}
    \left\vert\frac{d^{n}r_{1}(k)}{dk^{n}}\right\vert=O\left(\frac{1}{\left(1+\vert k\vert\right)^{n+1}}\right) \text{ for all $n\in\mathbb{N},$}
\end{equation*}   
 then there exists $C>1$ such that for all $y\in\mathbb{R}$
 \begin{align*}
   \norm{F_{0}\left[r_{1}(k)e^{iky}G^{*}_{\omega_{\ell}}(f)(k)\right](x)}_{L^{1}_{x}(\mathbb{R})}\leq &C\norm{f(x)}_{L^{1}_{x}(\mathbb{R})}.
 \end{align*}
\end{lemma}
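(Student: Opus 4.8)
The plan is to turn the claim into a uniform kernel estimate and close it with a Schur-type bound. Let $M(x',k)$ denote the $2\times2$ matrix kernel of $G^{*}_{\omega_{\ell}}$, so that $G^{*}_{\omega_{\ell}}(f)(k)=\tfrac1{\sqrt{2\pi}}\int_{\mathbb{R}}M(x',k)f(x')\,dx'$, its entries being built from $\mathcal{G}_{\omega_{\ell}}(x',k)$ and $\sigma_{1}\mathcal{G}_{\omega_{\ell}}(x',k)$. Interchanging the order of integration gives
\[
F_{0}\bigl[r_{1}(k)e^{iky}G^{*}_{\omega_{\ell}}(f)(k)\bigr](x)=\int_{\mathbb{R}}\mathcal{K}(x-y,x')\,f(x')\,dx',\qquad \mathcal{K}(z,x'):=\tfrac1{\sqrt{2\pi}}F_{0}\bigl[r_{1}(\cdot)M(x',\cdot)\bigr](z).
\]
By Tonelli and the translation invariance of the $z$-integral, $\norm{F_{0}[\cdots]}_{L^{1}_{x}}\le\bigl(\sup_{x'}\int_{\mathbb{R}}\norm{\mathcal{K}(z,x')}\,dz\bigr)\norm{f}_{L^{1}_{x}}$, so the whole statement reduces to the uniform bound $\sup_{x'}\int_{\mathbb{R}}\norm{\mathcal{K}(z,x')}\,dz\le C$.

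First I would record two elementary facts. The decay hypothesis gives $r_{1}\in H^{1}_{k}(\mathbb{R})\cap W^{1,\infty}_{k}(\mathbb{R})$, and by Plancherel $\norm{F_{0}[g]}_{L^{1}_{x}}\le\norm{\langle z\rangle^{-1}}_{L^{2}_{z}}\norm{\langle z\rangle F_{0}[g]}_{L^{2}_{z}}\lesssim\norm{g}_{H^{1}_{k}}$, so $F_{0}[r_{1}]\in L^{1}_{x}$; moreover the bounds \eqref{asyreftr} on $s,r$ imply $r_{1}s$ and $r_{1}r$ also lie in $H^{1}_{k}(\mathbb{R})$, whence $F_{0}[r_{1}s],\,F_{0}[r_{1}r]\in L^{1}_{x}$. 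Next I would split the $x'$-integral into $\{|x'|\le1\}$ and $\{|x'|\ge1\}$. On $\{|x'|\le1\}$ the Jost solutions $\mathcal{G}_{\omega_{\ell}}(x',\pm k)$ and their first $k$-derivative are bounded uniformly in $k$, so $M(x',\cdot)$ is bounded in $W^{1,\infty}_{k}$ uniformly in $|x'|\le1$, and therefore $\int_{\mathbb{R}}\norm{\mathcal{K}(z,x')}\,dz\lesssim\norm{r_{1}M(x',\cdot)}_{H^{1}_{k}}\lesssim\norm{r_{1}}_{H^{1}_{k}}\norm{M(x',\cdot)}_{W^{1,\infty}_{k}}\lesssim1$.

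On $\{x'\ge1\}$ I would use the asymptotics \eqref{asy2} together with their $k$-differentiated forms (as in equations~\eqref{psed1}--\eqref{psed2}), which hold as genuine bounds for $x'>1$, to write $M(x',k)=\bigl(e^{-ikx'}+r(k)e^{ikx'}\bigr)\mathrm{Id}+R_{+}(x',k)$ with $\norm{\partial_{k}^{n}R_{+}(x',k)}\lesssim e^{-\gamma'|x'|}(1+|k|)^{-1}$ for $n\in\{0,1\}$ and some fixed $0<\gamma'<\gamma$. The two main terms contribute $\tfrac1{\sqrt{2\pi}}F_{0}[r_{1}](z+x')\,\mathrm{Id}$ and $\tfrac1{\sqrt{2\pi}}F_{0}[r_{1}r](z-x')\,\mathrm{Id}$, whose $z$-integrals equal the ($x'$-independent, finite) numbers $\norm{F_{0}[r_{1}]}_{L^{1}_{x}}$ and $\norm{F_{0}[r_{1}r]}_{L^{1}_{x}}$; the remainder contributes $\tfrac1{\sqrt{2\pi}}F_{0}[r_{1}R_{+}(x',\cdot)]$, whose $z$-integral is $\lesssim\norm{r_{1}R_{+}(x',\cdot)}_{H^{1}_{k}}\lesssim e^{-\gamma'|x'|}\le1$. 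The region $\{x'\le-1\}$ is treated identically, using \eqref{asy1} and \eqref{psed1} to write $M(x',k)=s(k)e^{-ikx'}\mathrm{Id}+R_{-}(x',k)$ and the fact $r_{1}s\in H^{1}_{k}(\mathbb{R})$. Adding the three contributions yields $\sup_{x'}\int_{\mathbb{R}}\norm{\mathcal{K}(z,x')}\,dz\le C$, finishing the proof.

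The only delicate point — and it is essentially bookkeeping — is the remainder analysis on $\{|x'|\ge1\}$: the factor $e^{\pm ikx'}$ built into $R_{\pm}$ produces an extra $x'$ when hit by $\partial_{k}$, but this is absorbed by trading $e^{-\gamma|x'|}$ for $e^{-\gamma'|x'|}$ with $\gamma'<\gamma$, and the differentiated asymptotics even supply an extra factor $(1+|k|)^{-1}$. For the main terms, by contrast, one must peel off the oscillation $e^{\pm ikx'}$ \emph{before} integrating in $z$ — a crude $H^{1}_{k}$ bound on $r_{1}(\cdot)a(\cdot)e^{\pm i(\cdot)x'}$ would blow up as $|x'|\to\infty$ — which is precisely why the shifted identity $\mathcal{K}_{\mathrm{main}}(\cdot,x')=\tfrac1{\sqrt{2\pi}}F_{0}[r_{1}a](\cdot\mp x')$ with $a\in\{1,r,s\}$ is used, and the rest is routine Plancherel combined with $\langle z\rangle^{-1}\in L^{2}(\mathbb{R})$.
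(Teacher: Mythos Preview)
Your proof is correct. The paper takes a much shorter route: it factorizes via the convolution theorem and Young's inequality,
\[
\norm{F_0\bigl[r_1(k)e^{iky}\,G^*_{\omega_\ell}(f)(k)\bigr]}_{L^1_x}\le \norm{r_1 e^{iky}}_{\mathcal{F}L^1}\,\norm{G^*_{\omega_\ell}(f)}_{\mathcal{F}L^1},
\]
then bounds the first factor by $C\norm{r_1}_{H^1_k}$ (independent of $y$, since $F_0[r_1 e^{iky}]$ is a translate of $F_0[r_1]$) and the second by $c\norm{f}_{L^1_x}$ via Lemma~\ref{fordecay}, which records the $L^1_x\to\mathcal{F}L^1$ boundedness of $G^*_{\omega_\ell}$. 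Your Schur-type kernel estimate is essentially re-deriving the content of Lemma~\ref{fordecay} inline---the same Jost-function asymptotics \eqref{asy1}--\eqref{asy2} and exponentially decaying remainders (cf.\ \eqref{Flocalident}, \eqref{pseddecay}) drive that lemma's proof---while absorbing the multiplier $r_1$ in the same stroke. The paper's factored version is a two-line application once Lemma~\ref{fordecay} is in hand (and that lemma is reused elsewhere, so the modularity pays off); your direct argument is longer but self-contained, and has the incidental benefit of not explicitly invoking the non-resonance hypothesis (H4) that Lemma~\ref{fordecay} carries in its statement.
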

\begin{proof}[Proof of Lemma \ref{LB2}.]
First, from Lemma \ref{fordecay}, we observe that the operator $G^{*}_{\omega_{\ell}} $ satisfy for a constant $c>0$ the following estimate
\begin{equation*}
    \norm{G^{*}_{\omega_{\ell}}(f(x))}_{FL^{1}_{k}(\mathbb{R})}\leq c \norm{f(x)}_{L^{1}_{x}(\mathbb{R})}. 
\end{equation*}
Therefore, using Lemma \ref{fordecay}, we can verify that there exists $C>1$ satisfying
\begin{align*}
   \norm{F_{0}(x)\left[r_{1}(k)e^{iky}G^{*}_{\omega_{\ell}}(f)(k)\right]}_{L^{1}_{x}(\mathbb{R})}\leq & \norm{e^{iy_{1}k}r_{1}(k)}_{FL^{1}}\norm{G^{*}_{\omega_{\ell}}(f)(k)}_{FL^{1}}\\
   \leq & C\norm{r(k)}_{H^{1}_{k}(\mathbb{R})}\norm{f(x)}_{L^{1}_{x}(\mathbb{R})}.
\end{align*}
\end{proof}
\begin{lemma}\label{LB3}
Let $r:\mathbb{R}^{2}\to\mathbb{C}$ be a smooth function satisfying for a $\gamma(k)>c_{0}>0$ and $K>1$ the following estimates
\begin{align*}
    \left\vert \partial^{\ell}_{k}r(x,k) \right\vert\leq K &\frac{(1+\vert x\vert)^{\ell}e^{{-}\gamma\vert x\vert}}{1+\vert k\vert} \text{, for any $\ell\in\{0,1\}.$}
\end{align*}
There exists $C>1$ satisfying for any $f\in L^{2}_{x}(\mathbb{R})$ and any $N>1$
\begin{align*}
  \norm{(1+\vert x\vert)F_{0}(x)\left[\int_{{-}\infty}^{{-}N}r(k,x)f(x)\,dx\right]}_{L^{1}_{x}(\mathbb{R})}\leq & C N^{\frac{1}{2}}e^{{-}c_{0}N}\min\left(\norm{f}_{L^{2}_{x}(\mathbb{R})},\norm{f}_{L^{1}_{x}(\mathbb{R})}\right),\\
   \norm{(1+\vert x\vert)F_{0}(x)\left[\int_{N}^{{+}\infty}r(k,x)f(x)\,dx\right]}_{L^{1}_{x}(\mathbb{R})}\leq & C N^{\frac{1}{2}}e^{{-}c_{0} N}\min\left(\norm{f}_{L^{2}_{x}(\mathbb{R})},\norm{f}_{L^{1}_{x}(\mathbb{R})}\right).
\end{align*}

\end{lemma}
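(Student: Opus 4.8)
The plan is to reduce everything to a pointwise-in-$x$ estimate for the kernel of the map $f\mapsto F_{0}\bigl[\int r(k,\cdot)f\,dx\bigr]$, and then to integrate that kernel bound against $f$. I treat only the integral over $\{x>N\}$, since the one over $\{x<-N\}$ follows by reflecting $x\mapsto -x$. Set $g(k):=\int_{N}^{+\infty} r(k,x)f(x)\,dx$; to avoid a clash of notation I write $y$ for the physical variable produced by $F_{0}$, so the quantity to bound is $\norm{(1+|y|)F_{0}(g)}_{L^{1}_{y}(\mathbb{R})}$. Since the bounds on $r$ and $\partial_{k}r$ are integrable against $f\,dx$, Fubini gives
\[
 F_{0}(g)(y)=\int_{N}^{+\infty} f(x)\,\rho(x,y)\,dx,\qquad \rho(x,y):=F_{0}\bigl(r(\cdot,x)\bigr)(y),
\]
and the goal becomes the kernel bound
\begin{equation}\label{eq:LB3ker}
 \norm{(1+|y|)\,\rho(x,\cdot)}_{L^{1}_{y}(\mathbb{R})}\ \lesssim\ (1+|x|)\,e^{-\gamma|x|}\qquad(x\in\mathbb{R}).
\end{equation}

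For \eqref{eq:LB3ker} the key is that the spatial weight turns into a frequency derivative: integrating by parts in $k$ (the boundary terms vanish since $r(k,x)\to0$ as $|k|\to\infty$), one gets $y\,\rho(x,y)=-i\,F_{0}\bigl(\partial_{k}r(\cdot,x)\bigr)(y)$, hence $|(1+|y|)\rho(x,y)|\le |F_{0}(r(\cdot,x))(y)|+|F_{0}(\partial_{k}r(\cdot,x))(y)|$. Since $\tfrac1{1+|k|}\in L^{2}_{k}$, the hypotheses give $\norm{r(\cdot,x)}_{L^{2}_{k}}\lesssim e^{-\gamma|x|}$ and $\norm{\partial_{k}r(\cdot,x)}_{L^{2}_{k}}\lesssim (1+|x|)e^{-\gamma|x|}$, so Plancherel yields $\norm{\langle y\rangle\,\rho(x,\cdot)}_{L^{2}_{y}}\lesssim (1+|x|)e^{-\gamma|x|}$. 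To upgrade this $L^{2}_{y}$ control to the weighted $L^{1}_{y}$ norm in \eqref{eq:LB3ker} I would use the smoothness of $r$ in $k$: as for the remainders in \eqref{psed1}--\eqref{psed2} to which this lemma is applied, each further $k$-derivative of $r$ costs only an extra factor $(1+|k|)^{-1}$, so one more integration by parts together with Plancherel also controls $\langle y\rangle^{2}F_{0}(r(\cdot,x))$ and $\langle y\rangle^{2}F_{0}(\partial_{k}r(\cdot,x))$ in $L^{2}_{y}$ by the same weights; since $\langle y\rangle^{-1}\in L^{2}_{y}$, Cauchy--Schwarz, $\int \langle y\rangle(1+|y|)|\rho(x,y)|\,dy\le \norm{\langle y\rangle^{-1}}_{L^{2}_{y}}\norm{\langle y\rangle^{2}\rho(x,\cdot)}_{L^{2}_{y}}$, then gives \eqref{eq:LB3ker}.

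With \eqref{eq:LB3ker} in hand, Minkowski's integral inequality gives
\[
 \norm{(1+|y|)F_{0}(g)}_{L^{1}_{y}}\ \le\ \int_{N}^{+\infty}|f(x)|\,\norm{(1+|y|)\rho(x,\cdot)}_{L^{1}_{y}}\,dx\ \lesssim\ \int_{N}^{+\infty}|f(x)|\,(1+|x|)\,e^{-\gamma|x|}\,dx .
\]
Bounding the last integral by Cauchy--Schwarz yields $\lesssim\|f\|_{L^{2}}\bigl(\int_{N}^{\infty}(1+x)^{2}e^{-2\gamma x}\,dx\bigr)^{1/2}$, while pulling out $\sup_{x\ge N}(1+x)e^{-\gamma x}$ yields $\lesssim\|f\|_{L^{1}}\sup_{x\ge N}(1+x)e^{-\gamma x}$; in both cases the outcome is $\lesssim (1+N)\,e^{-\gamma N}\,\min(\|f\|_{L^{2}},\|f\|_{L^{1}})$. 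Finally, since $\gamma>c_{0}$ and $N>1$, $(1+N)e^{-\gamma N}=(1+N)e^{-(\gamma-c_{0})N}e^{-c_{0}N}\lesssim N^{1/2}e^{-c_{0}N}$, which is the claimed bound.

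The main obstacle is exactly the passage in \eqref{eq:LB3ker} from the Plancherel ($L^{2}_{y}$) control of the kernel to its weighted $L^{1}_{y}$ control: the slow decay $\tfrac1{1+|k|}$ is not integrable in $k$, so one cannot simply integrate by parts once in $k$ and take absolute values, and the extra $y$-decay must be extracted from the $C^{1}$ (in fact $C^{\infty}$, in the applications) regularity of $r$ in $k$. Everything else---Fubini, the integration-by-parts identity transferring the weight to a $k$-derivative, and the harvesting of the factor $e^{-\gamma N}\lesssim N^{1/2}e^{-c_{0}N}$ from the localization $\{|x|\ge N\}$---is routine.
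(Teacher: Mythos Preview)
Your argument is essentially the same as the paper's. The paper applies Cauchy--Schwarz with the weight $\langle y\rangle^{-1}\in L^{2}$ directly to the function $g(k)=\int r(k,x)f(x)\,dx$, reducing to $\norm{g}_{H^{2}_{k}}$, then pushes the $H^{2}_{k}$ norm inside the $x$-integral by Minkowski, and finally bounds $\int_{|x|\geq N}(1+|x|)^{2}e^{-\gamma|x|}|f(x)|\,dx$ by $CNe^{-\gamma N}\min(\norm{f}_{L^{2}},\norm{f}_{L^{1}})$. Your route via the pointwise kernel bound \eqref{eq:LB3ker} is just the same computation done in a different order (Minkowski first, then Cauchy--Schwarz in $y$), with the same ingredients: Plancherel, trading $y$-weights for $k$-derivatives, and Cauchy--Schwarz against $\langle y\rangle^{-1}$.

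One remark: you correctly identify that the passage from $L^{2}_{y}$ to weighted $L^{1}_{y}$ needs a second $k$-derivative of $r$, i.e.\ $\norm{r(\cdot,x)}_{H^{2}_{k}}$, which is not covered by the stated hypothesis $\ell\in\{0,1\}$. The paper's proof does exactly the same thing without comment---it writes $\norm{r(k,x)}_{H^{2}_{k}}\leq C_{0}(1+|x|)^{2}e^{-\gamma|x|}$ and moves on. So the gap you flag is present in the paper too; your observation that the relevant $r$'s in the applications (the remainders in \eqref{psed1}--\eqref{psed2}) satisfy the stronger bound for all $\ell$ is the right resolution.
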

\begin{proof}
It is enough to prove the first inequality, the proof of the other is analogous. This follow from the existence of $C_{0},\,C>0$ satisfying the following estimates
\begin{align*}
    \norm{(1+\vert x\vert)F_{0}(x)\left[\int_{{-}\infty}^{{-}N}r(k,x)f(x)\,dx\right]}_{L^{1}_{x}(\mathbb{R})}\leq & \norm{\frac{1}{(1+\vert x\vert)}}_{L^{2}_{x}(\mathbb{R})}\norm{\int_{{-}\infty}^{{-}N}r(k,x)f(x)\,dx}_{H^{2}_{k}(\mathbb{R})}\\
    \leq &  \norm{\frac{1}{(1+\vert x\vert)}}_{L^{2}_{x}(\mathbb{R})}\int_{{-}\infty}^{{-}N}\norm{r(k,x)}_{H^{2}_{k}(\mathbb{R})}\vert f(x)\vert\,dx\\
    \leq & C_{0}\int_{{-}\infty}^{{-}N}(1+\vert x\vert)^{2}e^{{-}\gamma\vert x\vert}\vert f(x)\vert\,dx\\
    \leq & CNe^{{-}\gamma N}\min\left(\norm{f}_{L^{2}_{x}(\mathbb{R})},\norm{f}_{L^{1}_{x}(\mathbb{R})}\right)
\end{align*}as desired.
\end{proof}    
\begin{lemma}\label{fordecay}
Assume that the operator $\mathcal{H}_{\omega}$ satisfies all the assumptions of  \S \ref{subsub:assumption} and that $\omega_,\,{-}\omega$ are not resonance points of $\mathcal{H}_{\omega}.$
Then, there exists a constant $C>1$ satisfying the following estimates for any $f\in L^{1}_{x}(\mathbb{R},\mathbb{C}^{2}).$
\begin{equation}\label{estpp1}
\norm{F^{*}_\omega(k)(f)}_{\mathcal{F}L_{1}}\leq C\norm{f}_{L^{1}_{x}(\mathbb{R})}.
\end{equation}
Furthermore, if in addition, $xf(x)\in L^{1}_{x}(\mathbb{R},\mathbb{C}^{2}),$ then
\begin{equation}\label{estpp2}
    \norm{\partial_{k}F^{*}_\omega(k)(f)}_{\mathcal{F}L_{1}}\leq C\left[\norm{(1+\vert x\vert)f(x)}_{L^{1}_{x}(\mathbb{R})}\right].
\end{equation}
\end{lemma}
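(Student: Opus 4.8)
The plan is to reduce the $\mathcal{F}L^1$ bounds for $F^*_\omega$ to the corresponding bounds for the flat Fourier transform $F_0^*$ plus a perturbative (localized, absolutely integrable) piece coming from the difference between the distorted Fourier basis $\mathcal{F}_\omega(x,k)$ and the pure exponentials. First I would recall from Remark \ref{re-infty} and the asymptotics \eqref{asy1}--\eqref{asy4} that, since $\pm\omega$ are neither eigenvalues nor resonances, the functions $e^{-ikx}\mathcal{F}_\omega(x,k)/s(k)$ and $e^{ikx}\mathcal{F}_\omega(x,k)/s(k)$ are bounded, together with all their derivatives in $k$ up to the needed order, uniformly on $x\geq 0$ resp.\ $x\leq 0$, with error terms decaying like $e^{-\gamma|x|}(1+|k|)^{-1-n}$ as in \eqref{psed1}--\eqref{psed2}. (This is exactly where (H4) is used: without it, $s(k)^{-1}$ would blow up at $k=0$ and the pseudodifferential-symbol estimates below would fail.) So I would write
\begin{equation*}
\mathcal{F}_\omega(x,k)=s(k)\bigl(e^{ikx}\underline e + r(k)e^{-ikx}\underline e\bigr)\,\mathbbm{1}_{x\le 0}+s(k)e^{ikx}\underline e\,\mathbbm{1}_{x\ge 0}+a(k,x),
\end{equation*}
where $a(k,x)$ and $\partial_k a(k,x)$ are $O\bigl((1+|x|)e^{-\gamma|x|}(1+|k|)^{-1}\bigr)$.

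For the main (flat) terms, the operator $f\mapsto F_0^*(\chi_{\{x\ge0\}}f)$ has kernel $e^{ikx}\mathbbm{1}_{x\ge0}$, and its composition with the reflection multiplier $r(k)$ is handled exactly as in the computations displayed in the sketch (Section \ref{sub:sketch}) and in the proof of Lemma \ref{T}: after splitting $\mathbb{R}$ at $x=0$, each half-line piece contributes a Hardy-space projection $P_\pm$ of $F_0(f)$ composed with a multiplier ($1$, $r(k)$, or $s(k)$) that by \eqref{asyreftr} lies in $W^{\infty,\infty}$ with $r,s-1=O(|k|^{-1})$. Since $P_\pm$ and bounded multipliers act boundedly on $\mathcal{F}L^1$ — here I would invoke Lemma \ref{lB1} (and its variant Lemma \ref{LB2}) for the multiplier boundedness, and Remark \ref{rrr} together with the elementary fact that $P_\pm$ is $\mathcal{F}L^1$-bounded for the projections — and since $\|F_0(f)\|_{\mathcal{F}L^1}=\|f\|_{L^1}$ by definition, we get $\|F^*_\omega(f)\|_{\mathcal{F}L^1}\lesssim\|f\|_{L^1}$ from these terms. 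The contribution of $a(k,x)$ is $\frac{1}{\sqrt{2\pi}}\int a(k,x)f(x)\,dx$; since $\|a(\cdot,x)\|_{H^1_k}\lesssim(1+|x|)e^{-\gamma|x|}$, estimating $\|\cdot\|_{\mathcal{F}L^1}\lesssim\|\cdot\|_{H^1_k}$ and pulling the $L^1_x$-norm out by Minkowski gives $\lesssim\int(1+|x|)e^{-\gamma|x|}|f(x)|\,dx\lesssim\|f\|_{L^1}$, which proves \eqref{estpp1}.

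For \eqref{estpp2} I would differentiate $F^*_\omega(f)(k)=\frac{1}{\sqrt{2\pi}}\int \mathcal{F}_\omega^t(x,-k)f(x)\,dx$ in $k$; the $k$-derivative either hits the multipliers $s,r$ (whose derivatives decay one power faster by \eqref{asyreftr}, so those terms are even better), or it brings down a factor $\mp ix$ from the exponentials $e^{\pm ikx}$, i.e.\ it replaces $f(x)$ by $xf(x)$; the flat-term analysis above then applies verbatim with $f$ replaced by $xf$, and the $a(k,x)$-piece is handled by the same $H^1_k$-in-$k$/$L^1_x$-in-$x$ argument using $\|\partial_k a(\cdot,x)\|_{H^1_k}\lesssim(1+|x|)e^{-\gamma|x|}$. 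The main obstacle is purely bookkeeping: making precise, uniformly in $x$ on each half-line, the claim that $e^{\mp ikx}\mathcal{F}_\omega(x,k)/s(k)$ is a classical symbol of order $0$ in $k$ with the stated decaying remainder — this is essentially Lemma \ref{appFourier} and the pseudodifferential argument in its proof, and I would simply cite that machinery rather than redo it, being careful that (H4) is invoked so that $1/s(k)$ contributes no singularity at $k=0$.
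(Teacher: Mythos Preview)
Your strategy matches the paper's: decompose $\mathcal{F}_\omega(x,k)$ into plane-wave parts plus an exponentially localized remainder, treat the plane-wave pieces as $\mathcal{F}L^1$-bounded Fourier multipliers acting on half-line transforms of $f$, and control the remainder via $\|\cdot\|_{\mathcal{F}L^1}\lesssim\|\cdot\|_{H^1_k}$ and Minkowski in $x$. Two fixable issues, though. First, your decomposition carries a spurious factor $s(k)$ on the $x\le 0$ piece: by \eqref{asy4} the asymptotic there is $e^{ikx}+r(k)e^{-ikx}$, so as written your remainder $a(k,x)$ contains the non-localized term $(1-s(k))\bigl(e^{ikx}+r(k)e^{-ikx}\bigr)\mathbbm{1}_{x\le 0}$ and fails the claimed $e^{-\gamma|x|}$ decay; the paper's identity \eqref{Flocalident} has $\chi_-(x)[e^{ikx}+r(k)e^{-ikx}]$ with no $s(k)$. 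Second, your citation of Lemma~\ref{LB2} is circular (that lemma invokes Lemma~\ref{fordecay} in its first line), and Lemma~\ref{lB1} is about products $r_1(k)r_2(k+v)$ rather than a single multiplier. The paper handles the multiplier terms directly: since $r,\,1-s\in H^1_k$ one has $\hat r,\,\widehat{1-s}\in L^1$, and then Young's inequality gives, e.g., $\bigl\|r(k)\int e^{-ikx}\chi_-(x)f(x)\,dx\bigr\|_{\mathcal{F}L^1}=\|\hat r*(\chi_-f)\|_{L^1}\le\|\hat r\|_{L^1}\|f\|_{L^1}$. Your detour through the Hardy projections $P_\pm$ is unnecessary here, since the $\mathcal{F}L^1$-norm of a half-line Fourier transform of $f$ is simply the $L^1$-norm of the restriction of $f$.
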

\begin{proof}
First, using Lemma $5.2$ of \cite{collotger}, we can find smooth cut-off functions $\chi_{+},\,\chi_{-}$ satisfying
\begin{itemize}
    \item $\chi_{+}(x)\in [0,1],$ and $\chi_{-}(x)=\chi_{+}({-}x)=1-\chi_{+}(x)$ for any $x\in\mathbb{R},$
    \item $\chi_{+}(x)=0$ on $[{-}\infty,{-}\frac{1}{2}]$ and $\chi_{+}(x)=1$ on $[\frac{1}{2},{+}\infty],$ 
\end{itemize}
and the following identity holds 
\begin{equation}\label{Flocalident}
    \mathcal{F}_\omega(x,k)=\chi_{+}(x)s(k)e^{ikx}+\chi_{-}(x)\left[e^{ikx}+r(k)e^{{-}ikx}\right]+R^{+}(x,k)e^{ikx}+R^{-}(x,k)e^{{-}ikx},
\end{equation}
such that the functions $R^{+}$ and $R^{-}$ are smooth satisfying the following decay estimates for a constant $\alpha>0$ and any $\ell,\,n\in\mathbb{N}$
\begin{equation}\label{pseddecay}
    \max_{\pm}\left\vert \frac{\partial^{\ell+n}}{\partial k^{\ell}\partial x^{n}}R^{\pm}(x,k) \right\vert\leq C_{\ell,n}\frac{e^{{-}\alpha \vert x\vert }}{(1+\vert k\vert)^{\ell+1}},
\end{equation}
where the parameters $C_{\ell,n}>1$ depend only on $\ell$ and $n.$
\par Next, since $1-s(k)$ and $r(k)$ belong to $H^{1}_{k}(\mathbb{R}),$ we can verify using the Fourier inversion formula that there exists a constant $C>1$ satisfying
\begin{align}\label{decap1}
    \norm{r(k)\int_{\mathbb{R}}e^{{-}ikx}f(x)\chi_{-}(x)\,dx}_{\mathcal{F}L_{1}}\leq \norm{\left[\hat{r}({-}\Diamond)*(\chi_{-}(\Diamond)f(\Diamond))\right](x)}_{L^{1}_{x}(\mathbb{R})}\leq & \norm{\hat{r}(x)}_{L^{1}_{x}(\mathbb{R})}\norm{\chi_{-}(x)f(x)}_{L^{1}_{x}(\mathbb{R})}\\ \nonumber
    \leq & C\norm{r(k)}_{H^{1}_{k}(\mathbb{R})}\norm{f(x)}_{L^{1}_{x}(\mathbb{R})}.
\end{align}
Similarly, we can verify the existence of a constant $C>1$ satisfying
\begin{equation}\label{decap2}
    \norm{(1-s(k))\int_{\mathbb{R}}e^{ikx}\chi_{+}(x)f(x)\,dx}_{\mathcal{F}L_{1}}\leq C\norm{1-s(k)}_{H^{1}_{k}(\mathbb{R})}\norm{f(x)}_{L^{1}_{x}(\mathbb{R})}.
\end{equation}
\par Furthermore, using Theorem $5.1$ from  \cite{psed} about the $L^2$ norm of pseudo-differential operators and estimates \eqref{pseddecay}, we can verify that
\begin{align}\label{decap3}
\max_{\pm}\norm{\int_{\mathbb{R}}R^{\pm}(x,k)e^{ikx}f(x)\,dx}_{\mathcal{F}L_{1}}\leq & C  \max_{\pm} \norm{\int_{\mathbb{R}}R^{\pm}(x,k)e^{ikx}f(x)\,dx}_{H^{1}_{k}}.
\end{align}
In particular, using the decay estimates of \eqref{Flocalident}, it is not difficult to verify that
\begin{equation*}
    \max_{\ell\in\{0,1\},\pm}\left \vert \frac{d^{\ell}}{dk^{\ell}}\int_{\mathbb{R}}R^{\pm}(x,k )e^{ikx}f(x) \right\vert\leq C\frac{\int_{\mathbb{R}}(1+\vert x\vert)^{\ell}e^{{-}\alpha\vert x\vert}\vert f(x)\vert \,dx}{1+\vert k\vert  }.
\end{equation*}
Therefore, since $\frac{1}{1+\vert k\vert}$ is in $L^{2}_{k}(\mathbb{R}),$ we obtain from \eqref{decap3} that
\begin{equation*}
\max_{\pm}\norm{\int_{\mathbb{R}}R^{\pm}(x,k)e^{ikx}f(x)\,dx}_{\mathcal{F}L_{1}}\leq K \norm{f}_{L^{1}_{x}(\mathbb{R})}.
\end{equation*}
\par Consequently, using the Fourier inversion formula, identity \eqref{Flocalident}, and inequalities \eqref{decap1}, \eqref{decap2}, \eqref{decap3},  we deduce from the triangle inequality that
\begin{equation*}
    \norm{\int_{\mathbb{R}}\mathcal{F}_\omega(x,k)f(x)\,dx}_{\mathcal{F}L_{1}}\leq C\norm{f(x)}_{L^{1}_{x}(\mathbb{R})},
\end{equation*}
from which we can deduce \eqref{estpp1}.
\par The proof of \eqref{estpp2} is similar. In particular, we differentiate the right-hand side of equation  \eqref{Flocalident} with respect to $k$, and 
estimate
\begin{equation*}
    \norm{\int_{\mathbb{R}}\partial_{k}\mathcal{F}_\omega(x,k)f(x)\,dx}_{\mathcal{F}L^{1}},
\end{equation*}
using the Fourier inversion formula, the decays of the functions $1-s(k),\,r(k),$ and Theorem $5.1$ from  \cite{psed} or \cite{CP1} for symbols in terms of Jost functions.
\end{proof}

\section{Proof of Theorem \ref{interpolation est.}}\label{A}
\par First, for $\epsilon\in (0,1)$ small enough, let $\chi :\mathbb{R}\to\mathbb{R}$ be a smooth cut-off function satisfying $ \chi(x)\in [0,1],$ and
\begin{equation*}
    \chi(x)=
    \begin{cases}
      1 \text{, if $x\leq {-}2\epsilon,$}\\
      0 \text{, if $x\geq {-}\epsilon.$}
    \end{cases}
\end{equation*}
Next, given a function $f\in L^{2}(\mathbb{R},\mathbb{C}^{2}),$ we we consider the following system of equations below having  unknowns: the functions $f_{\ell,\pm}\in L^{2}(\mathbb{R},\mathbb{C}^{2})$ for $\ell\in\{1,2\,...,\,m\},$ an element $\overrightarrow{\phi}$ belonging to the domain of $\mathcal{S}(0),$ and a finite set of functions $\overrightarrow{v_{d_{\ell}}}\in\Raa P_{d,\omega_{\ell}}.$ 
 \begin{align}\label{lG}
& e^{i\frac{\sigma_{3}v_{\ell}x}{2}}\hat{G}_{\omega_{\ell}}\left(e^{iy_{\ell}k}\begin{bmatrix}
    \phi_{1,\ell}\left(k+\frac{v_{\ell}}{2}\right)\\
     \phi_{2,\ell}\left(k-\frac{v_{\ell}}{2}\right)
 \end{bmatrix}\right)(x-y_{\ell})  \\
 = & \left[1-\chi\left(x-\frac{y_{\ell}+y_{\ell+1}}{2}\right)-\chi\left({-}x+\frac{y_{\ell}+y_{\ell-1}}{2}\right)\right]f(x)+ e^{i\frac{\sigma_{3}v_{\ell}x}{2}}\overrightarrow{v_{d_{\ell}}}(x-y_{\ell})\\ \nonumber
 & {+}\chi\left(x-\frac{y_{\ell}+y_{\ell+1}}{2}\right)f_{\ell,-}(x)+\chi\left({-}x+\frac{y_{\ell}+y_{\ell-1}}{2}\right)f_{\ell,+}(x),\\ \label{1G}
 & e^{i\frac{\sigma_{3}v_{1}x}{2}}\hat{G}_{\omega_{1}}\left(e^{iy_{1}k}\begin{bmatrix}
     \phi_{1,1}\left(k+\frac{v_{1}}{2}\right)\\
     \phi_{2,1}\left(k-\frac{v_{1}}{2}\right)
 \end{bmatrix}\right)(x-y_{1})\\    
 =&  \left[1-\chi\left(x-\frac{y_{1}+y_{2}}{2}\right)\right]f(x)+\chi\left(x-\frac{y_{1}+y_{2}}{2}\right)f_{1,-}(x)+e^{i\frac{\sigma_{3}v_{1}x}{2}}\overrightarrow{v_{d_{1}}}(x-y_{1}),\\ \label{mG}
& e^{i\frac{\sigma_{3}v_{m}x}{2}}\hat{G}_{\omega_{m}}\left(e^{iy_{m}k}\begin{bmatrix}
     \phi_{1,m}\left(k+\frac{v_{m}}{2}\right)\\
     \phi_{2,m}\left(k-\frac{v_{m}}{2}\right)
 \end{bmatrix}\right)(x-y_{m})\\
 = & \left[1-\chi\left({-}x+  \frac{(y_{m}+y_{m-1})}{2}\right)\right]f(x)+\chi\left({-}x+  \frac{(y_{m}+y_{m-1})}{2}\right)f_{m,+}(x){+}e^{i\frac{\sigma_{3}v_{m}x}{2}}\overrightarrow{v_{d_{m}}}(x-y_{m}),
 \end{align}
for any $\ell\in\{2,..., m-1\}.$
\par In particular, repeating the argument used in the proof of Lemma \ref{dec}, we can apply the distorted Fourier transforms $F^{*}_{\omega_{\ell}}$ and $G^{*}_{\omega_{\ell}}$ to the equations \eqref{lG}, \eqref{1G} and \eqref{mG} to obtain an approximate linear system on the variables
\begin{align}\label{unknown}
\begin{bmatrix}
    g_{1,-,1}(k)\\
    g_{1,-,2}(k)
\end{bmatrix}=&\int_{{-}\infty}^{0}f_{1,-}\left(x+\frac{y_{1}+y_{2}}{2}\right)\chi(x)e^{ikx}\,dx,\\ \nonumber  \begin{bmatrix}
    g_{\ell,-,1}(k)\\
    g_{\ell,-,2}(k)
\end{bmatrix}=&\int_{{-}\infty}^{0}f_{\ell,-}\left(x+\frac{y_{\ell}+y_{\ell+1}}{2}\right)\chi(x)e^{ikx}\,dx \text{, when $2\leq \ell\leq m-1,$}\\ \nonumber
\begin{bmatrix}
    g_{\ell,+,1}(k)\\
    g_{\ell,+,2}(k)
\end{bmatrix}=&\int_{0}^{{+}\infty}f_{\ell,+}\left(x+\frac{y_{\ell}+y_{\ell-1}}{2}\right)\chi({-}x)e^{ikx}\,dx \text{, when $2\leq \ell\leq m-1,$}\\ \nonumber \begin{bmatrix}
    g_{m,+,1}(k)\\
    g_{m,+,2}(k)
\end{bmatrix}=&\int_{0}^{{+}\infty}f_{m,+}\left(x+\frac{y_{m}+y_{m-1}}{2}\right)\chi({-}x)e^{ikx}\,dx.
\end{align}
Since $\supp \chi(x)\subset ({-}\infty,{-}\epsilon],$ and $\supp \chi({-}x)\subset [\epsilon,{+}\infty),$ it is not difficult to verify for any continuous function $f$ that
\begin{equation*}
    \int_{\mathbb{R}}\chi(x)f(x)\,dx=\int_{{-}\infty}^{0}\chi(x)f(x)\,dx,\,\, \int_{\mathbb{R}}\chi({-}x)f(x)\,dx=\int_{0}^{{+}\infty}\chi({-}x)f(x)\,dx.
\end{equation*}
\par Furthermore, applying Lemma \ref{Tp} in the approximate linear system satisfied by $g_{\ell,\pm,1}$ and $g_{\ell,\pm,2},$ we can verify similarly to the proof fo Theorem \ref{TT} that the following estimates hold.
\begin{align}\label{esttt1}
    \max_{\pm,\ell}\norm{\chi\left(x-\frac{y_{\ell\mp 1}+y_{\ell}}{2}\right)\langle x \rangle^{j} f_{\ell,\pm}\left(x\right)}_{L^{2}_{x}(\mathbb{R})}\leq &C_{v,j}\max_{\ell}\norm{\chi_{\ell}(x)\langle x-y_{\ell}\rangle^{j} f}_{L^{2}_{x}(\mathbb{R})},\\ \nonumber \max_{\ell,\pm}\norm{\chi\left(x-\frac{y_{\ell\mp 1}+y_{\ell}}{2}\right)f_{\ell,\pm}\left(x\right)}_{L^{1}_{x}(\mathbb{R})}\cong \max_{\ell,\pm}\norm{g_{\ell,\pm}(k)}_{\mathcal{F}L^{1}} \leq & C_{v}\norm{f}_{L^{1}_{x}(\mathbb{R})},\\
    \max_{\ell}\norm{\langle k\rangle^{2}g_{\ell,\pm}(k)}_{L^{2}_{k}(\mathbb{R})}\leq &C_{v}\norm{f}_{H^{2}_{x}(\mathbb{R})},
\end{align}    
and
\begin{align}\label{D0}
    \max_{\ell,n\in\{0,1,2\}}\norm{\frac{d^{n}}{dk^{n}}g_{\ell,\pm}(k)}_{L^{2}_{k}(\mathbb{R})}\leq   C\max_{n\in\{0,1,2\}} (y_{1}-y_{m})^{n}\max\norm{\langle x-y_{\ell}\rangle \chi_{\ell}^{\perp}(x)f(x)}_{H^{2-n}_{k}(\mathbb{R})}, 
\end{align}
where 
\begin{equation*}
    \chi_{\ell}^{\perp}(x)=1-\chi\left(x-\frac{y_{\ell}+y_{\ell+1}}{2}\right)-\chi\left({-}x+\frac{y_{\ell}+y_{\ell-1}}{2}\right),
\end{equation*} 
and the equivalence relation above is a consequence of the inverse Fourier transform identity. 
\par Furthermore, we have the following proposition.
\begin{lemma}\label{lemmafordecay}
Assume that the operator $\mathcal{H}_{\omega}$ satisfies the assumptions $(H1)-(H3),$ and that $\omega_,\,{-}\omega$ are not resonance points of $\mathcal{H}_{\omega}.$
Then, there exists a constant $C>1$ satisfying the following estimates for any $f\in W^{1,1}_{x}(\mathbb{R},\mathbb{C}^{2})\cap H^{1}_{x}(\mathbb{R},\mathbb{C}^{2}).$
\begin{equation}\label{estpp1}
\norm{kF^{*}_\omega(k)(f)}_{\mathcal{F}L_{1}}\leq C\norm{f}_{W^{1,1}_{x}(\mathbb{R})}.
\end{equation}
Furthermore, if $xf(x)\in W^{1,1}_{x}(\mathbb{R},\mathbb{C}^{2}),$ then
\begin{equation}\label{estpp2}
\norm{k\partial_{k}F^{*}_\omega(k)(f)}_{\mathcal{F}L_{1}}\leq C\left[\norm{(1+\vert x\vert)\partial_{x}f(x)}_{L^{1}_{x}(\mathbb{R})}+\norm{(1+\vert x\vert  )f}_{L^{1}_{x}(\mathbb{R})}\right].
\end{equation}
\end{lemma}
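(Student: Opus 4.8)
The plan is to mimic the proof of Lemma~\ref{fordecay} line by line, carrying the extra frequency factor $k$ through each term. Recall that the $\mathcal{F}L_1$ norm of a function $g(k)$ is $\norm{F_0(g)}_{L^1_x}$, the $L^1$ norm of its inverse Fourier transform; hence multiplication by $k$ in frequency is $-i\partial_x$ in physical space, which is exactly why the right-hand side must carry one more derivative ($W^{1,1}_x$ rather than $L^1_x$). As in Lemma~\ref{fordecay}, I would start from the decomposition \eqref{Flocalident} of the Jost solution $\mathcal{F}_\omega(x,k)$ into a flat part $\chi_+(x)s(k)e^{ikx}+\chi_-(x)[e^{ikx}+r(k)e^{-ikx}]$ and an exponentially-spatially-localized pseudodifferential remainder $R^+(x,k)e^{ikx}+R^-(x,k)e^{-ikx}$; this decomposition holds on all of $\mathbb{R}$, including the threshold $k=0$, precisely because $\pm\omega$ are not resonances of $\mathcal{H}_\omega$ (Lemma~\ref{2.1} and \cite{collotger}), so that $s,r$ are smooth with the symbolic decay \eqref{asyreftr} --- in particular $1-s,\,r\in H^1_k(\mathbb{R})$, whence $F_0(1-s),\,F_0(r)\in L^1_x(\mathbb{R})$ by Cauchy--Schwarz --- and $R^\pm$ obey the symbol bounds \eqref{pseddecay}. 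Inserting this into $F^*_\omega(f)(k)$ displays it as a flat term $\tfrac1{\sqrt{2\pi}}\int e^{-ikx}f(x)\,dx$, coefficient corrections built from $(1-s(\mp k))$ or $r(\mp k)$ times $\int e^{\mp ikx}\chi_\pm(x)f(x)\,dx$, and a remainder $\int R^\pm(x,-k)e^{\mp ikx}f(x)\,dx$.

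For \eqref{estpp1} I would estimate each piece separately. The flat term gives $k\,\tfrac1{\sqrt{2\pi}}\int e^{-ikx}f(x)\,dx=i\,\tfrac1{\sqrt{2\pi}}\int e^{-ikx}\partial_x f(x)\,dx$, of $\mathcal{F}L_1$ norm $\norm{\partial_x f}_{L^1_x}\le\norm{f}_{W^{1,1}_x}$. For a coefficient correction I would push the factor $k$ onto the data factor: since $f\in W^{1,1}_x\cap H^1_x\subset W^{1,1}_x\cap L^\infty$ and the cutoffs are smooth with compactly supported derivative, $\chi_\pm f\in W^{1,1}_x$, so $k\int e^{\mp ikx}\chi_\pm(x)f(x)\,dx$ equals, up to a constant, $\int e^{\mp ikx}\partial_x(\chi_\pm f)(x)\,dx$ with $\partial_x(\chi_\pm f)\in L^1_x$; the leftover coefficient $1-s$ or $r$ lies in $H^1_k$, so by the convolution inequality $\norm{F_0(1-s)*\partial_x(\chi_\pm f)}_{L^1_x}\le\norm{F_0(1-s)}_{L^1_x}\norm{\partial_x(\chi_\pm f)}_{L^1_x}\lesssim\norm{f}_{W^{1,1}_x}$, and similarly for the $r$-corrections. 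For the remainder $k\int R^\pm(x,-k)e^{\mp ikx}f(x)\,dx$ I would integrate by parts in $x$ against $e^{\mp ikx}=\pm(ik)^{-1}\partial_x e^{\mp ikx}$, which cancels the factor $k$ and leaves $\mp\tfrac1i\int\big(\partial_x R^\pm(x,-k)f(x)+R^\pm(x,-k)\partial_x f(x)\big)e^{\mp ikx}\,dx$; by \eqref{pseddecay} this is $O\big((1+|k|)^{-1}\norm{f}_{W^{1,1}_x}\big)$, with every $k$-derivative gaining a further factor $\langle k\rangle^{-1}$, so it lies in $H^1_k(\mathbb{R})$ and hence in $\mathcal{F}L_1$ with norm $\lesssim\norm{f}_{W^{1,1}_x}$ (by Cauchy--Schwarz again, or equivalently by the $L^2$ bound for pseudodifferential operators, Theorem~5.1 of \cite{psed}). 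Summing the pieces gives \eqref{estpp1}.

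For \eqref{estpp2} I would differentiate \eqref{Flocalident} in $k$: the $\partial_k$ either falls on an oscillation $e^{\mp ikx}$, producing $\mp ix$ that merges with $f$ into $xf$ (and $\partial_x(xf)=f+x\partial_x f\in L^1_x$, $xf\in L^1_x$, both controlled by $\norm{(1+|x|)\partial_x f}_{L^1_x}+\norm{(1+|x|)f}_{L^1_x}$), or it falls on a coefficient with $\partial_k s,\,\partial_k r\in H^1_k$ by \eqref{asyreftr}, or on a remainder symbol with $\partial_k R^\pm$ obeying an even stronger bound in \eqref{pseddecay}. In every case one is reduced to an expression of exactly the shape handled for \eqref{estpp1}, but with $f$ replaced by $xf$, or with a coefficient replaced by its $k$-derivative; applying the argument of the previous paragraph then bounds $\norm{k\,\partial_k F^*_\omega(f)}_{\mathcal{F}L_1}$ by $C\big[\norm{(1+|x|)\partial_x f}_{L^1_x}+\norm{(1+|x|)f}_{L^1_x}\big]$, which is \eqref{estpp2}.

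The only genuinely delicate input is the one that already underlies Lemma~\ref{fordecay}: that $s,r$ extend smoothly, with the symbolic decay \eqref{asyreftr}, across the threshold $k=0$, which is exactly the non-resonance of $\pm\omega$ via Lemma~\ref{2.1}. Once that is granted, $k(1-s),\,kr$ stay symbols of order zero, $1-s,\,r,\,\partial_k(1-s),\,\partial_k r$ stay in $H^1_k$, and $R^\pm,\,kR^\pm,\,\partial_k R^\pm$ stay in the exponentially localized symbol class of \eqref{pseddecay}, so the whole argument reduces to elementary convolution estimates plus one pseudodifferential $L^2$ bound; the main work is the bookkeeping of where the extra factor $k$ is absorbed --- onto $\partial_x$ of the data, or through an integration by parts against the oscillation in the $R^\pm$ terms.
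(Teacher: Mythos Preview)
Your proposal is correct and follows exactly the approach the paper intends: the paper's proof of Lemma~\ref{lemmafordecay} simply states that it is ``completely similar to the proof of Lemma~\ref{fordecay},'' and your write-up carries out precisely that similarity, threading the extra factor of $k$ through each term of the decomposition \eqref{Flocalident} via integration by parts onto $\partial_x f$. In fact your proposal supplies more detail than the paper does.
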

\begin{proof}
 The proof is completely similar to the proof of Lemma $B4$.    
\end{proof}
Furthermore, using Lemma $B.3,$ we can apply \eqref{estpp1} to the estimate \eqref{princprinc} and obtain
\begin{equation*}
  \max_{\ell}\norm{k\phi_{\ell}(k)}_{\mathcal{F}L^{1}}\leq C(v)\norm{T(0)(\vec{\phi})}_{W^{1,1}_{x}(\mathbb{R})}.  
\end{equation*}

Consequently, using the definition of the distorted Fourier basis $\mathcal{F},\,\mathcal{G},$ we can verify the following proposition.
\begin{proposition}\label{dxlinfty}
If $t>>0,$ then
\begin{multline*}
    \norm{\frac{\partial}{\partial x}\left[e^{i\frac{v_{\ell}x\sigma_{3}}{2}}\hat{G}_{\omega_{\ell}}\left(e^{{-}it(k^{2}+\omega_{\ell})\sigma_{3}}\phi_{\ell}\left(k+\sigma_{3}\frac{v_{\ell}}{2}\right)\right)(x)\right]}_{L^{\infty}_{x}(\mathbb{R})}\\
    \leq \frac{C(v)}{t^{\frac{1}{2}}}\left[\max_{j\in\{\ell,\ell-1\}}\norm{\phi_{j}(k)}_{\mathcal{F}L^{1}}+\norm{k\phi_{j}(k)}_{\mathcal{F}L^{1}}\right].
\end{multline*}
\end{proposition}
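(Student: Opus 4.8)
The plan is to absorb the extra $x$-derivative into a factor of $ik$ on the frequency side, up to corrections whose kernels are either compactly supported / exponentially $x$-localized or of pseudodifferential type, and then to feed the outcome into the standard $L^{1}\to L^{\infty}$ dispersive estimate for the one-potential distorted flow (Lemma~\ref{lem:decayonepotential}) together with the $\mathcal{F}L^{1}$ bounds of Appendix~\ref{sec:appb}. Throughout, $t$ may be taken large, and we use freely that composition with the unitary modulation $e^{iv_{\ell}x\sigma_{3}/2}$ and with spatial translations leaves every $L^{\infty}_{x}$ norm unchanged, while a Galilean shift $k\mapsto k\pm v_{\ell}/2$ changes $\mathcal{F}L^{1}$ norms only by a factor $C(v)$. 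We work under the non-resonance hypothesis (H4), which is in force in the setting where this estimate is applied, so that Lemma~\ref{fordecay} is available.

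First I would split by the product rule,
\begin{equation*}
\partial_{x}\Big[e^{i\frac{v_{\ell}x\sigma_{3}}{2}}\hat{G}_{\omega_{\ell}}\big(e^{-it(k^{2}+\omega_{\ell})\sigma_{3}}\phi_{\ell}(k+\sigma_{3}\tfrac{v_{\ell}}{2})\big)(x)\Big]=\tfrac{iv_{\ell}\sigma_{3}}{2}\,e^{i\frac{v_{\ell}x\sigma_{3}}{2}}\hat{G}_{\omega_{\ell}}(\cdots)(x)+e^{i\frac{v_{\ell}x\sigma_{3}}{2}}\,\partial_{x}\hat{G}_{\omega_{\ell}}(\cdots)(x).
\end{equation*}
For the first term, Lemma~\ref{lem:decayonepotential} gives the bound $C(v)\,t^{-1/2}\norm{\hat{G}_{\omega_{\ell}}\big(\phi_{\ell}(\Diamond+\sigma_{3}v_{\ell}/2)\big)}_{L^{1}_{x}}$; splitting $\mathbb{R}$ into a neighbourhood of the center of $\mathcal{H}_{\omega_{\ell}}$ and its complement, comparing $\hat{G}_{\omega_{\ell}}$ with $F_{0}$ off the center (Lemma~\ref{appFourier}) while using Remark~\ref{transition} (which turns $\hat{G}_{\omega_{\ell}}(\phi_{\ell})$ into $\hat{F}_{\omega_{\ell}}(\phi_{\ell-1})$ near the center) and Lemma~\ref{fordecay}, one controls this by $C(v)\,t^{-1/2}\max_{j\in\{\ell,\ell-1\}}\norm{\phi_{j}}_{\mathcal{F}L^{1}}$; this is where the index $j\in\{\ell,\ell-1\}$ enters.

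For the main term $\partial_{x}\hat{G}_{\omega_{\ell}}$, I would differentiate under the integral and use the Jost-type decomposition of $\mathcal{G}_{\omega_{\ell}}(x,-k)$ from the proof of Lemma~\ref{fordecay}, writing $\partial_{x}\mathcal{G}_{\omega_{\ell}}(x,-k)=ik\,\mathcal{G}_{\omega_{\ell}}(x,-k)+\mathcal{R}_{\omega_{\ell}}(x,k)$, where $\mathcal{R}_{\omega_{\ell}}$ collects the derivatives of the fixed cut-offs (compactly supported in $x$), the reflected term $-2ik\,\overline{r_{\omega_{\ell}}(k)}\,e^{-ikx}\underline{e}$ on the positive half-line, and remainders decaying exponentially in $|x|$, all with smooth $k$-amplitudes governed by \eqref{asyreftr} and the pseudodifferential-operator bounds of \cite{psed}. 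The $ik\,\mathcal{G}_{\omega_{\ell}}$ piece reassembles, over both vector slots, into $\hat{G}_{\omega_{\ell}}\big(e^{-it(\Diamond^{2}+\omega_{\ell})\sigma_{3}}\,(ik)\phi_{\ell}(k+\sigma_{3}\tfrac{v_{\ell}}{2})\big)(x)$, and Lemma~\ref{lem:decayonepotential} followed by the same $\mathcal{F}L^{1}$ conversion as above---now applied to $(ik)\phi_{\ell}(k+\sigma_{3}v_{\ell}/2)$, which after the Galilean shift costs an extra $\norm{k\phi_{j}}_{\mathcal{F}L^{1}}$---yields $C(v)\,t^{-1/2}\max_{j\in\{\ell,\ell-1\}}\big(\norm{\phi_{j}}_{\mathcal{F}L^{1}}+\norm{k\phi_{j}}_{\mathcal{F}L^{1}}\big)$, precisely the right-hand side of the proposition, modulo the $\mathcal{R}_{\omega_{\ell}}$-contribution.

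The contribution of $\mathcal{R}_{\omega_{\ell}}$ is the step I expect to be the main obstacle, since its $k$-amplitudes are only $O(1)$ in $k$ and offer no gain from decay in $k$, so a bound using only the spatial localization produces no $t$-decay. The idea is to rewrite this contribution as a finite, $x$-localized (or half-line supported) superposition of free Schr\"odinger evolutions $e^{it\partial_{x}^{2}}\big(a_{j}(\Diamond)\,\phi_{\ell}(\Diamond+\sigma_{3}v_{\ell}/2)\big)$ with smooth symbols $a_{j}$, and to combine the free dispersive bound $\norm{e^{it\partial_{x}^{2}}g}_{L^{\infty}_{x}}\lesssim t^{-1/2}\norm{g}_{L^{1}_{x}}$ with the $\mathcal{F}L^{1}$ estimates of Appendix~\ref{sec:appb} (in particular Lemma~\ref{LB2}, which is tailored to the composition of such symbols with the transforms), so that $\norm{F_{0}\big(a_{j}\,\phi_{\ell}(\Diamond+\sigma_{3}v_{\ell}/2)\big)}_{L^{1}_{x}}\lesssim C(v)\,\norm{\phi_{\ell}}_{\mathcal{F}L^{1}}$, the needed decay of $a_{j}$ and of $F_{0}a_{j}$ again coming from \eqref{asyreftr}. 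Since these pieces carry no additional factor of $k$ on $\phi_{\ell}$, they feed only the $\norm{\phi_{j}}_{\mathcal{F}L^{1}}$ terms, and collecting all contributions gives the stated estimate.
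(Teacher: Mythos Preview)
Your Jost-type decomposition $\partial_{x}\mathcal{G}_{\omega_{\ell}}(x,-k)=ik\,\mathcal{G}_{\omega_{\ell}}(x,-k)+\mathcal{R}_{\omega_{\ell}}(x,k)$ is a valid route, but it is organized differently from the paper's. The paper instead exploits that under (H4) one has $\mathcal{G}_{\omega_{\ell}}(x,0)=0$ and $s_{\ell}(0)=0$, so that $k/s_{\ell}(-k)$ and $\mathcal{G}_{\omega_{\ell}}(x,-k)/k$ are smooth; it then writes $\partial_{x}\bigl[\mathcal{G}_{\omega_{\ell}}/s_{\ell}(-k)\bigr]=\partial_{x}\bigl[\mathcal{G}_{\omega_{\ell}}/k\bigr]\cdot k/s_{\ell}(-k)$ and splits by a frequency cut-off $\chi_{L}$ together with a spatial cut-off $|x|\lessgtr L$. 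For low frequency and bounded $x$ it freezes the spatial variable (inserting $e^{ik(x-z)}\big|_{z=x}$), applies the free dispersive bound, and controls the resulting amplitude in $H^{1}_{k}$; for $|x|>L$ it reads off the asymptotics of $\mathcal{G}_{\omega_{\ell}}$; for high frequency it uses $1/s_{\ell}(-k)-1=O(|k|^{-1})$. Your approach trades this low/high splitting for a single use of Lemma~\ref{lem:decayonepotential} on the $ik\,\mathcal{G}_{\omega_{\ell}}$ piece, which is cleaner for the main term.

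Two points on your remainder step. First, the exponentially localized pieces coming from $\partial_{x}R^{\pm}(x,k)$ have genuinely $x$-dependent amplitudes and are not of the form $m(x)\,e^{\pm ikx}a_{j}(k)$ with $a_{j}$ independent of $x$; you still need a freeze-$x$ argument (the amplitude lies in $H^{1}_{k}\hookrightarrow\mathcal{F}L^{1}$ uniformly in $x$, with the $e^{-\alpha|x|}$ factor to spare), exactly parallel to the paper's freeze-$z$ trick. Second, the reflected contribution $-2ik\,\chi_{+}(x)\overline{r_{\omega_{\ell}}(k)}e^{-ikx}$ and the companion piece $-2ik\,R^{-}(x,k)e^{-ikx}$ carry an explicit factor of $k$, and since $k\overline{r(k)}$ is only bounded (not in $H^{1}_{k}$), you must pass the $k$ onto $\phi_{\ell}$ and bound via $\norm{k\phi_{j}}_{\mathcal{F}L^{1}}$; so your closing claim that the $\mathcal{R}_{\omega_{\ell}}$ pieces feed only the $\norm{\phi_{j}}_{\mathcal{F}L^{1}}$ term is not quite right. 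Neither point affects the stated inequality.
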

\begin{proof}
Since it is well-known that
\begin{equation*}
\norm{e^{i\frac{v_{\ell}x\sigma_{3}}{2}}\hat{G}_{\omega_{\ell}}\left(e^{{-}it(k^{2}+\omega_{\ell})\sigma_{3}}\phi_{\ell}\left(k+\sigma_{3}\frac{v_{\ell}}{2}\right)\right)(x)}_{L^{\infty}_{x}(\mathbb{R})}\leq \frac{C}{t^{\frac{1}{2}}}\max_{j\in\{\ell,\ell-1\}}\norm{\phi_{j}(k)}_{FL^{1}},  
\end{equation*}
it is enough to verify that 
\begin{multline*}
    \norm{\frac{\partial}{\partial x}\left[\hat{G}_{\omega_{\ell}}\left(e^{{-}it(k^{2}+\omega_{\ell})\sigma_{3}}\phi_{\ell}\left(k+\sigma_{3}\frac{v_{\ell}}{2}\right)\right)(x)\right]}_{L^{\infty}_{x}(\mathbb{R})}\\
    \leq \frac{C(v)}{t^{\frac{1}{2}}}\left[\max_{j\in\{\ell,\ell-1\}}\norm{\phi_{j}(k)}_{\mathcal{F}L^{1}}+\norm{k\phi_{j}(k)}_{\mathcal{F}L^{1}}\right].
\end{multline*}
From the definition of $\hat{G}_{\omega_{\ell}},$ we have that
\begin{equation*}
\hat{G}_{\omega_{\ell}}\left(e^{{-}it(k^ {2}+\omega_{\ell})\sigma_{3}}\begin{bmatrix}\varphi(k)\\
0\end{bmatrix}\right)(x)=\int_{\mathbb{R}}e^ {{-}it(k^{2}+\omega_{\ell})}\frac{\mathcal{G}_{\omega_{\ell}}(x,{-}k)}{s_{\ell}({-}k)}\varphi(k)\,dk.
\end{equation*}
In particular, using the fact that $\frac{k}{s_{\ell}(k)}\neq 0$ is smooth on $\mathbb{R},$ we can verify that
\begin{align*}
   \frac{d}{dx}\int_{\mathbb{R}}e^ {{-}it(k^{2}+\omega_{\ell})}\frac{\mathcal{G}_{\omega_{\ell}}(x,{-}k)}{s_{\ell}({-}k)}\varphi(k)\,dk=& \int_{\mathbb{R}}e^ {{-}it(k^{2}+\omega_{\ell})}\frac{\partial}{\partial x}\left[\frac{\mathcal{G}_{\omega_{\ell}}(x,{-}k)}{k}\right]\frac{k}{s_{\ell}({-}k)}\varphi(k)\,dk.
\end{align*}
Next, let $\chi_{L}$ be a $C^{\infty}$ cut-off function supported in $[{-}2L,2L]$ satisfying $\chi_{L}(k)=1$ if $\vert k\vert\leq L.$ We can verify that
\begin{multline*}
 \norm{\int_{\mathbb{R}}e^ {{-}it(k^{2}+\omega_{\ell})}\chi_{L}(k)\frac{\partial}{\partial x}\left[\frac{\mathcal{G}_{\omega_{\ell}}(x,{-}k)}{k}\right]\frac{k}{s_{\ell}({-}k)}\varphi(k)\,dk}_{L^{\infty}_{x}([{-}L,L])}\\
 \begin{aligned}
 \leq & C\max_{z\in[{-}L,L]}\norm{\int_{\mathbb{R}}e^ {{-}it(k^{2}+\omega_{\ell})}e^{ik(x-z)}\chi_{L}(k)\frac{\partial}{\partial z}\left[\frac{\mathcal{G}_{\omega_{\ell}}(z,{-}k)}{k}\right]\frac{k}{s_{\ell}({-}k)}\varphi(k)\,dk}_{L^{\infty}_{x}([{-}L,L])}\\
 \leq & \max_{z\in[L,{-}L]}\frac{C}{t^{\frac{1}{2}}}\norm{\int_{\mathbb{R}}e^{ik(x-z)}\chi_{L}(k)\frac{\partial}{\partial z}\left[\frac{\mathcal{G}_{\omega_{\ell}}(z,{-}k)}{k}\right]\frac{k}{s_{\ell}({-}k)}\varphi(k)\,dk}_{L^{1}_{x}(\mathbb{R})}\\
 \leq & \max_{z\in[{-}L,L]}\frac{C}{t^{\frac{1}{2}}}\norm{\varphi(k)}_{\mathcal{F}L^{1}}\norm{\chi_{L}(k)\frac{\partial}{\partial z}\left[\frac{\mathcal{G}_{\omega_{\ell}}(z,{-}k)}{k}\right]\frac{k}{s_{\ell}({-}k)}}_{\mathcal{F}L^{1}}\\
 \leq & \max_{z\in[L,{-}L]}\frac{C_{1}}{t^{\frac{1}{2}}}\norm{\varphi}_{\mathcal{F}L^{1}}\norm{\chi_{L}(k)\frac{\partial}{\partial z}\left[\frac{\mathcal{G}_{\omega_{\ell}}(z,{-}k)}{k}\right]\frac{k}{s_{\ell}({-}k)}}_{H^{1}_{k}}\leq \frac{K}{t^{\frac{1}{2}}}\norm{\varphi}_{\mathcal{F}L^{1}},
\end{aligned}
\end{multline*}
where the facts that $s(k)k^{{-}1}$ being smooth, $\partial^{n}_{k^{n}}\mathcal{G}_{\omega_{\ell}}(x,0)=0,$ and $\mathcal{G}_{\omega_{\ell}}(x,k)$ being a smooth function were used for the proof of the last inequality above. 
\par If $\vert x\vert>L,$ we can use the asymptotic properties of $\mathcal{G}_{\omega_{\ell}}$ to verify that 
\begin{equation*}
\norm{\int_{\mathbb{R}}\frac{\partial}{\partial x}\frac{\mathcal{G}_{\omega_{\ell}}(x,{-}k)\varphi(k)}{s_{\ell}({-}k)}\,dk}_{L^{1}_{x}(\vert x\vert\geq L)}\leq \max\left(\norm{\varphi(k)}_{\mathcal{F}L^{1}},\norm{k\varphi(k)}_{\mathcal{F}L^{1}},\norm{k\phi(k)}_{\mathcal{F}L^{1}},\norm{\phi(k)}_{\mathcal{F}L^{1}}\right),
\end{equation*}
where 
\begin{equation*}
    \phi(k)=\frac{\varphi(k)-r_{\ell}(k)\varphi({-}k)}{s_{\ell}(k)},
\end{equation*}
see item $b)$ of Definition $1.5.$
\par Next, since 
\begin{equation*}
    \frac{d^{l}}{dk^{l}}\left[1-\frac{1}{s({-}k)}\right]=O\left(\frac{1}{(1+\vert k\vert)^{\ell+1}}\right) \text{, when $k\in \supp$ of $1-\chi_{L},$}
\end{equation*}
we can verify similarly to the estimate of
\begin{equation*}
    \norm{\int_{\mathbb{R}}e^ {{-}it(k^{2}+\omega_{\ell})}\chi_{L}(k)\frac{\partial}{\partial x}\left[\frac{\mathcal{G}_{\omega_{\ell}}(x,{-}k)}{k}\right]\frac{k}{s_{\ell}({-}k)}\varphi(k)\,dk}_{L^{\infty}_{x}([{-}L,L])}
\end{equation*}
using that $\partial^{n}_{k^{n}}\partial_{x}\mathcal{G}_{\omega_{\ell}}(x,k)\in L^{\infty}_{x}(\mathbb{R})$ that
\begin{multline*}
\norm{\int_{\mathbb{R}}e^ {{-}it(k^{2}+\omega_{\ell})}[1-\chi_{L}(k)]\frac{\partial}{\partial x}\left[\frac{\mathcal{G}_{\omega_{\ell}}(x,{-}k)}{k}\right]\frac{k}{s_{\ell}({-}k)}\varphi(k)\,dk}_{L^{\infty}_{x}([{-}L,L])}\\
    \begin{aligned}
      \leq &\frac{C}{t^{\frac{1}{2}}}\max_{z\in[{-}L,L]}\norm{[1-\chi_{L}(k)]e^{ikz}\left[\frac{\partial_{z}\mathcal{G}_{\omega_{\ell}}(z,{-}k)}{s_{\ell}({-}k)k}\right]}_{H^{1}_{k}}[\norm{k\varphi(k)}_{\mathcal{F}L^{1}}+\norm{\varphi(k)}_{\mathcal{F}L^{1}}]\\
      \lesssim &\frac{1} {t^{\frac{1}{2}}}[\norm{k\varphi(k)}_{\mathcal{F}L^{1}}+\norm{\varphi(k)}_{\mathcal{F}L^{1}}].
    \end{aligned}
\end{multline*}
In conclusion, we deduce from triangular inequality that the statement of Proposition \ref{dxlinfty} is true.
\end{proof}
 
\par In particular, the triangular inequality implies that 
\begin{equation*}
\norm{\frac{\partial}{\partial x}\mathcal{S}(t)(\vec{\phi})}_{L^{\infty}}\leq \frac{C(v)}{t^{\frac{1}{2}}}\left[\max_{\ell}\norm{\vec{\phi}_{\ell}(k)}_{\mathcal{F}L^{1}}+\norm{k\vec{\phi}_{\ell}(k)}_{\mathcal{F}L^{1}}\right].
\end{equation*}
Next, using Theorem $1.8$ and Lemma $7.3,$ we can obtain the following.
\begin{align*}
    \norm{\frac{\partial}{\partial x}\left[\mathcal{T}(t)(\vec{\phi})-\mathcal{S}(t)(\vec{\phi})\right]}_{L^{\infty}_{x}}\leq C & \norm{\frac{\partial}{\partial x}\left[\mathcal{T}(t)(\vec{\phi})-\mathcal{S}(t)(\vec{\phi})\right]}_{H^{1}_{x}}
    \\ \leq & K(v) e^{{-}\beta \min_{\ell}(v_{\ell}-v_{\ell+1})t+(y_{\ell}-y_{\ell+1})}\norm{\mathcal{S}(t)(\vec{\phi})}_{H^{1}_{x}(\mathbb{R})}\\
    \sim & e^{{-}\beta \min_{\ell}(v_{\ell}-v_{\ell+1})t+(y_{\ell}-y_{\ell+1})}\norm{\mathcal{S}(0)(\vec{\phi})}_{H^{1}_{x}(\mathbb{R})}\\
    \sim & e^{{-}\beta \min_{\ell}(v_{\ell}-v_{\ell+1})t+(y_{\ell}-y_{\ell+1})}\norm{\mathcal{T}(0)(\vec{\phi})}_{H^{1}_{x}(\mathbb{R})}. 
\end{align*}

Consequently, using \eqref{princprinc}, we have the following proposition.
\begin{proposition}
The following estimate is true.
\begin{equation*}
    \norm{\frac{\partial}{\partial x}\mathcal{T}(t)(\vec{\phi})}_{L^{\infty}_{x}}\leq \left[\frac{C(v)}{t^{\frac{1}{2}}}\norm{\mathcal{T}(0)(\vec{\phi})}_{W^{1,1}_{x}(\mathbb{R})}+C(v)e^{{-}\beta [\min_{\ell}(v_{\ell}-v_{\ell+1})t+(y_{\ell}-y_{\ell+1})]}\norm{\mathcal{T}(0)(\vec{\phi})}_{H^{1}_{x}(\mathbb{R})}\right].
\end{equation*}
\end{proposition}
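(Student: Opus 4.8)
The plan is to split $\mathcal{T}(t)(\vec{\phi})=\mathcal{S}(t)(\vec{\phi})+r(t)$, with $r(t):=\mathcal{T}(t)(\vec{\phi})-\mathcal{S}(t)(\vec{\phi})$, and to estimate $\partial_x$ of the two summands in $L^{\infty}_{x}$ separately (for $t$ bounded away from zero, as in Proposition~\ref{dxlinfty}). For the first summand, recall from Definition~\ref{s0def} that $\mathcal{S}(t)(\vec{\phi})$ is the sum of the $m$ moving wave packets $e^{i\frac{v_\ell x}{2}\sigma_3}\hat{G}_{\omega_\ell}(e^{-it(k^2+\omega_\ell)\sigma_3}\cdots)(x-y_\ell-v_\ell t)$ handled in Proposition~\ref{dxlinfty}, together with the single free term $-\frac{1}{\sqrt{2\pi}}\int e^{-itk^2\sigma_3}\vec{\varphi}(k)e^{ikx}\,dk$ with $\vec{\varphi}=\sum_{\ell=1}^{m-1}\vec{\phi}_\ell$ by condition c). Proposition~\ref{dxlinfty} bounds $\partial_x$ of each wave packet by $t^{-1/2}(\norm{\vec{\phi}_j}_{\mathcal{F}L^{1}}+\norm{k\vec{\phi}_j}_{\mathcal{F}L^{1}})$ with $j\in\{\ell,\ell-1\}$, while for the free term the standard stationary-phase estimate for the one-dimensional Schr\"odinger group used in Lemma~\ref{lem:decayonepotential} gives $\norm{\partial_x F_0(e^{-itk^2\sigma_3}\vec{\varphi})}_{L^{\infty}_x}\lesssim t^{-1/2}\norm{k\vec{\varphi}}_{\mathcal{F}L^{1}}$. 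Summing over the finitely many indices,
\[
\norm{\partial_x\mathcal{S}(t)(\vec{\phi})}_{L^{\infty}_x}\lesssim_{v}\frac{1}{t^{1/2}}\max_{\ell}\Bigl(\norm{\vec{\phi}_\ell}_{\mathcal{F}L^{1}}+\norm{k\vec{\phi}_\ell}_{\mathcal{F}L^{1}}\Bigr).
\]

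Next I would recast the right-hand side in terms of $\mathcal{T}(0)(\vec{\phi})$. Localizing $\mathcal{T}(0)(\vec{\phi})$ near each center as in \eqref{eqqq11} and inverting through $\sigma_3 F^*_{\omega_\ell}\sigma_3\hat{G}_{\omega_\ell}=\mathrm{Id}$ (Lemma~\ref{leper}), the error produced by the overlap of the windows is exponentially small by Lemma~\ref{appFourier}, so that up to such errors $\vec{\phi}_\ell$ equals $\sigma_3 F^*_{\omega_\ell}$ applied to a windowed Galilei transform of $\mathcal{T}(0)(\vec{\phi})$. Then the $\mathcal{F}L^{1}$-boundedness of $F^*_{\omega_\ell}$ and of $k\,F^*_{\omega_\ell}$, which are exactly Lemma~\ref{fordecay} and Lemma~\ref{lemmafordecay} and which crucially use the non-resonance hypothesis (H4), yield $\max_{\ell}(\norm{\vec{\phi}_\ell}_{\mathcal{F}L^{1}}+\norm{k\vec{\phi}_\ell}_{\mathcal{F}L^{1}})\lesssim_{v}\norm{\mathcal{T}(0)(\vec{\phi})}_{W^{1,1}_x}$, giving the first term of the claimed bound.

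For the correction $r(t)$ I would use the one-dimensional Sobolev embedding $\norm{\partial_x r(t)}_{L^{\infty}_x}\lesssim\norm{r(t)}_{H^2_x}$ together with the fact, established in \S\ref{solscont}, that $r$ solves $i\partial_t r+\sigma_3\partial_x^2 r-\sum_\ell V^{\sigma_\ell}_\ell(t)r=f(t)$ with $\norm{\langle x\rangle f(t)}_{H^2_x}\lesssim (y_1-y_m+(v_1-v_m)t)\,e^{-\beta[\min_\ell(y_{\ell-1}-y_\ell)+(v_{\ell-1}-v_\ell)t]}\max_\ell\norm{\langle k\rangle\vec{\phi}_\ell}_{L^2_k}$. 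Estimate \eqref{po} of Lemma~\ref{edl} with $k=2$ then controls $\norm{r(t)}_{H^2_x}$ by $\norm{f(t)}_{H^2_x}$ up to the exponential-in-time weight; the polynomial factor $(y_1-y_m+(v_1-v_m)t)$ and the loss $e^{\beta_1 t}$ coming from the exponentially weighted norm are absorbed into the exponential after slightly decreasing $\beta$, which is legitimate once $\min_\ell(v_\ell-v_{\ell+1})$ is large. Finally, Lemma~\ref{h1coercc} applied at $t=0$, combined with the $H^1$ asymptotic completeness in the corollary following the proof of Theorem~\ref{princ} (which gives $\norm{\mathcal{S}(0)(\vec\phi)}_{H^1_x}\sim\norm{\mathcal{T}(0)(\vec\phi)}_{H^1_x}$), bounds $\max_\ell\norm{\langle k\rangle\vec{\phi}_\ell}_{L^2_k}$ by $\norm{\mathcal{T}(0)(\vec{\phi})}_{H^1_x}$, so that $\norm{\partial_x r(t)}_{L^{\infty}_x}\lesssim_{v} e^{-\beta[\min_\ell(v_\ell-v_{\ell+1})t+(y_\ell-y_{\ell+1})]}\norm{\mathcal{T}(0)(\vec{\phi})}_{H^1_x}$.

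Adding the two bounds by the triangle inequality proves the proposition. The main obstacle is the correction term: Proposition~\ref{dxlinfty} only covers the $\hat{G}_{\omega_\ell}$-shaped pieces of $\mathcal{S}$, so one must route $\partial_x r$ through an $H^2_x$ bound, and the delicate point is that this $H^2_x$ control must come out with only the $H^1_x$ norm of $\mathcal{T}(0)(\vec{\phi})$ on the right. This forces the use of the $\langle x\rangle$-weighted $H^2$ estimate for the source $f$ from \S\ref{solscont} (rather than the unweighted $H^3$ one, which would require $H^2$ regularity of the data), together with a careful check that the time-growing polynomial prefactors in that estimate are dominated by the velocity-gap exponential decay, using the well-separated-speeds hypothesis.
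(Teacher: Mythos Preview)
Your proof is correct and follows essentially the same route as the paper: split $\mathcal{T}(t)=\mathcal{S}(t)+r(t)$, apply Proposition~\ref{dxlinfty} and the $\mathcal{F}L^1$ bounds from Lemmas~\ref{fordecay}--\ref{lemmafordecay} (via \eqref{princprinc}) to control $\partial_x\mathcal{S}(t)$ by $t^{-1/2}\norm{\mathcal{T}(0)(\vec\phi)}_{W^{1,1}}$, and use Sobolev embedding together with Lemma~\ref{edl} at $H^2$ level for $\partial_x r(t)$. The only unnecessary detour is your use of the $\langle x\rangle$-weighted $H^2$ estimate on the source: the general bound \eqref{intSS} with $p=0$, $j=1$ already gives $\norm{f(t)}_{H^2_x}\lesssim e^{-\beta[\cdots]}\max_\ell\norm{\langle k\rangle\vec\phi_\ell}_{L^2_k}$ directly (this is what the paper invokes, citing Theorem~\ref{tcont} and Lemma~\ref{edl}), so no polynomial prefactor arises and Lemma~\ref{h1coercc} immediately converts the right-hand side to $\norm{\mathcal{T}(0)(\vec\phi)}_{H^1_x}$.
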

\par In particular, if $f\in\Raa \mathcal{T}(0),$ we can verify from equations \eqref{lG}, \eqref{mG} and \eqref{1G} that
\begin{equation*}
  \max_{\ell} \norm{\langle x\rangle \vec{v_{d_{\ell}}}(x)}_{L^{1}_{x}(\mathbb{R})}\cong \max_{\ell}\norm{\vec{v_{d_{\ell}}}(x)}_{L^{2}_{x}(\mathbb{R})}=O\left(e^{{-}\beta \min_{\ell}(y_{\ell}-y_{\ell+1})}\max_{\ell,\pm}\left[\norm{f_{\ell,\pm}}_{L^{2}_{x}(\mathbb{R})}\right]\right).
\end{equation*}
Consequently, \eqref{lG} and \eqref{esttt1} imply that
\begin{multline}\label{gwe}
 \max_{\ell} \norm{\langle x\rangle \hat{G}_{\ell}\left(e^{iy_{\ell}k}\vec{\phi_{\ell}}\left(k+\sigma_{3}\frac{v_{\ell}}{2}\right)\right)(x)}_{L^{1}_{x}(\mathbb{R})}\\
 \leq C\left[\max_{\ell}\norm{\langle x-y_{\ell}\rangle\chi_{\ell}^{\perp}(x)f(x)}_{L^{1}_{x}(\mathbb{R})}+\max_{\ell}\norm{\langle x\rangle f_{\ell,\pm}(x+y_{\ell})\chi\left(\mp x\mp\frac{y_{\ell}-y_{\ell\mp 1}}{2}\right)}_{L^{1}_{x}(\mathbb{R})}\right]\\
 +Ce^{{-}\beta\min_{\ell}(y_{\ell}-y_{\ell+1})}\norm{f}_{L^{2}_{x}(\mathbb{R})}.
\end{multline}
Therefore, the estimate of $\norm{\langle x\rangle \hat{G}_{\ell}\left(e^{iy_{\ell}k}\vec{\phi_{\ell}}\left(k+\sigma_{3}\frac{v_{\ell}}{2}\right)\right)(x)}_{L^{1}_{x}(\mathbb{R})}$ will follow from the estimate of 
\begin{equation*}
    \norm{\langle x\rangle f_{\ell,\pm}(x+y_{\ell})\chi\left(\mp x\mp\frac{y_{\ell}-y_{\ell\mp 1}}{2}\right)}_{L^{1}_{x}(\mathbb{R})},
\end{equation*}
which will be obtained from the linear system solved by \eqref{unknown}.
\begin{lemma}\label{l2}
If $f=\mathcal{T}(0)(\vec{\psi})(x),$ $\vec{\phi}$ satisfies all the equations in \eqref{unknown}, then there exist uniform constants $C>1$ satisfying
\begin{align*}
 \max_{\ell}\norm{\langle k \rangle^{2} \vec{\phi}_{\ell}(k)}\leq & C\norm{f(x)}_{H^{2}_{x}(\mathbb{R})},\\
    \max_{\ell}\norm{\vec{\phi_{\ell}}-\vec{\psi_{\ell}}}_{H^{2}_{k}(\mathbb{R})}\leq & C e^{{-}\beta \min_{\ell}(y_{\ell}-y_{\ell+1})}\norm{f(x)}_{H^{2}_{x}(\mathbb{R})},\\
     \max_{\ell}\norm{k(\vec{\phi_{\ell}}-\vec{\psi_{\ell}})}_{H^{2}_{k}(\mathbb{R})}\leq & C e^{{-}\beta \min_{\ell}(y_{\ell}-y_{\ell+1})}\norm{f(x)}_{H^{2}_{x}(\mathbb{R})},\\
     \norm{\langle x-y_{\ell} \rangle ^ {2}\chi_{\ell}^{\perp}(x)\mathcal{S}(0)\left[\vec{\phi}-\vec{\psi}\right]}_{H^{1}_{x}(\mathbb{R})}\leq & C(y_{1}-y_{m})^{2}e^{{-}\beta\min_{\ell}(y_{\ell}-y_{\ell+1})}\norm{\vec{f}(x)}_{H^{2}_{x}(\mathbb{R}
     )}.
\end{align*}
\end{lemma}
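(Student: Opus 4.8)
The plan is to re-run the scheme of Lemma~\ref{dec} and Corollary~\ref{c62}, carrying along two extra powers of regularity and one polynomial weight, and then to compare the profile $\vec\phi$ produced by the Appendix-C system with the genuine profile $\vec\psi$ of $f=\mathcal{T}(0)(\vec\psi)$. For the first estimate, $\max_\ell\norm{\langle k\rangle^{2}\vec\phi_\ell(k)}\le C\norm{f}_{H^2_x(\mathbb{R})}$, I would start from the bound $\max_{\ell,\pm}\norm{\langle k\rangle^{2}g_{\ell,\pm}(k)}_{L^2_k(\mathbb{R})}\lesssim_v\norm{f}_{H^2_x(\mathbb{R})}$ already recorded in \eqref{esttt1}; since each $g_{\ell,\pm}$ is, up to a unimodular factor and a translation in $x$, the flat Fourier transform of $\chi(\mp\,\cdot)\,f_{\ell,\pm}\!\left(\cdot+\tfrac{y_\ell+y_{\ell\mp1}}{2}\right)$, Plancherel gives $\norm{\chi(\mp\cdot)f_{\ell,\pm}}_{H^2_x(\mathbb{R})}\lesssim_v\norm{f}_{H^2_x(\mathbb{R})}$. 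Recovering $\vec\phi_\ell$ from \eqref{lG}, \eqref{1G}, \eqref{mG} by applying $\sigma_3 F^*_{\omega_\ell}\sigma_3$ and $\sigma_3 G^*_{\omega_\ell}\sigma_3$ as in the proof of Lemma~\ref{dec} (the discrete terms $e^{i\sigma_3 v_\ell x/2}\vec v_{d_\ell}$ being annihilated by these transforms by Corollary~\ref{Asy1sol2}), and invoking the Sobolev bounds $\norm{k^nF^*_\omega(u)}_{L^2_k}+\norm{k^nG^*_\omega(u)}_{L^2_k}\lesssim\norm{u}_{H^n_x}$ from Remark~\ref{weightremark}, then yields the claim, the right-hand side reducing to $H^2_x$ norms of $\chi_\ell^\perp f$ and of the localized $f_{\ell,\pm}$.

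For the two difference estimates, I would use that $f=\mathcal{T}(0)(\vec\psi)$ implies, by Theorem~\ref{tcont} (estimate \eqref{tcontest} at $t=0$) together with the $H^2$ coercivity of Corollary~\ref{coo},
\begin{equation*}
\max_{j\in\{0,1,2\}}\max_\ell\big\|\langle x-y_\ell\rangle^{\,j}\big(f-\mathcal{S}(0)(\vec\psi)\big)\big\|_{H^1_x(\mathbb{R})}\le C\,(y_1-y_m)^2\,e^{-\beta\min_\ell(y_\ell-y_{\ell+1})}\,\|f\|_{H^2_x(\mathbb{R})}.
\end{equation*}
Feeding $f=\mathcal{S}(0)(\vec\psi)+(\text{error})$ into the system \eqref{lG}, \eqref{1G}, \eqref{mG}, the identities of Section~\ref{Asc} (Remark~\ref{transition}, Lemma~\ref{appFourier}, Lemma~\ref{+-interact}, and the relations of Remark~\ref{Tremark}) show that $\vec\psi$, paired with the $g$'s built from $\mathcal{S}(0)(\vec\psi)$, satisfies exactly the same linear system up to a right-hand side of size $(y_1-y_m)^2 e^{-\beta\min_\ell(y_\ell-y_{\ell+1})}\norm{f}_{H^2_x}$; the inner products of $\mathcal{S}(0)(\vec\psi)$ with the discrete modes being exponentially small (as in Corollary~\ref{c62}), the corresponding $\vec v_{d_\ell}$ are exponentially small as well. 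Subtracting and using that $\mathrm{Id}+T$ from Lemma~\ref{tcopp} is boundedly invertible on $L^2_k$ — an invertibility that survives multiplication by $\langle k\rangle$ and two $k$-differentiations, since the coefficients $r_{\omega_\ell},s_{\omega_\ell}$ gain a power of $\langle k\rangle^{-1}$ per derivative by \eqref{asyreftr} while the translated and reflected entries merely permute the unknowns — gives the exponential smallness of $\vec\phi-\vec\psi$ in $H^2_k$ and in $\norm{k\,(\cdot)}_{H^2_k}$.

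The last estimate then follows by localization: by Lemma~\ref{localS0}, $\chi_\ell^\perp\,\mathcal{S}(0)[\vec\phi-\vec\psi]$ equals $\chi_\ell^\perp\,e^{i\sigma_3 v_\ell x/2}\hat G_{\omega_\ell}\!\big(e^{iy_\ell k}\tau_{\sigma_3 v_\ell/2}(\vec\phi_\ell-\vec\psi_\ell)\big)(x-y_\ell)$ up to an error of size $e^{-\beta\min_\ell(y_\ell-y_{\ell+1})}\max_\ell\norm{\vec\phi_\ell-\vec\psi_\ell}_{L^2_k}$. Applying the weighted mapping properties of $\hat G_\omega$ — Remark~\ref{weightremark} for the $\langle x-y_\ell\rangle^2$ weight, Lemma~\ref{sobolevdecayofG} for the additional $x$-derivative, and Lemma~\ref{appFourier} for the localization error — bounds $\norm{\langle x-y_\ell\rangle^{2}\chi_\ell^\perp\,\mathcal{S}(0)[\vec\phi-\vec\psi]}_{H^1_x(\mathbb{R})}$ by $\norm{k(\vec\phi_\ell-\vec\psi_\ell)}_{H^2_k(\mathbb{R})}$ plus lower-order terms, which the preceding step controls by $(y_1-y_m)^2 e^{-\beta\min_\ell(y_\ell-y_{\ell+1})}\norm{f}_{H^2_x}$.

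The main obstacle I anticipate is the bookkeeping in the middle step: one must push the full chain of identities \eqref{estl1122}, \eqref{pl+1}, \eqref{P+m} (and their analogues, roughly $2m-2$ families of equations) through two extra $k$-derivatives and one $\langle k\rangle$ weight, checking term by term that each use of the Hardy-space bound in Lemma~\ref{+-interact}, of the $L^2$ and $\mathcal{F}L^1$ boundedness of $F^*_\omega$ and $G^*_\omega$, and of the shift maps $\tau_{\sigma_3 v}$ still produces the exponential gain $e^{-\beta\min_\ell(y_\ell-y_{\ell+1})}$ against at most the polynomial loss $(y_1-y_m)^2$; the decay rates \eqref{asyreftr} of $r_{\omega_\ell},s_{\omega_\ell}$ are precisely what makes these weighted estimates close, but the verification is unavoidably term-by-term.
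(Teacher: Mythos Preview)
Your plan is correct in outline and would work, but it takes a more laborious route than the paper for the middle two estimates and the last one. The paper's key shortcut is the pointwise identity you do not write down: restricting the defining system \eqref{lG}--\eqref{mG} to the sharp interval $\chi_{[\frac{y_\ell+y_{\ell+1}}{2},\frac{y_\ell+y_{\ell-1}}{2}]}$ (on which the $f_{\ell,\pm}$ pieces vanish and $\chi_\ell^\perp\equiv1$) and subtracting Lemma~\ref{localS0} for $\mathcal{S}(0)(\vec\psi)$ gives directly, for $n\in\{0,1,2\}$ and $j\in\{0,1\}$,
\[
\chi_{[\cdot]}\,\partial_x^j\Big[\langle x-y_\ell\rangle^n e^{i\sigma_3 v_\ell x/2}\hat G_{\omega_\ell}\big(\vec\phi_\ell-\vec\psi_\ell\big)(x-y_\ell)\Big]
=\chi_{[\cdot]}\,\partial_x^j\big[\langle x-y_\ell\rangle^n r\big]+O_{L^2}\big(e^{-\beta\min_\ell(y_\ell-y_{\ell+1})}\|f\|_{L^2}\big),
\]
where $r=\mathcal{T}(0)(\vec\psi)-\mathcal{S}(0)(\vec\psi)$ and the discrete pieces $\vec v_{d_\ell}$ have been absorbed into the error using $f\in\operatorname{Ran}P_c(0)$ as in Corollary~\ref{c62}. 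The weighted $H^1$ bounds on $r$ then come for free from \eqref{tcontest} and Remark~\ref{weightedremainderedecay}, so there is no need to push the full Lemma~\ref{tcopp} machinery through $\langle k\rangle$-weights and two $k$-derivatives; the second and third inequalities follow by running the $B_j$-inversion of Theorem~\ref{TT} on this identity. For the fourth inequality the paper again invokes the displayed identity together with \eqref{tcontest}, whereas you detour through the third estimate and a forward weighted bound $\|\langle x\rangle^2\hat G_\omega(\varphi)\|_{H^1}\lesssim\|\langle k\rangle\varphi\|_{H^2_k}$; this is true but note that Remark~\ref{weightremark}, which you cite, is stated for $G^*_\omega$ and not for $\hat G_\omega$, so you would have to prove the forward direction separately from the asymptotics \eqref{psed1}--\eqref{psed2}.

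In short: your system-subtraction strategy is sound, and your acknowledged ``bookkeeping obstacle'' is real but avoidable. The paper sidesteps it entirely by working on the physical side with the localized identity above, so that all the weighted and Sobolev information is inherited directly from the already-proved remainder estimates of Theorem~\ref{tcont} rather than re-derived through the Hardy-space linear system.
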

\begin{proof}
The proof of the inequality
\begin{equation*}
   \max_{\ell}\norm{\langle k \rangle^{2} \vec{\phi}_{\ell}(k)}\leq  C\norm{f(x)}_{H^{2}_{x}(\mathbb{R})} 
\end{equation*}
 is completely similar to the proof of Corollary \ref{coo}.
 \par Next, from the definition of $\vec{\phi}$ and $\mathcal{T},$ we have that
\begin{multline}\label{sdphi}
\sum_{\ell}\chi_{\left[\frac{y_{\ell}+y_{\ell+1}}{2},\frac{y_{\ell}+y_{\ell-1}}{2}\right]}(x)\frac{\partial^{j}}{\partial x^{j}}\left[\langle x-y_{\ell}\rangle^{n} e^{i\frac{v_{\ell}x\sigma_{3}}{2}}\hat{G}_{\omega_{\ell}}\left(e^{iy_{\ell}k}\begin{bmatrix}
      [\phi_{1,\ell}-\psi_{1,\ell}]\left(k+\frac{v_{\ell}}{2}\right)\\
      [\phi_{2,\ell}-\psi_{2,\ell}]\left(k-\frac{v_{\ell}}{2}\right)
  \end{bmatrix}\right)(x-y_{\ell})\right]\\
=\sum_{\ell}\chi_{\left[\frac{y_{\ell}+y_{\ell+1}}{2},\frac{y_{\ell}+y_{\ell-1}}{2}\right]}(x)\frac{\partial^{j}}{\partial x^{j}}\left[\langle x-y_{\ell}\rangle^{n}r(t,x)\right]\\{+}O_{L^{2}}\left(e^{{-}\beta \min_{\ell}(y_{\ell}-y_{\ell+1})}\norm{f}_{L^{2}_{x}(\mathbb{R})}\right),
\end{multline}
for any $n\in\{0,1,2\},\,j\in\{0,1\}$
such that $r(t,x)=\mathcal{T}(0)(\vec{\psi})-\mathcal{S}(0)(\vec{\psi}).$
\par In particular, Remark $7.5$ and the proof of Theorem $1.8$ imply that the first two inequalities of the Lemma above can be obtained from \eqref{sdphi}.
\par Next, using the definition of $\mathcal{S}(0),$ we can verify for any $j\in\{0,1\}$ that
\begin{multline*}
\left\langle x-y_{\ell} \right\rangle^{2}\chi_{\ell}^{\perp}(x)\frac{\partial^{j}}{\partial x^{j}}\mathcal{S}(0)(\vec{\phi}-\vec{\psi})(x)=\chi_{\ell}^{\perp}(x)\langle x-y_{\ell}\rangle^{2}\frac{\partial^{j}}{\partial x^ {j}}\left[e^{i\frac{\sigma_{3}v_{\ell}x}{2}}\hat{G}_{\omega_{\ell}}\left(\vec{\phi}_{\ell}-\vec{\psi}_{\ell}\right)(x-y_{\ell})\right]\\{+}O_{L^{2}}\left((y_{1}-y_{m})^{2}e^{{-}\beta\min_{\ell}(y_{\ell}-y_{\ell+1})}\max_{\ell}\left[\norm{\vec{\phi}_{\ell}}_{L^ {2}}+\norm{\vec{\psi}_{\ell}}_{L^{2}}\right]\right). \end{multline*}
We also remark that $\supp \chi_{\ell}^{\perp}\subset \supp \chi_{\left[\frac{y_{\ell}+y_{\ell+1}}{2}-2\epsilon,\frac{y_{\ell}+y_{\ell-1}}{2}+2\epsilon\right]},$ from the definition of the smooth cut-off function $\chi_{\ell}.$ 
In conclusion, using estimate \eqref{tcontest} from Theorem \ref{tcont}, the inequality above, estimate \eqref{sdphi}, and triangular inequality, we obtain the last inequality of the proposition. 
\end{proof}
\par Therefore, we can deduce from the above lemma and triangular inequality that if $f=\mathcal{T}(0)(\vec{\psi}),$ then
\begin{multline}\label{eddd1}
    \norm{\langle x \rangle \hat{G}_{\omega_{\ell}}\left(e^{iy_{\ell}k}\vec{\psi}_{\ell}\left(k+\sigma_{3}\frac{v_{\ell}}{2}\right)\right)(x) }_{L^{1}_{x}(\mathbb{R})}\\ \leq C\left[\max_{\ell}\norm{\langle x-y_{\ell}\rangle\chi_{\ell}^{\perp}(x)f(x)}_{L^{1}_{x}(\mathbb{R})}+\max_{\ell}\norm{\langle x\rangle f_{\ell,\pm}(x+y_{\ell})\chi\left(\mp x\pm\frac{y_{\ell}-y_{\ell\mp 1}}{2}\right)}_{L^{1}_{x}(\mathbb{R})}\right]\\
 +C(v)e^{{-}\beta\min_{\ell}(y_{\ell}-y_{\ell+1})}\norm{f}_{H^{2}_{x}(\mathbb{R})}.
\end{multline}
Next, similarly to the proof of Theorem \ref{TT}, we can verify that
\begin{multline*}
     \norm{\langle x\rangle f_{\ell,\pm}(x+y_{\ell})\chi\left(\mp x\pm\frac{y_{\ell}-y_{\ell\mp 1}}{2}\right)}_{L^{1}_{x}(\mathbb{R})}\leq C\max_{j\in\{0,1\},\ell}(y_{1}-y_{m})^{1-j}\norm{\langle x-y_{\ell}\rangle^{j}\chi_{\ell}^{\perp}(x)f(x)}_{L^{1}_{x}(\mathbb{R})}\\{+}Ce^ {{-}\beta\min(y_{\ell}-y_{\ell+1})}\max\left(\norm{f}_{L^{1}_{x}(\mathbb{R})},\norm{f}_{L^{2}_{x}(\mathbb{R})}\right),
\end{multline*}
from which we deduce that
\begin{align*}
    \norm{\langle x \rangle \hat{G}_{\omega_{\ell}}\left(e^{iy_{\ell}k}\vec{\psi}_{\ell}\left(k+\sigma_{3}\frac{v_{\ell}}{2}\right)\right)(x) }_{L^{1}_{x}(\mathbb{R})} \leq & C(v)\left[\max_{\ell}\norm{\langle x-y_{\ell}\rangle\chi_{\ell}^{\perp}(x)f(x)}_{L^{1}_{x}(\mathbb{R})}
 +\norm{f}_{L^{1}_{x}(\mathbb{R})}(y_{1}-y_{m})\right]\\
 &{+}e^{{-}\beta\min_{\ell}(y_{\ell}-y_{\ell+1})}\norm{f}_{H^{2}_{x}(\mathbb{R})}.
\end{align*}
Next, using the inverse formula of the Fourier transform and \eqref{unknown}, we can verify the following estimate
\begin{equation*}
    c\norm{k\partial_{k}g_{\ell,\pm}(k)}_{\mathcal{F}L^{1}} \leq \norm{\frac{d}{dx}\left[x f_{\ell,\pm}\left(x+\frac{y_{\ell}+y_{\ell\mp 1}}{2}\right)\chi\left(\mp x\right)\right]}_{L^{1}_{x}(\mathbb{R})}\leq C \norm{k\partial_{k}g_{\ell,\pm}(k)}_{\mathcal{F}L^{1}}.
\end{equation*}
\par Furthermore, using estimates \eqref{esttt1} and Lemma \ref{fordecay}, we can verify that
\begin{align*}
   \max_{\ell}\norm{\langle k\rangle \partial_{k}g_{\ell,\pm}(k)}_{FL^{1}}\leq & C(v) \max_{\ell}\left[\norm{\frac{d}{dx}\left[\langle x-y_{\ell}\rangle\chi_{\ell}^{\perp}(x)f(x)\right]}_{L^{1}_{x}(\mathbb{R})}+\norm{\langle x-y_{\ell}\rangle\chi_{\ell}^{\perp}(x)f(x)}_{L^{1}_{x}(\mathbb{R})}\right]
 \\&{+}C(v)[y_{1}-y_{m}]\norm{f}_{L^{1}_{x}(\mathbb{R})}+Ce^ {{-}\beta\min_{\ell}(y_{\ell}-y_{\ell+1})}\norm{f}_{L^{2}_{x}(\mathbb{R})}\\
 &{+}C(v)\max_{\ell}\norm{\left\langle \frac{d}{dx}  \right\rangle\left[\langle x \rangle\hat{G}_{\omega_{\ell}}\left(e^{iy_{\ell}k}\vec{\phi}_{\ell}\left(k+\sigma_{3}\frac{v_{\ell}}{2}\right)\right)(x)\right]}_{L^{1}_{x}(\mathbb{R})}. 
\end{align*}

\par Moreover, using the inverse Fourier formula and the identities
\begin{align*}
    \langle k\rangle \partial_{k}g_{\ell,-}(k)=&c\int_{{-}\infty}^{{+}\infty} \langle \frac{d}{dx}\rangle \left[x f_{\ell,-}\left(x+\frac{y_{\ell}+y_{\ell+1}}{2}\right)\chi(x)\right]e^{ikx}\,dx,\\
    \langle k\rangle \partial_{k}g_{\ell,+}(k)=&c\int_{{-}\infty}^{{+}\infty} \langle \frac{d}{dx} \rangle\left[ x f_{\ell,+}\left(x+\frac{y_{\ell}+y_{\ell-1}}{2}\right)\chi({-}x)\right]e^{ikx}\,dx,  
\end{align*}
we deduce that for some constant $c\in\mathbb{C}$ that
\begin{align}\label{F01}
   F_{0}\left(\langle k\rangle \partial_{k}g_{\ell,-}(k)\right)(x)=2\pi c \langle \frac{d}{dx}  \rangle\left[x f_{\ell,-}\left(x+\frac{y_{\ell}+y_{\ell+1}}{2}\right)\chi(x)\right],\\ \label{F02}
   F_{0}\left(\langle k\rangle \partial_{k}g_{\ell,+}(k)\right)(x)=2c\pi \langle \frac{d}{dx} \rangle\left[ xf_{\ell,+}\left(x+\frac{y_{\ell}+y_{\ell-1}}{2}\right)\chi({-}x)\right].
\end{align}
\begin{proposition}
If $p\in (1,2),$ there exists $C(p)>0$ satisfying the following inequality for all $t>0$ and $p^{*}=\frac{p}{p-1}$
\begin{equation*}
\norm{\frac{\hat{G}_{\omega_{\ell}}(e^{{-}itk^{2}\sigma_{3}}\vec{\phi})(x)}{\langle x\rangle}}_{W^{1,p^{*}}_{x}(\mathbb{R})}\leq \frac{C(p)}{t^{\frac{3}{2}(\frac{1}{p}-\frac{1}{p^{*}})}}\max_{n\in\{0,1\}}\norm{\langle x \rangle^{\frac{1}{p}-\frac{1}{p^{*}}}\partial^{n}_{x}\hat{G}_{\omega_{\ell}}(\vec{\phi})(x)}_{L^{p}_{x}(\mathbb{R})}.
\end{equation*}
In particular,
\begin{align*}
\norm{\frac{\hat{G}_{\omega_{\ell}}(e^{{-}itk^{2}\sigma_{3}}\vec{\phi})(x)}{\langle x\rangle}}_{W^{1,p^{*}}_{x}(\mathbb{R})}\leq & \frac{C(p)}{t^{\frac{3}{2}(\frac{1}{p}-\frac{1}{p^{*}})}}\norm{\langle x \rangle\partial_{x}\hat{G}_{\omega_{\ell}}(\vec{\phi})(x)}_{L^{1}_{x}(\mathbb{R})}^{\frac{2-p}{p}} \norm{\partial_{x}\hat{G}_{\omega_{\ell}}(\vec{\phi})(x)}_{L^{2}_{x}(\mathbb{R})}^{\frac{2(p-1)}{p}}\\
&{+}\frac{C(p)}{t^{\frac{3}{2}(\frac{1}{p}-\frac{1}{p^{*}})}}\norm{\langle x \rangle\hat{G}_{\omega_{\ell}}(\vec{\phi})(x)}_{L^{1}_{x}(\mathbb{R})}^{\frac{2-p}{p}}\norm{\hat{G}_{\omega_{\ell}}(\vec{\phi})(x)}_{L^{2}_{x}(\mathbb{R})}^{\frac{2(p-1)}{p}}.  
\end{align*}
\end{proposition}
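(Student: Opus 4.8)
The plan is to obtain the first inequality by interpolation between the $L^{2}$ bound and the improved local decay at rate $t^{-3/2}$, carried out on the two linear operators that together control the $W^{1,p^{*}}$ norm on the left. Write $g:=\hat G_{\omega_{\ell}}(\vec\phi)$ and let $U_{\ell}(t):=e^{-it\mathcal H_{\omega_{\ell}}}$ denote the flow restricted to $\Raa\hat G_{\omega_{\ell}}=\Raa P_{e,\omega_{\ell}}$, so that $\hat G_{\omega_{\ell}}(e^{-it(\Diamond^{2}+\omega_{\ell})\sigma_{3}}\vec\phi)=U_{\ell}(t)g$ by \eqref{fo2} (the factor $e^{-itk^{2}\sigma_{3}}$ in the statement differs from this only by the global unimodular phase $e^{-it\omega_{\ell}\sigma_{3}}$ and plays no role below). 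Since $|\partial_{x}\langle x\rangle^{-1}|\le\langle x\rangle^{-2}\le\langle x\rangle^{-1}$, one has
\[
\norm{\langle x\rangle^{-1}U_{\ell}(t)g}_{W^{1,p^{*}}}\lesssim\norm{\langle x\rangle^{-1}U_{\ell}(t)g}_{L^{p^{*}}}+\norm{\langle x\rangle^{-1}\partial_{x}U_{\ell}(t)g}_{L^{p^{*}}},
\]
so it suffices to bound each summand by $C(p)\,t^{-\frac{3}{2}\theta}\max_{n\in\{0,1\}}\norm{\langle x\rangle^{\theta}\partial_{x}^{n}g}_{L^{p}}$, where $\theta:=\tfrac1p-\tfrac1{p^{*}}=\tfrac{2-p}{p}\in[0,1]$.

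The endpoints are $p=2$ (so $p^{*}=2$, weight $t^{0}$) and $p=1$ (so $p^{*}=\infty$, weight $t^{-3/2}$). At $p=2$: $\norm{\langle x\rangle^{-1}U_{\ell}(t)g}_{L^{2}}\le\norm{U_{\ell}(t)g}_{L^{2}}\lesssim\norm g_{L^{2}}$ by the $L^{2}$ bound of Lemma~\ref{lem:decayonepotential}, and $\norm{\langle x\rangle^{-1}\partial_{x}U_{\ell}(t)g}_{L^{2}}\le\norm{\partial_{x}U_{\ell}(t)g}_{L^{2}}\lesssim\norm{(1+|k|)\vec\phi}_{L^{2}_{k}}\lesssim\norm g_{H^{1}}$, where the middle bound is Lemma~\ref{kphi} (its $\hat G_{\omega_{\ell}}$-analogue) applied to $\vec u=e^{-it(\Diamond^{2}+\omega_{\ell})\sigma_{3}}\vec\phi$, which has the same weighted $L^{2}_{k}$ norms as $\vec\phi$, and the last is the $H^{1}$ coercivity $\norm{(1+|k|)\vec\phi}_{L^{2}_{k}}\lesssim\norm{\hat G_{\omega_{\ell}}(\vec\phi)}_{H^{1}}$ (the $H^{1}$-analogue of Corollary~\ref{h2Gesti}, proved as in Lemma~\ref{interpoo}). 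At $p=1$, under hypothesis (H4): $\norm{\langle x\rangle^{-1}U_{\ell}(t)g}_{L^{\infty}}\lesssim t^{-3/2}\norm{\langle x\rangle g}_{L^{1}}$ is precisely the improved local decay of Lemma~\ref{lem:decayonepotential}, while the derivative endpoint $\norm{\langle x\rangle^{-1}\partial_{x}U_{\ell}(t)g}_{L^{\infty}}\lesssim t^{-3/2}(\norm{\langle x\rangle g}_{L^{1}}+\norm{\langle x\rangle\partial_{x}g}_{L^{1}})$ is the one new ingredient, discussed below.

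Granting these four bounds, I would interpolate each of the two linear operators $\langle x\rangle^{-1}U_{\ell}(t)$ and $\langle x\rangle^{-1}\partial_{x}U_{\ell}(t)$ between its $p=1$ and $p=2$ endpoints. Since $\{L^{p}(\langle x\rangle^{\theta p}dx)\}$ and $\{W^{1,p}(\langle x\rangle^{\theta p}dx)\}$ form complex-interpolation scales whose $p=1,2$ members are $L^{1}(\langle x\rangle dx),L^{2}(dx)$ and $W^{1,1}(\langle x\rangle dx),H^{1}(dx)$ respectively (the Sobolev case via the $(\mathrm{Id},\partial_{x})$ retraction), and since $[L^{\infty},L^{2}]_{1-\theta}=L^{p^{*}}$, this yields $\norm{\langle x\rangle^{-1}U_{\ell}(t)}_{L^{p}(\langle x\rangle^{\theta p})\to L^{p^{*}}}\lesssim t^{-3\theta/2}$ and $\norm{\langle x\rangle^{-1}\partial_{x}U_{\ell}(t)}_{W^{1,p}(\langle x\rangle^{\theta p})\to L^{p^{*}}}\lesssim t^{-3\theta/2}$. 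Summing and bounding $\norm g_{W^{1,p}(\langle x\rangle^{\theta p})}\lesssim\max_{n\in\{0,1\}}\norm{\langle x\rangle^{\theta}\partial_{x}^{n}g}_{L^{p}}$ gives the first inequality with exponent $\tfrac32\theta=\tfrac32(\tfrac1p-\tfrac1{p^{*}})$. For the ``in particular'' statement I would then apply the elementary weighted interpolation inequality $\norm{\langle x\rangle^{\theta}h}_{L^{p}}\le\norm{\langle x\rangle h}_{L^{1}}^{\theta}\norm h_{L^{2}}^{1-\theta}$ (a Hölder estimate, valid because $\tfrac1p=\tfrac12+\tfrac\theta2$) to $h=g$ and $h=\partial_{x}g=\partial_{x}\hat G_{\omega_{\ell}}(\vec\phi)$, and bound $\max(a,b)\le a+b$.

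The main obstacle is the derivative endpoint $\norm{\langle x\rangle^{-1}\partial_{x}U_{\ell}(t)g}_{L^{\infty}}\lesssim t^{-3/2}(\norm{\langle x\rangle g}_{L^{1}}+\norm{\langle x\rangle\partial_{x}g}_{L^{1}})$, which is not contained in Lemma~\ref{lem:decayonepotential} because $\partial_{x}$ does not commute with $\mathcal H_{\omega_{\ell}}$. The plan here is to repeat the argument of Proposition~\ref{dxlinfty} with the localization $\langle x\rangle^{-1}$ retained and to gain the extra power of $t$ exactly as one passes from the $t^{-1/2}$ to the $t^{-3/2}$ estimates of Lemma~\ref{lem:decayonepotential} (i.e.\ in Krieger--Schlag, Propositions~$7.1$ and~$8.1$): starting from $\partial_{x}\hat G_{\omega_{\ell}}(e^{-it(\Diamond^{2}+\omega_{\ell})\sigma_{3}}\vec\phi)(x)=\int_{\mathbb R}e^{-it(k^{2}+\omega_{\ell})}\,\partial_{x}\bigl[\mathcal G_{\omega_{\ell}}(x,-k)/k\bigr]\,\tfrac{k}{s_{\omega_{\ell}}(-k)}\,\phi_{1}(k)\,dk$ plus the $\sigma_{1}$-companion (with $\vec\phi=(\phi_{1},\phi_{2})$), one uses that under (H4) both $\mathcal G_{\omega_{\ell}}(x,-k)/k$ and its $x$-derivative are smooth in $k$ with the same asymptotic normalization as $\mathcal G_{\omega_{\ell}}$ itself, while $k/s_{\omega_{\ell}}(-k)$ is a smooth symbol of order one; hence $\partial_{x}U_{\ell}(t)g$ obeys the same oscillatory-integral estimates as $U_{\ell}(t)$ applied to data one derivative rougher, giving a bound in terms of $\norm{\vec\phi}_{\mathcal{F}L^{1}},\norm{k\vec\phi}_{\mathcal{F}L^{1}},\norm{\partial_{k}\vec\phi}_{\mathcal{F}L^{1}},\norm{k\partial_{k}\vec\phi}_{\mathcal{F}L^{1}}$, which by Lemmas~\ref{fordecay} and~\ref{lemmafordecay} (again using (H4)) are all $\lesssim\norm{\langle x\rangle g}_{L^{1}}+\norm{\langle x\rangle\partial_{x}g}_{L^{1}}$. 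The remaining steps are routine.
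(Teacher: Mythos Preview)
Your proposal is correct and matches the paper's approach: the paper simply cites Corollary~8.3 of Krieger--Schlag for the first inequality (which is exactly the Riesz--Thorin/Stein--Weiss interpolation between the $L^{2}$ bound and the weighted $t^{-3/2}$ local decay you describe, together with its derivative analogue), and then invokes H\"older for the ``in particular'' part, which is your final step. Your identification of the derivative endpoint $\norm{\langle x\rangle^{-1}\partial_{x}U_{\ell}(t)g}_{L^{\infty}}\lesssim t^{-3/2}\norm{\langle x\rangle g}_{W^{1,1}}$ as the one nonstandard ingredient, and the plan to obtain it by rerunning the Krieger--Schlag oscillatory-integral argument with $\partial_{x}\mathcal G_{\omega_{\ell}}$ in place of $\mathcal G_{\omega_{\ell}}$, is precisely what is needed.
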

\begin{proof}
    See the proof of Corollary $8.3$ in \cite{KriegerSchlag}. the second inequality follows from the first inequality and Holder inequality.
\end{proof}
\subsection{Estimate of $\norm{\langle x \rangle\frac{d}{dx}\hat{G}_{\omega_{\ell}}\left(\vec{\phi}_{\ell}\right)(x)}_{L^{1}}$}
From the equations \eqref{1G}, \eqref{lG} and \eqref{mG}, to estimate $\norm{\langle x \rangle\frac{d}{dx}\hat{G}_{\omega_{\ell}}\left(\vec{\phi}_{\ell}\right)(x)}_{L^{1}_{x}(\mathbb{R})},$ it is enough to estimate the terms
\begin{equation*}
    \norm{\left\langle \frac{d}{dx} \right\rangle x \chi\left(x+\frac{y_{\ell}-y_{\ell+1}}{2}\right)f_{\ell,-}(x+y_{\ell})}_{L^{1}_{x}(\mathbb{R})}+ \norm{\left\langle \frac{d}{dx} \right\rangle x \chi\left({-}x+\frac{y_{\ell-1}-y_{\ell}}{2}\right)f_{\ell,+}(x+y_{\ell})}_{L^{1}_{x}(\mathbb{R})}. 
\end{equation*}
In particular, using Minkowski inequality, we obtain that
\begin{align*}
    \norm{\left\langle \frac{d}{dx} \right\rangle x \chi\left(x+\frac{y_{\ell}-y_{\ell+1}}{2}\right)f_{\ell,-}(x+y_{\ell})}_{L^{1}_{x}(\mathbb{R})}\leq & \norm{\left\langle \frac{d}{dx} \right\rangle x \chi\left(x\right)f_{\ell,-}\left(x+\frac{y_{\ell}+y_{\ell+1}}{2}\right)}_{L^{1}_{x}(\mathbb{R})}\\
    &{+}\frac{y_{\ell}-y_{\ell+1}}{2}\norm{\left\langle \frac{d}{dx} \right\rangle \chi\left(x\right)f_{\ell,-}\left(x+\frac{y_{\ell}+y_{\ell+1}}{2}\right)}_{L^{1}_{x}(\mathbb{R})}
\end{align*}
Consequently, from \eqref{F01} and \eqref{F02}, we have
\begin{equation*}
    \norm{\left\langle \frac{d}{dx} \right\rangle x \chi\left(x+\frac{y_{\ell}-y_{\ell+1}}{2}\right)f_{\ell,-}(x+y_{\ell})}_{L^{1}_{x}(\mathbb{R})}\leq C (y_{1}-y_{m}) \norm{\langle k\rangle g_{\ell,-}(k)}_{FL^{1}}+\norm{\langle k\rangle \partial_{k}g_{\ell,-}(k)}_{FL^{1}}. 
\end{equation*}
\par Furthermore, using \eqref{lG}, \eqref{1G}, \eqref{mG}, we can verify similarly to the argument used in the steps $5$ and $6$ of the proof of Lemma \ref{dec} in Section $5$ that the functions $\vec{g}_{\ell}$ solve a linear system from Lemma \ref{tcopp} for 
\begin{align*}
  T_{\ell}(f)(k)=&c_{1}e^{i\sigma_{3}c_{4}}F^{*}_{\omega_{\ell}}\left(\chi^ {\perp}_{\ell}(x)e^{\frac{{-}i\sigma_{3}v_{\ell}(x+y_{\ell})}{2}}f(x+y_{\ell})\right)(k)\\&{+}c_{2}e^ {i\sigma_{3}c_{5}}G^{*}_{\omega_{\ell+1}}\left(\chi^ {\perp}_{\ell+1}(x)e^{\frac{{-}i\sigma_{3}v_{\ell+1}(x+y_{\ell+1})}{2}}f(x+y_{\ell+1})\right)(k+v_{*}).   
\end{align*}
Therefore, using Lemma \ref{lemmafordecay}, we can conclude that
\begin{align}\label{princprinc}
  \max_{j\in\{0,1\},\ell\in[2m-2]}\norm{\langle k\rangle^{j} T_{\ell}(f)(k)}_{\mathcal{F}L^{1}}\leq &C(v)\norm{f}_{W^{1,1}_{x}(\mathbb{R})},\\
 \max_{j\in\{0,1\},\ell\in[2m-2]}\norm{\langle k\rangle^{j} \partial_{k}T_{\ell}(f)(k)}_{\mathcal{F}L^{1}}\leq &C(v)\left[(y_{1}-y_{m})\norm{f}_{W^{1,1}_{x}(\mathbb{R})}+\max_{\ell}\norm{(1+\vert x-y_{\ell}\vert)\langle \partial_{x}\rangle \chi_{\ell}^{\perp}(x)f(x)}_{L^{1}_{x}(\mathbb{R})}\right].  
\end{align}
In conclusion, similarly to the proof of \eqref{eddd1}, we obtain from \eqref{1G}, \eqref{lG}, \eqref{mG} and the Fourier identities \eqref{F01}, \eqref{F02} that
\begin{align}\label{xxG}
\norm{\langle x\rangle \langle \frac{d}{dx}\rangle \hat{G}_{\omega_{\ell}}\left(e^{iy_{\ell}k}\begin{bmatrix}
     \phi_{1,\ell}\left(k+\frac{v_{\ell}}{2}\right)\\
     \phi_{2,\ell}\left(k-\frac{v_{\ell}}{2}\right)
 \end{bmatrix}\right)(x)}_{L^{1}_{x}(\mathbb{R})}\leq & C(v)\max_{\ell}\norm{(1+\vert x-y_{\ell}\vert)\langle \partial_{x}\rangle \chi_{\ell}^{\perp}(x)f(x)}_{L^{1}_{x}(\mathbb{R})}\\ \nonumber
 &{+}C(v)(y_{1}-y_{m})[\norm{f}_{W^{1,1}_{x}(\mathbb{R})}+e^{{-}\beta \min_{\ell}(y_{\ell}-y_{\ell+1})}\norm{f}_{H^{1}_{x}(\mathbb{R})}].
\end{align}
\par Finally, we can finish the proof of Theorem \ref{interpolation est.}
\begin{proof}[Proof of Theorem \ref{interpolation est.}]
First, to simplify our notation, we consider
\begin{equation*}
    \mathcal{T}(t)(\vec{\psi})=\mathcal{U}(t,s)P_{c}(s)(\vec{f}(x)).
\end{equation*}
Theorems \ref{tcont} and \ref{princ} imply that
\begin{multline}\label{po1}
    \max_{\ell}\norm{\chi_{\ell}^ {\perp}(s,x)\langle x-v_{\ell}s-y_{\ell} \rangle^{2} \left(\mathcal{T}(s)(\vec{\psi})-\mathcal{S}(s)(\vec{\psi})\right)}_{H^{1}_{x}(\mathbb{R})}\\
    \leq C(v)(y_{1}-y_{m}+(v_{1}-v_{m})s)^{2}e^{{-}\beta(\min_{\ell}(v_{\ell}-v_{\ell+1})s+y_{\ell}-y_{\ell+1})}\norm{\mathcal{T}(s)(\vec{\psi})}_{H^{2}_{x}(\mathbb{R})},
\end{multline}
for some constant $C(v)>1$ depending on the speeds $v_{1},\,...,\,v_{m}.$ 
\par Consequently, using \eqref{po1} and the last inequality of Lemma \eqref{l2} for $t_{0}=s$ in the place of $0,$ we obtain from triangular inequality that
\begin{multline}\label{tri}
  \max_{\ell}\norm{\chi_{\ell}^{\perp}(s,x)\langle x-v_{\ell}s-y_{\ell} \rangle^{2}[\mathcal{T}(s)(\vec{\phi}-\vec{\psi})]}_{H^{1}_{x}(\mathbb{R})} \\
  \leq C(v)(y_{1}-y_{m}+(v_{1}-v_{m})s)^{2}e^{{-}\beta(\min_{\ell}(v_{\ell}-v_{\ell+1})s+y_{\ell}-y_{\ell+1})}\norm{\mathcal{T}(s)(\vec{\psi})}_{H^{2}_{x}(\mathbb{R})}.
\end{multline}
In the proof of estimate \eqref{tri}, we used the last estimate of \eqref{esttt1} which implies that 
\begin{equation*}
    \norm{\mathcal{T}(s)(\vec{\phi})}_{H^{2}_{x}(\mathbb{R})}\leq C(v)\norm{\vec{f}(x)}_{L^{2}_{x}(\mathbb{R})}.
\end{equation*}
\par In particular, since $\max_{\ell}\norm{\chi_{\ell}^{\perp}(s,x)\langle x-v_{\ell}s-y_{\ell} \rangle f(x)}_{W^{1,1}_{x}(\mathbb{R})}$ is always larger or equivalent to
\begin{equation*}
\norm{f(x)}_{W^{1,1}}+\max_{\ell}\norm{(1+\vert x-y_{\ell}-v_{\ell}s\vert)\langle \partial_{x}\rangle[ \chi_{\ell}^{\perp}(x)\vec{f}(x)]}_{L^{1}_{x}(\mathbb{R})},
\end{equation*}
we can deduce Theorem \ref{interpolation est.} from \eqref{xxG}, \eqref{po1}, \eqref{tri} and triangular inequality.
\end{proof}

\section*{Statements and Declarations}

\subsection*{Competing Interests}
The authors have no conflicts to disclose.

\subsection*{Acknowledgement(s)}

The first author was partially supported by NSF grant DMS-2350301 and by Simons foundation MP-TSM00002258.
\subsection*{Availability of data and materials}
    The authors can confirm that all relevant data are included in this article and its supplementary information files.

\bibliographystyle{plain}
\bibliography{ref}

\begin{thebibliography}{10}

\bibitem{psed}
Serge Alinhac and Patrick Gérard.
\newblock {\em Pseudo-differential Operators and the Nash–Moser Theorem}, volume~82 of {\em Graduate Studies in Mathematics}.
\newblock AMS, 2007.

\bibitem{Busper1}
V.~S. Buslaev and G.~S Perelman.
\newblock Nonlinear scattering: The states which are close to a soliton.
\newblock {\em Journal of Mathematical Sciences}, 77(3):3161--3169, 1995.

\bibitem{BusSule}
Vladimir Buslaev and Catherine Sulem.
\newblock On asymptotic stability of solitary waves for nonlinear {S}chrödinger equations.
\newblock {\em Annales de l'Institut Henri Poincare (C) Non Linear Analysis}, 20:419--475, 05 2003.

\bibitem{Cai}
Kaihua Cai.
\newblock Fine properties of charge transfer models.
\newblock {\em arXiv:math-ph/0311048}, page 44pp, 11 2003.

\bibitem{Stritchargewave}
Gong Chen.
\newblock Strichartz estimates for charge transfer models.
\newblock {\em Discrete and Continuous Dynamical Systems}, 37:1201--1226, 3 2017.

\bibitem{GCwavecmp}
Gong Chen.
\newblock Multisolitons for the defocusing energy critical wave equation with potentials.
\newblock {\em Communications in Mathematical Physics}, 1:45--82, 364 2020.

\bibitem{GCnls}
Gong Chen.
\newblock Long-time dynamics of small solutions to 1d cubic nonlinear {S}chr\"odinger equations with a trapping potential.
\newblock {\em arXiv:2106.10106}, 2021.

\bibitem{GCwave}
Gong Chen.
\newblock Strichartz estimates for wave equations with charge transfer hamiltonians.
\newblock {\em Memoirs of the American Mathematical Society}, 1339:1503--1560, 273 2021.

\bibitem{kinknew}
Gong Chen and Jacek Jendrej.
\newblock Kink networks for scalar fields in dimension $1+1$.
\newblock {\em Nonlinear Analysis}, 215, 2022.

\bibitem{CJlinear}
Gong Chen and Jacek Jendrej.
\newblock Strichartz estimates for {K}lein-{G}ordon equations with moving potentials.
\newblock {\em arXiv:2210.03462}, 2022.

\bibitem{CJnonlinear}
Gong Chen and Jacek Jendrej.
\newblock Asymptotic stability and classification of multi-solitons for {K}lein-{G}ordon equations.
\newblock {\em Communications in Mathematical Physics}, 1, 405 2024.

\bibitem{CP1}
Gong Chen and Fabio Pusateri.
\newblock The 1-dimensional nonlinear {S}chr\"odinger equation with a weighted {$L^1$} potential.
\newblock {\em Analysis \& PDE}, 15:937–982, 04 2022.

\bibitem{CP2}
Gong Chen and Fabio Pusateri.
\newblock On the 1d cubic {NLS} with a non-generic potential.
\newblock {\em Communications in Mathematical Physics}, 02 2024.

\bibitem{collotger}
Charles Collot and Pierre Germain.
\newblock Asymptotic stability of solitary waves for one dimensional nonlinear {S}chr\"odinger equations.
\newblock 06 2023.

\bibitem{sofferYaoDeng}
Qingquan Deng, Avy Soffer, and Xiaohua Yao.
\newblock Endpoint strichartz estimates for charge transfer hamiltonians.
\newblock {\em Indiana University Mathematics Journal}, 67(6):2487--2522, 2018.

\bibitem{ErSch}
M.~Burak Erdogan and Wilhelm Schlag.
\newblock Dispersive estimates for {S}chr\"odinger operators in the presence of a resonance and/or an eigenvalue at zero energy in dimension three: {II}.
\newblock {\em Journal d’Analyse Math´ematique}, 99:199--248, 01 2006.

\bibitem{GoSch}
Michael Goldberg and Wilhelm Schlag.
\newblock Dispersive estimates for {S}chr\"odinger operators in dimensions one and three.
\newblock {\em Communications in Mathematical Physics}, 25:157--178, 01 2004.

\bibitem{KaPu}
Jun Kato and Fabio Pusateri.
\newblock A new proof of long-range scattering for critical nonlinear {S}chr\"odinger equations.
\newblock {\em Differential Integral Equations}, 24:923--940, 00-10 2011.

\bibitem{KriegerSchlag}
Joachim Krieger and Wilhelm Schlag.
\newblock Stable manifolds for all monic supercritical focusing nonlinear {S}chr\"odinger equations in one dimension.
\newblock {\em Journal of the American Mathematical Society}, 19(4):815--920, 2006.

\bibitem{Lidecay}
Yongming Li.
\newblock Dispersive estimates for 1d matrix {Schr\"odinger} operators with threshold resonance.
\newblock {\em Calculus of Variations and Partial Differential Equations}, 63 2024.

\bibitem{nls3soliton}
Yongming Li and Jonas L\"uhrmann.
\newblock Asymptotic stability of solitary waves for the 1d focusing cubic {S}chr\"odinger equation under even perturbations.
\newblock 08 2024.

\bibitem{Mizumachi}
Tetsu Mizumachi.
\newblock Asymptotic stability of small solitary waves to 1d nonlinear {S}chrödinger equations with potential.
\newblock {\em Journal of Mathematics of Kyoto University - J MATH KYOTO UNIV}, 48, 01 2008.

\bibitem{perelmanasym}
Galina Perelman.
\newblock Some results on the scattering of weakly interacting solitons for nonlinear {S}chr\"odinger equation.
\newblock {\em Advances in Partial Differential Equations}, 14, 1997.

\bibitem{Perelman4}
Galina Perelman.
\newblock Asymptotic stability of multi-soliton solutions for nonlinear {S}chr\"odinger equations.
\newblock {\em Communications in Partial Differential Equations}, 29(7-8):1051--1095, 2004.

\bibitem{GPR}
Fabio~Pusateri Pierre~Germain and Frédéric Rousset.
\newblock The nonlinear {S}chr\"odinger equation with a potential.
\newblock {\em Annales de l'Institut Henri Poincare (C) Non Linear Analysis}, 36:1477--1530, 06 2018.

\bibitem{nlsstates}
Igor Rodnianski, Wilhelm Schlag, and Avy Soffer.
\newblock Asymptotic stability of n-soliton states of nls.
\newblock 10 2003.

\bibitem{Dispesti}
Igor Rodnianski, Willhem Schlag, and Avy Soffer.
\newblock Dispersive analysis of charge transfer models.
\newblock {\em Communications in Pure and Applied Mathematics}, 58(2):149--216, 2005.

\bibitem{WSSurvey}
Wilhelm Schlag.
\newblock Dispersive estimates for {S}chr\"odinger operators: a survey.
\newblock {\em Mathematical aspects of nonlinear dispersive equations}, 163:255--285, 02 1991.

\bibitem{schlag2}
Wilhelm Schlag.
\newblock Stable manifolds for an orbitally unstable nonlinear {S}chr\"odinger equation.
\newblock {\em Annals of Mathematics - ANN MATH}, 169:139--227, 01 2009.

\bibitem{SoWu}
Avy Soffer and Xiaoxu Wu.
\newblock On the large time asymptotics of {Schr\"odinger} type equations with general data.
\newblock {\em arXiv:2203.00724}, 09 2024.

\bibitem{Geocharge}
Ulrich Wüller.
\newblock Geometric methods in scattering theory of the charge transfer model.
\newblock {\em Duke Mathematical Journal}, 62, 02 1991.

\bibitem{Yajima1}
Kenji Yajima.
\newblock A multichannel scattering theory for some time dependent hamiltonians, charge transfer problem.
\newblock {\em Communications in mathematical physics}, 75(2):153–178, 1980.

\bibitem{Zielinski1}
Lech Zielinski.
\newblock Asymptotic completeness for multiparticle dispersive charge transfer models.
\newblock {\em Journal of Functional Analysis}, 150:453--470, 1997.

\end{thebibliography}

\bigskip
\end{document}